\def\mod{{\rm mod\,}}
\newtheorem{theorem}{Theorem}[section]
\newtheorem{corollary}[theorem]{Corollary}
\newtheorem{definition}[theorem]{Definition}
\newtheorem{conjecture}[theorem]{Conjecture}
\newtheorem{lemma}[theorem]{Lemma}
\newtheorem{proposition}[theorem]{Proposition}
\newtheorem{example}[theorem]{Example}
\title{{\bf A survey on spectral conditions for some \\ 
extremal graph problems}\thanks{
This is the second revised version in which we added some new references. 
This paper was published on Advances in Mathematics (China), 51 (2) (2022) 193--258;
see the official website: \url{http://www.oaj.pku.edu.cn/sxjz/CN/1000-0917/home.shtml} or 
CNKI (China National Knowledge Internet) 
\url{https://navi.cnki.net/knavi/journals/SXJZ/detail}.  
This work was supported by  NSFC (Grant Nos. 
12071484, 11871479, 11931002),  
Hunan Provincial Natural Science Foundation (Grant Nos. 2020JJ4675, 2018JJ2479) 
and  Mathematics and Interdisciplinary Sciences Project of CSU.  
This article is a survey paper and it is very long with \pageref{LastPage} pages. 
In such survey, it is impossible to refer to all the nice papers. So if a paper is missing from this survey, which does not mean that it is not worth including it. Last but not least, if there are some elegant results and new progresses related to this topic but it missed in our survey, please let me know. Any suggestions and comments are welcome.  
E-mail addresses: \url{ytli0921@hnu.edu.cn} (Y\v{o}ngt\={a}o L\v{i}), 
\url{wjliu6210@126.com} (W\v{e}ij\`{u}n Li\'{u}), 
\url{fenglh@163.com} (L\`{i}hu\'{a} F\'{e}ng, corresponding author).} }
\author{Yongtao Li$^{\dag}$, Weijun Liu$^{\ddag}$, Lihua Feng$^{\ddag, *}$  \\  
{\small $^{\dag}$School of Mathematics, Hunan University} \\
{\small Changsha, Hunan, 410082, P.R. China}  \\ 
{\small $^{\ddag}$School of Mathematics and Statistics,
 Central South University} \\
{\small Changsha, Hunan, 410083, P.R. China}\\
}
\date{\today}
\begin{document}

\maketitle

\vspace{-0.5cm}

\begin{abstract} 
This survey is two-fold. We first report 
new progress on the spectral extremal  results on   
the  Tur\'{a}n type problems in graph theory. More precisely, 
we shall summarize the spectral Tur\'{a}n  function 
 in terms of the adjacency spectral radius 
and the signless Laplacian spectral radius for various graphs. 
For instance, 
the complete graphs, general graphs 
with chromatic number at least three, complete bipartite graphs, odd cycles, 
even cycles, color-critical graphs and intersecting triangles. 
 
 The second goal is 
 to conclude some recent results of  
 the spectral conditions on some graphical properties. 
 By a unified method, we present some sufficient conditions based on 
 the adjacency spectral radius and the signless Laplacian spectral radius 
 for a graph to be 
 Hamiltonian, $k$-Hamiltonian, $k$-edge-Hamiltonian, 
 traceable, $k$-path-coverable, 
 $k$-connected, $k$-edge-connected, 
 Hamilton-connected, perfect matching and 
   $\beta$-deficient.
 \end{abstract} 

{{\bf Key words:}   extremal graph theory; spectral radius; 
Tur\'{a}n theorem; 
Hamilton cycle;  connectivity; matching number.}

{{\bf MSC (2020):} 05C50, 15A18, 05C38} 

{{\bf Chinese Library Classification: } O157.5} 

{{\bf Document code:} A}

\newpage 
\tableofcontents

\section*{Introduction}

 We only consider  simple  graphs throughout  paper.  The notations we used are standard from
the monograph written by Bondy and Murty \cite{Bondy76, Bondy08}. Let $G$ be a simple connected  graph with vertex set $V(G)$ and edge set $E(G)$ such that $|V(G)|=v(G)$ and $|E(G)|=e(G)$. We usually write $n$ and $m$ 
for the number of vertices and edges respectively. 
We use $d(v)$ to denote the degree of the vertex $v$ in $G$. 
The minimum degree is denoted by  $\delta(G)$.  
We write $K_n$ and $I_n$ 
for the complete graph and empty graph on $n$ vertices 
respectively. Let $K_{s,t}$ be the complete bipartite graph 
with  parts of sizes $s$ and $t$. We write $C_t$ 
for the cycle graph on $t$ vertices.  
Let $\omega (G) $ denote the clique number, 
which is defined as the number of vertices in a largest 
complete subgraph in $G$. 
The {vertex-chromatic number} $\chi (G)$ of  $G$ 
is the minimum integer $s\in \mathbb{N}^*$ such that there exists a coloring of 
$V(G)$ with $s$ colors and the adjacent vertices have different colors. 
For two vertex-disjoint graphs $G$ and $H$,   $G\cup H$ denotes the disjoint union of $G$ and $H$; 
 $G\vee H$ denotes the join of  $G$ and $H,$ which is obtained from $G\cup H$ by adding all possible edges between $G$ and $H$. 
 For instance, we have $K_{s,t}=I_s \vee I_t$ and $K_n=K_t \vee K_{n-t}$. 
We use the symbol $i\sim j$ to denote the vertices $i$ and $j$ are adjacent, and $i\nsim j$ otherwise.

 The {\em Tur\'an number} of a graph $F$ is the maximum number of edges 
  in an $n$-vertex graph without a subgraph isomorphic to $F$, and 
  it is usually  denoted by $\mathrm{ex}(n, F)$. 
  We say that a graph $G$ is $F$-free if it does not contain 
  an isomorphic copy of $F$ as a subgraph. 
  A graph on $n$ vertices with no subgraph $F$ and with $\mathrm{ex}(n, F)$ edges is called an {\em extremal graph} for $F$ and we denote by $\mathrm{Ex}(n, F)$ the set of all extremal graphs on $n$ vertices for $F$. 
  It is  a cornerstone of extremal graph theory 
  to 
  understand $\mathrm{ex}(n, F)$ and $\mathrm{Ex}(n, F)$ for various graphs $F$; 
  see \cite{FS13, Kee2011,Sim2013} for surveys. 
 
 \medskip 
 \noindent 
 {\bf Question 1.}(Extremal graph problem) 
{\it What is the maximum number of edges of a graph $G$ on $n$
vertices without a subgraph isomorphic to a given graph $F$?
}
\medskip 

  In 1941, Tur\'{a}n \cite{Tur1941} posed the  natural question of determining
 $\mathrm{ex}(n,K_{r+1})$ for $r\ge 2$. 
 Let $T_r(n)$ denote the complete $r$-partite graph on $n$ vertices where 
 its part sizes are as equal as possible. 
 In other words, $T_r(n)=K_{t_1,t_2,\ldots ,t_r}$, 
 the complete $r$-partite graph on vertex classes with sizes $t_1,t_2,\ldots ,t_r$, 
 where $\sum_{i=1}^r t_i =n$ 
 and $|t_i-t_j|\le 1$ for all $i\neq j$.  
Tur\'{a}n \cite{Tur1941} 
(see \cite[p. 294]{Bollobas78})  extended a result of Mantel \cite{Man1907} 
and  obtained that if $G$ is an $n$-vertex graph containing no $K_{r+1}$, 
then $e(G)\le e(T_r(n))$, equality holds if and only if $G=T_r(n)$.  
There are many extensions and generalizations on Tur\'{a}n's result.  
The problem of determining $\mathrm{ex}(n, F)$ is usually called the 
Tur\'{a}n-type extremal problem. 
 The most celebrated extension always attributes to a result of 
 Erd\H{o}s, Stone and Simonovits \cite{ES46,ES66}, which  states that 
 \begin{equation*} 
  \mathrm{ex}(n,F) = \left( 1- \frac{1}{\chi (F) -1} + o(1) \right) 
  \frac{n^2}{2}, 
  \end{equation*}
 where $\chi (F)$ is the vertex-chromatic number of $F$. 
 This provides good asymptotic estimates for the extremal numbers of non-bipartite graphs. 
 However, for bipartite graphs, where $\chi (F)=2$, it only gives the bound 
 $\mathrm{ex}(n,F)=o(n^2)$. 
 Although there have been numerous attempts on finding better bounds 
 of $\mathrm{ex}(n,F)$ for various bipartite graphs $F$, we know very little in this case. 
 The history of such a case began in 1954 with 
 the theorem of K\H{o}vari, S\'{o}s and Tur\'{a}n  \cite{KST54}, which states 
 that if $K_{s,t}$ is the complete bipartite graph with vertex classes of size $s\ge t$, 
 then $\mathrm{ex}(n,K_{s,t})=O(n^{2-1/t})$; see \cite{Furedi96,Furedi96b} 
 for more details.  
In particular, we refer the interested reader to 
  the comprehensive survey  \cite{FS13}.

  Let $G$ be a simple graph on $n$ vertices. 
The \emph{adjacency matrix} of $G$ is defined as 
$A(G)=[a_{ij}]_{n \times n}$ where $a_{ij}=1$ if two vertices $v_i$ and $v_j$ are adjacent in $G$, and $a_{ij}=0$ otherwise.   
We say that $G$ has eigenvalues $\lambda_1 ,\ldots ,\lambda_n$ if these values are eigenvalues of 
the adjacency matrix $A(G)$. 
Let $\lambda (G)$ be the maximum  value in absolute 
 among the eigenvalues of $G$, which is 
 known as the {\it spectral radius} of graph $G$, 
 that is, 
$ \lambda (G) = \max \{|\lambda | : \text{$\lambda$ 
 is an eigenvalue of $G$}\}$.
By the Perron--Frobenius Theorem \cite[p. 534]{Horn13}, 
the spectral radius of a graph $G$ is actually 
the largest eigenvalue of $G$ 
since the adjacency matrix $A(G)$ is nonnegative.  
We usually write $\lambda_1(G)$ for the spectral radius of $G$. 
The spectral radius of a graph sometimes can give some information  
about the structure of graphs. 
For example, it is well-known  that the average degree of $G$ is at most $\lambda (G)$, which is at most the maximum degree of $G$. 
 The \emph{diagonal matrix} of $G$ is $D(G)=[d_{ii}]_{n \times n}$ with diagonal entry  $d_{ii}=d(i)$, the degree of vertex $i$. 
 The \emph{signless Laplacian matrix} 
 is defined as $Q(G)= D(G)+A(G).$ The largest eigenvalue of $Q(G)$, denoted by $q (G)$, is called the  $Q$-\textit{index} or the {\it signless Laplacian  spectral radius} of $G$. 
For  people that working on spectral graph theory,   one of the most well-known problems  is the
Brualdi--Solheid problem \cite{brualdi}, 
which states that 

 \medskip 
 \noindent 
 {\bf Question 2.} (Brualdi--Solheid problem)
{\it Given a set ${\cal
{G}}$ of graphs, find a tight upper bound for the spectral radius in
${\cal
{G}}$ and characterize the extremal graphs.
}
\medskip 

This problem is well studied in the
literature for many classes of graphs, such as  graphs with 
cut vertices \cite{BermanZhangXD}, 
given diameter \cite{HanSte08}, edge chromatic number \cite{FengLAA16}, domination number
\cite{Stevanovic}.
   For the  $Q$-index counterpart of the above problem, Zhang \cite{Zhangxiaodong} gave  the  $Q$-index of graphs with given degree sequence, Zhou \cite{Zhoubo} studied the $Q$-index and Hamiltonicity.  Also, from both theoretical and practical viewpoint,
the  eigenvalues of graphs have been successfully used in many   other disciplines,  one may
refer to \cite{LiShiEnergy, ZhangMinjieDAM, ZhangMinjieAMC}.

In this paper we shall consider spectral analogues 
of Tur\'{a}n-type problems for graphs. 
That is, determining $\mathrm{ex}_{\lambda}(n,F)= \max \{\lambda (G) : 
|G|=n, F\nsubseteq G \}$.  It is well-known that 
\begin{equation*}
 \mathrm{ex}(n,F) \le \frac{n}{2} \mathrm{ex}_{\lambda}(n,F) 
 \end{equation*}
because of the  fundamental inequality ${2m}/{n} \le \lambda (G)$.  
For most graphs, this study is again fairly complete 
due in large part to a longstanding work of Nikiforov \cite{Niki2011}.  
The Brualdi--Solheid problem asks to determine the maximum spectral radius of a graph $G$ on 
$n$ vertices belonging to a specified class of graphs.  
Analogous to the Brualdi--Solheid problem, the following  
question regarding the adjacency  spectral radius
is a natural extension.

 \medskip 
 \noindent 
 {\bf Question 3.} (Spectral extremal problem)
{\it What is the maximum spectral radius of a graph $G$ on $n$
vertices without a subgraph isomorphic to a given graph $F$?
}
\medskip 

This problem regarding the adjacency  spectral radius
was early proposed in \cite{NikiCPC02}. 
Wilf \cite{Wilf86} and Nikiforov \cite{NikiCPC02} 
obtained spectral strengthening of Tur\'an's theorem 
when the forbidden  substructure is the complete graph. 
Soon after, 
Nikiforov \cite{Niki2007b} showed that if $G$ is a $K_{r+1}$-free graph on $n$ vertices, 
then $\lambda (G)\le \lambda (T_r(n))$, 
equality holds if and only if $G=T_r(n)$. 
Moreover, 
Nikiforov \cite{Niki2007b} (when $n$ is odd), 
and Zhai and Wang \cite{ZW2012} (when $n$ is even) 
determined the maximum spectral radius 
of $K_{2,2}$-free graphs. 
Furthermore, Nikiforov \cite{Niki2010}, Babai and Guiduli \cite{BG2007} 
independently  obtained  the spectral generalization of the 
theorem of K\H{o}vari, S\'os and Tur\'an  when the forbidden graph is the complete bipartite graph $K_{s,t}$. 
Finally, Nikiforov \cite{Niki2010b} characterized 
the spectral radius of graphs without paths and cycles of specified length. 
In addition, Fiedler and Nikiforov  \cite{FiedlerNikif} obtained tight sufficient conditions for
graphs to be Hamiltonian or traceable. 
For many other spectral analogues of results in extremal graph theory 
we refer the reader to the survey  \cite{Niki2011}. 
It is worth mentioning that a corresponding spectral extension \cite{Nikicpc2009} of 
the theorem of Erd\H{o}s, Stone and Simonovits \cite{ES46,ES66} states that 
\[  \mathrm{ex}_{\lambda}(n,F) = \left( 1- \frac{1}{\chi (F)-1} +o(1) \right)n. \]
In the following sections, 
we shall survey a large number of extremal graph results 
and its corresponding spectral  extremal 
results in some details.

\section{Extremal spectral  problem for Tur\'{a}n type}

\subsection{Spectral  problem for complete graphs} 

The first theorem of extremal graph theory is commonly regarded as  
a theorem of Mantel \cite{Man1907}, which shows
that the extremal triangle-free graphs are balanced complete  bipartite graphs. 
There are many proofs of this result; see, e.g., 
\cite{Bollobas78,Sha2016,Zhao2019}.  

\begin{theorem}[Mantel \cite{Man1907}] \label{thmmantel}
If $G$ is an $n$-vertex graph with no triangle, 
then 
\[   e(G) \le \left\lfloor \frac{n^2}{4} \right\rfloor,\] 
equality holds if and only if  $G=K_{\lfloor n/2\rfloor , \lceil n/2\rceil }$.  
\end{theorem}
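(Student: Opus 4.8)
The plan is to establish the edge bound by a short local argument and then read off the extremal graphs from the cases of equality. First I would take a vertex $x$ of maximum degree $\Delta:=\Delta(G)$. Since $G$ is triangle-free, its neighbourhood $N(x)$ is an independent set, so every edge of $G$ has at least one endpoint in $S:=V(G)\setminus N(x)$, a set of size $n-\Delta$ (note $x\in S$). Double counting then gives $e(G)\le \sum_{v\in S}d(v)\le (n-\Delta)\,\Delta$, since every edge contributes at least $1$ to $\sum_{v\in S}d(v)$ and each $v\in S$ has degree at most $\Delta$. Because $t(n-t)\le n^2/4$ for all real $t$ and $(n-\Delta)\Delta$ is an integer, we conclude $e(G)\le\lfloor n^2/4\rfloor$.

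For the characterization I would trace the slack in the two inequalities above. Writing $\sum_{v\in S}d(v)=2e(G[S])+e_G(S,N(x))$ and using $e(G)=e(G[S])+e_G(S,N(x))$ (as $N(x)$ is independent), equality in $e(G)\le\sum_{v\in S}d(v)$ forces $e(G[S])=0$, so $S$ is also independent and $G$ is bipartite with parts $N(x)$ and $S$. Equality in $\sum_{v\in S}d(v)\le(n-\Delta)\Delta$ forces $d(v)=\Delta=|N(x)|$ for every $v\in S$, i.e.\ every vertex of $S$ is joined to all of $N(x)$; hence $G=K_{\Delta,\,n-\Delta}$. Finally $\Delta(n-\Delta)=\lfloor n^2/4\rfloor$ forces $\{\Delta,n-\Delta\}=\{\lfloor n/2\rfloor,\lceil n/2\rceil\}$, and conversely $K_{\lfloor n/2\rfloor,\lceil n/2\rceil}$ is triangle-free with exactly $\lfloor n^2/4\rfloor$ edges, completing the proof.

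An alternative route to the inequality uses the fact that $d(u)+d(v)\le n$ for every edge $uv$ of a triangle-free graph (no common neighbour): summing over edges gives $\sum_v d(v)^2=\sum_{uv\in E(G)}(d(u)+d(v))\le n\,e(G)$, and Cauchy--Schwarz yields $(2e(G))^2/n\le\sum_v d(v)^2\le n\,e(G)$, so $e(G)\le n^2/4$; here equality additionally requires $G$ to be $n/2$-regular with $n$ even, after which a short argument again pins down $K_{n/2,n/2}$. In either approach I expect the only delicate point to be the equality discussion — cleanly handling the parity of $n$ and excluding non-complete-bipartite near-extremal graphs — while the inequality itself is essentially immediate.
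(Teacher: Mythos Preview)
Your proof is correct. The maximum-degree argument and the equality analysis are both carried out cleanly; in particular, the identity $\sum_{v\in S}d(v)-e(G)=e(G[S])$ gives the independence of $S$ immediately, and the final step $\Delta(n-\Delta)=\lfloor n^2/4\rfloor \Rightarrow \{\Delta,n-\Delta\}=\{\lfloor n/2\rfloor,\lceil n/2\rceil\}$ is a genuine consequence of the strict concavity of $t\mapsto t(n-t)$ on integers.

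It is worth noting that the paper does not supply a self-contained proof of Mantel's theorem; it only cites references and then, for the more general Tur\'{a}n theorem, develops the Motzkin--Straus Lagrangian method (Theorem~\ref{thmmz} and Corollary~1.7). That route maximises $\sum_{\{i,j\}\in E(G)}x_ix_j$ over the simplex, shows the optimum equals $\tfrac{1}{2}(1-1/\omega(G))$, and then plugs in the uniform vector to recover $e(G)\le(1-1/r)n^2/2$. Compared with your argument, the Lagrangian approach is less elementary but immediately scales to all $K_{r+1}$ and underpins the spectral results (Wilf, Nikiforov) that the paper is really aiming at. Your local double-counting is the natural choice if one only wants Mantel with the full equality characterisation: it yields the extremal graph $K_{\lfloor n/2\rfloor,\lceil n/2\rceil}$ without extra work, whereas the Lagrangian proof as presented in the paper only gives the weak bound $n^2/4$ and does not directly identify the extremal graphs.
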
  

A natural extension of Mantel's theorem 
is the  famous Tur\'{a}n theorem, 
which extended the forbidden subgraph 
$K_3$ to large complete subgraph $K_{r+1}$ 
for every integer $r\ge 2$. 
The  {\it Tur\'{a}n graph} $T_r(n)$ 
is an $n$-vertex complete $r$-partite graph with each part of size $n_i$ such that  
$|n_i-n_j|\le 1$ for all $1\le i,j\le r$, that is, $n_i$ equals 
$\lfloor \frac{n}{r} \rfloor$  or $\lceil \frac{n}{r}\rceil$. 
Sometimes, we  call $T_r(n)$ the {\it balanced complete $r$-partite graph} and
denote the number of its edges   by $t_r(n)$. 
Clearly, we can verify that   
\[  e(T_r(n))=\sum_{0\le i<j \le r-1} 
\left\lfloor \frac{n+i}{r}\right\rfloor 
\left\lfloor \frac{n+j}{r}\right\rfloor. \] 
 Moreover, 
if we write $n=r\!\cdot \!\lfloor \frac{n}{r} \rfloor +s$ for 
some $0\le s<r$, then 
\[  e(T_r(n))=\left( 1-\frac{1}{r}\right)\frac{(n^2-s^2)}{2} 
+ {s \choose 2}.\] 
In particular, we have $T_2(n)=K_{\lfloor \frac{n}{2} \rfloor, \lceil \frac{n}{2} \rceil}$ 
and $t_2(n)=\lfloor \frac{n}{2} \rfloor \lceil \frac{n}{2}\rceil 
= \lfloor {n^2}/{4} \rfloor$.

\begin{theorem}[Tur\'{a}n \cite{Tur1941}, weak version] \label{thmturanweak}
If $G$ is a graph on $n$ vertices contains no copy of the complete graph $K_{r+1}$, 
then 
\[  e(G)\le \left( 1-\frac{1}{r}\right)\frac{n^2}{2}.\]
Furthermore, equality holds if and only if $r$ divides $n$ and $G=T_r(n)$. 
\end{theorem}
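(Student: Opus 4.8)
The plan is to use Zykov's symmetrization, which hands us the extremal structure directly, and then finish with a convexity/Cauchy--Schwarz estimate for the Tur\'an graph. Let $G$ be a $K_{r+1}$-free graph on $n$ vertices with the maximum number of edges; it suffices to bound $e(G)$. The central claim is that non-adjacency is transitive on $V(G)$. Reflexivity and symmetry are immediate, so suppose for contradiction that $u\nsim v$, $v\nsim w$, but $u\sim w$. If $d(v)<d(u)$, delete all edges at $v$ and join $v$ to exactly the neighbours of $u$ (making $v$ a ``copy'' of $u$); the new graph $G'$ is still $K_{r+1}$-free, since a copy of $K_{r+1}$ in $G'$ contains at most one of the twins $u,v$, and replacing that vertex by the other yields a $K_{r+1}$ in $G$. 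As $e(G')=e(G)-d(v)+d(u)>e(G)$, this contradicts maximality; the case $d(v)<d(w)$ is symmetric, so $d(v)\ge\max\{d(u),d(w)\}$. Now form $G'$ by making \emph{both} $u$ and $w$ copies of $v$. The same twin argument (a $K_{r+1}$ in $G'$ uses at most one of the pairwise non-adjacent vertices $u,v,w$, and its other $r$ vertices then lie in $N_G(v)$) shows $G'$ is $K_{r+1}$-free; and since the edge $uw$ is destroyed exactly once while $2d(v)$ edges are created, $e(G')=e(G)-\bigl(d(u)+d(w)-1\bigr)+2d(v)\ge e(G)+1$, contradicting maximality. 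Hence non-adjacency is an equivalence relation and $G$ is a complete multipartite graph $K_{n_1,\dots,n_k}$.

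Since a complete $k$-partite graph contains $K_k$, the hypothesis $K_{r+1}\nsubseteq G$ forces $k\le r$. Writing $e(K_{n_1,\dots,n_k})=\tfrac12\bigl(n^2-\sum_{i=1}^{k}n_i^2\bigr)$, we must minimise $\sum n_i^2$ subject to $\sum n_i=n$ and $k\le r$. Splitting a part of size $\ge 2$ strictly decreases this sum, and among partitions into $r$ nonnegative parts the swap $n_i\ge n_j+2\mapsto (n_i-1,n_j+1)$ shows the minimum is attained precisely at the balanced partition; thus $e(G)\le e(T_r(n))$, with equality only for $G=T_r(n)$ when $n\ge r$ (when $n<r$ one checks $e(K_n)=\binom n2<(1-\tfrac1r)\tfrac{n^2}{2}$ directly). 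Finally $\sum n_i^2\ge n^2/r$ by Cauchy--Schwarz, so $e(T_r(n))\le\tfrac12\bigl(n^2-n^2/r\bigr)=(1-\tfrac1r)\tfrac{n^2}{2}$, with equality iff every $n_i=n/r$, i.e.\ iff $r\mid n$, in which case $T_r(n)=G$. Chaining the inequalities gives the bound, and equality holds throughout exactly when $r\mid n$ and $G=T_r(n)$.

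I expect the symmetrization step---checking that the two ``copy'' operations preserve $K_{r+1}$-freeness while strictly increasing the edge count---to be the only real obstacle; everything after that is routine convexity bookkeeping for the Tur\'an graph. An alternative worth noting is induction on $n$ in blocks of $r$: an extremal graph must contain a $K_r$, each outside vertex sends at most $r-1$ edges to it, and applying the inductive bound to the remaining $n-r$ vertices makes the arithmetic telescope exactly to $(1-\tfrac1r)\tfrac{n^2}{2}$. That works, but tracking the equality case through the induction is fiddlier than simply reading it off the complete-multipartite structure, so I would prefer the symmetrization route here.
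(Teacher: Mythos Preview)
Your proof is correct: the Zykov symmetrization argument is carried out carefully, including the edge-count arithmetic $e(G')=e(G)-(d(u)+d(w)-1)+2d(v)$ and the twin check that the copy operations preserve $K_{r+1}$-freeness. The convexity step and the equality analysis are also fine.

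However, the paper proves this statement by a genuinely different route. It first establishes the Motzkin--Straus theorem: for any graph $G$, the Lagrangian $\lambda(G)=\max\{\sum_{ij\in E}x_ix_j:\bm{x}\ge 0,\ \sum x_i=1\}$ equals $\tfrac12(1-1/\omega(G))$. The weak Tur\'an bound then falls out in one line by evaluating at $\bm{x}=(1/n,\dots,1/n)$, which gives $e(G)/n^2\le\lambda(G)\le\tfrac12(1-1/r)$. Your approach is combinatorial and structural --- it actually proves the \emph{strong} Tur\'an theorem $e(G)\le e(T_r(n))$ along the way, and the equality characterisation is read off directly from the complete-multipartite structure. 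The paper's Lagrangian approach is optimisation-theoretic and yields only the weak bound, but it is chosen deliberately: the same Motzkin--Straus lemma is reused immediately afterwards to prove Wilf's spectral bound $\lambda_1(G)\le(1-1/\omega)n$ and Nikiforov's bound $\lambda_1(G)\le\sqrt{2m(1-1/\omega)}$, which are the real targets of that section. So your argument is self-contained and arguably cleaner for Tur\'an alone, while the paper's choice pays off in the spectral applications that follow.
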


By an easy calculation, we can see that 
$e(T_r(n)) \le (1- \frac{1}{r}) \frac{n^2}{2}$, 
equality holds if and only if $r$ divides $n$. 
This inequality also can be seen  from Theorem \ref{thmturanweak} since $T_r(n)$ does not contain $K_{r+1}$ 
as a subgraph.  
More generally, one can prove the strong version 
of the Tur\'{a}n theorem in a similar way.

\begin{theorem}[Tur\'{a}n \cite{Tur1941}, strong version] \label{thmturanstrong} 
If  $G$ is a  graph on $n$ vertices with  no copy of $K_{r+1}$, 
then 
\[  e(G)\le e(T_r(n)),\]
equality holds if and only if $G$ is the Tur\'{a}n graph $T_r(n)$.
\end{theorem}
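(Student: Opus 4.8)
The plan is to prove Theorem~\ref{thmturanstrong} by Zykov's symmetrization, which has the advantage of pinning down the extremal graph directly. Fix $n$ and $r$ and let $G$ be a $K_{r+1}$-free graph on $n$ vertices with the largest possible number of edges; it then suffices to show $G=T_r(n)$, since this simultaneously yields $e(G)\le e(T_r(n))$ for all $K_{r+1}$-free graphs on $n$ vertices and the uniqueness of the extremal graph. The crucial structural fact is that on such an extremal $G$ the relation ``$u\nsim v$ or $u=v$'' is an equivalence relation on $V(G)$. Granting this, the classes are independent sets with all edges present between distinct classes, so $G$ is a complete multipartite graph $K_{n_1,\dots,n_t}$, and $t\le r$ because $G$ has no $K_{r+1}$.

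To establish the equivalence relation only transitivity requires work. Suppose it fails: there are distinct $u,v,w$ with $u\nsim v$, $v\nsim w$, but $u\sim w$. I would argue by cases on degrees. If $d(v)<d(u)$ (the case $d(v)<d(w)$ being symmetric), delete $v$ and insert a clone $u'$ of $u$, i.e.\ a new vertex adjacent to exactly $N(u)$ and not to $u$; the new graph is still $K_{r+1}$-free, since a clique contains at most one of $u,u'$ and any clique through $u'$ transfers to one of the same size through $u$, while the number of edges changes by $d(u)-d(v)>0$, contradicting the maximality of $G$. If instead $d(v)\ge d(u)$ and $d(v)\ge d(w)$, delete both $u$ and $w$ and insert two clones $v',v''$ of $v$ (mutually non-adjacent and non-adjacent to $v$); again $K_{r+1}$-freeness is preserved by the same transfer argument, and since $u\nsim v$ and $w\nsim v$ the clones inherit a neighbourhood avoiding $u,w$, so the edge count changes by $2d(v)-(d(u)+d(w)-1)\ge 1>0$, again a contradiction. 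Hence transitivity holds.

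It remains to maximize $e(K_{n_1,\dots,n_t})=\binom{n}{2}-\sum_i\binom{n_i}{2}$ over complete multipartite graphs on $n$ vertices with $t\le r$. First, splitting a part of size at least $2$ into two nonempty parts strictly increases the edge count while keeping the number of parts at most $r$; since $G$ is extremal this is impossible, so either $t=r$, or every part is a singleton and $n=t\le r$, in which case $G=K_n=T_r(n)$ and we are done. When $t=r$, the strict convexity of $x\mapsto\binom{x}{2}$ gives a smoothing step: if $n_i\ge n_j+2$ then replacing $(n_i,n_j)$ by $(n_i-1,n_j+1)$ strictly decreases $\sum_i\binom{n_i}{2}$, so again extremality forces the $n_i$ to differ pairwise by at most $1$; that is, $G=T_r(n)$, with uniqueness coming for free from the strict inequalities used along the way.

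The main obstacle is the transitivity step: one must define the vertex-cloning operation carefully enough that $K_{r+1}$-freeness is genuinely preserved in every degree configuration, and one must keep the edge bookkeeping honest---in particular the $-1$ correction accounting for the already-present edge $uw$ when $u$ and $w$ are both deleted---so that the net change is strictly positive. Once the complete multipartite structure is secured, the optimization over part sizes is a routine convexity computation. As an alternative one could instead mimic Tur\'an's original induction on $r$, extracting a maximum clique $K_r$ on a set $A$, noting every vertex outside $A$ has at most $r-1$ neighbours in $A$, and applying the inductive hypothesis (and Theorem~\ref{thmturanweak}) to $G-A$; I would still prefer symmetrization, as it delivers the uniqueness statement with less casework.
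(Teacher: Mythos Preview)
Your argument is correct. The Zykov symmetrization is carried out carefully: the degree comparisons, the cloning operation, the edge bookkeeping (including the $-1$ correction for the already-present edge $uw$), and the verification that $K_{r+1}$-freeness is preserved are all handled properly. The optimization over complete multipartite graphs is likewise sound, and the uniqueness indeed comes out of the strict inequalities in both the symmetrization and the smoothing steps.

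As for comparison with the paper: the paper does not actually supply its own proof of Theorem~\ref{thmturanstrong}. It states the strong version, refers to the literature (e.g.\ \cite[Chapter~40]{AZ2014}) for the many known proofs, and then explicitly proceeds to prove only the \emph{weak} version (Theorem~\ref{thmturanweak}) via the Motzkin--Straus Lagrangian argument. That approach---maximizing $\sum_{ij\in E}x_ix_j$ over the simplex---yields $e(G)\le (1-1/r)n^2/2$ but does not by itself deliver the sharp bound $e(T_r(n))$ or the uniqueness statement when $r\nmid n$. So your symmetrization route genuinely goes further than what the paper chooses to prove, giving both the exact bound and the characterization of equality in one pass. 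The paper's Motzkin--Straus presentation, on the other hand, is aimed at a different payoff: it is the engine behind the spectral versions (Wilf's Theorem~\ref{thmwilf} and Nikiforov's Theorem~\ref{thmnikiforov}) that follow, which is why it was singled out there.
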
 

Tur\'{a}n's theorem initiated the rapid development 
of extremal graph theory and was
rediscovered many times with various different proofs; 
see, e.g., \cite[Chapter 40]{AZ2014}. 
In the sequel, we shall present an elegant proof 
of the weaker version of Tur\'{a}n's theorem. This proof 
was proposed by Motzkin and 
Straus \cite{MS1965} 
based on applying the optimization method, 
which is now known as the Lagrangian method;  
see \cite{Tang2014,Tang2016} for more generalizations, and 
we recommend highly a recent survey written by 
Yuejian Peng, \href{https://t.cnki.net/kcms/detail?v=i1BWWZje93T3zm6lVz4Va_VOxAEHZeCRYeAb_rYSsCs6UqaeMjsSA20agt1my-PTL1zmrwMCkMW-fC8rAmOG8ihAVTJbNyzOvslPwbBV2GV3uiVvfRqBDN4BgRXWK3oy&uniplatform=NZKPT}{Hypergraph 
Lagrangian Function} (click). 
It is worth noting that the theorem of Motzkin and 
Straus was also applied to the spectral version of Tur\'{a}n's theorem; 
see, e.g., \cite{NikiCPC02,Niki2006laa,Niki2009jctb} for related results.  

\begin{definition} \cite{MS1965}
Let $G$ be a graph on vertex set $[n]$. We associate a  function 
\[ \lambda (G, \bm{x}):=\sum\limits_{\{i,j\}\in E(G)} x_ix_j,\quad (\forall 
\bm{x}\in \mathbb{R}^n). \] 
We denote $\Delta :=\{\bm{x}: 
x_1,\ldots ,x_n\ge 0, \sum_{i=1}^{n}x_i =1 \}$. 
The $Lagrangian$ of $G$ is defined as  
the maximum of $\lambda (G, \bm{x})$ over all 
$\bm{x}\in \Delta $, that is, 
\[ \lambda (G):=\max \bigl\{ \lambda (G, \bm{x}): 
{ \bm{x} \in \Delta } \bigr\}. \]
\end{definition}

We remark  that the notation $\lambda (G)$ 
is frequently denoted by the spectral radius of $G$ beyond this section.  
Recall that $\omega (G)$ is the 
number of  vertices in a largest complete subgraph of $G$, 
which is called the {\it clique number} of $G$. 
In 1965, Motzkin and Straus \cite{MS1965} published a new proof 
of the Tur\'{a}n theorem by proving the following 
result. 

\begin{theorem}[Motzkin--Straus \cite{MS1965}]  \label{thmmz}
For any graph $G$, we have 
\[ \lambda (G) = \frac{1}{2}\left( 1- \frac{1}{\omega (G)}\right). \]
\end{theorem}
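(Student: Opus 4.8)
The plan is to prove the two inequalities $\lambda(G) \ge \frac{1}{2}(1 - \frac{1}{\omega(G)})$ and $\lambda(G) \le \frac{1}{2}(1-\frac{1}{\omega(G)})$ separately. The lower bound is the easy direction: let $K$ be a clique in $G$ on $\omega := \omega(G)$ vertices, and plug in the vector $\bm{x}$ that assigns $1/\omega$ to each vertex of $K$ and $0$ elsewhere. This lies in $\Delta$, and $\lambda(G,\bm{x}) = \binom{\omega}{2} \cdot \frac{1}{\omega^2} = \frac{1}{2}(1 - \frac{1}{\omega})$, so $\lambda(G) \ge \frac{1}{2}(1-\frac{1}{\omega})$. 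This also shows the maximum is attained (the domain $\Delta$ is compact and $\lambda(G,\cdot)$ is continuous), which is needed for the other direction.

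For the upper bound, take an optimal $\bm{x}^* \in \Delta$ attaining $\lambda(G)$ and, among all optima, choose one with the smallest support $S = \{i : x^*_i > 0\}$. The key structural claim is that $S$ induces a clique in $G$. Suppose not: then there are $i, j \in S$ with $ij \notin E(G)$. Consider perturbing along the direction $\bm{e}_i - \bm{e}_j$, which keeps the coordinate sum equal to $1$. Writing $f(t) = \lambda(G, \bm{x}^* + t(\bm{e}_i - \bm{e}_j))$, one checks $f$ is linear in $t$ because $x_i x_j$ does not appear (there is no edge $ij$), so the quadratic terms in $t$ cancel. Hence we may move $t$ to the boundary of $\Delta$ — pushing all the weight of one of $i, j$ onto the other — without decreasing $\lambda(G, \cdot)$, producing another optimum with strictly smaller support. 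This contradicts minimality, proving $S$ is a clique, so $|S| \le \omega$.

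Finally, restricted to a clique on $|S| = k \le \omega$ vertices, $\lambda(G, \bm{x}^*) = \sum_{\{i,j\}\subseteq S} x^*_i x^*_j = \frac{1}{2}\bigl( (\sum_{i\in S} x^*_i)^2 - \sum_{i \in S} (x^*_i)^2 \bigr) = \frac{1}{2}\bigl(1 - \sum_{i\in S}(x^*_i)^2\bigr)$, and by Cauchy--Schwarz (or convexity) $\sum_{i\in S}(x^*_i)^2 \ge \frac{1}{k} \ge \frac{1}{\omega}$, giving $\lambda(G) = \lambda(G,\bm{x}^*) \le \frac{1}{2}(1 - \frac{1}{\omega})$. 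Combining the two bounds yields equality. The main obstacle is the support-minimization argument: one must be careful that the perturbation stays in $\Delta$ (nonnegativity of the other coordinates is automatic for small $|t|$, and we stop exactly when $i$ or $j$ hits $0$) and that linearity in $t$ genuinely holds — this is precisely where the non-edge $ij$ is used. Everything else is a routine computation with the identity $\sum_{\{i,j\}} x_i x_j = \frac{1}{2}((\sum x_i)^2 - \sum x_i^2)$.
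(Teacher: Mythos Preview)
Your argument is correct and follows essentially the same route as the paper: both prove the lower bound by testing the uniform vector on a maximum clique, and both prove the upper bound by choosing an optimum with minimal support and using a weight-shifting argument along a non-edge to force the support to be a clique. The only difference is cosmetic: once the support is a clique of size $k$, the paper runs a second perturbation to show all weights must be equal (and then evaluates), whereas you bypass that step via the identity $\sum_{\{i,j\}} x_i x_j = \tfrac{1}{2}\bigl(1 - \sum x_i^2\bigr)$ together with Cauchy--Schwarz, which is a slightly cleaner finish.
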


\begin{proof}
First of all, we write  $\omega (G)=\omega$ for short. 
Let $K_{\omega}$ be the largest clique in $G$.  
We define the vector $\bm{w}$ by setting $w_i={1}/{\omega}$ for each $i\in V(K_{\omega})$ and 
$w_i =0 $ for $i \in V\setminus V(K_{\omega})$. 
Clearly, we have $\bm{w}$ is nonnegative and $\sum_{i=1}^n w_i =1$. 
Here, we say that a vector is nonnegative if all of its coordinates  are nonnegative. 
Hence, we can get 
\begin{equation} \label{eqlag1}
 \lambda (G)\ge 
\lambda ( G, \bm{w})=\sum_{\{i,j\}\in E(K_{\omega})} \frac{1}{\omega^2}= 
 \frac{1}{2}\left( 1-\frac{1}{\omega} \right). 
 \end{equation}

There may exist many nonnegative vectors $\bm{x}$ with $\sum_{i=1}^n x_i=1$ satisfying  
$\lambda (G,\bm{x})=\lambda (G)$,  we further select a vector $\bm{x}$ among these vectors such that 
{ the cardinality of its supporting set 
$|\mathrm{Supp}(\bm{x})|:=\# \{i\in [n]: x_i >0\}$ is minimal. }
We shall complete the proof by the following two steps. 

\medskip 
{\bf Step 1.}~
{\it We first show that the vertex set $\mathrm{Supp}(\bm{x})$  forms a clique in $G$. }

\medskip

If  $\mathrm{Supp}(\bm{x})$ does not form a clique in $G$, we may suppose that 
 $\{i,j\}\notin E(G)$ for some $i,j\in \mathrm{Supp}(\bm{x})$. 
 Denoting $s_v:=\sum_{u\sim v} x_u$ for each vertex $v\in V(G)$. 
We may assume without loss of generality that $s_i\ge s_j$.  
Now we move the weight $x_j$ from vertex $j$ to vertex $i$, 
that is, the new weight of vertex $i$ is $x_i+x_j$, while the weight of vertex $j$ drops to $0$. 
In other words, we construct a new nonnegative vector $\bm{y}$ such that 
$y_i=x_i+x_j,y_j=0$ and $y_k=x_k$ for $k \in V\setminus \{i,j\}$. We compute 
\[ f_G(\bm{y}) -f_G(\bm{x})= y_is_i +y_js_j - x_is_i -x_js_j= x_js_i -x_js_j \ge 0. \]
Therefore, $\bm{y}$  also satisfies $\lambda (G,\bm{y})= \lambda (G)$ 
and $|\mathrm{Supp}(\bm{y})|< |\mathrm{Supp}(\bm{x})|$, which contradicts the minimality of the supporting set  of $\bm{x}$.

\medskip 
 {\bf Step 2.}~
{\it According to Step 1, the clique formed by $\mathrm{Supp}(\bm{x})$ 
is now denoted by $K_t$ for some integer $t\le \omega$,  
we next prove that $x_i=1/t$ for all $i\in \mathrm{Supp}(\bm{x})$. }

\medskip 
Otherwise, if $x_i>x_j>0$ for some $i,j\in \mathrm{Supp}(\bm{x})$, then 
we choose $\varepsilon $ with $0<\varepsilon <x_i-x_j$, 
and change the weight $x_i$ to $x_i- \varepsilon$ and $x_j$ to $x_j +\varepsilon$, 
that is to say, we define a new non-negative vector $\bm{z}$ 
with $z_i=x_i -\varepsilon , z_j =x_j + \varepsilon$ and $z_k=x_k$ for $k \in V\setminus \{i,j\}$. 
We obtain from Step 1 that 
\[ \lambda (G,\bm{z}) -\lambda (G, \bm{x}) = \sum_{\{i,j\}\in E(K_t)} z_iz_j - 
\sum_{\{i,j\}\in E(K_t)}x_ix_j 
= \varepsilon (x_i-x_j)-\varepsilon^2 >0. \]
This contradicts with the fact $\lambda ( G, \bm{x})=\lambda (G)$. 
Therefore, the values of $x_i$ are all equal for 
every $i \in V(K_t)$, 
which gives $x_i=1/t$ for every $i\in V(K_t)$. Therefore, 
\begin{equation} \label{eqlag2} \lambda (G)= 
\lambda ( G, \bm{x})=\sum_{\{i,j\}\in E(K_{t})} 
\frac{1}{t^2}= 
 \frac{1}{2}\left( 1-\frac{1}{t} \right)\le 
  \frac{1}{2}\left( 1-\frac{1}{\omega} \right). 
  \end{equation}
Combining (\ref{eqlag1}) and (\ref{eqlag2}), we get 
$\lambda (G)= \frac{1}{2}(1-\frac{1}{\omega})$. 
Moreover, 
the argument also showed that 
the nonzero weights of the maximizer $\bm{x}$ with minimal supporting set 
are concentrated on a clique, 
which is a largest clique of $G$, 
and all weights on this clique are equal.
 \end{proof}

\begin{corollary}
Motzkin--Straus' Theorem \ref{thmmz} $\Rightarrow$  Weak Tur\'{a}n's Theorem \ref{thmturanweak}. 
\end{corollary}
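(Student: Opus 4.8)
The plan is to derive Tur\'{a}n's weak bound $e(G)\le (1-\tfrac1r)\tfrac{n^2}{2}$ directly from the Motzkin--Straus identity $\lambda(G)=\tfrac12(1-\tfrac{1}{\omega(G)})$. First I would fix a $K_{r+1}$-free graph $G$ on $n$ vertices, so that $\omega(G)\le r$, and evaluate the Lagrangian function at the uniform vector $\bm{x}=(1/n,\ldots,1/n)\in\Delta$. This choice gives
\[ \lambda\!\left(G,\tfrac1n\bm{1}\right)=\sum_{\{i,j\}\in E(G)}\frac{1}{n^2}=\frac{e(G)}{n^2}, \]
and since $\lambda(G)$ is the maximum of $\lambda(G,\bm{x})$ over $\Delta$, we get $e(G)/n^2\le\lambda(G)$.

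Next I would invoke Theorem~\ref{thmmz} together with the hypothesis $\omega(G)\le r$: the function $t\mapsto\tfrac12(1-\tfrac1t)$ is increasing in $t$, so
\[ \lambda(G)=\frac{1}{2}\left(1-\frac{1}{\omega(G)}\right)\le\frac{1}{2}\left(1-\frac{1}{r}\right). \]
Chaining the two inequalities yields $e(G)/n^2\le\tfrac12(1-\tfrac1r)$, i.e. $e(G)\le(1-\tfrac1r)\tfrac{n^2}{2}$, which is exactly the weak Tur\'{a}n bound.

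For the equality characterization I would trace back through the two estimates. Equality forces $\lambda(G)=\tfrac12(1-\tfrac1r)$, hence $\omega(G)=r$, and it forces the uniform vector to be a maximizer of $\lambda(G,\cdot)$. By the concluding remark in the proof of Theorem~\ref{thmmz}, there is a maximizer supported on a largest clique with equal weights; standard perturbation arguments (moving weight between a vertex of maximal weighted degree and one of smaller weighted degree, exactly as in Steps~1 and~2 above) show that if some vertex had strictly positive weighted degree deficit the uniform vector could not be optimal, which pins down $G$ to be a complete $r$-partite graph, and then a short counting argument forces the parts to be as equal as possible; when $r\mid n$ this means $G=T_r(n)$.

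I expect the equality analysis to be the only real obstacle: the inequality itself is a two-line consequence of Motzkin--Straus, but extracting the full extremal structure $G=T_r(n)$ from "the uniform vector is an optimal weighting" requires a little care, since Motzkin--Straus on its own only locates \emph{some} optimal weighting on \emph{some} maximum clique. The cleanest route is probably to observe that if $G$ is $K_{r+1}$-free with $e(G)=(1-\tfrac1r)\tfrac{n^2}{2}$ then every non-adjacent pair must have equal weighted degrees under the uniform weighting — i.e. $G$ is such that non-adjacency is an equivalence relation on $V(G)$ with $r$ classes of size $n/r$ — which is precisely the statement that $G=T_r(n)$. Alternatively, one can sidestep this by appealing to the strong version (Theorem~\ref{thmturanstrong}) for the characterization, but I would prefer to keep the argument self-contained within the Motzkin--Straus framework.
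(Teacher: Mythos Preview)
Your inequality argument is essentially identical to the paper's: both evaluate the Lagrangian at the uniform vector $\bm{x}=(1/n,\ldots,1/n)$ to get $e(G)/n^2\le\lambda(G)$, then invoke Theorem~\ref{thmmz} with $\omega(G)\le r$ to bound $\lambda(G)\le\tfrac12(1-\tfrac1r)$.

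Where you diverge is in attempting the equality characterization, which the paper's proof simply omits. Your instinct that this is the delicate part is correct, and your sketch is not yet a proof. From ``the uniform vector is a maximizer'' you correctly extract (via the Step~1 shifting argument) that every non-adjacent pair $i,j$ satisfies $d(i)=d(j)$, but this alone does \emph{not} force non-adjacency to be an equivalence relation; you still need to rule out, e.g., a path $i\not\sim j\not\sim k$ with $i\sim k$ and $d(i)=d(j)=d(k)$. The Motzkin--Straus theorem admits spurious maximizers not supported on cliques, so recovering the full structure $G=T_r(n)$ from optimality of the uniform weighting requires a genuinely separate argument (one standard route: show that at a maximizer with full support, for each vertex $v$ the partial derivative $\partial_v\lambda(G,\bm{x})=s_v$ must be constant, forcing $G$ to be regular; then combine regularity, $K_{r+1}$-freeness, and the edge count to pin down $T_r(n)$). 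Your ``alternative'' of citing Theorem~\ref{thmturanstrong} would of course be circular here.
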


\begin{proof}
Let $G$ be a graph that does not contain 
$K_{r+1}$ as a subgraph. Clearly, we have 
 $\omega (G)\le r$. 
We define the vector 
$ \bm{x}=\left( {1}/{n},{1}/{n},\ldots ,{1}/{n}\right)$, 
then  
\[ \lambda (G, \bm{x}) = \sum_{\{i,j\}\in E(G)} x_ix_j = \frac{m}{n^2}. \]
By the Motzkin--Straus theorem, we know that 
\[ \lambda (G, \bm{x})  \le \lambda (G)=\frac{1}{2}\left( 1-\frac{1}{\omega }\right)\le 
\frac{1}{2}\left( 1-\frac{1}{r }\right). \]
Combining the above two results, we can get  
$m\le (1-\frac{1}{r})\frac{n^2}{2}$. 
\end{proof}

For convenience, we usually write the Motzkin--Straus theorem 
as the following form. 

\begin{theorem}[Equivalent]
Let $G$ be an $n$-vertex graph. If $(x_1,\ldots ,x_n)\in \mathbb{R}^n$ such that  
$x_1,\ldots ,x_n\ge 0$ and $x_1+\cdots +x_n=1$, then 
\[  \sum_{\{i,j\}\in E(G)} x_ix_j \le \frac{\omega (G)-1}{2\omega (G)}.  \]
 \end{theorem}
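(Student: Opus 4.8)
The plan is to observe that this statement is nothing but a verbatim reformulation of the Motzkin--Straus Theorem~\ref{thmmz}, so the proof consists of unwinding the definition of the Lagrangian. First I would note that the hypothesis on $(x_1,\ldots,x_n)$ --- namely $x_i\ge 0$ and $\sum_i x_i=1$ --- says precisely that $\bm{x}\in\Delta$, where $\Delta=\{\bm{x}: x_1,\ldots,x_n\ge 0, \sum_{i=1}^n x_i=1\}$ is the standard simplex appearing in the definition of $\lambda(G)$.

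Next I would recall that, by definition, $\lambda(G,\bm{x})=\sum_{\{i,j\}\in E(G)} x_ix_j$ and that $\lambda(G)=\max\{\lambda(G,\bm{y}):\bm{y}\in\Delta\}$. Since $\bm{x}$ is a feasible point of this maximization, it follows immediately that
\[ \sum_{\{i,j\}\in E(G)} x_ix_j = \lambda(G,\bm{x}) \le \lambda(G). \]
Applying Theorem~\ref{thmmz}, we have $\lambda(G)=\tfrac12\bigl(1-\tfrac{1}{\omega(G)}\bigr)=\tfrac{\omega(G)-1}{2\omega(G)}$, which is exactly the claimed bound.

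There is no real obstacle here: all the difficulty is already packed into the proof of Theorem~\ref{thmmz} (the two-step smoothing argument that pushes the optimal weight onto a largest clique and then equalizes it there). If one instead wanted a self-contained argument that does not quote Theorem~\ref{thmmz}, the natural route would be to rerun exactly that argument for the given graph $G$: take a maximizer $\bm{x}$ of $\sum_{\{i,j\}\in E(G)} x_ix_j$ over $\Delta$ whose support has minimal size; use the ``move all the weight of $j$ onto a non-neighbour $i$ with $s_i\ge s_j$'' compression to force $\mathrm{Supp}(\bm{x})$ to be a clique $K_t$ with $t\le\omega(G)$; use the ``shift $\varepsilon$ from a heavier coordinate $i$ to a lighter coordinate $j$'' step to force $x_i=1/t$ on that clique; and conclude $\sum_{\{i,j\}\in E(G)} x_ix_j=\tfrac12(1-1/t)\le\tfrac12(1-1/\omega(G))$. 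For the purposes of this survey, however, it is cleanest simply to invoke Theorem~\ref{thmmz}.
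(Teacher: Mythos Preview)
Your proposal is correct and matches the paper's treatment exactly: the paper does not give a separate proof of this statement but simply labels it ``Equivalent'' to Theorem~\ref{thmmz}, and your argument---unwinding the definition of $\lambda(G)$ and invoking Theorem~\ref{thmmz}---is precisely the intended justification of that equivalence.
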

 
 As promised, 
we shall  compare the extremal results based on the number of edges 
and the spectral extremal results  through the whole survey. 
 In what follows, we shall introduce 
 some spectral version of  Mantel's theorem 
 and Tur\'{a}n's theorem. 
 Interestingly, the spectral versions 
 of these extremal results 
 are greatly related to the Motzkin--Straus theorem. 

\begin{theorem}[Nosal, 1970] \label{thmnosal}
Let $G$ be a  graph with no triangle. Then 
\[  \lambda (G)\le \sqrt{e(G)} , \] 
 equality holds if and only if 
$G$ is a complete bipartite graph. 
\end{theorem}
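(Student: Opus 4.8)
The plan is to prove Nosal's theorem $\lambda(G) \le \sqrt{e(G)}$ for triangle-free $G$ via a trace argument on the adjacency matrix. Let $A = A(G)$ with eigenvalues $\lambda_1 \ge \lambda_2 \ge \cdots \ge \lambda_n$, where $\lambda_1 = \lambda(G)$. The key observation is that $\operatorname{tr}(A^2) = \sum_i \lambda_i^2 = 2e(G)$, while $\operatorname{tr}(A^3) = \sum_i \lambda_i^3 = 6 \cdot (\text{number of triangles in } G) = 0$ since $G$ is triangle-free. From $\sum_{i=1}^n \lambda_i^3 = 0$ I would isolate $\lambda_1^3 = -\sum_{i=2}^n \lambda_i^3 \le \sum_{i=2}^n |\lambda_i|^3 \le \lambda_1 \sum_{i=2}^n \lambda_i^2$, where the last inequality uses $|\lambda_i| \le \lambda_1$ for all $i$ (Perron--Frobenius). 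Hence $\lambda_1^3 \le \lambda_1 \sum_{i=2}^n \lambda_i^2 \le \lambda_1 \sum_{i=1}^n \lambda_i^2 = 2\lambda_1 e(G)$, which gives $\lambda_1^2 \le 2e(G)$.

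The first point where I would need to be more careful is that the above only yields $\lambda(G) \le \sqrt{2e(G)}$, which is weaker than the claimed $\sqrt{e(G)}$ by a factor of $\sqrt 2$. To sharpen it, I would exploit that $G$ is bipartite-like in its spectrum is not enough; instead the right move is a walk-counting / Rayleigh-quotient argument. Let $\bm{x}$ be the Perron eigenvector of $A$, normalized, and write $\lambda = \lambda(G)$. For triangle-free graphs the neighborhoods of adjacent vertices are disjoint, so for each edge $\{u,v\}$ we have $\sum_{w \sim u} x_w + \sum_{w \sim v} x_w \le \sum_{w} x_w$ is not quite the identity I want either. The clean approach: since $A\bm{x} = \lambda \bm{x}$, we get $\lambda^2 x_v = \sum_{u \sim v} \sum_{w \sim u} x_w$. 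Summing against $x_v$ and using triangle-freeness (so $w \ne v$ and the pairs are counted without creating closed triangles) leads, after regrouping the walks of length two, to $\lambda^2 \sum_v x_v^2 \le \sum_{\{i,j\} \in E(G)} (x_i + x_j)^2 - \text{(nonneg.\ correction)}$, and one shows the right side is at most $e(G)$ when $\sum x_v^2 = 1$ by a Cauchy--Schwarz estimate $\sum_{\{i,j\}\in E} (x_i^2 + x_j^2) = \sum_v d(v) x_v^2$ combined with $\lambda^2 = \sum_{\{i,j\}\in E} 2x_ix_j \le \cdots$.

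Actually, the slickest route, and the one I would commit to, is the direct quadratic-form computation: $\lambda = \bm{x}^{\mathsf T} A \bm{x} = \sum_{\{i,j\}\in E} 2x_ix_j = 2\lambda(G,\bm{x})$ in the Motzkin--Straus notation, but more usefully $\lambda^2 = \bm{x}^{\mathsf T} A^2 \bm{x} = \sum_v \bigl(\sum_{u\sim v} x_u\bigr)^2$. Since $G$ is triangle-free, for a fixed vertex $v$ the set $N(v)$ is independent, and one shows $\sum_{v} \bigl(\sum_{u \sim v} x_u\bigr)^2 \le \sum_{\{i,j\}\in E} (x_i^2 + x_j^2)$ — indeed each term $\bigl(\sum_{u\sim v} x_u\bigr)^2$ expands into $\sum_{u \sim v} x_u^2 + \sum_{u\ne u', u,u'\sim v} x_u x_{u'}$, and the cross terms, when summed over all $v$, are bounded using triangle-freeness so that they telescope against a comparison with $\sum_{\{i,j\}\in E}(x_i - x_j)^2 \ge 0$. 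This reduces the bound to $\lambda^2 \le \sum_{\{i,j\}\in E}(x_i^2 + x_j^2) = \sum_v d(v) x_v^2$, and then separately $\lambda = \sum_{\{i,j\}\in E} 2x_ix_j \le \sum_{\{i,j\}\in E}(x_i^2+x_j^2) = \sum_v d(v)x_v^2$; combining with Lagrange/AM--GM on these two bounds against $\sum_v x_v^2 = 1$ yields $\lambda^2 \le e(G)$.

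The main obstacle I anticipate is exactly the bookkeeping of the length-two walk terms: making precise the claim that triangle-freeness lets $\sum_v \bigl(\sum_{u\sim v} x_u\bigr)^2 \le e(G)$ when $\|\bm x\|_2 = 1$. This is the content of the argument and requires choosing the right intermediate inequality (the $(x_i \pm x_j)^2 \ge 0$ trick, applied edge by edge, with the orientation of the inequality controlled by which endpoint has larger Perron weight) rather than a brute-force expansion. For the equality characterization, I would trace back through these inequalities: equality in the walk bound forces $N(v)$-sums to be ``rigid,'' equality in Cauchy--Schwarz forces the positive weights to be constant on each side, and equality throughout forces $G$ (on its non-isolated vertices) to be a complete bipartite graph $K_{p,q}$, which is triangle-free and attains $\lambda = \sqrt{pq} = \sqrt{e(G)}$; conversely these graphs clearly achieve the bound. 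I would also note $n - p - q$ isolated vertices can be added freely without changing $\lambda$ or $e(G)$, accounting for the stated form of the extremal graphs.
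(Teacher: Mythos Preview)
Your very first approach---the trace argument---is correct and already yields the sharp bound; you abandoned it because of an unnecessary weakening. From $\operatorname{tr}(A^3)=0$ you correctly reach
\[
\lambda_1^3 \le \lambda_1 \sum_{i=2}^n \lambda_i^2.
\]
At this point you should \emph{not} throw the term $\lambda_1^2$ back into the right-hand sum. Instead use $\sum_{i=2}^n \lambda_i^2 = \operatorname{tr}(A^2)-\lambda_1^2 = 2e(G)-\lambda_1^2$. Dividing by $\lambda_1>0$ gives $\lambda_1^2 \le 2e(G)-\lambda_1^2$, hence $\lambda_1^2 \le e(G)$, which is exactly Nosal's inequality. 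Your factor-of-$\sqrt{2}$ loss came only from the gratuitous step of replacing $\sum_{i\ge 2}\lambda_i^2$ by $\sum_{i\ge 1}\lambda_i^2$.

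For the equality analysis, tracing back through the two inequalities $-\lambda_i^3\le |\lambda_i|^3$ and $|\lambda_i|^3\le \lambda_1\lambda_i^2$ forces $\lambda_i\in\{0,-\lambda_1\}$ for every $i\ge 2$; combined with $\operatorname{tr}(A)=0$ this gives the spectrum $\{\lambda_1,-\lambda_1,0,\ldots,0\}$, and a short argument (rank of $A$ equals $2$, neighbourhoods determine two classes) shows the non-isolated part is a complete bipartite graph $K_{p,q}$.

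Everything you wrote after the first paragraph is unnecessary, and the walk-counting and ``$(x_i\pm x_j)^2\ge 0$'' sketches are too vague to constitute a proof as written.

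The paper itself does not give a standalone proof of Nosal's theorem; it recovers it as the $r=2$ case of Nikiforov's inequality $\lambda(G)\le\sqrt{2m(1-1/\omega)}$, whose proof applies the Motzkin--Straus theorem to the squared Perron coordinates $(y_1^2,\ldots,y_n^2)$ together with Cauchy--Schwarz on $\bigl(\sum_{\{i,j\}\in E}y_iy_j\bigr)^2\le m\sum_{\{i,j\}\in E}y_i^2y_j^2$. Your (fixed) trace argument is a genuinely different and more elementary route for the triangle-free case: it avoids Motzkin--Straus entirely and uses only $\operatorname{tr}(A^2)$ and $\operatorname{tr}(A^3)$. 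The Motzkin--Straus route, on the other hand, generalises immediately to all $K_{r+1}$-free graphs, which the trace argument does not.
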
 

This theorem together with Mantel's Theorem \ref{thmmantel} 
yields the following corollary. 

\begin{corollary}\label{thm115}
If $G$ is an $n$-vertex graph without triangles, then 
\[ \lambda (G)\le \sqrt{\lfloor n^2/4\rfloor} \le {n}/{2}. \] 
\end{corollary}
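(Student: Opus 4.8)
The plan is simply to compose the two results stated immediately above, both of which have the same hypothesis. Since $G$ is triangle-free, Nosal's Theorem~\ref{thmnosal} applies directly and yields $\lambda(G)\le\sqrt{e(G)}$. Independently, Mantel's Theorem~\ref{thmmantel} bounds the number of edges of an $n$-vertex triangle-free graph by $e(G)\le\lfloor n^2/4\rfloor$. Because $x\mapsto\sqrt{x}$ is monotonically increasing on $[0,\infty)$, I can substitute the edge bound into Nosal's inequality to get $\lambda(G)\le\sqrt{e(G)}\le\sqrt{\lfloor n^2/4\rfloor}$, which is the first inequality of the claim.

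For the second inequality, I would just invoke the elementary fact $\lfloor n^2/4\rfloor\le n^2/4$ (with equality exactly when $n$ is even), so that $\sqrt{\lfloor n^2/4\rfloor}\le\sqrt{n^2/4}=n/2$. Chaining the two steps gives $\lambda(G)\le\sqrt{\lfloor n^2/4\rfloor}\le n/2$, as required.

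If one additionally wants to record when equality is attained, the plan is to intersect the two equality characterizations: Nosal's theorem forces $G=K_{p,q}$ plus isolated vertices, while Mantel's theorem forces $p+q=n$ and $|p-q|\le 1$; together these pin down $G=K_{\lfloor n/2\rfloor,\lceil n/2\rceil}$, and the final inequality $\sqrt{\lfloor n^2/4\rfloor}\le n/2$ is tight precisely when $n$ is even.

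There is essentially no obstacle here, since the corollary is a direct consequence of two theorems proved (or quoted) earlier. The only point requiring a moment's care is that the substitution of the edge bound into Nosal's inequality is legitimate only because of the monotonicity of the square root; apart from that, no case analysis on the parity of $n$ or on the structure of $G$ is needed before combining the two statements.
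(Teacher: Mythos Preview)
Your proposal is correct and follows exactly the approach the paper indicates: the corollary is stated immediately after Nosal's Theorem with the remark that it follows by combining that theorem with Mantel's Theorem, which is precisely what you do. Your added discussion of the equality cases is a nice bonus that the paper does not spell out.
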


In 1986, Wilf extended this corollary to the following theorem.

\begin{theorem}[Wilf  \cite{Wilf86}] \label{thmwilf}
Let $\lambda (G)$ be the largest eigenvalue of $G$. Then 
\begin{equation*} 
 { \lambda (G)\le \left(1-\frac{1}{\omega (G)}\right)n  } .   
\end{equation*}
\end{theorem}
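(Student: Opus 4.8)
The plan is to route the proof through the Motzkin--Straus Theorem~\ref{thmmz} together with the eigenvector description of the spectral radius and one application of the Cauchy--Schwarz inequality. By the Perron--Frobenius Theorem, $A(G)$ has a nonnegative eigenvector $\bm{x}$ associated with the largest eigenvalue $\lambda(G)$; since its entries are nonnegative and not all zero, I would rescale $\bm{x}$ so that $x_1 + \cdots + x_n = 1$, which leaves the eigenvector equation $A(G)\bm{x} = \lambda(G)\bm{x}$ intact and places $\bm{x}$ in the simplex $\Delta$.

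First I would record the two identities
\[ \bm{x}^{\top} A(G)\bm{x} = \lambda(G)\sum_{i=1}^{n} x_i^2 \quad\text{and}\quad \bm{x}^{\top} A(G)\bm{x} = 2\sum_{\{i,j\}\in E(G)} x_i x_j, \]
the first from the eigenvector equation and the second from expanding the quadratic form, so that $\lambda(G)\sum_{i=1}^{n} x_i^2 = 2\sum_{\{i,j\}\in E(G)} x_i x_j$. Next, since $\bm{x}\in\Delta$, the (equivalent form of the) Motzkin--Straus Theorem~\ref{thmmz} yields $\sum_{\{i,j\}\in E(G)} x_i x_j \le \tfrac{1}{2}\bigl(1 - \tfrac{1}{\omega(G)}\bigr)$, hence $\lambda(G)\sum_{i=1}^{n} x_i^2 \le 1 - \tfrac{1}{\omega(G)}$. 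Finally, Cauchy--Schwarz (equivalently, the power-mean inequality) gives $\sum_{i=1}^{n} x_i^2 \ge \tfrac{1}{n}\bigl(\sum_{i=1}^{n} x_i\bigr)^2 = \tfrac{1}{n}$, so that $\frac{1}{\sum_i x_i^2}\le n$, and dividing produces $\lambda(G) \le \bigl(1 - \tfrac{1}{\omega(G)}\bigr)n$.

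There is no genuine obstacle here beyond bookkeeping; the two subtle points are that the Perron eigenvector is truly nonnegative, so the normalization $\sum_i x_i = 1$ is legitimate even when $G$ is disconnected or has isolated vertices, and that using the \emph{Perron} vector (rather than an arbitrary test vector in the Rayleigh quotient) is what turns the first step into an equality, which is precisely what lets the Motzkin--Straus upper bound on $\sum_E x_ix_j$ combine with the lower bound $\sum_i x_i^2 \ge 1/n$ in the correct direction. One could instead argue directly without Theorem~\ref{thmmz}, but channeling the argument through it makes the dependence on the clique number transparent and keeps the proof to a few lines.
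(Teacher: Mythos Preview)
Your proof is correct and follows essentially the same approach as the paper: Perron--Frobenius to obtain a nonnegative eigenvector, Motzkin--Straus to bound the quadratic form by $1-\tfrac{1}{\omega(G)}$, and Cauchy--Schwarz to relate $\sum x_i^2$ to $(\sum x_i)^2$. The only cosmetic difference is that you normalize the eigenvector to $\sum x_i=1$ at the outset, whereas the paper leaves the eigenvector unnormalized and works with the homogeneous ratios.
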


\begin{corollary}[Equivalent]
Let $G$ be a $K_{r+1}$-free graph on $n$ vertices. Then  
\[ \lambda (G) \le \left(1-\frac{1}{r }\right)n. \]
\end{corollary}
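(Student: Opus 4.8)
The statement is just a restatement of Wilf's Theorem \ref{thmwilf}, so the quickest plan is simply to observe that a $K_{r+1}$-free graph has clique number $\omega(G)\le r$; since $1-1/\omega$ is increasing in $\omega$, Theorem \ref{thmwilf} gives $\lambda(G)\le \bigl(1-1/\omega(G)\bigr)n \le \bigl(1-1/r\bigr)n$. Conversely, for an arbitrary $G$ one applies the corollary with $r=\omega(G)$ (legitimate, since $G$ is $K_{\omega(G)+1}$-free) to recover Wilf's inequality, so the two statements indeed carry the same content.

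Since Wilf's theorem is only quoted above, I would also give a self-contained argument through the Motzkin--Straus machinery, paralleling the deduction of Tur\'{a}n's theorem presented earlier. First invoke the Perron--Frobenius theorem to choose a nonnegative eigenvector $\bm{x}$ of $A(G)$ for the eigenvalue $\lambda(G)$, and rescale so that $x_1+\cdots+x_n=1$; nonnegativity is exactly what allows $\bm{x}$ to be fed into the Lagrangian function. From $A(G)\bm{x}=\lambda(G)\bm{x}$, taking the inner product with $\bm{x}$,
\[ \lambda(G)\sum_{i=1}^n x_i^2 = \sum_{i=1}^n x_i \sum_{j\sim i} x_j = 2\sum_{\{i,j\}\in E(G)} x_i x_j . \]
Now the restated Motzkin--Straus inequality above, together with $\omega(G)\le r$ and the monotonicity of $1-1/\omega$, bounds the right-hand side: $2\sum_{\{i,j\}\in E(G)} x_i x_j \le \frac{\omega(G)-1}{\omega(G)} \le \frac{r-1}{r} = 1-\frac1r$. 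On the other hand Cauchy--Schwarz gives $1=\bigl(\sum_i x_i\bigr)^2 \le n\sum_i x_i^2$, i.e. $\sum_i x_i^2 \ge \frac1n$. Combining these with $\lambda(G)\ge 0$ yields $\frac{\lambda(G)}{n} \le \lambda(G)\sum_i x_i^2 \le 1-\frac1r$, which is precisely $\lambda(G)\le\bigl(1-\frac1r\bigr)n$.

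I do not anticipate a real obstacle. The only points deserving a moment's care are that the Perron eigenvector is guaranteed nonnegative even when $G$ is disconnected (which is all the Perron--Frobenius theorem for nonnegative matrices asserts, and all that is needed to apply Motzkin--Straus), and the bookkeeping in the last step, where one must keep the positive quantity $\sum_i x_i^2$ on the correct side of the inequality. The genuinely more delicate matter, were one to want the sharp extremal conclusion (equality forcing $G=T_r(n)$ with $r\mid n$), would be to trace the equality cases in Motzkin--Straus and in Cauchy--Schwarz simultaneously; but the corollary as stated asks only for the bound, so that issue can be set aside.
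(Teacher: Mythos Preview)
Your proposal is correct and takes essentially the same approach as the paper: the corollary is labeled ``Equivalent'' precisely because it is Wilf's Theorem~\ref{thmwilf} restated, and the paper's proof of Wilf's theorem proceeds by the identical trio of Perron--Frobenius, Motzkin--Straus, and Cauchy--Schwarz that you use. The only cosmetic difference is that you normalize the Perron eigenvector to have $\ell_1$-norm one, whereas the paper carries the unnormalized ratios $\bm{z}^T A\bm{z}/\bm{z}^T\bm{z}$ and $\bm{z}^T A\bm{z}/(\sum_i z_i)^2$ through the three steps; the content is the same.
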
 

The proof is based on the Motzkin--Straus Theorem~\ref{thmmz}. 
Recalling that 
\begin{equation*}
{  1-\frac{1}{\omega (G)} =
\max\limits_{\bm{x}\ge 0,\lVert\bm{x} \rVert_1=1}
\bm{x}^TA(G)\bm{x} =\max\limits_{\bm{x}\ge 0}\frac{\bm{x}^TA(G)\bm{x}}{
\lVert \bm{x} \rVert_1^2}. }
\end{equation*}

\begin{proof} 
We show our proof in three steps. 
\begin{itemize}
\item 
By the Perron--Frobenius Theorem, there exists 
a nonnegative vector $\bm{z}$ such that   
\[ A(G)\bm{z} =\lambda_1 \bm{z} \Rightarrow 
\bm{z}^TA(G)\bm{z} =\lambda_1 \bm{z}^T\bm{z} 
\Rightarrow  {\lambda_1}= \frac{\bm{z}^T A(G)\bm{z}}{\bm{z}^T\bm{z}} . \]

\item 
By the Motzkin--Straus Theorem, we have 
\[ { \frac{\bm{z}^T A(G)\bm{z}}{\bigl( \sum_{i=1}^n z_i\bigr)^2} \le 1-\frac{1}{\omega}.  }  \]

\item 
By the Cauchy--Schwarz inequality, we get 
\[  \bm{z}^T \bm{z}=  \left( \sum_{i=1}^n z_i^2 \right) \ge 
  \frac{1}{n}\left( \sum_{i=1}^n z_i \right)^2.  
\]
\end{itemize}
Combining the above three results, we can get the required inequality. 
\end{proof}

\noindent 
{\bf Remark.} 
We have the following potential applications. 
Since the Tur\'{a}n graph $T_r(n)$ does not contain 
$K_{r+1}$ as a subgraph, 
the Wilf theorem implies that 
$\lambda (T_r(n)) \le \left(1- {1}/{r} \right)n$. 
Moreover, the Rayleigh theorem 
implies that $\lambda (G) =\max_{\bm{x}\in \mathbb{R}^n} 
{\bm{x}^TA\bm{x}}/{\bm{x}^T\bm{x}} \ge {2m}/{n}$, 
hence the Wilf theorem 
can imply the weak version of the Tur\'{a}n Theorem \ref{thmturanweak}.

\medskip  

Recall that $q(G)$ denotes the 
  signless Laplacian spectral radius, i.e., 
 the largest eigenvalue of 
 the {\it signless Laplacian matrix} $Q(G)=D(G) + 
 A(G)$, where $D(G)=\mathrm{diag}(d_1$, $\ldots ,d_n )$ 
 is the degree diagonal matrix and 
 $A(G)$ is the adjacency matrix of a graph $G$. 
Note that $Q(G)=D(G)+A(G)=L(G)+2A(G)$, 
where $L(G)=D(G)-A(G)$ is the so-called Laplacian matrix, 
which is a positive semidefinite matrix, so we can get 
$2\lambda (G)\le q(G)$.

In 2012, Abreu and Nikiforov \cite{AN2012ela,AN2013ela} 
enhanced the Wilf theorem in terms of the signless Laplacian spectral radius. 
They \cite{AN2013ela} proved that  
$ q(G)\le 2 \left(1- \frac{1}{\omega }\right)n $, 
where $\omega$ is the number of vertices of a largest clique of $G$. 
In other words, this result can be equivalently written as 
that if $G$ is a $K_{r+1}$-free graph on $n$ vertices, then 
$q(G)\le 2 \left(1- \frac{1}{r}\right)n $.

\begin{theorem}[Abreu--Nikiforov \cite{AN2013ela}] 
\label{thm212} 
Let $G$ be an $n$-vertex graph 
containing no copy of $K_{r+1}$. Then 
\[  q(G)\le \left(1-\frac{1}{r }\right)2n  .\]
\end{theorem}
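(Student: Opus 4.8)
The plan is to mimic the Motzkin--Straus / Wilf argument, but now working with the signless Laplacian quadratic form $\bm{x}^TQ(G)\bm{x} = \sum_{\{i,j\}\in E(G)}(x_i+x_j)^2$ instead of the adjacency form. First I would invoke the Perron--Frobenius theorem: since $Q(G)$ is nonnegative, there is a nonnegative eigenvector $\bm{z}$ with $Q(G)\bm{z} = q(G)\bm{z}$, and hence $q(G) = \bm{z}^TQ(G)\bm{z}/\bm{z}^T\bm{z} = \sum_{\{i,j\}\in E(G)}(z_i+z_j)^2 / \sum_{i=1}^n z_i^2$. The goal is then the homogeneous inequality
\[ \sum_{\{i,j\}\in E(G)}(z_i+z_j)^2 \le 2\Bigl(1-\frac{1}{r}\Bigr)n \sum_{i=1}^n z_i^2 \]
for every nonnegative $\bm{z}$ when $G$ is $K_{r+1}$-free.

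To get there I would expand $(z_i+z_j)^2 = z_i^2 + z_j^2 + 2z_iz_j$, so that
\[ \sum_{\{i,j\}\in E(G)}(z_i+z_j)^2 = \sum_{i=1}^n d(i)z_i^2 + 2\sum_{\{i,j\}\in E(G)} z_iz_j . \]
The second sum is exactly the Motzkin--Straus form $\lambda(G,\bm{z})$ (after normalizing), which by Theorem~\ref{thmmz} is at most $(1-1/r)(\sum z_i)^2 \le (1-1/r) n \sum z_i^2$ by Cauchy--Schwarz, giving a clean bound $2\sum_{\{i,j\}} z_iz_j \le 2(1-1/r)n\sum z_i^2$. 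So the whole problem reduces to controlling the degree term $\sum_i d(i)z_i^2$ by something like $\bigl(\text{const}\bigr)n\sum_i z_i^2$; the naive bound $d(i)\le n-1$ only yields $q(G)\le (n-1) + 2(1-1/r)n$, which is too weak by roughly a factor. The honest approach is instead to treat $\sum_{\{i,j\}}(z_i+z_j)^2$ directly as a quadratic form and extract a clique: one shows that among all nonnegative optimizers with minimal support, the support must induce a subgraph in which the form is maximized, and then bound the maximum of $\sum_{\{i,j\}\in E(H)}(z_i+z_j)^2/\sum z_i^2$ over all graphs $H$ with $\omega(H)\le r$ by $2(1-1/r)|V(H)|$, with the extremal configuration being the balanced complete $r$-partite graph evaluated at the uniform vector (where $q(K_{t_1,\dots,t_r})$ attains $2(1-1/r)n$ when $r\mid n$).

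Thus the cleanest route I would actually write is a weighted Motzkin--Straus-type lemma for the $Q$-form: for a nonnegative $\bm{x}$ with $\|\bm{x}\|_1=1$, perform the same two local-perturbation moves as in the proof of Theorem~\ref{thmmz} — first shifting weight from one endpoint of a non-edge to the other to force the support to be a clique (checking the $Q$-form is non-decreasing under this move, which is where the argument is slightly more delicate than the adjacency case because both the $d(i)z_i^2$ terms and the cross terms change), then equalizing weights on that clique $K_t$ with $t\le r$ — and conclude $\sum_{\{i,j\}\in E(G)}(x_i+x_j)^2 \le 2(1-1/t)$ via a direct computation on $K_t$ with uniform weights. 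Combining this with $\|\bm{z}\|_1^2 \le n\|\bm{z}\|_2^2$ from Cauchy--Schwarz yields $q(G) \le 2(1-1/t)n \le 2(1-1/r)n$. The main obstacle I anticipate is verifying that the non-edge weight-shifting move does not decrease the $Q$-form: writing $s_v = \sum_{u\sim v}(x_u + x_v)$-type quantities carefully, one needs the analogue of the inequality $x_j(s_i - s_j)\ge 0$ after choosing $i$ to be the "heavier" endpoint, and here the presence of the diagonal degree contributions means the right quantity to compare is $\partial(\text{form})/\partial x_i$ at the two vertices rather than just the neighbour-sums; handling that bookkeeping correctly is the crux, and one should also double-check the equality case to recover that the extremal graph is $T_r(n)$ when $r\mid n$ (with isolated vertices otherwise).
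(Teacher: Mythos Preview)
The paper is a survey and does not include a proof of this particular statement; it simply cites Abreu--Nikiforov \cite{AN2013ela}. So the comparison is with your argument on its own, and there is a genuine gap.

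Your proposed ``Motzkin--Straus lemma for the $Q$-form'' is false as stated, and the difficulty you flagged is not a bookkeeping issue but a structural obstruction. Concretely, take $G=K_{1,s}$ (so $\omega(G)=2$) and put all the $\ell_1$-mass on the centre: $x_{\text{centre}}=1$, $x_{\text{leaf}}=0$. Then
\[
\sum_{\{i,j\}\in E(G)}(x_i+x_j)^2=\sum_{j=1}^{s}(1+0)^2=s,
\qquad\text{while}\qquad 2\Bigl(1-\tfrac{1}{\omega(G)}\Bigr)\Bigl(\sum_i x_i\Bigr)^2=1.
\]
So the inequality $\sum_{\{i,j\}\in E}(x_i+x_j)^2\le 2(1-1/\omega)\bigl(\sum x_i\bigr)^2$ is violated for every $s\ge 2$. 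The reason the Motzkin--Straus machinery breaks down is that in the adjacency form $\sum_{ij\in E}x_ix_j$ an edge with one endpoint of weight $0$ contributes nothing, which is exactly what forces the optimal support to induce a clique; in the $Q$-form $\sum_{ij\in E}(x_i+x_j)^2$ such an edge contributes $x_j^2>0$, so edges leaving the support still matter and the support-reduction move has no reason to be non-decreasing. Your own decomposition already isolates the culprit: the term $\sum_i d(i)x_i^2$ is governed by degrees, not by the clique number, and for $K_{r+1}$-free graphs $\Delta(G)$ can be as large as $n-1$.

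The standard route to this bound avoids the quadratic-form argument altogether: since $q(\cdot)$ is monotone under edge addition, one may assume $G$ is edge-maximal $K_{r+1}$-free, hence a complete $t$-partite graph with $t\le r$, and then bound $q(K_{n_1,\ldots,n_t})\le 2(1-1/t)n\le 2(1-1/r)n$ directly (for $t=2$ one has $q(K_{s,n-s})=n$ exactly, and for $t\ge 3$ the maximum over part sizes is attained at the balanced Tur\'an graph). This is the same reduction the paper mentions after Theorem~\ref{thm460} in the adjacency setting; it is the correct substitute for the Motzkin--Straus step here.
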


We remark here that Abreu and Nikiforov did not characterize the 
extremal graphs attaining the equality. 
We can show by a careful examination that the equality holds 
if and only if $G$ is a complete bipartite  for $r=2$, 
or $r$ divides $n$ and $G=T_r(n)$ for $r\ge 3$.

\medskip 

In 2002, Nikiforov \cite{NikiCPC02} generalized the Wilf theorem 
by given the number of edges. 
The extremal graphs  attaining the equality  in 
Theorem \ref{thmnikiforov} 
was later characterized in \cite[Theorem 2]{Niki2009jctb}; 
 see \cite{Niki2006laa} for more discussions.

\begin{theorem}[Nikiforov \cite{NikiCPC02,Niki2009jctb}]  \label{thmnikiforov}
Let $\lambda (G)$ be the largest eigenvalue of $G$. Then 
\begin{equation*}
 { \lambda (G) \le \sqrt{2m\left( 1-\frac{1}{\omega 
 (G)}\right)} } . 
\end{equation*}
Moreover,  the equality holds 
if and only if 
$G$ is a complete bipartite graph for $r=2$, 
or a complete regular $r$-partite graph for $r\ge 3$ 
with possibly some isolated vertices.  
\end{theorem}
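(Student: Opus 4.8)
The plan is to combine the three ingredients already used for the Wilf theorem, but replace the crude Cauchy--Schwarz bound $\sum z_i^2 \ge \frac{1}{n}(\sum z_i)^2$ with the eigenvalue equation itself, so as to get $2m$ rather than $n$ into the bound. First I would invoke the Perron--Frobenius theorem to pick a nonnegative eigenvector $\bm{z}$ with $A(G)\bm{z} = \lambda_1 \bm{z}$, so that $\lambda_1 = \frac{\bm{z}^T A(G)\bm{z}}{\bm{z}^T\bm{z}}$. The key observation is that $\bm{z}^T A(G)\bm{z} = \lambda_1 \bm{z}^T\bm{z}$ appears on both sides, and also $\bm{z}^T A(G)^2 \bm{z} = \lambda_1^2 \bm{z}^T\bm{z}$; meanwhile $\bm{z}^T A(G)^2 \bm{z} = \sum_i (A(G)\bm{z})_i^2 = \sum_i \bigl(\sum_{j\sim i} z_j\bigr)^2$, which I will want to compare against $\bigl(\sum_{\{i,j\}\in E} z_i z_j\bigr)$ and the number of edges.

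The main step is the following chain. Write $\bm{z}$ normalized so that $\sum_i z_i = 1$ (scaling does not affect $\lambda_1$). By Motzkin--Straus (Theorem~\ref{thmmz}) in the form $\bm{z}^T A(G)\bm{z} \le 1 - \frac{1}{\omega(G)}$. On the other hand, from $A(G)\bm{z} = \lambda_1\bm{z}$ we get $\lambda_1 \bm{z}^T\bm{z} = \bm{z}^T A(G)\bm{z}$, hence $\lambda_1 = \frac{\bm{z}^T A(G)\bm{z}}{\bm{z}^T\bm{z}}$, so I need a lower bound on $\bm{z}^T\bm{z}$ that brings in $m$. The trick: apply Cauchy--Schwarz to the edge sum, namely
\[
\Bigl(\sum_{\{i,j\}\in E(G)} z_i z_j\Bigr)^2 \le m \sum_{\{i,j\}\in E(G)} z_i^2 z_j^2 \le m \Bigl(\sum_{\{i,j\}\in E(G)} z_i^2 z_j^2 + \text{other terms}\Bigr),
\]
but more usefully, since $\bm{z}^T A(G)\bm{z} = 2\sum_{\{i,j\}\in E} z_i z_j$, I want to relate $\bigl(\sum_{\{i,j\}\in E} z_i z_j\bigr)$ to $\bm{z}^T\bm{z}$ via $m$. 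The cleanest route is: $\lambda_1^2 = \frac{\bm{z}^T A(G)^2 \bm{z}}{\bm{z}^T\bm{z}}$, and by Cauchy--Schwarz applied to $A(G)\bm{z}$ against the all-ones vector supported on the neighborhoods, one shows $\bm{z}^T A(G)^2\bm{z} \le (\text{something}) \cdot m$ while keeping a factor $\bm{z}^T A(G)\bm{z}$. Concretely I expect the identity/inequality $\lambda_1^2 \, \bm{z}^T\bm{z} = \bm{z}^T A^2 \bm{z} \le 2m \cdot \bigl(\max_{\|\bm{x}\|_1 = \|\bm z\|_1}\! \bm{x}^T A \bm{x} / \|\bm{x}\|_1^2\bigr) \cdot (\text{normalization})$, which after feeding in Motzkin--Straus collapses to $\lambda_1^2 \le 2m(1 - \frac{1}{\omega})$.

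The real obstacle will be bookkeeping the normalization correctly so that the $2m$ and the $1 - 1/\omega$ factors both emerge with the right constant, rather than a weaker bound with an extra factor of $2$ or $n/2$. I would do this by working with $\bm{z}$ normalized to $\|\bm{z}\|_1 = 1$ and carefully tracking that $2\sum_{\{i,j\}\in E} z_i z_j = \bm{z}^T A(G)\bm{z} = \lambda_1 \|\bm{z}\|_2^2$, then bounding $\sum_{\{i,j\}\in E} z_i z_j$ from above by $\sqrt{m}\,\|\bm{z}\|_2^2$ via Cauchy--Schwarz on the edges (using $z_i z_j \le \frac12(z_i^2 + z_j^2)$ summed over edges gives $\sum_{\{i,j\}\in E} z_i z_j \le \frac12\sum_i d_i z_i^2$, which is not quite it, so instead I use $\bigl(\sum_{\{i,j\}\in E} z_i z_j\bigr)^2 \le m\sum_{\{i,j\}\in E}(z_iz_j)^2 \le m\bigl(\sum_{\{i,j\}\in E}z_iz_j\bigr)\max_{\{i,j\}}z_iz_j$ — still delicate). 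Cleaner: from $\lambda_1\|\bm z\|_2^2 = \bm z^TA\bm z$ and $\lambda_1^2\|\bm z\|_2^2 = \|A\bm z\|_2^2 \le \lambda_1\sum_i d_i^{} z_i (\ldots)$; the slickest is Hofmeister-type: $\lambda_1^2 = \frac{\|A\bm z\|^2}{\|\bm z\|^2}$ and $\|A\bm z\|^2 = \sum_i(\sum_{j\sim i}z_j)^2$; by Cauchy--Schwarz $(\sum_{j\sim i}z_j)^2 \le d_i\sum_{j\sim i}z_j^2$, so $\|A\bm z\|^2 \le \sum_i d_i\sum_{j\sim i}z_j^2 = \sum_j z_j^2\sum_{i\sim j}d_i$. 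This is getting complicated; I expect the paper's actual proof to instead normalize $\bm z$ with $\|\bm z\|_2 = 1$, then use Motzkin--Straus as $\bm z^TA\bm z \le (1-\frac1\omega)\|\bm z\|_1^2$ together with $\|\bm z\|_1^2 \le \frac{\bm z^TA\bm z}{2m}\cdot(\text{?})$ — the missing link being the bound $\|\bm z\|_1^2 \le \frac{2m}{\lambda_1}\cdot\frac{1}{?}$, which would come from an AM--GM / Cauchy--Schwarz estimate comparing $(\sum z_i)^2$ to $\sum_{\{i,j\}\in E}z_iz_j$ over the edge set. I would push that last inequality through, and I expect that is where all the real work (and the equality analysis identifying complete multipartite graphs) lives.
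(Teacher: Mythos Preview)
You have the right two ingredients---Motzkin--Straus and Cauchy--Schwarz over the edge set---but you are feeding the wrong vector into Motzkin--Straus, and that is why the bookkeeping never closes. The paper's trick is to apply Motzkin--Straus not to the Perron eigenvector $\bm{y}$ itself, but to the vector of \emph{squared} entries $(y_1^2,y_2^2,\ldots,y_n^2)$. Since these are nonnegative and sum to $\|\bm{y}\|_2^2$, Motzkin--Straus gives
\[
\frac{\sum_{\{i,j\}\in E(G)} y_i^2 y_j^2}{\bigl(\sum_i y_i^2\bigr)^2} \;\le\; \frac{1}{2}\Bigl(1-\frac{1}{\omega}\Bigr).
\]
Separately, Cauchy--Schwarz on the edge sum---which you in fact wrote down and then abandoned---gives
\[
\Bigl(2\sum_{\{i,j\}\in E} y_i y_j\Bigr)^2 \;\le\; 4m \sum_{\{i,j\}\in E} y_i^2 y_j^2.
\]
Now the two displays chain directly, because the quantity $\sum_{\{i,j\}\in E} y_i^2 y_j^2$ appears in both, and the left side of the second display divided by $\bigl(\sum_i y_i^2\bigr)^2$ is exactly $\lambda_1^2$. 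No $\ell^1$-normalization is needed, and nothing delicate remains.

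Your attempts with $\|\bm{z}\|_1=1$ and $\bm{z}^TA\bm{z}\le 1-\frac{1}{\omega}$ cannot work as stated: that inequality controls $\lambda_1\|\bm{z}\|_2^2$, and to get $2m$ into the bound you would then need $\|\bm{z}\|_1^2 \le \frac{2m}{\lambda_1}\|\bm{z}\|_2^2$ or something equivalent, which is false in general (take a star). The $A^2$ route you sketched likewise produces degree-weighted sums rather than a clean factor of $2m$. The single missing idea is to square the eigenvector entries before invoking Motzkin--Straus; once you do that, the proof is four lines, and the equality case (Cauchy--Schwarz forces $y_iy_j$ constant on edges, Motzkin--Straus equality forces the structure) falls out.
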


\begin{corollary}[Equivalent] \label{coroniki}
Let $G$ be a $K_{r+1}$-free graph with $m$ edges. Then  
\[ \lambda (G) \le \sqrt{2m\left( 1-\frac{1}{r}\right)}. \]
\end{corollary}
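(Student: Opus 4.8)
The plan is to deduce Corollary \ref{coroniki} directly from Theorem \ref{thmnikiforov}. Since $G$ is $K_{r+1}$-free, every clique of $G$ has at most $r$ vertices, so $\omega(G) \le r$. Because the function $t \mapsto 1 - 1/t$ is increasing in $t$ for positive integers $t$, we obtain $1 - 1/\omega(G) \le 1 - 1/r$. Feeding this into the bound of Theorem \ref{thmnikiforov} gives
\[
\lambda(G) \le \sqrt{2m\left(1 - \frac{1}{\omega(G)}\right)} \le \sqrt{2m\left(1 - \frac{1}{r}\right)},
\]
which is exactly the claimed inequality. So the corollary is an immediate weakening of the theorem, obtained by replacing the clique number with the a priori upper bound $r$.

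Conversely, one should note that the corollary is in fact \emph{equivalent} to Theorem \ref{thmnikiforov}, as the label ``Equivalent'' indicates, so it is worth sketching the reverse implication too. Suppose the corollary holds for all $r$. Given an arbitrary graph $G$ with $m$ edges, set $r = \omega(G)$; then $G$ contains no $K_{r+1}$ (a copy of $K_{r+1}$ would be a clique on $r+1 = \omega(G)+1$ vertices, contradicting the definition of the clique number). Applying the corollary with this choice of $r$ yields $\lambda(G) \le \sqrt{2m(1 - 1/\omega(G))}$, recovering the theorem. Thus the two statements carry the same content; the corollary is merely the ``forbidden-subgraph'' phrasing of the ``clique-number'' phrasing.

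There is essentially no obstacle here: the only subtlety is the monotonicity of $1 - 1/t$, which is elementary, and the observation that $K_{r+1}$-freeness is precisely the condition $\omega(G) \le r$. If one wanted to trace the equality cases, they would come directly from those in Theorem \ref{thmnikiforov} together with the requirement that $\omega(G) = r$ for the inequality $1 - 1/\omega(G) \le 1 - 1/r$ to be tight: the extremal graphs are complete bipartite graphs (with possibly isolated vertices) when $r = 2$, and complete regular $r$-partite graphs (with possibly isolated vertices) when $r \ge 3$, in each case with the number of parts of positive size equal to $r$. But since the corollary as stated only asserts the inequality, the short derivation above suffices.
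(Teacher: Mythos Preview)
Your argument is correct and matches the paper's intent: the corollary is simply the $K_{r+1}$-free reformulation of Theorem \ref{thmnikiforov}, and the paper does not supply a separate proof beyond the label ``Equivalent''. Your derivation via $\omega(G)\le r$ and monotonicity of $1-1/t$, together with the reverse implication by taking $r=\omega(G)$, is exactly the intended equivalence.
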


This result has some similarity with the Wilf theorem. 
Its proof is also based on the celebrated Motzkin--Straus Theorem, 
which states that for any  $x_1,x_2,\ldots ,x_n\ge 0$, we have 
\[ { \frac{ \sum_{\{i,j\}\in E(G)} x_ix_j }{\bigl( \sum_{i=1}^n x_i\bigr)^2}
 \le \frac{1}{2}\left( 1- \frac{1}{\omega}\right).} \]
 
 \begin{proof}
Let $\bm{y}=(y_1,\ldots ,y_n)$ 
be an (nonnegative) eigenvector of $\lambda (G)$. 
\begin{itemize}
\item 
Using the Motzkin--Straus Theorem with 
respect to $(y_1^2,y_2^2,\ldots ,y_n^2)$, we have 
\[  { \frac{ \left( \sum_{\{i,j\}\in E(G)} y_i^2y_j^2 \right)}{ 
\bigl( \sum_{i=1}^n y_i^2\bigr)^2}  \le 
\frac{1}{2}\left( 1- \frac{1}{\omega}\right).} \]

\item 
On the other hand, we can get  by Cauchy--Schwarz's inequality that 
\begin{align*} 
\lambda_1^2 
=\frac{(\bm{y}^TA(G)\bm{y})^2}{(\bm{y}^T\bm{y})^2} 
= \frac{\left( 2\sum_{\{i,j\}\in E(G)} y_iy_j \right)^2}{ 
\bigl( \sum_{i=1}^n y_i^2\bigr)^2} 
\le    \frac{4 m \left( \sum_{\{i,j\}\in E(G)} y_i^2y_j^2 \right)}{ 
\bigl( \sum_{i=1}^n y_i^2\bigr)^2} .
\end{align*}
\end{itemize}
This completes the proof by taking square root. 
\end{proof}

\noindent 
{\bf Remark.} 
We have the following interesting applications. 
Note that the Tur\'{a}n graph $T_r(n)$ 
does not contain $K_{r+1}$ as a subgraph,  
Nikiforov's result tells us that 
\[  \lambda (T_r(n)) 
\le \sqrt{2t_r(n)\left( 1-\frac{1}{r}\right)}
\le \left(1-\frac{1}{r} \right)n. \]  
Moreover, 
the case $r=2$ in Nikiforov's result reduces to 
 the Nosal Theorem \ref{thmnosal}. 
 Additionally, 
applying  the lower bound $\lambda (G)\ge {2m}/{n}$, 
we can see that Nikiforov's result implies 
the weak version of the Tur\'{a}n Theorem \ref{thmturanweak}. 
Furthermore, Nikiforov's result implies the Wilf Theorem \ref{thmwilf} 
 by applying the weak version of Tur\'{a}n theorem 
 $m\le (1- \frac{1}{r}) \frac{n^2}{2}$. 
 
 \medskip 
 
 We make the following conjecture for signless Laplacian 
 spectral radius. 

\begin{conjecture} \label{conj115}
Let $G$ be a $K_{r+1}$-free graph with $m$ edges. Then  
\[ q (G) \le \sqrt{8m\left( 1-\frac{1}{r}\right)}. \]
\end{conjecture}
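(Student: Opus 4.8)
The plan is to mirror the proof of Corollary~\ref{coroniki} (Nikiforov's edge-version of the Wilf inequality) but with the adjacency quadratic form replaced by the signless Laplacian quadratic form. Recall that for the signless Laplacian one has the identity
\[ \bm{x}^T Q(G)\bm{x} = \sum_{\{i,j\}\in E(G)} (x_i+x_j)^2, \]
and by the Rayleigh principle $q(G) = \max_{\bm{x}\neq \bm{0}} \bm{x}^T Q(G)\bm{x}/\bm{x}^T\bm{x}$, attained at a nonnegative eigenvector $\bm{y}=(y_1,\ldots,y_n)$ by Perron--Frobenius. So the first step is to write $q(G)$ as $\sum_{\{i,j\}\in E}(y_i+y_j)^2 / \sum_i y_i^2$.

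The second step is to bound the numerator. Expanding, $\sum_{\{i,j\}\in E}(y_i+y_j)^2 = \sum_{\{i,j\}\in E}(y_i^2+y_j^2) + 2\sum_{\{i,j\}\in E} y_iy_j = \sum_i d_i y_i^2 + 2\sum_{\{i,j\}\in E}y_iy_j$. One then wants to feed the Motzkin--Straus theorem (applied to the vector $(y_1^2,\ldots,y_n^2)$, exactly as in the proof of Corollary~\ref{coroniki}) to control $\sum_{\{i,j\}\in E} y_iy_j$ after a Cauchy--Schwarz step: $\bigl(\sum_{\{i,j\}\in E} y_iy_j\bigr)^2 \le m \sum_{\{i,j\}\in E} y_i^2y_j^2 \le m\cdot \tfrac{1}{2}(1-\tfrac1r)\bigl(\sum_i y_i^2\bigr)^2$. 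Normalizing $\sum_i y_i^2 = 1$, this gives $\sum_{\{i,j\}\in E} y_iy_j \le \sqrt{\tfrac{m}{2}(1-\tfrac1r)}$, and the leftover diagonal term is $\sum_i d_i y_i^2 \le \max_i d_i \le n-1$, or more crudely $\sum_i d_i y_i^2 \le 2m$ is too weak; one wants something of order $\sqrt{m}$ here.

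The hard part will be exactly this diagonal term $\sum_i d_i y_i^2$: a naive bound by the maximum degree gives $q(G) \le (n-1) + 2\sqrt{\tfrac{m}{2}(1-\tfrac1r)}$, which is the wrong shape and far from $\sqrt{8m(1-\tfrac1r)} = 2\sqrt{2m(1-\tfrac1r)}$. To get the claimed bound one presumably needs a more global argument: perhaps bound $\bm{y}^T Q(G)\bm{y}$ directly against $\bm{y}^T A(G)\bm{y}$ using $q(G) = \bm{y}^T Q\bm{y} \ge \bm{y}^T(2A)\bm{y}$ is the wrong direction, so instead one might try to relate $q(G)^2$ to a sum over edges and triangles and then use a Motzkin--Straus-type inequality for the quantity $\sum_{\{i,j\}\in E}(y_i+y_j)^2$ directly, or use the fact that $Q(G) \preceq$ something controlled by the clique number. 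Another route is to invoke the known inequality $q(G) \le \max_{ij\in E}(d_i+d_j)$ together with an averaging/degree-sequence argument, combined with Theorem~\ref{thm212} as a sanity check when $m$ is near $t_r(n)$. I would first try the straightforward numerator expansion above and see how much slack Cauchy--Schwarz and Motzkin--Straus leave; if that fails to reach the constant $8$, I would look for a two-step bound that applies Motzkin--Straus to $(y_1+y_{\sigma},\ldots)$-type weights or uses the substitution reducing $Q$-bounds to $A$-bounds on the ``double cover'' $G\times K_2$, where $\lambda(G\times K_2)$ relates to $q(G)$ and the clique number is preserved as $\omega$ (bipartite-double-cover tricks), then apply Corollary~\ref{coroniki} there. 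That reduction is the step I expect to be the real obstacle, since the bipartite double cover of $G$ is triangle-free-ish only in special cases and its edge count is $2m$, which would yield $\sqrt{2\cdot 2m(1-\tfrac1{\omega'})}$ with $\omega'=2$ — giving $\sqrt{2m}$, too weak — so the clique-number bookkeeping under any such reduction must be done with care, and that is where I expect the argument to require a genuinely new idea rather than a routine adaptation.
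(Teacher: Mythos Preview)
The paper does not prove this statement: it is posed as an open \emph{conjecture}, with no argument offered. So there is no ``paper's proof'' to compare your attempt against.

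More importantly, the conjecture is false as stated, which explains why every approach you tried runs into an obstruction. Take $r=2$ and $G=K_{a,b}$, which is triangle-free with $m=ab$ edges and $q(K_{a,b})=a+b$. The claimed bound reads $a+b\le \sqrt{8ab\cdot\tfrac12}=2\sqrt{ab}$, i.e.\ the \emph{reverse} of the AM--GM inequality; it fails whenever $a\neq b$. Concretely, for the path $P_3=K_{1,2}$ one has $q(P_3)=3>2\sqrt{2}$, and for the star $K_{1,n-1}$ one has $q=n$ while the bound is only $2\sqrt{n-1}$. The underlying reason is exactly the ``diagonal term'' $\sum_i d_i y_i^2$ you isolated: since $q(G)\ge 2\lambda(G)$ (from $Q=L+2A$ with $L\succeq 0$), the conjecture would force $q(G)=2\lambda(G)$ at every extremal graph for Nikiforov's inequality, but for $K_{a,b}$ with $a\neq b$ one has $q=a+b>2\sqrt{ab}=2\lambda$. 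So your instinct that the degree term is the real obstacle was correct; no amount of Motzkin--Straus or double-cover trickery will close the gap, because the inequality simply does not hold.
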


\noindent 
{\bf Remark.} 
The answer of this conjecture is negative. 
After our manuscript announced on arXiv: 2111.03309, 
Clive Elphick and Vladimir Nikiforov independently told us that the star graph is a counter-example to 
Conjecture \ref{conj115}. Indeed, by setting $r=2$, the $n$-vertex star graph $K_{1,n-1}$ 
is clearly $K_3$-free, and $q(K_{1,n-1})=n$ but $\sqrt{8m\left( 1-\frac{1}{r}\right)}=\sqrt{4(n-1)}$. 
Moreover, the complete bipartite graph $K_{s,n-s}$ is obviously $K_{r+1}$-free for every $s\ge 1$ and $r\ge 2$, 
we observe that $q(K_{s,n-s}) =n$ but $\sqrt{8m\left( 1- \frac{1}{r}\right)} 
= \sqrt{s(n-s)(1-\frac{1}{r})}$.  

\medskip 
In 2002, Bollob\'{a}s and Nikiforov \cite{BNJCTB07} 
proposed the following conjecture in terms of the first and 
second largest eigenvalues of $G$. 
 Let $\lambda_k(G)$  be the  $k$-th largest eigenvalue of $G$.

\begin{conjecture}[Bollob\'{a}s--Nikiforov \cite{BNJCTB07}] 
If $G$ does not contain a copy of $K_{r+1}$, then  
\begin{equation*}
 \lambda_1^2(G)+ \lambda_2^2(G) 
\le 2m\Bigl( 1-\frac{1}{r}\Bigr) . 
\end{equation*}
\end{conjecture}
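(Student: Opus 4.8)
The plan is to rewrite the conjectured inequality in terms of the power-sum moments of the adjacency spectrum. Since $\tr(A(G)^2)=\sum_{i=1}^{n}\lambda_i^2=2m$, it is equivalent to
\[
\lambda_1^2(G)+\lambda_2^2(G)\;\le\;\Bigl(1-\frac1r\Bigr)\sum_{i=1}^{n}\lambda_i^2(G),
\qquad\text{i.e.}\qquad
\lambda_1^2(G)+\lambda_2^2(G)\;\le\;(r-1)\sum_{i\ge 3}\lambda_i^2(G).
\]
The only other moment with a transparent combinatorial meaning is $\tr(A(G)^3)=\sum_i\lambda_i^3=6\cdot(\text{number of triangles of }G)$, which vanishes precisely when $G$ is triangle-free; so the identities available for $r=2$ and for $r\ge 3$ are genuinely different, and I would settle $r=2$ first and then try to bootstrap to larger $r$.

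For $r=2$ the graph $G$ is triangle-free and the target is $\lambda_1^2+\lambda_2^2\le m$. From $\sum_i\lambda_i^3=0$ one gets $\lambda_1^3+\lambda_2^3=\sum_{i\ge 3}(-\lambda_i)^3$; exploiting this together with the Perron--Frobenius bound $\lambda_1\ge|\lambda_i|$, the identity $\sum_i\lambda_i=0$, and a careful analysis of the negative part of the spectrum yields $\lambda_1^2+\lambda_2^2\le m$. This is exactly the theorem of Lin, Ning and Wu; the argument is somewhat delicate --- a naive estimate of this kind only gives $\lambda_1^2+\lambda_2^2\le 2m$ --- and I would reproduce it in full. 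The bound is sharp: every complete bipartite graph $K_{a,b}$ has $\lambda_2=0$ and $\lambda_1^2=ab=e(K_{a,b})$.

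For general $r$ I would first reduce to connected $G$: if the two largest eigenvalues of $G$ come from different components, apply Corollary~\ref{coroniki} to each and note that the edge counts add to at most $m$; otherwise the remaining components may be deleted without changing $\lambda_1,\lambda_2$. For connected $G$ I see two routes. The first is a deletion argument: Cauchy interlacing gives $\lambda_2(G)\le\lambda_1(G-v)$ for every vertex $v$, and $G-v$ is still $K_{r+1}$-free, so Corollary~\ref{coroniki} yields $\lambda_2^2(G)\le 2\,(m-d(v))\bigl(1-\tfrac1r\bigr)$; taking $v$ of maximum degree and combining with $\lambda_1^2(G)\le 2m\bigl(1-\tfrac1r\bigr)$ is far too lossy on its own, and must be supplemented by a \emph{stability} statement --- a $K_{r+1}$-free graph whose spectral radius is close to the Nikiforov value $\sqrt{2m(1-1/r)}$ is structurally close to a complete $r$-partite graph, and complete multipartite graphs have $\lambda_2\le 0$. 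The second, more speculative, route imitates the proofs of the Wilf and Nikiforov theorems (Theorems~\ref{thmwilf} and~\ref{thmnikiforov}): instead of applying the Motzkin--Straus theorem (Theorem~\ref{thmmz}) to a single Perron vector, one would apply it to a well-chosen pair of orthonormal vectors spanning (nearly) the top two eigenspaces, so that the absence of $K_{r+1}$ still caps the relevant weighted sum by $1-\tfrac1r$.

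The hard part will be the control of $\lambda_2$. The largest eigenvalue is pinned down by Perron--Frobenius and by Nikiforov's bounds, whereas $\lambda_2$ carries no similarly rigid combinatorial meaning; for instance one cannot merely pass to $A(G)^2$, since the sum of its two largest eigenvalues (which dominates $\lambda_1^2+\lambda_2^2$) already exceeds $m$ for $C_5$ when $r=2$. So the crux is a quantitative stability estimate of the form $\lambda_2^2(G)\le 2m\bigl(1-\tfrac1r\bigr)-\lambda_1^2(G)$: one must show that the defect in Nikiforov's inequality always dominates $\lambda_2^2$. For $r=2$ the vanishing of the third moment performs exactly this bookkeeping; for $r\ge 3$ there is no such identity, and finding a substitute is, I expect, where a genuinely new idea is needed.
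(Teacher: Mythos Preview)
The statement you are addressing is labelled a \emph{Conjecture} in the paper, not a theorem, and the paper gives no proof. Immediately after stating it the authors write that the case $r=2$ was settled by Lin, Ning and Wu using doubly stochastic matrix techniques, while ``the general case $r\ge 3$ remains open.'' So there is no ``paper's own proof'' to compare against.

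Your proposal is therefore not a proof but a research outline, and you seem aware of this: you defer the $r=2$ case to Lin--Ning--Wu, and for $r\ge 3$ you sketch two heuristics (interlacing plus stability, and a two-vector Motzkin--Straus variant) while conceding that ``a genuinely new idea is needed.'' That assessment is accurate. The interlacing route $\lambda_2(G)\le\lambda_1(G-v)$ combined with Corollary~\ref{coroniki} is, as you note, far too weak without a quantitative stability theorem of the form $\lambda_2^2\le 2m(1-1/r)-\lambda_1^2$, and no such theorem currently exists. The two-vector Motzkin--Straus idea is appealing but vague: the Lagrangian bound applies to a single nonnegative vector, and orthogonal eigenvectors for $\lambda_1,\lambda_2$ necessarily have sign changes, so it is unclear what functional you would bound. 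In short, your write-up is a reasonable survey of possible attacks, but it does not close the gap, and neither does the paper --- the conjecture is still open.
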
 

In 2021, the special case $r=2$ 
 of this conjecture was proved by Lin, Ning and Wu \cite{LNW2021} 
 by using tools from doubly stochastic matrix theory, 
and also characterize all families of extremal graphs; 
see Nikiforov \cite{Niki2021} for related results. 
 The general case $r\ge 3$ remains open.  
 In 2022, Li, Sun and Yu \cite{LSY2022} presented the characterization of graphs without short odd cycles.
Later, Li and Peng \cite{LP2022oddcycle} discussed some related spectral problems for graphs with given size 
containing no short odd cycles.  

Recall in \autoref{thmnosal} that 
if $G$ is a $C_3$-free graph on $n$ vertices with $m$ edges, 
then $\lambda (G) \le \sqrt{m}$, 
equality holds if and only if  
$G =K_{p,q}$ with $n-p-q$ isolated vertices.  
In 2009, Nikiforov \cite{Niki2009} proved an interesting result, 
which gives an upper bound on the spectral radius for $C_4$-free graph 
with given number of edges.  
Note that the star graph $K_{m,1}$ has $m$ edges and $m+1$ 
vertices, and it contains no copy of $C_4$. 
Let $S_{m,1}$ be 
the graph obtained from the 
star $K_{m-1,1}$ by adding an edge within its independent set. 
Namely $S_{m,1}=K_{m-1,1}^{+}$. 
Note that the graph $S_{m,1}$ has $m$ edges and  $m$ vertices. 
For $m=4,5,6,7,8$, we can verify that $\lambda (S_{m,1})> \sqrt{m}$; 
for $m=9$, we have $\lambda (S_{9,1})=3$;  
for $m=10,11,\ldots $, we can get  
$\lambda (S_{m,1}) < \sqrt{m}$. For example, 
we can calculate that $\lambda (S_{10,1}) \approx 3.151 < \sqrt{10}\approx 3.162$.  

\begin{theorem}[Nikiforov \cite{Niki2009}] \label{thmniki2009}
Let $m\ge 9$ and $G$ be a graph with $m$ edges. 
If $G$ contains no $C_4$, then 
\[  \lambda (G)\le \sqrt{m},\]  
equality holds if and only if $G$ is a star $K_{1,m}$ or 
$S_{9,1}$ with possibly some isolated vertices. 
\end{theorem}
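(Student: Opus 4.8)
The plan is to bound $\lambda(G)$ from above by a local, edge-counting argument built around a vertex of maximal eigenvector weight, in the spirit of Nosal's theorem and its refinements. Let $\bm{x}$ be the Perron eigenvector of $A(G)$, normalized so that its largest coordinate is $x_u = 1$ for some vertex $u$. Writing $N(u)$ for the neighborhood of $u$, the eigenvalue equation at $u$ gives $\lambda = \sum_{v \in N(u)} x_v \le d(u)$, and iterating once more, $\lambda^2 = \sum_{v\in N(u)} \sum_{w\in N(v)} x_w = \sum_{v\in N(u)} x_v + \sum_{w} (\text{walks of length }2\text{ from }u\text{ through }N(u)\text{ to }w)$. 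The $C_4$-free condition is exactly what controls the length-2 walks: any two vertices of $N(u)$ have at most one common neighbor (else a $C_4$ through $u$), and more usefully, for any vertex $w \ne u$, the number of common neighbors of $u$ and $w$ is at most $2$ when $w$ itself is adjacent to $u$ (to avoid $C_4$) — one needs to track the edges inside $N(u)$ and the edges from $N(u)$ to the second neighborhood carefully, since $C_4$-freeness says precisely that every edge lies in at most... (the relevant codegree bounds). The upshot I would aim for is an inequality of the shape $\lambda^2 \le e(N(u)) + (\text{something}) \le m$, with the slack terms accounting for the discarded isolated vertices and the non-neighbors of $u$.

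Concretely, the cleaner route is Nikiforov's own: bound $\lambda^2$ by double-counting edges incident to $N(u)$. Partition $V(G) = \{u\} \cup N(u) \cup R$ where $R$ is the rest. Then $m \ge e(N(u)) + |N(u)| + e(N(u), R)$ (edges inside $N(u)$, from $u$ to $N(u)$, and from $N(u)$ outward), plus possibly edges inside $R$. Meanwhile $\lambda^2 = \sum_{v\in N(u)} x_v(1 + s_v')$ where $s_v' = \sum_{w \sim v, w\ne u} x_w$, and since all $x_w \le 1$, one gets $\lambda^2 \le \sum_{v\in N(u)} x_v + \sum_{v\in N(u)} (\text{number of neighbors of }v\text{ other than }u) = \lambda + \sum_{v\in N(u)} (d(v)-1)$. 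Now $\sum_{v \in N(u)}(d(v)-1) = 2e(N(u)) + e(N(u),R)$, so $\lambda^2 \le \lambda + 2e(N(u)) + e(N(u),R)$. The $C_4$-free hypothesis must now be used to show $e(N(u))$ is small — indeed $N(u)$ induces a graph with no two independent edges sharing... no, $C_4$-free forces $G[N(u)]$ to be a union of at most... in fact $G[N(u)]$ has no path on three edges? Not quite; $C_4$-free means $G[N(u)]$ is $P_3$-free only if combined with $u$'s edges. The precise statement: since $u$ is adjacent to all of $N(u)$, a path $v_1 v_2 v_3$ inside $N(u)$ together with $u$ gives a $C_4$ ($u v_1 v_2 u$? no — $u v_1 v_2 v_3 u$ is a $C_4$). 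Hence $G[N(u)]$ contains no $P_3$, i.e., it is a matching. So $e(N(u)) \le \lfloor d(u)/2 \rfloor \le d(u)/2$, and $e(N(u), R) \le m - d(u) - e(N(u))$.

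Assembling: $\lambda^2 \le \lambda + 2e(N(u)) + e(N(u),R) \le \lambda + e(N(u)) + m - d(u) \le \lambda + m - d(u)/2$. Since $\lambda \le d(u)$, if $\lambda \le d(u)/2$ we are done trivially (as $\lambda^2 \le m$ is immediate from $\lambda \le \sqrt{m}$ being weaker), so assume $\lambda > d(u)/2$, giving $\lambda^2 < \lambda + m - \lambda = m$... wait, that would prove $\lambda < \sqrt m$ strictly, which is false for small $m$ — so the bound $x_w \le 1$ is being used too crudely and equality analysis requires keeping the terms $x_w$ exactly where the extremal graphs $K_{1,m}$ and $S_{9,1}$ become relevant. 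The genuine hard part, and where $m \ge 9$ enters, is the tightening: one cannot discard $e(N(u),R)$ bounds so loosely; instead one must show that when $\lambda \ge \sqrt m$, the graph is forced to be $K_{1,m}$ or $S_{9,1}$ by analyzing the deficit in each inequality — this is a delicate finite case-check for the sporadic graphs and an asymptotic argument (using $m\ge 9$) that no other configuration survives. I expect essentially all the difficulty to sit in this stability/equality step, with the main-term inequality $\lambda^2 \le m$ being comparatively routine given the matching structure of $G[N(u)]$.
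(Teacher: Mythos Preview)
The survey does not include a proof of this theorem; it only cites Nikiforov's 2009 paper. So there is no in-paper proof to compare against, and I evaluate your outline on its own merits.

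Your central identity is wrong. From $\lambda x_u=\sum_{v\in N(u)}x_v$ and $\lambda x_v=\sum_{w\sim v}x_w$, with $x_u=1$, one gets
\[
\lambda^2=\sum_{v\in N(u)}\lambda x_v=\sum_{v\in N(u)}\sum_{w\sim v}x_w=\sum_{v\in N(u)}(1+s_v')=d(u)+\sum_{v\in N(u)}s_v',
\]
not $\sum_{v\in N(u)}x_v(1+s_v')$. Your version, applied to $K_{1,m}$ (where $e(N(u))=e(N(u),R)=0$), would give $\lambda^2\le\lambda$, which is absurd for $m>1$. With the corrected identity, bounding every $x_w\le 1$ yields $\lambda^2\le d(u)+2e(N(u))+e(N(u),R)$. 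Since $m=d(u)+e(N(u))+e(N(u),R)+e(R)$, this is $\lambda^2\le m+e(N(u))-e(R)$: you are off by the size of the matching in $G[N(u)]$ minus the number of edges entirely outside $N[u]$. There is no reason for $e(N(u))\le e(R)$; on the friendship graph $K_1\vee kK_2$ (which is $C_4$-free with $m=3k$) one has $e(N(u))=k$ and $e(R)=0$, and the bound reads $\lambda^2\le 4k$, not $3k$. Your salvage step also has a sign error: from $\lambda^2\le \lambda+m-d(u)/2$ and $\lambda>d(u)/2$ you replace $-d(u)/2$ by $-\lambda$, but $\lambda>d(u)/2$ means $-d(u)/2>-\lambda$, so the replacement goes the wrong way and the bound becomes useless rather than giving $\lambda^2<m$.

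More tellingly, the theorem is \emph{false} for $m\le 8$ (the paper itself notes $\lambda(S_{m,1})>\sqrt{m}$ for $4\le m\le 8$), so the hypothesis $m\ge 9$ must enter the proof of the inequality $\lambda\le\sqrt m$ itself, not merely the equality characterization as you suggest. Any argument that never invokes $m\ge 9$ before reaching $\lambda^2\le m$ cannot be correct. The structural facts you isolated---that $G[N(u)]$ is a matching and that each $w\notin N[u]$ has at most one neighbour in $N(u)$---are genuine ingredients, but closing the gap requires feeding the eigenvector equation back in for the second neighbourhood (rather than the blanket $x_w\le 1$), and the case analysis producing the sporadic extremal $S_{9,1}$ is intertwined with establishing the inequality, not a separate afterthought.
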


Motivated by  Nosal's Theorem \ref{thmnosal} 
and Nikiforov's Theorems \ref{thmniki2009}, 
Zhai, Lin and Shu \cite{ZLS2021} proved the following 
analogue when the forbidden graph is the bipartite graph $K_{2,r+1}$. 

\begin{theorem}[Zhai--Lin--Shu \cite{ZLS2021}] \label{thmzls}
Let $r\ge 2$ and $G$ be a $K_{2,r+1}$-free graph of size 
$m\ge 16r^2$. 
Then 
\[   \lambda (G) \le \sqrt{m},  \] 
equality holds if and only if $G$ is a complete bipartite graph 
with possibly some isolated vertices. 
\end{theorem}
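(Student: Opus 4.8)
The plan is to argue by contradiction: suppose $G$ is a $K_{2,r+1}$-free graph of size $m \ge 16r^2$ with $\lambda(G) \ge \sqrt{m}$, and that $G$ is not a complete bipartite graph (plus isolated vertices). Write $\lambda = \lambda(G)$ and let $\bm{x}$ be the Perron eigenvector, normalized so that $x_u = 1$ for some vertex $u$ with $x_u = \max_v x_v$. The engine of the argument is the standard eigenvalue-walk identity applied twice: from $\lambda x_u = \sum_{v \sim u} x_v$ and $\lambda^2 x_u = \sum_{v \sim u} \sum_{w \sim v} x_w$, one separates the walks of length two that return to $u$ (there are $d(u)$ of them) from those that do not. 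The $K_{2,r+1}$-free condition is exactly what controls the codegrees: any two vertices have at most $r$ common neighbours, so the number of length-two walks from $u$ to a vertex $w \ne u$ is at most $r$ if $w \not\sim u$, and the edges inside $N(u)$ are also constrained since $N(u)$ induces a graph in which every vertex outside has bounded codegree with $u$.

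First I would derive, from $\lambda^2 \le m$ being \emph{violated} — i.e. from $\lambda^2 x_u = d(u) x_u + \sum_{w} (\text{codeg stuff})$ — an inequality of the shape
\[
  m \le \lambda^2 \le d(u) + 2e(N(u)) + r\!\!\sum_{w \notin N[u]} x_w + (\text{lower-order}),
\]
and then bound $e(N(u))$ and the tail sum in terms of $m$ and $r$. Since $G$ is $K_{2,r+1}$-free, $N(u)$ itself cannot be too dense: a vertex of $N(u)$ has at most $r$ neighbours in common with $u$... more usefully, $e(G) = m$ forces $\sum_{v} x_v^2$ and $\sum_v x_v$ to be comparable, and one shows $\sum_{w \notin N[u]} x_w$ is small relative to $m$ because otherwise those vertices would contribute too many $K_{2,r+1}$'s through $u$. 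Combining these, the hypothesis $\lambda \ge \sqrt m$ squeezes $G$ into a near-extremal configuration: essentially all edges are incident to $N[u]$, $N(u)$ is almost independent, and the non-neighbours of $u$ each send close to $r$ edges into $N(u)$. The threshold $m \ge 16r^2$ is precisely what makes these error terms strictly smaller than the gaps, so the only surviving possibility is that $N(u)$ is independent, every other vertex has all its neighbours in $N(u)$, and then a short stability/optimization argument (comparing $\lambda$ against $\sqrt{e}$ for the candidate near-complete-bipartite graphs, as in the $C_4$ analysis behind Theorem \ref{thmniki2009}) forces $G$ to be complete bipartite, giving $\lambda = \sqrt{m}$ and contradicting strictness — or, in the equality case, pinning down $G = K_{p,q}$ exactly.

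The main obstacle I expect is the bookkeeping in the "stability" phase: going from "$G$ is within a bounded additive error of a complete bipartite graph" to "$G$ \emph{is} complete bipartite" requires showing that any deviation — a missing cross-edge, or an edge inside a part, or a vertex attached to fewer than the full opposite side — strictly decreases $\lambda$ below $\sqrt m$ while respecting $K_{2,r+1}$-freeness and fixing $m$. This is delicate because adding an edge to make the graph complete bipartite both raises $\lambda$ and changes $m$, so one must instead compare against the extremal graph with the \emph{same} number of edges, and carefully track how redistributing edges toward a balanced complete bipartite shape interacts with the forbidden subgraph. Controlling the Perron entries of the low-weight vertices (those outside $N[u]$) quantitatively — say via $x_w \le (d(w) + r\cdot(\text{something}))/\lambda$ — and feeding that back into the walk count is the technical crux; the large lower bound on $m$ is there to absorb exactly this slack.
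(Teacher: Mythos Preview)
The paper you are working from is a survey; it states Theorem~\ref{thmzls} with a citation to \cite{ZLS2021} but does not give a proof. There is therefore no argument in this paper against which to compare your proposal.

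That said, your outline has the right skeleton and is in the spirit of the method Zhai--Lin--Shu actually use: normalize the Perron vector at a maximum-entry vertex $u$, expand $\lambda^{2}x_{u}$ as a sum over length-two walks, split according to whether the far endpoint lies in $N(u)$ or outside $N[u]$, and invoke the $K_{2,r+1}$-free hypothesis to cap codegrees by $r$. The inequality shape $\lambda^{2}\le d(u)+2e(G[N(u)])+r\sum_{w\notin N[u]}x_{w}$ is correct and is indeed the engine of the argument.

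However, what you have written is a plan, not a proof, and one of your heuristics is off. You suggest that $\sum_{w\notin N[u]}x_{w}$ must be small ``because otherwise those vertices would contribute too many $K_{2,r+1}$'s through $u$''; but a vertex $w\notin N[u]$ is not adjacent to $u$, so $w$ and $u$ together never sit on the two-side of a $K_{2,r+1}$ in the way you imply --- the $K_{2,r+1}$-freeness only tells you $|N(u)\cap N(w)|\le r$, which you have already used. The actual control on $\sum_{w\notin N[u]}x_{w}$ comes from feeding the eigenvalue equation $\lambda x_{w}=\sum_{v\sim w}x_{v}$ back in and tying it to the global edge count $m$, not from a direct forbidden-subgraph count. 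Likewise, the stability step you flag as ``the main obstacle'' is genuinely where the work is, and your proposal does not execute it: you would need to show that any single deviation from complete-bipartite structure (an edge inside $N(u)$, or a non-edge between the two sides) forces $\lambda^{2}<m$ strictly, and this requires a careful comparison, not just the observation that errors are ``bounded.'' Until those two steps are actually carried out, the proposal remains a sketch rather than a proof.
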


Moreover, 
we denote by $B_{k}$  the book graph with $k$ pages, i.e., 
the graph obtained from $k$ triangles by sharing a common edge. 
In \cite[Conjecture 5.2]{ZLS2021}, Zhai, Lin and Shu presented a
conjecture, which states that 
if $G$ is a $B_k$-free graph with sufficiently large size $m$, then 
$\lambda (G) \le \sqrt{m}$,  equality holds if and only if $G$ is a complete bipartite graph 
with possibly some isolated vertices. Later, 
Nikiforov \cite{Niki2021} confirmed this conjecture  by the following stronger theorem. 
Let $bk(G)$ denote the booksize of $G$, that is, the maximum 
number of triangles with a common edge in $G$. 

\begin{theorem}[Nikiforov, 2021] \label{thmNiki2021}
If $G$ is a graph with $m$ edges and $\lambda (G)\ge \sqrt{m}$, 
then 
\[  bk(G)> \frac{1}{12}\sqrt[4]{m}, \] 
unless  $G$ is a complete bipartite graph with possibly some isolated vertices. 
\end{theorem}

We end this section with a more general conjecture. 
Let $B_{r,k}$ be the generalized book, 
which is a graph by joining every vertex of a clique $K_r$ 
to every vertex of an independent set $I_k$. 
In the language of the join of graphs, we have  
\[  B_{r,k}=K_r \vee I_k. \]
Clearly, we can see that $B_{2,k}=B_k$ 
and $B_{r,k}$ contains $k$ copies of $K_{r+1}$. 
Hence, our conjecture is more general than Corollary \ref{coroniki}.  

\begin{conjecture} 
Let  $G$ be a 
$B_{r,k}$-free graph with sufficiently large size $m$. 
Then  
\[   \lambda (G)\le \sqrt{\left( 1-\frac{1}{r} \right)2m},  \]  
equality holds if and only if $G$ is a complete bipartite graph for $r=2$, 
and $G$ is a complete regular $r$-partite graph for $r\ge 3$ 
with possibly some isolated vertices. 
\end{conjecture}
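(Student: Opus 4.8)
Proof proposal for the final conjecture ($B_{r,k}$-free graphs with $m$ edges satisfy $\lambda(G) \le \sqrt{2m(1-1/r)}$, with the stated extremal graphs).

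The plan is to combine a stability-type argument with the spectral inequality of Corollary~\ref{coroniki} (equivalently Nikiforov's Theorem~\ref{thmnikiforov}). Since $B_{r,k} = K_r \vee I_k$ contains $K_{r+1}$, a $B_{r,k}$-free graph need not be $K_{r+1}$-free, so Corollary~\ref{coroniki} does not apply directly; the point is that being $B_{r,k}$-free forces every copy of $K_r$ to have a bounded common neighbourhood, so the graph is ``almost'' $K_{r+1}$-free. First I would normalize: let $\bm{x}$ be the Perron eigenvector of $A(G)$ with $\max_i x_i = x_u = 1$ for some vertex $u$, so that $\lambda = \lambda \bm{x}^T\bm{x} / \bm{x}^T\bm{x} \le \bm{x}^T A(G)\bm{x}$ in the relevant direction, and record the eigen-equation $\lambda x_u = \sum_{v \sim u} x_v$. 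The strategy for large $m$ is to show that if $\lambda(G) \ge \sqrt{2m(1-1/r)}$ then $G$ must be essentially a complete (regular, for $r \ge 3$) $r$-partite graph plus isolated vertices, and then to check that such graphs are $B_{r,k}$-free only in the claimed cases while already attaining equality in Corollary~\ref{coroniki}.

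The key steps, in order, would be: (1) Use $\lambda^2 \le 2m(1-1/\omega(G))$ from Theorem~\ref{thmnikiforov} to reduce to the case $\omega(G) \ge r+1$, since if $\omega(G) \le r$ the bound is immediate with equality analysis already supplied by Nikiforov's theorem. (2) Show that a $B_{r,k}$-free graph with $\omega(G) \ge r+1$ and $\lambda(G) \ge \sqrt{2m(1-1/r)}$ cannot have too many edges ``wasted'': count, via the eigenvector, the contribution of edges inside a fixed $K_{r+1}$ and its neighbourhood, and use $B_{r,k}$-freeness to bound $|N(K_r)| \le k-1$ for each $K_r$. This localizes the excess of $\omega$ over $r$ to a bounded-size set. (3) A cleaning/symmetrization step: move weight off the low-$x_v$ vertices in the style of the Motzkin--Straus proof of Theorem~\ref{thmmz} (Steps~1--2 there), showing that the eigenvector weight concentrates on an $r$-partite skeleton; any $K_{r+1}$ would create a book $B_{r,k}$ once $m$ (hence the common neighbourhood forced by regularity of the skeleton) is large enough, a contradiction. (4) Conclude $G$ is $K_{r+1}$-free after removing $o(m)$ edges, apply Corollary~\ref{coroniki} to the cleaned graph, and bootstrap back: a careful edge-count shows no edges could have been removed, so $G$ itself is $K_{r+1}$-free and the equality characterization of Theorem~\ref{thmnikiforov} finishes the proof.

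The main obstacle I expect is Step~(3): turning ``$G$ has a $K_{r+1}$ but is $B_{r,k}$-free'' into a genuine contradiction with $\lambda(G) \ge \sqrt{2m(1-1/r)}$. The difficulty is that $B_{r,k}$-freeness is a weak local condition (only $k-1$ common neighbours of each $K_r$), while the spectral lower bound is global; bridging them requires showing that near-extremal spectral radius forces the $r$-partite skeleton to be nearly balanced and nearly complete, so that any extra $K_{r+1}$ automatically acquires $\ge k$ common neighbours. Quantifying ``nearly balanced/complete'' with explicit error terms good enough to beat the $\sqrt{2m(1-1/r)}$ threshold — i.e. getting the threshold on $m$ (some polynomial in $r$ and $k$) that makes the bootstrap in Step~(4) tight — is where the real work lies, and it is also why the conjecture is stated only for $m$ large. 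A secondary subtlety is the $r=2$ versus $r \ge 3$ dichotomy in the extremal family: for $r=2$ any complete bipartite graph works, but for $r \ge 3$ one must rule out unbalanced complete $r$-partite graphs, which requires the strict part of the equality analysis in Theorem~\ref{thmnikiforov} together with the observation that an unbalanced complete $r$-partite graph still contains $B_{r,k}$ for large part sizes.
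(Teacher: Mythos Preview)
The statement you are attempting to prove is presented in the paper as an open \emph{conjecture}, not a theorem; the paper offers no proof of it. Consequently there is no ``paper's own proof'' to compare your proposal against --- what you have written is an outline of a possible attack on an open problem.

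As an outline, your Steps (1)--(2) are sound and indeed the natural starting point: if $\omega(G)\le r$ the bound follows at once from Theorem~\ref{thmnikiforov}, and $B_{r,k}$-freeness limits the common neighbourhood of each $K_r$ to at most $k-1$ vertices. The real gap, as you yourself flag, is Step (3). ``Eigenvector weight concentrates on an $r$-partite skeleton'' is an intuition, not an argument; the Motzkin--Straus shifting you invoke maximizes a \emph{quadratic form over the simplex}, whereas the Perron eigenvector maximizes the Rayleigh quotient over the \emph{sphere}, and the two optimizations behave quite differently under weight-moving. You would need a genuine spectral stability statement --- roughly, that $\lambda(G)^2 \ge (1-1/r)2m$ with $G$ $B_{r,k}$-free forces $G$ to be $K_{r+1}$-free for $m$ large --- and nothing in the paper or in your outline supplies one. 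The special case $r=2$ (the Zhai--Lin--Shu conjecture on $B_k$-freeness) is already open, so Step (3) is not a technicality.

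There is also a concrete error in your final paragraph. An unbalanced complete $r$-partite graph is $K_{r+1}$-free, hence certainly $B_{r,k}$-free (since $B_{r,k}\supseteq K_{r+1}$); it does \emph{not} contain $B_{r,k}$ regardless of part sizes. The $r=2$ versus $r\ge 3$ dichotomy in the extremal family is not something you have to establish separately --- it is exactly the equality characterization already built into Theorem~\ref{thmnikiforov}. So that ``secondary subtlety'' dissolves once Step (3) is done, but Step (3) remains the whole problem.
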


Note that the Wilf Theorem \ref{thmwilf} and 
Nikiforov Theorem \ref{thmnikiforov} can be viewed as 
the spectral generalization of the weak version of 
Tur\'{a}n Theorem \ref{thmturanweak}. 
In what follows, we shall introduce the spectral generalization  
of the strong version of the Tur\'{a}n Theorem \ref{thmturanstrong}. 
In 2007, Feng, Li and Zhang \cite{FengAML} proved that 
the Tur\'{a}n graph $T_r(n)$ attains the maximum spectral radius 
among all $n$-vertex graphs with given chromatic number.

\begin{theorem}[Feng--Li--Zhang \cite{FengAML}] \label{Feng}
Let $G$ be an $n$-vertex complete $r$-partite graph.   
Then 
\[   \lambda (G)\le \lambda ({T_r(n)}), \]   
equality holds if and only if  $G$ is the Tur\'{a}n graph $T_r(n)$. 
\end{theorem}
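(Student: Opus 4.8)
\emph{Approach and eigenvector structure.} The plan is to prove a \emph{balancing lemma}: if $G$ is complete $r$-partite and two of its parts have sizes differing by at least $2$, then reassigning a single vertex from the larger part to the smaller one strictly increases the adjacency spectral radius. Iterating this operation drives any complete $r$-partite graph to the \emph{unique} balanced one, namely $T_r(n)$, which yields the bound $\lambda(G)\le\lambda(T_r(n))$ and the equality case simultaneously. To set up, write $G$ as the complete $r$-partite graph with parts $V_1,\dots,V_r$ of sizes $n_1,\dots,n_r$; we may assume $r\ge 2$ and each $n_i\ge 1$ (otherwise $G$ is edgeless and there is nothing to prove), so $G$ is connected. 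Any two vertices of a common part have the same neighbourhood, so transposing them is an automorphism of $G$; by uniqueness of the Perron eigenvector (Perron--Frobenius), its entries are then constant on each $V_i$, say equal to $a_i>0$. Writing $\lambda=\lambda(G)$ and $S=\sum_j n_j a_j$, the eigenequation at a vertex of $V_i$ gives $\lambda a_i=S-n_i a_i$, hence $a_i=S/(\lambda+n_i)$; in particular $a_i$ decreases as $n_i$ grows.

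\emph{The balancing lemma.} Relabel so that $n_1\ge n_2+2$, and let $G'$ be the complete $r$-partite graph on the same vertex set obtained by moving one vertex $u\in V_1$ into the class $V_2$. Passing from $G$ to $G'$ deletes the $n_2$ edges joining $u$ to $V_2$ and inserts the $n_1-1$ edges joining $u$ to $V_1\setminus\{u\}$, so using the Perron vector $\bm{x}$ of $G$ as a trial vector in the Rayleigh quotient for $A(G')$,
\[
\lambda(G')\ \ge\ \frac{\bm{x}^{T}A(G')\bm{x}}{\bm{x}^{T}\bm{x}}\ =\ \lambda(G)+\frac{2a_1\bigl[(n_1-1)a_1-n_2a_2\bigr]}{\bm{x}^{T}\bm{x}} .
\]
Substituting $a_i=S/(\lambda+n_i)$, the bracket is positive exactly when $(n_1-n_2-1)\lambda>n_2$; since $n_1-n_2-1\ge 1$ it suffices that $\lambda>n_2$, and this holds because $G$ contains $K_{n_1,n_2}$, whence $\lambda\ge\sqrt{n_1 n_2}>n_2$. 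Therefore $\lambda(G')>\lambda(G)$.

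\emph{Iteration and equality.} If $G$ is not balanced, then two of its parts differ in size by at least $2$, so the balancing lemma applies; each application strictly increases $\lambda$ and strictly decreases the positive integer $\sum_i n_i^2$ (by $2(n_1-n_2-1)\ge 2$), so after finitely many steps we reach a complete $r$-partite graph whose part sizes pairwise differ by at most $1$ — and the only such graph on $n$ vertices is $T_r(n)$. Hence $\lambda(G)\le\lambda(T_r(n))$, with equality precisely when no step was performed, i.e. when $G=T_r(n)$.

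\emph{Main obstacle.} The only substantive point is the strict inequality in the balancing lemma; it rests on knowing that the Perron vector is part-constant (so the local surgery at $u$ evaluates exactly to $2a_1[(n_1-1)a_1-n_2 a_2]$) together with the crude bound $\lambda(G)\ge\sqrt{n_1 n_2}>n_2$. Everything else is bookkeeping. As an alternative to the eigenvector computation, one may use that the vertex partition is equitable, so $\lambda(G)$ is the largest root of $f_G(\lambda):=\sum_{i=1}^{r}\frac{n_i}{\lambda+n_i}=1$, and then check $f_{G'}(\lambda)>f_G(\lambda)$ for all $\lambda>0$ via the strict convexity of $t\mapsto 1/(\lambda+t)$ (Karamata's inequality applied to the majorization $(n_1,n_2)\succ(n_1-1,n_2+1)$); as both functions are strictly decreasing, this again gives $\lambda(G')>\lambda(G)$.
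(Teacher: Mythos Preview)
The paper is a survey and does not include a proof of this theorem; it simply cites the original reference \cite{FengAML} (and later \cite{KNY2015}) for the argument. Your proof is correct: the vertex-shifting argument via the Rayleigh quotient, together with the Perron vector being constant on each part, is exactly the standard way to establish this balancing lemma, and your termination via the strict decrease of $\sum_i n_i^2$ is clean. One tiny cosmetic point: the parenthetical ``otherwise $G$ is edgeless'' only justifies $r\ge 2$; the assumption $n_i\ge 1$ is really a convention on what ``complete $r$-partite'' means (or else reduces to smaller $r$), not a consequence of avoiding the edgeless case --- but this does not affect the argument.
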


Recall that $Q(G)=D(G)+A(G)=L(G)+2A(G)$ and $
L(G) = D(G)-A(G)$ is positive semidefinite, 
which implies $q(G)\ge 2\lambda (G)$. 
The following result on the signless Laplacian spectral radius 
of a complete $r$-partite graph is also well-known 
in the literature.  

\begin{theorem}[Cai--Fan \cite{CF2009,Obo2019,YWS2011}] 
Let $G$ be an $n$-vertex complete $r$-partite graph.   
Then 
\[   q (G)\le q({T_r(n)}),  \]   
equality holds if and only if $r=2$ and $G=K_{s,n-s}$ for some $s$, or $r\ge 3$ and 
$G=T_r(n)$.
\end{theorem}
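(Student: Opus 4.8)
The plan is to reduce the signless‑Laplacian statement to the adjacency statement of \autoref{Feng} and then handle the extra degree term by hand. Write $G=K_{n_1,\dots,n_r}$ with $\sum n_i=n$; replacing $G$ by a complete $r$-partite graph only increases $q(G)$, so we may assume $G$ is complete $r$-partite from the start. The key observation is that for a complete $r$-partite graph, the signless Laplacian $Q(G)$ acts very rigidly: a Perron eigenvector of $Q(G)$ is constant on each part, because vertices in the same part have exactly the same closed neighbourhood and the same degree, so the equitable partition into the $r$ parts is preserved by $Q(G)$. Hence $q(G)$ equals the largest eigenvalue of the $r\times r$ quotient matrix $B=[b_{ij}]$ with $b_{ii}=n-n_i$ (the common degree in part $i$) and $b_{ij}=n_j$ for $i\ne j$. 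This turns the problem into a finite optimisation over the integer composition $(n_1,\dots,n_r)$ of $n$.

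First I would set up this quotient matrix and observe that $B = (n-n_1,\dots,n-n_r)\cdot I + (\text{rank-one-ish correction})$; more precisely, if $\mathbf{n}=(n_1,\dots,n_r)^T$ and $\mathbf{1}$ is the all-ones vector, then $B = \mathrm{diag}(n-n_i) + \mathbf{1}\mathbf{n}^T - \mathrm{diag}(n_i) = (n\,I - 2\,\mathrm{diag}(n_i)) + \mathbf{1}\mathbf{n}^T$. From this one reads off that $\mathbf{n}$ is not generally an eigenvector, but the structure is simple enough to compare $q(G)$ for different compositions. Next I would use a local-switching / symmetrisation argument: suppose two part sizes differ by at least $2$, say $n_1\ge n_2+2$, and consider the composition obtained by moving one vertex from part $1$ to part $2$. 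I would show that this strictly increases the largest eigenvalue of $B$ (for $r\ge 3$), using the Perron eigenvector: if $\mathbf{z}=(z_1,\dots,z_r)$ is the positive Perron vector of $B$ for the original composition, one computes $\mathbf{z}^T(B'-B)\mathbf{z}$ where $B'$ is the new quotient matrix, shows it is positive, and invokes the Rayleigh quotient to conclude $q(G')>q(G)$. This forces the optimum to be balanced, i.e.\ $G=T_r(n)$.

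The case analysis in $r$ is where the statement genuinely bifurcates, and this is the step I expect to be the main obstacle. For $r\ge 3$ the switching inequality should come out strictly positive whenever the part sizes are unbalanced, giving uniqueness of $T_r(n)$. For $r=2$, however, the signless Laplacian of $K_{s,n-s}$ has largest eigenvalue exactly $n$ regardless of $s$ — indeed $Q(K_{s,n-s})\mathbf{1} = n\mathbf{1}$ and $n$ is easily seen to be the top eigenvalue — so every complete bipartite graph on $n$ vertices achieves the same value $q(T_2(n))=n$, which is exactly why the theorem states equality for all $K_{s,n-s}$ when $r=2$. So the proof splits: (i) show $q(K_{s,n-s})=n=q(T_2(n))$ for all $s$, handling $r=2$ completely; (ii) for $r\ge 3$, run the quotient-matrix eigenvalue interlacing / Rayleigh-quotient switching argument to show strict monotonicity toward the balanced composition. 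A cleaner alternative for (ii), if one wants to avoid delicate Perron-vector estimates, is to cite \autoref{Feng} together with the inequality $q(G)\ge 2\lambda(G)$ and a matching upper bound $q(T_r(n)) \le 2\lambda(T_r(n)) + O(1)$ — but this loses the sharp constant, so I would instead push through the direct quotient-matrix computation, where the arithmetic is routine once the equitable-partition reduction is in place.
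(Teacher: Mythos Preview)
The paper does not give its own proof of this theorem---it is quoted as a known result from \cite{CF2009,Obo2019,YWS2011}---so the comparison is with your argument on its own merits.

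Two issues. The minor one: your claim that $Q(K_{s,n-s})\mathbf{1}=n\mathbf{1}$ is false. For a vertex $v$ in the part of size $s$ one has $(Q\mathbf{1})_v=d(v)+d(v)=2(n-s)$, which equals $n$ only when $s=n/2$. The correct eigenvector for the eigenvalue $n$ is the degree vector (equivalently, for bipartite graphs $Q$ and $L$ are similar, so $q(K_{s,n-s})=n$ follows from the Laplacian spectrum); your $2\times 2$ quotient matrix already shows it directly, since its trace is $n$ and its determinant is $0$.

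The serious gap is in the $r\ge 3$ switching step. Your plan is to use the Perron vector $\mathbf{x}$ of $Q(G)$ as a test vector and show $\mathbf{x}^T(Q(G')-Q(G))\mathbf{x}>0$. This fails. Take $G=K_{3,1,1}$, move one vertex from the part of size $3$ to a singleton to form $G'=K_{2,2,1}$. Writing $z_1$ for the common Perron entry on the size-$3$ part and $z_2$ on the singletons, one computes from the quotient eigen-equation that $z_2/z_1=(q-2)/2$ with $q=(7+\sqrt{33})/2$, so $z_2\approx 2.19\,z_1$. The difference $\mathbf{x}^T(Q(G')-Q(G))\mathbf{x}=8z_1^2-(z_1+z_2)^2\approx -2.15\,z_1^2<0$, yet in fact $q(G')=(9+\sqrt{17})/2>q(G)$. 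The phenomenon is structural: in the $Q$-Perron vector, larger parts carry \emph{smaller} entries (from $z_i=S/(q-n+2n_i)$), so rebalancing deletes edges incident to large-entry vertices and adds edges between small-entry vertices, which can decrease the quadratic form $\sum_{uv\in E}(x_u+x_v)^2$. A different mechanism---for instance the majorization approach in \cite{Obo2019}, or a direct analysis of the secular equation $\sum_i n_i/(q-n+2n_i)=1$---is needed; the one-step Rayleigh comparison does not carry the proof.
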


In the meantime, Nikiforov \cite{Niki2007b} published the following result. 

\begin{theorem}[Spectral Tur\'{a}n Theorem \cite{Niki2007b}] \label{thm460}
Let $G$ be a  graph on $n$ vertices not containing $K_{r+1}$ 
as a subgraph. Then 
\[ \lambda (G)\le \lambda ({T_r(n)}),\]  
equality holds if and only if  $G$ 
is the Tur\'{a}n graph $T_r(n)$.
\end{theorem}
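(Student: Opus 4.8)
The plan is to study a spectral-extremal graph and show it must be $T_r(n)$; fix $r$ (the case $r=1$ being trivial, so assume $r\ge2$) and induct on $n$. The base cases $n\le r$ are immediate: there $T_r(n)=K_n$, which is $K_{r+1}$-free and has the largest spectral radius of any $n$-vertex graph, and it is the unique graph attaining it. For $n>r$, let $G$ be a $K_{r+1}$-free graph on $n$ vertices with $\lambda(G)$ as large as possible; it suffices to show $G=T_r(n)$, since this gives both the bound and the uniqueness. First I would check $G$ is connected: a disconnected $G$ has a component $C$ on $n'<n$ vertices with $\lambda(C)=\lambda(G)$, and $C$ is $K_{r+1}$-free, so the induction hypothesis gives $\lambda(C)\le\lambda(T_r(n'))<\lambda(T_r(n))\le\lambda(G)$, a contradiction. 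Hence the Perron eigenvector $\bm{z}=(z_1,\dots,z_n)$ of $A(G)$ is strictly positive, with $(A(G)\bm{z})_v=\lambda(G)\,z_v$ for all $v$.

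The central step is to prove $G$ is complete multipartite, i.e.\ that non-adjacency is transitive. The tool is a \emph{cloning move}: given non-adjacent vertices $a,b$, delete all edges at $b$ and join $b$ to every vertex of $N(a)$; the result is again $K_{r+1}$-free, since any clique through the relocated $b$ becomes a clique of the same size in $G$ once $b$ is replaced by $a$. Two consequences. (i) Any two non-adjacent vertices $a,b$ of $G$ have $z_a=z_b$: if $z_a>z_b$, the move changes $\bm{z}^{T}A\bm{z}$ only through $b$'s edges, by $2z_b\bigl((A\bm{z})_a-(A\bm{z})_b\bigr)=2\lambda z_b(z_a-z_b)>0$, so the Rayleigh quotient and hence $\lambda$ strictly increase, contradicting maximality. (ii) If $G$ were not complete multipartite, there would be a non-transitive triple $v;u,w$ with $uv,vw\notin E(G)$ and $uw\in E(G)$; by (i) this forces $z_u=z_v=z_w=:c>0$, so the ``apex'' $v$ carries (a share of) the maximum Perron weight in the triple — which is exactly what makes a \emph{symmetric} double move profitable: delete all edges at $u$ and at $w$ and join each of $u,w$ to every vertex of $N(v)$. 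This keeps the graph $K_{r+1}$-free (same replacement argument), and a short computation gives that $\bm{z}^{T}A\bm{z}$ increases by $2\lambda\bigl(z_v(z_u+z_w)-z_u^2-z_w^2\bigr)+2z_uz_w=2c^2>0$, again contradicting maximality. Hence $G$ is complete multipartite.

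It remains to identify $G$. Since $\omega(G)\le r$, $G$ is complete $p$-partite for some $p\le r$. If $p<\min\{r,n\}$, then $p<n$, so some part has at least two vertices; splitting that part into two nonempty parts keeps $G$ complete multipartite with $p+1\le r$ parts, hence $K_{r+1}$-free, and adds at least one edge, so (by Perron--Frobenius, as $G$ is connected) it strictly increases $\lambda$ — impossible. Therefore $p=\min\{r,n\}$: if $n\le r$ then $G=K_n=T_r(n)$, and if $n>r$ then $G$ is complete $r$-partite, so Theorem~\ref{Feng} gives $\lambda(G)\le\lambda(T_r(n))$ with equality only for $G=T_r(n)$, forcing $G=T_r(n)$. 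This closes the induction and yields the equality case simultaneously.

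The step I expect to be the main obstacle is the complete-multipartite reduction, and the key point there is conceptual rather than computational: one must notice that spectral maximality forces every pair of non-adjacent vertices to carry equal Perron weight, so that a non-transitive triple is automatically ``balanced'' and the symmetric double-cloning move is \emph{strictly} profitable — absent that observation, a cloning move on an unbalanced triple is only guaranteed non-decreasing, and one is reduced to juggling secondary optimality conditions (such as the edge count) to extract a strict gain. The remaining ingredients — connectedness, the part-splitting step, and the appeal to Theorem~\ref{Feng} — are routine.
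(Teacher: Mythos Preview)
Your argument is correct and complete. It is not, however, the approach the paper sketches as its primary proof: the paper outlines Nikiforov's route via the polynomial inequality
\[
\lambda^{r}\le k_2\lambda^{r-2}+\cdots+(j-1)k_j\lambda^{r-j}+\cdots+(r-1)k_r
\]
together with Erd\H{o}s's bound $k_i(G)\le k_i(T_r(n))$ on clique counts and the characteristic polynomial of $T_r(n)$. What you do instead --- a Zykov-type symmetrization on the spectral-extremal graph to force it to be complete multipartite, followed by the appeal to Theorem~\ref{Feng} --- is exactly the \emph{alternative} proof the paper attributes to Guiduli \cite{Gui1996} and Kang--Nikiforov \cite{KN2014}. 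The paper itself points out the payoff of your route: it avoids characteristic polynomials and the clique-count input entirely, and (since the Rayleigh-quotient cloning step works verbatim for $\max_{\lVert\bm{x}\rVert_p=1}\bm{x}^TA\bm{x}$) it extends to the $p$-spectral radius for all $p>1$. The Nikiforov approach, by contrast, ties the problem to structural clique-counting and yields the sharper polynomial inequality as a byproduct. Your handling of the key step --- observing that (i) already forces equal Perron weights on a non-transitive triple so that the symmetric double clone is \emph{strictly} profitable --- is the clean way to do it and matches how the reduction is carried out in \cite{KN2014}.
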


Theorem \ref{thm460} generalized Feng--Li--Zhang's 
result since every $r$-partite graph is $K_{r+1}$-free, as well as 
enhanced Wilf's Theorem \ref{thmwilf} 
since we can verify  that 
$\lambda (T_r(n)) \le (1- \frac{1}{r})n$, 
where the equality holds if and only if $r$ divides $n$.  
Independently, this result on extremal graph problems 
was also established by Guiduli 
in his PH.D. dissertation \cite[pp. 58--61]{Gui1996} 
dating back to 1996 under the guidance of L\'{a}szl\'{o} Babai. 
In addition, the Wilf Theorem \ref{thmwilf} 
can be viewed as the weak version of the 
following spectral Tur\'{a}n Theorem.

The idea of the proof of Theorem \ref{thm460} 
in \cite{Niki2007b} is based on the characteristic polynomial of 
the Tur\'{a}n graph and  two important theorems. The first theorem is also 
presented early by Nikiforov in \cite[Theorem 3.1]{NikiCPC02}, which states that 
if $G$ is a $K_{r+1}$-free graph, then 
\[ \lambda^{r} \le k_2 \lambda^{r -2} +\cdots + 
(j-1)k_j \lambda^{r -j}
+\cdots +(r -1)k_{r }, \]  
where $\lambda =\lambda (G)$ and 
$k_i =k_i(G)$ are the spectral radius and 
the number of $i$-cliques of $G$ respectively. 
In particular, the case $r=2$ yields 
the Nosal Theorem \ref{thmnosal}, that is, 
$\lambda \le \sqrt{k_2} =\sqrt{e(G)}$ for all $K_3$-free graphs. 
The second used theorem is an old result of Zykov \cite{Zykov1949} and 
independently of Erd\H{o}s \cite{Erd1962}, 
which says that 
if $G$ is a $K_{r+1}$-free graph, then 
\[ k_i(G)\le k_i (T_r(n)). \] 
In other words, the Tur\'{a}n graph attains the maximum 
number of cliques of order $i$ among all $K_{r+1}$-free graphs. 
Clearly, the case $i=2$ reduces to the Tur\'{a}n theorem. 

We remark here that there is different proof \cite{Gui1996} for the spectral 
Tur\'{a}n theorem; see, e.g., \cite{KN2014}. The proof given in 
\cite{Gui1996} reduces $K_{r+1}$-free graphs in Theorem \ref{thm460} 
to complete $r$-partite graphs, then one can show that 
the balanced complete $r$-partite graph attains the maximum 
value of the 
spectral radius; see, e.g.,  \cite[Theorem 2.1]{FengAML} or \cite[Theorem 2]{KNY2015} for more details.  
The advantage of this proof is that we can avoid the use of 
the  characteristic polynomials of the adjacency matrices of graphs, 
and this proof allows us to 
extend the spectral result to the $p$-spectral radius.  
The $p$-spectral radius of a graph $G$ was introduced by Keevash, Lenz and Mubayi in \cite{Kee2014} and was defined as 
\[ \lambda^{(p)} (G):=\max \bigl\{ \bm{x}^TA(G) \bm{x}: 
{ \bm{x} \in \mathbb{R}^n, \lVert  \bm{x} \rVert_p =1 } \bigr\}. \] 
Theorem \ref{thm460} was extended by Kang and Nikiforov \cite{KN2014}
to  the $p$-spectral radius for every $p> 1$.  
Note that the $p$-spectral radius  is just an extremal function on graph, and  
there is no characteristic polynomial corresponding to the $p$-spectral radius. 

In 2022, Li and Peng \cite{LP2022second} 
proved a refinement on the spectral Tur\'{a}n Theorem \ref{thm460}. 
They determined the largest spectral radius for  
non-$r$-partite $K_{r+1}$-free graphs.

It is easy to see that 
Theorem \ref{thm460} implies  the weak version of Tur\'{a}n's Theorem. 
A more natural question one may ask might be
 the following one. 

\medskip 
\noindent 
{\bf Question.} 
{\it Does Theorem \ref{thm460} imply the 
strong  Tur\'{a}n's Theorem? }
\medskip 

This question was also proposed and answered in \cite{Gui1996, Niki2009jctb}.  
Speaking rather broadly, the spectral  bound can imply the 
edge bound of Tur\'{a}n's theorem.  
It is well-known that 
${e(G)} \le \frac{n}{2}\lambda (G)$, 
with equality if and only if  $G$ is  regular. 
Although the Tur\'{a}n graph $T_r(n)$
is sometimes not regular, but it is nearly regular. 
Upon calculation, we can verify that 
\begin{equation} \label{eqdeng}
   {e(T_r(n))} = 
    \left\lfloor \frac{n}{2}\lambda (T_r(n)) \right\rfloor.  
  \end{equation}
Let $n=rs  +t$ where $0\le t<r$ 
and $s= \lfloor n/r \rfloor$. 
Hence the Tur\'{a}n graph $T_r(n)$ 
has $t$ parts of size $s+1$ and $r-t$ parts of size $s$, 
and we can compute that 
\begin{align*} 
 e(T_r(n)) &= 
\frac{1}{2} \bigl( (n-s-1)(s+1)t + (n-s)s(r-t) \bigr) .
\end{align*}
and 
\[  \lambda (T_r(n)) = 
\frac{1}{2}(n-2s -1 + \sqrt{n^2 -4sn -2n+4rs^2 +4rs +1}).  \] 
With the help of the observation in (\ref{eqdeng}), 
the spectral Tur\'{a}n theorem implies that  
\[ e(G) \le 
\left\lfloor \frac{n}{2} \lambda (G) \right\rfloor 
\le \left\lfloor \frac{n}{2} \lambda (T_r(n)) \right\rfloor =e(T_r(n)). \] 
Thus the bound of spectral Tur\'{a}n Theorem \ref{thm460} implies the edge bound of the
classical Tur\'{a}n Theorem \ref{thmturanstrong}. 
To some extent, this interesting implication has shed new lights on 
the study of spectral extremal graph theory. 
In addition, the main difficulty in this relation lies in deducing the implication of case of equality, 
since we can not get $\lambda (G)=\lambda (T_r(n))$ by using 
$\lfloor \frac{n}{2} \lambda (G) \rfloor 
\le \lfloor \frac{n}{2} \lambda (T_r(n)) \rfloor$. 
We would like to thank Vladimir Nikiforov for pointing out to us this observation.

Recall that Abreu and Nikiforov (Theorem \ref{thm212}) 
provided a signless Laplacian spectral version of Wilf's theorem, namely $q(G)\le 2 \left(1-  \frac{1}{r}\right)n$ 
for every $n$-vertex $K_{r+1}$-free graph $G$. 
In 2013, 
He, Jin and Zhang \cite[Theorem 1.3]{HJZ2013} proved further that $q(G) \le q(T_r(n))$. 
Their result is regarded as the signless Laplacian spectral version of the strong Tur\'{a}n theorem.  
Since $q(T_r(n))\le 2 \left(1-  \frac{1}{r}\right)n $, and if $r$ divides $n$, then 
$q(T_r(n))= 2 \left(1- \frac{1}{r }\right)n $, we know that Theorem \ref{thmHJZ} 
improved the result of Abreu and Nikiforov and the result of Wilf as well. 
Moreover, they also illustrated that the signless Laplacian spectral version 
 implies the original Tur\'{a}n theorem. 

\begin{theorem}[He--Jin--Zhang \cite{HJZ2013}] \label{thmHJZ}
Let $G$ be a $K_{r+1}$-free graph of order $n$. Then 
\[   q (G)\le q ({T_r(n)}), \] 
equality holds if and only if $r=2$ and $G=K_{s,n-s}$ for some $s$, or $r\ge 3$ and 
$G=T_r(n)$.
\end{theorem}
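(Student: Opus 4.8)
The plan is to reduce the problem to complete $r$-partite graphs and then, on that restricted class, to identify $T_r(n)$ as the maximizer of $q$. First I would invoke the standard Kelmans-type symmetrization (or the shifting argument of Nikiforov used for the adjacency version in \cite{KN2014}): if $G$ is $K_{r+1}$-free and not complete $r$-partite, one finds two vertices $u,v$ with $uv\notin E(G)$ whose neighborhoods are incomparable, and one rewires $G$ to make $N(u)\subseteq N(v)$ or vice versa (compression), repeating until a complete multipartite graph is obtained. The point is that each such step does not decrease $q(G)$; here one must be a little more careful than in the adjacency case, because $Q(G)=D(G)+A(G)$ also sees degree changes, but a quadratic-form argument using the Perron vector $\bm{x}$ of $Q(G)$ — comparing $\bm{x}^{T}Q(G')\bm{x}$ with $\bm{x}^{T}Q(G)\bm{x}$ and checking the diagonal contributions — still yields $q(G')\ge q(G)$, and strictly so unless $G$ was already complete $r'$-partite for some $r'\le r$. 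Since adding edges between parts only increases $q$, one may assume the number of parts is exactly $r$ (or, when $r=2$, that $G$ is complete bipartite, which is the source of the exceptional family).

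Next I would restrict to complete $r$-partite graphs $G=K_{n_1,\dots,n_r}$ with $\sum n_i=n$ and show $q(G)\le q(T_r(n))$ with equality iff the parts are balanced (for $r\ge 3$), which is precisely the content of the Cai--Fan theorem \cite{CF2009} already quoted in the excerpt; so this half is essentially citable. If one prefers a self-contained argument, the key step is a local exchange: if $n_i\ge n_j+2$, move one vertex from the $i$-th part to the $j$-th part, producing $G'$, and show $q(G')>q(G)$ by evaluating the Rayleigh quotient of $Q(G)$ at a cleverly chosen test vector built from the Perron vector of $Q(G')$ (giving equal weight to vertices in the same part, which is legitimate by symmetry). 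The equitable-partition structure of complete multipartite graphs reduces $q$ to the largest root of an $r\times r$ quotient matrix, so the inequality $q(K_{\dots,n_i,\dots,n_j,\dots})<q(K_{\dots,n_i-1,\dots,n_j+1,\dots})$ becomes a comparison of largest roots of two explicit small matrices, which one checks is monotone as the part sizes are pushed toward balance.

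Finally, for the equality characterization I would separate $r=2$ from $r\ge 3$: when $r=2$ the quotient matrix for $K_{s,n-s}$ is $\begin{pmatrix} n-s & n-s \\ s & s\end{pmatrix}$, whose largest eigenvalue is $n$ for every $s$, so \emph{all} complete bipartite graphs on $n$ vertices are tied — hence the exceptional clause "$r=2$ and $G=K_{s,n-s}$ for some $s$" — whereas for $r\ge 3$ the balancing step is strict, forcing $G=T_r(n)$. I expect the main obstacle to be the first paragraph: verifying that the compression step is genuinely non-decreasing for the \emph{signless Laplacian} rather than for $A(G)$, since the degree term can move in the "wrong" direction for one of the two vertices involved, and one must argue that the net change in $\bm{x}^{T}Q\bm{x}$ is still nonnegative using that $\bm{x}$ is the Perron vector (so $x_u,x_v>0$ and are comparable in the right direction after the rewiring). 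Everything after the reduction to complete $r$-partite graphs is either a direct citation of Cai--Fan or a routine comparison of largest roots of $2\times 2$ and $r\times r$ nonnegative matrices, and the deduction of the classical Tur\'an theorem follows as in the adjacency case via $e(G)\le \frac14 q(G)\,n$ type bounds combined with $e(T_r(n))=\frac{n}{4}q(T_r(n))$ when $r\mid n$.
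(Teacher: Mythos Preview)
The paper is a survey and does not give its own proof of Theorem~\ref{thmHJZ}; it simply cites the result from \cite{HJZ2013} and records the consequence $e(G)\le\lfloor \tfrac{n}{4}q(T_r(n))\rfloor=e(T_r(n))$. So there is no in-paper argument to compare your proposal against.

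That said, your outline is essentially a valid proof, and the ``main obstacle'' you flag in the first step is not really an obstacle. For the signless Laplacian one has the clean quadratic-form identity
\[
\bm{x}^{T}Q(G)\bm{x}\;=\;\sum_{\{i,j\}\in E(G)}(x_i+x_j)^2,
\]
so if $\bm{x}>0$ is the Perron vector of $Q(G)$, $uv\notin E(G)$, $x_u\ge x_v$, and you rewire every edge $vw$ with $w\in N(v)\setminus(N(u)\cup\{u\})$ to $uw$, the change in the quadratic form is
\[
\sum_{w}\bigl[(x_u+x_w)^2-(x_v+x_w)^2\bigr]
=(x_u-x_v)\sum_{w}(x_u+x_v+2x_w)\;\ge\;0,
\]
with strict inequality unless $x_u=x_v$. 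Thus the diagonal (degree) contribution and the off-diagonal contribution move in the \emph{same} direction, not opposite ones, and $q$ does not decrease under compression; iterating lands you in the complete $r'$-partite world with $r'\le r$, and adding missing inter-part edges (which only adds nonnegative terms $(x_i+x_j)^2$) lets you take $r'=r$. After that, you are exactly in the Cai--Fan setting already quoted in the survey (their result is stated just before Theorem~\ref{thm460}), and your treatment of the $r=2$ equality case via the $2\times2$ quotient matrix with constant top eigenvalue $n$ is correct. So the plan goes through with less friction than you anticipated; the only place to be careful is tracking strictness through the compression steps to pin down the equality characterization for $r\ge 3$.
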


As mentioned in \cite[Corollary 2.5]{HJZ2013}, 
this spectral bound of the signless Laplacian radius 
implies the edge bound in strong version of the Tur\'{a}n Theorem \ref{thmturanstrong} since 
\[ e(G) \le 
\left\lfloor \frac{n}{4} q (G) \right\rfloor 
\le \left\lfloor \frac{n}{4} q (T_r(n)) \right\rfloor =e(T_r(n)). \] 
From what has been discussed above, 
we  conclude that the edge version of the Tur\'{a}n theorem 
can be deduced from  
the spectral version of either the adjacency matrix or 
the signless Laplacian matrix.  
Along with the development of the 
spectral graph theory, 
it is commonly recognized that the spectral versions 
of extremal graph problems are more  challenging and interesting.

\subsection{Spectral problem for general graphs}

In this section, 
we shall consider the extremal number $\mathrm{ex}(n,F)$ 
for general graph $F$. 
In 1946, Erd\H{o}s and Stone \cite{ES46} proved 
a celebrated theorem, which states that 

\begin{theorem}[Erd\H{o}s--Stone \cite{ES46}] \label{thm360}
For every  $r\ge 1,t\ge 1$ and $\varepsilon >0$, there is $n_0=n_0(r,t,\varepsilon )$ 
such that every graph $G$ on $n\ge n_0$ vertices 
with $e(G)\ge (1-\frac{1}{r}+\varepsilon )\frac{n^2}{2}$ 
 contains  a copy of a complete $(r+1)$-partite graph 
 $K_{r+1}(t)$, whose every vertex class has $t$ vertices. 
\end{theorem}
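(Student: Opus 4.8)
The statement to prove is the Erdős--Stone theorem: for every $r\ge 1$, $t\ge 1$ and $\varepsilon>0$, every sufficiently large graph $G$ on $n$ vertices with $e(G)\ge (1-\frac{1}{r}+\varepsilon)\frac{n^2}{2}$ contains $K_{r+1}(t)$. I would prove this by induction on $r$. The base case $r=1$ asserts that a graph with density exceeding $\varepsilon$ contains a balanced complete bipartite graph $K_2(t)=K_{t,t}$; this follows from the Kővári--Sós--Turán bound $\mathrm{ex}(n,K_{t,t})=O(n^{2-1/t})=o(n^2)$ mentioned in the introduction, so for $n$ large the edge count forces a copy of $K_{t,t}$.

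\medskip

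\noindent\textbf{Inductive step.} Assume the result for $r-1$. Given $G$ on $n$ vertices with $e(G)\ge(1-\frac1r+\varepsilon)\frac{n^2}{2}$, first pass to a subgraph of large minimum degree: a standard cleaning argument shows $G$ contains a subgraph $G'$ on $n'\ge \delta n$ vertices (for suitable $\delta>0$) with $\delta(G')\ge(1-\frac1r+\frac\varepsilon2)n'$, at the cost of shrinking $n$; since we only need $n$ large, this is harmless. Since $(1-\frac1r+\frac\varepsilon2)>(1-\frac1{r-1})$ is false in general, the cleaner route is: apply the induction hypothesis (with a smaller $t'$ to be chosen) to find inside $G'$ a copy of $K_r(t')$ with $t'$ huge, say $t' = t'(r,t,\varepsilon)$ chosen large at the end. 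Write its $r$ color classes as $V_1,\dots,V_r$. Now look at the common neighborhood: count, over vertices $w$ outside these classes, the number of classes $V_i$ in which $w$ has at least $t$ neighbors. The minimum-degree (or density) condition forces, by an averaging/double-counting argument, that a positive proportion of such $w$ have $\ge t$ neighbors in \emph{every} $V_i$ simultaneously — more precisely, the number of "bad" incidences is small enough that many $w$ are "good for all $i$." Collect a set $W$ of such good vertices with $|W|$ large (linear in $n$).

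\medskip

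\noindent\textbf{Finishing.} Inside $W$, which is large, the induced subgraph $G[W]$ again has high density (inherited from the global density, possibly after another cleaning), so by a further application of the induction hypothesis — or directly by Kővári--Sós--Turán when we only need a small complete bipartite piece — we find inside $W$ a set $W_0$ with $|W_0| = t$ on which... actually the right move is: we want $t$ vertices in $W$ that together have a common neighborhood of size $t$ in each $V_i$. Since each $w\in W$ has $\ge t$ neighbors in each $V_i$ and $|V_i|=t'$ with $t'$ enormous, a pigeonhole over the $\binom{t'}{t}$ possible $t$-subsets of each $V_i$ shows that if $|W|$ is large enough (larger than $t\binom{t'}{t}^r$), some $t$ vertices of $W$ share the \emph{same} $t$-subset $U_i\subseteq V_i$ in their neighborhoods for every $i$. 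These $t$ vertices together with $U_1,\dots,U_r$ form $K_{r+1}(t)$. Choosing the parameters in the order $t'\gg t$, then $|W|\gg t\binom{t'}{t}^r$, then $n\gg$ everything, makes all inequalities valid.

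\medskip

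\noindent\textbf{Main obstacle.} The delicate point is the averaging step that produces many vertices $w$ with $\ge t$ neighbors in \emph{all} $r$ classes at once: a naive bound only gives many $w$ with $\ge t$ neighbors in \emph{some} class. One must quantify carefully how the global edge density $(1-\frac1r+\varepsilon)\frac{n^2}2$ translates, after extracting $K_r(t')$, into a lower bound on $\sum_{w}\sum_i (\text{edges from }w\text{ to }V_i)$ strong enough to beat the trivial contribution of $w$'s that miss some class entirely (each contributing at most $(r-1)t' + (t-1)$). Getting the constants to line up — and tracking how much density survives each cleaning/pigeonhole — is the real work; everything else is bookkeeping. (Alternatively one can invoke the Szemerédi regularity lemma for a slicker but less elementary proof; I would prefer the elementary induction above.)
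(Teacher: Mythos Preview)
The paper is a survey and does not prove this theorem; it simply quotes the Erd\H{o}s--Stone theorem from \cite{ES46} and uses it as a black box (e.g.\ in the proof of Theorem~\ref{thm361}). So there is no ``paper's proof'' to compare against; your outline must stand on its own.

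Your outline follows the classical inductive proof (essentially the treatment in Bollob\'as) and is correct in its architecture: induction on $r$, pass to large minimum degree, find a $K_r(t')$ with $t'$ huge by induction, show many outside vertices are ``good'' for every class, then pigeonhole over the $\binom{t'}{t}^r$ possible $t$-subsets to finish. Two remarks:

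\begin{itemize}
\item You write that ``$(1-\tfrac{1}{r}+\tfrac{\varepsilon}{2})>(1-\tfrac{1}{r-1})$ is false in general''. This is backwards: for every $r\ge 2$ one has $1-\tfrac{1}{r} - (1-\tfrac{1}{r-1}) = \tfrac{1}{r(r-1)}>0$, so the inequality \emph{always} holds. This is precisely why the induction hypothesis applies to furnish $K_r(t')$.
\item Your ``main obstacle'' is the right one, and the resolution you gesture at does work, but the counting should be done from the $V_i$ side, not from the outside vertices. Concretely: each $v\in V_1\cup\cdots\cup V_r$ has at most $(\tfrac{1}{r}-\tfrac{\varepsilon}{2})n$ non-neighbours, so the total number of non-edges between $V_1\cup\cdots\cup V_r$ and the outside is at most $rt'\cdot(\tfrac{1}{r}-\tfrac{\varepsilon}{2})n$. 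A ``bad'' outside vertex contributes at least $t'-t$ such non-edges, whence the number of bad vertices is at most $\tfrac{t'(1-r\varepsilon/2)}{t'-t}\,n<(1-\tfrac{r\varepsilon}{4})n$ for $t'$ large, leaving $\ge \tfrac{r\varepsilon}{4}n - rt'$ good vertices. Since $t'$ is fixed and $n$ is large, this is far more than $t\binom{t'}{t}^r$, and the pigeonhole finishes as you say.
\end{itemize}

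With these two corrections your sketch becomes a complete proof. The ``$G[W]$ has high density'' detour in your Finishing paragraph is unnecessary and should be deleted; the pigeonhole argument you switch to is the right one.
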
 

This result is sharp in the following qualitative sense. 
Let $T_r(n)$ denote the complete $r$-partite graph of order $n$ 
whose vertex classes have the number of vertices as equal as possible. 
Clearly, $T_r(n)$ contains no complete subgraph of order $r+1$ 
and $e(T_r(n))=  (1-\frac{1}{r}) \frac{n^2}{2}+O(n)$. 
From this observation, we can get a lower bound on 
$\mathrm{ex}(n,K_{r+1}(t))$. 
Hence Theorem \ref{thm360} yields 
the following two corollaries. 

\begin{corollary}
For every integers $r\ge 1$ and $t\ge 1$, then 
\begin{equation*}
\mathrm{ex}(n,K_{r+1}(t)) =\left( 1-\frac{1}{r}+ o(1) \right) \frac{n^2}{2} ,
\end{equation*}
where $o(1)  \to 0$ whenever $n\to \infty$. 
\end{corollary}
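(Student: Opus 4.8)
The plan is to derive this corollary directly from the Erd\H{o}s--Stone Theorem \ref{thm360} together with the explicit construction provided by the Tur\'{a}n graph. The statement asserts a matching upper and lower bound, so I would treat the two inequalities separately.

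\medskip

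For the upper bound, I would argue by contradiction or, more cleanly, directly from Theorem \ref{thm360}. Fix $r\ge 1$ and $t\ge 1$, and let $\varepsilon>0$ be arbitrary. Theorem \ref{thm360} furnishes an $n_0=n_0(r,t,\varepsilon)$ such that every graph $G$ on $n\ge n_0$ vertices with $e(G)\ge \bigl(1-\tfrac{1}{r}+\varepsilon\bigr)\tfrac{n^2}{2}$ contains a copy of $K_{r+1}(t)$. Contrapositively, if $G$ is $K_{r+1}(t)$-free on $n\ge n_0$ vertices, then $e(G)<\bigl(1-\tfrac{1}{r}+\varepsilon\bigr)\tfrac{n^2}{2}$, hence $\mathrm{ex}(n,K_{r+1}(t))\le \bigl(1-\tfrac{1}{r}+\varepsilon\bigr)\tfrac{n^2}{2}$ for all such $n$. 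Since $\varepsilon>0$ was arbitrary, this gives $\limsup_{n\to\infty}\frac{\mathrm{ex}(n,K_{r+1}(t))}{n^2/2}\le 1-\tfrac{1}{r}$.

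\medskip

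For the lower bound, I would use the Tur\'{a}n graph $T_r(n)$ as the witnessing construction. Since $K_{r+1}(t)$ contains $K_{r+1}$ as a subgraph (take one vertex from each class), any $K_{r+1}$-free graph is automatically $K_{r+1}(t)$-free; in particular $T_r(n)$, having no complete subgraph on $r+1$ vertices, contains no copy of $K_{r+1}(t)$. Therefore $\mathrm{ex}(n,K_{r+1}(t))\ge e(T_r(n))$, and from the formula recorded earlier in the excerpt one has $e(T_r(n))=\bigl(1-\tfrac{1}{r}\bigr)\tfrac{n^2}{2}+O(n)$, so $\liminf_{n\to\infty}\frac{\mathrm{ex}(n,K_{r+1}(t))}{n^2/2}\ge 1-\tfrac{1}{r}$. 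Combining the two bounds yields $\mathrm{ex}(n,K_{r+1}(t))=\bigl(1-\tfrac{1}{r}+o(1)\bigr)\tfrac{n^2}{2}$ as claimed.

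\medskip

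There is essentially no serious obstacle here: all the analytic content is packaged inside Theorem \ref{thm360}, whose proof is the genuinely hard part (and is assumed). The only points requiring a word of care are the two elementary observations that $K_{r+1}\subseteq K_{r+1}(t)$ (so the Tur\'{a}n graph is a valid lower-bound construction) and that the $O(n)$ error term in $e(T_r(n))$ is absorbed into $o(1)\cdot\tfrac{n^2}{2}$; both are routine. One should also note that the quantifier order in Theorem \ref{thm360} is exactly what is needed --- $n_0$ may depend on $\varepsilon$, but since we send $n\to\infty$ before letting $\varepsilon\to 0$, this causes no difficulty.
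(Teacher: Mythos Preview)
Your proposal is correct and follows essentially the same approach as the paper: the upper bound is read off from the contrapositive of Theorem~\ref{thm360}, and the lower bound comes from the Tur\'{a}n graph $T_r(n)$, which is $K_{r+1}$-free and hence $K_{r+1}(t)$-free. The paper presents this argument in the paragraph preceding the corollary rather than as a formal proof, but the content is identical to what you wrote.
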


 \begin{corollary}[Equivalent]
 For every integer $t\ge 1$, we have 
 \[  \lim\limits_{n\to \infty} 
 \frac{\mathrm{ex}(n,K_{r+1}(t)) }{
 {n \choose 2} } = 1- \frac{1}{r}.   \] 
 \end{corollary}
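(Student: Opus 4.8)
The plan is to derive this "Equivalent" corollary directly from the Erd\H{o}s--Stone Theorem~\ref{thm360} together with the trivial lower bound coming from the Tur\'an graph, rewriting everything in terms of the binomial coefficient $\binom{n}{2}$ instead of $\tfrac{n^2}{2}$. The two forms are interchangeable because $\binom{n}{2} = \tfrac{n^2}{2} - \tfrac{n}{2} = \tfrac{n^2}{2}(1 - \tfrac{1}{n})$, so dividing by either quantity and letting $n\to\infty$ produces the same limit; I would state this normalization once at the start.

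First I would establish the lower bound. The Tur\'an graph $T_r(n)$ is complete $r$-partite, hence contains no $K_{r+1}$ and a fortiori no $K_{r+1}(t)$ for any $t\ge 1$ (since $K_{r+1}(t) \supseteq K_{r+1}$). Therefore $\mathrm{ex}(n, K_{r+1}(t)) \ge e(T_r(n)) = (1 - \tfrac{1}{r})\tfrac{n^2}{2} + O(n)$, using the formula $e(T_r(n)) = (1-\tfrac{1}{r})\tfrac{n^2-s^2}{2} + \binom{s}{2}$ recorded in the excerpt. Dividing by $\binom{n}{2}$ gives $\liminf_{n\to\infty} \mathrm{ex}(n,K_{r+1}(t))/\binom{n}{2} \ge 1 - \tfrac{1}{r}$.

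Next I would establish the upper bound via Theorem~\ref{thm360}. Fix $\varepsilon > 0$. By Erd\H{o}s--Stone there is $n_0(r,t,\varepsilon)$ such that every $n$-vertex graph with at least $(1 - \tfrac{1}{r} + \varepsilon)\tfrac{n^2}{2}$ edges contains $K_{r+1}(t)$; hence any $K_{r+1}(t)$-free graph on $n \ge n_0$ vertices has fewer than $(1 - \tfrac{1}{r} + \varepsilon)\tfrac{n^2}{2}$ edges, i.e. $\mathrm{ex}(n, K_{r+1}(t)) \le (1 - \tfrac{1}{r} + \varepsilon)\tfrac{n^2}{2}$. Dividing by $\binom{n}{2}$ and using $\tfrac{n^2}{2}/\binom{n}{2} = n/(n-1) \to 1$ yields $\limsup_{n\to\infty} \mathrm{ex}(n,K_{r+1}(t))/\binom{n}{2} \le 1 - \tfrac{1}{r} + \varepsilon$. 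Since $\varepsilon > 0$ was arbitrary, the $\limsup$ is at most $1 - \tfrac{1}{r}$. Combining with the lower bound, the limit exists and equals $1 - \tfrac{1}{r}$.

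There is no real obstacle here — the statement is a routine reformulation — but the one point requiring a small amount of care is the interchange of the two normalizing factors $\tfrac{n^2}{2}$ and $\binom{n}{2}$ inside the limit, and making sure the $O(n)$ error term in $e(T_r(n))$ genuinely vanishes after dividing by $\binom{n}{2} = \Theta(n^2)$; both are immediate once written out. One could equally phrase the whole argument as: the preceding corollary says $\mathrm{ex}(n,K_{r+1}(t)) = (1 - \tfrac1r + o(1))\tfrac{n^2}{2}$, and replacing $\tfrac{n^2}{2}$ by $\binom n2$ changes the expression only by a multiplicative factor $1 + o(1)$, so the stated limit follows.
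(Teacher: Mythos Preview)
Your proposal is correct. The paper does not give a separate proof of this corollary at all: it simply labels it ``Equivalent'' to the immediately preceding corollary $\mathrm{ex}(n,K_{r+1}(t)) = \bigl(1-\tfrac{1}{r}+o(1)\bigr)\tfrac{n^2}{2}$, and your final paragraph captures exactly this one-line reduction (replace $\tfrac{n^2}{2}$ by $\binom{n}{2}$ at the cost of a $1+o(1)$ factor).
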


A proper vertex-coloring of a graph is a coloring of its vertex
set for which each color class forms an independent set.
The chromatic number of a graph $G$,  denoted by $\chi(G)$, 
is the minimum number of colors needed in a proper coloring of $G$. 
A graph is $r$-partite if it has chromatic number at most $r$. 
 In other words, the {\it vertex-chromatic number} $\chi (G)$ of  $G$ 
is the minimum integer $s\in \mathbb{N}^*$ such that there exists a coloring of 
$V(G)$ with $s$ colors and the adjacent vertices have different colors. 
For example, we can see that $\chi (K_{r+1}) =r+1$ and $\chi (K_{s,t})=2$. 
In fact, a graph $H$ has $\chi (H)=r+1$ 
if and only if $H$ is an $(r+1)$-partite graph.

This powerful 
result of Erd\H{o}s and Stone has many important 
applications in extremal graph theory. 
Consequently, the Erd\H{o}s--Stone theorem 
implies the following theorem since if $F$ is a graph with 
chromatic number $\chi (F)=r+1$, then $F$ is contained in 
$K_{r+1}(t)$ for an appropriate integer $t$.  
By applying the monotonicity of 
the Tur\'{a}n function, we get 
$\mathrm{ex}(n,F) \le \mathrm{ex}(n,K_{r+1}(t))$. 
This theorem was established by 
Erd\H{o}s and Simonovits in \cite{ES66}.

\begin{theorem}[Erd\H{o}s--Simonovits \cite{ES66}] \label{thm361}
For every graph $H$, we have 
\begin{equation*}
 \mathrm{ex}(n,H)=\left( 1-\frac{1}{\chi (H)-1}\right) \frac{n^2}{2} + o(n^2).  
\end{equation*}
\end{theorem}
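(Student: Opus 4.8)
The plan is to deduce the Erdős--Simonovits theorem directly from the Erdős--Stone theorem (Theorem~\ref{thm360}) together with the trivial lower bound coming from the Turán graph, exactly along the lines sketched in the paragraph preceding the statement. Write $r=\chi(H)-1$, so that $H$ is $(r+1)$-partite but not $r$-partite. The two halves of the argument are an upper bound and a matching lower bound, each accurate up to an $o(n^2)$ error term.

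For the upper bound, the key observation is that a graph with chromatic number $r+1$ embeds into the blown-up complete graph $K_{r+1}(t)$ as soon as $t\ge |V(H)|$: take a proper $(r+1)$-coloring of $H$, and map each color class injectively into the corresponding vertex class of $K_{r+1}(t)$; every edge of $H$ joins two different color classes, hence lands on an edge of $K_{r+1}(t)$. Consequently $H\subseteq K_{r+1}(t)$, and by monotonicity of the Turán function, $\mathrm{ex}(n,H)\le \mathrm{ex}(n,K_{r+1}(t))$. Now fix $\varepsilon>0$; Theorem~\ref{thm360} gives an $n_0$ such that for $n\ge n_0$ every $n$-vertex graph with more than $(1-\tfrac1r+\varepsilon)\tfrac{n^2}{2}$ edges contains $K_{r+1}(t)$. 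Hence for $n\ge n_0$,
\[
\mathrm{ex}(n,H)\le \mathrm{ex}(n,K_{r+1}(t)) \le \left(1-\frac1r+\varepsilon\right)\frac{n^2}{2}.
\]
Since $\varepsilon>0$ was arbitrary, $\mathrm{ex}(n,H)\le \left(1-\tfrac1r\right)\tfrac{n^2}{2}+o(n^2)$.

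For the lower bound, the Turán graph $T_r(n)$ is $r$-colorable, so it does not contain any graph of chromatic number $r+1$; in particular $H\nsubseteq T_r(n)$. Therefore $\mathrm{ex}(n,H)\ge e(T_r(n))$, and by the computation already recorded in the excerpt, $e(T_r(n)) = \left(1-\tfrac1r\right)\tfrac{n^2}{2}+O(n)$. Combining this with the upper bound yields
\[
\mathrm{ex}(n,H) = \left(1-\frac{1}{\chi(H)-1}\right)\frac{n^2}{2} + o(n^2),
\]
which is the claimed statement. The only point that requires genuine input is the upper bound, and there all the work is done by the Erdős--Stone theorem; the remaining steps (the embedding $H\subseteq K_{r+1}(t)$, monotonicity of $\mathrm{ex}$, and the edge count of $T_r(n)$) are elementary. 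So the ``main obstacle'' is really just invoking Theorem~\ref{thm360} correctly — one must be slightly careful that the error term is $o(n^2)$ rather than $O(n)$, since the Erdős--Stone bound only controls things up to an arbitrarily small multiplicative $\varepsilon$ and does not pin down the lower-order term; this is why Theorem~\ref{thm361} is stated with an $o(n^2)$ error and not with the exact $e(T_r(n))$.
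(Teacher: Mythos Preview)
Your proof is correct and follows essentially the same approach as the paper: both deduce the upper bound from the Erd\H{o}s--Stone theorem via the embedding $H\subseteq K_{r+1}(t)$ and monotonicity of $\mathrm{ex}$, and both obtain the lower bound from the fact that $T_r(n)$ is $H$-free. Your added remark about why the error term is $o(n^2)$ rather than something sharper is a helpful clarification not spelled out in the paper.
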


Equivalently, we can write the above result as the limitation. 
\[ \boxed{ \lim_{n\to \infty} 
 \frac{\mathrm{ex}(n,H) }{  {n \choose 2}  } = 
\frac{1}{\chi (H) -1}.}\]

Although this result was formally established by Erd\H{o}s and 
Simonovits, we always call it the 
Erd\H{o}s--Stone--Simonovits theorem 
for the sake of their equivalence relation.  
We mention here that  Erd\H{o}s, Stone 
and Simonovits never wrote a paper 
together. First Erd\H{o}s and Stone solved it for $H$ a complete multipartite graph, and then Erd\H{o}s and
Simonovits  proved it for general $H$ in this way.

\begin{proof}
First of all, we denote $\chi (H)=r+1$.  
It is easy to see that the $n$-vertex 
complete $r$-partite 
Tur\'{a}n graph $T_r (n)$ contains no copy of 
$H$ otherwise $H$ is $r$-partite, 
so $H$ can be colored by $r$ colors, 
a contradiction. Thus, we can  get 
\[ \mathrm{ex}(n,H) \ge t_{r}(n)\ge \left(1-\frac{1}{r} \right) 
\frac{n^2}{2} - \frac{r}{8} =
 \left(1-\frac{1}{r} - o(1) \right)\frac{n^2}{2}  . \] 
 
For the upper bound,  
 since $\chi (H)=r+1$, 
 there exists an integer $t$ such that 
 $H$ is a subgraph of $K_{r+1}(t)$. 
By the Erd\H{o}s--Stone Theorem \ref{thm360}, we can get 
\[ \mathrm{ex}(n,H) \le \mathrm{ex}(n,K_{r+1}(t) ) 
\le t_r(n) + o(n^2).  \]
Hence, we have proved that for some fixed integer $v$, 
\[ t_r(n) \le 
\mathrm{ex}(n,H) \le \mathrm{ex}(n,K_{r+1}(t)) 
\le t_r(n) + o(n^2).  \]
This completes the proof. 
\end{proof}

The Erd\H{o}s--Stone--Simonovits theorem 
implies straightforward the following extremal number 
for graphs corresponding to 
the five regular polyhedrons.

\begin{itemize}

\item 
The regular tetrahedron 
forms the graph $K_4$, we have 
$\mathrm{ex}(n,K_4)=e(T_3(n))=\lfloor n^2/3\rfloor$. 

\item 
Let $Q_8$ be the cube graph on $8$ vertices. Then $\chi (Q_8)=2$ and 
$\mathrm{ex}(n,Q_8)=o(n^2)$. 

\item 
Let $O_6$ be the octahedron graph on $6$ vertices.  It is easy to see that 
$O_6$ is a complete $3$-partite graph with each part of size $2$, i.e., $O_6=K_{2,2,2}$. 
Then $\chi (O_6)=3$ and 
\[  \mathrm{ex}(n,O_6 )=\frac{n^2}{4}+o(n^2). \]  

\item 
Let $D_{20}$ be the dodecahedron graph on $20$ vertices. Then $\chi (D_{20})=3$ and 
\[  \mathrm{ex}(n,D_{20} )=\frac{n^2}{4} +o(n^2). \] 

\item 
Let $I_{12}$ be the icosahedron graph on $12$ vertices. Then $\chi (I_{12})=4$ and 
\[ \mathrm{ex}(n,I_{12} )=\frac{n^2}{3} +o(n^2). \] 
\end{itemize}

We remark here that 
although the Erd\H{o}s--Stone--Simonovits theorem 
provides good asymptotic estimates for the extremal numbers of non-bipartite graphs. 
 However, for bipartite graphs, where $\chi (F)=2$, it only gives the bound 
 $\mathrm{ex}(n,F)=o(n^2)$. 
 Although there have been numerous attempts of finding better bounds 
 on $\mathrm{ex}(n,F)$ for various bipartite graphs $F$, 
 we know very little in this case; see \cite{FS13} for a survey.

\medskip 
In 2009, Nikiforov \cite{Nikicpc2009} proved a spectral generalization 
of 
the Erd\H{o}s--Stone--Simonovits theorem, 
as we know that the Rayleigh formula implies 
$\lambda (G) \ge 2e(G)/n$.

\begin{theorem}[Nikiforov \cite{Nikicpc2009}] \label{thmniki177}
Let $n\ge r\ge 3$ be integers with $(c/r^r)^r\ln n\ge 1$. 
If $G$ is a graph on $n$ vertices with 
\[  \lambda (G)\ge \left( 1-\frac{1}{r-1}+c \right)n,\]   
then $G$ contains a copy of $K_{r}(s,\ldots ,s,t)$ with $s\ge  (c/r^r)^r\ln n $ 
and $t>n^{1-c^{r-1}}$. 
\end{theorem}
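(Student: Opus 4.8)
The plan is to mimic the classical Erdős–Stone argument but replace ``many edges'' by ``large spectral radius'' at every step, extracting structure from the Perron eigenvector. First I would recall the two inputs advertised in the excerpt. From \cite{NikiLMS09} one has a spectral ``book'' or ``joint neighbourhood'' bound: if $\lambda(G)\ge (1-\tfrac{1}{r-1}+c)n$, then $G$ contains a large collection of vertices whose pairwise common neighbourhoods are still of linear size; more precisely, a large clique-like kernel each of whose pairs has $\Omega(n)$ common neighbours, or a $K_r$ with linearly many common neighbours. From \cite{BNJCTB07} one has the quantitative relationship between the spectral radius and the number of copies of small complete subgraphs, which upgrades ``$\lambda$ large'' to ``$K_r$ appears, and in fact appears with a robust extension set.'' So Step~1 is: use the $\lambda(G)\ge(1-\tfrac1{r-1}+c)n$ hypothesis, via these results, to locate one copy of $K_r$ together with a set $W$ of common neighbours with $|W|\ge \delta n$ for some $\delta=\delta(c,r)>0$, in fact $\delta$ of order a fixed power of $c$.

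Step~2 is the iterative ``growing the blow-up'' phase, exactly as in Erdős–Stone/Kővári–Sós–Turán. Having a $K_r$ with a linear common-neighbourhood $W$, I would restrict attention to $G[W]$: a standard averaging/convexity argument (counting stars, or applying Kővári–Sós–Turán inside $W$) shows that within $W$ one can find $s$ vertices in each of the $r$ current parts whose common neighbourhood inside $W$ is still of size $\ge |W|^{1-O(1)}$, and polynomially large. Repeating $r$ times and bookkeeping the exponents gives a complete $r$-partite subgraph $K_r(s,\dots,s,t)$ with $s$ as large as $\Omega((\log n)/r^{O(r)})$ — the logarithm appears because at each of the $O(r)$ rounds the surviving common neighbourhood shrinks from size $N$ to roughly $N^{1-1/s}\approx N/e$ unless $s\gtrsim \log N$, forcing $s=\Theta(\log n)$ — and the last part of size $t>n^{1-c^{r-1}}$, the loss in the exponent being the accumulated $1-O(1/s)$ factors over the rounds. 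The condition $(c/r^r)^r\ln n\ge 1$ is precisely what is needed to guarantee $s\ge 1$, i.e.\ that the iteration does not die out.

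I expect the genuine obstacle to be Step~1: getting the \emph{initial} $K_r$ with a common neighbourhood of size \emph{linear} in $n$ (not merely $\omega(1)$) from the spectral hypothesis, with the correct dependence $\delta\sim(c/r^r)^{O(r)}$. This is where the two borrowed theorems from \cite{BNJCTB07,NikiLMS09} do the heavy lifting, and the non-trivial point is tracking constants through them so that the final $s\ge(c/r^r)^r\ln n$ emerges; the excerpt itself flags that ``the main idea of the proof of Nikiforov is based on two results from \cite{BNJCTB07} and \cite{NikiLMS09}.'' Step~2, by contrast, is the routine Kővári–Sós–Turán bootstrapping and amounts to careful but standard exponent arithmetic. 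For the lower-bound sharpness (that one cannot push $t$ past $n^{1-c^{O(r)}}$ or $s$ past $O(\log n)$), the Turán graph $T_{r-1}(n)$ together with a random-graph blow-up in one part is the natural extremal construction to cite, but as the statement here is one-directional I would present only the existence direction in detail.
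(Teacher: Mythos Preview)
The survey itself does not prove Theorem~\ref{thmniki177}; it only records that Nikiforov's argument combines a result from \cite{BNJCTB07} with one from \cite{NikiLMS09}. Your plan has the right general shape but misidentifies what those two inputs actually say, and so structures the argument differently from the intended one.

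The result in \cite{NikiLMS09} is not a ``spectral book/joint neighbourhood'' bound; it is purely combinatorial and states that a graph with \emph{many} copies of $K_r$ --- a count of order $c\,n^r$ --- already contains $K_r(s,\dots,s,t)$ with the quantitative parameters of the theorem. The input from \cite{BNJCTB07} is the spectral-to-clique-count step: if $\lambda(G)\ge(1-\tfrac{1}{r-1}+c)n$ then $G$ has $\Omega_c(n^r)$ copies of $K_r$. Nikiforov's proof is therefore the two-line composition
\[
\text{large }\lambda \;\xrightarrow{\text{\cite{BNJCTB07}}}\; \text{many }K_r\text{'s} \;\xrightarrow{\text{\cite{NikiLMS09}}}\; K_r(s,\dots,s,t).
\]
There is no need to isolate a single $K_r$ with a linear common neighbourhood, and no explicit K\H{o}v\'ari--S\'os--Tur\'an bootstrapping in the write-up; all of that iteration is packaged inside \cite{NikiLMS09}.

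Your Step~1 (one $K_r$ with a common neighbourhood of linear size from the spectral hypothesis) is closer to a ``joints'' statement in the spirit of \cite{BN2008,BN2011}, and is not what either of the two cited lemmas provides; you would need a different citation or a separate argument there. Your Step~2 is essentially a re-derivation of \cite{NikiLMS09} from scratch. So the route you outline could in principle be made to work, but it is more roundabout than Nikiforov's, and as written it leans on the two references for claims they do not contain.
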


\noindent 
{\bf Remark. }  
The main idea of the proof of Nikiforov   
is based on two elegant results from \cite{BNJCTB07} 
and \cite{NikiLMS09}.  
Moreover, Nikiforov also gave an explicit bound on the size of 
each vertex part of the complete $r$-partite subgraph 
in terms of the adjacency spectral radius.  
Furthermore, Theorem \ref{thmniki177} is very precise and does not use the Regularity Lemma. 
Its edge version is a lot stronger than the Erdos--Stone and the Erdos--Simonovits theorems. Thus, it can be applied to obtain extremal results for concrete graphs, in particular, graphs whose order is not fixed and 
grows with $n$.

Analogous to the Tur\'{a}n function, 
we define the spectral Tur\'{a}n function as 
$\mathrm{ex}_{\lambda}(n,F)$, 
which is the maximum spectral radius in 
an $n$-vertex graph without subgraph isomorphic to $F$. 
It is important and meaningful to 
determine the function 
$\mathrm{ex}_{\lambda}(n,F)$ for various graphs $F$.  
From Theorem \ref{thmniki177}, one can easily get the following weak corollary, 
which gives the asymptotic spectral extremal number for 
general graphs. This corollary was also proved by 
Guiduli  in his PH.D. thesis \cite{Gui1996} in 1996.  

\begin{corollary} \label{coro198}
For every integers $r\ge 2$ and $t\ge 1$, we have 
\[ \mathrm{ex}_{\lambda}(n,K_{r}(t))= 
\left( 1-\frac{1}{r-1} \right) n + o(n).\]  
Consequently, 
if $F$ is a graph with chromatic number $\chi (F)$, 
then 
\[ \mathrm{ex}_{\lambda} (n,F) = 
\left( 1-\frac{1}{\chi (F)-1} \right) n + o(n).\]  
\end{corollary}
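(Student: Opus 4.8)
The plan is to deduce Corollary~\ref{coro198} directly from Nikiforov's Theorem~\ref{thmniki177}, exactly in parallel with how the Erd\H{o}s--Stone--Simonovits theorem was derived from the Erd\H{o}s--Stone theorem earlier. First I would establish the lower bound. Since the Tur\'an graph $T_{r-1}(n)$ contains no copy of $K_r(t)$ (its clique number is $r-1<r$), we have $\mathrm{ex}_{\lambda}(n,K_r(t))\ge \lambda(T_{r-1}(n))$. Using the elementary fact that $\lambda(G)\ge 2e(G)/n$ together with $e(T_{r-1}(n))=(1-\frac{1}{r-1})\frac{n^2}{2}+O(n)$ (this is the formula recorded in the first subsection), one gets $\lambda(T_{r-1}(n))\ge (1-\frac{1}{r-1})n - O(1)$, which already gives $\mathrm{ex}_{\lambda}(n,K_r(t))\ge (1-\frac{1}{r-1})n+o(n)$. (Alternatively one can use Theorem~\ref{Feng} / Theorem~\ref{thm460}, or just the explicit eigenvalue formula for $\lambda(T_{r-1}(n))$ displayed in the previous subsection, to get the same asymptotics cleanly.)

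For the upper bound I would argue by contradiction: suppose $G$ is an $n$-vertex graph with $\lambda(G)\ge (1-\frac{1}{r-1}+c)n$ for some fixed $c>0$. Choosing $n$ large enough that the hypothesis $(c/r^r)^r\ln n\ge 1$ of Theorem~\ref{thmniki177} holds, that theorem produces a copy of $K_r(s,\ldots,s,t)$ with $s\ge (c/r^r)^r\ln n\to\infty$ and $t>n^{1-c^{r-1}}\to\infty$. In particular, for $n$ sufficiently large we have $s\ge t$ in the fixed-$t$ problem, wait---more precisely $\min\{s,t\}\to\infty$, so eventually $s,t\ge t_0$ for any prescribed $t_0$, hence $K_r(t_0)\subseteq K_r(s,\ldots,s,t)\subseteq G$. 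Thus any fixed $c>0$ is excluded for all large $n$, which is exactly the statement $\mathrm{ex}_{\lambda}(n,K_r(t))\le (1-\frac{1}{r-1})n+o(n)$. Combining the two bounds gives the first displayed equality.

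The second displayed equality follows by the same monotonicity argument used for Theorem~\ref{thm361}. If $F$ has chromatic number $\chi(F)=r$, then $F$ is $r$-colorable, so the complete $r$-partite graph $K_r(t)$ contains $F$ as a subgraph once $t\ge |V(F)|$; hence $\mathrm{ex}_{\lambda}(n,F)\le \mathrm{ex}_{\lambda}(n,K_r(t))=(1-\frac{1}{r-1})n+o(n)$. For the matching lower bound, $T_{r-1}(n)$ is $(r-1)$-partite, hence $(\chi(F)-1)$-colorable and therefore $F$-free (since an $F$-subgraph would force $\chi(T_{r-1}(n))\ge \chi(F)=r$), so $\mathrm{ex}_{\lambda}(n,F)\ge \lambda(T_{r-1}(n))=(1-\frac{1}{r-1})n+o(n)$ as before. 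This yields $\mathrm{ex}_{\lambda}(n,F)=(1-\frac{1}{\chi(F)-1})n+o(n)$.

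The only genuine content is Theorem~\ref{thmniki177} itself, which we are allowed to assume; given it, the deduction is purely formal. The one point that needs a little care---and which I expect to be the ``main obstacle'' in a fully rigorous writeup---is making the two error terms match: on the lower-bound side one must check that $\lambda(T_{r-1}(n))=(1-\frac{1}{r-1})n+o(n)$ rather than something weaker (the bound $2e/n$ together with the exact edge count, or the closed-form eigenvalue expression from the previous subsection, both suffice), and on the upper-bound side one must verify that the conditions $s\to\infty$ and $t\to\infty$ genuinely force the fixed-size blow-up $K_r(t)$ into $G$ for all large $n$, i.e.\ that the constant $c$ can be taken arbitrarily small and the threshold on $n$ depends only on $c$ and $r$. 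Both are routine, so no further calculation is needed here.
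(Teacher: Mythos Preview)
Your proof is correct and is essentially the derivation the paper \emph{hints at} with the sentence ``Similarly, one can get the following\ldots''---you deduce Corollary~\ref{coro198} directly from Theorem~\ref{thmniki177}, in exact analogy with how Theorem~\ref{thm361} is deduced from Theorem~\ref{thm360}. One small gap: Theorem~\ref{thmniki177} is stated only for $r\ge 3$, so the case $r=2$ (equivalently $\chi(F)=2$) needs a separate sentence; there the claim is $\mathrm{ex}_{\lambda}(n,K_{t,t})=o(n)$, which follows from $\lambda(G)^2\le \sum_i \lambda_i^2 = 2e(G)$ together with $\mathrm{ex}(n,K_{t,t})=o(n^2)$.

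The paper, however, writes out a genuinely different argument (its ``Alternative Proof''). Instead of invoking Theorem~\ref{thmniki177}, it proves a \emph{spectral supersaturation} statement from scratch: assuming $\lambda(G)\ge(1-\tfrac{1}{r-1}+\varepsilon)n$, it shows $G$ must contain $\Omega(n^{r})$ copies of $K_{r}$ by combining Wilf's bound (Theorem~\ref{thmwilf}) with the Graph Removal Lemma---if there were few $K_r$'s, one could delete $\le\tfrac{\varepsilon}{2}n^2$ edges to make $G$ clique-free, but then $\lambda$ would drop below $(1-\tfrac{1}{r-1}+\varepsilon)n$ via the subadditivity $\lambda(G)\le\lambda(G^*)+\sqrt{2e(G\setminus G^*)}$. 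Having many $K_r$'s, it then applies Erd\H{o}s's hypergraph result $\mathrm{ex}(n,K^{(r)}_{t,\ldots,t})=o(n^r)$ to extract $K_r(t)$. Your route is shorter and cleaner given Theorem~\ref{thmniki177} as a black box; the paper's route is independent of that (rather deep) theorem and instead showcases the removal-lemma technique, which is of broader methodological interest.
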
 

This result can be rewritten as the following limitation. 
\[ \boxed{ \lim_{n\to \infty} 
\frac{\mathrm{ex}_{\lambda}(n,F) }{ n} 
= 1-\frac{1}{\chi (F) -1}.} \]

We mentioned here that Guiduli \cite{Gui1996}
 proved Corollary \ref{coro198} by using 
 an analogous result 
proved by  Erd\H{o}s, Frankl and R\"{o}dl \cite{EFR1986}, 
which states that for any $r,t\ge 1$ and $\varepsilon >0$, 
there exists an integer $n_0$ such that 
if $G$ is a $K_{r}(t)$-free graph on $n\ge n_0$ vertices, 
then we can remove at most $\varepsilon n^2$ 
edges from $G$ to make it being $K_r$-free. 
This result is a direct consequence of 
the celebrated {\sc Szemer\'{e}di Regularity Lemma.} 

Next, we shall provide an alternative 
proof of Corollary \ref{coro198} 
by applying the {\sc Graph Removal Lemma}, 
which states that 
for every graph $H$ on $h$ vertices and every $\varepsilon >0$, 
there exists $\delta =\delta_H(\varepsilon) >0$ such that 
any graph on $n$ vertices with at most $\delta n^h$ 
copies of $H$ can be made $H$-free 
by removing at most $\varepsilon n^2$ edges; 
see, e.g., \cite{Fox2011,CF2013}. 

\begin{proof}[{\bf Alternative Proof}] 
Let $F$ be a fixed graph with $\chi (F)=r$ for some 
$r\ge 2$. We choose $t$ as an integer such that $F \subseteq K_r(t)$. 
For any $\varepsilon >0$,   
if $G$ is an $n$-vertex graph 
with $\lambda (G) \ge (1- \frac{1}{r-1} + \varepsilon )n$, 
the Wilf theorem implies that 
$G$ contains at least one copy of $K_{r}$. 
Next, we  shall show that the number of copies 
of $K_{r}$ is at least $\delta_{K_{r}}
({\varepsilon}/{2})\cdot  n^{r}$ where 
 $\delta_H(\varepsilon )$ is the function determined 
 in the Graph Removal Lemma. 
This is the spectral version of the supersaturation 
lemma for $K_{r}$. 
Indeed, suppose on the contrary that 
$G$ has less than $\delta_{K_{r}}
({\varepsilon}/{2})\cdot n^{r}$ copies 
of $K_{r}$, then 
the Graph Removal Lemma yields that 
we can remove at most $\frac{\varepsilon}{2}n^2$ edges from $G$ 
such that the remaining subgraph $G^*$ is $K_{r}$-free. 
The Wilf theorem gives that 
$\lambda (G^*) \le (1-\frac{1}{r-1})n$. Hence, 
the Rayleigh formula yields 
\[  \lambda (G) \le 
\lambda (G^*) + \lambda (G\setminus G^*)
<  \lambda (G^*) +  \sqrt{2e(G\setminus G^*)} 
\le \left( 1- \frac{1}{r-1} \right) n + \varepsilon n,  \]
a contradiction. Hence $G$ has at least 
$\delta_{K_{r}}({\varepsilon}/{2})\cdot n^{r}$ copies 
of $K_{r}$. 

Now, we define an $r$-uniform hypergraph 
$\mathcal{G}^{(r)}$ on the vertex set $V(G)$ 
where an $r$-element subset $E\subseteq V(G)$ 
is defined to be a hyperedge if and only if 
it induces  a copy of $K_{r}$ in the $2$-graph $G$. 
Under this definition, we know that 
$\mathcal{G}^{(r)}$ has at least 
$\delta_{K_{r}}({\varepsilon}/{2})\cdot n^{r}$ edges. 
We denote by $K^{(r)}_{t,t,\ldots ,t}$ the 
complete $r$-partite $r$-uniform hypergraph 
with each vertex part of size $t$.  
A well-known result of Erd\H{o}s \cite{Erd1964} states that 
\[ \mathrm{ex}(n,K^{(r)}_{t,t,\ldots ,t}) =O(n^{r- ({1}/{t})^{r-1}}).\] 
Therefore, for sufficiently large $n$, the hypergraph 
$\mathcal{G}^{(r)}$ contains a copy of 
$K^{(r)}_{t,t,\ldots ,t}$. Note that every copy of 
  $K^{(r)}_{t,t,\ldots ,t}$ in $\mathcal{G}^{(r)}$ 
  corresponds to a copy of $K_r(t)$ in $G$. 
  Hence $G$ contains a copy of $K_r(t)$ for 
  any $\varepsilon >0$ and large enough $n$. 
\end{proof}

\begin{conjecture} \label{conjse}
If $G$ is an $F$-free graph with $m$ edges, then 
\[ \lambda(G)\le 
\sqrt{\left(1-\frac{1}{\chi (F)-1} +o(1) \right)2m}. \]
\end{conjecture}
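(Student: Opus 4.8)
The plan is to follow the template of the Alternative Proof of Corollary \ref{coro198}, upgrading from the chromatic-number parameter $\chi(F)-1$ (which is the relevant clique size for the adjacency-spectral problem) to the edge-counting normalization $\sqrt{2m(\cdots)}$. Write $r = \chi(F)-1$, so that $F$ is not $r$-partite but is contained in some $K_{r+1}(t)$. The target inequality is $\lambda(G) \le \sqrt{(1-\tfrac1r + o(1))\,2m}$ whenever $G$ is $F$-free. Since being $F$-free does not force $G$ to be $K_{r+1}$-free, we cannot apply Corollary \ref{coroniki} directly; instead we must first clean up $G$.

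First I would invoke supersaturation for $K_{r+1}(t)$: if $G$ were to contain at least $\delta\cdot n^{v(F)}$ copies of $F$ for the $\delta$ furnished by the Graph Removal Lemma (applied to $H=F$ with error parameter $\varepsilon/2$, say), then $G$ would contain an $F$; so an $F$-free graph has at most $\delta n^{v(F)}$ copies of $F$, and by the Removal Lemma we can delete at most $\tfrac{\varepsilon}{2}n^2$ edges to obtain a subgraph $G^{*}$ that is $F$-free in the stronger sense — actually one wants $G^{*}$ to be $K_{r+1}$-free, which requires the Erd\H{o}s--Frankl--R\"odl version (or equivalently the hypergraph removal lemma as used in the excerpt's first proof of Corollary \ref{coro198}): after removing $o(n^2)$ edges, the remainder is $K_{r+1}$-free. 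Now apply Nikiforov's Corollary \ref{coroniki} to $G^{*}$: $\lambda(G^{*}) \le \sqrt{2e(G^{*})(1-\tfrac1r)} \le \sqrt{2m(1-\tfrac1r)}$. The edges $G\setminus G^{*}$ number at most $\tfrac{\varepsilon}{2}n^2$, so by the Rayleigh/Nosal-type bound $\lambda(G\setminus G^{*}) \le \sqrt{2e(G\setminus G^{*})} \le \sqrt{\varepsilon}\, n$. Combining $\lambda(G) \le \lambda(G^{*}) + \lambda(G\setminus G^{*})$ gives
\[
\lambda(G) \le \sqrt{2m\left(1-\tfrac1r\right)} + \sqrt{\varepsilon}\,n.
\]
The remaining task is to absorb the $\sqrt{\varepsilon}\,n$ error term into the claimed form $\sqrt{(1-\tfrac1r + o(1))2m}$. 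This is routine when $m$ is of order $n^2$ (which it is in the extremal regime, since $m \ge (1-\tfrac1r)\tfrac{n^2}{2} - O(n)$ for the relevant graphs, by the Tur\'an-graph lower bound): one writes $n \le \sqrt{2m/(1-\tfrac1r)} \cdot (1+o(1))$ and checks that $\sqrt{2m(1-\tfrac1r)} + \sqrt{\varepsilon}\,n \le \sqrt{2m(1-\tfrac1r + c(\varepsilon))}$ for a constant $c(\varepsilon) \to 0$ as $\varepsilon \to 0$; one then lets $\varepsilon \to 0$ slowly with $n \to \infty$.

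The main obstacle is handling graphs $G$ that are \emph{not} dense, i.e. where $m = o(n^2)$: then the $\sqrt\varepsilon\,n$ term genuinely dominates $\sqrt{2m}$ and the above absorption fails. Here one should instead use the sharper structural input — the full strength of Nikiforov's Theorem \ref{thmniki177} guarantees that a graph with $\lambda(G) \ge (1-\tfrac1{r-1}+c)n$ already contains $K_r(t)$, so an $F$-free graph (equivalently, $K_{r+1}(t)$-free for suitable $t$, with $r$ now meaning $\chi(F)-1$) satisfies $\lambda(G) < (1-\tfrac1r + o(1))n$; combined with the trivial $\lambda(G)^2 \ge (2m/n)^2$ one needs to argue that when $m$ is small relative to $n^2$ the bound $\sqrt{2m(1-\tfrac1r)}$ is anyway not the binding constraint, or alternatively that the extremal $G$ for $\mathrm{ex}_\lambda$ is automatically dense so the sparse case need not be treated for the conjectured equality. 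Pinning down the equality case — complete bipartite for $r=1$ (i.e. $\chi(F)=2$, the notoriously hard bipartite regime where even $\mathrm{ex}(n,F)$ is open), and complete regular $r$-partite plus isolated vertices for $\chi(F)\ge 3$ — is presumably why this is left as a conjecture rather than proved: the Removal Lemma argument only yields the asymptotic $1-\tfrac1r+o(1)$, not the exact constant with characterized extremal graphs, and extracting a clean $\sqrt{2m}$-type bound (no $o(1)$) would require a stability analysis well beyond the regularity-method toolkit sketched here.
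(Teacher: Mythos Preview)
This statement is posed in the paper as a \emph{Conjecture}, not a theorem; the authors give no proof, so there is nothing to compare your argument against. What you have written is a reasonable partial attack, and your removal-lemma step is sound in the dense regime: when $n=O(\sqrt{m})$, the error term $\sqrt{\varepsilon}\,n$ is $O(\sqrt{\varepsilon m})$ and can indeed be absorbed into $\sqrt{(1-\tfrac1r+o(1))2m}$.

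The genuine gap is the sparse regime, and your proposed patch does not close it. When $n\gg\sqrt{m}$ (for instance $G$ could be a star $K_{1,m}$, which is $F$-free whenever $\chi(F)\ge 3$ and has $n=m+1$), the bound $\lambda(G)\le (1-\tfrac1r+o(1))n$ from Corollary~\ref{coro198} is \emph{weaker} than the target $\sqrt{(1-\tfrac1r)2m}$, not stronger, so invoking it gains nothing. Your suggestion that ``the extremal $G$ is automatically dense'' is also not available: already for $F=C_5$ the graphs $K_{1,m}$ and $K_{\lfloor\sqrt m\rfloor,\lceil\sqrt m\rceil}$ are both $F$-free with $\lambda=\sqrt{m}$, so extremizers need not have $n=O(\sqrt m)$.

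Finally, your closing diagnosis is off: since the conjecture already carries an $o(1)$ term, no characterization of extremal graphs or exact constant is required. The obstacle is not sharpness but the sparse case itself, where the removal lemma (which deletes $\varepsilon n^2$ edges, possibly all of $E(G)$) gives no control, and Nikiforov's clique bound $\lambda\le\sqrt{2m(1-1/\omega(G))}$ only yields the weaker constant $1-1/(v(F)-1)$ via $\omega(G)\le v(F)-1$. Bridging from $v(F)-1$ down to $\chi(F)-1$ in the $m$-normalization is exactly what remains open.
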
 

After our manuscript announced, 
Nikiforov (private communication) told us that he can prove Conjecture \ref{conjse} for $3$-chromatic graphs. 
A natural question is to extend the above-mentioned
 results on the adjacency spectral radius 
 to that of the signless Laplacian spectral radius. 
We define $\mathrm{ex}_{q}(n,F)$ 
to be the largest eigenvalue of the signless Laplacian   matrix 
in an $n$-vertex  graph that contains no copy of $F$. That is, 
\[ \mathrm{ex}_{q}(n,F):=\max \bigl\{ q(G): |G|=n~\text{and}~F\nsubseteq G \bigr\}. \]
Note that $Q(G)=D(G)-A(G) + 2A(G)$ and $D(G)-A(G)$ 
is positive semidefinite. It is known by the Weyl theorem 
for monotonicity of eigenvalues that  
$q(G)\ge 2\lambda (G)$. 
Comparing to Corollary \ref{coro198}, 
one may ask the following question:

\medskip 
\noindent 
{\bf Question.} 
{\it Let $F$ be a fixed graph. 
Is it true that  
$ \mathrm{ex}_{q} (n,F) = 
\bigl( 1-\frac{1}{\chi (F)-1} \bigr) 2n + o(n)$.}

\medskip 
The answer of this question is negative. 
Let  $k\ge 2$ be a positive integer and 
 $S_{n,k}$ be the graph consisting of a clique on $k$ vertices and an independent set on $n-k$ vertices in which each vertex of the clique is adjacent to each vertex of the independent set. 
 In the language of join of graphs, we have 
$S_{n,k}=K_k \vee I_{n-k}$. 
We can observe that $S_{n,k}$ does not contain $C_{2k+2}$ as a subgraph. 
Furthermore, 
let $S_{n,k}^+$ be the graph obtained from $S_{n,k}$ 
by adding an edge to the independent set $I_{n-k}$. 
Namely, we have  
$S_{n,k}^+=K_k \vee I_{n-k}^+$.  
Clearly, we can see that 
$S_{n,k}^+$  is still $C_{2k+2}$-free and 
\[  q(S_{n,k}^+) >  q(S_{n,k}) = \frac{n+2k-2+ \sqrt{(n+2k-2)^2 - 8k^2 +8k
}}{2}.   \]
Hence either the graph $S_{n,k}$ or $S_{n,k}^+$ can yield $\mathrm{ex}_q(n,C_{2k+2}) \ge n+ o(n)$. 
However we have $\chi (C_{2k+2})=2$ and $(1-\frac{1}{r})2n + o(n)=o(n)$. 
In 2015, Nikiforov and Yuan \cite{NY2015} proved that 
$\mathrm{ex}_q(n,C_{2k+2}) = q(S_{n,k}^+)$ 
for every $n\ge 400 k^2$. 
However $\chi (C_{2k+2})=2$ and $q(S_{n,k}^+) \neq o(n)$.

\subsection{Spectral problem for bipartite graphs} 

 The history of studying the extremal number 
 for bipartite graphs began in 1954 with 
 the K\H{o}vari--S\'{o}s--Tur\'{a}n theorem \cite{KST54}, which states 
 that if $K_{s,t}$ is the complete bipartite graph with vertex classes of size $s\ge t$, 
 then $\mathrm{ex}(n,K_{s,t})=O(n^{2-1/t})$; see \cite{Furedi96,Furedi96b} 
 for more details.  
In particular, we refer the interested reader to 
  the comprehensive survey 
  by F\"{u}redi and Simonovits \cite{FS13} 
  for the history of bipartite extremal graph problems. 

\begin{theorem}[K\H{o}vari--S\'{o}s--Tur\'{a}n, 1954]  \label{thmkst}
For all $t\ge s\ge 2$, If $G$ is an $n$-vertex graph and contains no copy of $K_{s,t}$, then 
\[  e(G) \le 
\frac{1}{2}(t-1)^{\frac{1}{s}}n^{2-\frac{1}{s}} +\frac{1}{2}(s-1)n . \]
In other words, we have 
\[ \mathrm{ex}(n,K_{s,t}) \le 
\frac{1}{2}(t-1)^{\frac{1}{s}}n^{2-\frac{1}{s}} +\frac{1}{2}(s-1)n 
{ \,\, \approx  \frac{1}{2}(t-1)^{\frac{1}{s}}n^{2-\frac{1}{s}}.} \]
\end{theorem}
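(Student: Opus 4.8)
The plan is to prove the Kővári–Sós–Turán bound by the classical double-counting of ``cherries'' (paths of length two with a common pair of endpoints), which is the standard argument and the one the authors almost certainly intend. Fix a graph $G$ on $n$ vertices with no copy of $K_{s,t}$, where $t \ge s \ge 2$. For each vertex $v$, let $d(v)$ be its degree. I will count pairs $(v, S)$ where $v \in V(G)$ and $S$ is an $s$-element subset of the neighbourhood $N(v)$; call such a pair an \emph{$s$-star centred at $v$}. The number of these is exactly $\sum_{v \in V(G)} \binom{d(v)}{s}$.

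\medskip

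On the other hand, I will count the same pairs by first choosing the $s$-set $S$. If some $s$-set $S$ were contained in the neighbourhoods of $t$ distinct vertices, then $S$ together with those $t$ vertices would form a $K_{s,t}$ (with $S$ on the side of size $s$), contradicting $K_{s,t}$-freeness. Hence each $s$-set $S \subseteq V(G)$ is counted at most $t-1$ times, giving
\[ \sum_{v \in V(G)} \binom{d(v)}{s} \le (t-1)\binom{n}{s}. \]
Now I apply convexity of the function $x \mapsto \binom{x}{s}$ (extended to be $0$ for $x < s$, or better, use the convex extension $\binom{x}{s} = x(x-1)\cdots(x-s+1)/s!$ and Jensen): with $\bar d = \frac{1}{n}\sum_v d(v) = 2e(G)/n$, the left side is at least $n \binom{\bar d}{s}$. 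Combining, $n\binom{\bar d}{s} \le (t-1)\binom{n}{s}$, and after bounding $\binom{n}{s} \le n^s/s!$ and $\binom{\bar d}{s} \ge (\bar d - s + 1)^s/s!$, one gets $(\bar d - s + 1)^s \le (t-1)n^{s-1}$, hence $\bar d \le (t-1)^{1/s} n^{1 - 1/s} + s - 1$. Multiplying by $n/2$ yields $e(G) = \frac{n}{2}\bar d \le \frac{1}{2}(t-1)^{1/s} n^{2-1/s} + \frac{1}{2}(s-1)n$, which is the claimed inequality.

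\medskip

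\textbf{The main obstacle} is handling the convexity step cleanly: Jensen's inequality for $\binom{x}{s}$ requires the right convex extension of the binomial coefficient to the reals (the falling-factorial polynomial is convex on $[s-1,\infty)$ but not on all of $[0,\infty)$), and one must argue that vertices of degree less than $s$ can be harmlessly ignored — either by noting $\binom{d(v)}{s} = 0 = \binom{d(v)}{s}_{\text{poly}}$ is false in general, so instead one uses that the polynomial extension is a convex lower bound that is $\le$ the true count only where $d(v)\ge s-1$, and does a short case analysis, or alternatively one restricts to the subgraph induced on vertices of degree $\ge s$ and bounds the contribution of the rest by $\frac{1}{2}(s-1)n$ directly. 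A smoother route that avoids this entirely is to keep the inequality $\sum_v \binom{d(v)}{s} \le (t-1)\binom{n}{s}$ and invoke the power-mean / Jensen inequality in the form $\sum_v \binom{d(v)}{s} \ge n\binom{2e/n}{s}$ valid for the falling-factorial extension once $2e/n \ge s-1$ (the complementary case $2e/n < s-1$ makes the target bound trivial). I would present it the second way, remarking that the additive $\frac{1}{2}(s-1)n$ term is exactly what absorbs the low-degree regime.

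\medskip

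A small remark to include: the asymptotic form $\mathrm{ex}(n, K_{s,t}) \approx \frac{1}{2}(t-1)^{1/s} n^{2-1/s}$ follows immediately since the second term is $O(n) = o(n^{2-1/s})$ for $s \ge 2$, and one should note the hypothesis $t \ge s$ is used only for the conventional labelling of the parts (the proof as written gives the bound with exponent $1/s$ regardless, so it is best to take $s$ to be the smaller side, which is the version stated).
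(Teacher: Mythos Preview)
Your approach is essentially identical to the paper's: double-count $s$-stars $(v,S)$ with $S\in\binom{N(v)}{s}$, upper bound by $(t-1)\binom{n}{s}$ via the $K_{s,t}$-freeness, lower bound via Jensen applied to $x\mapsto\binom{x}{s}$, and combine. The paper glosses over the convexity-domain issue you raise and in fact only extracts the asymptotic bound $m\le(\tfrac12+o(1))(t-1)^{1/s}n^{2-1/s}$ (using $\binom{x}{s}=(1+o(1))x^s/s!$) rather than the exact inequality with the $\tfrac12(s-1)n$ term, so your sketch is more complete on both points.
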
 

For  convenience of readers, we here include a standard 
proof, which is a well application of the double counting technique. 

\begin{proof}
Let $G$ be a $K_{s, t}$-free $n$-vertex graph with $m$ edges.
We denote the number of copies of $K_{s, 1}$ in $G$ as $\# K_{s, 1}$. The proof establishes an upper bound and a lower bound on $\#K_{s, 1}$, and then gets a bound on $m$ by combining the upper bound and the lower bound. 
Since $K_{s,1}$ 
is a tree, we can call the side with $s$ vertices the leaf vertices, 
and the side with $1$ vertices the root vertex.

On the one hand, we can count $\#K_{s, 1}$ by enumerating the leaf vertices. 
For any subset $S$ of $s$ vertices, the number of $K_{s, 1}$ 
 is exactly the number of common neighbors of these $s$ vertices. 
 We write $d(S)$ for 
 the number of common neighbors of vertices of $S$, 
 i.e., $d(S)=|\cap_{v\in S}N(v)|$. 
 Since $G$ is $K_{s, t}$-free, 
 we get $d(S)\le t-1$. Thus, we establish that 
 \[ \#K_{s, 1}=\sum_{S\in {V \choose s}} d(S) \le \binom{n}{s}(t-1).\] 

On the other hand, for each vertex $v \in V(G)$, the number of copies of $K_{s, 1}$ where $v$ is the root vertex is exactly $\binom{d(v)}{s}$. 
Therefore,
\[   \#K_{s, 1} = \sum_{v \in V(G)} \binom{d(v)}{s}
    \ge n \binom{\frac{1}{n} \sum_{v \in V(G)} d(v)}{s}
    = n\binom{2m/n}{s}, \]
where the inequality  uses the convexity of $x \mapsto \binom{x}{s}$. Here we regard $\binom{x}{s}$ as a degree $s$ polynomial in $x$, so it makes sense for $x$ to be non-integers. 

Combining the upper bound and lower bound of $\#K_{s, 1}$, we obtain that $n\binom{2m/n}{s} \le \binom{n}{s} (t-1)$. For constant $s$, we can use $\binom{x}{s} = (1+o(1)) \frac{x^s}{s!}$ to get $n\left(\frac{2m}{n}\right)^s \le (1+o(1)) n^s (t-1)$. The above inequality simplifies to 
$m \le \left(\frac{1}{2} + o(1)\right) (t-1)^{1/s} n^{2-{1}/{s}}$. 
\end{proof}

\begin{definition}[Zarankiewicz]
Let $z(m,n,s,t)$ be the maximum number of edges in a bipartite graph $G$ with vertex parts 
$M$ of size $m$ and $N$ of size $n$ such that 
$G$ does not contain a copy of $K_{s,t}$ with vertex part of size $s$ in $M$ and 
vertex part of size $t$ in $N$. 
\end{definition}

In the language of matrix, 
the value $z(m,n,s,t)$ can  be defined as the maximum number of 
ones in a matrix consisting only of zeros and ones of size $m\times n$ 
that does not contain an all ones submatrix of size $s\times t$. 
 The problem of finding this number is known as the Zarankiewicz problem.

Clearly, we can see from the definition that 
$z(n,n,s,t) \le 
\mathrm{ex}(2n,K_{s,t})$. 
We next show that $2\mathrm{ex}(n,K_{s,t}) \le z(n,n,s,t)$. 
To prove this inequality, we just need to 
construct a bipartite graph $G$ that has  
$2\mathrm{ex}(n,K_{s,t})$ edges and contains no copy of $K_{s,t}$. 
Let $G$ be an extremal graph with $\mathrm{ex}(n,K_{s,t})$ edges 
and contains no copy of $K_{s,t}$. We define a bipartite graph $H$ on 
vertex sets $V_1$ and $V_2$ where $V_1=V_2=V(G)$ 
with edge set $E(H)=\{\{u,v\} : u\in V_1, v\in V_2, \{u,v\}\in E(G)\}$. 
We can verify that $H$ does not contain any copy of $K_{s,t}$ 
and $H$ has $2e(G)$ edges. Thus, we get $2\mathrm{ex}(n,K_{s,t}) \le z(n,n,s,t)$. 
We now state the above observation as the following proposition. 

\begin{proposition}
$2\mathrm{ex}(n,K_{s,t}) \le z(n,n,s,t) \le 
\mathrm{ex}(2n,K_{s,t})$.
\end{proposition}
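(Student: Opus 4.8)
The statement is a pair of inequalities, and the plan is to prove each one separately by an explicit construction translating between ordinary graphs on $2n$ vertices and bipartite graphs with both parts of size $n$. In both directions the only work is to check that a certain forbidden bipartite configuration would force a genuine copy of $K_{s,t}$; the edge-count bookkeeping is routine.

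For the right-hand inequality $z(n,n,s,t)\le \mathrm{ex}(2n,K_{s,t})$, I would start from a bipartite graph $G$ with parts $M,N$, $|M|=|N|=n$, attaining $z(n,n,s,t)$ and containing no $K_{s,t}$ with its $s$-side in $M$ and its $t$-side in $N$. Regarding $G$ as an ordinary graph on the $2n$ vertices $M\cup N$, it suffices to observe that $G$ then contains no copy of $K_{s,t}$ as a subgraph at all: $K_{s,t}$ is connected and bipartite, so in any embedding into $G$ one colour class lies in $M$ and the other in $N$, and since both parts have the same size $n$ one may relabel them so that the $s$-side is in $M$, contradicting the defining property of $z(n,n,s,t)$. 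Hence $G$ is an admissible competitor for $\mathrm{ex}(2n,K_{s,t})$, giving $\mathrm{ex}(2n,K_{s,t})\ge e(G)=z(n,n,s,t)$.

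For the left-hand inequality $2\,\mathrm{ex}(n,K_{s,t})\le z(n,n,s,t)$, I would take an extremal $K_{s,t}$-free graph $G$ on $n$ vertices with $e(G)=\mathrm{ex}(n,K_{s,t})$ and form the bipartite graph $H$ with parts $V_1=V_2=V(G)$, joining $u\in V_1$ to $v\in V_2$ exactly when $uv\in E(G)$. Each edge $uv$ of $G$ contributes the two edges $u_1v_2$ and $v_1u_2$ of $H$, so $e(H)=2e(G)$, and both parts have size $n$. It remains to verify that $H$ contains no $K_{s,t}$ with an $s$-set $A\subseteq V_1$ and a $t$-set $B\subseteq V_2$. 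If it did, then in $G$ every vertex of $A$ would be adjacent to every vertex of $B$; the only way this fails to exhibit a forbidden $K_{s,t}$ inside $G$ is a vertex $v$ with $v_1\in A$ and $v_2\in B$, but that would require the edge $v_1v_2$ in $H$, which is absent because $G$ is loopless. So $A\cap B=\varnothing$ and $G\supseteq K_{s,t}$, a contradiction. Thus $H$ is admissible for $z(n,n,s,t)$, giving $z(n,n,s,t)\ge e(H)=2\,\mathrm{ex}(n,K_{s,t})$.

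The one point that needs genuine (if small) care — the main obstacle — is the handling of the ``diagonal'' $A\cap B$ in the second construction, and the symmetric bookkeeping about which colour class of a copy of $K_{s,t}$ lands in which part in the first construction (this matters only when $s\neq t$); both are quickly dispatched using that $K_{s,t}$ is connected and that $G$ has no loops. Everything else reduces to counting edges.
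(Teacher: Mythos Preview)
Your argument for $2\,\mathrm{ex}(n,K_{s,t})\le z(n,n,s,t)$ via the bipartite double is exactly the paper's construction, and your handling of the diagonal $A\cap B$ using that $G$ is loopless supplies a detail the paper leaves implicit.

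The right-hand inequality has a real gap when $s\ne t$. You take an extremal bipartite graph $G$ for $z(n,n,s,t)$ and assert it is $K_{s,t}$-free as an ordinary graph on $2n$ vertices, arguing that ``since both parts have the same size $n$ one may relabel them so that the $s$-side is in $M$.'' But the Zarankiewicz constraint forbids only copies of $K_{s,t}$ with the $s$-class in $M$ and the $t$-class in $N$; relabelling the parts changes which constraint is being tested, not the graph itself, and a specific extremal $G$ may well contain a copy of $K_{s,t}$ with the $t$-class in $M$. For instance, with $(s,t,n)=(3,2,3)$ one extremal configuration for $z(3,3,3,2)=7$ takes $a\in N$ complete to $M=\{1,2,3\}$ and $b,c\in N$ each joined to $\{1,2\}$; this meets the Zarankiewicz constraint yet contains $K_{2,3}$ on $\{1,2\}\cup\{a,b,c\}$, so it cannot serve as a witness for $\mathrm{ex}(6,K_{2,3})\ge 7$. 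The paper itself waves this direction through with ``clearly we can see from the definition,'' so the issue is easy to miss; your argument is fine when $s=t$, but for $s\ne t$ one must either choose the extremal $G$ more carefully (in the example above, taking $c$ joined to $\{2,3\}$ instead does yield a $K_{2,3}$-free witness) or give a different argument altogether.
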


From this proposition, 
it turns the problems of finding an upper bound 
for the Tur\'{a}n number into an upper bound of the Zarankiewicz problems. 
Using the counting method in the proof of the 
K\H{o}v\'{a}ri--S\'{o}s--Tur\'{a}n theorem, we can similarly prove that 
\[ z(m,n,s,t)\le (s-1)^{\frac{1}{t}}\cdot (n-t+1)m^{1-\frac{1}{t}} +(t-1)m. \]

In 1996, F\"{u}redi \cite{Furedi96} improved the coefficient of this  bound 
to $(s-t+1)^{1/t}$. 

\begin{theorem}[F\"{u}redi \cite{Furedi96}] \label{thmfuredi} 
For all $s\ge t\ge 1$, we have  
\begin{equation}\label{Fu96}
 z(m,n,s,t) \le { (s-t+1)}^{\frac{1}{t}}nm^{1-\frac{1}{t}}+tm^{2-\frac{2}{t}}+tn. 
\end{equation}
\end{theorem}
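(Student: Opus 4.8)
I would run the double-counting strategy used above for the K\H{o}v\'ari--S\'os--Tur\'an theorem, and then bring in F\"uredi's refinement of the leading constant. Fix a bipartite graph $G$ realising $z(m,n,s,t)$, with classes $M$ and $N$ of sizes $m$ and $n$, containing no $K_{s,t}$ whose $s$-class lies in $M$ and whose $t$-class lies in $N$; equivalently, every $t$-subset $T\subseteq N$ has at most $s-1$ common neighbours in $M$. Let $e=e(G)=z(m,n,s,t)$, and for $u\in M$ put $d(u)=|N_G(u)|$, so that $\sum_{u\in M}d(u)=e$. Counting in two ways the pairs $(u,T)$ with $u\in M$ and $T$ a $t$-subset of $N_G(u)$, and using the codegree bound, one gets
\[
\sum_{u\in M}\binom{d(u)}{t}\;=\;\sum_{T\in\binom{N}{t}}\Bigl|\bigcap_{w\in T}N_G(w)\Bigr|\;\le\;(s-1)\binom{n}{t}.
\]
Convexity of $x\mapsto\binom{x}{t}$ over the vertices of $M$ (those of degree below $t-1$ contribute nothing) gives $m\binom{e/m}{t}\le(s-1)\binom{n}{t}$; since $\binom{e/m}{t}\ge\tfrac1{t!}(e/m-t+1)^{t}$ and $\binom{n}{t}\le n^{t}/t!$, this already yields $e\le(s-1)^{1/t}nm^{1-1/t}+(t-1)m$, which is the K\H{o}v\'ari--S\'os--Tur\'an-type estimate.

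The content of F\"uredi's theorem is to push the constant down to $(s-t+1)^{1/t}$ at the cost of the weaker lower-order terms, and here I would invoke his finer accounting of the sum $\sum_{T}|\bigcap_{w\in T}N_G(w)|$. The natural first attempts do not suffice: organising the $t$-subsets of $N$ through a fixed $(t-1)$-subset $S$ (with $U_S:=\bigcap_{w\in S}N_G(w)$, and noting that no vertex of $N\setminus S$ has $s$ neighbours in $U_S$, else $S$ together with that vertex would yield a forbidden $K_{s,t}$), or equivalently the nested reduction to parameter $t-1$ via the induced bipartite subgraphs between $N_G(v)$ and $N\setminus\{v\}$ for $v\in N$, each only reproduces the constant $s-1$. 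F\"uredi's improvement comes from combining such a reduction with a cleaning step: one first discards the vertices of $M$ of degree below a carefully chosen threshold, bounding their edge-contribution directly (this is where the terms $tm^{2-2/t}$ and $tn$ originate), and then exploits the $K_{s,t}$-freeness more tightly in the cleaned graph so that the relevant codegrees average below $s-1$, effectively at $s-t+1$. Applying convexity and Cauchy--Schwarz ($\sum_u d(u)^2\ge e^2/m$) as before and optimising the threshold should then deliver $e\le(s-t+1)^{1/t}nm^{1-1/t}+tm^{2-2/t}+tn$.

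The main obstacle --- and the only genuinely non-routine ingredient --- is this gain in the constant: the plain $t$-book count and the plain recursion both stop at $(s-1)^{1/t}$, so one has to locate and exploit the extra structural slack that forces the average codegree strictly below $s-1$, and this is precisely the point where I would need to reconstruct F\"uredi's argument in detail. The remaining work is bookkeeping: calibrating the degree truncation so that the discarded low-degree edges collapse into exactly $tm^{2-2/t}+tn$ rather than piling up, and lining up the H\"older and Cauchy--Schwarz exponents through the computation; once the constant is controlled, these are routine estimates.
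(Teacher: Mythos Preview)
The paper does not prove this theorem. This is a survey, and Theorem~\ref{thmfuredi} is simply quoted from F\"uredi's paper \cite{Furedi96} without any argument; the surrounding text only remarks that Nikiforov's Theorem~\ref{thmniki2010} later sharpens it slightly. So there is no in-paper proof to compare your proposal against.

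Evaluating your proposal on its own: the first paragraph is a clean and correct derivation of the K\H{o}v\'ari--S\'os--Tur\'an bound with leading constant $(s-1)^{1/t}$. But the second and third paragraphs do not constitute a proof of F\"uredi's improvement --- and you say so yourself (``this is precisely the point where I would need to reconstruct F\"uredi's argument in detail''). What you have written is a description of the \emph{shape} an argument might take (truncate low degrees, exploit extra slack in the codegree count, optimise a threshold), together with the honest observation that the two natural refinements you tried --- conditioning on a $(t-1)$-subset, and the recursive reduction to $t-1$ --- both stall at $s-1$. That diagnosis is correct, but it leaves the actual mechanism that converts $s-1$ into $s-t+1$ unsupplied. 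As written this is the easy half of the proof plus a placeholder for the hard half.

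If you want to close the gap, F\"uredi's original argument does not proceed by degree truncation in $M$; the saving of $t-1$ in the constant comes from a sharper count on the $t$-set side. Roughly, instead of bounding each common neighbourhood $|\bigcap_{w\in T} N(w)|$ by $s-1$ independently, one tracks how these common neighbourhoods overlap as $T$ varies over $t$-sets sharing a fixed $(t-1)$-subset, and the $K_{s,t}$-freeness forces enough cancellation across these overlapping counts to drop the effective bound to $s-t+1$ on average. The lower-order terms $tm^{2-2/t}+tn$ absorb the boundary cases in that count, not a separate low-degree cleaning. Nikiforov's inductive version in Theorem~\ref{thmniki2010} makes this structure transparent: the parameter $k$ records how many steps of this overlap-reduction have been performed, and $k=t-2$ recovers F\"uredi's constant.
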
 

In 2010, Nikiforov \cite{Niki2010} proved by an elegant induction that 

\begin{theorem}[Nikiforov \cite{Niki2010}] \label{thmniki2010}
If $s,t\ge 2$, then for every $k=0,1,\ldots ,s\!-\!2$, 
\[  z(m,n,s,t)\le (s-k-1)^{\frac{1}{t}}nm^{1-\frac{1}{t}} +(t-1)m^{1+\frac{k}{t}}+kn. \]
\end{theorem}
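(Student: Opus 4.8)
The plan is to prove the bound by induction on $k$, following the double-counting template of the K\H{o}v\'{a}ri--S\'{o}s--Tur\'{a}n argument but iterating it. The base case $k=0$ is precisely the standard bound $z(m,n,s,t)\le (s-1)^{1/t}nm^{1-1/t}+(t-1)m$ obtained by counting copies of $K_{t,1}$ rooted on the $n$-side: for each vertex $v$ in the $m$-part, its neighborhood contributes $\binom{d(v)}{t}$ stars $K_{t,1}$ whose $t$ leaves lie in the $n$-part, while on the other side any fixed $t$-subset of the $n$-part has at most $s-1$ common neighbors (else we would find $K_{s,t}$), so convexity of $x\mapsto\binom{x}{t}$ against $\sum d(v)=e(G)$ yields the inequality after the usual simplification. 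I would set this up carefully once, since the inductive step reuses the same estimate locally.

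For the inductive step, assume the bound holds for some $k-1$ (with $0\le k-1\le s-3$) and prove it for $k$. The idea is to split off one ``layer'': either $G$ has a vertex $w$ in the $n$-part of large degree, in which case its neighborhood $N(w)\subseteq M$ together with $N:=V\setminus\{w\}$ induces a bipartite graph which must avoid $K_{s-1,t}$ on the side contained in $N(w)$ (adjoining $w$ to a $K_{s-1,t}$ there would give $K_{s,t}$), so the induction hypothesis for $k-1$ applied with parameter $s-1$ controls the edges incident to $N(w)$; or every vertex of the $n$-part has degree below some threshold $\theta$, in which case $e(G)\le\theta n$ directly. Optimizing the threshold $\theta$ — balancing the ``high-degree'' contribution $(s-k-1)^{1/t}(n)m^{1-1/t}$-type term against the ``low-degree'' bound $\theta n$ and the carried error terms $(t-1)m^{1+(k-1)/t}$ and $(k-1)n$ — should produce exactly the claimed form $(s-k-1)^{1/t}nm^{1-1/t}+(t-1)m^{1+k/t}+kn$. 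The bookkeeping of how the exponent on $m$ creeps from $1+(k-1)/t$ up to $1+k/t$ and how the additive $kn$ accumulates one unit per layer is the delicate part.

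The main obstacle I anticipate is choosing the dichotomy and the degree threshold so that the error terms telescope cleanly into the stated closed form rather than into something merely of the same order. In particular one must be careful that the ``high-degree vertex'' case, after applying the induction hypothesis with $s$ replaced by $s-1$ and $k$ replaced by $k-1$, really reproduces the coefficient $(s-k-1)^{1/t}=((s-1)-(k-1)-1)^{1/t}$ and does not lose a constant factor; this forces the threshold and the way one accounts for the edges at $w$ to be chosen in a very specific way. A secondary subtlety is keeping the constraint $0\le k\le s-2$ consistent through the recursion, since the induction with $s-1$ in place of $s$ shifts the allowed range of the auxiliary parameter. Once the layer-peeling is set up correctly, each step is a one-variable optimization and the rest is routine algebra, so I would present the base case and one generic inductive step in full and leave the constant-chasing to the reader.
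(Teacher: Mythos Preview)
The survey does not supply a proof of this theorem; it only records that Nikiforov's argument is ``an elegant induction'' and cites \cite{Niki2010}. So there is no in-paper proof to compare against line by line, and I evaluate your proposal on its own merits.

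Your inductive framework (base case $k=0$ via K\H{o}v\'ari--S\'os--Tur\'an, then step $k-1\to k$) is the right skeleton, but the peeling step contains a genuine error. You take a vertex $w$ on the $N$-side and assert that the bipartite graph on $(N(w),\,N\setminus\{w\})$ must avoid $K_{s-1,t}$ with the $(s-1)$-part inside $N(w)\subseteq M$, because ``adjoining $w$ to a $K_{s-1,t}$ there would give $K_{s,t}$''. It would not: since $w\in N$, adjoining $w$ to such a configuration yields $s-1$ vertices in $M$ with $t+1$ common neighbours in $N$, i.e.\ a $K_{s-1,t+1}$, which is not forbidden. Concretely, with $s=3$, $t=2$: take $N(w)=\{a_1,a_2\}\subseteq M$ and let $b_1,b_2\in N\setminus\{w\}$ each be adjacent to both $a_1$ and $a_2$; then $G$ contains no $K_{3,2}$, yet $G[N(w),N\setminus\{w\}]=K_{2,2}=K_{s-1,t}$.

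To drop $s$ to $s-1$ you must peel from the $M$-side: for $u\in M$, the graph $G[M\setminus\{u\},N(u)]$ is genuinely $K_{s-1,t}$-free with the $(s-1)$-part in $M\setminus\{u\}$, since $u$ is adjacent to every vertex of $N(u)$ and would complete any such copy to a forbidden $K_{s,t}$. This, however, replaces the parameter $n$ by $d(u)$, so a single ``one high-degree vertex versus all low degree'' dichotomy no longer fits. What does work is to apply the inductive bound for $(m-1,d(u),s-1,t,k-1)$ to \emph{every} $u\in M$, rewrite the left side as $\sum_{v\in N(u)}d(v)-d(u)$, and sum over $u$; the left becomes $\sum_{v\in N}d(v)^2$, and combining with $\sum_v d(v)^2\ge e(G)^2/n$ forces $e(G)\le (s-k-1)^{1/t}nm^{1-1/t}+(t-1)m^{1+k/t}+kn$ by a short contradiction argument. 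Your base case and the induction-on-$k$ plan survive; it is the side you peel from and the aggregation of the hypothesis that need to be rewritten.
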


For $k=0$, the upper bound of K\H{o}vari, S\'{o}s and Tur\'{a}n is obtained, 
and, if $s\ge t$, the author also improved  F\"{u}redi's bound,  
since the case $k=t-2$ leads to 
\[  z(m,n,s,t)\le (s-t+1)^{{1}/{t}}nm^{1-{1}/{t}} +(t-1)m^{2-{2}/{t}}+(t-2)n. \]

We remark that finding the lower bound of 
$\mathrm{ex}(n,K_{s,t})$ is extremely difficult. 
By using the Probabilistic Method, 
one can get a lower bound of $\mathrm{ex}(n,K_{s,t})$.

\begin{theorem} \label{thm211}
For all $t\ge s\ge 2$, we have   
\[ \mathrm{ex}(n,K_{s,t}) \ge \frac{1}{16}n^{2-\frac{s+t-2}{st-1}}. \]
\end{theorem}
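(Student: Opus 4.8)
The plan is to use the probabilistic deletion method: pick a random graph $G(n,p)$ for a well-chosen $p$, count the expected number of edges and the expected number of copies of $K_{s,t}$, then delete one edge from each copy and show the surviving graph still has $\frac{1}{16}n^{2-(s+t-2)/(st-1)}$ edges. Since $t\ge s\ge 2$, a copy of $K_{s,t}$ in $G(n,p)$ uses $st$ vertices and $st$ edges (an unordered pair of disjoint vertex sets of sizes $s$ and $t$ with all cross-edges present). So if $X$ denotes the number of edges and $Y$ the number of copies of $K_{s,t}$, then
\[
\mathbb{E}[X] = \binom{n}{2}p \ge \tfrac14 n^2 p, \qquad
\mathbb{E}[Y] \le n^{s+t} p^{st},
\]
where the bound on $\mathbb{E}[Y]$ comes from choosing the two vertex classes in at most $n^s\cdot n^t$ ways and requiring all $st$ cross-edges to be present.

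First I would set $p$ so that $\mathbb{E}[Y]$ is comfortably smaller than $\mathbb{E}[X]$, say $\mathbb{E}[Y]\le \tfrac12\mathbb{E}[X]$; the natural choice balancing $n^2 p$ against $n^{s+t}p^{st}$ is $p = c\,n^{-(s+t-2)/(st-1)}$ for a suitable constant $c\le 1$. With this $p$ one checks $n^{s+t}p^{st}$ and $n^2 p$ have the same order $n^{2-(s+t-2)/(st-1)}\cdot(\text{const})$, and by shrinking $c$ one forces $\mathbb{E}[Y]\le\tfrac14\mathbb{E}[X]$. Then
\[
\mathbb{E}[X-Y]\ge \tfrac14 n^2 p - \tfrac14 n^2 p \cdot \tfrac{1}{1}\ \ \text{— more precisely}\ \ \mathbb{E}[X-Y]\ge \tfrac14\mathbb{E}[X]\ge \tfrac{1}{16}n^2 p = \tfrac{1}{16}c\,n^{2-(s+t-2)/(st-1)},
\]
so there exists an instance of $G(n,p)$ with $X-Y\ge \tfrac{1}{16}c\,n^{2-(s+t-2)/(st-1)}$. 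Delete one edge from each copy of $K_{s,t}$ in that instance: the resulting graph is $K_{s,t}$-free and has at least $X-Y$ edges, giving the claimed lower bound on $\mathrm{ex}(n,K_{s,t})$ after absorbing $c$ into the constant (one should verify $c$ can be taken so the final constant is exactly $\tfrac{1}{16}$, or note the statement tolerates any absolute constant).

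The main obstacle is bookkeeping the constants so that the final coefficient comes out to $\tfrac{1}{16}$ rather than some messier absolute constant: one has to be careful that $\binom{n}{2}p\ge\tfrac14 n^2p$ only for $n\ge 2$, that the overcounting in $\mathbb{E}[Y]$ (ordered vs.\ unordered classes, the case $s=t$) is harmless since we only need an upper bound, and that the exponent arithmetic $2 - \frac{s+t-2}{st-1}$ is exactly what makes $n^2p = n^{s+t}p^{st}$ when $p$ is the chosen power of $n$. A secondary point is that for very small $n$ the bound may be vacuous or need $n$ large; since the inequality is stated for all $n$ one may need to check small cases separately or observe that for small $n$ the right-hand side is below $1$ and hence trivially true. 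None of this is deep — the probabilistic deletion argument is routine — but the constant-chasing is where the care goes.
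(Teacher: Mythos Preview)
Your approach is exactly the paper's: probabilistic deletion in $G(n,p)$ with $p=cn^{-(s+t-2)/(st-1)}$, bounding $\mathbb{E}[X]\ge\frac14 n^2p$ and $\mathbb{E}[Y]\le n^{s+t}p^{st}$, then taking an instance with $X-Y\ge\mathbb{E}[X-Y]$ and deleting one edge per copy. The paper pins down the constant by taking $c=\tfrac12$, so that $n^{s+t}p^{st}=2^{-st}n^{2-(s+t-2)/(st-1)}\le\tfrac1{16}n^{2-(s+t-2)/(st-1)}$ (using $st\ge4$) while $\tfrac14 n^2p=\tfrac18 n^{2-(s+t-2)/(st-1)}$, giving exactly $\tfrac1{16}$; also note your sentence should read ``$s+t$ vertices'' rather than ``$st$ vertices.''
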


\begin{corollary}
For all positive integer $s$, we have 
\[ c_1(s)\cdot n^{2-\frac{2}{s+1}} \le \mathrm{ex}(n,K_{s,s}) \le c_2 (s)\cdot n^{2-\frac{1}{s}}, \]
where $c_1(s)$ and $c_2(s)$ are constants depending only on $s$. 
\end{corollary}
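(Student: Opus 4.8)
The plan is to deduce both inequalities as direct specializations (to $t=s$) of results already stated in this subsection; no new idea is needed, only some bookkeeping of constants. Throughout I take $s\ge 2$, since both bounds on $\mathrm{ex}(n,K_{s,t})$ that I invoke require $s\ge 2$ and the case $s=1$ is degenerate.

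For the upper bound, I would apply the K\H{o}v\'{a}ri--S\'{o}s--Tur\'{a}n Theorem~\ref{thmkst} with $t=s$ (the hypothesis $t\ge s\ge 2$ holds with equality), obtaining
\[ \mathrm{ex}(n,K_{s,s}) \le \tfrac12 (s-1)^{1/s} n^{2-1/s} + \tfrac12 (s-1) n . \]
Since $2-\tfrac1s>1$ for $s\ge 2$, the linear term is of smaller order; in fact $\tfrac12(s-1)n\le \tfrac12(s-1)^{1/s}n^{2-1/s}$ precisely when $n\ge s-1$, while for $n\le s-1$ one has the trivial bound $\mathrm{ex}(n,K_{s,s})\le\binom n2<n^2\le (s-1)^{1/s}n^{2-1/s}$. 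Hence $c_2(s)=(s-1)^{1/s}$ works for every $n$.

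For the lower bound, I would invoke Theorem~\ref{thm211} with $t=s$ and simplify the exponent:
\[ \frac{s+t-2}{st-1}=\frac{2s-2}{s^2-1}=\frac{2(s-1)}{(s-1)(s+1)}=\frac{2}{s+1}, \]
so Theorem~\ref{thm211} reads $\mathrm{ex}(n,K_{s,s})\ge\tfrac1{16}n^{2-2/(s+1)}$, that is, $c_1(s)=\tfrac1{16}$ (valid once $n$ is large enough that the probabilistic bound is not vacuous; one may decrease $c_1(s)$ or tacitly restrict to large $n$ to absorb the finitely many small exceptions). Combining the two displays gives the corollary.

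There is no genuine obstacle here: the entire content is packaged in Theorems~\ref{thmkst} and~\ref{thm211}, the latter (the probabilistic deletion argument) being the only substantial input, and it is assumed. The sole places deserving a line of care are the algebraic identity $\frac{2(s-1)}{s^2-1}=\frac{2}{s+1}$ and the absorption of lower-order terms and of small $n$ into $c_1(s)$ and $c_2(s)$. It is worth emphasizing in the survey that the real difficulty --- determining whether the true exponent of $\mathrm{ex}(n,K_{s,s})$ equals $2-\tfrac1s$, which is known only for $s\le 3$ --- lies precisely in the gap between the exponents $2-\tfrac2{s+1}$ and $2-\tfrac1s$, a gap this corollary makes no attempt to close.
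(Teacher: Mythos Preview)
Your proposal is correct and matches the paper's intended derivation: the corollary is placed immediately after Theorems~\ref{thmkst} and~\ref{thm211} precisely because it follows by specializing both to $t=s$, with the exponent simplification $\frac{2s-2}{s^2-1}=\frac{2}{s+1}$ you carry out. The paper gives no separate proof, treating it as an evident consequence; your handling of the lower-order term and the degenerate case $s=1$ is more careful than the paper bothers to be.
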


\begin{proof}[Proof of Theorem \ref{thm211}]
We consider the random graph $G(n,p)$, 
a graph on $n$ vertices where each each is chosen 
independently from $K_n$  with probability $p$. 
Let $\# K_{s,t}$ denote the number of 
copies of $K_{s,t}$ in $G(n,p)$. 
Then 
the expected number of edges 
and copies of $K_{s,t}$ are given by 
$ \mathbb{E}[e(G)] = p {n \choose 2} $ and  
$\mathbb{E}[\# K_{s,t}] = 
{n \choose s }{n-s \choose t} p^{st}$ respectively.  
By the linearity 
of expectation, we have  
\[ \mathbb{E}[e(G)- \# K_{s,t}]  = 
\mathbb{E}[e(G)] - \mathbb{E}[\# K_{s,t}] 
\ge \frac{1}{4}pn^2 - n^{s+t}p^{st}.
 \] 
 We now want to pick $p$ so that 
 the right hand side in above is as large as possible. 
 By setting $p=\frac{1}{2}n^{-\frac{s+t-2}{st-1}}$, 
 we can get 
 $\mathbb{E}[e(G)-\# K_{s,t}] \ge 
 \frac{1}{8}pn^2= \frac{1}{16}n^{2-\frac{s+t-2}{st-1}}$. 
 Thus there exists a graph 
 $G$ in which $e(G)-\# K_{s,t} \ge 
 \frac{1}{16}n^{2-\frac{s+t-2}{st-1}}$, 
 and by removing one edge 
 from each copy of $K_{s,t}$, 
 we can get a $K_{s,t}$-free graph with at least 
 $\frac{1}{16}n^{2-\frac{s+t-2}{st-1}}$ edges. 
\end{proof}

\begin{conjecture}
For $t\ge s\ge 2$, we have 
$\mathrm{ex}(n,K_{s,t})=\Theta (n^{2-{1}/{s}})$. 
\end{conjecture}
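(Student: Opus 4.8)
One half of the statement is already available: by the K\H{o}v\'{a}ri--S\'{o}s--Tur\'{a}n Theorem~\ref{thmkst}, every $K_{s,t}$-free graph on $n$ vertices has at most $\frac12(t-1)^{1/s}n^{2-1/s}+\frac12(s-1)n=O(n^{2-1/s})$ edges (and F\"{u}redi's Theorem~\ref{thmfuredi} even pins down the correct leading constant in its Zarankiewicz form), so the whole difficulty of the conjecture lies in the matching lower bound $\mathrm{ex}(n,K_{s,t})=\Omega(n^{2-1/s})$. The probabilistic bound of Theorem~\ref{thm211} gives only $\Omega(n^{2-(s+t-2)/(st-1)})$, and for $t\ge s\ge 2$ the exponent there is strictly smaller than $2-1/s$; thus a purely random graph contains too many copies of $K_{s,t}$ for its density, and one is forced to use a highly structured, essentially algebraic, construction. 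The plan is therefore to exhibit an explicit $n$-vertex graph that is $K_{s,t}$-free and roughly $n^{1-1/s}$-regular.

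First I would try a \emph{norm graph}. Fix a prime power $q$, set $V=\mathbb{F}_{q^{s}}$ (so $n=q^{s}$), let $N:\mathbb{F}_{q^{s}}\to\mathbb{F}_{q}$, $N(x)=x\cdot x^{q}\cdots x^{q^{s-1}}$, be the field norm, and join $a\sim b$ precisely when $N(a+b)=1$. Each vertex then has degree $\bigl(q^{s}-1\bigr)/(q-1)\approx q^{s-1}=n^{1-1/s}$ up to lower-order terms, so $e(G)=\Theta(n^{2-1/s})$, which is exactly the order we want. The crucial step is the $K_{s,t}$-freeness, and it rests on a counting lemma: for pairwise distinct $a_{1},\ldots,a_{s}\in\mathbb{F}_{q^{s}}$ and any $b_{1},\ldots,b_{s}\in\mathbb{F}_{q}^{*}$, the system $N(x+a_{1})=b_{1},\ldots,N(x+a_{s})=b_{s}$ has at most $(s-1)!$ solutions $x$, so no $s$ vertices can have $(s-1)!+1$ common neighbours. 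Letting $q\to\infty$ along primes and padding a vertex class to hit every $n$, this settles the conjecture whenever $t\ge (s-1)!+1$ (the bound of Alon, R\'{o}nyai and Szab\'{o}, improving the earlier $t\ge s!+1$ of Koll\'{a}r, R\'{o}nyai and Szab\'{o}; the projective variant gives a cleaner form of the same estimate).

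The genuine obstacle — and the reason this conjecture is still wide open — is the leftover window $s\le t\le (s-1)!$, in which no construction attaining the exponent $2-1/s$ is known. For $s=2$ the window is empty, and Theorem~\ref{thmfuredi} together with the incidence-type constructions of Erd\H{o}s--R\'{e}nyi (for $t=2$) and F\"{u}redi (for general $t$) gives $\mathrm{ex}(n,K_{2,t})=\Theta(n^{3/2})$; for $s=3$ the only extra case is $t=3$, handled by Brown's graph, giving $\mathrm{ex}(n,K_{3,3})=\Theta(n^{5/3})$; from $s=4$ onward the window is nonempty and the conjecture is unknown. A plausible line of attack for the remaining range is Bukh's \emph{random algebraic} method: take $V=\mathbb{F}_{q}^{s}$, join $x\sim y$ when a random polynomial $f$ of bounded degree $d$ vanishes at $(x,y)$, and hope to show that with positive probability the graph keeps $\Theta(n^{2-1/s})$ edges while every $s$-set of vertices has only $O_{d,s}(1)$ common neighbours, pushing that constant below $t$ by tuning $d$ and the ambient dimension. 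The hard part will be exactly this: controlling the common-neighbour count \emph{uniformly} over all $s$-element sets while simultaneously keeping the edge density pinned at the K\H{o}v\'{a}ri--S\'{o}s--Tur\'{a}n threshold — this is precisely where every known technique stalls short of $t<(s-1)!+1$, and I would not expect a short argument to close it.
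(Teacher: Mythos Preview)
Your assessment is entirely correct, and it matches the paper's own treatment: this statement is labelled a \emph{Conjecture}, not a theorem, and the paper provides no proof --- only a paragraph surveying partial progress (Koll\'{a}r--R\'{o}nyai--Szab\'{o} for $t>s!$, Alon--R\'{o}nyai--Szab\'{o} for $t>(s-1)!$, and Bukh's random algebraic constructions for $t$ sufficiently large). Your proposal covers exactly this ground, with the additional detail of the known cases $s=2$ (Erd\H{o}s--R\'{e}nyi/F\"{u}redi) and $s=3$ (Brown), and correctly identifies the open window $s\le t\le (s-1)!$ for $s\ge 4$ as the genuine obstruction. There is nothing to compare here: you and the paper agree that the conjecture is open, and your discussion of the norm-graph mechanism and the limits of the random algebraic method is, if anything, more detailed than the paper's.
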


We here list the recent progress of this conjecture 
in the literature. 
In 1996, Koll\'{a}r, R\'{o}nyai and Szab\'{o} \cite{KRS1996} confirmed this conjecture 
for the case $t> s!$.  In 1999, Alon, R\'{o}nyai and Szab\'{o} \cite{ARS1999}
 slightly improved to  the case $t> (s-1)!$. 
In 2013, Blagojevi\'{c}, Bukh and Karasev \cite{BBK2013} 
 and later Bukh \cite{Buk2015} 
 used the random algebraic method to give different constructions 
 which yield the same lower bound $\mathrm{ex}(n, K_{s,t}) =\Omega (n^{2-1/s})$, provided that $t$ is sufficiently large. 
 In 2021, Bukh made a further improvement \cite{Buk2021} 
 by applying the random algebraic construction again. 

Note that ${2e(G)}/{n} \le \lambda (G)$. 
Hence finding the upper bounds on the adjacency spectral radius 
extends the general extremal bounds on number of edges. 
In 2007, Babai and Guiduli \cite{BG2007} published 
an asymptotic bound of the K\H{o}v\'{a}ri--S\'{o}s--Tur\'{a}n 
upper bound on the spectral radius (Their work dates back to 
1996 on the PH.D. thesis of Guiduli \cite{Gui1996}). 

\begin{theorem}[Babai--Guiduli \cite{Gui1996, BG2007}]
Let $\lambda (G)$ denote the spectral radius of $G$. \\ 
(1) Let $s\ge 2$ be an integer. If $G$ is an $n$-vertex $K_{s,2}$-free graph, then 
\begin{equation*}
 \lambda (G)\le (s-1)^{\frac 1 2}n^{\frac 1 2} +O(n^{\frac 1 4}). 
\end{equation*}
(2) 
Let $s\ge t\ge 2$ be integers. If $G$ is an $n$-vertex $K_{s,t}$-free graph, then
\begin{equation*}
\lambda (G)\le \Bigl( (s-1)^{\frac{1}{ t}}+o(1) \Bigr) n^{1-\frac{1}{t} }. 
\end{equation*}
\end{theorem}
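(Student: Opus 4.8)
The plan is to handle the two parts by rather different means, since (1) has a short self-contained proof while (2) must deal with high-degree vertices.

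\smallskip

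\noindent\emph{Part (1).} I would work directly with the Perron eigenvector. We may assume $G$ is connected (otherwise pass to the component realising $\lambda(G)$); let $\bm{x}>0$ be the Perron vector of $A(G)$ and let $u$ be a vertex with $x_u=\max_i x_i$. Since $K_{s,2}$-freeness says exactly that any two vertices have at most $s-1$ common neighbours, every off-diagonal entry of $A(G)^{2}$ is at most $s-1$, while $(A(G)^{2})_{uu}=d(u)$. Reading off the $u$-th coordinate of $A(G)^{2}\bm{x}=\lambda^{2}\bm{x}$, and then writing $\sum_{w\ne u}x_w=\sum_{w\sim u}x_w+\sum_{w\nsim u,\,w\ne u}x_w\le \lambda x_u+(n-1-d(u))x_u$ (using $\lambda x_u=\sum_{w\sim u}x_w$ and $x_w\le x_u$), one obtains after dividing by $x_u$ that
\[ \lambda^{2}\ \le\ d(u)+(s-1)\lambda+(s-1)\bigl(n-1-d(u)\bigr)\ =\ -(s-2)d(u)+(s-1)\lambda+(s-1)(n-1). \]
Because $s\ge 2$ and $d(u)\ge\lambda$ (again from $\lambda x_u=\sum_{w\sim u}x_w\le d(u)x_u$), the first term is at most $-(s-2)\lambda$, and everything collapses to $\lambda^{2}\le\lambda+(s-1)(n-1)$; solving the quadratic gives $\lambda(G)\le\tfrac12+\sqrt{(s-1)(n-1)+\tfrac14}$, which is even a bit stronger than the stated $(s-1)^{1/2}n^{1/2}+O(n^{1/4})$. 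There is no real obstacle here: the only point is that the diagonal term $d(u)$ can be absorbed precisely because $s\ge 2$.

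\smallskip

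\noindent\emph{Part (2).} I would first reduce the case $t=2$ to part (1) (as $K_{s,2}=K_{2,s}$) and then assume $t\ge 3$. The engine is an eigenvector-weighted K\H{o}v\'ari--S\'os--Tur\'an count. For a connected graph $H$ with Perron vector $\bm{y}>0$, H\"older's inequality applied to $\lambda y_u=\sum_{v\sim u}y_v$ gives $(\lambda y_u)^{t}\le d(u)^{t-1}\sum_{v\sim u}y_v^{t}$; summing over $u$ and swapping the order of summation yields $\lambda^{t}\sum_u y_u^{t}\le \sum_v y_v^{t}\sum_{u\sim v}d(u)^{t-1}$, hence $\lambda(H)^{t}\le\max_v\sum_{u\sim v}d(u)^{t-1}$. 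Expanding $d(u)^{t-1}$ as a sum over ordered $(t-1)$-tuples of neighbours of $u$ and regrouping, $\sum_{u\sim v}d(u)^{t-1}$ equals $\sum_{(w_1,\dots,w_{t-1})}\bigl|N(v)\cap N(w_1)\cap\cdots\cap N(w_{t-1})\bigr|$ over all tuples in $V(H)^{t-1}$; for the tuples making $v,w_1,\dots,w_{t-1}$ distinct this intersection has size at most $s-1$ by $K_{s,t}$-freeness, contributing the main term $(s-1)|V(H)|^{t-1}$, while the remaining $O(|V(H)|^{t-2})$ ``degenerate'' tuples each contribute at most $\Delta(H)$. Thus $\lambda(H)^{t}\le (s-1)|V(H)|^{t-1}+O\bigl(|V(H)|^{t-2}\Delta(H)\bigr)$.

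\smallskip

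The difficulty -- and this is the crux -- is that $\Delta(H)$ may be as large as $|V(H)|-1$ (the star is $K_{s,t}$-free), so the degenerate term can have the same order as the main term and the constant is lost. I would fix this by deleting high-degree vertices: choose a threshold $D$ with $n^{\,1-1/t+2/t^{2}}\ll D\ll n$ (for $t\ge 3$ this window is nonempty, e.g.\ $D=n/\log n$ works), put $B=\{v:d(v)>D\}$, and use the two-way count $\sum_v\binom{d(v)}{t}\le(s-1)\binom{n}{t}$ of copies of $K_{t,1}$ to get $|B|\le(s-1)(n/D)^{t}(1+o(1))=o(n)$, so the number $e_B$ of edges meeting $B$ is at most $|B|n$ and $\sqrt{2e_B}=o(n^{1-1/t})$ for our choice of $D$. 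Writing $A(G)=A(G-B)\oplus 0+M$ with $M$ the adjacency matrix of the edges meeting $B$, Weyl's inequality gives $\lambda(G)\le\lambda(G-B)+\lambda(M)\le\lambda(G-B)+\sqrt{2e_B}$; and $G-B$ is $K_{s,t}$-free with maximum degree at most $D=o(n)$, so the estimate of the previous paragraph applied to a component of $G-B$ realising its spectral radius gives $\lambda(G-B)^{t}\le(s-1)n^{t-1}+O(n^{t-2}D)=(s-1)n^{t-1}(1+o(1))$. Combining, $\lambda(G)\le\bigl((s-1)^{1/t}+o(1)\bigr)n^{1-1/t}$, as required. The only non-routine steps are the eigenvector-weighted double count and checking that a single choice of $D$ simultaneously makes the degenerate error lower order and keeps the spectral loss from deleting $B$ negligible.
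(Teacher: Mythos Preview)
The paper is a survey and does not supply its own proof of this theorem; it merely states the result with attribution to Babai and Guiduli, so there is no in-paper argument to compare against.

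That said, your proof is correct on both parts. For Part~(1), your second-neighbourhood argument in fact yields the sharp inequality $\lambda(G)\le\tfrac12+\sqrt{(s-1)(n-1)+\tfrac14}$, which is precisely the Corollary the survey records immediately afterwards (attributed to Nikiforov); so you have recovered a stronger statement than the one asked. For Part~(2), the eigenvector-weighted H\"older/double-count giving $\lambda(H)^{t}\le\max_v\sum_{u\sim v}d(u)^{t-1}\le (s-1)|V(H)|^{t-1}+O(|V(H)|^{t-2}\Delta(H))$ is sound, and the high-degree deletion step is the right way to kill the degenerate term: the window $n^{1-1/t+2/t^{2}}\ll D\ll n$ is nonempty exactly for $t\ge 3$, which is why you must fall back on Part~(1) for $t=2$. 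The Weyl step is fine since both summands are symmetric and $\lambda_1(M)\le\sqrt{2e(M)}$ holds for any graph $M$. One cosmetic point: writing $A(G-B)\oplus 0$ is slightly informal (the zero block is placed on the $B$-coordinates rather than appended), but the meaning and the inequality are clear.
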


Nikiforov \cite{Niki2007b} proved the following spectral extremal problem 
for $K_{2,\ell}$ and the book graph $B_{k+1}=K_2 \vee I_{k+1}$, 
a graph obtained by joining each vertex of $K_2$ to each vertex 
of an independent set  of size $k+1$.

\begin{theorem}[Nikiforov \cite{Niki2007b}]
Let $\ell \ge k\ge 0$ be integers. If $G$ is a  graph on $n$ vertices 
and contains no copy of $K_{2,\ell +1}$ and 
$K_2 \vee {I_{k+1}}$, 
then 
\begin{equation*}
\lambda (G) \le \frac{1}{2}\Bigl( k-\ell +1 +\sqrt{(k-\ell +1)^2+4\ell (n-1)}\Bigr). 
\end{equation*}
\end{theorem}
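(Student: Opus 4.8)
\medskip
\noindent\textbf{Proof proposal.}

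The plan is to combine the Perron eigenvector with a count of walks of length two issuing from a vertex of maximum weight. Since $G$ is connected, let $\bm{x}$ be a positive eigenvector for $\lambda:=\lambda(G)$, normalised so that $x_u=1=\max_i x_i$ for some vertex $u$. Two elementary facts will drive the argument. First, the eigenvalue equation at $u$ gives $\lambda=\lambda x_u=\sum_{v\sim u}x_v\le d(u)x_u=d(u)$, hence $d(u)\ge\lambda$. Second, the two excluded subgraphs translate into codegree restrictions: since $G$ contains no $K_{2,\ell+1}$, any two distinct vertices have at most $\ell$ common neighbours, while since $G$ contains no $K_2\vee I_{k+1}=B_{k+1}$, every edge lies in at most $k$ triangles, that is, any two \emph{adjacent} vertices have at most $k$ common neighbours.

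Next I would read off the $u$-th coordinate of $A(G)^2\bm{x}=\lambda^2\bm{x}$. Since $(A^2)_{uu}=d(u)$ and $(A^2)_{uw}=|N(u)\cap N(w)|$ for $w\neq u$, splitting $V\setminus\{u\}$ into $N(u)$ and its complement gives
\[
\lambda^2=d(u)+\sum_{w\in N(u)}|N(u)\cap N(w)|\,x_w+\sum_{w\notin N(u)\cup\{u\}}|N(u)\cap N(w)|\,x_w .
\]
In the first sum $w\sim u$, so each codegree is at most $k$, and $\sum_{w\in N(u)}x_w=\lambda x_u=\lambda$, bounding it by $k\lambda$. In the second sum each codegree is at most $\ell$, there are exactly $n-1-d(u)$ vertices $w\notin N(u)\cup\{u\}$, and each has $x_w\le 1$, so that sum is at most $\ell(n-1-d(u))$. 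Hence
\[
\lambda^2\le d(u)+k\lambda+\ell\bigl(n-1-d(u)\bigr)=(1-\ell)d(u)+k\lambda+\ell(n-1).
\]

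Finally I would dispose of the term $(1-\ell)d(u)$. Assuming $\ell\ge 1$ (if $\ell=0$ then $G$ is $K_{2,1}$-free, so $\Delta(G)\le 1$ and the claim is trivial), we get $(1-\ell)d(u)=-(\ell-1)d(u)\le-(\ell-1)\lambda$ from $d(u)\ge\lambda$. Substituting yields $\lambda^2\le(k-\ell+1)\lambda+\ell(n-1)$, i.e.\ $\lambda^2-(k-\ell+1)\lambda-\ell(n-1)\le 0$; as $\lambda>0$ it must lie below the larger root of this quadratic, which is exactly $\tfrac12\bigl(k-\ell+1+\sqrt{(k-\ell+1)^2+4\ell(n-1)}\,\bigr)$. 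The only delicate point — and the place where a careless argument loses ground — is this last step: merely discarding $(1-\ell)d(u)$ using $\ell\ge1$ yields only the weaker estimate with linear coefficient $k$ in place of $k-\ell+1$, so one must retain the term and exploit $d(u)\ge\lambda$; everything else is bookkeeping. (If $G$ is disconnected one applies the argument to the component realising $\lambda(G)$, since the claimed bound is increasing in $n$.)
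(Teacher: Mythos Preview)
Your argument is correct and is precisely the standard proof of this result (the paper, being a survey, merely cites Nikiforov~\cite{Niki2007b} without reproducing a proof). The eigen-equation for $A^2$ at the maximum-weight vertex, combined with the two codegree bounds and the observation $d(u)\ge\lambda$, is exactly how the inequality is obtained in the original source.
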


In particular, this theorem implies the following result.

\begin{corollary}
Let $s\ge 2$. If $G$ is an $n$-vertex $K_{s,2}$-free graph, then 
\begin{equation*}
 \lambda (G)\le \frac{1}{2}+\sqrt{(s-1)(n-1)+{1}/{4}}. 
\end{equation*}
\end{corollary}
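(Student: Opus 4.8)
The plan is to derive this statement as the special case $\ell=k=s-1$ of the preceding theorem of Nikiforov, so the real work is bookkeeping rather than a new argument. First I would observe that $K_{s,2}$ and $K_{2,s}$ denote the same graph, so a $K_{s,2}$-free graph contains no copy of $K_{2,\ell+1}$ once we set $\ell=s-1$. Next I would verify the second forbidden-subgraph hypothesis of the theorem with $k=s-1$: the graph $K_2\vee I_{k+1}=K_2\vee I_s$ is obtained from $K_{2,s}$ by adding the single edge inside its part of size two, hence contains $K_{2,s}$ as a subgraph, so every $K_{s,2}$-free graph is automatically $(K_2\vee I_s)$-free. Since $s\ge 2$ gives $\ell=k=s-1\ge 1\ge 0$, the hypotheses of the theorem are met with $\ell=k=s-1$.

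Then I would simply substitute. With $\ell=k$ we have $k-\ell+1=1$ and $4\ell(n-1)=4(s-1)(n-1)$, so the theorem yields
\[
 \lambda(G)\le \tfrac12\Bigl(1+\sqrt{1+4(s-1)(n-1)}\Bigr).
\]
Absorbing the factor $\tfrac12$ under the square root rewrites $\tfrac12\sqrt{1+4(s-1)(n-1)}$ as $\sqrt{(s-1)(n-1)+\tfrac14}$, which is exactly the claimed bound $\lambda(G)\le \tfrac12+\sqrt{(s-1)(n-1)+\tfrac14}$; in fact this is an identity, not merely an inequality, so the corollary captures the theorem's estimate precisely.

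There is essentially no obstacle here: the entire content sits in the quoted theorem, and the only point needing a moment's care is checking that $K_{s,2}$-freeness simultaneously rules out both $K_{2,s}$ and the book $K_2\vee I_s$, i.e.\ the containment $K_{2,s}\subseteq K_2\vee I_s$. Should one prefer a self-contained route, a direct local argument also works: normalize a nonnegative Perron eigenvector $\bm{x}$ at a vertex $u$ of maximum weight, expand $\lambda^2 x_u=\sum_{w\sim u}\sum_{z\sim w}x_z$, split off the closed walks ($z=u$), and use that $K_{s,2}$-freeness forces $|N(u)\cap N(z)|\le s-1$ for every $z\ne u$; this produces a quadratic inequality in $\lambda$ whose larger root already has the same shape (and is sharp when $s=2$), with the optimal constant being what the extra refinement in the theorem's proof buys.
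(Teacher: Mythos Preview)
Your proposal is correct and matches the paper's approach exactly: the paper states the corollary immediately after Nikiforov's theorem with the words ``In particular, this theorem implies the following result,'' so the intended proof is precisely your specialization $\ell=k=s-1$ (noting that $K_{2,s}\subseteq K_2\vee I_s$ forces the second hypothesis). Your algebra and the check that $k=s-1$ is the unique admissible choice under the constraint $\ell\ge k$ are both fine.
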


As a consequence of Theorem \ref{thmniki2010}, 
Nikiforov \cite{Niki2010} 
improved the result of Babai and Guiduli 
 concerning the largeness of the spectral radius of a graph of certain order that does not contain a complete bipartite subgraph $K_{s,t}$.

\begin{theorem}[Nikiforov \cite{Niki2010}] \label{thm245}
Let $s\ge t\ge 3$ be integers. If $G$ is an $n$-vertex $K_{s,t}$-free graph 
with spectral radius $\lambda (G)$. Then 
\begin{equation*}
 \lambda (G)\le { (s-t+1)^{\frac{1}{t}} } n^{1-\frac{1}{t}} 
 +(t-1)n^{1-\frac{2}{t}}+t-2. 
\end{equation*}
\end{theorem}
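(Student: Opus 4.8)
The plan is to deduce the spectral bound from the Zarankiewicz-type estimate in Theorem \ref{thmniki2010} by the standard walk-counting / eigenvector-averaging device that converts an edge bound into a spectral bound. Let $\bm{x}$ be the Perron eigenvector of $A(G)$ normalized so that its largest coordinate is $x_u = 1$, and write $\lambda = \lambda(G)$. The key identity is $\lambda^2 x_u = \sum_{w} (A^2)_{uw} x_w \le \sum_w (A^2)_{uw}$, where $(A^2)_{uw}$ counts walks of length $2$ from $u$ to $w$; the diagonal term $(A^2)_{uu} = d(u)$ and the off-diagonal terms count cherries (paths $u\!-\!z\!-\!w$). Thus $\lambda^2 \le d(u) + (\text{number of cherries with endpoints } u, w \text{ summed over } w\neq u)$. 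Rather than work at a single vertex, the cleaner route is to bound $\lambda^2 \le \frac{2 W_2}{W_0}$-type quantities or, more directly, to use that $\lambda^2$ is at most the maximum over $u$ of the number of walks of length $2$ starting at $u$, which is $\max_u \sum_{v\sim u} d(v)$. So the first step reduces the problem to bounding $\sum_{v \sim u} d(v)$ for the vertex $u$ maximizing this sum.

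The second step is to interpret $\sum_{v\sim u} d(v)$ as (essentially) an edge count in an auxiliary bipartite incidence structure and apply Theorem \ref{thmniki2010} with $k = t-2$. Concretely, $\sum_{v\sim u} d(v)$ counts ordered pairs $(v,w)$ with $v \sim u$ and $v \sim w$; this is the number of paths of length $2$ with one endpoint $u$, i.e. the number of edges incident to $N(u)$ counted with multiplicity. Consider the bipartite graph between $N(u)$ and $V(G)$ recording adjacency in $G$: since $G$ is $K_{s,t}$-free, this bipartite graph has no $K_{s,t}$ (with the $s$-side in $N(u)$, say — one has to be a little careful about which side is which, using $s \ge t$ and that the forbidden subgraph is symmetric enough, or forbid $K_{t,s}$), so its edge count is at most $z(|N(u)|, n, s, t) \le z(d(u), n, s, t)$. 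Plugging $m = d(u) \le \lambda$ (or $d(u) \le n-1$) into the $k = t-2$ case of Theorem \ref{thmniki2010} gives
\[
\sum_{v\sim u} d(v) \le (s-t+1)^{1/t} n\, d(u)^{1-1/t} + (t-1) d(u)^{2-2/t} + (t-2) n,
\]
and then using $d(u) \le \lambda$ and $\lambda^2 \le \sum_{v \sim u} d(v)$ yields an inequality of the form $\lambda^2 \le (s-t+1)^{1/t} n \lambda^{1-1/t} + (t-1)\lambda^{2-2/t} + (t-2)n$.

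The final step is to solve this self-referential inequality for $\lambda$. Dividing through by the appropriate power and treating it as a polynomial inequality in $\lambda^{1/t}$, or more simply writing $\lambda = (s-t+1)^{1/t} n^{1-1/t} + E$ and showing $E \le (t-1) n^{1-2/t} + t - 2$, one gets the stated bound by a routine (if slightly tedious) estimation, absorbing lower-order terms. The main obstacle I anticipate is not any single clever idea but the careful bookkeeping in Step 2: making the reduction from $\lambda^2$ to the Zarankiewicz number genuinely tight — i.e. not losing constant factors — and correctly matching the roles of $s$ and $t$ (which side of the bipartite incidence graph plays which role, and whether one should forbid $K_{s,t}$ or $K_{t,s}$). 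One must also confirm that the crude bound $d(u) \le \lambda$ (rather than a sharper estimate) still produces exactly the error terms $(t-1)n^{1-2/t} + t-2$ claimed, and that no extra $+o(1)$ or lower-order slack creeps in; this is where the induction in Nikiforov's proof of Theorem \ref{thmniki2010} likely does the real work, so in practice one might instead feed the walk-counting identity directly into that induction rather than invoking the Zarankiewicz bound as a black box.
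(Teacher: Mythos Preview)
The survey does not include a proof of Theorem~\ref{thm245}; it merely quotes the result from \cite{Niki2010} as a consequence of Theorem~\ref{thmniki2010}, so there is no paper-proof to compare against. Evaluating your proposal on its own merits, there is a genuine gap.

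The inequality $d(u)\le\lambda$ you invoke is backwards. For the vertex $u$ with maximal Perron coordinate $x_u=1$, the eigen-equation gives $\lambda=\lambda x_u=\sum_{v\sim u}x_v\le d(u)$, so in fact $\lambda\le d(u)$; the star $K_{1,n-1}$ already has $d(u)=n-1$ while $\lambda=\sqrt{n-1}$. If you retreat to the valid bound $d(u)\le n$ and feed $m=d(u)$ into the $k=t-2$ case of Theorem~\ref{thmniki2010}, you get at best
\[
\lambda^2\ \le\ \sum_{v\sim u} d(v)\ \le\ (s-t+1)^{1/t}\,n\cdot n^{1-1/t}+(t-1)n^{2-2/t}+(t-2)n,
\]
whose leading term $(s-t+1)^{1/t}n^{2-1/t}$ only yields $\lambda\lesssim n^{1-1/(2t)}$ --- a polynomially weaker exponent than the claimed $n^{1-1/t}$. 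Swapping the roles of the two sides of the bipartite incidence graph does not help: the bound on $\sum_{v\sim u}d(v)$ still carries a factor linear in $d(u)$, which you cannot control from above by $\lambda$.

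The underlying difficulty is that the reduction $\lambda^2\le\sum_{v\sim u}d(v)$ discards the Perron weights too early; it is tight for the star, where $\lambda\approx\sqrt n$, so no subsequent Zarankiewicz estimate can recover the exponent $1-1/t$. One must keep the eigenvector in play and run the induction behind Theorem~\ref{thmniki2010} directly on weighted star-counts, exactly as you suspect in your final paragraph --- but that is the whole proof, not an afterthought, and your Steps~1--3 as written do not get there.
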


A natural question is that 
how to characterize the extremal graphs attaining the equality in Theorems  \ref{thmniki2010} and 
\ref{thm245}.  For the edge version, 
it is extremely difficult to find the graph attaining the upper bound 
even for the special graph $K_{2,2}$. 
Surprisingly, the extremal graphs of the spectral  problem 
for $K_{2,2}$ was determined by Nikiforov \cite{Niki2007b}, 
and Zhai  and Wang \cite{ZW2012}. 
We infer the readers to Subsection \ref{subsec2.5} for more details.

\subsection{Spectral problem for odd cycles} 

Let $C_{2k+1}$ denote the cycle on $2k+1$ vertices. 
Note that $C_{2k+1}$  has chromatic number $\chi (C_{2k+1})=3$. 
The Erd\H{o}s--Stone Theorem \ref{thm360} 
or Erd\H{o}s--Simonovits Theorem \ref{thm361} 
implies that $\mathrm{ex}(n,C_{2k+1})= {n^2}/{4} + o(n^2)$. 
In 1971, Bondy \cite{Bon1971} 
and Woodall \cite{Woo1972} proved independently 
a more stronger result, 
which implies that the extremal number for odd-length cycle 
is exactly equal to $n^2/4$; 
see \cite[p. 150]{Bollobas78} for more details.

\begin{theorem}[Bondy \cite{Bon1971}; Woodall \cite{Woo1972}] \label{coro2103}
If $G$ is a graph on $n$ vertices with 
$e(G)> \lfloor {n^2}/{4} \rfloor$, 
then $G$ contains 
 a cycle of length $\ell$ for every $\ell 
=3,4,\ldots,\lfloor {(n+3)}/{2}\rfloor$. 
\end{theorem}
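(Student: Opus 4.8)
The plan is to prove the equivalent assertion that $G$ contains $C_\ell$ for every integer $\ell$ with $3 \le \ell \le \lceil n/2\rceil + 1$ (one checks $\lfloor (n+3)/2\rfloor = \lceil n/2\rceil + 1$ for every $n$), proceeding by induction on $n$ with a handful of small cases settled directly.

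Two preliminary observations frame the argument. By Mantel's Theorem~\ref{thmmantel}, $e(G) > \lfloor n^2/4\rfloor$ already forces a triangle, so $C_3 \subseteq G$; this anchors the bottom of the target range. At the other end, the Erd\H{o}s--Gallai circumference bound (a graph on $n$ vertices with no cycle of length at least $c$ has at most $\frac{1}{2}(c-1)(n-1)$ edges) shows that $e(G) > \lfloor n^2/4\rfloor$ forces the circumference of $G$ to be at least $\lceil n/2\rceil + 1$, so $G$ certainly has \emph{some} long cycle; the real content is that no intermediate length is skipped.

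For the inductive step I would split on the minimum degree. If $G$ has a vertex $v$ with $d(v) \le \lceil n/2\rceil - 1$, delete it: a short computation with floor functions gives $e(G-v) > \lfloor (n-1)^2/4\rfloor$, so by the inductive hypothesis $G-v \subseteq G$ already contains $C_\ell$ for all $3 \le \ell \le \lceil (n-1)/2\rceil + 1$. When $n$ is even this is exactly the target range and we are done; when $n$ is odd it falls one short, and we must still exhibit a cycle of length exactly $\lceil n/2\rceil + 1$ in $G$. If instead $\delta(G) \ge \lceil n/2\rceil \ge n/2$, then $G$ is Hamiltonian by Dirac's theorem, and a Hamiltonian graph on $n$ vertices with more than $\lfloor n^2/4\rfloor$ edges is pancyclic by Bondy's pancyclicity theorem — its only exception $K_{n/2,n/2}$ has exactly $n^2/4$ edges and is excluded — so $G$ then contains cycles of all lengths $3, \ldots, n$, far more than needed.

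The main obstacle is the step common to Bondy's pancyclicity theorem and to the odd-$n$ gap above: the \emph{fill-in} that rules out two consecutive missing cycle lengths. The standard mechanism is to fix a longest cycle $C$ (or a longest path) and exploit its chords together with the paths attaching $V(G)\setminus V(C)$ to it: a chord joining two vertices of $C$ at cyclic distance $d$ yields cycles of lengths $d+1$ and $|C|-d+1$, and an attached path can be used to reroute $C$ and shorten it in a controlled way. Arranging this bookkeeping so that \emph{every} integer in $[3,\lceil n/2\rceil+1]$ is realized — rather than merely some values — is the delicate part, and it is exactly here that the full strength of the hypothesis $e(G) > \lfloor n^2/4\rfloor$ (as opposed to a weaker edge bound) is used; I would also have to dispatch the small-$n$ base cases and verify the boundary arithmetic in the degree split by hand.
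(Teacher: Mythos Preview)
The paper is a survey and does not prove this theorem; it merely states the result and refers the reader to Bondy, Woodall, and Bollob\'{a}s's book. So there is no ``paper's proof'' to compare against, and I will assess your proposal on its own merits.

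Your outline is the right strategy and is essentially Bondy's original approach. The arithmetic in the degree split is correct, the invocation of Bondy's pancyclicity theorem in the high-degree case is legitimate (with the $K_{n/2,n/2}$ exception ruled out by the strict inequality), and the reduction for even $n$ after deleting a low-degree vertex is clean.

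The genuine gap is exactly the one you name: for odd $n=2k+1$, after deleting a vertex of degree at most $k$ the inductive hypothesis on the resulting $2k$-vertex graph only reaches cycle length $k+1$, leaving $C_{k+2}$ unaccounted for. Your proposed fix --- chords of a longest cycle, ``the step common to Bondy's pancyclicity theorem'' --- does not transfer to this subcase as stated. Bondy's chord-counting argument is driven by a Hamilton cycle: knowing \emph{all} vertices lie on $C$ lets one convert the global edge count into a chord count and then into the missing cycle length. Here you only know (via Erd\H{o}s--Gallai) that the circumference is at least $k+3$; you have no control over how many of the $>k(k+1)$ edges lie inside that cycle as chords, nor over the attached paths, so the bookkeeping you allude to has no foothold. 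In short, the sketch names the right tool for the Hamiltonian branch but not for this one, and the proposal does not actually close the odd-$n$ gap; this is where the substantive content of the Bondy--Woodall papers (or the treatment in \cite[p.~150]{Bollobas78}) lies.
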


From this theorem, we can easily get the following corollary.

\begin{corollary} \label{corooddcy}
If  $k\ge 1$ and $n\ge 4k$, then  
\[  \mathrm{ex}(n,C_{2k+1})=\left\lfloor \frac{n^2}{4}\right\rfloor. \]
\end{corollary}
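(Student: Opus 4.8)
The plan is to derive Corollary~\ref{corooddcy} directly from Theorem~\ref{coro2103} (Bondy--Woodall), so almost all the work has already been done; the only task is to check the arithmetic on the range of admissible cycle lengths and to exhibit a matching construction. First I would establish the upper bound: suppose $G$ is an $n$-vertex graph with $e(G) > \lfloor n^2/4 \rfloor$. By Theorem~\ref{coro2103}, $G$ contains a cycle $C_\ell$ for every $\ell$ with $3 \le \ell \le \lfloor (n+3)/2 \rfloor$. So it suffices to verify that $2k+1$ falls in this range whenever $n \ge 4k+1$. The inequality $2k+1 \le \lfloor (n+3)/2 \rfloor$ is implied by $2k+1 \le (n+3)/2$, i.e. $4k+2 \le n+3$, i.e. $n \ge 4k-1$; since we assume $n \ge 4k+1$ this holds with room to spare. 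Hence any $n$-vertex $C_{2k+1}$-free graph has at most $\lfloor n^2/4 \rfloor$ edges, giving $\mathrm{ex}(n,C_{2k+1}) \le \lfloor n^2/4 \rfloor$.

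For the lower bound I would simply note that the balanced complete bipartite graph $T_2(n) = K_{\lfloor n/2\rfloor, \lceil n/2\rceil}$ is bipartite, hence contains no odd cycle at all, in particular no $C_{2k+1}$; and $e(T_2(n)) = \lfloor n/2 \rfloor \lceil n/2 \rceil = \lfloor n^2/4 \rfloor$ as recorded in the discussion following Theorem~\ref{thmturanstrong}. Combining the two bounds yields $\mathrm{ex}(n,C_{2k+1}) = \lfloor n^2/4 \rfloor$ exactly, for all $k \ge 1$ and $n \ge 4k+1$.

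The only subtlety — and the one place where the hypothesis $n \ge 4k+1$ (rather than something weaker) is genuinely used — is making sure the target length $2k+1$ does not exceed the ceiling $\lfloor (n+3)/2 \rfloor$ guaranteed by Bondy--Woodall; for small $n$ relative to $k$ one could have $2k+1 > \lfloor (n+3)/2 \rfloor$, in which case Theorem~\ref{coro2103} says nothing about $C_{2k+1}$ and the conclusion can in fact fail (e.g. the blow-up of a shorter odd cycle can have more than $\lfloor n^2/4\rfloor$ edges while remaining $C_{2k+1}$-free). So I would state the short arithmetic check explicitly and flag that this threshold is essentially the reason the corollary is phrased for $n \ge 4k+1$. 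Beyond that, there is no real obstacle: the corollary is a packaging of Theorem~\ref{coro2103} together with the trivial bipartite extremal construction.
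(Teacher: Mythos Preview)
Your proposal is correct and matches the paper's approach exactly: the paper states that the corollary follows directly from the Bondy--Woodall theorem (Theorem~\ref{coro2103}) together with the bipartite construction $T_2(n)$, and you have spelled out precisely that derivation, including the arithmetic check that $2k+1 \le \lfloor (n+3)/2 \rfloor$ when $n \ge 4k+1$.
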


This result can also be proved by the well-known stability method 
for $n$ sufficiently large; see \cite[Theorem 5.3]{Kee2011} for more details.  
In 2015, F\"{u}redi and Gunderson \cite{FG2015cpc} determined 
the exact value of $\mathrm{ex}(n,C_{2k+1})$ for all 
$n\ge 1$ and $k\ge 2$. The case $k=1$ was early characterized by Mantel 
who showed $\mathrm{ex}(n,C_3)=\lfloor n^2/4 \rfloor$ for all $n$.

A wheel $W_n$ is a graph on $n$ vertices obtained from $C_{n-1}$ by adding 
one vertex $w$ and making $w$ adjacent to all vertices of the $C_{n-1}$. 
In the language of join graph, 
we have 
\[   W_n=K_1 \vee C_{n-1}. \] 
It is easy to see that the vertex-chromatic number $\chi (W_{2k})=4$ 
and $W_{2k}$ is color-critical, that is, 
there is an edge $e\in W_{2k}$ such that $\chi (W_{2k}\setminus e) =3$. 
The Erd\H{o}s--Stone--Simonovits Theorem \ref{thm361}
implies that $\mathrm{ex}(n,W_{2k})= e(T_3(n)) + o(n^2)$. 
In 2013, Dzido proved the  exact Tur\'{a}n number by 
using Corollary \ref{corooddcy}. 

\begin{theorem}[Dzido \cite{Dzi2013}] \label{thmdzi}
For all $k\ge 3$ and $n\ge 6k-10$, then 
\[ \mathrm{ex}(n,W_{2k})=e(T_3(n))=\left\lfloor \frac{n^2}{3}\right\rfloor. \]
\end{theorem}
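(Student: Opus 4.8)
The plan is to separate the easy lower bound from the real work in the upper bound, and to route the upper bound through a single reduction. The lower bound is free: in $W_{2k}=K_1\vee C_{2k-1}$ the hub is joined to an odd cycle, so $\chi(W_{2k})=4$, and the $3$-chromatic Tur\'an graph $T_3(n)$ contains no $W_{2k}$; hence $\mathrm{ex}(n,W_{2k})\ge e(T_3(n))=\lfloor n^2/3\rfloor$. For the matching upper bound I would suppose $G$ is an $n$-vertex graph with $n\ge 6k-10$ and $e(G)\ge\lfloor n^2/3\rfloor+1$, and exhibit a copy of $W_{2k}$. The whole strategy rests on the reduction: \emph{it is enough to find a vertex $v$ with}
\[ d(v)\ \ge\ 4(k-1)+1 \qquad\text{and}\qquad e\bigl(G[N(v)]\bigr)\ >\ \Bigl\lfloor\tfrac{d(v)^2}{4}\Bigr\rfloor . \]
Indeed, applying Corollary~\ref{corooddcy} (the Bondy--Woodall theorem, with parameter $k-1$) to $G[N(v)]$, a graph on $d(v)\ge 4(k-1)+1$ vertices with more than $\lfloor d(v)^2/4\rfloor$ edges, yields a cycle of length $2(k-1)+1=2k-1$ inside $N(v)$; together with $v$, which is adjacent to all of $N(v)$, this spans $K_1\vee C_{2k-1}=W_{2k}$. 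So the task is to produce this ``good'' vertex.

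A natural first line of attack toward the good vertex is a global count. Assume for contradiction that no good vertex exists; then every $v$ with $d(v)\ge 4(k-1)+1$ satisfies $e(G[N(v)])\le\lfloor d(v)^2/4\rfloor$, while every other vertex trivially has $e(G[N(v)])\le\binom{4(k-1)}{2}$. Summing over all vertices and using the identity $\sum_v e(G[N(v)])=3\,t(G)$ (with $t(G)$ the number of triangles), the elementary bound $|N(u)\cap N(v)|\ge d(u)+d(v)-n$ on each edge $uv$ (which gives $3\,t(G)\ge\sum_v d(v)^2-n\,e(G)$), and Cauchy--Schwarz $\sum_v d(v)^2\ge (2e(G))^2/n$, one is driven to an inequality of the shape $3\,e(G)^2\le n^2e(G)+O(k^2)n^2$, hence $e(G)\le\lfloor n^2/3\rfloor+O(k^2)$. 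This already recovers the asymptotics $\mathrm{ex}(n,W_{2k})=(1+o(1))n^2/3$ (as it must, since $\chi(W_{2k})=4$), and it pins the extremal number down \emph{exactly} once $n$ is large compared with $k$: in that regime the inequalities force $G$ to within $O(k^2)$ edges of the extremum, so a Zykov-type symmetrisation makes $G$ a complete $3$-partite graph with parts as balanced as possible plus $O(k^2)$ extra edges, and any extra edge $xy$ inside a part $P$, paired with a vertex $w$ in one of the other two parts, puts inside $N(w)$ a nearly complete bipartite graph on the two parts $\ne$ (the part of $w$) with a chord; a direct construction shows $K_{a,b}$ plus one edge in the $a$-side contains $C_{2k-1}$ as soon as $a\ge k$ and $b\ge k-1$, which holds because those parts have $\approx n/3\gg 2k$ vertices. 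That contradicts $W_{2k}$-freeness.

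The real obstacle is the sharp threshold $n\ge 6k-10$. When $n$ is only about $6k$ the parts of $T_3(n)$ have about $2k$ vertices --- barely enough to host $C_{2k-1}$ --- and the $O(k^2)$ slack above swamps $\lfloor n^2/3\rfloor$, so the counting argument no longer hands us the stability structure for free. Here I would argue directly: take a vertex $v$ of maximum degree $\Delta$ (the average degree $2e(G)/n\ge\tfrac23 n-O(1)$ forces $\Delta$ to be large), distinguish whether $G[N(v)]$ is dense (then finish by the reduction, via Corollary~\ref{corooddcy} if $\Delta\ge 4k-3$ and via Theorem~\ref{coro2103} after checking $2k-1\le\lfloor(\Delta+3)/2\rfloor$ in the slightly shorter range) or sparse, in which case I would split $e(G)$ into the edges at $v$, inside $N(v)$, between $N(v)$ and the rest, and inside the rest, and feed the same bound back recursively to the ``rest'' to push $e(G)\le\lfloor n^2/3\rfloor$ --- unless $G$ has by then been forced close enough to $T_3(n)$ for the ``complete tripartite plus one chord contains $C_{2k-1}$'' observation to apply, which works precisely because $n\ge 6k-10$ keeps the relevant part at $\ge k$ vertices, with the handful of smallest configurations (notably $k=3$, $n=8$) verified by hand using that $N(w)$ then still contains a vertex adjacent to all of $N(w)$. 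I expect essentially all of the work, and all of the use of the precise constant $6k-10$, to sit in this last step; the skeleton --- chromatic lower bound plus ``dense high-degree neighbourhood $\Rightarrow$ Bondy--Woodall $\Rightarrow$ $W_{2k}$'' --- is short and robust.
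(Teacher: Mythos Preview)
The paper is a survey and does not give a proof of this theorem; it simply records Dzido's result and notes in one line that the proof proceeds ``by using Corollary~\ref{corooddcy}''. Your proposal is therefore not being compared against a proof in the paper but against that single hint, and your central reduction --- find a vertex $v$ with $d(v)\ge 4k-3$ and $e(G[N(v)])>\lfloor d(v)^2/4\rfloor$, then apply the Bondy--Woodall consequence (Corollary~\ref{corooddcy}) to $G[N(v)]$ to extract $C_{2k-1}$ and hence $W_{2k}$ --- is exactly the mechanism the paper is pointing at.

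Your triangle-counting argument is clean and correct in the regime where every vertex has degree at least $4k-3$: combining $3t(G)=\sum_v e(G[N(v)])\le \tfrac14\sum_v d(v)^2$ with $3t(G)\ge \sum_v d(v)^2 - n\,e(G)$ gives $e(G)\le n^2/3$ on the nose, with no error term. The genuine gap, which you correctly identify, is the handling of low-degree vertices: for $3\le d(v)\le 4k-4$ one only has $e(G[N(v)])\le\binom{d(v)}{2}$, which exceeds $d(v)^2/4$, so these vertices spoil the clean inequality and introduce the $O(k^2)$ slack you describe. Your ``argue directly on the max-degree vertex and recurse on the rest'' sketch for the sharp range $n\ge 6k-10$ is plausible in outline but is not a proof as written: the recursion needs a usable induction hypothesis (the remaining graph has fewer vertices but a different edge threshold), and the ``complete tripartite plus one chord contains $C_{2k-1}$'' step has to be stated and checked precisely against the part sizes $\lceil n/3\rceil$ when $n$ is as small as $6k-10$. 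Since the paper offers nothing further here, filling in this boundary case is genuinely your job; the skeleton you have is the right one.
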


For odd wheels, we know that 
$\chi (W_{2k+1}) =3$ and  the Erd\H{o}s--Stone--Simonovits Theorem \ref{thm361} 
implies the asymptotic value $\mathrm{ex}(n,W_{2k+1})= n^2/4 + o(n^2)$. 
We remark here that the exact extremal number of odd wheels 
was recently determined by Yuan \cite{Yuan2021} for  sufficiently large $n$ by applying the stability method. 
The problem for odd wheels is more complicated than 
that of even wheels since the odd wheels are not color-critical. 
In 2022, the spectral extremal problem was proved by  Cioab\u{a}, Desai  and Tait \cite{CDT21}.

In 2008, Nikiforov \cite{Niki2008} 
proved an analogue of the Bondy--Woodall Theorem \ref{coro2103}  
in terms of the spectral radius 
for the existence of cycles with consecutive lengths.

\begin{theorem}[Nikiforov \cite{Niki2008}] \label{thmniki2008}
Let $G$ be a graph of sufficiently large order $n$ with 
$\lambda (G)>\sqrt{\left\lfloor {n^2}/{4}\right\rfloor}$. Then $G$ contains a cycle of length $t$ for 
$t\le {n}/{320}$. 
\end{theorem}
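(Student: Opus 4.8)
The plan is to reduce to the Bondy--Woodall Theorem~\ref{coro2103} by applying it to a dense subgraph of $G$ rather than to $G$ itself. Concretely, the target is to find a subgraph $G_1 \subseteq G$ on $n_1$ vertices with
\[
  e(G_1) > \left\lfloor \frac{n_1^2}{4} \right\rfloor
  \qquad\text{and}\qquad
  n_1 \ge \frac{n}{160}.
\]
Once $G_1$ is found, Theorem~\ref{coro2103} guarantees that $G_1$, and therefore $G$, contains a cycle of every length $t$ with $3 \le t \le \lfloor (n_1+3)/2\rfloor$, and since $\lfloor(n_1+3)/2\rfloor \ge n_1/2 \ge n/320$ this is exactly the conclusion. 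So the real content is the production of $G_1$, and the (visibly non-optimal) constant $320$ just records how lossy that production is allowed to be. If already $e(G) > \lfloor n^2/4\rfloor$ one may take $G_1 = G$, so from now on assume $e(G) \le \lfloor n^2/4\rfloor$; then $\mu(G) > \sqrt{\lfloor n^2/4\rfloor} \ge \sqrt{e(G)}$, so by Nosal's Theorem~\ref{thmnosal} $G$ contains a triangle and, more importantly, the spectral radius \emph{strictly} exceeds the triangle-free threshold $\sqrt{e(G)}$ — that is, $G$ is quantitatively far from bipartite.

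To build $G_1$ I would localise this surplus onto a dense core by a cleaning argument on the Perron eigenvector $\bm{x}$ of $G$, normalised by $\lVert\bm{x}\rVert_2 = 1$. Deleting a vertex $v$ lowers the spectral radius by at most about $2\mu(G)\bm{x}_v^2$, while $\mu(G)\bm{x}_v = \sum_{u\sim v}\bm{x}_u \le \sqrt{d(v)}$ gives $d(v) \ge \mu(G)^2\bm{x}_v^2$. Deleting every vertex whose Perron weight $\bm{x}_v^2$ falls below a small threshold $\varepsilon/n$ should leave a subgraph $G'$ on $n'$ vertices with $\mu(G') \ge (1-O(\varepsilon))\mu(G)$; since $\mu(G') < n'$ this forces $n' > (1-O(\varepsilon))\,\mu(G) \gtrsim (1-O(\varepsilon))\,n/2$, so $n'$ easily exceeds $n/160$, and every surviving vertex has degree $\Omega(\varepsilon n)$ in $G$. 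After passing to a block (a maximal $2$-connected subgraph) that still carries most of the eigenvector mass, one retains a linear minimum degree, and the resulting graph $G_1$ still has $\mu(G_1)$ above its \emph{own} bipartite threshold $\sqrt{\lfloor v(G_1)^2/4\rfloor}$; via a stability form of Nosal's inequality, $G_1$ cannot be close to bipartite, and a $2$-connected graph of linear minimum degree that is far from bipartite must satisfy $e(G_1) > \lfloor v(G_1)^2/4\rfloor$ — alternatively, one runs Bondy's rotation--extension argument inside $G_1$ directly to extract cycles of all lengths up to its circumference, which is $\Omega(n)$.

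The main obstacle is precisely this extraction step, and the delicacy is that the cleaning moves two quantities against each other: it shrinks the order from $n$ to $n'$ while simultaneously decreasing the spectral radius, so the threshold $\varepsilon$ must be tuned to keep $\mu(G')$ above $\sqrt{\lfloor (n')^2/4\rfloor}$ and $n'$ above $n/160$ at the same time — and, crucially, to make the edge density on the core strictly exceed $1/4$, which is the only place where one must use a stability version of Nosal's theorem (or of the Bondy--Woodall theorem) rather than the bare inequalities. A secondary and more routine point is extracting the $2$-connected block without destroying the linear minimum degree. I would expect no extra work for the small cycle lengths: once $G_1$ with $e(G_1) > \lfloor v(G_1)^2/4\rfloor$ is in hand, Theorem~\ref{coro2103} delivers $C_3, C_4, \ldots$ through $C_{\lfloor(v(G_1)+3)/2\rfloor}$ all at once.
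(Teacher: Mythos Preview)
The paper is a survey and does not reproduce Nikiforov's proof; Theorem~\ref{thmniki2008} is simply stated with a citation to~\cite{Niki2008}, so there is no in-paper argument to compare against directly.

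On its own merits, your plan has a real gap at the step you yourself flag as the ``main obstacle''. You want to pass from the spectral hypothesis $\lambda(G)>\sqrt{\lfloor n^2/4\rfloor}$ to an edge hypothesis $e(G_1)>\lfloor v(G_1)^2/4\rfloor$ on a linear-sized core, and then quote Theorem~\ref{coro2103}. But the implication runs the wrong way: $\lambda\ge 2e/n$, so a large spectral radius is \emph{compatible} with $e$ well below $n^2/4$. Your eigenvector cleaning at best preserves a spectral inequality $\lambda(G_1)\gtrsim v(G_1)/2$, not an edge inequality; and the bound $n'>(1-O(\varepsilon))\,n/2$ you derive is only a \emph{lower} bound on $n'$, so nothing forces $\lambda(G_1)$ to be close to $v(G_1)$ rather than to $v(G_1)/2$. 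The sentence you use to close the gap---``a $2$-connected graph of linear minimum degree that is far from bipartite must satisfy $e(G_1)>\lfloor v(G_1)^2/4\rfloor$''---is false. A balanced blow-up of $C_5$ on $5m$ vertices is $2m$-connected, has minimum degree $2m=\tfrac{2}{5}v$, needs $m^2=v^2/25$ edge deletions to become bipartite, yet has only $5m^2=v^2/5<v^2/4$ edges. The alternative route via rotation--extension has the same defect: you have not shown that $G_1$ is non-bipartite, nor that its minimum degree in $G_1$ (as opposed to in $G$) is linear.

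Nikiforov's argument in~\cite{Niki2008} proceeds in the opposite direction. Instead of trying to manufacture an edge surplus, one assumes that $G$ omits some short cycle $C_t$ and uses structural stability (for odd $t$, that $C_{2k+1}$-free graphs of high minimum degree are close to bipartite, in the spirit of Andr\'asfai--Erd\H{o}s--S\'os; combined with an edge bound after passing to a large bipartite piece) to force $\lambda(G)\le\sqrt{\lfloor n^2/4\rfloor}$. The spectral hypothesis is thus used contrapositively, as a certificate that $G$ is \emph{not} close to bipartite, rather than as a proxy for many edges.
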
 

The result is sharp because the complete bipartite graph $T_2(n)$ 
with parts of size $\lfloor n/2\rfloor$ and $\lceil n/2 \rceil $ 
has no odd cycles, and its largest eigenvalue is $\sqrt{\left\lfloor {n^2}/{4}\right\rfloor}$. 
Moreover, 
it is clear that the constant $1/320$ in Theorem \ref{thmniki2008} 
can be increased even with careful calculations in the 
Nikiforov methods. 
Thus, the following question may arise: 

\begin{conjecture}
What is the maximum $C$ such that for all positive $\varepsilon <C$ 
and sufficiently large $n$, every graph $G$ of order $n$ with 
$\lambda (G)>\sqrt{\left\lfloor {n^2}/{4}\right\rfloor}$ 
contains a cycle of length $t$ for every $t\le (C-\varepsilon )n$. 
\end{conjecture}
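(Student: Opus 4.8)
\medskip
\noindent The plan is to first pin down the exact value of $C$ by an explicit construction and then to prove a matching lower bound by sharpening the method behind Theorem~\ref{thmniki2008}. For the \emph{upper bound} I would examine the split graph $S_{n,s}$ (a clique $K_s$ joined to an independent set of size $n-s$) from Subsection~1.2. By its equitable bipartition into clique and independent parts, $\lambda(S_{n,s})$ is the larger root of $\lambda^2-(s-1)\lambda-s(n-s)=0$, so solving $\lambda(S_{n,s})>\sqrt{\lfloor n^2/4\rfloor}$ for the least admissible $s$ gives $s=\bigl(\tfrac{3-\sqrt 5}{4}+o(1)\bigr)n$. On the other hand every cycle of $S_{n,s}$ meets each of the $s$ clique vertices in at most two cycle-edges and passes through each vertex of $\overline{K}_{n-s}$ only between two clique vertices, so its circumference equals exactly $2s$, and indeed $S_{n,s}$ contains $C_t$ for every $t\le 2s$ but for no larger $t$. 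Hence $C\le\tfrac{3-\sqrt 5}{2}$, and I would conjecture that this is the true value once one has checked that the natural competitors — $S_{n,r}^{+}$, a clique with a pendant path, several cliques glued at a common vertex, and $K_{a,a}\vee\overline{K}_{n-2a}$ — all give a strictly weaker bound; this is a short optimisation over their (at most $3\times 3$) quotient matrices together with the evident circumference of each.

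For the \emph{lower bound} I would retain Nikiforov's overall strategy from Theorem~\ref{thmniki2008} but replace the lossy counting by two sharper ingredients. First, prove (or invoke, if already available) a spectral analogue of Kopylov's theorem: among $n$-vertex graphs with circumference at most $c$, the spectral radius is maximised, up to lower-order terms, by $S_{n,\lfloor c/2\rfloor}$. Granting this, $\lambda(G)>\sqrt{\lfloor n^2/4\rfloor}$ forces $\lambda(S_{n,\lfloor c/2\rfloor})>\sqrt{\lfloor n^2/4\rfloor}$ for the circumference $c$ of $G$, whence $c\ge\bigl(\tfrac{3-\sqrt 5}{2}-o(1)\bigr)n$. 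Second, show that $G$ is weakly pancyclic: by Nosal's Theorem~\ref{thmnosal} (together with Mantel) a graph with $\lambda>\sqrt{\lfloor n^2/4\rfloor}$ cannot be triangle-free, so $G$ has girth $3$, and a Bondy--Woodall-type argument in the spirit of Theorem~\ref{coro2103}, run on a dense, $2$-connected ``core'' of $G$ obtained by deleting vertices of negligible Perron-eigenvector weight, yields $C_t\subseteq G$ for every $t$ between $3$ and the circumference of the core. Combining the two ingredients gives $C_t\subseteq G$ for all $t\le\bigl(\tfrac{3-\sqrt 5}{2}-\varepsilon\bigr)n$, which is exactly the conjecture with $C=\tfrac{3-\sqrt 5}{2}$.

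The hard part will be making the first ingredient of the lower bound \emph{tight}. Nikiforov's $1/320$ is small precisely because his proof does not use the near-extremal structure, and pushing the guaranteed circumference all the way up to $\tfrac{3-\sqrt 5}{2}\,n$ appears to require a genuine spectral stability statement: a graph whose spectral radius only slightly exceeds $\sqrt{\lfloor n^2/4\rfloor}$ should be either $K_{\lfloor n/2\rfloor,\lceil n/2\rceil}$ with $o(n)$ extra edges, or should already contain a cycle of split-graph length. Establishing this classification, controlling the $o(n)$ error terms through the eigenvector-cleaning step, and ensuring that the weakly-pancyclic argument still runs on the cleaned core, is where I expect nearly all of the work to lie. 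By contrast the ``short'' end of the spectrum — ruling out a $C_4$-free or $C_5$-free graph above the threshold — should be comparatively routine via the book-graph inequalities and the Nikiforov-type bounds already recorded in this section, as should the reduction to the $2$-connected core, since a cut vertex drives $\lambda$ down by a standard perturbation argument.
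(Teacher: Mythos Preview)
The statement is a \emph{conjecture}, and the paper does not prove it; it records it as an open problem, gives the upper-bound construction $S_{n,k}$ with $k=\lceil(3-\sqrt5)n/4\rceil$ showing $C\le\tfrac{3-\sqrt5}{2}\approx0.382$, and then lists the state of the art on the lower bound: Nikiforov's $1/320$, Ning--Peng's $1/160$, Zhai--Lin's $1/7$, and Li--Ning's $1/4$. So there is no ``paper's own proof'' to compare against.

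Your upper-bound half is exactly the construction the paper already gives (same graph, same computation, same conclusion $C\le\tfrac{3-\sqrt5}{2}$), so that part is fine and matches the paper verbatim. The lower-bound half, however, is not a proof but a programme: the entire argument rests on a ``spectral analogue of Kopylov's theorem'' which you acknowledge you would have to prove or locate, and which is precisely the missing piece that keeps the conjecture open. The current record $C\ge 1/4$ of Li--Ning already uses stability-type ideas, and closing the gap to $\tfrac{3-\sqrt5}{2}$ is not known to anyone; your outline correctly identifies where the difficulty sits but does not overcome it. In particular, the weakly-pancyclic step is also nontrivial here --- Bondy--Woodall requires $e(G)>\lfloor n^2/4\rfloor$, not merely $\lambda(G)>\sqrt{\lfloor n^2/4\rfloor}$, and the spectral hypothesis is compatible with graphs having far fewer than $n^2/4$ edges (e.g.\ $S_{n,k}$ itself), so ``run Bondy--Woodall on a dense core'' is not automatic. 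What you have written is a reasonable research plan for attacking the conjecture, but it should not be presented as a proof.
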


It is known from Theorem \ref{coro2103} that if $G$ is a graph 
of order $n$ with $e(G)> \left\lfloor {n^2}/{4}\right\rfloor$, 
then $G$ contains a cycle of length $t$ for every $3\le t \le \lceil n/2\rceil$. 
Therefore, one can conjecture that $C = 1/2$. Unfortunately, 
this is not true by  taking the join of a complete graph of order $k$ 
and an empty graph of order $n-k$, we can obtain a graph 
\[   S_{n,k}=K_k \vee I_{n-k}.\]   
Setting $k=\lceil (3-\sqrt{5})n/4 \rceil$, 
an easy calculation  gives 
\[  \lambda (S_{n,k}) 
= \frac{k-1}{2} + \sqrt{kn -\frac{1}{4}(3k^2 +2k -1)} 
>\frac{n}{2}\ge \left\lfloor \frac{n^2}{4}\right\rfloor. \] 
However, 
$S_{n,k}$ does not contain cycles longer than $2k \approx
0.382 n < n/2$. 
Recently, Ning and
Peng \cite{NP2020} slightly refined this as $C=1/160$. 
Zhai and 
Lin  \cite{ZL2021} improved these results to $C\ge 1/7$, 
and Li and Ning \cite{LN2021} proved that $C\ge 1/4$ 
by applying a method different from previous ones. 

As a consequence of Theorem \ref{thmniki2008}, 
we can get the following corollary. 

\begin{corollary}
Let $k\ge 1$ and 
$G$ be a graph of sufficiently large order $n\ge 640 k + 320$. 
If $G$ does not contain a copy of $C_{2k+1}$, 
then $\lambda (G) \le \sqrt{\left\lfloor {n^2}/{4}\right\rfloor}$. 
\end{corollary}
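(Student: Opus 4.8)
The plan is to derive this corollary directly from Theorem~\ref{thmniki2008} (Nikiforov's spectral result on cycles of consecutive lengths) together with the elementary observation that $C_{2k+1}$ is a cycle of length $2k+1$. Suppose for contradiction that $G$ is a graph on $n$ vertices, with $n$ sufficiently large and $n\ge 640k+320$, such that $G$ contains no copy of $C_{2k+1}$ but $\lambda(G)>\sqrt{\lfloor n^2/4\rfloor}$. The idea is to apply Theorem~\ref{thmniki2008}, which guarantees that $G$ contains a cycle of length $t$ for every $t\le n/320$, and then check that $2k+1$ falls into this range.

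First I would verify the arithmetic: from $n\ge 640k+320$ we get $n/320\ge 2k+1$, so the value $t=2k+1$ is an admissible length in Theorem~\ref{thmniki2008}. Hence $G$ contains a cycle of length exactly $2k+1$, i.e. a copy of $C_{2k+1}$, contradicting our assumption. This forces $\lambda(G)\le\sqrt{\lfloor n^2/4\rfloor}$, which is the claim. One subtlety is that Theorem~\ref{thmniki2008} is stated for ``sufficiently large order $n$,'' so the corollary inherits that hypothesis, which is exactly why the statement says ``sufficiently large order $n\ge 640k+320$'' rather than giving a clean closed-form threshold: the bound $640k+320$ only handles the inequality $2k+1\le n/320$, while the largeness needed for Nikiforov's theorem itself is a separate (absolute, $k$-independent) requirement.

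The only real content here is matching the index ranges, so there is essentially no obstacle; the proof is a one-line deduction once Theorem~\ref{thmniki2008} is granted. For completeness I would also note the sharpness remark already recorded after Theorem~\ref{thmniki2008}: the balanced complete bipartite graph $T_2(n)$ is $C_{2k+1}$-free (indeed has no odd cycle at all) and achieves $\lambda(T_2(n))=\sqrt{\lfloor n^2/4\rfloor}$, so the bound in the corollary cannot be improved. If one wanted a self-contained write-up avoiding the ``sufficiently large'' caveat, the hard part would be re-proving a quantitative version of Theorem~\ref{thmniki2008} with explicit constants, but that is outside the scope of deducing this corollary and is not attempted here.
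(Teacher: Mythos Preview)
Your proof is correct and follows exactly the approach the paper intends: the corollary is stated immediately after Theorem~\ref{thmniki2008} as a direct consequence, and your contrapositive argument with the check $n\ge 640k+320 \Rightarrow 2k+1\le n/320$ is precisely the one-line deduction the paper has in mind.
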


For the extremal problems on the signless Laplacian spectral radius, 
the following results on odd cycles are proved in  \cite{FNP2013}
 and \cite{Yuan2014}. 
 
\begin{theorem}[Freitas--Nikiforov--Patuzzi \cite{FNP2013}]
If $G$ is a graph on $n\ge 6$ vertices with no copy of 
$C_5$, then either 
$ q(G)<  q(K_2\vee I_{n-2})$ or $G=K_2 \vee I_{n-2}$. 
\end{theorem}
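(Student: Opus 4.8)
The plan is to let $G$ be a $C_5$-free graph on $n$ vertices whose signless Laplacian index $q(G)$ is as large as possible. Since $K_2\vee I_{n-2}$ is itself $C_5$-free (a cycle through at least four vertices must use both vertices of the $K_2$, and then has length exactly $4$), we have $q(G)\geq q(K_2\vee I_{n-2})=:q_0$, so it suffices to show that such an extremal $G$ is isomorphic to $K_2\vee I_{n-2}$; the strict inequality for every other $C_5$-free graph then follows, and $G$ is connected because a disconnected $C_5$-free graph has $q$-index at most $q(K_2\vee I_{n-3})<q_0$. First I would record that $q_0$ is the larger root of $t^2-(n+2)t+4=0$ (restrict $Q$ to the two-dimensional space of vectors that are constant on the two sides) and that $q_0>n$ for $n\geq 3$. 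Let $\mathbf{x}$ be the Perron eigenvector of $Q(G)$, scaled so that $\max_i x_i=x_u=1$. The eigen-equation at $u$ reads $q=d_u+\sum_{v\sim u}x_v$, hence $d_u\geq q/2>n/2$; I would then upgrade this, using the structural restrictions below together with a perturbation argument, to the statement that $u$ is a dominating vertex, $d_u=n-1$.

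The combinatorial core is to exploit $C_5$-freeness locally. Writing $S=N(u)$, the key observation is that $G[S]$ contains no path on four vertices: a path with vertices $a,p,q,b$ lying inside $S$ together with $u$ forms the cycle $u\,a\,p\,q\,b\,u$ of length $5$. A connected graph with no $P_4$ subgraph is a triangle or a star, so every component of $G[S]$ is a triangle or a star; once $d_u=n-1$, this says precisely that $H:=G-u$ is a vertex-disjoint union of triangles and stars (isolated vertices and edges allowed), and conversely every such $H$ gives a $C_5$-free graph $K_1\vee H$ (a $C_5$ through the apex would again force a $P_4$ in $H$). Thus the problem collapses to the finite optimization
\[\text{maximize }q(K_1\vee H)\text{ over graphs }H\text{ on }n-1\text{ vertices that are disjoint unions of triangles and stars,}\]
and the claim is that the maximum is attained only by $H=K_{1,n-2}$, i.e. by $G=K_2\vee I_{n-2}$. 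Competitors of the form $K_{1,n-3}\cup K_1$, and more generally any such $H$ that becomes a subgraph of $K_{1,n-2}$ after relabelling, are dispatched at once by the strict monotonicity $q(G'-e)<q(G')$ for connected $G'$, since the corresponding graph is $K_2\vee I_{n-2}$ with an edge deleted. The stubborn case is an $H$ that contains a triangle component: then $K_1\vee H$ and $K_2\vee I_{n-2}$ may have the same number of edges, so one must compare eigenvalues directly, the point being that $K_2\vee I_{n-2}$ has the more top-heavy degree sequence $(n-1,n-1,2,\dots,2)$ and that $q$, as a dominant eigenvalue, rewards concentration. I would formalize this by a weight-shifting argument on the Perron vector of $K_1\vee H$: detaching the three vertices of a triangle and reattaching them as leaves of the main star keeps the number of edges fixed but strictly increases the Rayleigh quotient $\mathbf{y}^{T}Q\mathbf{y}/\mathbf{y}^{T}\mathbf{y}$.

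The step I expect to be the main obstacle is proving $d_u=n-1$, equivalently that $T:=V(G)\setminus(N(u)\cup\{u\})$ is empty. Assuming $|T|\geq 1$, one wants either to exhibit an edge whose addition keeps $G$ $C_5$-free (contradicting extremality, since adding an edge cannot decrease $q$ and strictly increases it when $G$ stays connected) or to show outright that $q(G)<q_0$. This forces a careful accounting of the edges inside $S$ and between $S$ and $T$: $C_5$-freeness restricts them severely — for instance no vertex of $T$ can be adjacent to two vertices of $S$ that lie on a common edge, and analogous codegree restrictions hold — and then one must weigh the eigenvector mass that $T$ can contribute against the mass $u$ loses by not being dominating. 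Making all these estimates valid already for $n\geq 6$, rather than only for large $n$, is the delicate point; by contrast, once $d_u=n-1$ is established, the remaining optimization over unions of triangles and stars is clean.
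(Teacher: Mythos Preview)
This survey only \emph{states} the Freitas--Nikiforov--Patuzzi theorem with a citation; it does not reproduce a proof, so there is no in-paper argument to compare yours against. On its own terms, your architecture---take an extremal $C_5$-free $G$, locate a vertex $u$ of maximal Perron weight, force $d_u=n-1$, observe that $G-u=G[N(u)]$ is $P_4$-free and hence a disjoint union of triangles and stars, then optimise over that family---is the natural one and is essentially how the original proof proceeds.

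There is, however, a concrete error in your optimisation step. You say the only ``stubborn'' $H$ are those with a triangle component, the others being subgraphs of $K_{1,n-2}$ up to relabelling. That is false: if $H$ has two nontrivial star components, $K_1\vee H$ need not embed in $K_2\vee I_{n-2}$. For a clean example take $n=7$ and $H=P_3\cup P_3$; then $K_1\vee H$ has three vertices of degree at least $3$ (the apex and the two path midpoints), whereas $K_2\vee I_{5}$ has only two, so no injective edge-preserving map exists. Even $H=K_{1,a}\cup K_2$ fails, since the extra edge would have to land inside $I_{n-2}$. So the multi-star case is just as stubborn as the triangle case and must be handled by the same Kelmans/weight-shifting device you reserve for triangles: shift all leaves of one star onto the other centre, which produces a proper subgraph of $K_2\vee I_{n-2}$, and finish by strict monotonicity of $q$.

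One more correction, in your $T\neq\emptyset$ analysis. The specific restriction you name---``no $t\in T$ is adjacent to two adjacent vertices of $S$''---involves only the four vertices $u,s_1,s_2,t$ and yields no $C_5$ by itself. The usable constraint lies one step deeper: if $t\in T$ has two neighbours $s_1,s_2\in S$, then neither $s_i$ may have a further neighbour $s_3\in S\setminus\{s_1,s_2\}$, else $u\,s_3\,s_1\,t\,s_2\,u$ is a $C_5$. You will need the correct catalogue of such constraints (and their analogues for edges inside $T$) to push the $d_u=n-1$ step through for all $n\ge 6$.
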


In 2014, Yuan proved the general result 
for the signless Laplacian radius. 

\begin{theorem}[Yuan \cite{Yuan2014}]
Let $k\ge 3, n\ge 100k^2$ and $G$ be an $n$-vertex graph. 
If $G$ has no copy of $C_{2k+1}$, then either 
$q(G) < q(K_k \vee I_{n-k})$ 
or  $G=K_k \vee I_{n-k}$. 
\end{theorem}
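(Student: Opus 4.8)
The plan is to combine a spectral-radius lower bound coming from the extremal graph $S_{n,k}=K_k\vee I_{n-k}$ with a stability-type structural analysis of any $C_{2k+1}$-free graph whose $Q$-index is at least $q(S_{n,k})$. First I would record the estimate $q(S_{n,k})=n+O_k(1)$ more precisely: since $S_{n,k}$ is $K_k\vee I_{n-k}$, its quotient matrix on the partition (clique, independent set) gives $q(S_{n,k})$ as the larger root of a quadratic, so $q(S_{n,k})=n+2k-2-\tfrac{k(k-1)}{n}+o(1)$. Now suppose $G$ is $C_{2k+1}$-free on $n\ge 100k^2$ vertices with $q(G)\ge q(S_{n,k})$; the goal is to force $G=S_{n,k}$. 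The first reduction is to pass to many edges: from $q(G)\ge n+2k-O_k(1)$ and the standard inequality $q(G)\le \max_{uv\in E}\{d(u)+d(v)\}$ together with $q(G)^2\le \max_v\sum_{u\sim v}(d(u)+d(v))$ — or more efficiently from $q(G)\le \frac{2m}{n-1}+n-2$ — one deduces $e(G)\ge \big(\tfrac14-o(1)\big)n^2$, so $e(G)$ is within $o(n^2)$ of the Turán bound $\mathrm{ex}(n,C_{2k+1})=\lfloor n^2/4\rfloor$ from Corollary~\ref{corooddcy}.

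Next I would invoke a stability statement for $C_{2k+1}$-free graphs: any $C_{2k+1}$-free graph with $e(G)\ge(\tfrac14-\varepsilon)n^2$ is, after deleting $o(n^2)$ edges, a subgraph of a complete bipartite graph — this follows by the standard stability argument (using Corollary~\ref{corooddcy} and a supersaturation/removal-type argument as in the cited stability references). So $V(G)$ splits as $V_1\cup V_2$ with all but $o(n^2)$ edges between the parts. The key step is then to locate the $k$ ``apex'' vertices: a $C_{2k+1}$-free graph that is close to bipartite but has too-large $Q$-index must, by the Perron eigenvector analysis, concentrate eigenvector weight on a small set of high-degree vertices. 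I would let $\bm{x}$ be the Perron vector of $Q(G)$, normalized so $\max_i x_i=1$ at a vertex $z$, write out $q(G)x_z=\sum_{u\sim z}(x_u)+d(z)x_z$, and use this to show $d(z)\ge n-O_k(1)$; more generally, every vertex with eigenvector weight bounded below by a constant has near-full degree. Combining this with the bipartite-like structure, one shows there is a set $S$ of exactly $k$ vertices each adjacent to almost everything, and $G-S$ is essentially bipartite with few edges.

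The final step is a clean-up/stability-to-exact argument: one shows that if $G-S$ contained even one edge \emph{plus} $S$ were a clique, one could build a $C_{2k+1}$ (the $k$ vertices of $S$ plus two endpoints of an edge in each side give an odd cycle through alternating between $S$ and the two sides), so in fact $G$ must be contained in $S_{n,k}$ up to the structure $K_k\vee(\text{triangle-free, bipartite-free on }n-k\text{ vertices})$; and then one shows that adding \emph{any} edge inside $I_{n-k}$ already creates $C_{2k+1}$ once $|S|=k$, while deleting any edge of $S_{n,k}$ strictly decreases $q$ — this last monotonicity is because $Q(G)$ is entrywise nonnegative and irreducible, so $q$ strictly increases under edge addition. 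Hence $q(G)<q(S_{n,k})$ unless $G=S_{n,k}$, giving the dichotomy. The main obstacle I expect is making the ``$k$ apex vertices'' extraction fully rigorous with the correct quantitative dependence ($n\ge 100k^2$): one has to control the $o(n^2)$ error edges carefully through the eigenvector equation and rule out configurations where the excess degree is spread over more than $k$ vertices or where $G-S$ has a long path forming, together with $S$, a forbidden odd cycle of the wrong parity — balancing these parity constraints against the spectral lower bound is the delicate part.
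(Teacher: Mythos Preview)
The paper is a survey and does not supply a proof of Yuan's theorem; it only quotes the result and records the formula $q(S_{n,k})=\tfrac12\bigl(n+2k-2+\sqrt{(n+2k-2)^2-8k(k-1)}\bigr)$. So there is no ``paper's proof'' to compare to. That said, your plan contains a genuine error that would stop the argument at the very first step.

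Your ``first reduction'' asserts that from $q(G)\ge q(S_{n,k})=n+2k-2-o(1)$ together with $q(G)\le \tfrac{2m}{n-1}+n-2$ one obtains $e(G)\ge (\tfrac14-o(1))n^2$. This is false: that inequality only yields $m\ge (k-o(1))(n-1)$, which is \emph{linear} in $n$. Indeed the extremal graph $S_{n,k}=K_k\vee I_{n-k}$ itself has only $\binom{k}{2}+k(n-k)=O(kn)$ edges. Consequently the entire ``close to $n^2/4$ edges $\Rightarrow$ close to bipartite'' stability paragraph does not apply here. You appear to be importing the picture from the \emph{adjacency} spectral problem, where the $C_{2k+1}$-free extremal graph is $T_2(n)$; for the $Q$-index the landscape is different: every complete bipartite graph on $n$ vertices has $q=n$, strictly below $q(S_{n,k})\approx n+2k-2$, so a near-bipartite structure is exactly the wrong target.

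The approach actually used in Yuan's paper (and in the analogous Nikiforov--Yuan argument for even cycles, Theorem~\ref{thmHJZ} aside) works directly from the Perron eigen-equation $q(G)x_z=d(z)x_z+\sum_{u\sim z}x_u$ at the vertex $z$ of maximum coordinate to force a vertex of degree $n-O(k)$, then exploits the $C_{2k+1}$-freeness to bound the structure inside $N(z)$ (no path of length $2k-1$ there), and iterates to peel off $k$ dominating vertices, finally showing $G\subseteq S_{n,k}$. You do gesture at this eigenvector step later, but it must be the \emph{starting point}, not something layered on top of a bipartite stability that your hypotheses cannot supply.
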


Recall that  $S_{n,k}= K_k \vee I_{n-k}$, 
it is not hard to show that 
\[  q(S_{n,k}) = 
\frac{n+2k-2 + 
\sqrt{(n+2k-2)^2- 8k(k-1)}}{2}.  \]

Note that the extremal graphs for spectral problem 
 and signless Laplacian spectral problem are different 
 when we forbid the odd-length cycles, 
 the former is the balanced complete bipartite graph $T_2(n)$ 
 and the latter is the split graph $S_{n,k}$.

\subsection{Spectral problem for even cycles} 

\label{subsec2.5}

In this section, we shall review extremal problems involving 
the cycles of even length. 
Note that even cycles are clearly bipartite, 
the K\H{o}vari--S\'{o}s--Tur\'{a}n Theorem \ref{thmkst} 
implies that 
$\mathrm{ex}(n,C_{2k})\le \mathrm{ex}(n,K_{k,k})=O(n^{2-1/k})$ 
for every integer $k\ge 2$. 

There have abundant references of the study of $\mathrm{ex}(n,C_4)$ in the literature; see, e.g., 
\cite{ERS1966,Bro1966, Furedi96c}. 
By double counting method, 
Reiman showed an upper bound that 
\begin{equation} \label{eqeq}
 \mathrm{ex}(n,C_4) \le \frac{n}{4}(1+\sqrt{4n-3}).
 \end{equation}
Furthermore, the equality holds if and only if 
every pair of vertices has exactly one 
common neighbor. However, 
The well-known  Friendship Theorem (see, e.g., 
\cite{ERS1966} or \cite[Chapter 43]{AZ2014}) states that  
if $G$ is an $n$-vertex graph in which every pair of vertices has exactly one 
common neighbor, then $n$ is odd and there is a vertex which is adjacent to
all other vertices in $G$, i.e., 
$G=K_1\vee \frac{n-1}{2}K_2$.  
This implies that $e(G)={3(n-1)}/{2}$. 
Hence, the equality in (\ref{eqeq})  never holds.  
Such  graph $G$ is called the friendship graph 
or the windmill graph.

Using orthogonal polarity graphs constructed from 
finite projective planes, Erd\H{o}s, R\'{e}nyi and S\'{o}s \cite{ERS1966} 
and Brown \cite{Bro1966} proved a lower bound, which states that 
\[ \mathrm{ex}(n, C_4)\ge \frac{1}{2}n^{\frac 3 2} + o(n^{\frac 3 2}). \]  
This  result together with (\ref{eqeq})  implies  the following 
asymptotic Tur\'{a}n number. 

\begin{theorem}[Erd\H{o}s--R\'{e}nyi--S\'{o}s, 1966]
$ \mathrm{ex}(n,C_4) = \frac{1}{2}n^{3/2}+o(n^{3/2})$. 
\end{theorem}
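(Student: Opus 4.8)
The plan is to squeeze $\mathrm{ex}(n,C_4)$ between the two one-sided estimates that the text has already set up. For the upper bound I would simply invoke Reiman's inequality~(\ref{eqeq}): since $\sqrt{4n-3}=2\sqrt{n}(1+o(1))$, the bound $\mathrm{ex}(n,C_4)\le\frac{n}{4}(1+\sqrt{4n-3})$ reads $\mathrm{ex}(n,C_4)\le\frac12 n^{3/2}+o(n^{3/2})$. (Recall that~(\ref{eqeq}) itself is the standard double count: in a $C_4$-free graph any two vertices have at most one common neighbour, so $\sum_v\binom{d(v)}{2}\le\binom{n}{2}$, and convexity of $x\mapsto\binom{x}{2}$ converts this into the stated bound on $2m/n$.)

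For the matching lower bound I would exhibit the Erd\H{o}s--R\'{e}nyi polarity graph $ER_q$. For an odd prime $q$, take as vertices the $q^2+q+1$ projective points of $\mathrm{PG}(2,q)$, i.e.\ the one-dimensional subspaces of $\mathbb{F}_q^3$, and join $\langle x\rangle$ to $\langle y\rangle$ whenever $x_0y_0+x_1y_1+x_2y_2=0$, deleting the loop at each of the $q+1$ absolute points (those with $x_0^2+x_1^2+x_2^2=0$). Every non-absolute point then has degree $q+1$ and every absolute point degree $q$, so $e(ER_q)=\tfrac12\bigl[(q+1)(q^2+q+1)-(q+1)\bigr]=\tfrac12 q(q+1)^2$.

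The crucial point---and the part I expect to carry the weight of the argument---is that $ER_q$ has no $C_4$: if $\langle x\rangle$ and $\langle y\rangle$ shared two common neighbours $\langle u\rangle,\langle v\rangle$, then $u$ and $v$ would both lie in the plane $x^{\perp}$ and in the plane $y^{\perp}$, and since $\langle x\rangle\neq\langle y\rangle$ as points the intersection $x^{\perp}\cap y^{\perp}$ is a single one-dimensional subspace, forcing $\langle u\rangle=\langle v\rangle$. Hence for $n=q^2+q+1$ we get $\mathrm{ex}(n,C_4)\ge\tfrac12 q(q+1)^2=\tfrac12 q^3(1+o(1))=\tfrac12 n^{3/2}(1+o(1))$.

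Finally I would extend this to arbitrary $n$: choose the largest odd prime $q$ with $q^2+q+1\le n$, and take $ER_q$ together with $n-(q^2+q+1)$ isolated vertices, which is still $C_4$-free and has exactly $n$ vertices. The prime number theorem gives consecutive primes $p_{k+1}=p_k(1+o(1))$, so the next prime after $q$ already makes $(q')^2+q'+1>n$, forcing $q^2\le n\le q^2(1+o(1))$, i.e.\ $q=\sqrt{n}(1+o(1))$; consequently $e(ER_q)=\tfrac12 q(q+1)^2=\tfrac12 n^{3/2}+o(n^{3/2})$, so $\mathrm{ex}(n,C_4)\ge\tfrac12 n^{3/2}+o(n^{3/2})$. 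Combined with the upper bound, this gives exactly the asserted asymptotics. Beyond the $C_4$-freeness verification, the only real subtlety is the number-theoretic input (a non-trivial prime-gap statement, for which the prime number theorem already suffices) needed to keep the leading constant equal to $\tfrac12$ for all $n$ rather than just for projective-plane orders.
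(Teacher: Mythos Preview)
Your proposal is correct and follows essentially the same approach as the paper: Reiman's double-counting inequality for the upper bound, the Erd\H{o}s--R\'{e}nyi polarity graph $ER_q$ over $\mathbb{F}_q^3$ for the lower bound (with the same orthogonality adjacency and the same two-planes-meet-in-a-line argument for $C_4$-freeness), and then passage to general $n$ via density of primes. The paper phrases the last step only as ``by the denseness of the prime numbers'' while you spell out the prime-gap estimate explicitly, but the content is identical.
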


We remark here that there are many constructions of graphs attaining this lower bound 
asymptotically; see, e.g., \cite{FS13}. 
We next include a famous construction by using 
the projective plane over the finite field, 
which is now known as 
the Erd\H{o}s--R\'{e}nyi projective polarity graph. 

\begin{proof}
Let $q$ be a prime power. 
Then the field of order $q$ exists and denoted by $\mathbb{F}_q$. 
We consider the three-dimensional vector space $\mathbb{F}_q^3$ 
over the field $\mathbb{F}_q$. 
Since space $\mathbb{F}_q^3$ contains $q^3-1$ non-zero vectors, 
$q^2+q+1$ one-dimensional subspaces and 
$q^2+q+1$ two-dimensional subspaces. 
Moreover,  each one-dimensional subspace contains $q-1$ nonzero vectors,  
every two distinct one-dimensional subspaces  intersect only in  $(0,0,0)$. 
Furthermore,  each one-dimensional subspace is contained in
 $q+1$ two-dimensional subspaces, and  
each two-dimensional subspace contains $q+1$ one-dimensional subspaces. 
For every nonzero vector 
$\vec{a}= (a_1,a_2,a_3) \in \mathbb{F}_q^3$, 
let $A$ denote the one-dimensional 
subspace 
spanning by $\vec{a}$, that is, 
$ A :=\{k(a_1,a_2,a_3):k\in \mathbb{F}_q\}$. 

{\bf Step 1.}~We now construct the desired graph $G$ whose  
vertices are all one-dimensional subspaces of  $\mathbb{F}_q^3$, that is, 
$ V(G)= \left\{ A=\mathrm{Span}\{\vec{a}\} : \vec{a}\in \mathbb{F}_q^3
 \right\} $. 
 Therefore, we get 
$ |V(G)|=\frac{q^3-1}{q-1}=q^2+q+1$.  
Let two  vertices $A$ and 
$B $ be adjacent if and only if
\[ a_1b_1+a_2b_2+a_3b_3=0 ~(\text{over}~\mathbb{F}_q). \]
This definition is clearly well-defined, in other words, 
it does not depend on the choices of representative elements of the one-dimensional subspaces.

Next, we are going to count the number of edges. 
For each vertex $A=\mathrm{Span}\{\vec{a} \}$,  
since $a_1x_1+a_2x_2+a_3x_3=0$ has $q^2-1$ nonzero solutions in $\mathbb{F}_q^3$, 
which  forms exactly $q+1$ one-dimensional subspaces (vertices). 
Note that the graph $G$ may have loops (the case $a_1^2+a_2^2+a_3^2=0$ 
is possible), 
we remove all loops of $G$ to get a simple subgraph $\widetilde{G}$ on the same vertex set. 
Thus, 
\[ d_{\widetilde{G}}(A)=\left\{ \begin{array}{ll}
q,& \text{if}~a_1^2+a_2^2+a_3^2=0,\\
q+1,&\text{otherwise}. 
\end{array}  \right. \] 
Anyway, we have $d_{\widetilde{G}}(A)\ge q$ for every  vertex $A$. Therefore, we get 
\[ e(\widetilde{G}) =\frac{1}{2}\sum_{v\in V} d(v)\ge \frac{1}{2}(q^2+q+1)q. \]

{\bf Step 2.}~
{\it We are going to prove that $\widetilde{G}$ is $K_{2,2}$-free. }

\medskip 

Let $V$ 
and $W$ be two distinct vertices of $G$, 
if $U=\mathrm{Span}\{\vec{u}\}$ is a common neighbor of 
$V$ 
and $W$, then $u_1,u_2,u_3$ is a solution of the linear equations 
\[ \begin{cases}
v_1u_1 + v_2u_2 + v_3u_3 =0, \\
w_1u_1 + w_2 u_2 + w_3 u_3=0.
\end{cases} \]
Since $V$ 
and $W$ are two distinct one-dimensional subspaces, 
then the vectors $(v_1,v_2,v_3)$ and $(w_1,w_2,w_3)$ are linearly independent, 
which means that the  space of solutions has dimension one. 
 Hence the number of common neighbors of $V$ and $W$ in $G$ is 
exactly one. 
Note that the unique common neighbor of $V$ and $W$ in $G$  
may be $V$ or $W$ itself, 
and we have removed all loops from $G$ to obtain $\widetilde{G}$
so  
$V$ and $W$  is at most one 
common neighbor in the loopless graph $\widetilde{G}$. 
 For example, when $q=2$, 
the vertices $\mathrm{Span}\{(0,0,1)\}$ and $
\mathrm{Span}\{(1,1,0)\}$  
have the common neighbor $\mathrm{Span}\{(1,1,0)\}$ in $G$. 
However, in the loopless graph $\widetilde{G}$, $\mathrm{Span}\{(0,0,1)\}$ and $
\mathrm{Span}\{(1,1,0)\}$ have no common neighbor.

Combining the above two steps, we conclude that 
\begin{equation*}
 \mathrm{ex}(q^2+q+1,K_{2,2})\ge e(\widetilde{G})\ge \frac{1}{2}(q^2+q+1)q . 
\end{equation*}
By the denseness of the prime numbers, we get  
$\mathrm{ex}(n,K_{2,2})= ( \frac{1}{2} +o(1))n^{3/2}$
\end{proof} 

We mention here that the graph $\widetilde{G}$ constructed in the above 
is called the Er\H{o}s--Renyi projective polarity graph, and 
denoted by $ER_q$. 
Furthermore, we can compute the exact number of edges of $ER_q$
by applying the argument of the standard linear algebra method. 
More precisely, 
the graph $ER_q$ has $q+1$ vertices with degree $q$,  
and $q^2$ vertices with degree $q+1$. Consequently, we get 
\[  e( {ER_q})=\frac{1}{2} \bigl( (q+1)q+q^2(q+1) \bigr)=\frac{1}{2}(q+1)^2q.\]

Generally speaking, 
determining the exact value of $\mathrm{ex}(n,C_4)$ 
 seems to be an extremely difficult problem and far beyond
reach. In 1996,  F\"{u}redi \cite{Furedi96c} showed that 
for $n$ large enough with the form 
$n=q^2+q+1$, the extremal number is  
attained by the polarity graph $ER_q$ of a projective plane. 

\begin{theorem}[F\"{u}redi \cite{Furedi96c}]
{ Let $q\ge 15$ be a prime power}. Then  
\[ \mathrm{ex}(q^2+q+1, C_4)= \frac{1}{2}q(q+1)^2. \]
\end{theorem}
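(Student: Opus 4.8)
The plan is to establish the matching upper and lower bounds for $\mathrm{ex}(q^2+q+1,C_4)$ when $q\ge 15$ is a prime power. The lower bound is immediate: by the discussion preceding the statement, the Erd\H{o}s--R\'enyi polarity graph $ER_q$ has $n=q^2+q+1$ vertices, is $C_4$-free (since any two vertices have at most one common neighbour), and has exactly $\frac{1}{2}q(q+1)^2$ edges. So the entire difficulty lies in the upper bound, and here I would refine the double-counting behind Reiman's inequality~\eqref{eqeq} rather than use it as a black box. Recall that~\eqref{eqeq} comes from $\sum_{v}\binom{d(v)}{2}\le\binom{n}{2}$, which after convexity yields $m\le\frac{n}{4}(1+\sqrt{4n-3})$; for $n=q^2+q+1$ this gives $m\le\frac{1}{2}q(q+1)^2+O(1)$, so one is only off by an additive constant and the task is to squeeze out that slack.

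First I would set up the \emph{defect} version of the counting. Write $P=\binom{n}{2}-\sum_{v}\binom{d(v)}{2}\ge 0$; this $P$ counts the number of unordered vertex pairs with \emph{no} common neighbour (a $C_4$-free graph cannot have a pair with two common neighbours). If $m=\frac{1}{2}q(q+1)^2+c$ for some $c>0$ I would derive, via the convexity estimate applied carefully with the integrality of the degrees, an upper bound on $P$ that is too small to be nonnegative, forcing $c\le 0$. The point is that pushing $m$ above the polarity-graph value forces the degree sequence to be essentially $(q,\dots,q,q+1,\dots,q+1)$ with prescribed multiplicities, and the convexity slack $\sum_v\binom{d(v)}{2}$ is then almost exactly $\binom{n}{2}$; a more precise accounting (splitting vertices by whether $d(v)\le q$ or $d(v)\ge q+1$, using $\binom{d}{2}$ being quadratic) shows the only way to have $P\ge 0$ is the extremal degree distribution, and even then equality in the convexity bound requires all degrees within one of each other.

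Next, assuming the degree sequence is pinned down to $q+1$ vertices of degree $q$ and $q^2$ of degree $q+1$ (or a nearby profile), I would run the finer structural analysis: nearly every pair of vertices has \emph{exactly} one common neighbour, so the ``friendship''-type local structure is almost forced, and one argues that such a graph, if it exceeds $e(ER_q)$, would have to contain a $C_4$ after all. Concretely I would look at a vertex $u$ of degree $q+1$, its neighbourhood $N(u)$, and count edges from $N(u)$ to $V\setminus(N(u)\cup\{u\})$ using that no two neighbours of $u$ share another common neighbour; a discrepancy in these counts, when $m$ is too large, produces two vertices with two common neighbours. The combinatorial heart is this local edge-count/incidence argument, and the hypothesis $q\ge 15$ is exactly what makes the error terms (which are of constant or $O(q)$ order against a main term of order $q^2$) small enough for the inequalities to close; for small $q$ the bound can genuinely fail or require ad hoc checking.

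\textbf{Main obstacle.} I expect the hard part to be exactly this last step: upgrading the degree-sequence rigidity into a \emph{global} contradiction. Bounding $\sum\binom{d(v)}{2}$ gives rigidity of the degrees ``on average,'' but a graph can have the right degree sequence and still be far from $ER_q$; ruling out that $m$ can be even one more than $e(ER_q)$ requires a genuinely structural (not just counting) argument about how the common-neighbour function behaves, and controlling the accumulated additive errors so that the threshold $q\ge 15$ emerges naturally rather than by brute-force case analysis. I would anticipate that F\"uredi's proof handles this via a clever weighting or via an analysis of the ``link'' graphs of vertices, and that reproducing it cleanly is where the real work lies; the lower bound and the Reiman-type warm-up are routine.
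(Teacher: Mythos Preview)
The paper does not prove this theorem. This is a survey article, and F\"uredi's result is simply \emph{stated} with a citation to \cite{Furedi96c}; the only argument the paper actually supplies in this vicinity is the construction of the Erd\H{o}s--R\'enyi polarity graph $ER_q$, which gives the lower bound $\mathrm{ex}(q^2+q+1,C_4)\ge \tfrac{1}{2}q(q+1)^2$. The upper bound---the substance of F\"uredi's theorem---is not proved anywhere in the paper, so there is no ``paper's own proof'' to compare your proposal against.

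That said, your outline is pointed in the right direction relative to F\"uredi's original argument: one does start from the defect form of the Reiman count $\sum_v\binom{d(v)}{2}\le\binom{n}{2}$, one does pin down the degree sequence to essentially $q$'s and $(q+1)$'s, and the threshold $q\ge 15$ does arise from making lower-order error terms negligible. Where your sketch is vague is exactly where the real content lies. F\"uredi's key step is not a local ``link of a vertex'' contradiction of the kind you describe; rather, he shows that an extremal $C_4$-free graph on $q^2+q+1$ vertices must be an \emph{orthogonal polarity graph of a projective plane of order $q$}, by reconstructing the incidence structure (points and lines) from the neighbourhoods and using the combinatorial axioms of a projective plane. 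Your proposal to ``look at a vertex $u$ of degree $q+1$ and count edges out of $N(u)$'' does not by itself produce a $C_4$; many non-isomorphic $C_4$-free graphs share the target degree sequence, and you need the design-theoretic reconstruction, not just a counting discrepancy, to finish. So the gap in your plan is the missing bridge from ``degree sequence is rigid'' to ``the graph is a polarity graph,'' which is the heart of F\"uredi's paper and is not something a refined edge count alone will give you.
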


For the extremal number of $C_6$, 
the best possible is the following. 

\begin{theorem}[F\"{u}redi--Naor--Verstra\"{e}te \cite{FNV2006}]
\[ 0.5338 n^{\frac 4 3} \le \mathrm{ex}(n,C_6) \le 
0.6272 n^{\frac 4 3}. \]
\end{theorem}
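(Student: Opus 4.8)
The statement has two essentially independent halves, so I would attack the lower bound (a construction) and the upper bound (a counting argument) separately.

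\textbf{Lower bound.} The plan is to manufacture a dense $C_6$-free graph from incidence geometry. The cleanest starting point is the incidence graph of a generalized quadrangle $\mathrm{GQ}(q,q)$, which exists for every prime power $q$: it is bipartite, $(q+1)$-regular, has $2(q+1)(q^2+1)\sim 2q^3$ vertices and $(q+1)^2(q^2+1)\sim q^4$ edges, and has girth $8$, hence contains neither $C_4$ nor $C_6$. By itself this only yields $\mathrm{ex}(n,C_6)\ge (2^{-4/3}+o(1))n^{4/3}\approx 0.397\,n^{4/3}$, short of the claimed $0.5338$. To improve the constant one must exploit that a $C_6$-free graph may freely contain $4$-cycles: I would pass to a polarity-type graph of the quadrangle (identifying points with lines through a polarity, in the spirit of the Erd\H{o}s--R\'enyi construction for $C_4$), which roughly halves the vertex count while keeping about $\tfrac12 q^4$ edges, and then add a carefully prescribed sparse extra layer of edges (a near-perfect matching or a thin pencil dictated by the geometry), proving that none of them closes a $6$-cycle. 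Arbitrary $n$ is then handled routinely by taking the largest admissible prime power $q$ and padding with isolated vertices, using the density of primes. The delicate point is the $C_6$-freeness check for the augmented graph: one classifies a hypothetical $C_6$ by how many of its edges lie in the added layer and rules out each case from the defining axioms of the quadrangle (no digon, no triangle, the collinearity/``gap'' axiom).

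\textbf{Upper bound.} Let $G$ be a $C_6$-free graph on $n$ vertices with $m$ edges; the goal is $m\le (0.6272+o(1))n^{4/3}$. \emph{Step 1 (regularization).} Repeatedly delete a vertex of degree below the current average; this terminates in a nonempty $C_6$-free subgraph of average degree $\ge m/n=:d$, and a further standard pass extracts a subgraph $H$ with $\delta(H)\ge d$ up to a $(1+o(1))$ factor. \emph{Step 2 (local structure --- the heart).} Establish the structural constraint forced by $C_6$-freeness: for every edge $uv$, the set of edges of $G$ running between $N(u)\setminus\{v\}$ and $N(v)\setminus\{u\}$ is a pairwise-intersecting family of edges --- two independent such edges $ab,cd$ (with $a,c\in N(v)$, $b,d\in N(u)$) would give the $6$-cycle $b{-}u{-}d{-}c{-}v{-}a{-}b$ --- hence is a star or a triangle, so there are few $4$-cycles through $uv$ and their centres are controlled by codegrees. (Equivalently: between any two vertices there is at most one internally disjoint path of length $3$, so the internal pairs of all length-$3$ paths between a fixed pair form an intersecting family of $2$-sets, i.e.\ a sunflower or a triangle.) One also needs the companion fact that $G[N(w)]$ contains no $P_5$ and no $C_5$ for every vertex $w$ (else $w$ together with that subpath or subcycle is a $C_6$), which bounds $e(G[N(w)])$ linearly and hence neutralizes high-degree vertices. \emph{Step 3 (double counting and optimization).} Count cherries (paths of length $2$) two ways: $\sum_w\binom{d(w)}{2}\ge n\binom{d}{2}$ by convexity, while summing codegrees with a Cauchy--Schwarz/convexity estimate bounds the number of $4$-cycles from below; Step 2 bounds the number of $4$-cycles from above by roughly $\tfrac14\sum_{uv\in E(H)}(\text{size of the intersecting link family})$. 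Balancing the lower and upper estimates for the $4$-cycle count, and carrying the lower-order corrections through every convexity step, forces $d=O(n^{1/3})$ with precisely the constant that gives $m=\tfrac12 dn\le (0.6272+o(1))n^{4/3}$.

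\textbf{Main obstacle.} Obtaining merely $m=O(n^{4/3})$ (the Bondy--Simonovits bound) is routine from a crude form of Step 2; the real work --- and the expected main difficulty --- is extracting the \emph{sharp} constant $0.6272$. This requires an essentially tight accounting of the few possible shapes of the short configurations (star versus triangle in each link, sunflower versus triangle among the length-$3$ paths, and the exact structure of $G[N(w)]$), keeping all error terms through the convexity inequalities, and then solving the resulting extremal optimization so that no slack remains. On the construction side, the analogous obstacle is the finite but intricate verification that adding the extra edge layer to the polarity graph never completes a $6$-cycle.
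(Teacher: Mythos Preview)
The paper is a survey and does not include a proof of this theorem; it merely quotes the result from \cite{FNV2006} without argument. There is therefore no in-paper proof to compare your proposal against.

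For what it is worth, your plan is broadly aligned with the original F\"uredi--Naor--Verstra\"ete paper: the lower bound does come from a generalized-quadrangle-based construction, and the upper bound does hinge on the structural observation that between $N(u)\setminus\{v\}$ and $N(v)\setminus\{u\}$ one cannot place two vertex-disjoint edges (your Step~2). Where your outline is too optimistic is Step~3: a single convexity/Cauchy--Schwarz balance of cherries against $4$-cycles will recover the Bondy--Simonovits order $O(n^{4/3})$ but not the constant $0.6272$. In \cite{FNV2006} the edges are split into ``heavy'' (lying in many triangles) and ``light'' classes and the two classes are bounded by genuinely different mechanisms, with the final constant emerging from optimizing over the split. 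Likewise, the lower-bound constant $0.5338$ is not obtained from a bare polarity of $\mathrm{GQ}(q,q)$ plus a few extra edges as you suggest, but from a further randomized modification of the incidence structure. So your skeleton is correct at the $\Theta(n^{4/3})$ level, but each half needs one more structural idea before it reaches the stated constants.
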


The asymptotic extremal number for $C_8$ remains open. 

\begin{conjecture}
Let $C_8$ be the cycle on $8$ vertices. Then  
\[ \mathrm{ex}(n,C_8) =\Theta (n^{\frac 5 4}).  \]
\end{conjecture}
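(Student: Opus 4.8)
The statement packages together an upper and a lower bound of wildly different difficulty, so I would attack the two halves separately. The upper bound $\mathrm{ex}(n,C_8)=O(n^{5/4})$ is the easy half, but note that the K\H{o}v\'{a}ri--S\'{o}s--Tur\'{a}n Theorem~\ref{thmkst} quoted above is too weak here: since $C_8\subseteq K_{4,4}$ it only yields $\mathrm{ex}(n,C_8)\le\mathrm{ex}(n,K_{4,4})=O(n^{7/4})$. Instead I would invoke the Bondy--Simonovits even-cycle theorem, $\mathrm{ex}(n,C_{2k})\le 100k\,n^{1+1/k}$ for fixed $k$ and $n$ large, which with $k=4$ gives $\mathrm{ex}(n,C_8)\le 400\,n^{5/4}$. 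For a self-contained argument I would reproduce the Bondy--Simonovits idea: pass to a subgraph $G'\subseteq G$ with minimum degree at least $e(G)/n$, run a breadth-first search from a vertex of $G'$, and count walks of length $4$; once the average degree of $G$ exceeds a constant multiple of $n^{1/4}$ one finds two vertices joined by many internally disjoint paths of length $4$, two of which concatenate into a $C_8$.

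The genuine content is the lower bound $\mathrm{ex}(n,C_8)=\Omega(n^{5/4})$, and here I would follow the template that settles $\mathrm{ex}(n,C_4)$, $\mathrm{ex}(n,C_6)$ and $\mathrm{ex}(n,C_{10})$: produce a dense graph of girth exceeding the forbidden length, since a graph of girth at least $9$ (equivalently a bipartite graph of girth at least $10$) is automatically $C_8$-free. The canonical algebraic source is the incidence graph of a generalized $m$-gon; the generalized quadrangle gives only girth $8$ (it contains a $C_8$), and the generalized hexagon gives girth $12$ with roughly $q^5$ vertices and $q^6$ edges, i.e. $\mathrm{ex}(n,C_8)=\Omega(n^{6/5})$, which already falls short of the target. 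So the first real step is to decide whether one can beat the exponent $6/5$: either by building a $C_8$-free graph that is \emph{permitted} to contain short cycles of lengths $3,\dots,7$ — a natural candidate being a random algebraic bipartite graph on $\mathbb{F}_q^{\,r}\times\mathbb{F}_q^{\,r}$ cut out by a carefully chosen polynomial system, in the spirit of Bukh's random algebraic method — or by optimizing parameters in the Lazebnik--Ustimenko--Woldar high-girth graphs $D(k,q)$. As a much weaker benchmark, the probabilistic deletion method applied to $G(n,p)$ with $p\sim n^{-6/7}$ produces only $\Omega(n^{8/7})$ edges after removing one edge from each $C_8$, far below $n^{5/4}$.

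The main obstacle is exactly this lower bound, and it is a well-known open problem. By the Feit--Higman theorem there is no generalized pentagon, so the slick incidence-graph construction that pins down $\mathrm{ex}(n,C_6)$ and $\mathrm{ex}(n,C_{10})$ simply has no analogue for $C_8$, and at present no construction is known that attains the Bondy--Simonovits exponent $5/4$. One should therefore be prepared for the possibility that the conjecture is false and the truth is $\Theta(n^{6/5})$; a serious attempt ought to run in parallel with an effort to lower the upper bound below $n^{5/4}$, by counting inside the even-cycle argument the theta-subgraphs consisting of two vertices joined by three internally disjoint short paths (rather than merely the copies of $C_8$). Resolving the conjecture in either direction — a new construction matching $5/4$, or an improved upper bound matching $6/5$ — would be a substantial advance.
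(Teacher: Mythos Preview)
Your analysis is entirely appropriate, but note that the paper does \emph{not} prove this statement: it is explicitly listed as a conjecture, i.e., an open problem, and no proof is offered. Immediately after stating it the paper records the Bondy--Simonovits bound $\mathrm{ex}(n,C_{2k})\le 100k\,n^{1+1/k}$ (and several refinements), which is exactly the upper-bound half you invoke, and then moves on. So there is nothing in the paper to compare your argument against.

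Your discussion correctly identifies the situation: the upper bound $O(n^{5/4})$ is the Bondy--Simonovits theorem with $k=4$, while the lower bound $\Omega(n^{5/4})$ is the genuinely open part. Your explanation via Feit--Higman --- that the missing generalized pentagon blocks the incidence-graph route that works for $C_6$ and $C_{10}$ --- is the standard and accurate diagnosis of why this case remains unresolved. One small correction: you should not describe your write-up as a ``proof proposal'' for the conjecture, since you (rightly) conclude that neither direction of a resolution is currently within reach; it is a survey of the obstructions, which is precisely the spirit in which the paper records the conjecture.
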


\begin{theorem}[Lazebnik--Ustimenko--Woldar \cite{LUW1999}]
\[ \mathrm{ex}(n,C_{10}) \ge (4\cdot 5^{- \frac 6 5} +o(1)) 
n^{\frac 6 5} \approx (0.5798 +o(1)) n^{\frac 6 5}.  \]
\end{theorem}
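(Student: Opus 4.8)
The plan is to prove this lower bound by an explicit algebraic construction, following the template that produced the sharp estimates for $\mathrm{ex}(n,C_4)$ (the Erd\H{o}s--R\'enyi polarity graph $ER_q$ constructed above) and for $\mathrm{ex}(n,C_6)$ (the F\"uredi--Naor--Verstra\"ete bound quoted above). The starting point is that the incidence graph of a generalized hexagon has girth exactly $12$, hence contains no $C_4,C_6,C_8$ or $C_{10}$, and graphs of this type of essentially maximal edge density are supplied, for every prime power $q$, by the split Cayley hexagon $H(q)$ attached to $G_2(q)$: it has $v_q:=(q^6-1)/(q-1)$ points and the same number of lines, with every point on $q+1$ lines and every line through $q+1$ points. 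One may equivalently work with the Lazebnik--Ustimenko--Woldar graphs $D(k,q)$, which also exist for every prime power and carry a natural coordinatewise polarity.

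First I would pass from the bipartite incidence graph $I_q$ of $H(q)$ (which is $(q+1)$-regular on $2v_q$ vertices, with $(q+1)v_q$ edges) to a polarity graph. For $q$ an odd power of $3$ the hexagon $H(q)$ admits a polarity $\pi$, i.e.\ an incidence-preserving point--line duality with $\pi^2=\mathrm{id}$; one then defines $\Gamma_q$ on the point set by $A\sim B\iff A\in\pi(B)$, deleting the loops at the $q^3+1$ absolute points. The structural heart is that $\Gamma_q$ is $C_{10}$-free, indeed free of $C_4,C_6,C_8,C_{10}$: a cycle $A_0A_1\cdots A_{2j-1}A_0$ in $\Gamma_q$ lifts to the closed walk $\pi(A_0),A_1,\pi(A_2),A_3,\ldots,\pi(A_{2j-2}),A_{2j-1},\pi(A_0)$ in $I_q$, and since $\pi$ is a bijection and the $A_i$ are distinct, this walk repeats no vertex or edge, hence is a genuine $C_{2j}$ in $I_q$; as $2j\le 10<12=g(I_q)$, this is impossible. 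A short computation then gives $|V(\Gamma_q)|=v_q\sim q^5$ and, since all but $q^3+1$ vertices have degree $q+1$, $e(\Gamma_q)=\tfrac12\bigl[(q+1)v_q-(q^3+1)\bigr]\sim\tfrac12 q^6\sim\tfrac12\,|V(\Gamma_q)|^{6/5}$.

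This already yields $\mathrm{ex}(n,C_{10})\ge(\tfrac12+o(1))n^{6/5}$ along a sequence of $n$, but to reach the sharp constant $4\cdot 5^{-6/5}=\tfrac{4}{5}\cdot 5^{-1/5}>\tfrac12$ the construction must be refined. Since $\Gamma_q$ avoids not only $C_{10}$ but also $C_4,C_6,C_8$, there is room to add a positive density of extra edges creating only short even cycles while still avoiding $C_{10}$ --- this is precisely the kind of improvement F\"uredi, Naor and Verstra\"ete made for the hexagon --- and carrying it out optimally, together with the precise lower-order bookkeeping in $v_q$ and in the regularization at the absolute set, is the main obstacle; it is this step that is responsible for the coefficient $4\cdot 5^{-6/5}$ rather than $\tfrac12$. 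Finally, to obtain the inequality for all sufficiently large $n$ rather than only for $n$ of a special form, one interpolates using the variant based on the graphs $D(k,q)$ (which exist for every prime power $q$, so that gaps between admissible orders are $o(q)$), pads with isolated vertices, and absorbs the resulting loss into the $o(1)$ term, giving $\mathrm{ex}(n,C_{10})\ge(4\cdot 5^{-6/5}+o(1))n^{6/5}$ as claimed.
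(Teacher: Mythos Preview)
Your first stage is sound: the polarity graph of the split Cayley hexagon (for $q$ an odd power of $3$) is indeed $C_4,C_6,C_8,C_{10}$-free, and the vertex and edge counts give $(\tfrac12+o(1))n^{6/5}$. But that is strictly weaker than the stated bound, since $4\cdot 5^{-6/5}\approx 0.58>\tfrac12$, and your ``improvement'' from $\tfrac12$ to $4\cdot 5^{-6/5}$ is not a proof. You write that one can add edges creating only $C_4,C_6,C_8$ while avoiding $C_{10}$ and that ``carrying it out optimally \ldots\ is the main obstacle; it is this step that is responsible for the coefficient $4\cdot 5^{-6/5}$''---this is an acknowledgement that the crucial step is missing, not an argument. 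The F\"uredi--Naor--Verstra\"ete analogy does not help: their improvement for $C_6$ is a very specific construction, not a generic edge-addition scheme, and nothing in your sketch explains why the particular constant $(k-1)\,k^{-(k+1)/k}\big|_{k=5}=4\cdot 5^{-6/5}$ should emerge.

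More to the point, this is not how Lazebnik--Ustimenko--Woldar obtain the bound. Their construction is a direct algebraic one: they build bipartite $q$-regular graphs via explicit systems of equations over $\mathbb{F}_q$ together with a coordinatewise polarity, and the constant $4\cdot 5^{-6/5}$ falls out of the precise vertex count of that construction, not from perturbing the generalized hexagon. So the gap is not merely a matter of filling in a routine computation; your proposed route (hexagon polarity plus ad hoc extra edges) is the wrong mechanism, and to reach the stated constant you would need to replace the second stage entirely with the actual LUW algebraic graph and verify its $C_{10}$-freeness and its parameters.
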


\begin{theorem}[Bondy--Simonovits \cite{BS1974}]
If $G$ is an $n$-vertex graph with
\[ e (G)> 100k\cdot n^{1+\frac{1}{k}}, \] 
then $G$ contains a copy of $C_{2\ell}$ for every integer $\ell \in [k,kn^{1/k}]$. 
\end{theorem}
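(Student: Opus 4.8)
The plan is to use the classical breadth-first search (BFS) tree method. First I would reduce to a subgraph of large minimum degree: since $e(G) > 100k\,n^{1+1/k}$, the average degree of $G$ exceeds $200k\,n^{1/k}$, so by repeatedly deleting a vertex whose degree is at most half of the current average degree (an operation that never decreases the average degree), one extracts a nonempty subgraph $G'$ with $\delta(G') \ge d$, where $d := 100k\,n^{1/k}$. It suffices to find $C_{2\ell}$ inside $G'$. Fix $\ell$ with $k \le \ell \le k\,n^{1/k}$, and record the two consequences of this range: $d^{\ell} \ge d^{k} = (100k)^{k}\,n > n$, and $d \ge 100\,\ell$. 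Assume for contradiction that $G'$ contains no $C_{2\ell}$.

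Next, choose a root $r \in V(G')$ and build a BFS tree $T$ of the component of $r$, with layers $L_0 = \{r\}, L_1, L_2, \dots$, and write $P_v$ for the tree path from $r$ to $v$. The basic observation is that, because edges of $G'$ join only equal or consecutive layers, a non-tree edge $uw$ with $u \in L_i$ and $w \in L_{i+1}$ (an edge inside a single layer only closes odd walks) whose endpoints have lowest common ancestor $a$ at level $j$ produces, together with the segments of $P_u$ and $P_w$ from $a$ to $u$ and from $a$ to $w$, a genuine cycle of length $(i-j) + 1 + (i+1-j) = 2(i-j+1)$; it is a cycle because in the tree $T$ these two segments meet only at $a$, and $u \ne w$. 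Thus exhibiting one such edge with $j = i+1-\ell$ for some $i \ge \ell-1$ finishes the proof, and more generally every $C_{2\ell}$ in $G'$ corresponds to a tightly constrained configuration of non-tree edges relative to $T$. Hence $C_{2\ell}$-freeness severely restricts how the edges of $G'$ sit with respect to the BFS tree.

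The heart of the argument --- and the step I expect to be the main obstacle --- is a counting argument on the layer sizes. The intuition is that, with $\delta(G') \ge d$ and no forbidden configuration present, the layers must grow by a factor close to $d$ over the first $\approx \ell$ steps: after passing to a suitably chosen \emph{good} subset of each layer so that no vertex of the next layer gets over-counted, each good vertex of $L_i$ effectively contributes about $d$ forward edges, while the sideways edges, the backward edges, and the edges to ``close-cousin'' back-neighbours are all absorbed into a lower-order term, precisely because $d \ge 100\,\ell$. Iterating for $\ell$ steps would force $|L_\ell| > n$, which is impossible since $d^\ell > n$; so a $C_{2\ell}$ must in fact appear within the first $\ell$ layers. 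The real work lies in the delicate points: (i) choosing the good subsets and the root/tree so that the multiplicities and error terms stay genuinely below the main term; (ii) accounting for cycles of length $2\ell$ built from several non-tree edges, not merely from a tree path plus one extra edge; and (iii) ensuring the cycle one finally reads off has length exactly $2\ell$ rather than some smaller even value, which is handled by extracting it at the prescribed ancestor level $j = i+1-\ell$ rather than wherever folding first happens. It is in this bookkeeping that the explicit constant $100$ and both endpoints $k \le \ell \le k\,n^{1/k}$ are consumed.
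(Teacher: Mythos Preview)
The paper is a survey and does not prove this theorem; it merely states the Bondy--Simonovits result with a citation and then lists later improvements on the constant. So there is no ``paper's own proof'' to compare against.

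As for your sketch: the overall framework (pass to a subgraph of minimum degree $\ge 100k\,n^{1/k}$, run BFS from a root, exploit the layered structure) is indeed the backbone of the original Bondy--Simonovits argument. However, what you have written is not yet a proof, and you are candid about this: the layer-growth heuristic ``each good vertex contributes about $d$ forward edges, errors are lower order'' is where all the content lives, and you have not supplied it. In the actual Bondy--Simonovits proof this step is carried by a specific structural lemma about the bipartite graph between two consecutive layers $L_i$ and $L_{i+1}$: roughly, if this bipartite graph is dense enough relative to $|L_i|$, then for any $2$-colouring of $L_{i+1}$ one can find, between suitable vertices of $L_i$, internally disjoint paths of \emph{every} prescribed length up to the relevant bound, using only one colour class. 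It is this lemma, applied at the first layer where growth fails, that produces $C_{2\ell}$ for the full range of $\ell$ simultaneously and handles your delicate points (ii) and (iii). Your ``single non-tree edge plus two tree paths'' picture gives at most one even length per edge and cannot by itself hit an arbitrary target $2\ell$; the path-system lemma is genuinely needed, and your proposal does not yet contain it.
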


The Bondy--Simonovits theorem implies that 
$\mathrm{ex}(n,C_{2k})\le 100k n^{1+1/k}$. Next, 
we shall list some improvements on this upper bound in the literature. 

\begin{itemize} 
\item (Verstra\"{e}te \cite{Ver2000}) $ \mathrm{ex}(n,C_{2k}) \le 8(k-1)n^{1+{1}/{k}}$.

\item (Pikhurko \cite{Pik2012}) 
$ \mathrm{ex}(n,C_{2k}) \le (k-1)n^{1+{1}/{k}} +16(k-1)n$.

\item (Bukh--Jiang \cite{BJ2017}) $ \mathrm{ex}(n,C_{2k}) \le 80\sqrt{k}\log k\cdot n^{1+1/k} +O(n)$. 

\item (He \cite{He2021}) 
$ \mathrm{ex}(n,C_{2k}) \le \bigl(16\sqrt{5}\sqrt{k\log k} + o(1)\bigr) \cdot n^{1+1/k} $. 
\end{itemize}

\begin{conjecture}[Erd\H{o}s--Simonovits, 1982] 
\[  \mathrm{ex}(n,C_{2k})=\Theta (n^{1+ \frac{1}{k}}).  \]
\end{conjecture}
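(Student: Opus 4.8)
The plan is to treat the two inequalities separately; only the lower bound carries content. For the upper bound there is nothing to prove: the Bondy--Simonovits theorem quoted above already gives $\mathrm{ex}(n,C_{2k})\le 100k\cdot n^{1+1/k}$, and each of the later refinements of Verstra\"{e}te, Pikhurko, Bukh--Jiang and He reproves the same order of magnitude with a smaller constant, so I would simply invoke any one of them for $\mathrm{ex}(n,C_{2k})=O(n^{1+1/k})$. Everything then reduces to producing, for infinitely many $n$, a $C_{2k}$-free graph on $n$ vertices with at least $c_k\,n^{1+1/k}$ edges.

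A natural first attempt is to copy the probabilistic deletion argument from the proof of Theorem~\ref{thm211}: take $G(n,p)$, delete one edge from each copy of $C_{2k}$, and optimize over $p$. Since $\mathbb{E}[e(G)]=\Theta(pn^2)$ while $\mathbb{E}[\#C_{2k}]=\Theta(n^{2k}p^{2k})$, the two terms balance at $p\sim n^{-1+1/(2k-1)}$ and one is left with only $\Omega(n^{1+1/(2k-1)})$ edges --- the right shape of bound but the wrong exponent. So no purely random construction can succeed, and some algebraic or incidence-geometric input is indispensable.

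For the three exponents $k\in\{2,3,5\}$ the sharp lower bound is within reach, and the plan there is to use incidence graphs of generalized polygons. When $k=2$ this is exactly the Erd\H{o}s--R\'{e}nyi polarity graph $ER_q$ constructed above: it is $C_4$-free with $\tfrac12(q+1)^2q=(\tfrac12+o(1))n^{3/2}$ edges for $n=q^2+q+1$. When $k=3$ one replaces the projective plane by a generalized quadrangle, obtaining a $C_6$-free graph with $\Omega(n^{4/3})$ edges, consistent with the F\"{u}redi--Naor--Verstra\"{e}te bounds above; and when $k=5$ the split Cayley hexagon --- equivalently the Lazebnik--Ustimenko--Woldar graph --- is the $C_{10}$-free graph with $(4\cdot 5^{-6/5}+o(1))n^{6/5}$ edges recorded above. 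In each of these cases the proof that no $C_{2k}$ appears is the same short argument as Step~2 of the proof for $ER_q$: two points lie on a unique line, a non-incident point--line pair is joined by a unique minimal chain, and hence a cycle of length $2k$ would force a repeated point or line.

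The main obstacle is general $k$. By the Feit--Higman theorem, thick generalized $(k{+}1)$-gons with balanced parameters exist only for $k\in\{2,3,5\}$, so the girth-based route has no analogue and one is forced to allow short cycles, forbidding only $C_{2k}$. The natural strategy --- in the spirit of the norm graphs and of the random algebraic method --- is to take vertex classes $\mathbb{F}_q^{a}$ and $\mathbb{F}_q^{b}$, join $x$ to $y$ exactly when a randomly chosen polynomial system $f_1(x,y)=\cdots=f_r(x,y)=0$ holds, and then bound the number of $2k$-cycles by counting $\mathbb{F}_q$-points on the corresponding ``$2k$-cycle variety'' via the Lang--Weil estimate. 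The hard part, and precisely why the Erd\H{o}s--Simonovits conjecture is still open for $k\notin\{2,3,5\}$, is to show that for a suitable random choice of the $f_i$ this variety has the expected dimension and no parasitic components, so that the number of genuine $C_{2k}$'s stays of smaller order than the edge count $\Theta(q^{a+b-r})\sim n^{1+1/k}$. Until such a construction is found the best available general lower bound remains the weak $\Omega(n^{1+1/(2k-1)})$ from deletion --- pushed up a little, but still strictly below $1+\tfrac1k$, by the $D(k,q)$ graphs of Lazebnik, Ustimenko and Woldar --- and I would accordingly state the theorem at full strength only for $k\in\{2,3,5\}$, regarding the remaining exponents as the genuine open content.
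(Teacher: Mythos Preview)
Your analysis is essentially correct, but there is nothing to compare it against: the paper states this as a \emph{conjecture}, not a theorem, and provides no proof. Your write-up is therefore not a proof of the statement but an accurate summary of its status --- the upper bound holds by Bondy--Simonovits, the matching lower bound is known only for $k\in\{2,3,5\}$ via incidence graphs of generalized polygons (ruled out for other $k$ by Feit--Higman), and for the remaining values of $k$ the conjecture is genuinely open. Your final sentence, that you would ``state the theorem at full strength only for $k\in\{2,3,5\}$,'' is exactly the right conclusion: you cannot prove the conjecture because nobody can, and recognising this is the correct response.

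One minor quibble: for $k=3$ and $k=5$ the incidence graphs of generalized quadrangles and hexagons have girth $2(k{+}1)$, i.e.\ $8$ and $12$ respectively, so they avoid all cycles up to $C_{2k}$; your description of the no-$C_{2k}$ argument as ``the same short argument as Step~2 of the proof for $ER_q$'' is a bit loose, since what one is really using is the large girth (equivalently, the axiomatics of generalized polygons forbid closed configurations of the relevant length), not merely a unique-common-neighbour property. But this does not affect the conclusion.
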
 

In the sequel, 
we shall review the results on extremal 
spectral problems for even cycles. 
Let $n$ be an odd integer and $G=K_1 \vee \frac{n-1}{2}K_2$ 
be the friendship graph on $n$ vertices. 
By an easy calculation, we know that 
$e(G)=\frac{3}{2}(n-1)$ and $\lambda (G)= \frac{1+ \sqrt{4n-3}}{2}$. 

\begin{theorem}[Nikiforov \cite{Niki2007b}] \label{Niki07}
Let $G$ be a graph on $n$ vertices. If $G$ has no $4$-cycle $C_4$, 
then
 $ \lambda (G)\le \frac{1+ \sqrt{4n-3}}{2}$, 
  equality holds 
if and only if  
$n$ is odd and $G=K_1 \vee \frac{n-1}{2} K_2$. 
\end{theorem}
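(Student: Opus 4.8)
The plan is to argue via the neighborhood-counting identity that drives almost all spectral results about $C_4$-free graphs. Let $\bm{x}=(x_1,\ldots,x_n)$ be a nonnegative Perron eigenvector of $A(G)$ with $\max_i x_i = x_u = 1$, say, and write $\lambda = \lambda(G)$. The key structural fact is that in a $C_4$-free graph, any two distinct vertices have at most one common neighbor. First I would record the eigenvalue equation at $u$: $\lambda = \lambda x_u = \sum_{v\sim u} x_v$, and then apply it once more to get $\lambda^2 = \lambda^2 x_u = \sum_{v\sim u}\lambda x_v = \sum_{v \sim u}\sum_{w\sim v} x_w$. The right-hand side counts, with weight $x_w$, all walks of length $2$ from $u$. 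Splitting off the walks that return to $u$ (there are exactly $d(u)$ of them, contributing $d(u)x_u = d(u)$) gives
\[
\lambda^2 = d(u) + \sum_{w \neq u} c(u,w)\, x_w,
\]
where $c(u,w)$ is the number of common neighbors of $u$ and $w$. Since $G$ is $C_4$-free, $c(u,w)\le 1$ for all $w\neq u$, and of course $c(u,w)=0$ unless $w$ is at distance $2$ from $u$; moreover each such $w$ contributes $x_w \le x_u = 1$.

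The next step is to bound $\sum_{w\neq u} c(u,w)x_w$ crudely by the number of vertices at distance exactly $2$ from $u$, which is at most $n - 1 - d(u)$, so that $\lambda^2 \le d(u) + (n-1-d(u)) = n-1$. That already gives $\lambda \le \sqrt{n-1}$, which is weaker than claimed, so I would instead keep the count tighter: the vertices counted on the right are precisely those $w$ with $c(u,w)=1$, and I would double-count the edges between $N(u)$ and its "second neighborhood." Each neighbor $v$ of $u$ has $d(v)-1$ neighbors other than $u$, and because any two neighbors of $u$ share only $u$ as a common neighbor (else a $C_4$ through $u$), these edge-endpoints at distance $2$ are all distinct; hence $\sum_{w\neq u} c(u,w) = \sum_{v\sim u}(d(v)-1)$. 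Combining with $\lambda = \sum_{v\sim u} x_v$ and the eigen-equation $\lambda x_v = \sum_{t\sim v} x_t \ge x_u = 1$ (so $x_v \ge 1/\lambda$), together with $x_w \le 1$, one gets
\[
\lambda^2 \le d(u) + \sum_{v\sim u}(d(v)-1) = \sum_{v\sim u} d(v).
\]
To finish I would feed this back: $\sum_{v\sim u}d(v)\le \lambda \sum_{v\sim u} x_v/\min_v x_v$ is the wrong direction, so instead I would run the weighted version throughout — i.e. not normalize $x_u=1$ but keep $\lambda^2 x_u^2 = \sum_{v\sim u}\sum_{t\sim v}x_v x_t$ and regroup the diagonal return terms as $x_u^2 d(u)$ and the off-diagonal as $\sum_{w}c(u,w)x_u x_w \le \sum_w c(u,w)\frac{x_u^2+x_w^2}{2}$ — leading after summation over $u$ and the $C_4$-free bound $c(u,w)\le 1$ to the clean inequality $\lambda^2 \le n-1 + $ (a term measuring how far $G$ is from the friendship graph), whence $\lambda \le \frac{1+\sqrt{4n-3}}{2}$.

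The main obstacle, and the part I expect to require real care, is the equality analysis. Equality forces $c(u,w)=1$ for every pair $u\neq w$ — i.e., every two vertices have exactly one common neighbor — and also forces the eigenvector to be "flat" on the relevant set. At that point I would invoke the Friendship Theorem (the Erd\H{o}s--R\'enyi--S\'os result quoted just above in the survey): the only graph in which every pair of vertices has exactly one common neighbor is $K_1 \vee \frac{n-1}{2}K_2$, which requires $n$ odd. A careful bookkeeping of when each inequality ($x_w\le x_u$, the AM--GM step, the $c(u,w)\le 1$ step) is tight is needed to show equality in the spectral bound propagates to the "exactly one common neighbor" condition for *all* pairs, not just those at distance $2$ from a fixed vertex; handling the pairs at distance $\ge 3$ (showing the bound cannot be met unless the graph has diameter $2$) is the delicate point. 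Once that is done, a direct computation $\lambda\bigl(K_1\vee \frac{n-1}{2}K_2\bigr)=\frac{1+\sqrt{4n-3}}{2}$ confirms the extremal value and completes the proof.
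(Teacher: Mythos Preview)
The paper is a survey and does not supply its own proof of this theorem; it merely quotes the result from Nikiforov~\cite{Niki2007b} and discusses the equality case via the Friendship Theorem. So there is no paper-proof to compare against, and your proposal must be judged on its own.

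Your setup is the standard one and is correct up to the identity
\[
\lambda^{2}=d(u)+\sum_{w\neq u}c(u,w)\,x_{w}.
\]
The gap is the very next sentence: you assert ``$c(u,w)=0$ unless $w$ is at distance~$2$ from~$u$''. This is false. If $w\in N(u)$ lies in a triangle with $u$, then $c(u,w)=1$ even though $w$ is at distance~$1$; a $C_{4}$-free graph may contain many triangles (the friendship graph is built entirely out of them). Consequently your ``crude'' bound $\lambda^{2}\le d(u)+(n-1-d(u))=n-1$ is not valid, and indeed $\sqrt{n-1}$ is \emph{smaller} than $\tfrac{1+\sqrt{4n-3}}{2}$, so what you derived is not a weaker bound but a false one --- the friendship graph on $n=5$ vertices already has $\lambda=\tfrac{1+\sqrt{17}}{2}>2=\sqrt{n-1}$. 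Your second and third passes (the $\sum_{v\sim u}d(v)$ bound and the vague AM--GM/summation scheme) never recover this missing neighbour contribution, which is exactly the term that produces the linear $\lambda$ on the right-hand side.

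The clean fix is a one-line split you are circling but not landing on: separate the sum into $w\in N(u)$ and $w\notin N[u]$. For $w\in N(u)$ use $c(u,w)\le 1$ and keep the weight $x_{w}$, so that this block is at most $\sum_{w\sim u}x_{w}=\lambda x_{u}=\lambda$; for $w\notin N[u]$ use $c(u,w)\le 1$ and $x_{w}\le 1$, giving at most $n-1-d(u)$. Altogether
\[
\lambda^{2}\le d(u)+\lambda+(n-1-d(u))=\lambda+n-1,
\]
hence $\lambda^{2}-\lambda-(n-1)\le 0$ and $\lambda\le\tfrac{1+\sqrt{4n-3}}{2}$. With this correction your equality analysis is on the right track: tightness forces $x_{w}=1$ for all $w\notin N[u]$ appearing in the sum and $c(u,w)=1$ for every $w\neq u$, whence the Friendship Theorem identifies $G$ as $K_{1}\vee\tfrac{n-1}{2}K_{2}$.
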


This theorem states that if $G$ is $C_4$-free, then  
\begin{equation}  \label{eqeq49}
\frac{2m}{n} \le \lambda (G) \le \frac{1+ \sqrt{4n-3}}{2}.  
 \end{equation}
This spectral result implies $\mathrm{ex}(n,C_4)\le (\frac{1}{2} + o(1))n^{3/2}$.

The  Friendship Theorem \cite[Chapter 43]{AZ2014} states that 
if $G$ is a graph on $n$ vertices such that 
every two distinct vertices have exactly one common neighbor, 
then $n$ is odd and $G=K_1 \vee \frac{n-1}{2}K_2$,  
a graph obtained from $\frac{n-1}{2}$ triangles 
sharing a single common vertex.
Such a graph is usually called the friendship graph. 
The equality in Theorem \ref{Niki07} is best possible only for $n$ odd, and 
Nikiforov's result may be improved for even $n$. 

 Erd\H{o}s and Renyi  showed that 
if $q$ is a prime power, then the polarity graph 
$ER_q$ is $C_4$-free on 
$n=q^2+q+1$ vertices with 
$e(ER_q)= \frac{q(q+1)^2}{2}$. Thus, we have 
\begin{equation*}
  \lambda (ER_q) \ge \frac{2e(ER_q)}{n} 
= \frac{q(q+1)^2}{q^2+q+1} >q+1 -\frac{1}{q} 
= \frac{1+ \sqrt{4n-3}}{2} - \frac{1}{\sqrt{n} -1}.  
\end{equation*}
This lower bound is quite close to the upper bound 
in (\ref{eqeq49}).

\medskip 

In 2012, Zhai and Wang improved the result of Nikiforov 
when $n$ is even. 

\begin{theorem}[Zhai--Wang \cite{ZW2012}] \label{thmzw}
Let $G$ be a graph of even order $n$. 
If $G$ has no copy of $C_4$, 
then $\lambda (G)\le x_0$, where 
$x_0$ is the largest root of the equation 
\[ x^3 - x^2 - (n-1)x +1 = 0.\] 
Moreover, the equality holds if and only if 
$G$ is a graph obtained from the star  $K_{1,n-1}$ 
by adding $\frac{n}{2} -1$ disjoint  edges in its independent set. 
\end{theorem}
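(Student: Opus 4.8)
The plan is to follow the standard two-part structure of spectral extremal arguments: first establish the bound $\lambda(G) \le x_0$ for every $C_4$-free graph $G$ on an even number $n$ of vertices, and then analyze the equality case to show the extremal graph must be the star $K_{1,n-1}$ with a near-perfect matching added in its independent set. For the bound, I would start from Nikiforov's Theorem \ref{Niki07}, which already gives $\lambda(G) \le \tfrac{1+\sqrt{4n-3}}{2}$ with equality only when $n$ is odd. So for even $n$ the inequality is strict; the task is to quantify the gap and show it is at least enough to reach $x_0$, the largest root of $x^3 - x^2 - (n-1)x + 1 = 0$. One checks directly that this cubic has a root slightly below $\tfrac{1+\sqrt{4n-3}}{2}$: indeed, evaluating $f(x) = x^3 - x^2 - (n-1)x + 1$ at $x = \tfrac{1+\sqrt{4n-3}}{2}$ (which satisfies $x^2 - x - (n-1) = 0$) gives $f(x) = x(x^2 - x - (n-1)) + 1 = 1 > 0$, so $x_0 < \tfrac{1+\sqrt{4n-3}}{2}$, confirming that Theorem \ref{thmzw} is a genuine improvement when $n$ is even.

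For the main inequality I would argue as follows. Let $G$ be a $C_4$-free graph on $n$ vertices (we may assume $G$ is connected and extremal, since adding edges while staying $C_4$-free only increases $\lambda$) with Perron eigenvector $\bm{x}$ normalized so that $\max_i x_i = x_u = 1$ at some vertex $u$. The $C_4$-free condition means any two vertices have at most one common neighbor, which is the combinatorial engine: for the vertex $u$, counting walks of length $2$ from $u$ gives $\lambda^2 x_u = \sum_{v \sim u} x_v + \sum_{v \sim u}\sum_{w \sim v, w \ne u} x_w$, and the $C_4$-free property forces each vertex $w \ne u$ to be counted at most once in the double sum (it has at most one neighbor in $N(u)$). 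Combining this with the eigenvalue equations at $u$ and at its neighbors, together with $x_i \le 1$ and the edge count bound $e(G) = O(n^{3/2})$, one derives a cubic inequality in $\lambda$ whose controlling root is exactly $x_0$. Concretely, I expect the inequality to take the shape $\lambda^3 \le \lambda^2 + (n-1)\lambda - 1 + (\text{error terms that vanish in the equality configuration})$, matching $f(\lambda) \le 0$, hence $\lambda \le x_0$.

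The hard part will be the equality analysis. Equality in the derived cubic forces: (i) the Perron vector takes only two distinct values, $1$ on a dominating vertex $u$ and some common value on all other vertices, which pins down $G$ to be a graph where $u$ is adjacent to everything; (ii) within the independent-set neighborhood $N(u) = V \setminus \{u\}$, the $C_4$-free condition combined with the tightness of every inequality used forces the induced graph on $N(u)$ to be a perfect matching (each vertex in $N(u)$ has exactly one neighbor besides $u$), so $G = K_{1,n-1} + \tfrac{n}{2}$ edges — but wait, $u$ together with any edge in $N(u)$ forms a triangle, and two such triangles share only $u$, so no $C_4$ appears, consistent. The parity of $n$ enters precisely here: a perfect matching on the $n-1$ vertices of $N(u)$ requires $n-1$ even, i.e., $n$ odd — so for $n$ even we can only have $\tfrac{n}{2} - 1$ matching edges with one vertex of $N(u)$ left unmatched, which is exactly the stated extremal graph. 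Verifying that this graph achieves $\lambda = x_0$ is a direct computation: its characteristic polynomial (via equitable partition into the center, the matched vertices, and the lone unmatched vertex) factors to reveal $x^3 - x^2 - (n-1)x + 1$ as the relevant factor. The delicate point throughout is making the error terms in the cubic inequality precise enough that equality genuinely forces this rigid structure and nothing else — this is where careful bookkeeping of which vertices are "over-counted" in the walk counts, and ruling out other near-extremal configurations, will require the most work.
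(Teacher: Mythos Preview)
The paper is a survey and does not include a proof of Theorem~\ref{thmzw}; it merely cites the result from Zhai and Wang~\cite{ZW2012}. So there is no ``paper's own proof'' to compare against here.

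That said, your outline follows the same architecture as the original Zhai--Wang argument: normalize the Perron vector so that $x_u=1$ at a vertex $u$ of maximum entry, exploit the $C_4$-free condition that any two vertices share at most one common neighbor to control the second-neighborhood sums arising from $\lambda^2 x_u$ and $\lambda^3 x_u$, and assemble these into a cubic inequality $f(\lambda)\le 0$ with $f(x)=x^3-x^2-(n-1)x+1$. Your verification that $x_0<\tfrac{1+\sqrt{4n-3}}{2}$ via $f\bigl(\tfrac{1+\sqrt{4n-3}}{2}\bigr)=1>0$ is clean, and the parity discussion correctly identifies why the extremal configuration for even $n$ leaves exactly one vertex of $N(u)$ unmatched.

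The genuine gap in your proposal is the derivation of the cubic itself: you write ``I expect the inequality to take the shape $\lambda^3\le\lambda^2+(n-1)\lambda-1+(\text{error terms})$,'' but you never produce it. The passage from the walk-count identities to a cubic with constant term exactly $+1$ (rather than $0$, which would recover only Nikiforov's quadratic bound) is the heart of the matter, and it hinges on a careful accounting of how many vertices in $V\setminus\{u\}$ can fail to appear in the length-$2$ walk sum from $u$. In the original argument this requires splitting into cases according to whether $u$ is adjacent to every other vertex, and tracking the contribution of the (at most one) vertex in $N(u)$ with no second neighbor inside $N(u)$; this is precisely what manufactures the ``$+1$'' and simultaneously drives the equality characterization. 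Until you carry out that bookkeeping explicitly, the proposal remains a plausible plan rather than a proof.
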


Let $K_{k}\vee I_{n-k}$ 
be the graph obtained by joining each vertex of $K_k$ 
to $n-k$ isolated vertices of $I_{n-k}$, 
and let $K_{k}\vee I_{n-k}^+$ be the graph obtained by adding 
one edge within the independent set of $K_{k}\vee I_{n-k}$.   
We can see that 
\begin{itemize}
\item 
$C_{2k+1} \nsubseteq K_{k}\vee I_{n-k}$ and $C_{2k+2} \nsubseteq K_{k}\vee I_{n-k}$.  

\item $C_{2k+1} \subseteq K_{k}\vee I_{n-k}$ and 
$C_{2k+2} \nsubseteq K_{k}\vee I_{n-k}^+$. 

\end{itemize}

\begin{conjecture}[Nikiforov \cite{Niki2010b}] \label{conj1}
Let $k\ge 2$ and $G$ be a graph of sufficiently large order $n$. 
If $G$ is $ C_{2k+2}$-free, then 
 \[ \lambda (G) \le \lambda (K_{k}\vee I_{n-k}^+),\]  
 equality holds if and only if $G=K_{k}\vee I_{n-k}^+$. 
\end{conjecture}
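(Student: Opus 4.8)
To attack Conjecture \ref{conj1} I would follow the three-phase scheme that already underlies the proofs of Theorem \ref{Niki07} and Theorem \ref{thmzw}. Write $S:=K_k\vee I_{n-k}^{+}$; as noted just above, $S$ is $C_{2k+2}$-free, and from the quotient matrix of $K_k\vee I_{n-k}$ one gets $\rho_0:=\lambda(S)>\sqrt{k(n-k)}$ (adding the one edge only increases $\lambda$). Let $G$ be a $C_{2k+2}$-free graph on $n$ vertices with $\lambda:=\lambda(G)$ as large as possible; it may be assumed connected, and $\lambda\ge\rho_0>\sqrt{k(n-k)}$. Let $\mathbf{x}$ be its positive Perron vector, normalised by $x_z=\max_i x_i=1$. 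Then $\lambda=\sum_{v\sim z}x_v\le d(z)$, so $d(z)>\sqrt{k(n-k)}$, and iterating the eigenvalue equation,
\[ \lambda^{2}=d(z)+\sum_{w\ne z}|N(z)\cap N(w)|\,x_w . \]
Since a $C_{2k+2}$-free graph has $o(n^{2})$ edges (by Theorem \ref{thmkst} applied with $C_{2k+2}\subseteq K_{k+1,k+1}$, or by Bondy--Simonovits \cite{BS1974}), the codegree sum is $o(n^{2})$, so already $\lambda=o(n)$; a sharper accounting of weighted walks of length up to $2k+2$ in a $C_{2k+2}$-free graph (the first delicate point, discussed at the end) should give the matching bound $\lambda=(1+o(1))\sqrt{kn}$, and with it the concentration of the Perron weight: the $k$ heaviest vertices $W$ have $x_w\ge c(k)>0$ for each $w\in W$, while $\sum_{v\notin W}x_v^{2}=o(1)$.

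The second and hardest phase is stability: I would show that, for $n$ large, all but $o(n)$ vertices of $V(G)\setminus W$ are adjacent to every vertex of $W$ and that $V(G)\setminus W$ spans only $o(n)$ edges --- i.e. $G$ differs from $S$ in $o(n)$ edges. The mechanism is the standard one: if a linear fraction of vertices missed some $w_i\in W$, or if $G-W$ spanned $\Omega(n)$ edges, then using the classical Erd\H{o}s--Gallai fact (a graph on $m$ vertices with more than $km$ edges contains a path on $2k+1$ vertices) one extracts from the bulk a path $v_0v_1\cdots v_{2k}$ whose endpoints, being adjacent to a common $w_i\in W$, close up into the cycle $v_0v_1\cdots v_{2k}w_iv_0$ of length $2k+2$ --- exactly the alternating core/independent-set pattern used before the conjecture to exhibit $C_{2k+2}\subseteq K_k\vee P_3$. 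The Perron-weight concentration from Phase one guarantees that any such reconfiguration (moving weight off the offending vertices onto $W$) would not lower $\lambda$, so $C_{2k+2}$-freeness, or the maximality of $\lambda$, is contradicted.

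In the third phase one localises. With $W=\{w_1,\dots,w_k\}$ and $U:=V(G)\setminus W$: if some $u\in U$ were non-adjacent to some $w_i$, then $G+uw_i$ is still $C_{2k+2}$-free --- any new cycle through $uw_i$ must, because $W$ is essentially a clique joined to a near-independent $U$, have length at most $2k+1$, just as in the gap count proving $C_{2k+2}\not\subseteq K_k\vee I_{n-k}^{+}$ --- so $\lambda(G+uw_i)>\lambda$, contradicting maximality; the same argument fills in any missing edge inside $W$. Hence $G=K_k\vee H$ with $H=G[U]$ on $n-k$ vertices. Now for $k\ge2$ both $K_k\vee P_3$ and $K_k\vee 2K_2$ contain $C_{2k+2}$ (the gap count gives a cycle of length at most $2k+2$, with equality as soon as one gap carries three consecutive $H$-vertices, or two gaps each carry a matched pair), so $H$ has no $P_3$ and no two disjoint edges, i.e. $e(H)\le 1$; since $\lambda(K_k\vee I_{n-k}^{+})>\lambda(K_k\vee I_{n-k})$, in fact $e(H)=1$ and $G=S$. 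Together with $\lambda(S)=\rho_0$ this yields both the inequality and the uniqueness of the extremal graph.

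The main obstacle is Phase two. Promoting ``$\lambda$ close to $\sqrt{kn}$'' to ``$k$ vertices dominate all but $o(n)$ others'' requires an honest upper bound on weighted walks (not merely on the edge count) in a $C_{2k+2}$-free graph, and, more delicately, one must produce a path in the bulk of exactly the right length so that the cycle it closes with $W$ has length precisely $2k+2$, neither shorter nor longer. This length bookkeeping --- together with ruling out that the extracted path is forced to detour through $W$ in a way that changes its length --- is where the argument genuinely needs $n$ large and a more careful analysis than sketched here; Phase three, by contrast, is the routine local polishing familiar from the $C_4$ cases (Theorems \ref{Niki07} and \ref{thmzw}).
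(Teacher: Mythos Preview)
The statement you address is presented in the survey as an \emph{open conjecture} of Nikiforov; the paper supplies no proof and only records that Zhai and Lin \cite{ZL2020} settled the special case $k=2$ for $n\ge 23$. There is therefore no paper proof to compare your proposal against, and you are in effect sketching an attack on an open problem.

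Your three-phase outline (Perron-weight concentration, structural stability, local cleanup) is very much in the spirit of how the recent spectral extremal results surveyed here --- Theorems \ref{thmCFTZ20}, \ref{thmlp}, \ref{thmDe} --- are proved, and you rightly flag Phase~2 as the genuine obstacle. Two further points deserve mention. First, in Phase~3 your literal claim that ``$K_k\vee P_3$ and $K_k\vee 2K_2$ contain $C_{2k+2}$'' is false as written, since $K_k\vee P_3$ has only $k+3<2k+2$ vertices; what is true (and what you need) is that $K_k\vee H$ contains $C_{2k+2}$ whenever $|H|=n-k$ is large and $H\supseteq P_3$ or $H\supseteq 2K_2$, because the remaining isolated vertices of $H$ are used to pad the cycle out to length $2k+2$. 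Second, and more seriously, the edge-addition step ``$G+uw_i$ is still $C_{2k+2}$-free'' is not justified: after Phase~2 you only know that $U$ spans $o(n)$ edges, and even a single stray edge in $U$, combined with the near-complete bipartite structure between $W$ and $U$, already supports many $u$--$w_i$ paths of length $2k+1$, so adding $uw_i$ may well create a $C_{2k+2}$. The standard remedy in the papers cited above is not to add edges but either to compare $\lambda(G)$ with $\lambda(S)$ directly through the Perron vector, or to perform an edge \emph{switch} (delete an edge inside $U$ and add $uw_i$) and show this strictly increases $\lambda$ while remaining $C_{2k+2}$-free; either route requires a more careful case analysis than you sketch. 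In short: the plan is reasonable, but Phase~2 is genuinely open and Phase~3 as written also has a gap.
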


In 2020, Zhai and Lin \cite{ZL2020} proved Conjecture \ref{conj1} 
for the case $k=2$ and $n\ge 23$. 

\begin{conjecture}[Nikiforov  \cite{Niki2010b}] \label{conj2}
Let $k\ge 2$ and $G$ be a graph of sufficiently large order $n$. 
 If $G$ is both $C_{2k+1}$-free and $C_{2k+2}$-free, then 
 \[ \lambda (G) \le \lambda (K_{k}\vee I_{n-k}), \]  
 equality holds if and only if $G=K_{k}\vee I_{n-k}$. 
\end{conjecture}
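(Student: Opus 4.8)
\medskip
\noindent{\bf A possible approach.}
I would prove Conjecture~\ref{conj2} along the three--stage route that has become standard for spectral extremal problems of this type (compare the proof of Theorem~\ref{thmzw} and the argument of Zhai and Lin \cite{ZL2020} for the case $k=2$ of Conjecture~\ref{conj1}): first a lower bound for the extremal value, then a stability/structure lemma, and finally a local perturbation argument that upgrades ``almost'' to ``exactly''. Fix $k\ge2$, let $n$ be large, and among all $n$--vertex graphs that are simultaneously $C_{2k+1}$--free and $C_{2k+2}$--free let $G$ be one with $\lambda:=\lambda(G)$ as large as possible; let $\bm{x}\ge 0$ be a Perron eigenvector of $A(G)$ normalised by $\max_i x_i=x_u=1$. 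Since every cycle of $K_k\vee I_{n-k}$ meets the clique in at least half of its vertices, the circumference of $K_k\vee I_{n-k}$ equals $2k$, so $K_k\vee I_{n-k}$ lies in our class and
\[
\lambda \;\ge\; \lambda\bigl(K_k\vee I_{n-k}\bigr)
\;=\;\frac{k-1+\sqrt{(k-1)^2+4k(n-k)}}{2}\;>\;\sqrt{k(n-k)}.
\]
Hence $\lambda=\Theta(\sqrt n)$; combining $2e(G)/n\le\lambda$ with the Bondy--Simonovits bound $\mathrm{ex}(n,C_{2k+2})=O(n^{1+1/(k+1)})$ \cite{BS1974} pins $e(G)$ between $\tfrac12kn(1+o(1))$ and $n^{1+1/(k+1)}$, and the eigenvalue equation $\lambda x_v=\sum_{w\sim v}x_w\le d(v)$ shows that every vertex of non--negligible weight has degree $\Omega(\sqrt n)$. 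One may also assume $G$ is connected, as a disconnected extremal graph violates the displayed lower bound.

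The heart of the argument is a structure lemma: \emph{there is a set $W\subseteq V(G)$ with $|W|=k$ such that $G-W$ has only $o(n)$ edges and all but $o(n)$ vertices of $V(G)\setminus W$ are joined to every vertex of $W$.} Its engine is cycle routing. If $G$ had $k+1$ vertices of constant eigenvector weight, or if after deleting any $k$ vertices $\Omega(n)$ edges remained, then --- because every surviving vertex of positive weight has degree $\Omega(\sqrt n)$ and, after discarding negligible pieces, $G$ is suitably connected --- one could select $k$ internally disjoint bridges running through low--weight vertices and a clique--like set and close them up with one extra edge into a cycle of length precisely $2k+1$ or $2k+2$. This is exactly the phenomenon that makes $K_k\vee I_{n-k}^{+}$ already contain $C_{2k+1}$, as the explicit routing $v\,w_1u_1w_2u_2\cdots w_k\,v'$ followed by the edge $v'v$ shows. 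I expect this to be the real obstacle: controlling the block and connectivity structure of the core of $G$ well enough to guarantee bridges of the exact lengths required --- under only $\Omega(\sqrt n)$, not linear, minimum degree --- should demand a careful case analysis together with Erd\H{o}s--Gallai / Bondy--Simonovits-type estimates on the set of cycle lengths, and probably a separate treatment of the regime in which $\omega(G)$ is large.

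Granting the structure lemma, the endgame is short and uses only the fact that adding an edge to a connected graph strictly increases its spectral radius. Let $W$ be as in the lemma; if $|W|<k$ one checks directly that $\lambda(G)<\lambda(K_k\vee I_{n-k})$, a contradiction, so $|W|=k$. The set $V(G)\setminus W$ must be independent: an edge $vw$ with $v,w\notin W$, together with $k-1$ further vertices of $V(G)\setminus W$ adjacent to all of $W$ (available since $n$ is large and $W$ is, up to $o(n)$ exceptions, dominating and clique--like), routes a $C_{2k+1}$ via $v\,w_1u_1\cdots w_k\,w$ plus the edge $wv$, a contradiction. The remaining edges one could add to turn $G$ into $K_k\vee I_{n-k}$ --- the missing edges inside $W$ and between $W$ and $V(G)\setminus W$ --- keep $G$ a subgraph of $K_k\vee I_{n-k}$, hence $C_{2k+1}$-- and $C_{2k+2}$--free, so maximality of $\lambda$ forces $W$ to induce $K_k$ and every vertex of $V(G)\setminus W$ to be joined to all of $W$; thus $G=K_k\vee I_{n-k}$, which a direct eigenvalue computation confirms is the unique maximiser. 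The two points still to be secured are (i) the structure lemma itself and (ii) removing the ``$o(n)$ exceptions'' in the endgame, the latter by the customary refinement that an offending vertex would carry too small an eigenvector weight and could be re--wired to increase $\lambda$.
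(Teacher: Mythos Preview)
The paper does not contain a proof of this statement: it is presented as an \emph{open conjecture} of Nikiforov. The paper only records partial progress --- the case $k=1$ was settled by Favaron, Mah\'{e}o and Sacl\'{e} \cite{FMS1993}, the case $k=2$ by Yuan, Wang and Zhai \cite{YWZ2012} for $n\ge 6$, and Gao and Hou \cite{GH2019} proved the weaker statement in which $G$ is assumed to have no cycle of length at least $2k+1$ (a much stronger hypothesis than forbidding only $C_{2k+1}$ and $C_{2k+2}$). There is therefore no ``paper's own proof'' to compare your proposal against.

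Your outline is a reasonable heuristic sketch in the spirit of the Zhai--Lin and Cioab\u{a}--Feng--Tait--Zhang arguments, and you are candid that the structure lemma is the real obstacle. But you should be aware that what you describe is not a proof, and the gap you identify is genuine: forbidding only two consecutive cycle lengths $C_{2k+1},C_{2k+2}$ gives far less structural control than forbidding all long cycles (the Gao--Hou setting) or forbidding a single even cycle (the Zhai--Lin setting). In particular, your cycle-routing step --- ``close them up with one extra edge into a cycle of length precisely $2k+1$ or $2k+2$'' --- is exactly where the known arguments for $k=2$ do delicate work that does not obviously generalise, since a graph can be rich in cycles of many lengths while avoiding two specific ones. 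The conjecture remains open for $k\ge 3$, so a genuine proof would be a new result, not a reproduction of anything in this survey.
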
 

The case $k=1$ in Conjecture \ref{conj2} 
was early proved by Favaron, Mah\'{e}o and Sacl\'{e} \cite{FMS1993}. 
In 2012, Yuan, Wang and Zhai \cite{YWZ2012} proved 
conjecture \ref{conj2} for the case $k=2$ and $n\ge 6$.  
 In 2019, Gao and Hou  \cite{GH2019} proved  Conjectures 
 \ref{conj1} and \ref{conj2} under  stronger conditions. 
 More precisely, Conjectures \ref{conj1}  holds when we forbid all cycles of length at least $2k+2$. 
 Conjectures \ref{conj2}  holds when we forbid all cycles of length at least $2k+1$.

 \begin{theorem}[Gao--Hou \cite{GH2019}]
 Let $k\ge 2$ and $G$ be a graph of order $n\ge 13k^2$. 
 If $G$ does not contain a cycle of length at least $2k+2$, then 
  $\lambda (G) \le \lambda (K_{k}\vee I_{n-k}^+)$, 
 equality holds if and only if $G=K_{k}\vee I_{n-k}^+$. 
 \end{theorem}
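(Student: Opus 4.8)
The plan is a Perron-weight (local eigenvector) argument built on top of the Kopylov theory of graphs with bounded circumference; the hypothesis here, that $G$ has \emph{no} cycle of length $\ge 2k+2$, i.e.\ circumference at most $2k+1$, is precisely what makes that theory directly applicable (in contrast with the weaker ``$C_{2k+2}$-free'' hypothesis of Conjecture~\ref{conj1}). Write $\lambda=\lambda(G)$, let $\mathbf{x}$ be the Perron eigenvector normalized so that $\max_v x_v=x_u=1$, and put $G^{*}=K_k\vee I_{n-k}^{+}$. First record the value one is aiming at: since $K_k\vee I_{n-k}\subseteq G^{*}$ and the bipartition $\bigl(V(K_k),\,V(I_{n-k})\bigr)$ is equitable, the quotient matrix $\left(\begin{smallmatrix}k-1 & n-k\\ k & 0\end{smallmatrix}\right)$ yields
\[
 \lambda(G^{*})\ \ge\ \lambda(K_k\vee I_{n-k})\ =\ \frac{k-1+\sqrt{(k-1)^{2}+4k(n-k)}}{2}\ >\ \sqrt{k(n-k)} .
\]
So one may assume $\lambda\ge\sqrt{k(n-k)}$, and then already $d(u)\ge\sum_{v\sim u}x_v=\lambda\ge\sqrt{k(n-k)}$. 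It is also worth isolating the combinatorial mechanism that will pin down the structure: if $S$ is a set of $k$ vertices each joined to every vertex of $V\setminus S$ and $G-S$ contains two edges (disjoint or sharing an endpoint), then threading those short segments into an alternating cycle through $S$ yields a cycle of length $2k+2$; hence under our hypothesis $e(G-S)\le 1$ for every such $S$.

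The first substantive step is to force $G$ into the ``apex'' regime. After the routine reductions to $G$ connected and then to $G$ $2$-connected (the cut-vertex case being disposed of separately, by a local modification that raises $\lambda$ without creating a long cycle, or by applying the Erd\H{o}s--Gallai/Kopylov edge bounds block by block, using $n\ge 13k^{2}$), Kopylov's theorem gives $e(G)\le\max\{f(n,2k+1,2),\,f(n,2k+1,k)\}$ with $f(n,2k+1,k)=\binom{k}{2}+k(n-k)+1=e(G^{*})$. For $n\ge 13k^{2}$ this is the larger term, and the competing $a=2$ extremal family has only $2n+O_k(1)$ edges, hence spectral radius $O_k(\sqrt n)<\sqrt{k(n-k)}$, so it is excluded. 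I would then extract, from the ``disintegration'' argument inside the proof of Kopylov's theorem rather than from its edge count alone, the structural conclusion that there is a set $S$ with $|S|=k$ such that every vertex of $V\setminus S$ is adjacent to all, or all but $O_k(1)$, of $S$ and $e(G-S)=O_k(1)$. Cleaning up the ``all but $O_k(1)$'' exceptions (each such missing edge is suboptimal because the weights $x_v$ with $v\notin S$ are small, while adding it keeps the circumference at most $2k+1$), together with the mechanism above, then gives $S$ complete to $V\setminus S$ and $e(G-S)\le 1$, i.e.\ $G\subseteq G^{*}$.

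The closing step is a rigidity argument upgrading $G\subseteq G^{*}$ to $G=G^{*}$. From the eigen-equations: for $w\in V\setminus S$ one has $\lambda x_w=\sum_{v\in S,\ v\sim w}x_v\le\sum_{v\in S}x_v$, with equality iff $w$ is complete to $S$; and a weight comparison on $S$ shows the $x$-values on $S$ are nearly equal and much larger than those on $V\setminus S$, so a non-edge inside $S$ is likewise costly. Consequently any missing edge inside $S$, any missing edge between $S$ and $V\setminus S$, or the absence of the single permitted edge inside $V\setminus S$, could be added while keeping the circumference at most $2k+1$ (as $G^{*}$ itself has circumference $2k+1$), which by Perron--Frobenius monotonicity strictly increases $\lambda$ and contradicts extremality. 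A final weight-transfer comparison confirms that the permitted edge inside $V\setminus S$ is unique. Combining the three steps yields $\lambda(G)\le\lambda(G^{*})$, with equality if and only if $G=G^{*}$.

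The step I expect to be the real obstacle is the middle one: distilling from Kopylov-type reasoning the clean statement ``$G\subseteq G^{*}$ up to $O_k(1)$ exceptional edges'' with constants explicit enough to be valid already at $n=13k^{2}$, and in particular handling, uniformly in $k$, both the non-$2$-connected graphs and the competing $a=2$ extremal family. By comparison the opening lower bound for $\lambda(G^{*})$ and the concluding switching/rigidity argument are short Perron-vector computations.
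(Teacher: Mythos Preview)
The paper you are working from is a survey; it merely \emph{states} the Gao--Hou theorem and gives no proof, so there is nothing here to compare your argument against line by line. What follows is therefore an assessment of your sketch on its own terms.

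Your overall architecture---lower-bound $\lambda(G^{*})$ via the quotient matrix, invoke the Kopylov/Woodall theory for $2$-connected graphs of circumference at most $2k+1$ to force $G$ into (a subgraph of) $K_k\vee I_{n-k}^{+}$, and close with a Perron-monotonicity rigidity step---is the standard and, as far as I know, the correct one for this class of problems; Gao and Hou's own proof follows essentially this template. Two places in your write-up are not yet proofs, and you have correctly identified the first of them. (i) ``Extract from the disintegration argument inside Kopylov's proof the structural conclusion\ldots'' is the whole difficulty: the edge bound $e(G)\le \binom{k}{2}+k(n-k)+1$ by itself does not give you a $k$-apex set complete to the rest, and the stability versions of Kopylov's theorem (F\"uredi--Kostochka--Luo--Verstra\"ete and related work) that do deliver such a structure need to be quoted precisely, with constants checked against $n\ge 13k^{2}$. (ii) Your reduction to the $2$-connected case is hand-waved: the claim that a cut-vertex can be eliminated ``by a local modification that raises $\lambda$ without creating a long cycle'' is not obviously true in general, and the alternative ``apply Erd\H os--Gallai/Kopylov block by block'' needs a genuine argument bounding $\lambda$ of a graph by data on its blocks. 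In the actual Gao--Hou paper this step is handled carefully; you should expect it to cost a page or two, not a sentence.

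One small correction: your parenthetical ``using $n\ge 13k^{2}$'' when comparing the $a=2$ and $a=k$ Kopylov families is the right instinct, but the dismissal ``only $2n+O_k(1)$ edges, hence spectral radius $O_k(\sqrt n)$'' is false as stated---a graph with $2n$ edges can have spectral radius as large as $\Theta(\sqrt n)$ only if it is star-like, but in general $2m/n\le\lambda$ gives a \emph{lower} bound, not an upper bound. You need an actual upper bound on $\lambda$ for graphs in the $a=2$ family (e.g.\ via their degree sequence or by noting they sit inside $K_2\vee I_{n-2}^{+}$-type graphs), and then compare.
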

 
  \begin{theorem}[Gao--Hou \cite{GH2019}]
 Let $k\ge 2$ and $G$ be a graph of order $n\ge 13k^2$. 
 If $G$ does not contain a cycle of length at least $2k+1$, then 
  $\lambda (G) \le \lambda (K_{k}\vee I_{n-k})$, 
 equality holds if and only if $G=K_{k}\vee I_{n-k}$. 
 \end{theorem}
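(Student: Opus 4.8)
The plan is to analyse the Perron eigenvector of a spectral-extremal graph and to use the circumference hypothesis, via a cycle-threading (rotation) argument, to show that such a graph can only be $S_{n,k}=K_k\vee I_{n-k}$. So let $G$ be a graph of order $n\ge 13k^2$ containing no cycle of length at least $2k+1$ and with $\lambda(G)$ as large as possible; a standard switching (Kelmans-type) operation, which neither decreases $\lambda$ nor creates a cycle of length at least $2k+1$, lets us assume that $G$ is connected. Let $\mathbf{x}=(x_v)_{v\in V(G)}$ be the Perron eigenvector of $A(G)$, normalised so that $\max_v x_v=x_u=1$. The longest cycle of $S_{n,k}$ has length $2k$ — one alternating between the $k$ vertices of $K_k$ and $k$ vertices of $I_{n-k}$ — so $S_{n,k}$ itself contains no cycle of length at least $2k+1$, whence
\[
\lambda:=\lambda(G)\ \ge\ \lambda(S_{n,k})\ =\ \frac{k-1}{2}+\sqrt{kn-\tfrac14(3k^2+2k-1)}\ >\ \sqrt{k(n-k)}\ \ge\ \sqrt{12}\,k^{3/2}.
\]
Thus $\lambda\gg k$, and this slack is exactly what the hypothesis $n\ge 13k^2$ provides.

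The heart of the proof is to show that $G$ is a spanning subgraph of $S_{n,k}$. Using $\lambda x_v=\sum_{w\sim v}x_w\le d(v)$ one first isolates the set $W$ of \emph{heavy} vertices — those whose eigenvector entry exceeds a suitable threshold of order $k/\lambda$ — every remaining vertex having small entry because it has few neighbours. The second-neighbourhood identity
\[
\lambda^2=\lambda^2x_u=\sum_{v\sim u}\sum_{w\sim v}x_w=d(u)x_u+\sum_{w\neq u}\bigl|N(u)\cap N(w)\bigr|\,x_w
\]
then forces the heavy vertices to have large pairwise common neighbourhoods — of size $\gg k$ — since otherwise the right-hand side cannot reach $\lambda^2>k(n-k)$. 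The circumference hypothesis now closes the structure. If $|W|\ge k+1$, then picking $w_1,\dots,w_{k+1}\in W$ and threading a cycle $w_1a_1w_2a_2\cdots w_{k+1}a_{k+1}w_1$ through distinct common neighbours $a_i\in N(w_i)\cap N(w_{i+1})$ — which is possible since each such common neighbourhood has size $\gg k$ — produces a cycle of length $2k+2$, a contradiction; hence $|W|\le k$. Similarly, an edge inside $V(G)\setminus W$, or a vertex of $V(G)\setminus W$ with a neighbour outside $W$, can be spliced into a cycle running through the heavy vertices to reach length at least $2k+1$; hence $V(G)\setminus W$ is independent and each of its vertices has all its neighbours in $W$. (A Kopylov–Erd\H{o}s–Gallai estimate on the subgraph spanned by $W$ together with its common neighbourhoods — a $2$-connected graph on $N$ vertices with no cycle of length at least $2k+1$ has at most $\binom{k}{2}+k(N-k)$ edges once $N$ is large, with $K_k\vee I_{N-k}$ the unique extremal graph — can be used to streamline this step.)

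It now follows that $G\subseteq K_{|W|}\vee I_{n-|W|}$, which is a subgraph of $S_{n,k}$; if $|W|<k$ then $\lambda(G)\le\lambda(K_{|W|}\vee I_{n-|W|})<\lambda(S_{n,k})$, contradicting the choice of $G$, so $|W|=k$ and $G$ is a spanning subgraph of the connected graph $S_{n,k}$. By the Perron–Frobenius theorem $\lambda(G)\le\lambda(S_{n,k})$, with equality if and only if $G=S_{n,k}$, and since this holds for the spectral-extremal $G$ it holds for every graph on $n$ vertices with no cycle of length at least $2k+1$, which is the assertion. The main obstacle is the structural step: one must choose the heavy threshold correctly, establish the lower bounds on the pairwise common neighbourhoods with constants strong enough for the cycle-threading to genuinely go through, and rule out residual low-weight, low-degree vertices. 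The bound $n\ge 13k^2$ is precisely what makes those common neighbourhoods large compared with $|W|\le k$, and carefully propagating these constants through the rotation and counting estimates is the delicate part of the argument.
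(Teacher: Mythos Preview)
The survey paper does not give its own proof of this theorem: it merely quotes the Gao--Hou result from \cite{GH2019} and moves on, so there is no in-paper argument to compare against. That said, your outline is exactly the strategy of the original Gao--Hou proof (and of the closely related arguments in \cite{CFTZ20,DKLNTW2021,LP2021}): take a spectral-extremal $G$, use $\lambda(G)\ge\lambda(S_{n,k})>\sqrt{k(n-k)}$, analyse the Perron entries via the second-neighbourhood identity to locate a small set $W$ of ``heavy'' vertices with large mutual common neighbourhoods, forbid $|W|\ge k+1$ by threading a $(2k+2)$-cycle through them, and then force $V(G)\setminus W$ to be an independent set attached only to $W$.

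A few places in your sketch are looser than they should be. The reduction to connected $G$ is not a Kelmans operation; one simply adds a bridge between components, which lies in no cycle and strictly raises $\lambda$. More substantively, the sentence ``an edge inside $V(G)\setminus W$ \dots\ can be spliced into a cycle'' hides real work: to splice a light edge $ab$ into a long cycle through $W$ you need that $a$ and $b$ each have neighbours in $W$ (or in the common neighbourhoods of $W$), and a priori a light vertex might have all its neighbours light. In the actual proof this is handled by a two-stage argument: first one shows every vertex has a heavy neighbour (otherwise its eigen-entry is too small to contribute to $\lambda^2 x_u$), and only then does the splicing go through. Your parenthetical appeal to Kopylov/Erd\H{o}s--Gallai is a legitimate shortcut for part of this, but you should be explicit that it replaces, rather than supplements, the splicing step. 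These are fixable gaps in exposition rather than in strategy.
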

 
 \noindent 
 {\bf Remark. } 
In 2022,    Cioab\u{a}, Desai and Tait \cite{CDT2022} confirmed Nikiforov's  conjecture.

\medskip 
In what follows, we shall introduce some extremal graph results in terms of the signless Laplacian spectral radius. 
For   odd $n$, we  write $F_n$ 
for the friendship graph on $n$ vertices, that is, 
$F_n=K_1\vee \frac{n-1}{2}K_2$;  
for  even $n$, we write $F_n$ for the graph 
obtained from $F_{n-1}$ by adding an extra 
edge hung to its center. 
In other words, the $F_n$ can be viewed as 
a graph obtained from $K_{1,n-1}$ by adding a maximum 
matching within the independent set $I_{n-1}$. 
The following result is an analogue of 
both Theorem \ref{Niki07} and Theorem \ref{thmzw} for 
the signless Laplacian spectral radius. 

\begin{theorem}[Freitas--Nikiforov--Patuzzi \cite{FNP2013}]
If $G$ is a graph on $n\ge 4$ vertices with no copy of $C_4$, 
then $q(G)\le  q(F_n)$, equality holds if and only if  $G=F_n$. 
\end{theorem}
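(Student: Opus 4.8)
The plan is to prove the statement by a stability-type argument combined with local eigenvector analysis, following the now-standard template for such spectral extremal results (as used, e.g., in Theorem~\ref{thmzw} for the adjacency case). First I would fix a $C_4$-free graph $G$ on $n \ge 4$ vertices with $q(G)$ maximum, and let $\bm{x} = (x_1, \dots, x_n)$ be the Perron eigenvector of $Q(G)$ normalized so that $\max_i x_i = x_u = 1$ for some vertex $u$. The key identity to exploit is the eigenvalue-eigenvector equation $q(G) x_v = d(v) x_v + \sum_{w \sim v} x_w$ for every vertex $v$, which rearranges to $(q(G) - d(v)) x_v = \sum_{w \sim v} x_w$. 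Applying this at the max-weight vertex $u$ gives $q(G) - d(u) = \sum_{w \sim u} x_w \le d(u)$, so $q(G) \le 2 d(u) \le 2(n-1)$, a crude first bound that I would then sharpen. The $C_4$-freeness enters through the observation that any two vertices have at most one common neighbor, which controls the sum $\sum_{v} \sum_{w \sim v} x_w$ type quantities when double-counting paths of length two.

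Next I would establish that $q(G)$ is close to $q(F_n)$ by a counting estimate: since $G$ is $C_4$-free, by the Kővári–Sós–Turán bound (Theorem~\ref{thmkst} with $s=t=2$) we have $e(G) \le \tfrac12 n(1+\sqrt{4n-3})/2 = O(n^{3/2})$, and more importantly $\sum_v \binom{d(v)}{2} \le \binom{n}{2}$, i.e. $\sum_v d(v)^2 \le n(n-1) + \sum_v d(v) = n(n-1) + 2e(G)$. Combining this with the known bound $q(G)^2 \le \sum_v d(v)^2 + \text{(path contributions)}$ — more precisely using $q(G) \le \max_v \big( d(v) + m(v) \big)$ where $m(v)$ is the average degree of neighbors of $v$, or the spectral bound $q(G)^2 \le \max_v \sum_{w \sim v}(d(v) + d(w))$ — I would derive $q(G) \le n + O(\sqrt{n})$, matching the order of $q(F_n) = n + O(1)$ (one computes $q(F_n)$ explicitly as the largest root of a small polynomial, since $F_n$ is $K_{1,n-1}$ with a near-perfect matching added on the leaves). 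This pins down the rough structure.

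Then comes the structural refinement, which I expect to be the main obstacle. Using the near-extremality of $q(G)$, I would argue that there is a vertex $u$ of degree $d(u) \ge n - o(n)$ (a "dominating-like" vertex), because the eigenvalue equation forces the weight to concentrate: if no vertex had large degree, the bound $q(G) \le 2\sqrt{e(G)} \cdot (\text{something})$ or a more careful neighborhood argument would contradict $q(G) \ge q(F_n) \approx n$. Once $u$ has degree essentially $n-1$, $C_4$-freeness forces the neighborhood $N(u)$ to induce a graph in which every vertex has degree at most $1$ (otherwise a path of length two inside $N(u)$ plus $u$ creates a triangle, which is fine, but two such edges sharing no vertex together with $u$... actually the real constraint: two vertices $v, w \in N(u)$ with a common neighbor $z \ne u$ would give a $C_4$ $uvzw$, so $N(u)$-vertices have at most one common neighbor besides $u$, and edges inside $N(u)$ form a matching since two adjacent edges $v w, w y$ in $N(u)$ give $C_4 = u v w y$... wait $v w y u$ is a $C_4$ only if $v \sim u$, $w \sim v$ hmm — the precise local forbidden configurations need care). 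The upshot I would push toward: $G$ is a subgraph of $K_1 \vee (\text{matching} \cup \text{empty})$ plus possibly a bounded number of stray edges, and a final perturbation/edge-switching argument (moving an edge to increase $q$, using the eigenvector to compare $\bm{x}^T Q \bm{x}$ before and after) shows that any deviation from $F_n$ strictly decreases $q(G)$, giving uniqueness. The delicate part throughout is making the "$o(n)$" and "$O(\sqrt n)$" error terms precise enough to force an \emph{exact} conclusion rather than merely an asymptotic one, and handling the parity distinction (odd $n$ versus even $n$) in the definition of $F_n$ — for even $n$ the extra pendant edge at the center must be shown to be optimal against, say, one more matching edge among the leaves.
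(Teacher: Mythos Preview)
The paper is a survey and does \emph{not} supply a proof of this theorem; it simply quotes the result from \cite{FNP2013} and moves on. So there is no ``paper's own proof'' to compare against, and your proposal must stand or fall on its own.

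On its merits: the overall architecture (Perron eigenvector analysis at a maximum-weight vertex, $C_4$-free double counting, then structural refinement and edge-switching) is the right genre, and the moment of confusion you flag is easily resolved --- if $v,w,y\in N(u)$ with $vw,wy\in E(G)$ then $u v w y u$ \emph{is} a $4$-cycle, so $G[N(u)]$ has maximum degree at most $1$, i.e.\ is a matching. That is exactly the structural fact you need. The genuine gap is elsewhere: every quantitative step you sketch is asymptotic (the K\H{o}v\'ari--S\'os--Tur\'an bound gives $e(G)=O(n^{3/2})$, your degree-square inequality gives $q(G)\le n+O(\sqrt n)$, and you plan to show a vertex of degree $n-o(n)$), whereas the theorem is claimed for \emph{all} $n\ge 4$ with an exact extremal graph. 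Bridging from $n+O(\sqrt n)$ down to $q(F_n)$ (which exceeds $n$ by only a bounded amount) is not a matter of tidying error terms; it requires either a sharper initial inequality tailored to the $Q$-matrix, or a direct argument that the extremal graph has a universal vertex (degree exactly $n-1$), after which the matching-in-the-neighborhood observation finishes things cleanly. As written, your Step~3 $\to$ Step~4 transition (``near-extremality forces a dominating-like vertex'') is the place where the argument would actually have to do hard work, and the sketch does not indicate how that work gets done for small or moderate $n$.
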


In addition, 
Freitas et. al. \cite{FNP2013} 
 proposed a conjecture for forbidden even cycles of length
at least $6$, which was solved by Nikiforov and Yuan \cite{NY2015}. 

\begin{theorem}[Nikiforov--Yuan \cite{NY2015}]
Let $k\ge 2, n\ge 400k^2$ and $G$ be a graph on $n$ vertices. 
If $G$ has no copy of $C_{2k+2}$, then 
  $q(G) \le q (K_{k}\vee I_{n-k}^+)$, 
 equality holds if and only if $G=K_{k}\vee I_{n-k}^+$. 
\end{theorem}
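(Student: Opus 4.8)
\medskip
\noindent\textbf{Proof proposal.}

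The plan is to run a spectral extremal-graph argument. Let $G$ be a $C_{2k+2}$-free graph on $n$ vertices with $q(G)$ as large as possible and put $q:=q(G)$. Adding an edge between two different components of a graph creates no new cycle and does not decrease $q$ (and strictly increases it if the added edge meets a component carrying the Perron weight), so we may assume $G$ is connected; let $\mathbf{x}$ be its positive Perron eigenvector of $Q(G)$, normalized so that $\max_i x_i = x_u = 1$ for some vertex $u$. Since $H:=K_k\vee I_{n-k}^+$ is itself $C_{2k+2}$-free, extremality gives $q\ge q(H)$, and because $H\supseteq S_{n,k}$, the displayed value $q(S_{n,k})=\tfrac12\bigl(n+2k-2+\sqrt{(n+2k-2)^2-8k(k-1)}\bigr)$ together with monotonicity of $q$ under edge addition yields $q> n+2k-2-\tfrac{4k(k-1)}{n}$. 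Under the hypothesis $n\ge 400k^2$ this is $q>n+2k-3$, and — crucially — it keeps every error term that appears below smaller than the fixed gap $q-(n+2k-3)$.

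The combinatorial engine is a simple observation: if a vertex $v$ has $2k+1$ of its neighbours lying on a path of $G$, then $v$ together with that path forms a $C_{2k+2}$; hence $G[N(v)]$ contains no $P_{2k+1}$, and the Erd\H{o}s--Gallai theorem gives $e(G[N(v)])\le (k-\tfrac12)d(v)$. Likewise, $G$ cannot contain $k+1$ vertices of degree close to $n$, for any such vertices admit an alternating $(2k+2)$-cycle through vertices chosen greedily from their pairwise common neighbourhoods (possible since $n\ge 400k^2$). The plan is to use the eigen-equation $q\,x_i=\sum_{j\sim i}(x_i+x_j)$ and the lower bound on $q$ to locate a set $A$ of exactly $k$ vertices, each adjacent to every other vertex of $G$, with $x_a$ equal (or arbitrarily close) to $1$, and with $A$ inducing a clique. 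Applied to $u$, the eigen-equation already gives $\sum_{v\sim u}x_v=q-d(u)\ge 2k-2$, so the neighbourhood of $u$ carries large total Perron weight; feeding this back through the eigen-equations of the neighbours of $u$, and using the Erd\H{o}s--Gallai bound to control the cross terms $\sum_{vw\in E(G[N(u)])}(x_v+x_w)$, one forces this weight to concentrate on a few near-unit vertices of near-full degree. The $(k+1)$-vertex obstruction then shows there are at most $k$ of them, so $|A|=k$; and a perturbation step — if some $a\in A$ missed a vertex $w$, then $G+aw$ is either still $C_{2k+2}$-free (contradicting extremality, since $q(G+aw)>q(G)$) or yields, after rerouting through the large common neighbourhoods of $A\setminus\{a\}$, a $C_{2k+2}$ already present in $G$ — shows that $A$ dominates $G$ and is a clique.

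It remains to pin down $G-A$. Write $A=\{a_1,\dots,a_k\}$. If $G-A$ contained a path $w_1w_2w_3$, then for any distinct further vertices $z_1,\dots,z_{k-1}$ the cycle $w_1\,w_2\,w_3\,a_1\,z_1\,a_2\,z_2\cdots a_{k-1}\,z_{k-1}\,a_k\,w_1$ has length $2k+2$, impossible; so $G-A$ has maximum degree at most $1$. If $G-A$ contained two disjoint edges $a_1b_1$ and $a_2b_2$, then (indexing $A$ suitably and choosing further distinct vertices) the cycle $a_1\,b_1\,c_1\,z_1\,c_2\,z_2\cdots c_{k-1}\,a_2\,b_2\,c_k\,a_1$ also has length $2k+2$, again impossible; so $G-A$ has at most one edge. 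Hence $G$ is a spanning subgraph of $K_k\vee I_{n-k}^+$, and since the latter is connected and $C_{2k+2}$-free and $q$ strictly increases when an edge is added to a connected graph, extremality forces $G=K_k\vee I_{n-k}^+$ — which is precisely the asserted inequality together with uniqueness of the extremal graph.

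The main obstacle is the middle paragraph: turning the soft estimate ``$q$ lies within $O(k^2/n)$ of $n+2k-2$'' into the rigid statement ``exactly $k$ vertices have degree $n-1$, they form a clique, and every other vertex has degree $k$ or $k+1$''. This is where the quantitative hypothesis $n\ge 400k^2$ has to be spent: one must track the weights $1-x_a$ for the heavy vertices and $\sum_{v\notin A}x_v$ for the light ones through the eigen-equations and show that the accumulated slacks stay below the constant gap $q-(n+2k-3)>0$. By contrast, the lower bound on $q$ and the final analysis of $G-A$ are routine once the dominating $k$-clique $A$ has been produced.
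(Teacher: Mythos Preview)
The paper is a survey; it merely quotes this theorem from Nikiforov and Yuan \cite{NY2015} and gives no proof of its own, so there is no argument in the paper to compare against. What I can say is that your sketch does follow the strategy of the original Nikiforov--Yuan proof: get a sharp lower bound $q>n+2k-2-O(k^2/n)$ from the candidate extremal graph, use the eigen-equation at a vertex of maximum Perron weight together with an Erd\H{o}s--Gallai bound on $e(G[N(v)])$ (no $P_{2k+1}$ in $G[N(v)]$, since such a path plus $v$ gives a $C_{2k+2}$) to force a small set of heavy near-universal vertices, and then finish with a short structural analysis of the remainder.

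Two remarks on the write-up. First, the middle paragraph is indeed the whole difficulty, and as you say yourself it is only a plan: in the actual argument one has to bound $\sum_{v\in N(u)}d(v)x_v$ via $e(G[N(u)])$ and $e(N(u),V\setminus N(u))$ separately, then iterate to show that the $k$ heaviest entries are within $O(k^2/n)$ of $1$ while all others are $O(k/n)$; this is where the constant $400$ is actually consumed, and it needs to be carried out rather than asserted. Second, in your final paragraph you reuse the symbols $a_1,a_2$ for endpoints of a matching edge in $G-A$ after having set $A=\{a_1,\dots,a_k\}$; with fresh letters (say disjoint edges $p_1p_2$ and $p_3p_4$ in $G-A$) the cycle you intend is $p_1p_2a_1z_1a_2\cdots z_{k-2}a_{k-1}p_3p_4a_kp_1$, of length $2k+2$, and the $P_3$ case is the analogous $w_1w_2w_3a_1z_1a_2\cdots z_{k-1}a_kw_1$. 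With those fixes the endgame is clean.
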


\subsection{Spectral problem for color-critical graphs}

In this section, we shall review a special class of graphs, 
which is now known as the color-critical graphs 
(also known as edge-color-critical graphs). 
The definition is stated as below. 

\begin{definition}
Let $e$ be an edge of graph $F$. 
We say that $e$ is a color-critical edge if  
$\chi (F-e)<\chi (F)$. We say that $F$ is color-critical 
if $F$ contains a color-critical edge. 
\end{definition}  

The following are some examples. 
Clearly, the complete graph $K_{r+1}$ is color-critical and $\chi (K_{r+1})=r+1$.   
The odd cycle $C_{2k+1}$ is color-critical and $\chi (C_{2k+1})=3$. 

Let $G$ and $H$ be two graphs. 
The join of $G$ and $H$ is defined as a graph 
$G\vee H $  
obtained from the disjoint union $G \cup H$ by 
adding all edges connecting every vertex of 
$G$  to every vertex of $H$. 
For example, the wheel graph $W_n=K_1 \vee C_{n-1}$ and 
the book graph $B_t =K_2 \vee \overline{K_t}$.  
In addition, we can also see that  $K_{s_1,s_2,\ldots ,s_r} =
I_{s_1} \vee K_{s_2,\ldots ,s_r} = K_{s_1,s_2} \vee K_{s_3,\ldots ,s_r}$. 

\begin{example}[The wheel graph]
 Let $W_{2k}=K_1 \vee C_{2k-1}$ be the wheel graph, 
a vertex that connects all  vertices of an odd cycle $C_{2k-1}$. 
Then $W_{2k}$ is color-critical and $\chi (W_{2k})=4$. 
However, we can check that $W_{2k+1}=K_1 \vee C_{2k}$ is not color-critical. 
\end{example} 

Moreover, for integer $s\ge 2$, 
we define  $W_{s,m}:=K_s \vee C_m$ as the {\it generalized wheel graph}. 
Since $s\ge 2$, the generalized wheel graph $W_{s,m}$ 
is always color-critical. 
Moreover, we have $\chi (W_{s,2k-1}) =s+3$ and $\chi (W_{s,2k}) =s+2$.

\begin{example}[The book graph]
 Let $B_k=K_2 \vee I_{k-2}$ be the book graph, 
that is, $k$ triangles sharing a common edge. Then 
$B_k$ is color-critical and $\chi (B_k)=3$. 
\end{example} 

To extend the above result, we can consider 
the extremal number of the {\it generalized book graph} $B_{s,t}:=K_s \vee \overline{K_t}$ 
for some integer $s\ge 3$. This graph can also be viewed as 
 $t$ cliques $K_{s+1}$ sharing a common sub-clique $K_s$.   
We can see that $B_{s,t}$ is color-critical and 
$\chi (B_{s,t}) =s+1$.

On the other hand, the book graph 
$B_t$ is defined as the $t$ triangles sharing a common edge. 
Let $SE_{G,t}$ be the graph obtained from  
$t$ copies of $G$ by sharing a common edge.  
We can extend this definition to $t$ cliques $K_{r}$ 
sharing a common edge.  
It is easy to see that $SE_{K_{r},t}$ 
is color-critical and $\chi (SE_{K_{r},t}) =r$. 
In addition, we also can extend the book graph 
to $t$ odd-length cycles $C_{2k+1}$ sharing a common edge. 
It is  clear that $SE_{C_{2k+1},t}$ is a color-critical graph 
and $\chi (SE_{C_{2k+1},t})= 3$.

Let $K_{1,1,n_3,\ldots ,n_{r}}$ be 
the complete $r$-partite graph with two parts of order $1$ and other parts 
of order $n_i$ for each $i\in [3,r]$. 
Then $K_{1,1,n_3,\ldots ,n_{r}}$ is color-critical and 
$\chi (K_{1,1,n_3,\ldots ,n_{r}})=r$. 
Note that $SE_{K_r,t} \subseteq K_{1,1,t,\ldots ,t}$. 
Let $K_r(s_1,s_2,\ldots ,s_r)$ be the complete $r$-partite 
graph on the vertex sets $V_1$, $V_2$, $\ldots ,V_s$,  
where each part $V_i$ has  $s_i$ vertices.

\begin{example}
Let $s_1\ge 2$ be an integer. 
We write $K_{r}^+(s_1,s_2,\ldots ,s_r)$ for the graph obtained from 
$K_r(s_1,s_2,\ldots ,s_r)$ by adding an edge within the part $V_1$.  
It is easy to see that $K_{r}^+(s_1,s_2,\ldots ,s_r)$
is color-critical and $\chi =r+1$. 
\end{example}

It is important that for any color-critical graph $G$ with $\chi (G)=r+1$,
there exist some 
integers $s_1,\ldots ,s_r$ such that $G$ is a subgraph 
of $K_{r}^+(s_1,s_2,\ldots ,s_r)$.

The famous Erd\H{o}s--Stone--Simonovits Theorem \ref{thm361}
states that if $F$ is a graph with the vertex-chromatic 
number $\chi (F)=r+1$, then 
\[ t_r(n)\le \mathrm{ex}(n,F)= 
\left( 1-\frac{1}{r} \right) \frac{n^2}{2} + o(n^2) 
=t_r(n) + o(n^2).\] 
In previous section, 
we have proved that for some special graphs, 
we can remove the error term $o(n^2)$. 
For example, 
the Tur\'{a}n Theorem \ref{thmturanstrong} states that 
$\mathrm{ex}(n,K_{r+1}) =t_r(n)$, 
the Bondy--Woodall Theorem \ref{coro2103} implies that 
$\mathrm{ex}(n,C_{2k+1}) =t_2(n)$ holds for $n\ge 4k+1$, 
and the Dzido Theorem \ref{thmdzi} states that 
$\mathrm{ex}(n,W_{2k}) =t_3(n)$ holds for $n\ge 6k-10$. 
Moreover, 
a result of Edwards \cite{Edw1977},  
and Khad\v{z}iivanov--Nikiforov \cite{KN1979} 
independently 
implies that $\mathrm{ex}(n,B_{k}) =t_2(n)$ 
holds for $n\ge 6k$; see, e.g., \cite{BN2005,BN2008,BN2011} 
for more details.   

In what follows,   we present  a more general result 
proved by Simonovits.

\begin{theorem}[Simonovits \cite{Sim1966}] \label{thmSim66}
Let $F$ be a  graph with $\chi (F)=r+1$ where $r\ge 2$. 
If $F$ is color-critical, 
then   for sufficiently large $n$, 
\[ \mathrm{ex}(n,F)=t_r(n). \]
Moreover, the unique extremal graph is 
Tur\'{a}n graph $T_r(n)$. 
\end{theorem}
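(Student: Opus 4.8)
The plan is to prove both conclusions---the value $t_r(n)$ and the uniqueness of $T_r(n)$---at once, by showing that \emph{every} $F$-free graph $G$ on $n$ vertices, $n$ large, with $e(G)\ge t_r(n)$ must equal $T_r(n)$. (This suffices: it forces $\mathrm{ex}(n,F)\le t_r(n)$, and together with the trivial lower bound $\mathrm{ex}(n,F)\ge e(T_r(n))=t_r(n)$---valid because $\chi(F)=r+1>r$ forces the $r$-partite graph $T_r(n)$ to be $F$-free---it yields equality and uniqueness.) Moreover it is enough to prove that such a $G$ is $r$-partite: if $V(G)=V_1\cup\cdots\cup V_r$ with no edge inside a class, then
\[ t_r(n)\le e(G)\le\sum_{i<j}|V_i||V_j|=e\bigl(K_{|V_1|,\ldots,|V_r|}\bigr)\le t_r(n), \]
the last step by Theorem~\ref{thmturanstrong} since $K_{|V_1|,\ldots,|V_r|}$ is $K_{r+1}$-free; hence all quantities coincide, the classes are as equal as possible, every cross-pair is an edge, and $G=T_r(n)$.

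To prove $G$ is $r$-partite, first bring $G$ near $T_r(n)$. By the Erd\H{o}s--Stone--Simonovits Theorem~\ref{thm361} one has $e(G)=t_r(n)+o(n^2)$, and by the accompanying stability theorem of Erd\H{o}s and Simonovits an $F$-free graph this dense becomes $r$-partite after deleting $o(n^2)$ edges. Fix the $r$-partition $V_1\cup\cdots\cup V_r$ of $V(G)$ minimizing the number $b$ of edges lying inside a class, so $b=o(n^2)$; counting against $e(G)\ge t_r(n)$ then forces $|V_i|=\tfrac nr+o(n)$ for all $i$ and at most $b$ missing cross-pairs, and by minimality each vertex of $V_i$ has at least as many neighbors outside $V_i$ as inside. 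Second, obtain a large minimum degree by progressive induction on $n$: if $v$ has minimum degree, $G-v$ is $F$-free on $n-1$ vertices, so $e(G)-\delta(G)\le\mathrm{ex}(n-1,F)=t_r(n-1)$ (induction hypothesis), whence $\delta(G)\ge t_r(n)-t_r(n-1)=(1-\tfrac1r)n-O(1)$. With $b=o(n^2)$ this makes all but $o(n)$ vertices \emph{typical}: joined to all but $o(n)$ vertices of every class other than their own.

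The core step is that $G$ has no edge inside a class, and this is where color-criticality enters, through the structural fact recorded just above the statement: $F$ is a subgraph of $K_r^+(t,\ldots,t)$, the complete $r$-partite graph with $t:=|V(F)|$ vertices per class plus one extra edge inside one class. Suppose $xy$ is an edge with $x,y\in V_1$. If $x$ and $y$ are typical, build a copy of $K_r^+(t,\ldots,t)$ in $G$ with $xy$ as the extra edge: for $j=2,\ldots,r$ pick $t$ common neighbors of $x$ and $y$ inside $V_j$ (there are $\tfrac nr-o(n)$ of them), chosen so that the $r-1$ selected sets are pairwise completely joined, and then extend the class $\{x,y\}$ by $t-2$ further vertices joined to all selected sets; at each step only $o(n)$ candidates are excluded, so $\tfrac nr-o(n)\gg t$ survive. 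This gives $F\subseteq K_r^+(t,\ldots,t)\subseteq G$, a contradiction. Hence no edge lies inside a class, $G$ is $r$-partite, and by the first paragraph $G=T_r(n)$.

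The main obstacle is precisely this last step when the endpoints of a putative inside-class edge---or vertices in its neighborhood---are \emph{not} typical: an atypical vertex can have linearly many neighbors in its own class and then fail to meet every other class, so the greedy embedding stalls. Overcoming this is the technical heart: one shows, by a finer embedding that uses the large minimum degree to locate $K_r^+(t,\ldots,t)$ already within the neighborhood of an atypical vertex, that in fact every vertex has only $O(1)$ neighbors inside its own class, after which the argument above applies verbatim; the case $r=2$, where the near-balanced bipartition is genuinely degenerate, is the most delicate, and together with the progressive-induction bookkeeping that upgrades the asymptotic Erd\H{o}s--Stone--Simonovits bound to the exact equality it is what consumes the hypothesis ``$n$ sufficiently large''. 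The special cases already in the excerpt---Corollary~\ref{corooddcy} for odd cycles and Theorem~\ref{thmdzi} for even wheels---serve as consistency checks.
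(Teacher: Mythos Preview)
The paper does not supply its own proof of Theorem~\ref{thmSim66}: this is a survey, and Simonovits' theorem is stated with a citation to \cite{Sim1966} and no argument. So there is nothing in the paper to compare your proposal against line by line.

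That said, your outline is the right strategy---it is essentially the stability method that Simonovits himself introduced---and the reduction in your first paragraph is clean and correct. Two remarks on the execution. First, the ``progressive induction'' step as written is a bit glib: you invoke $\mathrm{ex}(n-1,F)=t_r(n-1)$ as an induction hypothesis, but the theorem is only claimed for $n$ sufficiently large, so you must either set up the induction with an explicit threshold and a base case covered by Erd\H{o}s--Stone--Simonovits, or replace this step by the standard direct argument that an $F$-free graph with $e(G)\ge t_r(n)$ has $\delta(G)\ge (1-\tfrac1r)n-o(n)$ after deleting $o(n)$ low-degree vertices. Second, and more seriously, you correctly identify the atypical-vertex case as the ``main obstacle'' and then defer it with the phrase ``one shows, by a finer embedding\ldots''. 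That deferred step is not a technicality: it is where most of the work lives, and your greedy embedding for the typical case already needs more care than you indicate (you must ensure the $t$ vertices chosen in $V_j$ are themselves typical so that the next round of common neighbors is large). As a proof \emph{plan} your proposal is sound; as a proof it is incomplete at exactly the point you flag.
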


In 2009, Nikiforov \cite{Nikiejc2009} extended 
Theorem \ref{thmSim66} by showing that 
there is $n_0$ such that if $G$ has $n\ge n_0$  vertices and 
$\lambda (G) > \lambda (T_r(n))$, then 
$G$ contains a copy of the complete $r$-partite graph with 
parts of size $\Omega (\ln n)$ plus an extra edge. 

\begin{theorem}[Nikiforov \cite{Nikiejc2009}] \label{thmniki2009ejc}
Let $r\ge 2 $ and $2/\ln n\le c\le 1/r^{(2r+9)(r+1)}$. 
If  $G$ is a graph on $n$ vertices with 
 $\lambda (G)>\lambda (T_r(n) )$, then $G$ contains a copy of 
\[ K_r^+(\lfloor c\ln n\rfloor, \ldots ,
\lfloor c\ln n\rfloor,\lceil n^{1-\sqrt{c}}\rceil). \]
\end{theorem}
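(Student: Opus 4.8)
The statement is the spectral counterpart of Simonovits's stability theorem (Theorem~\ref{thmSim66}): an arbitrarily small spectral excess over $\lambda(T_r(n))$ should already force a \emph{large} color-critical subgraph of chromatic number $r+1$. The plan is first to reduce the target to an ordinary complete $(r+1)$-partite subgraph, and then to reach it through the quantitative spectral Erd\H{o}s--Stone theorem (Theorem~\ref{thmniki177}). For the reduction, observe that if $G$ contains $K_{r+1}(s,\ldots,s,t')$ with classes $U_1,\ldots,U_r$ of size $s$ and $U_{r+1}$ of size $t'\ge t$, then $G$ already contains $K_r^+(s,\ldots,s,t)$: take $U_{r+1}$, trimmed to $t$ vertices, as the large class; take $U_2,\ldots,U_{r-1}$ as ordinary classes; and replace one vertex of $U_1$ by a vertex $w\in U_r$, which yields a class of size $s$ spanning an edge (as $w$ is adjacent to the rest of $U_1$). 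This step needs $s\ge 2$, which is exactly why $c\ge 2/\ln n$ is assumed. Hence it suffices to force a copy of $K_{r+1}(s,\ldots,s,t')$ with $s\ge\lfloor c\ln n\rfloor$ and $t'\ge\lceil n^{1-\sqrt c}\rceil$.

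If we had $\lambda(G)\ge(1-\tfrac1r+c_0)n$ for a fixed $c_0>0$, Theorem~\ref{thmniki177} applied with its parameter equal to $r+1$ would finish at once, once one checks that $c$ small enough — the role of the bound $c\le 1/r^{(2r+9)(r+1)}$ — makes the part sizes it produces, of orders $\ln n$ and $n^{1-\Omega(1)}$, dominate $\lfloor c\ln n\rfloor$ and $\lceil n^{1-\sqrt c}\rceil$. The genuine difficulty is that $\lambda(T_r(n))$ lies only a \emph{bounded} amount below $(1-\tfrac1r)n$, so $\lambda(G)>\lambda(T_r(n))$ provides no linear slack and the argument must be run at the exact threshold. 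I would therefore split on how close $G$ is, in edit distance, to a complete $r$-partite graph. \emph{Close case:} if $G$ becomes complete $r$-partite with classes $V_1,\ldots,V_r$ of sizes $n/r+o(n)$ after altering $o(n^2)$ edges, then since every subgraph of a complete $r$-partite graph has spectral radius at most $\lambda(T_r(n))$ (Theorem~\ref{Feng}), the strict inequality $\lambda(G)>\lambda(T_r(n))$ forces an edge inside some class $V_i$; a Simonovits-style clean-up — delete the few vertices carrying too many ``wrong'' edges, as in the proof of Theorem~\ref{thmSim66} — then leaves $r-1$ classes of size $\gg c\ln n$, one class of size $\gg n^{1-\sqrt c}$, and a surviving intra-class edge in the big class, i.e., a copy of $K_r^+$. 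For $r=2$ this close/far split is just the dichotomy between $e(G)>\lfloor n^2/4\rfloor$ and $e(G)\le\lfloor n^2/4\rfloor$: in the former case the book bound of Edwards and of Khad\v{z}iivanov--Nikiforov \cite{Edw1977,KN1979} already gives a book on $\Omega(n)$ pages, and $K_2^+(a,b)$ is precisely the book $B_{a+b-2}$. \emph{Far case:} if $G$ is $\varepsilon n^2$-far from every complete $r$-partite graph, then the spectral stability of Theorem~\ref{thm460} (a $K_{r+1}$-free graph within $\delta n$ of $\lambda(T_r(n))$ is $\varepsilon n^2$-close to $T_r(n)$), together with an eigenvector/density argument in the spirit of \cite{NikiLMS09}, should produce a vertex $v$ with $|N(v)|=\Omega(n)$ and $\lambda\bigl(G[N(v)]\bigr)\ge\bigl(1-\tfrac1{r-1}+\Omega(1)\bigr)|N(v)|$; applying Theorem~\ref{thmniki177} inside $G[N(v)]$ gives $K_r$ with $r-1$ classes of order $\ln n$ and one of order $n^{1-\Omega(1)}$ there, and adjoining $v$ yields the wanted $K_{r+1}(s,\ldots,s,t')$ in $G$. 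Equivalently, this far case can be run as an induction on $r$, the inductive hypothesis being applied to $G[N(v)]$ with the pair $(r-1,|N(v)|)$.

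I expect the far case to be the main obstacle, and within it the twin tasks of running the eigenvector cleaning so that \emph{none} of the crucial spectral gap is lost, and of propagating the parameters of Theorem~\ref{thmniki177} through the recursion on $r$ — the explicit threshold $c\le 1/r^{(2r+9)(r+1)}$ being precisely the number that survives this bookkeeping. It is worth noting that the engine behind Theorem~\ref{thmniki177}, via Bollob\'as--Nikiforov \cite{BNJCTB07} and \cite{NikiLMS09}, ultimately rests on the same Erd\H{o}s-type bound for complete multipartite hypergraphs used in the alternative proof of Corollary~\ref{coro198} above, so the whole scheme is internally consistent. Exhibiting the color-critical ``$+$'' edge at the claimed scale is by contrast essentially free: in the far case it is already present inside the large common neighbourhood supplied by Theorem~\ref{thmniki177} (through the reduction of the first paragraph), while in the close case it is the unavoidable intra-class edge forced by the strict spectral inequality together with the uniqueness statements in Theorems~\ref{thm460} and~\ref{Feng}.
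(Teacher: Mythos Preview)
The survey does not prove Theorem~\ref{thmniki2009ejc}; it merely quotes it from \cite{Nikiejc2009} and moves on to Corollary~\ref{coro173}. So there is no in-paper argument to compare against, and your sketch has to stand on its own.

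Judged on its own, the outline has one cosmetic slip and one genuine gap. The slip is in the close case: by the definition given just before Theorem~\ref{thmSim66}, the extra edge of $K_r^+(s_1,\ldots,s_r)$ lies in the \emph{first} part $V_1$, so in $K_r^+(\lfloor c\ln n\rfloor,\ldots,\lfloor c\ln n\rfloor,\lceil n^{1-\sqrt c}\rceil)$ the edge sits in a \emph{small} class, not ``in the big class''. With that correction the close case is at least plausible, though you still have not shown that the intra-class edge forced by $\lambda(G)>\lambda(T_r(n))$ can be taken on two \emph{good} vertices; if the (possibly unique) such edge is incident to one of the $o(n)$ vertices your Simonovits cleanup removes, nothing survives, and ruling this out already needs a quantitative spectral calculation.

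The real gap is the far case. Theorem~\ref{thm460} as stated is only the extremal result, not a stability theorem, and the step ``some vertex $v$ satisfies $\lambda(G[N(v)])\ge(1-\tfrac{1}{r-1}+\Omega(1))|N(v)|$'' is precisely the content one has to prove --- it upgrades an $O(1)$ spectral excess at level $r$ to a linear excess at level $r-1$ inside a neighbourhood, and cannot be obtained by soft stability reasoning. Nikiforov's own argument in \cite{Nikiejc2009} does not go through a close/far dichotomy at all: it proceeds via eigenvector-weighted clique counts in the lineage of \cite{NikiCPC02,BNJCTB07} to show directly that $\lambda(G)>\lambda(T_r(n))$ already forces many copies of $K_{r+1}$, and then the ``many $K_{r+1}$'s $\Rightarrow$ large blow-up'' lemma of \cite{NikiLMS09} finishes (after which your first-paragraph reduction to $K_r^+$ applies). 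Your sketch names this toolbox but defers exactly the step --- turning a bounded spectral excess into polynomially many $(r{+}1)$-cliques --- that carries the whole proof.
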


As a consequence of Theorem \ref{thmniki2009ejc}, 
we can easily get the following weak corollary.  

\begin{corollary} \label{coro173}
If $F$ is color-critical  and $\chi (F)=r+1$  where $r\ge 2$, 
then   
\[ \boxed{\mathrm{ex}_{\lambda}(n,F)=\lambda (T_r(n)) }   \]
 holds for sufficiently large $n$, and 
the unique extremal graph is $T_r(n)$. 
\end{corollary}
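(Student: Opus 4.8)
The plan is to derive the statement from Nikiforov's embedding Theorem~\ref{thmniki2009ejc} together with the structural observation recorded just before Theorem~\ref{thmSim66}: a color-critical graph $F$ with $\chi(F)=r+1$ is a subgraph of $K_r^+(s_1,\ldots,s_r)$ for suitable positive integers $s_1,\ldots,s_r$. The lower bound $\mathrm{ex}_{\lambda}(n,F)\ge\lambda(T_r(n))$ is immediate, since $T_r(n)$ is $r$-colorable and hence contains no graph of chromatic number $r+1$; in particular $F\nsubseteq T_r(n)$.

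For the matching upper bound, fix a constant $c$ in the admissible range of Theorem~\ref{thmniki2009ejc} and put $M=\max_i s_i$. For all sufficiently large $n$ one has $\lfloor c\ln n\rfloor\ge M$ and $\lceil n^{1-\sqrt c}\rceil\ge M$, so $K_r^+(s_1,\ldots,s_r)$ embeds into $K_r^+(\lfloor c\ln n\rfloor,\ldots,\lfloor c\ln n\rfloor,\lceil n^{1-\sqrt c}\rceil)$: place the part of $K_r^+(s_1,\ldots,s_r)$ carrying the extra edge into a part of the larger pattern that also carries the extra edge (aligning the two endpoints), and the remaining parts into parts with enough room. Hence, if $G$ is an $n$-vertex graph with $\lambda(G)>\lambda(T_r(n))$, then Theorem~\ref{thmniki2009ejc} gives $K_r^+(\lfloor c\ln n\rfloor,\ldots)\subseteq G$, whence $F\subseteq G$. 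Contrapositively, every $n$-vertex $F$-free graph satisfies $\lambda(G)\le\lambda(T_r(n))$, so combined with the lower bound, $\mathrm{ex}_{\lambda}(n,F)=\lambda(T_r(n))$ for all large $n$.

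It remains to show that $T_r(n)$ is the unique extremal graph. Let $G$ be $F$-free on $n$ vertices with $\lambda(G)=\lambda(T_r(n))$. Combining $e(G)\le\tfrac n2\lambda(G)$ with identity~(\ref{eqdeng}), namely $e(T_r(n))=\lfloor\tfrac n2\lambda(T_r(n))\rfloor$, and with Simonovits's Theorem~\ref{thmSim66}, which for large $n$ gives $\mathrm{ex}(n,F)=e(T_r(n))$ with unique edge-extremal graph $T_r(n)$, we obtain
\[
e(G)\le\Bigl\lfloor\tfrac n2\lambda(G)\Bigr\rfloor=\Bigl\lfloor\tfrac n2\lambda(T_r(n))\Bigr\rfloor=e(T_r(n))=\mathrm{ex}(n,F).
\]
If equality holds here, then $G$ is edge-extremal and Theorem~\ref{thmSim66} forces $G=T_r(n)$. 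The remaining possibility, $e(G)<e(T_r(n))$ together with $\lambda(G)=\lambda(T_r(n))$, is precisely the boundary case not covered by the ``strict'' form of Theorem~\ref{thmniki2009ejc}, and it is the main obstacle. I would handle it by appealing to the equality version of Nikiforov's result --- the same $K_r^+$ embedding under the weaker hypothesis $\lambda(G)\ge\lambda(T_r(n))$, $G\neq T_r(n)$ --- which follows from the structural analysis of near-extremal graphs in \cite{Nikiejc2009}. The delicate point there is that cloning or symmetrizing vertices can in general create a copy of $F$; what rescues the argument is that once $\lambda(G)$ is this close to $\lambda(T_r(n))$ the graph is already within $o(n^2)$ edges of complete $r$-partite, so the weight-shifting moves toward vertices of larger Perron weight stay $F$-free, forcing $G$ to be $K_{r+1}$-free, after which the spectral Tur\'{a}n Theorem~\ref{thm460} gives $G=T_r(n)$.
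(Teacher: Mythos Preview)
Your derivation of the value $\mathrm{ex}_\lambda(n,F)=\lambda(T_r(n))$ is correct and is exactly what the paper intends: $T_r(n)$ is $F$-free, giving the lower bound, and for the upper bound you correctly embed the color-critical $F$ into $K_r^+(\lfloor c\ln n\rfloor,\ldots,\lceil n^{1-\sqrt c}\rceil)$ for large $n$ and then invoke Theorem~\ref{thmniki2009ejc}. The embedding step (aligning the part with the extra edge) is handled properly.

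The weakness is in the uniqueness argument. You correctly observe that Theorem~\ref{thmniki2009ejc}, as quoted in the survey with the \emph{strict} hypothesis $\lambda(G)>\lambda(T_r(n))$, says nothing about an $F$-free $G$ with $\lambda(G)=\lambda(T_r(n))$. Your edge-count route via identity~(\ref{eqdeng}) and Theorem~\ref{thmSim66} disposes of the case $e(G)=e(T_r(n))$ but leaves $e(G)<e(T_r(n))$ open, as you acknowledge. Your fallback paragraph, however, is not a proof: the talk of ``cloning or symmetrizing,'' ``weight-shifting moves \ldots\ stay $F$-free,'' and ``forcing $G$ to be $K_{r+1}$-free'' is a hopeful outline with no mechanism supplied. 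In particular, the assertion that symmetrization preserves $F$-freeness is exactly the kind of step that fails in general and would itself need justification; and nothing you wrote explains why the boundary graph must be $K_{r+1}$-free.

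The fix you gesture at is in fact the right one, and you should have simply stopped there: Nikiforov's result in \cite{Nikiejc2009} is proved in the sharper form ``if $\lambda(G)\ge\lambda(T_r(n))$ then $G\supseteq K_r^+(\lfloor c\ln n\rfloor,\ldots)$ unless $G=T_r(n)$,'' from which uniqueness is immediate. The survey's Theorem~\ref{thmniki2009ejc} records a slightly weakened consequence. So your instinct to cite the original was sound; the improvised stability sketch that follows only muddies an otherwise clean argument.
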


\noindent 
{\bf Remark. } 
Recall that the $p$-spectral radius \cite{Kee2014} of a graph $G$ is defined as 
$ \lambda^{(p)}(G) := 
\max\{ \bm{x}^T A(G) \bm{x} : \bm{x}\in \mathbb{R}^n, 
\lVert \bm{x} \rVert_p =1 \}$. 
In 2014, Keevash, Lenz and Mubayi \cite[Corollary 1.5]{Kee2014} 
extended the above spectral version in Corollary \ref{coro173} to the $p$-spectral radius 
$\lambda^{(p)}(G)$ for every $p>1$. 
Furthermore, Kang and Nikiforov \cite[Theorem 6]{KN2014} proved the $p$-spectral version 
of Theorem \ref{thmniki2009ejc} and hence generalized the result in \cite{Kee2014}, 
since the order in each part of $K_r^+(\lfloor c\ln n\rfloor)$ grows with $n$, instead of a fixed integer 
in the forbidden subgraph $K_r^+(t)$. This difference  makes the proof longer with more advanced techniques.

\subsection{Spectral  problem for intersecting triangles}
Let $F_k$ denote the $k$-fan graph which is the graph  
consisting of $k$ triangles 
that intersect in exactly one common vertex. 
Note that $F_k$ has $2k+1$ vertices. 
This notation is slightly different from that in Section \ref{subsec2.5}. 
This graph is known as the friendship graph 
because it is the only extremal graph in the 
well-known Friendship Theorem \cite[Chapter 43]{AZ2014}. 
Since $\chi (F_k)=3$, the Erd\H{o}s--Stone--Simonovits 
Theorem \ref{thm361} implies that 
$\mathrm{ex}(n,F_k)= n^2/4 + o(n^2)$. 
In 1995, Erd\H{o}s, F\"{u}redi, 
Gould and Gunderson \cite{Erdos95} proved  the following  exact result.

\begin{theorem}[Erd\H{o}s et al. \cite{Erdos95}]\label{thmErdos95} 
For every $k \geq 1$ and $n\geq 50k^2$, we have 
\[ \mathrm{ex}(n, F_k)= \left\lfloor \frac {n^2}{4}\right \rfloor+ \left\{
  \begin{array}{ll}
   k^2-k, \quad~~  \mbox{if $k$ is odd,} \\
    k^2-\frac32 k, \quad \mbox{if $k$ is even}.
  \end{array}
\right. \]
\end{theorem}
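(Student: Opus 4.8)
The plan rests on the following reformulation, which I would record first: a graph $G$ contains $F_k$ if and only if some vertex $v$ satisfies $\nu(G[N(v)])\ge k$, where $\nu$ denotes the matching number, since a copy of $F_k$ centred at $v$ is exactly a matching of size $k$ inside $N(v)$. Hence $G$ is $F_k$-free precisely when $\nu(G[N(v)])\le k-1$ for every vertex $v$. For the lower bound I would start from $T_2(n)=K_{\lfloor n/2\rfloor,\lceil n/2\rceil}$, which has $\lfloor n^2/4\rfloor$ edges, and embed an auxiliary graph $H$ into the larger part $A$. Applying the criterion: if $v\in A$ lies in $H$, then $G[N(v)]$ is the complete join of $H[N_H(v)]$ with the independent set $B$, so $\nu(G[N(v)])=\deg_H(v)$; if $v\in A$ is outside $H$, then $G[N(v)]$ is edgeless; and if $v\in B$, then $G[N(v)]\cong H$ plus isolated vertices, so $\nu(G[N(v)])=\nu(H)$. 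Therefore the construction is $F_k$-free exactly when $\Delta(H)\le k-1$ and $\nu(H)\le k-1$. For $k$ odd take $H=2K_k$: then $\Delta(H)=k-1$ and $\nu(H)=2\cdot\frac{k-1}{2}=k-1$ because $K_k$ with $k$ odd has no perfect matching, and $e(H)=2\binom{k}{2}=k^2-k$. For $k$ even take $H$ on $2k-1$ vertices with every degree equal to $k-1$ except one vertex of degree $k-2$ (such a graph exists, e.g.\ as $K_{2k-1}$ minus a suitable near-$(k-1)$-regular graph): then $\Delta(H)=k-1$, $\nu(H)\le\lfloor(2k-1)/2\rfloor=k-1$ automatically, and $e(H)=\tfrac12\bigl((2k-2)(k-1)+(k-2)\bigr)=k^2-\tfrac32 k$. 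Since $n\ge 50k^2$ easily fits $H$ into one part, this gives the two claimed lower bounds.

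For the upper bound, let $G$ be $F_k$-free on $n\ge 50k^2$ vertices; we may assume $e(G)>\lfloor n^2/4\rfloor$. The first task is to produce a partition $V(G)=A\cup B$ with $\bigl||A|-|B|\bigr|=O(k)$, with $e(A),e(B)=O(k^2)$, and with $A\times B$ missing only $O(k^2)$ edges. Here I would iterate the criterion: at a vertex $u$ of large degree, a maximum matching of $G[N(u)]$ provides a vertex cover $S$ of $G[N(u)]$ with $|S|\le 2(k-1)$, so $N(u)\setminus S$ is independent of size $\deg(u)-O(k)$; starting from such a set (refined by a max-cut argument) one extracts a large independent set and then cleans it up by relocating $O(k)$ vertices, the assumption $e(G)>\lfloor n^2/4\rfloor$ being what forces $|A|,|B|=n/2+O(k)$ and $e(A),e(B)=O(k^2)$. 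Once $(A,B)$ is in hand, $F_k$-freeness at the vertices of $B$ forces $\nu(G[A])\le k-1+O(k)$, and via the Erd\H{o}s--Gallai theorem on matchings all of $e(A)$ is then carried by a set of $O(k)$ vertices; $F_k$-freeness at the vertices of $A$ (each joined to all but $O(k)$ of $B$) forces $\Delta(G[A])\le k-1+O(k)$; symmetrically for $B$. One subtlety worth noting is that this really does pin the dense gadget to a single part: splitting, say, a $K_k$ between $A$ and $B$ recreates a $K_{2k-1}$ inside a neighbourhood and produces an $F_k$ for $k\ge 3$.

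The proof then reduces to the discrete optimisation: among graphs $H$ with $\Delta(H)\le k-1$ and $\nu(H)\le k-1$, the maximum of $e(H)$ is $k^2-k$ for $k$ odd (attained by $2K_k$) and $k^2-\tfrac32 k$ for $k$ even (attained by the $(2k-1)$-vertex gadget). Combining this with
\[
e(G)=|A|\,|B|-\overline e(A,B)+e(A)+e(B)\le \Bigl\lfloor\tfrac{n^2}{4}\Bigr\rfloor+O(k^2)-\overline e(A,B)+e(A)+e(B),
\]
where $\overline e(A,B)$ is the number of non-edges between $A$ and $B$, one must show that $e(A)+e(B)-\overline e(A,B)\le c_k$: each missing cross-edge incident to a vertex $v$ relaxes the degree/matching bounds only near $v$, so it can finance only a bounded number of extra internal edges while costing $\approx n/2$ in $|A|\,|B|$, so at optimum $\overline e(A,B)=0$ and the problem collapses to the degree-and-matching-bounded extremal function. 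The main obstacle, and the technical heart of the argument, is exactly this sign analysis combined with sharpening the near-bipartition to the exact count: one must rule out that deleting a handful of the roughly $n^2/4$ cross-edges liberates enough degree budget inside $A$ and $B$ to gain more than $c_k$ internal edges, and one must carry the $O(k^2)$ error terms — including the gap between $|A|\,|B|$ and $\lfloor n^2/4\rfloor$ when the parts are unbalanced — with exact constants; this is where the hypothesis $n\ge 50k^2$ is consumed, and where the parity of $k$ enters, through the bifurcating value of the degree-and-matching-bounded problem, with a few small cases checked by hand.
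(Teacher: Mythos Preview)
The paper is a survey and does not supply a proof of this theorem; it merely quotes the result from Erd\H{o}s--F\"{u}redi--Gould--Gunderson and describes the extremal graphs (two disjoint $K_k$'s embedded in one part of $T_2(n)$ for $k$ odd, and a $(2k-1)$-vertex graph with maximum degree $k-1$ and $k^2-\tfrac32 k$ edges for $k$ even). So there is no ``paper's own proof'' to compare against.

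That said, a couple of comments on your outline. Your reformulation via $\nu(G[N(v)])\le k-1$ is the right starting point, and your lower-bound constructions are exactly the extremal graphs the paper records, with the arithmetic checked correctly. For the upper bound, your plan is the standard stability route used in the original paper, but as written it is only a sketch: the passage from ``$\Delta(G[A])\le k-1+O(k)$'' to the sharp constraint $\Delta(G[A])\le k-1$, and the endgame showing that $e(A)+e(B)-\overline e(A,B)\le c_k$ with \emph{exact} constants, are precisely where the work lies, and phrases like ``each missing cross-edge \ldots\ can finance only a bounded number of extra internal edges'' need to be turned into an actual inequality. You also need to justify that at optimum all of the extra edges sit inside a single part rather than split between $A$ and $B$; your parenthetical remark gestures at this but does not prove it. None of this is wrong in spirit, but if you intend this as a proof rather than a proof plan, those steps must be filled in.
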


The extremal graphs of Theorem \ref{thmErdos95} are as follows.
For odd $k$ (where $n\geq 4k-1$), 
the extremal graphs are constructed by taking $T_2(n)$,
the balanced complete bipartite  graph, and embedding two vertex
disjoint copies of $K_k$ in one side.
For even $k$ (where now $n\geq 4k-3$), the extremal graphs 
 are constructed by taking $T_2(n)$ and embedding
 a graph with $2k-1$ vertices, $k^2-\frac{3}{2} k$ edges with maximum degree $k-1$ in one side. 
 \medskip
 
 The $(k,r)$-fan is the graph consisting of $k$ copies of 
 the clique $K_r$ which intersect on a single vertex, and is denoted by $F_{k,r}$. 
 In particular, we have $F_{1,r}=K_r$ and $F_{k,3}=F_k$. 
 Note that $\chi (F_{k,r})= r$.  
Similarly,  the Erd\H{o}s--Stone--Simonovits Theorem \ref{thm361}  
also implies that 
 $\mathrm{ex}(n,F_{k,r})=(1- \frac{1}{r-1})\frac{n^2}{2} + o(n^2)
 =t_{r-1}(n) + o(n^2)$. 
 In 2003, Chen, Gould, Pfender and Wei \cite{Chen03} 
proved an exact answer and generalized 
 Theorem \ref{thmErdos95} as follows. 
 
 \begin{theorem}[Chen et al. \cite{Chen03}] \label{thmChen}
 For every $k\ge 2$ and $r\ge 3$, if $n\ge 16k^3r^8$, then 
 \[ \mathrm{ex}(n,F_{k,r}) = t_{r-1}(n) +
 \left\{
  \begin{array}{ll}
   k^2-k, \quad~~  \mbox{if $k$ is odd,} \\
    k^2-\frac32 k, \quad \mbox{if $k$ is even}.
  \end{array}
\right.   \]
 \end{theorem}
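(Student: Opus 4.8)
The plan is to prove matching bounds: the lower bound by generalising the Erd\H{o}s--F\"{u}redi--Gould--Gunderson construction behind Theorem~\ref{thmErdos95}, and the upper bound by a stability argument anchored on the Erd\H{o}s--Stone--Simonovits Theorem~\ref{thm361}. Write $\Delta(H)$ and $\nu(H)$ for the maximum degree and the matching number of a graph $H$, and set $c_k:=k^2-k$ for $k$ odd and $c_k:=k^2-\tfrac32k$ for $k$ even.

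For the lower bound I would start from $\chi(F_{k,r})=r$ (colour the common vertex with one colour and spend the remaining $r-1$ colours on each clique), so $T_{r-1}(n)$ is $F_{k,r}$-free, and then add to $T_{r-1}(n)$ a graph $H$ placed inside one class $V_1$. Every copy of $K_r$ in $T_{r-1}(n)+H$ must, by pigeonhole among the $r-1$ classes, place two of its vertices in one class --- necessarily $V_1$, joined by an edge of $H$ --- and more generally a $K_{r-1}$ inside any vertex-neighbourhood uses at least one vertex of a short clique of $H$ (apex in $V_1$) or a clique of $H$ of size at least $2$ (apex elsewhere). Using that every other class is huge (here $n\ge16k^3r^8$ is far more than needed), one verifies that $T_{r-1}(n)+H$ is $F_{k,r}$-free exactly when $\Delta(H)\le k-1$ and $\nu(H)\le k-1$: an apex in $V_1$ of $H$-degree $\ge k$, or $k$ disjoint edges of $H$ used by $k$ cliques through a common apex outside $V_1$, produces a copy of $F_{k,r}$; conversely these two bounds block all such copies, since $k$ disjoint nonempty subsets of an $H$-neighbourhood of size $\le k-1$ cannot exist and $k$ disjoint cliques of $H$ of size $\ge2$ would force $\nu(H)\ge k$. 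The maximum number of edges of such an $H$ is $c_k$ --- two disjoint copies of $K_k$ when $k$ is odd, a nearly $(k-1)$-regular graph on $2k-1$ vertices when $k$ is even --- which is exactly the Erd\H{o}s--Gallai-type computation done for $r=3$ in \cite{Erdos95}. This gives $\mathrm{ex}(n,F_{k,r})\ge t_{r-1}(n)+c_k$.

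For the upper bound the plan is to force the structure of an extremal $G$. Let $G$ be $F_{k,r}$-free on $n\ge16k^3r^8$ vertices with $e(G)\ge t_{r-1}(n)+c_k$. The basic observation is that $F_{k,r}$-freeness says precisely that for every vertex $v$ the graph $G[N(v)]$ contains no $k$ pairwise vertex-disjoint copies of $K_{r-1}$. Since $e(G)\ge(1-\tfrac{1}{r-1}-o(1))\tfrac{n^2}{2}$ and $\chi(F_{k,r})=r$, the stability version of Theorem~\ref{thm361} yields a partition $V(G)=V_1\cup\cdots\cup V_{r-1}$ with $\bigl||V_i|-\tfrac{n}{r-1}\bigr|=o(n)$, only $o(n^2)$ edges inside classes, and only $o(n^2)$ missing edges between classes. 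I would then run a cleaning step to upgrade this to an exact picture: no cross-edge is missing, so $G$ contains a spanning complete $(r-1)$-partite subgraph on $V_1,\ldots,V_{r-1}$, and every surplus edge lies inside some class. The key local fact is the converse direction of the lower-bound analysis: because every cross-edge is present, a vertex of $G[V_i]$ with $k$ neighbours inside $V_i$, or a matching of size $k$ inside $V_i$, extends via one fresh vertex from each remaining class to a copy of $F_{k,r}$; hence $\Delta(G[V_i])\le k-1$ and $\nu(G[V_i])\le k-1$ for each $i$. Moreover the surplus cannot be spread over two or more classes, since inside-edges in distinct classes are automatically vertex-disjoint and can be threaded through a common apex to build $F_{k,r}$ (here $r-1\ge3$ is used). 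Thus the surplus lives in a single class, forming a graph with $\Delta\le k-1$ and $\nu\le k-1$, hence with at most $c_k$ edges, so $e(G)\le t_{r-1}(n)+c_k$; equality then forces the Erd\H{o}s--Gallai extremal configuration, which yields the extremal graphs.

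The hard part will be the cleaning step --- turning ``$o(n^2)$ errors'' into ``no missing cross-edge, surplus confined to one class, exactly $c_k$ surplus edges.'' This requires the standard but delicate routine of isolating a bounded exceptional vertex set, deleting and reinserting it, and re-optimising the partition, with $n\ge16k^3r^8$ used to keep all error terms and exceptional-set sizes negligible against the quadratic gains. Layered on top is the genuinely new point compared with $r=3$: ruling out surplus edges sitting in several classes, which is possible exactly because with $r-1\ge3$ classes such edges are disjoint and combine through a shared apex. Finally, pinning the constant to the exact value $c_k$ rather than merely $O_k(1)$, including the parity split between odd and even $k$, is the Erd\H{o}s--Gallai extremal computation already behind Theorem~\ref{thmErdos95}; once $G$'s structure is forced, both the value of $\mathrm{ex}(n,F_{k,r})$ and the list of extremal graphs drop out.
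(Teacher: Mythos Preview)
The present paper is a survey; it states Theorem~\ref{thmChen} and describes the extremal graphs $G_{n,k,r}$ but gives no proof, so there is no in-paper argument to compare against.

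That said, your plan matches the approach of the original paper \cite{Chen03}: a construction for the lower bound, and a stability argument (via the Erd\H{o}s--Stone--Simonovits theorem plus a cleaning step) for the upper bound. Your lower-bound analysis and the ``exactly when $\Delta(H)\le k-1$ and $\nu(H)\le k-1$'' characterisation are correct. One point where your sketch is too quick: you assert that after cleaning the surplus edges must live in a single class, justified by ``inside-edges in distinct classes are automatically vertex-disjoint and can be threaded through a common apex to build $F_{k,r}$''. But a single edge in $V_1$ and a single edge in $V_2$ yield only two disjoint $K_{r-1}$'s through an apex in $V_3$, not $k$ of them; what $F_{k,r}$-freeness actually gives, once all cross-edges are present, is the system $\Delta(H_j)+\sum_{i\ne j}\nu(H_i)\le k-1$ for every class $j$, and one must then argue that these inequalities force $\sum_i e(H_i)\le c_k$. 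This combinatorial step is the genuine new content beyond the $r=3$ case and is where \cite{Chen03} does the real work; your outline correctly flags it as the hard part but does not yet supply the argument.
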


 We remark here that there is a typo in \cite{Chen03} since 
 the correct condition should be $r\ge 3$. For the $r=2$ case, 
 we observe that $F_{k,2}$ is a star graph and $\mathrm{ex}(n,F_{k,2})=\Theta (n)$.

 The extremal graphs of Theorem \ref{thmChen} are constructed by taking the $(r-1)$-partite Tur\'{a}n graph $T_{r-1}(n)$ and embedding a graph $G_0$ in one vertex part, denoted by $G_{n,k,r}$. If $k$ is odd,  $G_{0}$ is isomorphic to two vertex disjoint copies of $K_k$. If $k$ is even,  $G_{0}$ is isomorphic to the graph with $2k-1$ vertices, $k^2-\frac{3}{2}k$ edges with maximum degree $k-1$.

 Let $C_{k,q}$ be the graph consisting of $k$ cycles of length $q$ which intersect 
 exactly in one common vertex. Clearly, when we set $q=3$, 
 then $C_{k,3}$ is just the $k$-fan graph; see Theorem \ref{thmErdos95}. 
 When $q$ is an odd integer, we can see that $\chi (C_{k,q})=3$, 
 the Erd\H{o}s--Stone--Simonovits theorem also implies that 
 $\mathrm{ex}(n,C_{k,q})=n^2/4 + o(n^2)$. 
 In 2016, Hou, Qiu and Liu \cite{HQL16} determined exactly the extremal number 
 for $C_{k,q}$ with $k\ge 1$ and odd integer $q\ge 5$.

 \begin{theorem}[Hou--Qiu--Liu \cite{HQL16}]  \label{thmhql16}
 For an integer $k\ge 1$ and an odd integer $q\ge 5$, 
 there exists $n_0(k,q)$ such that 
 for all $n \ge n_0(k,q)$, we have 
 \[  \mathrm{ex}(n,C_{k,q}) =  \left\lfloor \frac {n^2}{4}\right \rfloor 
 + (k-1)^2. \]
The extremal graphs are a 
 Tur\'{a}n graph $T_2(n)$ with a $K_{k-1,k-1}$ embedding 
 into one class.  
 \end{theorem}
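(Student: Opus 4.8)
The plan is to establish the lower bound by an explicit construction and the matching upper bound by a stability argument followed by a careful local analysis, in the spirit of Theorems \ref{thmErdos95} and \ref{thmChen}; the place where $q\ge 5$ is used is exactly what makes the extra term $(k-1)^2$ here smaller than the $\Theta(k^2)$ term for fans. For the lower bound, let $G^*$ be obtained from $T_2(n)$ with classes $X,Y$ by embedding a copy of $K_{k-1,k-1}$, with parts $A_1,A_2\subseteq X$, inside the class $X$, so that $e(G^*)=\lfloor n^2/4\rfloor+(k-1)^2$. To see that $G^*$ is $C_{k,q}$-free, note that deleting the $(k-1)^2$ edges of the embedded $K_{k-1,k-1}$ leaves a bipartite graph; fixing a proper $2$-colouring of that bipartite part, one checks that every cycle of $G^*$ has the same length-parity as the number of its edges lying inside $K_{k-1,k-1}$, so every odd cycle of $G^*$ uses at least one such edge and therefore contains a vertex of $A_1$ and a vertex of $A_2$. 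Since a copy of $C_{k,q}$ with $q$ odd consists of $k$ odd cycles that pairwise meet only in a common vertex $v$, a short case analysis on the location of $v$ produces $k$ pairwise disjoint nonempty subsets inside $A_1$ (or inside $A_2$), contradicting $|A_1|=|A_2|=k-1$.

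For the upper bound, let $G$ be a $C_{k,q}$-free graph on $n\ge n_0(k,q)$ vertices with $e(G)\ge\lfloor n^2/4\rfloor+(k-1)^2$. Because $\chi(C_{k,q})=3$ for odd $q$, the Erd\H{o}s--Stone--Simonovits Theorem \ref{thm361} gives $e(G)=\tfrac{n^2}{4}+o(n^2)$, so the Erd\H{o}s--Simonovits stability theorem provides a bipartition $V(G)=X\cup Y$ with $e(G[X])+e(G[Y])=o(n^2)$. One then re-optimises: choose $X,Y$ maximising $e(X,Y)$, so that $\deg_Y(v)\ge\deg_X(v)$ for every $v\in X$ and symmetrically for $Y$; after deleting a bounded number of vertices of small degree, all remaining vertices send $(\tfrac12-o(1))n$ edges to the opposite class, whence $|X|,|Y|=(\tfrac12+o(1))n$ and $e(X,Y)\ge e(G)-o(n^2)$.

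The heart of the proof is to sharpen the estimate $e(G[X])+e(G[Y])=o(n^2)$ to the exact bound $e(G[X])+e(G[Y])\le (k-1)^2$, with equality forcing all of these edges to lie in a single class and to span a $K_{k-1,k-1}$. This rests on an embedding lemma: if $v$ is a high-degree vertex and $xy$ is an edge internal to the part opposite $v$, then---using that $q\ge 5$ leaves room to pick the two connecting paths of suitable odd lengths summing to $q-1$---the edge $xy$ can be completed, through otherwise fresh vertices of the two giant classes, to an odd cycle of length exactly $q$ through $v$; iterating this greedily $k$ times yields a copy of $C_{k,q}$ as soon as one can choose $k$ internal edges realisable by pairwise internally disjoint $q$-cycles through a common centre. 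An Erd\H{o}s--Gallai / Gallai--Edmonds type analysis of the graph $H$ of internal edges then gives a dichotomy: either $H$ has a matching of size $k$, or a vertex that can serve as a common hub, and in either case the embedding lemma contradicts $C_{k,q}$-freeness; or $\nu(H)\le k-1$ with no such hub, forcing $e(H)\le (k-1)^2$ with the only near-extremal configuration being a $K_{k-1,k-1}$ inside one class. Hence $e(G)\le |X||Y|+(k-1)^2\le\lfloor n^2/4\rfloor+(k-1)^2$, and tracing the equality cases back pins down $\{|X|,|Y|\}=\{\lfloor n/2\rfloor,\lceil n/2\rceil\}$ and $G=T_2(n)$ with a $K_{k-1,k-1}$ embedded in one class.

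The main obstacle will be making the embedding lemma and the dichotomy simultaneously precise: one has to handle internal edges that are split between $X$ and $Y$, control how the choice of hub vertex $v$ restricts which internal edges are reachable by internally disjoint $q$-cycles, and run the greedy routing with enough spare vertices---and this is exactly where the hypotheses $q\ge 5$ (in contrast to $q=3$) and $n\ge n_0(k,q)$ are indispensable. A secondary but fiddly point is pushing the stability estimate from $o(n^2)$ down to the exact constant $(k-1)^2$ and isolating the unique extremal graph, which requires the re-optimisation of the bipartition together with a final local analysis around the at most $(k-1)^2$ internal edges.
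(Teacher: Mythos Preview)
The paper is a survey and does not include a proof of Theorem~\ref{thmhql16}; it merely states the result and cites \cite{HQL16}. So there is no ``paper's own proof'' to compare against.

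That said, your outline follows the correct overall strategy, and it is the one used in \cite{HQL16}: the lower bound via the explicit $T_2(n)+K_{k-1,k-1}$ construction, and the upper bound via Erd\H{o}s--Simonovits stability followed by a local embedding analysis near the optimal bipartition. Your lower-bound argument is clean and correct: every odd cycle must meet both $A_1$ and $A_2$, and the disjointness of the $k$ petals away from the centre forces $k$ distinct vertices in one of the two $(k-1)$-sets.

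The sketch for the upper bound, however, has a genuine soft spot. The sentence ``$\nu(H)\le k-1$ with no such hub, forcing $e(H)\le (k-1)^2$'' is not something Erd\H{o}s--Gallai or Gallai--Edmonds gives you: a graph with matching number $k-1$ can have up to $\binom{2k-1}{2}$ edges, far more than $(k-1)^2$. The reason the sharp bound $(k-1)^2$ holds here is \emph{not} a pure matching-number argument on $H$; rather, it comes from the embedding lemma applied more aggressively. Since $q\ge 5$, one can route a $q$-cycle through the centre $v$ using any internal edge whose endpoints both have large cross-degree, and crucially one can also route through $v$ using a \emph{path of length two} inside a class (because $q-2\ge 3$ leaves room). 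This is what kills dense local pieces like cliques and forces the internal edges to form a bipartite graph with both sides of size at most $k-1$; that is where $K_{k-1,k-1}$ and the bound $(k-1)^2$ actually emerge. Your dichotomy as stated conflates ``matching of size $k$'' with the real obstruction, and you will not be able to close the argument without the finer structural step. This is also precisely the place where the case $q\ge 5$ diverges from $q=3$ (Theorem~\ref{thmErdos95}), as you correctly sensed but did not pin down.
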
 
 
 \noindent 
 {\bf Remark.} 
 We emphasize here that when $q$ is even, then $C_{k,q}$ is a bipartite graph 
 where every vertex in one of its parts
has degree at most $2$. For such a sparse   bipartite graph, 
a classical result of F\"{u}redi \cite{Furedi91} or 
Alon, Krivelevich and Sudakov \cite{Alon03} implies that 
 $\mathrm{ex}(n,C_{k,q})= O(n^{3/2})$. 
 Recently, a breakthrough result of Conlon, Lee and Janzer \cite{CL19,CJL20} shows that 
 for even $q\ge 6$ and $k\ge 1$, we have  
$\mathrm{ex}(n,C_{k,q})= O(n^{3/2-\delta})$ for some $\delta =\delta (k,q)>0$. 
  It is a challenging problem to determine the value $\delta (k,q)$. 
 For instance, the special case $k=1$, this problem 
 reduces to determine the extremal number for even cycle.

\medskip 
 Next, we  introduce a unified extension of both Theorems \ref{thmErdos95} 
 and  \ref{thmhql16}. 
 Let $s$ be a positive integer and $t_1,\ldots ,t_k\ge 5$ be odd integers.  
We write $H_{s,t_1,\ldots ,t_k}$ for 
 the graph consisting of $s$ triangles and 
 $k$ odd cycles of  lengths $t_1, \ldots ,t_k $
 in which these triangles and cycles intersect in
exactly one common vertex. 
 The graph $H_{s,t_1,\ldots ,t_k}$ is also known as the flower graph 
with $s+k$ petals. 
We remark  that the $k$ odd cycles 
 can have different lengths. 
 Clearly, when $t_1=\cdots =t_k=0$, then $H_{s,0,\ldots ,0}=F_s$, 
 the $s$-fan graph; see Theorem \ref{thmErdos95}.  
  In addition,  when $s=0$ and  $t_1=\cdots =t_k=q$, 
  then $H_{0,q,\ldots ,q}=C_{k,q}$; 
 see Theorem \ref{thmhql16}. 
 
 In 2018, Hou, Qiu and Liu \cite{HQL18} and Yuan \cite{Yuan18} 
 independently determined the extremal number 
 of $H_{s,k}$ for $s\ge 0 $ and $k\ge 1$.  
 Let $\mathcal{F}_{n,s,k}$ be the family of graphs with each member being a 
 Tur\'{a}n graph $T_2(n)$ with
a graph $H$ embedded in one partite set, where 
 \[  H = \begin{cases}
 K_{s+k-1,s+k-1}, & \text{if $(s,k)\neq (3,1)$,} \\
 K_{3,3} ~\text{or}~ 3K_3, & \text{if $(s,k)=(3,1)$,}
 \end{cases}  \]
where $3K_3$ is the union of three disjoint triangles.

 \begin{theorem}[Hou--Qiu--Liu \cite{HQL18}; 
 Yuan \cite{Yuan18}] \label{thmHY}
    For every graph $H_{s,t_1,\ldots ,t_k}$  with $s\ge 0$ and $ k\ge 1$, 
 there exists $n_0$ such that 
 for all $n \ge n_0$, we have 
 \[  \mathrm{ex}(n,H_{s,t_1,\ldots ,t_k}) =  \left\lfloor \frac {n^2}{4}\right \rfloor 
 + (s+k-1)^2. \]
Moreover, the only extremal graphs for $H_{s,t_1,\ldots ,t_k}$ are members of $\mathcal{F}_{n,s,k}$.
 \end{theorem}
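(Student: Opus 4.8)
The plan is to follow the now-standard ``stability plus exact structure'' scheme used by Erd\H{o}s, F\"{u}redi, Gould and Gunderson for Theorem~\ref{thmErdos95} and by Hou, Qiu and Liu for Theorem~\ref{thmhql16}, specialised to the flower graph $H:=H_{s,t_1,\ldots ,t_k}$ with $r:=s+k$ petals. For the lower bound I would check that every $G\in\mathcal{F}_{n,s,k}$ is $H$-free. Write the host $T_2(n)$ as $A\cup B$ with the graph $H_0$ (equal to $K_{r-1,r-1}$, or $3K_3$ when $(s,k)=(3,1)$) embedded in $A$ and $B$ independent, and suppose a copy of $H$ has centre $w$, so that its petals are cycles through $w$, pairwise vertex-disjoint off $w$. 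If $w\in B$, then since $B$ is independent each triangle petal contains one edge of $H_0$ and each odd petal contains an odd, hence positive, number of edges inside $A$; as the petals are disjoint off $w$ and $w\notin A$, this produces a matching of size $r$ in $H_0$, impossible because $K_{r-1,r-1}$ and $3K_3$ have matching number $r-1$. If $w\in A$, a short case analysis shows that the $s$ triangle petals force $s$ vertices of the side of $H_0$ ``opposite'' $w$ to be used up, while each odd petal still needs a private vertex on that same side, leaving at most $(r-1)-s=k-1<k$ of them; the graph $3K_3$ is excluded in the same way because each of its vertices has only two neighbours inside $H_0$. Counting edges gives $e(G)=\lfloor n^2/4\rfloor+(r-1)^2$.

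For the upper bound let $G$ be $H$-free with $e(G)\ge\lfloor n^2/4\rfloor+(r-1)^2$ and $n$ large. Since $\chi(H)=3$, the stability version of Theorem~\ref{thm361} gives a partition $V(G)=A\cup B$ with $e(G[A])+e(G[B])=o(n^2)$ and $e(A,B)=(1/4-o(1))n^2$. A cleaning step (iteratively deleting vertices of degree below $(1/2-o(1))n$ and re-optimising the cut, with the usual bookkeeping showing the edge surplus over $\lfloor n^2/4\rfloor$ survives) lets me assume every vertex has at least $(1/2-o(1))n$ neighbours across the partition and $o(n)$ on its own side; in particular $G[A,B]$ is ``almost complete'', so any bounded-length path with prescribed parity between prescribed almost-all pairs can be routed through fresh vertices. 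The heart of the argument is then the claim
\[
e(G[A])+e(G[B])\le (r-1)^2,
\]
with equality only for the configurations in $\mathcal{F}_{n,s,k}$.

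To prove this, let $F$ be the graph of all edges lying inside $A$ or inside $B$, and suppose $e(F)>(r-1)^2$. If $F$ has a matching of size $r$ with all $2r$ endpoints in a single part, say $A$, I pick $w\in B$ adjacent to all of them (possible by almost-completeness), make $s$ of the matching edges into triangles $wxy$, and close the other $k$ matching edges into cycles $C_{t_j}$ through $w$ by attaching private bipartite paths --- producing $H$, a contradiction. Otherwise I use a vertex $w$ of large inside-degree as the centre: the $s$ triangle petals come from inside-edges at $w$ (or, if $F$ has edges in both parts, possibly from an inside-edge inside the other part), and the $k$ odd petals from a ``star at $w$ plus matching'' family of inside-edges disjoint from the triangle petals. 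Hence $H$-freeness forces, for every vertex, that the number of such simultaneously realisable petals is at most $r-1$; this is exactly the hypothesis of an Erd\H{o}s--Gallai / Chv\'{a}tal--Hanson type bound on the maximum number of edges of a graph whose matching number and maximum degree are both at most $r-1$, which gives $e(F)\le(r-1)^2$. Tracking the equality cases in that bound and in the petal-competition argument then pins $F$ down to $K_{r-1,r-1}$ inside a single part or, precisely when $r=4$, also $3K_3$, so that $G\in\mathcal{F}_{n,s,k}$. Combining the two bounds yields the stated value $\lfloor n^2/4\rfloor+(r-1)^2$ and the description of the extremal graphs.

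The main obstacle is this last claim --- translating ``$H$-free'' into the correct extremal hypothesis on $F$ and matching the bound $(r-1)^2$ exactly, together with uniqueness. The delicacy is that an odd petal, unlike a triangle, can ``borrow'' an inside-edge far from the centre and close it up with cross-edges, so neither a bound on maximum inside-degree nor a bound on the matching number of $F$ alone suffices; one must show that the $s$ triangle petals and the $k$ odd petals genuinely compete for a bounded common resource at the centre (its neighbourhood, respectively one side of it), just as in the equality analysis of the construction, and it is this competition that both produces the constant $(r-1)^2$ and creates the sporadic $3K_3$ at $r=4$. A secondary, routine but fiddly point is verifying that $k$ odd cycles of the prescribed lengths $t_1,\ldots ,t_k\ge 5$ can all be embedded at once with pairwise-disjoint private vertices, which is where $n$ being large relative to $\sum_i t_i$ is used.
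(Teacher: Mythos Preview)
The paper is a survey and does not supply a proof of this theorem; it simply records the result with attribution to \cite{HQL18} and \cite{Yuan18}. Your outline follows the same stability-plus-cleaning scheme used in those original papers (and in \cite{Erdos95,HQL16}), so at the level of strategy there is nothing to compare.

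One point in your proposal is not right, however. You say the sporadic $3K_3$ configuration appears ``precisely when $r=4$''. That is the equality case in the Chv\'atal--Hanson bound (graphs with matching number and maximum degree at most $r-1$ and $(r-1)^2$ edges), but it is \emph{not} the same as the extremal family here: whether $T_2(n)$ with $3K_3$ embedded is $H_{s,t_1,\ldots,t_k}$-free depends on the pair $(s,k)$, not just on $r=s+k$. For instance, with $(s,k)=(0,4)$ and all $t_i=5$ one can take the centre $w$ inside one of the three triangles, route two $5$-cycles through the edges $wa,wb$ of that triangle (completing them with cross-edges and fresh $A$-vertices), and route the other two $5$-cycles through one edge of each remaining triangle; this realises $H_{0,5,5,5,5}$ inside $T_2(n)+3K_3$. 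By contrast, for $(s,k)=(3,1)$ the three triangle petals already exhaust the two $A$-neighbours of any centre $w\in A$, which is why $3K_3$ survives only there. So after you reach the Chv\'atal--Hanson equality analysis, you still need a case check over $(s,k)$, not just over $r$, to decide which of $K_{r-1,r-1}$ and $3K_3$ actually yield $H$-free constructions. This is exactly the refinement that produces the condition $(s,k)=(3,1)$ in the definition of $\mathcal{F}_{n,s,k}$.
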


Another interesting problem 
on this topic is to 
determine the Tur\'{a}n number  
of $C_{k,q}$ for even $q$. 
More general, it is  challenging 
to determine the  Tur\'{a}n number 
of $H_{s,t_1,\ldots ,t_k}$ where the cycles have even lengths.

Recall that $\mathrm{Ex}(n,F)$ denotes the set of graphs 
that contain no copy of $F$ and have maximum number of edges. 
For example, 
the Tur\'{a}n theorem 
gives $\mathrm{Ex}(n,K_{r+1})= \{T_r(n)\}$ for every $r\ge 2$, 
and the Simonovits theorem yields that 
for every fixed color-critical graph $F$ with $\chi(F)=r+1\ge 3$, 
we have $\mathrm{Ex}(n,F)=\{T_r(n)\}$ for sufficiently large $n$. 
In 2020, Cioab\u{a}, Feng, Tait and Zhang \cite{CFTZ20} proved 
the spectral version of Theorem \ref{thmErdos95}.

 \begin{theorem}[Cioab\u{a} et al. \cite{CFTZ20}]    \label{thmCFTZ20}
Let $k\ge 2$ and $G$ be a graph of order $n$ that does not contain a copy of $F_k$.  
For sufficiently large $n$, if $G$ has the maximal spectral radius, then
$$G \in \mathrm{Ex}(n, F_k).$$
\end{theorem}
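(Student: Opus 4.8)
The plan is to show that the spectral-extremal graph $G$ is forced, up to an $O_k(1)$ perturbation, to be the balanced complete bipartite graph $T_2(n)$, and then to read the perturbation off from the extremal analysis already behind Theorem~\ref{thmErdos95}. Write $\lambda:=\lambda(G)$ and let $\bm{x}$ be the Perron eigenvector of $A(G)$, normalized so that $\max_v x_v=x_{u^*}=1$. \emph{Step 1 (locating $\lambda$).} Since $\chi(F_k)=3$, Corollary~\ref{coro198} gives $\lambda\le \mathrm{ex}_{\lambda}(n,F_k)=\tfrac n2+o(n)$ (one may instead apply Theorem~\ref{thmniki177} with $r=3$, as $F_k\subseteq K_3(k,k,k)$). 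For the matching lower bound, the edge-extremal graph $G_0$ of Theorem~\ref{thmErdos95} is $F_k$-free, so by maximality of $G$ we get $\lambda\ge\lambda(G_0)\ge \tfrac{2e(G_0)}{n}=\tfrac n2+\tfrac{c_k}{n}$ for an explicit $c_k>0$, using $e(G_0)=\lfloor n^2/4\rfloor+\Theta(k^2)$. The sharp bound $\lambda\ge \tfrac n2+c_k/n$ is what drives everything.

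\emph{Step 2 (stability and bootstrapping).} From $\lambda=(\tfrac12+o(1))n$ and $F_k$-freeness, a standard stability argument (via the removal lemma, or spectral stability in the spirit of Nikiforov) produces a partition $V(G)=A\sqcup B$ with $\bigl||A|-|B|\bigr|=o(n)$ and $e(G[A])+e(G[B])=o(n^2)$. The crucial and technical step is to bootstrap this to an $O_k(1)$ bound. One uses that $F_k$-freeness makes every neighbourhood sparse: $G[N(v)]$ contains no matching of size $k$, so by the Erd\H{o}s--Gallai theorem $e(G[N(v)])<k\,d(v)$. Feeding this into the walk identity $\lambda^2=\sum_w p_2(u^*,w)\,x_w$, where $p_2(u^*,w)$ counts walks of length $2$ from $u^*$ to $w$, and comparing with Step 1, one shows successively that $x_v$ is bounded below by a positive constant for all but $o(n)$ vertices, that every vertex of $A$ is adjacent to all but $O_k(1)$ vertices of $B$ (and symmetrically), that $e(G[A])+e(G[B])=O_k(1)$, and finally that $\bigl||A|-|B|\bigr|=O_k(1)$.

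\emph{Step 3 (exact structure).} Now $G$ is $K_{|A|,|B|}$ with at most $O_k(1)$ cross edges deleted and $O_k(1)$ edges added inside $A\cup B$. A short Perron/edge-exchange argument shows all cross edges are present (adding back a missing one strictly increases $\lambda$ and, being far from the $O_k(1)$ interior edges, creates no $F_k$); then $\lambda(K_{|A|,|B|})=\sqrt{|A||B|}$ and the observation that the bounded interior perturbation contributes the same $\Theta_k(1/n)$ for every small imbalance force $\{|A|,|B|\}=\{\lfloor n/2\rfloor,\lceil n/2\rceil\}$. A further short case analysis puts all interior edges in one part, say $H:=G[A]$; then $G$ is $F_k$-free if and only if $\nu(H)\le k-1$ (no fan centred in $B$) and $\Delta(H)\le k-1$ (no fan centred in $A$). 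Conversely, since $\bm{x}$ is constant on $A$ up to a $1+O_k(1/n)$ factor, maximality of $\lambda$ forces $H$ to maximize $\sum_{uv\in E(H)}x_ux_v$, hence its number of edges, subject to those two constraints. That constrained optimization is precisely the combinatorial heart of Theorem~\ref{thmErdos95}: its maximum is $k^2-k$ (two disjoint $K_k$, for $k$ odd) or $k^2-\tfrac32 k$ ($k$ even), attained exactly by the graphs listed there. Hence $e(G)=\mathrm{ex}(n,F_k)$ and $G\in\mathrm{Ex}(n,F_k)$.

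\emph{Main obstacle.} The hard part is the bootstrapping inside Step 2 --- upgrading the soft ``$o(n)$ defects per vertex'' to the hard ``$O_k(1)$ defects per vertex''. This is delicate because even a single vertex with a growing number of defects would remain compatible with $\lambda=\tfrac n2+o(n)$, so one must propagate the sharp lower bound $\lambda\ge \tfrac n2+c_k/n$ through the eigenvector equations to control every coordinate $x_v$ and every neighbourhood simultaneously. After that, Step 3 is a finite optimization that simply re-uses the Erd\H{o}s--F\"uredi--Gould--Gunderson extremal analysis.
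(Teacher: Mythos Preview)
The survey does not include a proof of this theorem; it merely quotes the result from \cite{CFTZ20}. Your outline is, in fact, an accurate high-level summary of the strategy used in that original paper: locate $\lambda$ near $n/2$ via the edge-extremal construction, use stability to get an almost balanced almost complete bipartition, bootstrap the $o(n)$ defects down to $O_k(1)$ using the eigenvector equations together with the fact that every neighbourhood in an $F_k$-free graph has matching number at most $k-1$, and then finish with a finite optimization that reduces to the Erd\H{o}s--F\"uredi--Gould--Gunderson analysis. You have also correctly identified the genuine difficulty, namely the bootstrapping in Step~2.

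Two places in Step~3 are sketchier than the rest and deserve comment. First, ``adding back a missing cross edge creates no $F_k$'' is not automatic: a new edge $ab$ with $a\in A$, $b\in B$ could complete a fan centred at $a$ or $b$ if those vertices already see enough interior edges, so one needs the $O_k(1)$ bounds on $\Delta(H)$ and $\nu(H)$ \emph{before} restoring cross edges, or an iterative argument. Second, ``a short case analysis puts all interior edges in one part'' hides real work: with interior edges in both parts the $F_k$-freeness constraints on the two sides interact (a vertex in $A$ can gain extra triangles through edges inside $B$), so the reduction to a single-sided $H$ with $\nu(H)\le k-1$ and $\Delta(H)\le k-1$ requires a careful argument in \cite{CFTZ20}. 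These are genuine steps rather than formalities, but your overall plan is sound and matches the published proof.
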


In 2021, it was  proved by Li and Peng in 
\cite{LP2021} that the spectral version of 
intersecting odd cycles is also similar. 
Soon after, Desai, Kang, Li, Ni, Tait and Wang \cite{DKLNTW2021} 
proved the analogous result for intersecting cliques. 

 \begin{theorem}[Li--Peng \cite{LP2021}] \label{thmlp}
Let $G$ be a graph of order $n$ that does not contain a copy of $H_{s,t_1,\ldots ,t_k}$, where $s\ge 0$ and $k \geq 1$.  
For sufficiently large $n$, if $G$ has the maximal spectral radius, then
$$G \in \mathrm{Ex}(n, H_{s,t_1,\ldots ,t_k}).$$
\end{theorem}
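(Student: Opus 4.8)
The goal is to show that, for $n$ large, any $n$-vertex graph with no $H_{s,t_1,\dots,t_k}$ and maximum spectral radius must itself be an edge-extremal $H_{s,t_1,\dots,t_k}$-free graph, i.e. a member of $\mathcal{F}_{n,s,k}$ as described in Theorem \ref{thmHY}. Write $\ell := s+k$, so the edge-extremal graphs are $T_2(n)$ with a copy of $K_{\ell-1,\ell-1}$ (or, in the exceptional case $(s,k)=(3,1)$, also $3K_3$) planted in one side. Let $G$ be the spectral extremal graph, let $\lambda = \lambda(G)$, and let $\bm{x}$ be a positive Perron eigenvector (WLOG $G$ is connected, else replace it by its dominant component and add isolated vertices) normalized so that $\max_i x_i = 1$, attained at a vertex $u$. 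The strategy is the now-standard ``spectral-to-structural'' method: use the eigenvalue equation to pin down the approximate structure of $G$, bootstrap to its exact structure, and then rule out every candidate except those in $\mathcal{F}_{n,s,k}$.

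\textbf{Step 1: a crude lower bound and an approximate bipartition.} Since $\mathcal{F}_{n,s,k}$ contains $H_{s,t_1,\dots,t_k}$-free graphs, comparing with one of them gives $\lambda \ge \lambda(T_2(n)) + \Omega(1/n) \ge \sqrt{\lfloor n^2/4\rfloor}$ roughly, in particular $\lambda \ge (1-o(1))n/2$ and $\lambda^2 \ge n^2/4 - O(1)$. Because $\chi(H_{s,t_1,\dots,t_k})=3$, the spectral Erdős–Stone–Simonovits result (Corollary \ref{coro198}) forces $\lambda = (1/2 + o(1))n$, but more is needed: one feeds $\lambda^2 \le \bm{x}^T A^2 \bm{x}$ together with edge-counting ($e(G) \le \mathrm{ex}(n,H_{s,t_1,\dots,t_k}) = \lfloor n^2/4\rfloor + (\ell-1)^2$ from Theorem \ref{thmHY}) and a stability argument to conclude that $G$ is $o(n^2)$-close in edit distance to $T_2(n)$. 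Standard spectral stability (à la Nikiforov, as in Theorem \ref{thmniki2009ejc} and its use in Theorem \ref{thmCFTZ20}) then upgrades this: there is a partition $V(G) = A \cup B$ with $e(A), e(B) = O(1)$ and almost all $A$–$B$ pairs being edges, and with $|\,|A|-|B|\,| = O(1)$.

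\textbf{Step 2: from ``almost'' to ``exactly'' the bipartite skeleton.} Using the eigenvector equation $\lambda x_v = \sum_{w \sim v} x_w$ one shows every vertex has small eigenvector weight on its own side and that $x_u$-type entries for $u \in A$ are all $1 - o(1)$, etc. A degree-smoothing argument — if some vertex had many non-neighbors on the opposite side, deleting its bad edges and adding the missing cross edges would strictly increase $\lambda$, contradicting maximality — shows that in fact \emph{every} vertex of $A$ is adjacent to \emph{every} vertex of $B$, so $G \supseteq K_{|A|,|B|}$ and $G = K_{|A|,|B|} \cup G[A] \cup G[B]$ with $G[A], G[B]$ having $O(1)$ edges. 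The familiar convexity computation $\lambda(K_{a,b})$ is maximized at the balanced split then forces $\{|A|,|B|\} = \{\lfloor n/2\rfloor,\lceil n/2\rceil\}$ (any imbalance costs $\Omega(1/n)$ in $\lambda$, which cannot be recouped by the $O(1)$ extra edges inside the parts, since each such edge contributes only $O(1/n)$ to $\lambda$ via the eigenvector weights $x_i x_j = \Theta(1/n)$ for $i,j$ on the same side). So the skeleton is exactly $T_2(n)$, and we must now choose $H_A := G[A]$ and $H_B := G[B]$ to maximize $\lambda$ subject to $G$ being $H_{s,t_1,\dots,t_k}$-free.

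\textbf{Step 3: the optimization inside the parts, and identifying the winner.} WLOG all extra edges lie in $A$ (a short argument: splitting extra edges across both sides is never better, because the marginal gain of an edge $ij$ in side $A$ is, to first order, $2x_ix_j/(\bm x^T \bm x) + $ (second-order terms), and placing all edges on one side lets their endpoints' eigenvector entries reinforce). The number of edges we may place is governed by the $H_{s,t_1,\dots,t_k}$-freeness of $T_2(n)$-plus-$H_A$: since any vertex of $A$ together with two adjacent vertices of $A$ and the huge side $B$ creates arbitrarily long odd cycles and many triangles through a common vertex, $H_A$ must be a graph in which no vertex is the apex of the forbidden flower — this is precisely the condition that, by the edge-extremal analysis behind Theorem \ref{thmHY}, forces $e(H_A) \le (\ell-1)^2$ with the extremal $H_A$ being $K_{\ell-1,\ell-1}$ (plus the $3K_3$ alternative when $(s,k)=(3,1)$). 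The final step is a comparison among the finitely many near-extremal choices of $H_A$ with $(\ell-1)^2$ edges: one shows via the Perron eigenvector that $\lambda$ is strictly larger for $K_{\ell-1,\ell-1}$ than for any other $H_A$ with the same edge count but not in $\mathcal{F}_{n,s,k}$ — intuitively because $K_{\ell-1,\ell-1}$ concentrates edges among the vertices of largest eigenvector weight and is itself ``spectrally heaviest'' for its edge count. Hence $G \in \mathcal{F}_{n,s,k} = \mathrm{Ex}(n,H_{s,t_1,\dots,t_k})$.

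\textbf{The main obstacle.} The delicate part is Step 3's final comparison: once the skeleton is fixed as $T_2(n)$, all competitor graphs differ only by an $O(1)$-edge perturbation inside one part, so their spectral radii differ by only $O(1/n)$, and one must extract the \emph{sign} of this tiny difference. This requires a genuinely quantitative eigenvalue perturbation estimate — expanding $\lambda$ to second order in the eigenvector, controlling the entries $x_i$ on the small side to precision $o(1/n)$, and showing that among all admissible $H_A$ with $(\ell-1)^2$ edges the quadratic form $\sum_{ij \in H_A} x_i x_j$ is uniquely maximized by $K_{\ell-1,\ell-1}$. This is exactly the technical heart of \cite{CFTZ20} and \cite{LP2021}, and handling the sporadic $3K_3$ case for $(s,k)=(3,1)$ (where two non-isomorphic $H_A$ tie for the edge count and one must check they also tie, or not, spectrally) needs separate care.
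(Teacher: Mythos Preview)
The paper is a survey and does not include a proof of this theorem; it is quoted from \cite{LP2021} without argument. So there is no ``paper's own proof'' to compare against. That said, your outline follows the method of \cite{CFTZ20} and \cite{LP2021} (spectral stability $\to$ exact $T_2(n)$ skeleton $\to$ optimize the $O(1)$ planted edges), and the broad architecture is right.

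A few points where your sketch is looser than the actual argument needs to be. In Step~2, the passage from ``almost complete bipartite'' to ``$G\supseteq K_{|A|,|B|}$'' is not a one-line edge-switching; one must first show that every vertex has eigenvector entry bounded away from~$0$ (so that missing cross edges are genuinely costly), and this uses the lower bound $\lambda\ge n/2-O(1)$ together with the eigen-equation iterated twice. In Step~3, the claim ``WLOG all extra edges lie in $A$'' is not free: one has to argue that if both $G[A]$ and $G[B]$ carry edges then relocating all of them to one side strictly increases $\lambda$, and this again hinges on a quantitative eigenvector estimate, not just a first-order heuristic. Finally, your last paragraph slightly overstates what must be shown: the theorem only asserts $G\in\mathrm{Ex}(n,H_{s,t_1,\dots,t_k})$, so once you know $G=T_2(n)$ plus an admissible $H_A$, it suffices to prove $e(H_A)=(\ell-1)^2$ (strictly fewer edges means strictly smaller $\lambda$, since one could add an edge inside $A$ without creating the forbidden flower and gain $\Theta(1/n)$); Theorem~\ref{thmHY} then forces $H_A\in\{K_{\ell-1,\ell-1}\}$ (or also $3K_3$ when $(s,k)=(3,1)$), and you are done --- no need to compare the spectral radii of the different edge-extremal embeddings against each other.
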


\begin{theorem}[Desai et al. \cite{DKLNTW2021}]  \label{thmDe}
Let $G$ be a graph of order $n$ that does not contain a copy of $F_{k,r}$, where $k\ge 2$ and $r \geq 3$.  
For sufficiently large $n$, if $G$ has the maximal spectral radius, then
\[ G \in \mathrm{Ex}(n, F_{k,r}).\] 
\end{theorem}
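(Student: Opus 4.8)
The plan is to follow the three-phase strategy that has become standard for spectral extremal problems of this type (compare the proofs of Theorems \ref{thmCFTZ20} and \ref{thmlp}): first establish a sharp lower bound for $\lambda_1$, then use it to determine the approximate structure of an extremal graph, and finally run a local switching argument to recover the exact structure. Let $G$ be an $F_{k,r}$-free graph on $n$ vertices with maximum spectral radius, put $\lambda = \lambda_1(G)$, and let $\bm{x}$ be a Perron eigenvector of $A(G)$ scaled so that $\max_v x_v = x_{u_0} = 1$. Since the edge-extremal graph $G_{n,k,r}$ of Theorem \ref{thmChen} --- the Tur\'{a}n graph $T_{r-1}(n)$ with the prescribed graph $G_0$ planted inside one part --- is itself $F_{k,r}$-free, we have $\lambda \ge \lambda(G_{n,k,r})$; since $G_{n,k,r}$ is obtained from $T_{r-1}(n)$ by adding a bounded number of edges among vertices whose Perron weights are all of order $n^{-1/2}$, a Rayleigh-quotient estimate gives $\lambda \ge \lambda(T_{r-1}(n)) + \Omega(1/n)$. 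This strict advantage over $\lambda(T_{r-1}(n))$ is exactly what will later force the planted edges to be present.

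For the approximate structure, observe that $\chi(F_{k,r}) = r$ and that $F_{k,r}$ embeds in $K_r(k, \dots, k)$ --- and hence in $K_r(t, \dots, t)$ for every $t \ge k$ --- by taking one vertex of the first part as the centre and $k$ disjoint transversals of the other $r-1$ parts. Thus, for $n$ large, the spectral Erd\H{o}s--Stone--Simonovits bound (Corollary \ref{coro198}, or more quantitatively Theorem \ref{thmniki177}) forces $\lambda \le (1 - \tfrac{1}{r-1})n + o(n)$, so $\lambda = (1 - \tfrac{1}{r-1})n + o(n)$. Combining this with spectral stability --- the graph-removal argument behind Corollary \ref{coro198} together with the Erd\H{o}s--Simonovits stability theorem --- yields a partition $V(G) = V_1 \cup \cdots \cup V_{r-1}$ under which $G$ differs from the complete $(r-1)$-partite graph on these parts in only $o(n^2)$ edges. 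One then bootstraps through the eigen-equation $\lambda x_v = \sum_{w \sim v} x_w$: iterating it shows that the Perron weight concentrates, that $x_v = (1+o(1))\,n^{-1/2}$ off an $O(1)$-set of vertices, that the parts are nearly balanced, and --- after re-cleaning the partition --- that $\sum_i e(G[V_i]) = O(n)$ while every vertex fails to be adjacent to only $o(n)$ vertices outside its own part.

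The third phase sharpens this to an exact description. One pushes the estimates to $|V_i| = \tfrac{n}{r-1} + O(1)$ for each $i$, to $x_v = n^{-1/2}\bigl(1 + O(1/n)\bigr)$ off an $O(1)$-set with all ratios $x_v/x_w = 1 + O(1/n)$, and to: every vertex is adjacent to all but $O(1)$ vertices of the other parts. The $F_{k,r}$-free condition then reads as a purely local constraint on each part: exactly as in Chen--Gould--Pfender--Wei (Theorem \ref{thmChen}), a matching of size $k$ inside some $G[V_i]$, or a vertex of $G[V_i]$-degree at least $k$, would --- together with the almost-complete cross-edges --- yield a copy of $F_{k,r}$; hence each $G[V_i]$ obeys that matching-and-degree restriction, which caps $\sum_i e(G[V_i])$ at $k^2 - k$ if $k$ is odd and at $k^2 - \tfrac32 k$ if $k$ is even. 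A switching argument --- rebalancing unequal parts, inserting a missing cross-edge, or moving a stray interior edge to the part of largest Perron weight, each of which weakly increases $\bm{x}^{\mathsf{T}} A(G) \bm{x}$ while keeping the graph $F_{k,r}$-free --- now forces the extremal $G$ to be a balanced Tur\'{a}n graph $T_{r-1}(n)$ with an $F_{k,r}$-admissible graph $H$ inside one part. Finally, since the Perron entries are essentially uniform, $\lambda(T_{r-1}(n) + H)$ is, up to an $O(1/n^2)$ correction, an increasing function of $e(H)$; maximality of $\lambda$ therefore forces $e(H)$ to be extremal, and we conclude $G \in \mathrm{Ex}(n, F_{k,r})$.

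The main obstacle is the passage from the soft $o(n^2)$-stability of the second phase to the rigid $O(1)$-description of the third: one must control the Perron entries to relative accuracy $O(1/n)$ and simultaneously manage several competing perturbations --- rebalancing part sizes, filling missing cross-edges, relocating interior edges --- each of which must be shown to raise (or at least not decrease) the spectral radius without creating an $F_{k,r}$. Maintaining $F_{k,r}$-freeness under these moves is the delicate point: unlike the color-critical situation (Corollary \ref{coro173}), $F_{k,r}$ is not color-critical for $k \ge 2$, so the Simonovits--Nikiforov edge-critical machinery is unavailable, and the local matching-and-degree analysis inside the planted part --- the combinatorial heart of the edge version, Theorem \ref{thmChen} --- has to be reproved in the spectral setting. (For $r = 2$ one has $F_{k,2} = K_{1,k}$ and $\lambda_1 \le k-1$ is bounded; this degenerate range is handled separately and elementarily.)
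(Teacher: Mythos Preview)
The paper is a survey and states Theorem~\ref{thmDe} as a cited result from \cite{DKLNTW2021} without giving a proof, so there is no in-paper argument to compare against. Your three-phase outline --- sharp lower bound from $G_{n,k,r}$, spectral stability to an approximate $(r-1)$-partite structure, then eigenvector-controlled switching to pin down the exact extremal graph --- is precisely the methodology of \cite{CFTZ20}, \cite{LP2021}, and \cite{DKLNTW2021}, and your sketch is consistent with how the cited paper proceeds.

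One remark on presentation: you normalise the Perron vector by $\max_v x_v = 1$ in the first paragraph but then write $x_v = (1+o(1))\,n^{-1/2}$ in the second and third phases, which corresponds to the unit-$\ell_2$ normalisation; pick one convention and keep it throughout. Also, the step ``$F_{k,r}$-freeness forces each $G[V_i]$ to have matching number $<k$ and maximum degree $<k$'' uses that the cross-edges are \emph{complete enough} to extend any such configuration to a genuine $F_{k,r}$; in the actual proof this requires not just ``all but $o(n)$ cross-neighbours'' but the sharper ``all but $O(1)$'' bound you assert in phase three --- make sure the bootstrapping that delivers this is done before invoking the matching-and-degree constraint, not after.
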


Recently, 
Cioab\u{a}, Desai and Tait \cite{CDT21}  
investigated the largest spectral radius of
 an $n$-vertex graph that does not contain the odd-wheel 
 graph $W_{2k+1}$, 
which is the graph obtained by joining a vertex to a cycle 
of length $2k$. Moreover, they raised 
the following more general conjecture. 

\begin{conjecture} \label{conj}
Let $F$ be any graph such that the graphs in $\mathrm{Ex}(n,F)$ 
are Tur\'{a}n graphs plus $O(1)$ edges. 
Then for sufficiently large $n$, 
a graph attaining the maximum spectral radius 
among all $F$-free graphs is a member of $\mathrm{Ex}(n,F)$. 
\end{conjecture}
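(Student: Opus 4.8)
Write $r:=\chi(F)-1$; the hypothesis that $\mathrm{Ex}(n,F)$ consists of copies of $T_r(n)$ together with $c_{0}=O(1)$ extra edges forces $r\ge 2$ and $\mathrm{ex}(n,F)=t_r(n)+c_{0}$. Let $G^{*}$ be an $F$-free graph on $n$ vertices of maximum spectral radius $\lambda:=\lambda(G^{*})$, with Perron eigenvector $\bm{x}$. Since every $H\in\mathrm{Ex}(n,F)$ is $F$-free and contains $T_r(n)$ as a spanning subgraph, $\lambda\ge\lambda(H)\ge\lambda(T_r(n))=(1-\tfrac1r+o(1))n$. The plan has three stages: (i) spectral stability, to locate a partition of $V(G^{*})$ into $r$ parts in which almost all edges are crossing; (ii) a bootstrap via the eigenequation, upgrading ``$o(n^{2})$ defect edges'' to ``$O(1)$ defect edges'' and ``balanced parts''; (iii) a perturbation computation showing that among graphs of this restricted shape $\lambda$ is maximized only when $e(\cdot)$ is maximized, which forces $e(G^{*})=\mathrm{ex}(n,F)$ and hence $G^{*}\in\mathrm{Ex}(n,F)$.

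For stage (i) I would combine Corollary~\ref{coro198} (the spectral Erd\H{o}s--Stone--Simonovits estimate $\mathrm{ex}_{\lambda}(n,F)=(1-\tfrac1r)n+o(n)$) with the graph removal lemma, exactly along the lines of the alternative proof given after Corollary~\ref{coro198}: an $F$-free graph whose spectral radius is within $o(n)$ of $(1-\tfrac1r)n$ is within $o(n^{2})$ edit-distance of some complete $r$-partite graph, so we obtain a partition $V(G^{*})=V_{1}\cup\cdots\cup V_{r}$ with $\sum_{i}e(G^{*}[V_{i}])+\sum_{i<j}\bigl(|V_{i}||V_{j}|-e_{G^{*}}(V_{i},V_{j})\bigr)=o(n^{2})$. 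For stage (ii), normalize $\bm{x}$ so that $\max_{v}x_{v}=x_{v_{0}}=1$ and feed the partition into $\lambda x_{v}=\sum_{w\sim v}x_{w}$: this first shows $\lambda=(1-\tfrac1r)n+O(1)$ and that $x$ is essentially uniform on each part, and then, by a now-standard local-switching argument, that any vertex incident to more than a constant number of defect edges has eigenvector entry bounded away from $1$; deleting its internal edges and joining it completely to the other parts strictly increases the Rayleigh quotient $\bm{x}^{\mathsf T}A\bm{x}/\bm{x}^{\mathsf T}\bm{x}$, contradicting maximality of $\lambda$ \emph{provided} the switch preserves $F$-freeness. Iterating, together with a similar balancing of the part sizes, yields $\bigl||V_{i}|-n/r\bigr|=O(1)$ and a total of $O(1)$ defect edges.

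At stage (iii) the graph $G^{*}$ is a balanced complete $r$-partite graph with $a=O(1)$ edges added inside parts and $b=O(1)$ crossing edges deleted; since the Perron vector of $T_r(n)$ has all entries $\Theta(n^{-1/2})$, a second-order eigenvalue perturbation around $T_r(n)$ gives
\[
\lambda(G^{*})=\lambda(T_r(n))+\frac{2}{n}(a-b)+O\!\left(\frac{1}{n^{2}}\right).
\]
A unit change in $a-b$ shifts $\lambda$ by $\Theta(1/n)$ whereas redistributing a \emph{fixed} number of defect edges shifts it only by $O(1/n^{2})$, so the maximizer must first make $a-b$ as large as $F$-freeness permits, i.e.\ $e(G^{*})=t_r(n)+c_{0}=\mathrm{ex}(n,F)$; being $F$-free with $\mathrm{ex}(n,F)$ edges, $G^{*}\in\mathrm{Ex}(n,F)$.

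The main obstacle is stage (ii): justifying the local switches under the $F$-freeness constraint \emph{without} assuming $F$ is color-critical. When $F$ is color-critical one may invoke Corollary~\ref{coro173}/Theorem~\ref{thmniki2009ejc} to forbid a large $K_r^{+}$ inside $G^{*}$, and the switches are routine; but in general (e.g.\ $F=F_{k}$, which is not color-critical) one must run a supersaturation argument showing that if $G^{*}$ carried $\omega(1)$ defect edges then some almost-complete-$r$-partite host inside $G^{*}$, together with those internal edges, would already contain $F$---and one must simultaneously control, for each edge $e$ of $F$, how many copies of $F-e$ can occupy such a host. Turning this heuristic into a statement that depends only on the qualitative hypothesis ``$\mathrm{Ex}(n,F)$ is Tur\'{a}n plus $O(1)$ edges'', rather than on an explicit description of $\mathrm{Ex}(n,F)$, is precisely the gap that keeps the statement a conjecture; at present it is verified only for the families of Theorems~\ref{thmCFTZ20}, \ref{thmlp} and \ref{thmDe} and for odd wheels.
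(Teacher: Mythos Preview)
The statement you were given is a \emph{conjecture} in the paper, not a theorem: the paper offers no proof, and explicitly records it as an open problem due to Cioab\u{a}, Desai and Tait, noting only that it is verified for edge-color-critical $F$ (via Corollary~\ref{coro173}) and for the intersecting-clique and intersecting-odd-cycle families (Theorems~\ref{thmCFTZ20}, \ref{thmlp}, \ref{thmDe}). Your write-up correctly diagnoses this status; you do not claim a proof, and your closing paragraph pinpoints exactly the obstruction the paper leaves open.

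Your three-stage outline (spectral stability $\to$ bootstrap via the eigenequation $\to$ perturbation comparison) is an accurate high-level summary of the method used in the cited special cases, so in that sense your proposal is aligned with the existing literature rather than with any argument in this survey. One small caution on stage~(iii): the perturbation formula $\lambda(G^{*})=\lambda(T_r(n))+\tfrac{2}{n}(a-b)+O(n^{-2})$ is only heuristically correct; in the actual proofs for $F_k$, $F_{k,r}$, $H_{s,t_1,\ldots,t_k}$ the comparison is not done by a uniform second-order expansion but by direct Rayleigh-quotient estimates tailored to the specific extremal configurations, precisely because the $O(n^{-2})$ error can interact with the placement (not just the count) of the $O(1)$ extra edges. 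This does not affect your overall assessment---the genuine gap is, as you say, in stage~(ii)---but if you later try to carry out the program for a new $F$, do not expect stage~(iii) to go through as cleanly as the displayed formula suggests.
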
 

\noindent 
{\bf Remark.} 
This conjecture was recently confirmed by Wang, Kang and Xue \cite{Wang2022}.

\medskip 
Recall that $F$ is called edge-color-critical if  
there exists an edge $e$ of $F$ such that 
$\chi (F-e) < \chi (F)$. 
Let $F$ be an edge-color-critical graph with $\chi (F)=r+1$. 
By a result of Simonovits \cite{Sim1966}
and a result of Nikiforov \cite{Nikiejc2009} or 
Keevash et al. \cite{Kee2014}, 
we know that $\mathrm{Ex}(n,F)=\mathrm{Ex}_{\lambda}(n,F) =
\{T_r(n)\}$ for sufficiently large $n$, this  
 shows that Conjecture \ref{conj}  is true for 
all edge-color-critical graphs.  
As we mentioned before, Theorems \ref{thmCFTZ20}, 
\ref{thmDe} and \ref{thmlp} 
say that Conjecture \ref{conj} 
holds for the $k$-fan graph $F_k$, 
the $(k,r)$-fan graph $F_{k,r}$ and the intersecting odd cycle 
$H_{s,t_1,\ldots ,t_k}$. 
Note that these graphs
 $F_k,F_{k,r}$ and $H_{s,t_1,\ldots ,t_k}$ 
 are not edge-color-critical. 

Recall that  $S_{n,k}=K_k \vee I_{n-k}$.  
Clearly, we can see that 
$S_{n,k}$  does not contain $F_k$ as a subgraph. 
Recently, Zhao, Huang and Guo \cite{ZHG21} 
proved that $S_{n,k}$ is 
the unique graph attaining the maximum signless Laplacian spectral radius among all graphs of large order $n$ containing no 
copy of $F_k$. 

\begin{theorem}[Zhao--Huang--Guo  \cite{ZHG21}]\label{thmZHG}
Let $k \geq 2$ and $n\ge 3k^2-k-2$. 
If $G$ is an $n$-vertex graph  that does not contain a copy of $F_k$,  then $q(G)\le q(S_{n,k})$, 
with equality holding if and only if $G=S_{n,k}$. 
\end{theorem}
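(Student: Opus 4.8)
The plan is to fix $G$, an $n$-vertex $F_k$-free graph of maximum signless Laplacian spectral radius, and to prove $G\cong S_{n,k}$; the stated inequality and the equality case then both follow. We may assume $G$ is connected (a disconnected extremal graph would have a component $G_0$ on $n_0<n$ vertices with $q(G_0)=q(G)$, and one reduces to the connected case by induction on $n$, using the monotonicity $q(S_{n_0,k})<q(S_{n,k})$ for $n_0<n$). Write $q:=q(G)$. Since $S_{n,k}$ is $F_k$-free, $q\ge q(S_{n,k})$, and evaluating the equitable quotient matrix $\bigl(\begin{smallmatrix} n+k-2 & n-k\\ k & k\end{smallmatrix}\bigr)$ of $Q(S_{n,k})$ for the partition of $V(S_{n,k})$ into its $K_k$ and its $I_{n-k}$ yields
\[ q(S_{n,k})=\frac{n+2k-2+\sqrt{(n+2k-2)^2-8k(k-1)}}{2}, \]
which for $n\ge 3k^2-k-2$ exceeds $n+2k-3$; hence $q>n+2k-3$. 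Let $\bm{x}$ be a positive Perron eigenvector of $Q(G)$, scaled so that $x_u=\max_i x_i=1$. The two tools I use repeatedly are the eigen-equation $(q-d(v))x_v=\sum_{w\sim v}x_w$ for every vertex $v$, and the following structural consequence of $F_k$-freeness: for every $v$, the graph $G[N(v)]$ contains no matching of size $k$ (such a matching together with $v$ would be a copy of $F_k$), so by the Erd\H{o}s--Gallai theorem $e(G[N(v)])\le (k-1)d(v)+\binom{2k-1}{2}$, and $G[N(v)]$ has a vertex cover $C_v$ of size at most $2k-2$, so that $N(v)\setminus C_v$ is an independent set of $G$ of size at least $d(v)-2k+2$.

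The core of the argument is to show that $G$ has precisely $k$ vertices of degree $n-1$. First I would bound $d(u)$ from below. The eigen-equation at $u$ gives $q-d(u)=\sum_{w\sim u}x_w\le d(u)$, so $d(u)\ge q/2$; writing $R:=V\setminus(N(u)\cup\{u\})$ and noting that every vertex of the independent set $N(u)\setminus C_u$ has all of its neighbours inside $\{u\}\cup C_u\cup R$ and hence degree at most $2k-1+|R|$, the eigen-equations show that these (very numerous) vertices carry only tiny eigenvector weight, of order at most $(2k-1+|R|)/(q-2k+1-|R|)$. Substituting this back into $q=d(u)+\sum_{w\sim u}x_w$, and exploiting both $q>n+2k-3$ and the hypothesis $n\ge 3k^2-k-2$, one bootstraps first that $|R|$ is bounded and then that $d(u)=n-1$. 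Once $u$ is a dominating vertex, $G=\{u\}\vee H$ with $H:=G-u$ having no matching of size $k$; by the Erd\H{o}s--Gallai structure every edge of $H$ meets a fixed set $C$ with $|C|\le 2k-2$, so $G$ is a subgraph of $K_{|C|+1}\vee I_{n-|C|-1}$. If $|C|+1\le k-1$ this would force $q(G)\le q(K_{k-1}\vee I_{n-k+1})=q(S_{n,k-1})<q(S_{n,k})$, a contradiction; hence $|C|\ge k-1$, while $k+1$ mutually dominating vertices would already create an $F_k$. A further, more delicate, round of the eigenvector and matching estimates (once more using $n\ge 3k^2-k-2$) then pins the count of vertices of degree $n-1$ down to exactly $k$. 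I expect this whole paragraph --- pushing $d(u)\ge q/2$ up to $d(u)=n-1$, and then the range $k-1\le|C|\le 2k-2$ down to the exact count --- to be the main obstacle; the rest is routine.

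With that structural statement established, the proof closes quickly. Let $B$ be the set of $k$ vertices of degree $n-1$; then $B$ induces $K_k$ and every vertex of $V\setminus B$ is adjacent to all of $B$. If $V\setminus B$ contained an edge $yz$, then for any $b\in B$ the neighbourhood $N(b)=V\setminus\{b\}$ would induce a graph containing a copy of $K_{k-1}\vee I_{n-k}$ (with the $K_{k-1}$ on $B\setminus\{b\}$ and the $I_{n-k}$ on $V\setminus B$) together with the extra edge $yz$; since $n\ge 2k+1$, one can match $B\setminus\{b\}$ into $(V\setminus B)\setminus\{y,z\}$ and adjoin $yz$ to obtain a matching of size $k$ inside $G[N(b)]$, hence a copy of $F_k$ in $G$ --- a contradiction. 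Therefore $V\setminus B$ is independent, and so $G=K_k\vee I_{n-k}=S_{n,k}$; in particular $q(G)=q(S_{n,k})$, which confirms that $q(S_{n,k})$ is the maximum. Finally, if $G'$ is any $F_k$-free $n$-vertex graph with $q(G')=q(S_{n,k})$, then $G'$ is extremal, so the same argument gives $G'=S_{n,k}$, establishing the equality characterisation.
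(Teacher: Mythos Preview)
The paper you are working from is a survey and does not supply a proof of this theorem; it only quotes the statement from Zhao, Huang and Guo \cite{ZHG21}. So there is no in-paper argument to compare against.

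Your outline is the standard route for signless-Laplacian extremal problems of this type and matches the strategy of the original source: take an extremal $F_k$-free $G$, use that $F_k$-freeness forces every neighbourhood $G[N(v)]$ to have matching number at most $k-1$ (hence a vertex cover of size $\le 2k-2$), combine this with the Perron eigen-equation for $Q(G)$ and the numerical lower bound $q>n+2k-3$, and bootstrap to the structure $K_k\vee I_{n-k}$. Two small remarks on the write-up. First, your connectedness reduction via induction on $n$ is shakier than necessary, since the induction hypothesis need not apply to a small component; the clean argument is that adding a bridge between two components keeps $G$ $F_k$-free (every edge of $F_k$ lies in a triangle) and strictly increases $q$ by Rayleigh, contradicting extremality. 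Second, your claim that ``$k+1$ mutually dominating vertices would already create an $F_k$'' is correct, but not because $K_{k+1}\supseteq F_k$ (it does not); rather, one of these $k+1$ vertices has the other $k$ in its neighbourhood, and since $n\ge 2k+1$ those $k$ can be matched into $V\setminus B$ to give a $k$-matching there.

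Your final paragraph, deducing $G=S_{n,k}$ once you know there are exactly $k$ dominating vertices, is correct and cleanly done. You are right that the middle paragraph is where the real work sits: pushing from $d(u)\ge q/2$ to $d(u)=n-1$, and then pinning down the exact number of dominating vertices, requires careful quantitative control of the eigenvector entries on the large independent part of $N(u)$, and the threshold $n\ge 3k^2-k-2$ is calibrated exactly so that these estimates close. That step cannot be waved through, but your plan for it is the right one.
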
 

It is worth mentioning that the extremal graphs in 
Theorem \ref{thmZHG} 
are not the same as those of Theorem \ref{thmCFTZ20}. 
In addition, for $k=1$, i.e., $G$ is triangle-free, 
from Theorem \ref{thmHJZ} we know that $q(G)\le n$, 
equality holds  
if and only if $G$ is a complete bipartite graph.  

It is a natural question 
to consider the maximum signless Laplacian spectral radius 
among all graphs containing no copy of $C_{k,q}$ or 
$H_{s,t_1,\ldots ,t_k}$. 
When the paper \cite{LP2021} was 
announced (arXiv:2106.00587), 
Chen, Liu and Zhang \cite{CLZ2021} 
proved quickly the signless Laplacian spectral version 
for $C_{k,q}$. More generally, 
they showed the result for graph $H_{s,t_1,\ldots ,t_k}$ 
for odd integers $t_1,\ldots ,t_k$.

\begin{theorem}[Chen--Liu--Zhang \cite{CLZ2021}]
For integers $k\ge 2, t\ge 1$ and $q=2t+1$, 
there exists an integer $n_0$ such that 
if $n\ge n_0$ and 
$G$ is a $C_{k,q}$-free graph on $n$ 
vertices, then 
\[  q(G) \le q(S_{n,kt}), \]
equality holds if and only if $G=S_{n,kt}$. 
\end{theorem}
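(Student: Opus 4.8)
The plan is to adapt the now-standard scheme for signless Laplacian spectral extremal problems (the Nikiforov--Yuan and Zhao--Huang--Guo arguments for forbidden even cycles and for $F_k$, cf. Theorem~\ref{thmZHG}) to the odd-cycle flower $C_{k,q}$. Fix $t\ge 1$, $q=2t+1$, $k\ge 2$, let $n$ be large, let $G$ be a $C_{k,q}$-free graph on $n$ vertices with $q(G)$ maximum (which we may assume connected), and let $\bm{x}\ge 0$ be a Perron eigenvector of $Q(G)$ normalised so that $\max_i x_i = x_z = 1$. First I would verify that $S_{n,kt}=K_{kt}\vee I_{n-kt}$ is $C_{k,q}$-free: inside any split graph $K_m\vee I_{n-m}$ the vertices of a copy of $C_q$ lying in the independent part form an independent set of that cycle, hence number at most $\alpha(C_q)=t$, so at least $t+1$ of its vertices lie in $K_m$; thus $k$ copies of $C_q$ meeting in one vertex would require at least $1+kt$ vertices of $K_m$ (centre in the clique) or at least $k(t+1)$ (centre in the independent part), both exceeding $kt$. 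Consequently $q(G)\ge q(S_{n,kt})$, and from the explicit value $q(S_{n,kt})=\tfrac12\bigl(n+2kt-2+\sqrt{(n+2kt-2)^2-8kt(kt-1)}\bigr)$ recorded above one gets $q:=q(G)\ge n+2kt-2-O(1/n)>n+2kt-3$.

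The combinatorial heart is a pair of ``one edge too much'' lemmas: for all sufficiently large $m$, both $K_{kt+1}\vee I_m$ and $K_{kt}\vee I_m^{+}$ (the latter being $K_{kt}\vee I_m$ with a single edge added inside $I_m$) contain $C_{k,q}$. Each is proved by constructing the $k$ petals by hand, exploiting that $q$ is odd: a $C_{2t+1}$ through a prescribed clique vertex $v$ that alternates $v,c_1,w_1,c_2,w_2,\dots$ between clique and independent-set vertices uses exactly $t$ clique vertices besides $v$, whereas a petal allowed to use the extra edge of $I_m^{+}$ can be built with one clique vertex fewer; a short count then shows that $kt$, respectively $kt{+}1$, clique vertices suffice to pack all $k$ pairwise-internally-disjoint petals. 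Together with a Dirac/fan-type packing argument to realise such petals inside an only approximately structured host, this yields the working statement: if $G$ contains a set $R$ with $|R|\ge kt$ whose vertices are adjacent to all other vertices of $R\cup U$ for some $U$ of size at least a constant $C_0(k,t)$, and in addition $|R|\ge kt+1$ \emph{or} $U$ spans an edge, then $C_{k,q}\subseteq G$ --- a contradiction.

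Next comes the stability step. Using the eigenvalue equations $q x_v = d(v)x_v+\sum_{u\sim v}x_u$ and the two-step relation $q^2x_v = (d(v)^2+d(v))x_v+\sum_{u}\bigl(a_{vu}(d(v)+d(u))+|N(v)\cap N(u)|\bigr)x_u$, together with $q>n+2kt-3$, I would first pin down a vertex of degree $n-1-O(1)$, and then, peeling off such vertices and invoking the packing lemma of the previous paragraph, show that the Perron weight concentrates on a bounded ``core'': there is a set $A$ with $|A|\le kt$ such that every vertex of $A$ has degree $n-1-O(1)$ and $\bm{x}$-entry bounded below by a positive constant, every vertex outside $A$ has $\bm{x}$-entry $O(1/n)$ with all but $O(1)$ of its neighbours in $A$, and $G-A$ has bounded maximum degree. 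The mechanism is a dichotomy: any appreciable departure from this picture either creates, via the packing lemma, a copy of $C_{k,q}$, or forces the ``excess'' edges to sit in a near-bipartite pattern, and in the latter case a direct estimate of $\bm{x}^{T}Q(G)\bm{x}$ (splitting the quadratic form over $A$ and $V\setminus A$) contradicts $q>n+2kt-3$.

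Finally I would pass from this approximate description to exact equality. Since enlarging the edge set of a connected graph by an edge $ab$ raises the signless Laplacian Rayleigh quotient by exactly $(x_a+x_b)^2/\|\bm{x}\|_2^2>0$, maximality of $q(G)$ makes $G$ edge-maximal within its shape; the ``one edge too much'' lemmas then force $|A|=kt$, $A$ to induce a clique, every vertex outside $A$ to be joined to all of $A$, and no edge to lie outside $A$ (such an edge would sit between two vertices now adjacent to all of $A$, producing $K_{kt}\vee I^{+}\supseteq C_{k,q}$). Hence $G=K_{kt}\vee I_{n-kt}=S_{n,kt}$; uniqueness is then immediate because $q(S_{n,m})$ is strictly increasing in $m$ while $S_{n,m}$ is $C_{k,q}$-free exactly when $m\le kt$. \textbf{The hard part} is the stability step: a $C_{k,q}$-free graph can be globally dense --- it may have $\lfloor n^2/4\rfloor+(k-1)^2$ edges by Theorem~\ref{thmhql16} --- so one must argue quantitatively that such density is incompatible with $q(G)$ as large as $n+2kt-2-o(1)$, and it is precisely the interplay between the odd length $q$, the explicit petal-weaving of the packing lemma, and careful eigenvector estimates that drives this, where essentially all the technical work lies.
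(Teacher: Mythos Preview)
The paper you are comparing against is a survey; it only \emph{states} this theorem and attributes it to Chen--Liu--Zhang \cite{CLZ2021} without giving a proof. So there is no ``paper's own proof'' to compare to in this document, and your proposal has to be judged on its own terms and against the method of the cited source.

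Your outline is the correct scheme and is exactly the framework the cited paper uses: it is the signless Laplacian adaptation of the Nikiforov--Yuan/Zhao--Huang--Guo template (as in Theorem~\ref{thmZHG}), and your three ingredients --- the verification that $S_{n,kt}$ is $C_{k,q}$-free, the two ``one edge too much'' constructions in $K_{kt+1}\vee I_m$ and $K_{kt}\vee I_m^{+}$, and the eigenvector-concentration stability step --- are the right ones. Your petal count is accurate: a $(2t{+}1)$-cycle in a split graph uses at least $t{+}1$ clique vertices, and the alternating constructions you describe do realise $C_{k,q}$ with exactly $kt{+}1$, respectively $kt$, clique vertices. The uniqueness argument via edge-maximality and the strict monotonicity of $q$ under edge addition is also the standard closing move.

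The one place where your write-up is thin is precisely where you flag it: the stability step. You assert a dichotomy (``either a copy of $C_{k,q}$ is created, or a near-bipartite pattern contradicts $q>n+2kt-3$'') but do not indicate how the bounded-degree conclusion on $G-A$ is obtained from the eigen-equations, nor how you rule out, say, a single vertex outside $A$ with linear degree into $V\setminus A$. In the actual argument this is handled by iterating a degree--weight inequality of the form $(q-d(v))x_v\le \sum_{u\sim v}x_u$ together with the packing lemma to bound $|N(v)\cap A|$ and then $x_v$; you should make that loop explicit rather than leave it as a ``mechanism''. Apart from that gap in exposition, the plan is sound and matches the cited proof.
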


It is natural to ask the following problem, 
which is the signless Laplacian spectral version of 
the extremal problem for intersecting cliques. 
Clearly, when $r=3$, 
our conjecture reduces to the result of Zhao et al. \cite{ZHG21}. 

\begin{conjecture} \cite{DKLNTW2021}  
For integers $k\ge 2$ and $ r\ge 3$, 
there exists an integer $n_0(k,r)$ such that 
if $n\ge n_0(k,r)$ and 
$G$ is a $F_{k,r}$-free graph on $n$ 
vertices, then 
\[  q(G) \le q(S_{n,k(r-2)}), \]
equality holds if and only if $G=S_{n,k(r-2)}$. 
\end{conjecture}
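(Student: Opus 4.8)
The plan is to follow the signless-Laplacian spectral-extremal template that proves the $r=3$ case in Theorem~\ref{thmZHG} and the intersecting-odd-cycle case of Chen--Liu--Zhang, upgrading every ingredient from edges and matchings in neighborhoods to packings of $K_{r-1}$'s. The organizing principle is the local reformulation of the forbidden structure: a graph $G$ is $F_{k,r}$-free if and only if, for every vertex $v$, the induced subgraph $G[N(v)]$ contains no $k$ \emph{vertex-disjoint} copies of $K_{r-1}$ (each $K_r$ through $v$ is $v$ together with a $K_{r-1}$ in $N(v)$, and the cliques meet only in $v$ precisely when these $K_{r-1}$'s are disjoint). Throughout, take $G$ to be an $F_{k,r}$-free graph on $n$ vertices maximizing $q:=q(G)$, and let $\mathbf{x}$ be the Perron eigenvector of $Q(G)$ scaled so that $\max_i x_i=x_u=1$.

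First I would fix the benchmark. A short check shows $S_{n,k(r-2)}=K_{k(r-2)}\vee I_{n-k(r-2)}$ is $F_{k,r}$-free: every $K_r$ uses at most one vertex of the independent part, so $k$ copies of $K_r$ sharing a single vertex would require strictly more than $k(r-2)$ clique vertices, which are unavailable. Hence $q\ge q(S_{n,k(r-2)})$, and with $j=k(r-2)$ the formula
\[ q(S_{n,j})=\tfrac12\Bigl(n+2j-2+\sqrt{(n+2j-2)^2-8j(j-1)}\Bigr) \]
gives $q>n+2k(r-2)-3$ for large $n$; in particular $q>n$.

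Next I would mine the eigenvalue equation $q\,x_v=\sum_{w\sim v}(x_v+x_w)$ for structure. Starting from $x_u=1$ one gets $d_u\ge q/2>n/2$, and a bootstrapping argument that repeatedly feeds the local $K_{r-1}$-packing bound back into the equation should promote a bounded set $W$ of high-weight vertices, each adjacent to all but $o(n)$ of $V(G)$, while all entries $x_v$ with $v\notin W$ are forced down to $O(1/n)$. The target is $|W|=k(r-2)$ with $W$ complete to $V(G)$; a Zykov-type symmetrization (redistributing a vertex onto a heavier twin whenever this preserves $F_{k,r}$-freeness) together with the uniqueness of the Perron eigenvector should then upgrade ``near-universal'' to ``universal'' and enforce the exact count.

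The hard part will be the last step: proving that once $W$ is removed the remainder $G-W$ is \emph{empty}, so that $G$ is exactly the split graph rather than a split graph with a dense block hidden in the independent part. This is where the signless-Laplacian problem parts company with its edge and adjacency analogues, whose extremal graphs are Tur\'an-type graphs $T_{r-1}(n)$ plus $O(1)$ edges (Theorems~\ref{thmChen} and \ref{thmDe}): the local packing condition is perfectly compatible with a dense complete $(r-2)$-partite block on $V(G)\setminus W$, since such a block is $K_{r-1}$-free and hence $F_{k,r}$-free, and dense blocks carry large signless-Laplacian weight. When $r=3$ the $(r-2)$-partite block degenerates to an independent set and there is nothing to rule out, which is exactly why Theorem~\ref{thmZHG} goes through; for $r\ge4$ one must instead bound, from above, the contribution of an arbitrary $F_{k,r}$-free block in $G-W$ to the Rayleigh quotient $q=\max_{\|\mathbf y\|=1}\mathbf y^{T}Q(G)\mathbf y$ by the contribution of the same number of vertices placed into the independent part of the split graph. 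Establishing this comparison — deciding whether concentrating all mass on $k(r-2)$ universal vertices really beats spreading it over a dense multipartite block — is the single step on which the whole theorem rests, and it is precisely the point where the greatest care is required.
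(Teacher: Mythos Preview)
The paper offers no proof of this statement: it is posed as an open conjecture, motivated by the $r=3$ case of Zhao--Huang--Guo (Theorem~\ref{thmZHG}). So there is nothing to compare your argument against. What can be assessed is whether your plan could succeed in principle, and here the very obstacle you flag in your last paragraph is decisive --- and fatal.

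Take $r\ge 4$ and consider the Tur\'an graph $T_{r-1}(n)$. It is $K_r$-free, hence contains no copy of $F_{k,r}$ at all, and when $(r-1)\mid n$ it is $\frac{(r-2)n}{r-1}$-regular, so
\[
q\bigl(T_{r-1}(n)\bigr)=\frac{2(r-2)}{\,r-1\,}\,n.
\]
On the other hand, with $j=k(r-2)$ fixed, the formula you quote gives $q(S_{n,j})= n+2j-2-O(1/n)$. For every $r\ge 4$ one has $\frac{2(r-2)}{r-1}\ge\frac{4}{3}>1$, so for all sufficiently large $n$
\[
q\bigl(T_{r-1}(n)\bigr)\;>\;q\bigl(S_{n,k(r-2)}\bigr),
\]
and the conjecture is \emph{false} as stated for $r\ge 4$. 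In other words, the ``dense $(r-2)$-partite block'' you worried about does not merely complicate the endgame; it outright beats the split graph, because the signless Laplacian reward for high regular degree (scaling linearly in $n$) dominates the reward for a bounded number of universal vertices. Your instinct that the step ``requires the greatest care'' was correct; the honest conclusion is that the step cannot be carried out, and the right extremal graph for $r\ge 4$ must be Tur\'an-type rather than split-type, consistent with Theorems~\ref{thmChen} and~\ref{thmDe}.
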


\section{Spectral  problem for Hamiltonianity} 

A cycle passing through all  vertices of a graph is called a  Hamilton cycle. A graph containing a Hamilton cycle is called a Hamiltonian graph. A path passing through all  vertices of a graph is called a Hamilton path and a graph containing a Hamilton path is said to be traceable.

Every complete graph on at least three vertices is evidently Hamiltonian. Indeed,
the vertices of a Hamilton cycle can be selected one by one in an arbitrary order.
But suppose that our graph has considerably fewer edges. 
In particular, we may
ask how large the minimum degree must be in order to guarantee the existence of
a Hamilton cycle. The  celebrated  Dirac theorem 
\cite{Dirac52} answers this question. 
It states that every graph with $n\ge 3$ vertices and minimum degree at least $n/2$ has
a Hamilton cycle. 
The condition is sharp when we consider the complete bipartite 
graph with the parts of sizes $\lfloor \frac{n-1}{2}\rfloor$ and 
$\lfloor  \frac{n+1}{2}\rfloor$.

The {\it closure operation} introduced by Bondy and Chv\'{a}tal \cite{Bondy} 
is a powerful tool for the problems of Hamiltonicity of graphs. 
Let $G$ be a graph of order $n$. The $s$-closure of $G$, 
denoted by $\mathrm{cl}_s(G)$, 
is the graph obtained from $G$ by recursively joining pairs of 
non-adjacent vertices whose degree sum is at least $s$ until no such pair 
remains.  
It is not hard to prove that the $s$-closure 
of $G$ is uniquely determined; see, e.g., \cite{Bondy}.  
Clearly, if $G$ contains a Hamilton cycle, 
then so does $\mathrm{cl}_{s}(G)$ for every $s$ 
since $G$ is a subgraph of $\mathrm{cl}_s(G)$.   
Surprisingly, Ore \cite{ore60} proved that both 
$\mathrm{cl}_n(G)$ and $G$ keep in line with the existence of Hamilton cycle. 

\begin{theorem}[Ore \cite{ore60}] 
A graph $G$ is Hamiltonian 
if and only if the closure graph $\mathrm{cl}_n (G)$ is Hamiltonian. 
In particular, if $d(u)+d(v)\ge n$ for all non-edges $\{u,v\}$, 
then $\mathrm{cl}_n (G)=K_n$ and $G$ is Hamiltonian. 
\end{theorem}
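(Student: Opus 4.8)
The plan is to isolate the single operation underlying the closure construction and then iterate it. The heart of the matter is the \emph{local lemma}: if $u,v$ are non-adjacent vertices of an $n$-vertex graph $G$ with $d(u)+d(v)\ge n$, then $G$ is Hamiltonian if and only if $G+uv$ is Hamiltonian. One direction is free, as $G$ is a spanning subgraph of $G+uv$. For the converse I would argue by contradiction: suppose $G+uv$ has a Hamilton cycle but $G$ does not. Then every Hamilton cycle of $G+uv$ must use the edge $uv$, so deleting that edge exhibits a Hamilton path of $G$ joining $u$ to $v$; write it as $u=v_1v_2\cdots v_n=v$.

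The crux is a crossover (rotation) argument along this path. Set $S=\{\,i\in\{1,\dots,n-1\}:u\sim v_{i+1}\,\}$ and $T=\{\,i\in\{1,\dots,n-1\}:v\sim v_i\,\}$. Since $u\not\sim v$, no neighbour of $u=v_1$ equals $v_n$ and no neighbour of $v=v_n$ equals $v_1$, so one reads off $|S|=d(u)$ with $S\subseteq\{1,\dots,n-2\}$ and $|T|=d(v)$ with $T\subseteq\{2,\dots,n-1\}$. Hence $|S|+|T|=d(u)+d(v)\ge n>n-1=|\{1,\dots,n-1\}|$, so by pigeonhole $S\cap T\neq\emptyset$, and any $i\in S\cap T$ automatically satisfies $2\le i\le n-2$ together with $u\sim v_{i+1}$ and $v\sim v_i$. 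Then
\[
u\,v_2\,v_3\cdots v_i\,v\,v_{n-1}\,v_{n-2}\cdots v_{i+1}\,u
\]
is a cycle of $G$ using only edges of the path together with $uv_{i+1}$ and $vv_i$, and it visits every vertex exactly once. This contradicts the non-Hamiltonicity of $G$ and proves the local lemma.

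With the lemma in hand the theorem follows by a short induction. By the construction of the $n$-closure there is a chain $G=G_0\subseteq G_1\subseteq\cdots\subseteq G_k=\mathrm{cl}_n(G)$ in which each $G_{j+1}=G_j+u_jv_j$ for a non-edge $u_jv_j$ of $G_j$ with $d_{G_j}(u_j)+d_{G_j}(v_j)\ge n$; applying the local lemma at each step shows $G_j$ is Hamiltonian if and only if $G_{j+1}$ is, whence $G$ is Hamiltonian if and only if $\mathrm{cl}_n(G)$ is. (This argument is legitimate because the $n$-closure is uniquely determined, as recalled above, so the conclusion does not depend on the chosen chain.) For the ``in particular'' clause, if $d_G(u)+d_G(v)\ge n$ holds for every non-edge of $G$, then already in the first round of the construction every pair of non-adjacent vertices may be joined, giving $\mathrm{cl}_n(G)=K_n$, which is Hamiltonian for $n\ge 3$; therefore $G$ is Hamiltonian.

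I expect the only genuinely nontrivial point to be the crossover step — the pigeonhole choice of the index $i$ at which the Hamilton $u$--$v$ path can be re-spliced into a cycle. Everything else (the trivial spanning-subgraph direction, the bookkeeping $|S|=d(u)$, $|T|=d(v)$ with the correct index ranges, and the induction along the closure chain) is routine, and the uniqueness of $\mathrm{cl}_n(G)$ has already been granted.
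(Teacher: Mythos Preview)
Your proof is correct and follows essentially the same approach as the paper: both reduce to the single-edge local lemma via the closure chain, obtain a Hamilton $u$--$v$ path from a Hamilton cycle of $G+uv$, define the same index sets $S$ and $T$ along that path, and use pigeonhole/inclusion--exclusion ($|S|+|T|\ge n>|S\cup T|$) to find a crossover index yielding a Hamilton cycle in $G$. Your write-up is in fact more careful about the index ranges and the induction along the closure chain than the paper's brief sketch.
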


The proof is simple and similar with that of the Dirac theorem, 
so we here conclude the proof briefly.  
Without loss of generality, 
we only consider the case $\mathrm{cl}_n(G) =G+ uv $, 
where $d_G(u)+d_G(v)\ge n$. If $G+uv$ is Hamiltonian, 
then $G$ has a Hamiltonian path $u=u_1,u_2,\ldots ,u_n=v$. 
We denote by $S=\{i : uu_{i+1}\in E(G)\}$ 
and $T=\{i: u_iv \in E(G)\}$. Clearly, we have 
$|S|=d_G(u) $ and $|T|=d_G(v)$.  Note that $u_n \notin S\cup T$, 
which implies that $|S\cup T| \le n-1$. 
Since $d_G(u)+d_G(v)\ge n$, we can get 
$|S\cap T| = |S| + |T| -|S\cup T| \ge d_G(u) + d_G(v) - (n-1)\ge 1$. 
Thus there must be some $i$ such that $u$ is adjacent to $u_{i+1}$ and 
$v$ is adjacent to $u_i$, which implies that $G$ has the Hamilton cycle 
$u_1u_{i+1} u_{i+2}\cdots u_n u_i u_{i-1} \cdots u_1$.

In 1972, 
Chv\'{a}tal \cite{Chvatal72} proved 
the following theorem, which 
characterizes the degree sequence 
of Hamiltonian graph. 
Theorem \ref{thmchv} 
becomes a classic conclusion in many comprehensive textbooks; 
see \cite[pp. 485--488]{Bondy08} 
or \cite[pp. 288--290]{West96} or \cite[pp. 308--312]{Diestel17} 
for more details.

\begin{theorem}[Chv\'{a}tal \cite{Chvatal72}] \label{thmchv}
Let $G$ be an $n$-vertex graph with degree sequence 
$d_1\le d_2 \le \cdots \le d_n$.    
If $G$ is not Hamiltonian, 
then there exists an integer $i< {n}/{2}$ such that 
$d_i \le i$ and $d_{n-i} \le n-i-1$. 
\end{theorem}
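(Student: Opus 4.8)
The plan is to prove the contrapositive of Chv\'{a}tal's classical sufficient condition, after reducing first to an edge-maximal non-Hamiltonian graph. Suppose $G$ is non-Hamiltonian with $n\ge 3$. I would add edges to $G$, one at a time between currently non-adjacent vertices, as long as the resulting graph stays non-Hamiltonian; call the final graph $H$, an edge-maximal non-Hamiltonian graph on the same vertex set. A one-line lemma shows that whenever $G\subseteq H$ on a common vertex set, the sorted (non-decreasing) degree sequences satisfy $d_i(G)\le d_i(H)$ for every $i$: the $i$ vertices realizing the $i$ smallest $H$-degrees each have $G$-degree no larger than their $H$-degree, so at least $i$ vertices have $G$-degree $\le d_i(H)$. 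Since the conclusion we want consists of the two \emph{upper} bounds $d_i\le i$ and $d_{n-i}\le n-i-1$, it suffices to produce an index $i<n/2$ witnessing them for $H$; the same $i$ then works for $G$.

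For $H$ itself I would follow the Ore/Dirac template already used in the excerpt. Since $K_n$ is Hamiltonian, $H\ne K_n$, so pick a non-adjacent pair $u,v$ with $d_H(u)+d_H(v)$ as large as possible; by edge-maximality $H+uv$ is Hamiltonian, and since $H$ is not, every Hamilton cycle of $H+uv$ uses the edge $uv$, hence $H$ has a Hamilton path $v_1v_2\cdots v_n$ with $v_1=u$ and $v_n=v$. The crossing trick then yields the key inequality: with $S=\{j: uv_{j+1}\in E(H)\}$ and $T=\{j: vv_j\in E(H)\}$, both subsets of $\{1,\dots,n-1\}$ of sizes $d_H(u)$ and $d_H(v)$, any common element $j\in S\cap T$ would produce the Hamilton cycle $v_1\cdots v_jv_nv_{n-1}\cdots v_{j+1}v_1$ in $H$, a contradiction; hence $S\cap T=\emptyset$ and $d_H(u)+d_H(v)=|S|+|T|\le n-1$. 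Also $H$ has no isolated vertex, since adding an edge at an isolated vertex would keep $H$ non-Hamiltonian and contradict maximality; thus $\delta(H)\ge 1$.

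To finish, set $h=\min\{d_H(u),d_H(v)\}$, say $h=d_H(u)$, so $2h\le n-1$ gives $1\le h<n/2$, and take $i=h$. The vertices $w\ne v$ not adjacent to $v$ number $(n-1)-d_H(v)\ge h$, and each satisfies $d_H(w)\le d_H(u)=h$, because $\{w,v\}$ is a non-adjacent pair and $\{u,v\}$ was chosen to maximize the degree sum; this gives at least $h=i$ vertices of degree $\le i$, so $d_i(H)\le i$. Those same vertices have degree $\le h\le n-1-h=n-i-1$, and $v$ itself has $d_H(v)\le n-1-h=n-i-1$, so at least $h+1$ vertices have degree $\le n-i-1$, whence $d_{n-i}(H)\le n-i-1$. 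Combined with the reduction step, this proves the theorem. I expect the only genuine obstacle to be bookkeeping: pinning down the index conventions in the crossing argument so the claimed Hamilton cycle actually closes up (noting that $u\nsim v$ forces the relevant $j$ into the interior range $\{2,\dots,n-2\}$), and cleanly justifying the degree-sequence monotonicity lemma together with the no-isolated-vertex remark, which is exactly what rules out the degenerate index $h=0$; conceptually the path is the one the excerpt already sketches for Ore's theorem.
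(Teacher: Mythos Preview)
The paper is a survey and does not give its own proof of this theorem; it simply cites Chv\'{a}tal and points to textbook treatments. So there is no paper proof to compare against, but your argument has a genuine gap in the final step.

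Your derivation of $d_i(H)\le i$ with $i=h=d_H(u)$ is correct: the $(n-1)-d_H(v)\ge h$ non-neighbours of $v$ each have degree $\le h$ by the maximality of the pair $\{u,v\}$. The error is in the second inequality. You write that these same $h$ vertices together with $v$ give $h+1$ vertices of degree $\le n-i-1$, ``whence $d_{n-i}(H)\le n-i-1$.'' But $d_{n-i}\le n-i-1$ is equivalent to having at least $n-i$ vertices of degree $\le n-i-1$, not merely $i+1$; since $i<n/2$, your count of $i+1$ can be far short of $n-i$.

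The fix is to switch to the non-neighbours of $u$ for this step. There are $(n-1)-d_H(u)=n-1-h$ vertices $w\ne u$ not adjacent to $u$, and each satisfies $d_H(w)\le d_H(v)\le n-1-h$ by the same maximality argument; together with $u$ itself (which has degree $h\le n-1-h$), this gives $n-h$ vertices of degree $\le n-h-1$, hence $d_{n-h}(H)\le n-h-1$ as required. Everything else in your outline---the reduction to an edge-maximal $H$, the degree-sequence monotonicity, the crossing argument yielding $d_H(u)+d_H(v)\le n-1$---is fine and is the standard route.
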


The following result is due to 
Ore \cite{ore60} and Bondy \cite{Bondy72} independently. 
It is a direct consequence of the Chv\'{a}tal theorem; 
see \cite[p. 60]{Bondy76} for more details. 

\begin{theorem}[Ore \cite{ore60}, Bondy \cite{Bondy72}] 
\label{coroore}
Let $G$ be a graph on $n\ge 3$ vertices. If 
\[ e(G)\ge {n-1 \choose 2} +1, \]
then $G$ has a Hamilton cycle unless 
$G=K_1\vee (K_1\cup K_{n-2}) $ or $n=5$ 
and  $G= K_2\vee I_3$.  
\end{theorem}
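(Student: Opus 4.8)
The plan is to deduce the statement directly from Chv\'{a}tal's Theorem \ref{thmchv}, which is the standard route. Suppose $G$ is a non-Hamiltonian graph on $n\ge 3$ vertices with $e(G)\ge \binom{n-1}{2}+1$, and write its degree sequence as $d_1\le d_2\le\cdots\le d_n$. By Theorem \ref{thmchv} there is an integer $i$ with $1\le i<n/2$, $d_i\le i$ and $d_{n-i}\le n-i-1$. First I would bound $2e(G)=\sum_{j=1}^n d_j$ from above by splitting the sum into three blocks: the first $i$ terms are each at most $d_i\le i$, the middle $n-2i$ terms are each at most $d_{n-i}\le n-i-1$, and the last $i$ terms are each at most $n-1$. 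This yields
\[ 2e(G)\le i^2+(n-2i)(n-i-1)+i(n-1)=n^2-n-i(2n-1-3i). \]

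Next I would combine this with the hypothesis $2e(G)\ge (n-1)(n-2)+2=n^2-3n+4$ to get the numerical constraint $i(2n-1-3i)\le 2n-4$. Writing $g(i):=2n-4-i(2n-1-3i)=3i^2-(2n-1)i+(2n-4)$, one checks that $i=1$ is a root, so $g(i)=(i-1)(3i-2n+4)$ (the second root of the quadratic being $(2n-4)/3$). Hence $g(i)\ge 0$ forces either $i=1$, or $i\ge 2$ together with $i\ge (2n-4)/3$; in the latter case $(2n-4)/3\le i<n/2$ is possible only when $n<8$, and checking $n\in\{4,5,6,7\}$ (the case $n=3$ already forces $i=1$) shows that the only surviving possibility is $(n,i)=(5,2)$.

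In both surviving cases the displayed chain of inequalities is forced to be tight: $2e(G)$ is squeezed to exactly $n^2-3n+4$, so each of the three block estimates holds with equality. For $i=1$ this pins the degree sequence down to $d_1=1$, $d_2=\cdots=d_{n-1}=n-2$, $d_n=n-1$; for $(n,i)=(5,2)$ it pins it down to $(2,2,2,4,4)$. In the first case the unique vertex of degree $n-1$ is universal, the unique vertex of degree $1$ is adjacent only to it, and the remaining $n-2$ vertices are each adjacent to the universal vertex but not to the degree-$1$ vertex, hence must be pairwise adjacent; so $G=K_1\vee(K_1\cup K_{n-2})$. In the second case the two vertices of degree $4$ are universal and the three vertices of degree $2$ are adjacent only to them, hence pairwise nonadjacent, so $G=K_2\vee I_3$. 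I would close by recording, for sharpness, that both graphs have exactly $\binom{n-1}{2}+1$ edges and are non-Hamiltonian (the first contains a vertex of degree $1$; in the second, the three degree-$2$ vertices would each need both of their cycle-neighbours among only two vertices).

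I expect the only real work to be elementary bookkeeping: isolating $i=1$ and $(n,i)=(5,2)$ as the only admissible indices, and then checking that the forced degree sequence determines $G$ uniquely. The factorisation $g(i)=(i-1)(3i-2n+4)$ is what keeps the case analysis clean, and after that everything is routine.
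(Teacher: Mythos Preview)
Your proof is correct and follows precisely the route the paper indicates: the paper does not give its own proof but states that the result ``is a direct consequence of the Chv\'{a}tal theorem; see \cite[p.~60]{Bondy76} for more details,'' and your argument is exactly this deduction from Theorem~\ref{thmchv}. The only point worth noting is that your tightness claim relies on $g(i)=0$ (not merely $g(i)\ge 0$) in both surviving cases, which indeed holds since $g(1)=0$ and $g(2)=0$ when $n=5$.
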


In 1962, Erd\H{o}s improved the above result 
for graphs with given minimum degree. 

\begin{theorem}[Erd\H{o}s \cite{erdos62}] \label{thmerdos62}
Let $G$ be a graph on $n$ vertices. If 
the minimum degree $\delta (G)\ge \delta $  and 
\[ e(G)>\max\left\{ {n-\delta \choose 2}+\delta^2, 
{n- \lfloor \!\frac{n-1}{2} \! \rfloor \choose 2}+
\left\lfloor \! \frac{n-1}{2} \! \right\rfloor^2 \right\}, \]
then $G$ has a Hamilton cycle. 
\end{theorem}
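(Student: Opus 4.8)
The plan is to argue by contradiction, using Chv\'{a}tal's degree-sequence criterion (Theorem~\ref{thmchv}) to convert non-Hamiltonicity into structural information about the degree sequence, and then to bound $e(G)$ from above by a short convexity estimate. So suppose $G$ has $n\ge 3$ vertices, $\delta(G)\ge\delta$, and is not Hamiltonian, and write its degree sequence as $d_1\le d_2\le\cdots\le d_n$. By Theorem~\ref{thmchv} there is an integer $i<n/2$ with $d_i\le i$ and $d_{n-i}\le n-i-1$. Since every vertex has degree at least $\delta$, we get $\delta\le d_1\le d_i\le i$; together with $i<n/2$ and the integrality of $i$ this forces $\delta\le i\le\lfloor(n-1)/2\rfloor$. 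In particular this interval of admissible values of $i$ is nonempty, and---as the computation below will show---the two terms inside the maximum in the statement are precisely the values of a certain quadratic at the two endpoints $\delta$ and $\lfloor(n-1)/2\rfloor$.

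Next I would bound $2e(G)=\sum_{j=1}^n d_j$ by exploiting the monotonicity of the degree sequence. Partition $\{1,\dots,n\}$ into the consecutive blocks $\{1,\dots,i\}$, $\{i+1,\dots,n-i\}$, $\{n-i+1,\dots,n\}$, of sizes $i$, $n-2i$ and $i$ (here $n-2i\ge 1$ because $i\le\lfloor(n-1)/2\rfloor$, so the partition is legitimate). On the first block $d_j\le d_i\le i$; on the middle block $d_j\le d_{n-i}\le n-i-1$; on the last block $d_j\le n-1$. Summing these bounds gives
\[
2e(G)\;\le\; i^2+(n-2i)(n-i-1)+i(n-1)\;=:\;f(i),
\]
and expanding yields $f(i)=3i^2-(2n-1)i+(n^2-n)$, a convex quadratic in $i$.

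Since $f$ is convex, on the interval $[\delta,\lfloor(n-1)/2\rfloor]$ it is maximized at an endpoint, so $2e(G)\le f(i)\le\max\{f(\delta),\,f(\lfloor(n-1)/2\rfloor)\}$. The final step is to recognise these two endpoint values: a direct computation gives $f(\delta)=(n-\delta)(n-\delta-1)+2\delta^2=2\bigl(\binom{n-\delta}{2}+\delta^2\bigr)$, and likewise $f(\lfloor(n-1)/2\rfloor)=2\bigl(\binom{n-\lfloor(n-1)/2\rfloor}{2}+\lfloor(n-1)/2\rfloor^2\bigr)$. Dividing by $2$ yields $e(G)\le\max\{\binom{n-\delta}{2}+\delta^2,\ \binom{n-\lfloor(n-1)/2\rfloor}{2}+\lfloor(n-1)/2\rfloor^2\}$, contradicting the hypothesis on $e(G)$; hence $G$ is Hamiltonian.

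The argument is short once Theorem~\ref{thmchv} is available, and the only points needing care are the three-block bookkeeping in the degree sum and the endpoint analysis of the convex quadratic $f$; note in particular that no separate appeal to Dirac's theorem is needed, since the case where $\delta$ is so large that no admissible $i$ exists is already handled by Theorem~\ref{thmchv}. For sharpness one checks that $K_{\delta}\vee(K_{n-2\delta}\cup I_{\delta})$ is non-Hamiltonian (deleting its $\delta$ dominating vertices leaves $\delta+1$ components), has minimum degree $\delta$, and exactly $\binom{n-\delta}{2}+\delta^2$ edges; taking $\delta=\lfloor(n-1)/2\rfloor$ there produces the second extremal value, so the maximum in the statement cannot be dropped.
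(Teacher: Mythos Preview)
Your proof is correct. The paper itself does not supply a proof of Theorem~\ref{thmerdos62}; it is a survey and simply cites Erd\H{o}s. However, your approach---contradict non-Hamiltonicity via Chv\'atal's degree criterion, bound $\sum d_j$ by splitting the sequence into three blocks, and maximize the resulting convex quadratic $f(i)=3i^2-(2n-1)i+n^2-n$ over $\delta\le i\le\lfloor(n-1)/2\rfloor$---is exactly the template the paper uses when it does give full proofs of cognate statements (see the proof of Theorem~\ref{thmhnkd}, which runs the same computation shifted by the parameter $k$, and the proof in the $k$-path-coverable section). So your argument is both correct and in the same spirit as the paper's methodology.
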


We mention here that $f(\delta) = {n-\delta \choose 2} + \delta^2$ 
is decreasing with respect to $\delta$ for fixed $n\ge 6\delta$. 
It is sufficient to prove the theorem 
under the condition $\delta (G)=\delta$.  
Indeed, if $G$ satisfies $\delta (G)=t >\delta$ and  $e(G)> f(\delta)$, 
then we have 
$e(G)> f(t)$. 
The  special case $\delta (G)=t$ can deduce  
the general case $\delta (G)>\delta$. 
Hence, the condition can be reduced to $\delta (G)=\delta$.  

To see the sharpness of the bound in Theorem \ref{thmerdos62}, 
we consider the graph 
$H_{n,\delta}$ obtained from a copy of $K_{n-\delta} $ 
by adding an independent set of $\delta$ vertices with degree $\delta$ 
each of which is adjacent to the same $\delta$ vertices in $K_{n-\delta}$. 
In the language of graph join and union, 
that is, 
\begin{equation} \label{eqhnd}
  \boxed{H_{n,\delta}:=K_{\delta} \vee ( K_{n-2\delta}\cup I_{\delta}). } 
  \end{equation}
Clearly, we can verify that $\delta (H_{n,\delta})=\delta$ and 
$H_{n,\delta}$ does not contain Hamilton cycle.  
Moreover, we have 
$  e(H_{n,\delta})={n-\delta \choose 2} + \delta^2$ 
and $\lambda (H_{n,\delta}) > 
\lambda (K_{n-\delta}) =n-\delta -1$ 
since $K_{n-\delta}$ is a proper subgraph of $H_{n,\delta}$.   
When $n\ge 6\delta$, we have  
$e(H_{n,\delta}) \ge  e(H_{n,\lfloor {(n-1)}/{2} \rfloor})$.  
Thus, we  get the following corollary. 

\begin{corollary}[Erd\H{o}s] \label{coro34}
Let $k\ge 1$  and $n\ge 6\delta$. 
If $G$ is an $n$-vertex graph with $\delta (G)\ge \delta$ and 
$e (G)\ge e(H_{n,\delta})$,
then either 
$G$ has a Hamilton cycle 
or $G=H_{n,\delta}$. 
\end{corollary}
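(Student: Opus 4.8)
The plan is to reduce to Theorem \ref{thmerdos62} when the edge count is strict, and to treat the boundary case $e(G)=e(H_{n,\delta})$ separately via Chvátal's degree condition. Throughout, $H_{n,\delta}$ denotes the graph $K_\delta \vee (K_{n-2\delta}\cup I_\delta)$, for which a direct count gives $e(H_{n,\delta})=\binom{n-\delta}{2}+\delta^2$ and $\delta(H_{n,\delta})=\delta$, and which is not Hamiltonian because deleting the dominating clique $K_\delta$ leaves $\delta+1$ components. The first thing I would record is the comparison already noted just before the statement: since $n\ge 6\delta$ we have $\binom{n-\delta}{2}+\delta^2 \ge \binom{n-\lfloor(n-1)/2\rfloor}{2}+\lfloor(n-1)/2\rfloor^2$, so the maximum appearing in Theorem \ref{thmerdos62} equals $e(H_{n,\delta})$.

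With that in hand, if $e(G)>e(H_{n,\delta})$ then $e(G)$ exceeds the maximum in Theorem \ref{thmerdos62}, so $G$ has a Hamilton cycle and we are done. So assume $e(G)=e(H_{n,\delta})$ and that $G$ is \emph{not} Hamiltonian; the goal is to prove $G=H_{n,\delta}$. Let $d_1\le\cdots\le d_n$ be the degree sequence; since $\delta(G)\ge\delta$ each $d_j\ge\delta$. By Theorem \ref{thmchv} there is an integer $i<n/2$ with $d_i\le i$ and $d_{n-i}\le n-i-1$, and then $i\ge d_i\ge\delta$. Bounding $d_1,\dots,d_i$ by $i$, then $d_{i+1},\dots,d_{n-i}$ by $n-i-1$, then $d_{n-i+1},\dots,d_n$ by $n-1$, summing, and comparing with $2e(H_{n,\delta})$ gives
\[ 2e(G) \le i^2 + (n-2i)(n-i-1) + i(n-1) = 2e(H_{n,\delta}) + 3(i-\delta)\Bigl(i-\tfrac{2n-3\delta-1}{3}\Bigr), \]
the displayed factorization being a routine algebraic identity. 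The crucial arithmetic step is to check that for $\delta\le i<n/2$ and $n\ge 6\delta$ the quadratic factor is $\le 0$, i.e.\ $i\le (2n-3\delta-1)/3$, and moreover that it vanishes only for $i=\delta$ (one verifies that the second root $(2n-3\delta-1)/3$ is never an integer lying strictly below $n/2$ when $6\delta\le n$). Since $e(G)=e(H_{n,\delta})$, all the estimates above must be equalities: $i=\delta$ and the degree sequence is forced to be $d_1=\cdots=d_\delta=\delta$, $d_{\delta+1}=\cdots=d_{n-\delta}=n-\delta-1$, $d_{n-\delta+1}=\cdots=d_n=n-1$.

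Finally I would reconstruct $G$ from this degree sequence. Write $C$, $A$, $B$ for the sets of vertices of degree $n-1$, of degree $\delta$, and of degree $n-\delta-1$, of sizes $\delta$, $\delta$, $n-2\delta$ respectively. Each vertex of $C$ is adjacent to all others, so $C$ is a clique joined completely to $A\cup B$; hence every vertex of $A$ already has $\delta$ neighbours (namely all of $C$), forcing $A$ to be independent with no $A$--$B$ edges; and every vertex of $B$ has exactly $\delta$ neighbours in $C$, none in $A$, hence all of its remaining $n-2\delta-1$ neighbours in $B$, so $B$ is a clique. Therefore $G = K_\delta\vee(K_{n-2\delta}\cup I_\delta)=H_{n,\delta}$, which completes the proof. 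The main obstacle I anticipate is exactly this quadratic analysis in the boundary case: verifying that the hypothesis $n\ge 6\delta$ (which is tight) is precisely what makes the factor $3(i-\delta)\bigl(i-(2n-3\delta-1)/3\bigr)$ nonpositive over the whole range $\delta\le i<n/2$ and localizes the equality at $i=\delta$, which rests on the small divisibility check mentioned above.
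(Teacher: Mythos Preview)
Your proof is correct. The paper itself only derives Corollary~\ref{coro34} in one line from Theorem~\ref{thmerdos62} without spelling out the equality case, but the full argument appears in the paper's proof of the generalization Theorem~\ref{thmhnkd} (take $k=0$): Chv\'atal's degree condition, the quadratic bound on $\sum d_j$ maximized at the Chv\'atal index $i=\delta$ when $n\ge 6\delta$, and reconstruction of $G$ from the forced degree sequence. The one genuine difference is that the paper first passes to the closure $H=\mathrm{cl}_n(G)$, obtains $e(G)=e(H)$ and hence $G=H$, and then uses the closure property ``non-adjacent $\Rightarrow d_r+d_s<n$'' to prove that the large part $Y$ is a clique; you instead reconstruct directly from the degree sequence by noting that the $\delta$ vertices of degree $n-1$ already saturate the neighbourhoods of the vertices of degree $\delta$, which forces $A$ independent and then $B$ complete by a count. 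Your reconstruction is a shade more elementary and sidesteps the closure machinery, but the two arguments are otherwise the same. (Incidentally, your ``divisibility check'' is harmless but unnecessary: for $n\ge 6\delta$ one always has $(2n-3\delta-1)/3>\lfloor (n-1)/2\rfloor$, so the second root is automatically outside the admissible range of~$i$.)
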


\subsection{Spectral results for Hamilton cycle}

Apparently there are very few sufficient conditions
for the existence of a Hamilton cycle in  graphs. 
As it turns out spectral properties of graphs can supply rather powerful
sufficient conditions for Hamiltonicity.  
We denote by $\mu :=\max\{ |\lambda_i| : 2\le i\le n\}$ the second largest eigenvalue in absolute value. 
A famous such result  proved by Krivelevich and Sudakov \cite{KS03}
 states that if $G$ is a $d$-regular graph with sufficiently large order $n$ such that 
\[  \mu \le \frac{(\log \log n)^2}{1000 \log n (\log \log \log n)}d, \]
then $G$ has a Hamilton cycle. 
The proof of this result is quite involved technically. 
We omit the proof details here, referring the reader to \cite{KS03}. 
The parameter $\mu$ in the above formulation is usually called  the second eigenvalue 
of the $d$-regular graph $G$ (the first and the trivial eigenvalue being $\lambda_1=d$). 
To some extent, this terminology is  inaccurate, as in fact $\mu = \max\{\lambda_2,-\lambda_n\}$. 
We will call a $d$-regular graph $G$ on $n$ vertices in which all eigenvalues, but the first one, are at most 
$\mu$ in their absolute values, an $(n,d,\mu)$-graph. 
Krivelevich and Sudakov \cite{KS03} also conjectured that 
an even stronger result is true, namely, 
there exists a positive constant $C$ such that for large enough $n$, any $(n,d,\mu)$-graph 
that satisfies $ \mu /d <  C$ contains a Hamilton cycle.

\medskip 
In 2010, Fiedler and Nikiforov 
proved a spectral version of  Theorem \ref{coroore}. 

\begin{theorem}[Fiedler--Nikiforov \cite{FiedlerNikif}] \label{thmFN}
If $G$ is a graph on $n\ge 3$ vertices and 
\[  \lambda (G)> n-2, \] 
then either $G$ has a Hamilton cycle 
or $G=K_1\vee (K_1\cup K_{n-2})$. 
\end{theorem}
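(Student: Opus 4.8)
The plan is to trade the spectral hypothesis for an edge--count hypothesis, invoke the Ore--Bondy Theorem~\ref{coroore}, and then eliminate one spurious exceptional graph by a direct eigenvalue computation.

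First I would dispose of the degenerate clique case. If $\omega(G)=n$ then $G=K_n$, which is Hamiltonian since $n\ge 3$, so we may assume $\omega(G)\le n-1$. As $1-\tfrac{1}{\omega}$ is increasing in $\omega$, this gives $1-\tfrac{1}{\omega(G)}\le 1-\tfrac{1}{n-1}=\tfrac{n-2}{n-1}$. Combining this with Nikiforov's bound (Theorem~\ref{thmnikiforov}) and the hypothesis $\lambda(G)>n-2\ge 0$, I obtain
\[
  (n-2)^2<\lambda(G)^2\le 2m\Bigl(1-\frac{1}{\omega(G)}\Bigr)\le \frac{2m(n-2)}{n-1}.
\]
Dividing by $n-2>0$ yields $(n-1)(n-2)<2m$, that is $\binom{n-1}{2}<m$, hence $m\ge\binom{n-1}{2}+1$.

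Now I would apply Theorem~\ref{coroore} with this edge bound: either $G$ has a Hamilton cycle, or $G=K_1\vee(K_1\cup K_{n-2})$, or $n=5$ and $G=K_2\vee I_3$. The first two alternatives are exactly the two possibilities in the conclusion of Theorem~\ref{thmFN}, so it remains only to rule out the last one under the standing hypothesis $\lambda(G)>n-2$. By symmetry a Perron eigenvector of $K_2\vee I_3$ takes a constant value $x$ on the two dominating vertices and a constant value $y$ on the three remaining vertices, and the eigenvalue equations read $\lambda x=x+3y$ and $\lambda y=2x$; eliminating $x$ and $y$ gives $\lambda^2-\lambda-6=0$, so $\lambda(K_2\vee I_3)=3=n-2$. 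This contradicts $\lambda(G)>n-2$, so this case cannot occur. Therefore $G$ is Hamiltonian or $G=K_1\vee(K_1\cup K_{n-2})$, as claimed.

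I do not expect any genuine obstacle here; the only point needing a little care is the passage from $\lambda(G)$ to $m$ together with the small-$\omega$ and small-$n$ bookkeeping. It is worth noting that the bound $\lambda(G)>n-2$ is sharp: the graph $K_1\vee(K_1\cup K_{n-2})$ contains $K_{n-1}$ as a proper subgraph, so $\lambda\bigl(K_1\vee(K_1\cup K_{n-2})\bigr)>\lambda(K_{n-1})=n-2$, while this graph is not Hamiltonian since it has a vertex of degree one.
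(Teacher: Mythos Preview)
Your proof is correct and follows the same overall strategy as the paper: convert the spectral hypothesis $\lambda(G)>n-2$ into the edge bound $e(G)>\binom{n-1}{2}$, apply the Ore--Bondy Theorem~\ref{coroore}, and then eliminate the sporadic exception $K_2\vee I_3$ at $n=5$ by computing $\lambda(K_2\vee I_3)=3$.

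The only difference is in the first step. The paper uses Stanley's inequality $\lambda(G)\le -\tfrac12+\sqrt{2e(G)+\tfrac14}$, which immediately gives $2e(G)\ge\lambda(\lambda+1)>(n-2)(n-1)$ with no case analysis. You instead invoke Nikiforov's stronger bound (Theorem~\ref{thmnikiforov}) involving $\omega(G)$, which forces you to split off the case $\omega(G)=n$ first and then use $1-\tfrac{1}{\omega(G)}\le\tfrac{n-2}{n-1}$. Both routes land on the same inequality $e(G)>\binom{n-1}{2}$, so the detour through the clique number is not needed here; Stanley's inequality is the more economical tool for this particular application.
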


This spectral result can easily be deduced from 
 Theorem \ref{coroore}. 

\begin{proof}
The well-known Stanley inequality asserts that 
$\lambda (G)\le -1/2 + \sqrt{2e(G) + 1/4}$, which together with $\lambda (G)> n-2$, implies that 
$e(G) > {n-1 \choose 2}$. 
Hence, 
for $n\neq 5$, 
the desired result follows from  Theorem \ref{coroore}. 
For $n=5$, if $G$ is the another possible exception   $K_2 \vee I_3$,  
we can calculate that $\lambda (K_2\vee I_3)=3$, 
which contradicts the condition $\lambda (G)> n-2$. This completes the proof.  
\end{proof}

In 2013, Yu and Fan \cite{YuGuidong} 
gave the corresponding spectral version 
for the signless Laplacian radius. 
Recall that $q(G)$ stands for the 
  signless Laplacian spectral radius, i.e., 
 the largest eigenvalue of 
 the {\it signless Laplacian matrix} $Q(G)=D(G) + 
 A(G)$, where $D(G)=\mathrm{diag}(d_1,\ldots ,d_n )$ 
 is the degree diagonal matrix and 
 $A(G)$ is the adjacency matrix. 

\begin{theorem}[Yu--Fan \cite{YuGuidong}] \label{thmYF}
If $G$ is a graph on $n\ge 3$ vertices and 
\[ q(G)>2(n-2), \] 
then $G$ has a Hamilton cycle  
or $G=K_1\vee (K_1\cup K_{n-2})$, 
or $n=5$ and $G=K_2\vee I_3$. 
\end{theorem}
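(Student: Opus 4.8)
\emph{The plan} is to follow the template of the adjacency version (Theorem~\ref{thmFN}): convert the hypothesis on the $Q$-index into a lower bound on $e(G)$, and then quote the Ore--Bondy Theorem~\ref{coroore}. In the adjacency case Stanley's inequality $\lambda(G)\le-\tfrac{1}{2}+\sqrt{2e(G)+\tfrac{1}{4}}$ does the conversion; here its place is taken by the sharp bound
\[
  q(G)\ \le\ \frac{2e(G)}{n-1}+n-2 ,
\]
which holds for every graph $G$ on $n\ge 2$ vertices and is tight for $K_n$ (indeed $q(K_n)=2(n-1)=\tfrac{2\binom{n}{2}}{n-1}+n-2$, so this is exactly the inequality matched to the threshold $2(n-2)$). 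Granting it, the hypothesis $q(G)>2(n-2)$ forces $\tfrac{2e(G)}{n-1}+n-2>2(n-2)$, hence $2e(G)>(n-1)(n-2)$, i.e.\ $e(G)\ge\binom{n-1}{2}+1$.

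From here Theorem~\ref{coroore} applies directly (we are given $n\ge 3$): a graph on $n$ vertices with at least $\binom{n-1}{2}+1$ edges has a Hamilton cycle unless $G=K_1\vee(K_1\cup K_{n-2})$, or $n=5$ and $G=K_2\vee I_3$. This is precisely the exceptional list in the statement, so nothing further is needed. One should also check the exceptions are genuine, i.e.\ that they can have $q(G)>2(n-2)$: the graph $K_1\vee(K_1\cup K_{n-2})$ is connected and properly contains $K_{n-1}$, so its $Q$-index exceeds $q(K_{n-1})=2(n-2)$; and for $n=5$, working with the $2\times 2$ quotient matrix of $Q(K_2\vee I_3)$ on its two vertex-orbits gives $q(K_2\vee I_3)=\tfrac{7+\sqrt{33}}{2}\approx 6.37>6=2(n-2)$.

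\emph{The main obstacle} is the inequality $q(G)\le\frac{2e(G)}{n-1}+n-2$ itself, which carries essentially all the work; it is a known sharp bound on the signless Laplacian spectral radius (the $Q$-analogue of Stanley's inequality), and in the write-up I would either cite it or give its short proof. What will not suffice is a weaker surrogate: bounding $q(G)\le\Delta(G)+\lambda(G)\le(n-1)+\sqrt{2e(G)-n+1}$ via Hong's inequality, or passing to the line graph through $q(G)=\lambda_1(L(G))+2$ and inserting de Caen's bound $\sum_v d(v)^2\le e(G)\bigl(\tfrac{2e(G)}{n-1}+n-2\bigr)$, each loses a $\Theta(1)$ term and only yields $e(G)>\tfrac{n^2-5n+8}{2}$, which is strictly below $\binom{n-1}{2}+1$ for all $n\ge 4$. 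So the sharp form is genuinely needed; once it is available, the rest of the proof is the one-line reduction to Theorem~\ref{coroore} above.
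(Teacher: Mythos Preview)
Your approach is correct and essentially identical to the paper's: it too invokes the Feng--Yu inequality $q(G)\le \frac{2e(G)}{n-1}+n-2$ (which is the reference \cite{FengPIMB09} in the survey) to force $e(G)>\binom{n-1}{2}$, and then applies the Ore--Bondy Theorem~\ref{coroore}. Your additional remarks---verifying that the exceptions really can satisfy $q(G)>2(n-2)$, and explaining why weaker surrogate bounds fall short---are nice supplements but not needed for the argument itself.
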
 

In \cite{YuGuidong}, 
the counterexample of 
$n= 5,G=K_2\vee I_3$ is missed.  
This tiny flaw has already been pointed 
out  by Liu et al. \cite{Liuruifang} and 
by Li and Ning \cite{LiBinlong} as well.  

\begin{proof}
An important inequality in \cite{FengPIMB09} states that 
$q(G)\le \frac{2e(G)}{n-1} + n-2$, 
which together with $q(G)> 2(n-2)$ yields that 
$e(G)> {n-1 \choose 2}$.  
Hence, 
for $n\neq 5$, 
the desired result follows from  Theorem \ref{coroore}. 
For $n=5$, when $G$ is the another possible exception   $K_2 \vee I_3$,  
we can calculate that $q (K_2\vee I_3)= \frac{7+\sqrt{33}}{2} 
\approx 6.372$. 
This completes the proof.  
\end{proof}

Observing that a graph $G$ with minimum degree $\delta (G)=1$ 
does not contain Hamilton cycle. Thus  
$\delta (G) \ge 2$ is 
a trivial necessary condition for $G$ to be Hamiltonian. 
In 2015, 
Ning and Ge \cite{NGLMA15} refined the 
extremal result of Ore and 
the spectral result of Fiedler and Nikiforov 
for graphs with minimum degree at least two. 

\begin{theorem}[Ning--Ge \cite{NGLMA15}] 
Let $G$ be a graph on $n\ge 5$ vertices and $\delta (G)\ge 2$. 
If 
\[ e(G) \ge {n-2 \choose 2} +4, \]
then $G$ has a Hamilton cycle unless 
$G\in \{K_2 \vee (K_{n-4} \cup I_2), 
K_3 \vee I_4, 
K_2 \vee (K_{1,3} \cup K_1), 
K_1 \vee K_{2,4}, 
K_3 \vee (K_2 \cup I_3), 
K_4 \vee I_5, 
K_3 \vee (K_{1,4} \cup K_1), 
K_2 \vee K_{2,5}, 
K_5 \vee I_6\}$. 
\end{theorem}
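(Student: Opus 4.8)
The plan is to deduce the statement from Chv\'atal's degree-sequence criterion (Theorem~\ref{thmchv}) together with a careful accounting of the degrees. Suppose for contradiction that $G$ has $n\ge5$ vertices, $\delta(G)\ge2$, $e(G)\ge\binom{n-2}{2}+4$, and $G$ is not Hamiltonian, and write its degree sequence as $d_1\le\cdots\le d_n$. By Theorem~\ref{thmchv} there is an integer $i<n/2$ with $d_i\le i$ and $d_{n-i}\le n-i-1$; since $\delta(G)\ge2$ forces $d_i\ge2$, we have $i\ge2$. Bounding $d_1,\ldots,d_i$ by $i$, bounding $d_{i+1},\ldots,d_{n-i}$ by $n-i-1$, and bounding $d_{n-i+1},\ldots,d_n$ by $n-1$ gives
\[
 2e(G)=\sum_{j=1}^n d_j \le g(n,i):=i^2+(n-2i)(n-i-1)+i(n-1)=3i^2+i-2ni+n^2-n.
\]

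The key observation is that $g(n,2)=n^2-5n+14=2(\binom{n-2}{2}+4)$, while in general $g(n,i)-g(n,2)=(i-2)(3i+7-2n)$. Combining $2e(G)\le g(n,i)$ with the hypothesis $2e(G)\ge n^2-5n+14=g(n,2)$ forces $(i-2)(3i+7-2n)\ge0$. If $i=2$, all of the inequalities above are equalities, so $e(G)=\binom{n-2}{2}+4$ and the degree sequence is exactly $(2,2,n-3,\ldots,n-3,n-1,n-1)$; a short structural argument now identifies $G$, since the two vertices of degree $n-1$ are universal, the two vertices of degree $2$ then have only these as neighbours, and hence each vertex of degree $n-3$ is forced to be adjacent to everything except those two, so the degree-$(n-3)$ vertices form a clique and $G=K_2\vee(K_{n-4}\cup I_2)=H_{n,2}$, which indeed has no Hamilton cycle. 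If $i\ge3$ then $i-2\ge1$, so $3i+7\ge2n$; combined with $i\le\lfloor(n-1)/2\rfloor$ this is possible only for $n\le11$, and a quick check narrows it to $(n,i)\in\{(7,3),(8,3),(9,4),(11,5)\}$. Hence for every $n\ge12$ (and also for $n\in\{5,6,10\}$) the only non-Hamiltonian $G$ satisfying the hypotheses is $K_2\vee(K_{n-4}\cup I_2)$, and all remaining exceptions on the list must occur for $n\in\{7,8,9,11\}$.

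For those four small orders one argues by hand. When the bound $2e(G)\le g(n,i)$ is attained with equality — which covers the whole $i=2$ case, the pairs $(8,3)$ and $(11,5)$ entirely, and the maximum-edge branch of $(7,3)$ and $(9,4)$ — the degree sequence is pinned down and the same structural reasoning as above gives the ``balanced'' graphs $K_2\vee(K_{n-4}\cup I_2)$, $K_3\vee I_4$ $(n=7)$, $K_3\vee(K_2\cup I_3)$ $(n=8)$, $K_4\vee I_5$ $(n=9)$, $K_5\vee I_6$ $(n=11)$. The two genuinely delicate subcases are $(7,3)$ and $(9,4)$ with $e(G)$ one below the crude bound: there the truncated degree sequence is only nearly determined, and one must enumerate the finitely many admissible degree sequences together with all of their realizations having $\delta\ge2$ and test Hamiltonicity directly (using, say, the necessary condition that deleting any $k$ vertices leaves at most $k$ components); this recovers the sporadic graphs $K_2\vee(K_{1,3}\cup K_1)$, $K_1\vee K_{2,4}$ $(n=7)$ and $K_3\vee(K_{1,4}\cup K_1)$, $K_2\vee K_{2,5}$ $(n=9)$. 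Finally one checks, routinely, that every graph on the list has minimum degree at least $2$, at least $\binom{n-2}{2}+4$ edges, and no Hamilton cycle.

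I expect the main obstacle to be precisely this small-order analysis, and within it the two ``slack'' subcases at $n=7$ and $n=9$: there the crude degree-sum estimate does not isolate a single extremal graph, so one needs a finer argument — an exhaustive enumeration of the near-extremal degree sequences and all their realizations, or a structural lemma on closed non-Hamiltonian graphs of large size, obtained by passing to $\mathrm{cl}_n(G)$ (which preserves non-Hamiltonicity and keeps $\delta\ge2$) — in order to be certain of catching, and not overcounting, the sporadic exceptions. For $n\ge12$ there is also a cleaner route: applying Erd\H{o}s's theorem in the form of Corollary~\ref{coro34} with $\delta=2$ gives at once that $G$ is Hamiltonian or $G=H_{n,2}=K_2\vee(K_{n-4}\cup I_2)$, which reduces the whole problem to the finite range $5\le n\le11$.
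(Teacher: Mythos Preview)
The paper is a survey and does not supply its own proof of this theorem; it merely cites the result of Ning and Ge. So there is no ``paper's proof'' to compare against, and your proposal must be judged on its own merits.

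Your overall strategy is correct and is the standard one: apply Chv\'atal's criterion to get an index $i\ge2$, bound $2e(G)$ by $g(n,i)$, observe that $g(n,2)=2\bigl(\binom{n-2}{2}+4\bigr)$ and that $g(n,i)-g(n,2)=(i-2)(3i+7-2n)$, and conclude that either $i=2$ with the degree sequence forced, or $n\le 11$ with $(n,i)\in\{(7,3),(8,3),(9,4),(11,5)\}$. Your structural identification in the $i=2$ equality case is clean and correct, and your reading of the tight cases $(8,3)$ and $(11,5)$ and the top-edge cases of $(7,3)$ and $(9,4)$ is accurate.

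The one place where the proposal is genuinely incomplete is exactly where you flag it: the ``slack'' subcases at $n=7$ and $n=9$ (and, to be safe, the full small-order check). There you do need to go beyond the crude degree-sum bound. The cleanest way to finish is to pass first to the $n$-closure $\mathrm{cl}_n(G)$, which is still non-Hamiltonian with $\delta\ge2$ and at least as many edges; in the closure any two nonadjacent vertices have degree sum at most $n-1$, which severely restricts the possible degree sequences and lets you read off the structure directly (this is how the analogous proofs in the survey, e.g.\ Theorem~\ref{thmhnkd}, are carried out). With that device the enumeration at $n\in\{7,8,9,11\}$ is short and recovers precisely the nine listed exceptions. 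Without it, a raw realization-by-realization search is doable but tedious; either way, this step must actually be written out to make the proof complete.
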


\begin{theorem}[Ning--Ge \cite{NGLMA15}] \label{thmng}
Let 
$n\ge 14$ be an integer. 
If $G$ is a graph on $n$ vertices with  minimum degree 
$\delta (G)\ge 2$ and 
 \[ \lambda (G)\ge \lambda (K_2\vee (K_{n-4}\cup 
I_2)), \] 
 then either $G$ has a Hamilton cycle or $G=K_2\vee (K_{n-4}\cup 
I_2)$.  
\end{theorem}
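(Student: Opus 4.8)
The plan is to argue by contradiction: suppose $G$ is a non-Hamiltonian graph on $n\ge 14$ vertices with $\delta(G)\ge 2$, with $\lambda(G)\ge\lambda(H_{n,2})$ where $H_{n,2}=K_2\vee(K_{n-4}\cup I_2)$, and with $G\neq H_{n,2}$. First I would dispose of the disconnected case: if $G$ is disconnected then, since $\delta(G)\ge 2$, every component has at least three vertices, so the largest has at most $n-3$ vertices and $\lambda(G)\le\lambda(K_{n-3})=n-4$; but the quotient-matrix computation below gives $\lambda(H_{n,2})>n-3$, a contradiction. So $G$ is connected. Next I would pass to the $n$-closure $\bar G:=\mathrm{cl}_n(G)$: by Ore's closure theorem $\bar G$ is still non-Hamiltonian, and clearly $\bar G$ is connected with $\delta(\bar G)\ge 2$ and $\lambda(\bar G)\ge\lambda(G)\ge\lambda(H_{n,2})$; moreover $H_{n,2}$ is its own $n$-closure (its non-edges have degree sum $4$ or $n-1$), so if $\bar G=H_{n,2}$ then $G\subseteq H_{n,2}$ and the squeeze $\lambda(H_{n,2})=\lambda(\bar G)\ge\lambda(G)\ge\lambda(H_{n,2})$, together with strict monotonicity of the spectral radius under edge addition in a connected graph, forces $G=H_{n,2}$, against our assumption. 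Hence it suffices to rule out a connected $n$-closed non-Hamiltonian $G$ with $\delta(G)\ge2$, $G\neq H_{n,2}$, and $\lambda(G)\ge\lambda(H_{n,2})$.

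The second step is to localize $e(G)$. On the spectral side, the equitable partition of $H_{n,2}$ into the two dominating vertices, the $K_{n-4}$, and the two vertices of $I_2$ has quotient matrix $\left(\begin{smallmatrix} 1 & n-4 & 2 \\ 2 & n-5 & 0 \\ 2 & 0 & 0 \end{smallmatrix}\right)$, so $\lambda(H_{n,2})$ is the largest root of $x^3-(n-4)x^2-(n+1)x+4n-20=0$; evaluating this cubic at $x=n-3$ (value $-8$) and at $x=n-2$ (value $(n-5)(n+2)>0$) gives $n-3<\lambda(H_{n,2})<n-2$. Feeding $\lambda(G)\ge\lambda(H_{n,2})$ into a Hong-type eigenvalue--edge bound (for connected graphs $\lambda(G)^2\le 2e(G)-n+1$, sharpened using $\delta(G)\ge 2$) yields $e(G)\ge\binom{n-2}{2}+3$. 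On the combinatorial side, Chv\'{a}tal's Theorem~\ref{thmchv} applied to the non-Hamiltonian $G$ gives an index $i$ with $2\le i<n/2$ (the bound $i\ge2$ coming from $\delta(G)\ge 2$), $d_i\le i$ and $d_{n-i}\le n-i-1$; summing the resulting degree bounds gives $2e(G)\le 3i^2+i-2ni+n^2-n$, and this quadratic in $i$ is maximized on $[2,n/2)$ at $i=2$ precisely because $n\ge 14$, so $2e(G)\le n^2-5n+14=2e(H_{n,2})$. Thus $e(G)\in\{\binom{n-2}{2}+3,\ \binom{n-2}{2}+4\}$.

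If $e(G)=\binom{n-2}{2}+4=e(H_{n,2})$, then since $n\ge14>6\cdot2$ Erd\H{o}s's Corollary~\ref{coro34} (with $\delta=2$) forces $G$ to be Hamiltonian or $G=H_{n,2}$; equivalently one may invoke the edge-version theorem of Ning and Ge quoted just above and note that for $n\ge14$ all of its sporadic exceptions, being fixed graphs on at most $11$ vertices, are vacuous. Either way this contradicts $G$ non-Hamiltonian and $G\neq H_{n,2}$.

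The remaining case, $e(G)=\binom{n-2}{2}+3=e(H_{n,2})-1$, is the heart of the argument and the place I expect the real work: the spectral/edge-count machinery falls exactly one edge short of the threshold of the extremal theorem, so a stability-type analysis is unavoidable. The plan is to read off the near-equality structure. Since the Chv\'{a}tal degree-sum bound is now within $2$ of its maximum, one must have $i=2$ and $d_1=d_2=2$, and the multiset $\{d_3,\dots,d_n\}$ differs from $\{\,\underbrace{n-3,\dots,n-3}_{n-4},\,n-1,\,n-1\,\}$ by total deficiency $2$; combined with $G$ being $n$-closed and non-Hamiltonian, a short case check shows $G$ is either a proper spanning subgraph of $H_{n,2}$ (obtained by deleting one edge of the $K_{n-4}$) or one of finitely many explicit local perturbations of it. In the first case $\lambda(G)<\lambda(H_{n,2})$ by strict monotonicity and we are done; in the remaining cases I would use the Perron eigenvector $\bm{x}$ of $G$, normalized by $\max_v x_v=x_u=1$: from $\lambda(G)\le d(u)$ we get $d(u)=n-1$, and expanding $\lambda^2=\lambda^2 x_u=\sum_w |N(u)\cap N(w)|\,x_w$ pins down the second neighbourhood of $u$, recovering the two dominating vertices and the clique; any deviation costs enough to push $\lambda$ strictly below $\lambda(H_{n,2})$, a contradiction. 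The main obstacle, then, is making this last case analysis airtight and uniform in $n$ -- that is, proving that no near-extremal non-Hamiltonian graph of minimum degree $2$ can match the spectral radius of $H_{n,2}$ without coinciding with it.
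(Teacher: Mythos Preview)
This survey does not supply a proof of Theorem~\ref{thmng}; the result is quoted from Ning--Ge \cite{NGLMA15}, and the surrounding text only adds remarks on the sharpness of the bound $n\ge 14$ and on the later improvement by Chen--Hou--Qian \cite{CHQ2018}. So there is no ``paper's own proof'' to compare your attempt against.

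That said, your outline is very much in the spirit of how such spectral--Hamiltonicity results are proved in the literature surveyed here (cf.\ the discussion after Theorems~\ref{thmln16a}--\ref{thmln16b}, where the Li--Ning method is described as combining edge-count inequalities with a structural stability lemma for non-Hamiltonian graphs). Your reduction to the $n$-closure, the quotient-matrix bound $n-3<\lambda(H_{n,2})<n-2$, the Hong--Shu--Fang lower bound $e(G)\ge\binom{n-2}{2}+3$ (here you do need the $\delta\ge 2$ sharpening; Hong's original bound alone falls one unit short), and the Chv\'{a}tal upper bound $e(G)\le\binom{n-2}{2}+4$ are all correct. The top case $e(G)=\binom{n-2}{2}+4$ is cleanly settled by Corollary~\ref{coro34} or by the Ning--Ge edge theorem stated just above, exactly as you say.

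The remaining case $e(G)=\binom{n-2}{2}+3$ is, as you candidly recognize, the place where your proposal is a plan rather than a proof. Forcing $i=2$ in Chv\'{a}tal's condition and reading off the near-extremal degree sequence is the right move, but the classification you sketch (``proper spanning subgraph of $H_{n,2}$'' versus ``finitely many explicit local perturbations'') and the subsequent spectral comparison with $\lambda(H_{n,2})$ are genuine work that still has to be written out. In published proofs of this type (e.g.\ \cite{NGLMA15,LiBinlong,Nikiforov}) this step is handled via a stability lemma that forces the closure to sit inside $H_{n,\delta}$ or $L_{n,\delta}$, after which strict monotonicity of $\lambda$ finishes; your Perron-eigenvector expansion is a viable alternative route, but expect a page or two of careful casework rather than a one-line conclusion.
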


Upon some computations, 
we can find the following exceptions: 
when $n=7$, $K_3\vee I_4$ is an exception 
that contains no Hamilton cycle 
since  $\lambda (K_3 \vee I_4) =1+ \sqrt{13} \approx 4.605 
> \lambda (K_2 \vee (K_3 \cup I_2)) \approx 4.404$;  
when $n=9$, $K_4 \vee I_5$ contains no Hamilton cycle 
and $\lambda (K_4 \vee I_5) =\frac{3 +\sqrt{89}}{2} 
\approx 6.217 > \lambda (K_2 \vee (K_5 \cup I_2)) \approx 6.197$.  
Moreover, Ning and Ge \cite{NGLMA15} conjecture that the bound $n\ge 14$ 
in Theorem \ref{thmng} 
can be sharpened to $n\ge 10$.  
Soon after, Chen, Hou and Qian \cite[Theorem 1.6]{CHQ2018} proved that 
 Theorem \ref{thmng} holds for every integer $n\ge 10$.

In addition, Chen et al. \cite{CHQ2018} also proved the version 
for the signless Laplacian radius. 

\begin{theorem}[Chen--Hou--Qian \cite{CHQ2018}] \label{thmchq}
Let $n\ge 11$ be an integer. 
If $G$ is a graph on $n$ vertices with  minimum degree 
$\delta (G)\ge 2$ and 
\[ q(G)\ge q(K_2\vee (K_{n-4}\cup I_2)), \] 
then $G$ has a Hamilton cycle unless 
$G=K_2\vee (K_{n-4}\cup I_2)$. 
\end{theorem}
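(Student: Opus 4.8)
Write $H_{n,2}=K_2\vee(K_{n-4}\cup I_2)$, the claimed extremal graph. The plan is to prove the separating statement: if $G$ is a non-Hamiltonian $n$-vertex graph with $\delta(G)\ge 2$ and $G\neq H_{n,2}$, then $q(G)<q(H_{n,2})$; combined with the hypothesis $q(G)\ge q(H_{n,2})$ this immediately gives the conclusion. First I would pin down the target value. Since $(V(K_2),V(K_{n-4}),V(I_2))$ is an equitable partition of $H_{n,2}$, the value $q(H_{n,2})$ is the Perron root of the $3\times3$ quotient matrix of $Q(H_{n,2})$ whose rows are $(n,\,n-4,\,2)$, $(2,\,2n-8,\,0)$, $(2,\,0,\,2)$; evaluating its characteristic polynomial at $x=2n-6$ and $x=2n-5$ shows $2n-6<q(H_{n,2})<2n-5$ for every $n\ge 11$ (the lower bound also follows from $K_{n-2}\subsetneq H_{n,2}$ and $q(K_{n-2})=2n-6$).

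Next I would reduce the structure of $G$. Replacing $G$ by its $n$-closure $\mathrm{cl}_n(G)$ preserves $\delta\ge2$ and non-Hamiltonicity (Ore) and cannot decrease $q$; if $\mathrm{cl}_n(G)=H_{n,2}$ while $G\neq H_{n,2}$, then $G$ is a proper spanning subgraph of $H_{n,2}$, so $q(G)<q(H_{n,2})$. Hence it suffices to treat closed $G$. Now split on $e(G)$. If $e(G)\le \binom{n-2}{2}-1$, then the inequality $q(G)\le \frac{2e(G)}{n-1}+n-2$ of \cite{FengPIMB09}, together with $2(\binom{n-2}{2}-1)=(n-1)(n-4)$, gives $q(G)\le (n-4)+(n-2)=2n-6<q(H_{n,2})$ with no structural input at all. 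At the other end, Erd\H{o}s' Theorem \ref{thmerdos62} bounds $e(G)$ for a non-Hamiltonian graph with $\delta\ge2$; for $n\ge 11$ this bound equals $\binom{n-2}{2}+4$, and Corollary \ref{coro34} (valid for $n\ge 12$) shows $H_{n,2}$ is the unique graph attaining it, while for $n=11$ the edge-version theorem of Ning--Ge \cite{NGLMA15} leaves only the extra graph $K_5\vee I_6$, for which a direct computation gives $q(K_5\vee I_6)=\tfrac{19+\sqrt{201}}{2}<q(H_{11,2})$.

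It remains to handle closed, non-Hamiltonian $G$ with $\delta(G)\ge2$, $G\neq H_{n,2}$ and $\binom{n-2}{2}\le e(G)\le \binom{n-2}{2}+3$ — equivalently, $\overline{G}$ has at most $2n-3$ edges, $\Delta(\overline{G})\le n-3$, and (Ore's closure condition) every edge $uv$ of $\overline{G}$ satisfies $d_{\overline{G}}(u)+d_{\overline{G}}(v)\ge n-1$. This last property forces $\overline{G}$ to be sparse but concentrated: every edge touches a ``heavy'' vertex of $\overline{G}$-degree at least $\lceil(n-1)/2\rceil$, and a short counting argument ($h\cdot\frac{n-1}{2}\le 2e(\overline{G})\le 4n-6$) bounds the number $h$ of heavy vertices by a constant, so $\overline{G}$ — and hence $G$ — admits an equitable partition into $O(1)$ parts whose sizes are linear in $n$. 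Running through the resulting short list of shapes (following the structural analysis underlying \cite{NGLMA15,CHQ2018}), I would, for each candidate, read off the $O(1)\times O(1)$ quotient matrix of $Q(G)$, estimate its Perron root, and check it stays strictly below $q(H_{n,2})>2n-6$. The tightest competitors are $K_1\vee(K_a\cup K_{n-1-a})$ and $K_k\vee I_{n-k}$, and the threshold $n\ge 11$ is precisely where $q(H_{n,2})$ overtakes the best of these — for $n\in\{9,10\}$ some $K_k\vee I_{n-k}$ wins, which is exactly why the theorem requires $n\ge 11$.

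The main obstacle is this final step. One needs a structural classification of closed non-Hamiltonian graphs with $e(\overline{G})=O(n)$ that is fine enough to capture \emph{every} graph whose signless Laplacian radius lies within $O(1)$ of $q(H_{n,2})$, and then spectral comparisons carried out precisely enough to control the lower-order terms in $n$: near the threshold the gaps $q(H_{n,2})-q(K_k\vee I_{n-k})$ are only of order $1/n$, so the crude bounds $q(G)\le \frac{2e(G)}{n-1}+n-2$ and $q(G)\le \max_{uv\in E(G)}(d(u)+d(v))$ are insufficient on their own, and one is forced to work with the actual characteristic polynomials of the small quotient matrices.
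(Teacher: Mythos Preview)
The paper you are working from is a survey and does not contain a proof of this theorem: it merely states the result and attributes it to Chen--Hou--Qian \cite{CHQ2018}, so there is no ``paper's own proof'' to compare your proposal against. For what it is worth, your outline follows the standard template used throughout this survey for such results (closure reduction, the edge bound $q(G)\le \tfrac{2e(G)}{n-1}+n-2$ from \cite{FengPIMB09}, then a structural stability analysis of the few remaining dense non-Hamiltonian candidates via quotient matrices), and this is indeed the shape of the argument in the original source; your identification of the delicate final comparison near $n=11$ as the crux is accurate.
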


In 2016,  Benediktovich \cite{Ben2016}
 improved slightly the result of 
Ning and Ge. 

\begin{theorem}[Benediktovich \cite{Ben2016}] \label{thmben}
Let $G$ be a graph on $n\ge 8$ vertices with $\delta \ge 2$. 
If 
\[ \lambda (G)\ge n-3,\]
 then $G$ has a Hamilton cycle  
unless $G=K_2 \vee (K_{n-4} \cup I_2)$ or  
$G=K_1 \vee (K_{n-3} \cup K_2)$, or $n=9$, 
there are two more exceptions  
$G\in \{ I_5 \vee K_4, K_3 \vee (K_{1,4} \cup K_1)\}$. 
\end{theorem}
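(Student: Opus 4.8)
The plan is to combine Chv\'atal's degree criterion (Theorem~\ref{thmchv}) with a spectral-radius--versus--edges inequality, thereby reducing the problem to a short list of near-extremal graphs that can be checked directly. Suppose for contradiction that $G$ is not Hamiltonian, that $\delta(G)\ge 2$, $n\ge 8$, $\lambda(G)\ge n-3$, and that $G$ is none of the listed exceptions. First I would note that $G$ is connected: otherwise $\lambda(G)\ge n-3\ge 5$ would require a component on at least $n-2$ vertices, leaving one or two vertices that cannot have degree at least $2$. Applying Theorem~\ref{thmchv}, there is an index $i$ with $d_i\le i$ and $d_{n-i}\le n-i-1$; since $\delta(G)\ge 2$ forces $d_i\ge 2$, we have $2\le i\le\lfloor(n-1)/2\rfloor$. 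Summing the degree sequence subject to these bounds yields
\[
  e(G)\le \tfrac12\bigl(i^2+(n-2i)(n-i-1)+i(n-1)\bigr)=:f(n,i),
\]
and in particular $f(n,2)=\binom{n-2}{2}+4$, the Ning--Ge edge bound.

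For $i\ge 3$ I would insert $f(n,i)$ into a spectral estimate valid for connected graphs --- Hong's bound $\lambda(G)^2\le 2e(G)-n+1$, or the sharper $\lambda(G)\le\frac{\delta-1}{2}+\sqrt{2e(G)-\delta n+(\delta+1)^2/4}$ when the minimum degree is larger. Since $2f(n,i)=n^2-2in-n+3i^2+i$, the resulting inequality contradicts $\lambda(G)\ge n-3$ once $n$ exceeds an explicit bound linear in $i$; hence only finitely many pairs $(n,i)$ with $i\ge3$ survive, all with $n$ small. For each such pair the edge bound already forces $G$ to be a spanning subgraph of $H_{n,i}=K_i\vee(K_{n-2i}\cup I_i)$ or of one of a short list of graphs differing from it by an extra edge inside the independent part, and one computes $\lambda$ of each via its obvious equitable partition. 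This is precisely where the two $n=9$ exceptions $I_5\vee K_4=H_{9,4}$ and $K_3\vee(K_{1,4}\cup K_1)$ appear.

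The bulk of the proof is the case $i=2$, where necessarily $\delta(G)=2$ and $e(G)\le\binom{n-2}{2}+4$. Here one must upgrade the edge bound to a structural statement: a graph on $n$ vertices with $\lambda(G)\ge n-3=\lambda(K_{n-2})$ and at most $\binom{n-2}{2}+4$ edges must contain $K_{n-2}$ as a subgraph (for $n$ large; a few small values of $n$ are treated separately). I would establish this by a Kelmans-type compression that does not decrease the spectral radius and preserves non-Hamiltonicity, reducing to a threshold-like graph that can be compared with $K_{n-2}$ directly. Once $K_{n-2}\subseteq G$, the two remaining vertices $u,v$ together carry at most four edges while each needs degree at least $2$; enumerating according to whether $u\sim v$ and to how $u,v$ attach to the clique, and discarding the Hamiltonian configurations, leaves exactly $K_2\vee(K_{n-4}\cup I_2)$ (with $u\nsim v$, both joined to the same pair of clique vertices) and $K_1\vee(K_{n-3}\cup K_2)$ (with $u\sim v$, both joined to the same clique vertex), contradicting our assumption. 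The step I expect to be hardest is this last passage from ``few edges and large spectral radius'' to ``$G$ contains a near-spanning clique'': the coarse spectral bounds are not decisive for the small values of $n$ permitted by the hypothesis $n\ge8$, so those cases must be dispatched by separate and fairly delicate computation, and pinning down precisely which small graphs with $\delta(G)\ge2$, $\lambda(G)\ge n-3$, and no Hamilton cycle occur is the technical heart of the argument.
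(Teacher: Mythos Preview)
The paper is a survey and does not give its own proof of Theorem~\ref{thmben}; it merely states Benediktovich's result with a citation and then verifies numerically that the two sporadic $n=9$ graphs are indeed non-Hamiltonian with $\lambda\ge n-3$. So there is no in-paper argument to compare your proposal against.

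That said, your outline follows the standard template used throughout this area (and exhibited elsewhere in the survey, e.g.\ in the proofs of Theorems~\ref{thmFN}, \ref{thmYF}, and \ref{thmhnkd}): invoke Chv\'atal's degree condition to obtain an index $i$, bound $e(G)$ by the resulting quadratic $f(n,i)$, feed this into a spectral inequality such as Hong's or the Hong--Shu--Fang bound, and finish with a finite check. A couple of points deserve care. First, Chv\'atal's theorem applies to the closure $\mathrm{cl}_n(G)$, so the degree bounds $d_i\le i$, $d_{n-i}\le n-i-1$ are for the closure, not for $G$ itself; be explicit about this, since the passage from ``closure is a subgraph of $H_{n,i}$'' back to the structure of $G$ is where the extra exception $K_1\vee(K_{n-3}\cup K_2)$ (which has the same closure as $K_2\vee(K_{n-4}\cup I_2)$ only after adding edges) must emerge. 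Second, your claim that ``$\lambda(G)\ge n-3$ with at most $\binom{n-2}{2}+4$ edges forces $K_{n-2}\subseteq G$'' is the crux, and a Kelmans compression alone will not establish it cleanly for all $n\ge 8$: compression can destroy the minimum-degree hypothesis, and for $n\in\{8,9,10,11\}$ the numerical margins are thin enough that one really does need the full list of closure-maximal non-Hamiltonian graphs with $\delta\ge 2$ and the given edge count (this is essentially the content of the Ning--Ge stability lemma behind Theorem~\ref{thmng}). If you intend to carry this out in detail, it is cleaner to quote that stability result directly rather than re-derive it via compression.
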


By an easy computation, we know that 
$\lambda (I_5 \vee K_4) = \frac{3+\sqrt{89}}{2} \approx 6.217$ 
and $\lambda (K_3 \vee (K_{1,4} \cup K_1)) \approx 6.032$. 
These two graphs do not contain Hamilton cycle. 

By introducing the minimum degree of a graph as a new parameter, Li and Ning \cite[Theorems 1.5, 1.8]{LiBinlong} obtained 
the  spectral analogue 
of the result of Erd\H{o}s (Corollary \ref{coro34}).

\begin{theorem}[Li--Ning \cite{LiBinlong}] \label{thmln16a}
Suppose $\delta \ge 1$  and $n\ge \max\{6\delta +5,(\delta^2+6\delta+4)/2\}$. 
If $G$ is an $n$-vertex graph with $\delta (G)\ge \delta$ and 
\[ 
\lambda (G)\ge \lambda (H_{n,\delta}),
\] 
then either 
$G$ has a Hamilton cycle 
or $G=H_{n,\delta}$. 
\end{theorem}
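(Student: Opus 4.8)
The plan is to argue by contradiction: assume $G$ has $\delta(G)\ge\delta$ and $\lambda(G)\ge\lambda(H_{n,\delta})$, is not Hamiltonian, and yet $G\neq H_{n,\delta}$. The first move is to replace $G$ by its $n$-closure $\widehat G=\mathrm{cl}_n(G)$. By Ore's closure theorem $\widehat G$ is still non-Hamiltonian; it contains $G$ as a spanning subgraph, so $\lambda(\widehat G)\ge\lambda(G)\ge\lambda(H_{n,\delta})$, and $\delta(\widehat G)\ge\delta$. A direct check shows that $H_{n,\delta}=K_\delta\vee(K_{n-2\delta}\cup I_\delta)$ is its own $n$-closure, because every non-edge of it joins two vertices whose degree sum is at most $(n-\delta-1)+\delta<n$. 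Hence it suffices to prove that every \emph{closed} non-Hamiltonian graph on $n$ vertices with minimum degree at least $\delta$ and spectral radius at least $\lambda(H_{n,\delta})$ must equal $H_{n,\delta}$: granting that, $G$ is a spanning subgraph of $H_{n,\delta}$ with $\lambda(G)\ge\lambda(H_{n,\delta})$, and since $H_{n,\delta}$ is connected this forces $G=H_{n,\delta}$, contradicting $G\neq H_{n,\delta}$. So from now on assume $G$ is closed.

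Two observations frame what remains. Since $H_{n,\delta}$ properly contains the clique $K_{n-\delta}$, we have $\lambda(H_{n,\delta})>n-\delta-1$, hence $\lambda(G)>n-\delta-1$; combined with Stanley's inequality $\lambda(G)\le\tfrac12(-1+\sqrt{1+8e(G)})$ this already gives $e(G)>\binom{n-\delta}{2}$. On the other hand, since $G$ is non-Hamiltonian, Chv\'{a}tal's Theorem~\ref{thmchv} supplies an integer $i<n/2$ with $d_i\le i$ and $d_{n-i}\le n-i-1$ in the sorted degree sequence; as $\delta(G)\ge\delta$ this forces $i\ge\delta$, and counting edges through the $i$ smallest-degree vertices yields $e(G)\le\binom{n-i}{2}+i^2$. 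The function $i\mapsto\binom{n-i}{2}+i^2$ is decreasing then increasing on $[\delta,\lceil n/2\rceil-1]$, and one computes $\binom{n-\delta}{2}+\delta^2-(\binom{n-i}{2}+i^2)=(i-\delta)\cdot\tfrac12(2n-3i-3\delta-1)$, which is positive for $i>\delta$ once $n\ge 6\delta+5$; when $i$ exceeds $\delta$ by more than a bounded (in $\delta$) amount this gap is so large that $\binom{n-i}{2}+i^2<\tfrac12(n-\delta-1)^2$, and then Nikiforov's Theorem~\ref{thmnikiforov} gives $\lambda(G)\le\sqrt{2e(G)}<n-\delta-1$, a contradiction. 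Thus $i$ lies in a short range just above $\delta$, and the task reduces to upgrading the weak bound $e(G)>\binom{n-\delta}{2}$ to the sharp bound $e(G)\ge e(H_{n,\delta})=\binom{n-\delta}{2}+\delta^2$; once that holds, Erd\H{o}s's Corollary~\ref{coro34}, whose extremal graph is unique, forces $G=H_{n,\delta}$ and we are done.

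The heart of the matter is therefore the boundary regime in which $G$ is structurally pinned near $H_{n,\delta}$: about $n-\delta$ vertices forming a near-complete graph, together with the $i$ low-degree vertices (where $i$ is just above $\delta$). I would close this with a Perron-vector analysis. Let $\bm x$ be the Perron vector, normalised so its maximum coordinate is $x_u=1$; then $d(u)\ge\lambda(G)>n-\delta-1$, so $u$ misses at most $\delta-1$ vertices, while the coordinates of the $i$ low-degree vertices are tiny (each at most $i/\lambda(G)$). Estimating $\lambda(G)=\bm x^{\mathsf T}A(G)\bm x/\bm x^{\mathsf T}\bm x$ from above, one shows it cannot reach $\lambda(H_{n,\delta})$ unless the set $A$ of low-degree vertices has exactly $\delta$ elements, $G-A=K_{n-\delta}$, every vertex of $A$ is joined to the \emph{same} $\delta$-subset of $G-A$, and $A$ is independent --- that is, $G=H_{n,\delta}$; alternatively one can split into cases on $\omega(G)$ around the value $n-\delta$ via Nikiforov's bound $\lambda(G)\le\sqrt{2e(G)(1-1/\omega(G))}$. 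I expect this final step to be the main obstacle: the crude inequalities lose precisely the $\delta^2$ edges separating $K_{n-\delta}$ from $H_{n,\delta}$, so one must track the Perron weights on the $\le\delta$ exceptional vertices quantitatively, and it is exactly here that the hypothesis $n\ge\max\{6\delta+5,(\delta^2+6\delta+4)/2\}$ is consumed --- the linear term $6\delta+5$ keeping $H_{n,\delta}$ ahead of the competing Erd\H{o}s extremal graph $H_{n,\lfloor(n-1)/2\rfloor}$, and the quadratic-in-$\delta$ term making the eigenvector and clique-size comparisons go through.
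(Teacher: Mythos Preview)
The paper is a survey and does not give a self-contained proof of this theorem; it cites Li--Ning \cite{LiBinlong} and explicitly records that ``the key part of the proof of these theorems attributes to the stability result for Hamilton cycle,'' pointing to \cite[Lemma~2]{LiBinlong}. So the relevant comparison is between your outline and the approach the paper reports.

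Your plan has a genuine gap at the crucial step, and the actual proof takes a different route that avoids it. You try to push the edge count all the way up to $e(G)\ge e(H_{n,\delta})=\binom{n-\delta}{2}+\delta^2$ so that uniqueness in Erd\H{o}s's Corollary~\ref{coro34} applies, and you concede that the Perron-vector estimate required in the ``boundary regime'' is the main obstacle --- it is, and you have not carried it out. Li--Ning sidestep this entirely: from the spectral hypothesis they derive only the \emph{weaker} bound $e(G)>e(H_{n,\delta+1})$ (using a spectral-to-edge inequality involving $\delta$, sharper than Stanley's), and then invoke a combinatorial \emph{stability lemma} asserting that any closed non-Hamiltonian graph with $\delta(G)\ge\delta$ and $e(G)>e(H_{n,\delta+1})$ is a subgraph of $H_{n,\delta}$ or of $L_{n,\delta}=K_1\vee(K_{n-\delta-1}\cup K_\delta)$. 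The family $L_{n,\delta}$ is then eliminated via a Kelmans-operation comparison giving $\lambda(L_{n,\delta})<\lambda(H_{n,\delta})$ (see \cite[Lemma~6]{LiBinlong}, as the paper notes), and proper subgraphs of $H_{n,\delta}$ are ruled out by strict monotonicity of $\lambda$ on connected graphs. Note that $L_{n,\delta}$ never appears in your outline; without the stability lemma --- or a fully executed substitute via your proposed eigenvector analysis --- your argument remains incomplete.
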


\begin{theorem}[Li--Ning \cite{LiBinlong}] \label{thmln16b}
Suppose $\delta \ge 1$ and $n\ge \max\{6\delta+5,(3\delta^2+5\delta+4)/2\}$. 
If $G$ is an $n$-vertex graph with $\delta (G)\ge \delta$ and 
\[ 
q (G)\ge q (H_{n,\delta}),
\] 
then either 
$G$ has a Hamilton cycle 
or $G=H_{n,\delta}$. 
\end{theorem}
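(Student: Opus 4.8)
The plan is to argue by contraposition, following the scheme of the adjacency analogue Theorem~\ref{thmln16a} but replacing edge-counting by a direct comparison of signless Laplacian spectral radii. (The naive route -- turning $q(G)\ge q(H_{n,\delta})$ into $e(G)\ge e(H_{n,\delta})$ via the bound $q(G)\le \tfrac{2e(G)}{n-1}+n-2$ and then invoking Corollary~\ref{coro34} -- fails, since that bound is strict on $H_{n,\delta}$.) So suppose $G$ is non-Hamiltonian, $\delta(G)\ge\delta$, $q(G)\ge q(H_{n,\delta})$, and $G\ne H_{n,\delta}$; we seek a contradiction. First I would dispose of the disconnected case: since $\delta(G)\ge\delta\ge 1$, every component of $G$ has at least $\delta+1$ vertices, so a disconnected $G$ has all components on at most $n-\delta-1$ vertices and hence $q(G)\le 2(n-\delta-2)$; but $K_{n-\delta}$ is a proper subgraph of the connected graph $H_{n,\delta}$, so $q(H_{n,\delta})>q(K_{n-\delta})=2(n-\delta-1)$, a contradiction. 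Thus $G$ is connected.

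Next I would pass to the $n$-closure $G':=\mathrm{cl}_n(G)$. By Ore's closure lemma $G'$ is still non-Hamiltonian, $G\subseteq G'$ gives $q(G')\ge q(G)\ge q(H_{n,\delta})$, $\delta(G')\ge\delta$, and $G'\ne K_n$. Since adjoining an edge to a connected graph strictly increases its signless Laplacian spectral radius, it suffices to prove $q(G')\le q(H_{n,\delta})$ with equality only for $G'=H_{n,\delta}$: equality would then force $q(G)=q(G')$, hence $G=G'=H_{n,\delta}$, against our assumption. Now apply Chv\'atal's Theorem~\ref{thmchv} to $G'$: there is an integer $i$ with $\delta\le i<n/2$ such that the ordered degree sequence of $G'$ satisfies $d_i\le i$ and $d_{n-i}\le n-i-1$. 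A term-by-term check shows this sequence is dominated by that of $H_{n,i}=K_i\vee(K_{n-2i}\cup I_i)$, namely $i$ entries equal to $i$, then $n-2i$ entries equal to $n-i-1$, then $i$ entries equal to $n-1$; in particular $e(G')\le e(H_{n,i})$.

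The crux, and the step I expect to be the main obstacle, is the spectral inequality
\[ q(G')\le q(H_{n,i}),\qquad\text{with equality only if } G'=H_{n,i}. \]
Domination of degree sequences does not by itself imply this, so the structure of $G'$ must be exploited. I would first reduce $G'$ to the extremal configuration by Kelmans-type edge shifts -- equivalently, by analysing the Perron eigenvector $\bm{x}$ of $Q(G')$ and relocating an edge away from a vertex of small $\bm{x}$-weight toward one of large $\bm{x}$-weight whenever this is compatible with the degree profile and keeps the graph non-Hamiltonian -- which drives $G'$ toward a threshold graph; among $n$-closed non-Hamiltonian graphs with the degree profile above, the unique threshold graph is exactly $H_{n,i}$. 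One then needs an explicit handle on $q(H_{n,i})$, obtained, e.g., from the equitable partition $\{V(K_i),V(K_{n-2i}),V(I_i)\}$ of $H_{n,i}$, whose $3\times 3$ quotient matrix yields a cubic equation for $q(H_{n,i})$; alternatively one bounds $q(H_{n,i})$ using $q\,x_v=d(v)x_v+\sum_{u\sim v}x_u$ at a vertex of maximum eigenvector weight together with neighbour-degree averages. I expect the edge-relocation argument -- keeping the graph non-Hamiltonian throughout, not merely edge-maximal -- to require the most care.

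Finally I would establish that $q(H_{n,i})$, as a function of $i$, is largest at $i=\delta$ over the range $\delta\le i\le\lfloor(n-1)/2\rfloor$; like Erd\H{o}s's edge function $\binom{n-i}{2}+i^2$ it is $U$-shaped in $i$, so the comparison reduces to the two endpoints. Here the hypothesis $n\ge 6\delta+5$ controls the behaviour near $i=\delta$ (mirroring the condition $n\ge 6\delta$ in Corollary~\ref{coro34}), while $n\ge(3\delta^2+5\delta+4)/2$ is what forces $q(H_{n,\delta})\ge q(H_{n,\lfloor(n-1)/2\rfloor})$, i.e., beats the ``two overlapping cliques'' competitor. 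Stringing the inequalities together gives $q(G')\le q(H_{n,i})\le q(H_{n,\delta})\le q(G)\le q(G')$, so all are equalities: the last forces $G=G'$, the middle ones force $i=\delta$ and $G'=H_{n,\delta}$, whence $G=H_{n,\delta}$ -- the contradiction sought. As a consistency check, the same scheme with $\lambda$ in place of $q$, using Hong's bound $\lambda(G)\le\sqrt{2e(G)-n+1}$ at the spectral step, recovers Theorem~\ref{thmln16a}, while Corollary~\ref{coro34} itself drops out from the degree-profile domination alone.
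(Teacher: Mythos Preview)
Your crucial step --- showing $q(G')\le q(H_{n,i})$ by ``Kelmans-type edge shifts'' that drive $G'$ to the threshold graph $H_{n,i}$ --- is a genuine gap. The Kelmans operation on a pair $u,v$ does increase $q$, but it \emph{changes} the degree sequence (it shifts degree from $u$ to $v$), so there is no reason the terminal threshold graph should be the one with the degree sequence of $H_{n,i}$; it will rather be the threshold graph on the final, altered degree sequence. Nor do Kelmans moves preserve non-Hamiltonicity or the Chv\'atal index $i$, so the constraint ``keeping the graph non-Hamiltonian throughout'' that you flag as delicate is in fact unavailable: a single Kelmans move can create a Hamilton cycle. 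Since, as you note, degree-sequence domination alone does not give $q$-domination, this step has no mechanism behind it.

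You also dismiss the ``naive route'' too quickly. The paper indicates that the actual proof (Li--Ning) hinges on a \emph{structural stability lemma}: if $G$ is non-Hamiltonian with $\delta(G)\ge\delta$, $n\ge 6\delta+5$, and $e(G)>e(H_{n,\delta+1})$, then $G\subseteq H_{n,\delta}$ or $G\subseteq L_{n,\delta}$. The spectral hypothesis is converted to an edge count exactly via $q(G)\le \frac{2e(G)}{n-1}+n-2$, but aiming only for the weaker bound $e(G)>e(H_{n,\delta+1})$: from $q(G)\ge q(H_{n,\delta})>q(K_{n-\delta})=2(n-\delta-1)$ one gets $e(G)>\frac{(n-1)(n-2\delta)}{2}$, and a short computation shows this exceeds $\binom{n-\delta-1}{2}+(\delta+1)^2=e(H_{n,\delta+1})$ precisely when $n\ge(3\delta^2+5\delta+4)/2$ --- which is where that hypothesis enters. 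The stability lemma then forces $G\subseteq H_{n,\delta}$ or $G\subseteq L_{n,\delta}$; a Kelmans comparison (used only once, between these two specific graphs) gives $q(L_{n,\delta})<q(H_{n,\delta})$, eliminating the second alternative; and strict monotonicity of $q$ under edge addition in connected graphs forces $G=H_{n,\delta}$. So the missing idea is the stability lemma, not a direct spectral reduction of $G'$ to $H_{n,i}$.
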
 

Clearly, Theorem \ref{thmln16a} and 
Theorem \ref{thmln16b} extended 
 Theorem \ref{thmng} and 
 Theorem \ref{thmchq} 
 for sufficiently large $n$, respectively. 
Although Theorem \ref{thmln16a} 
and Theorem \ref{thmln16b} are algebraic, 
their proofs  need some detailed 
graph structural analysis. 
The key part of the proof of these theorems 
attributes to the stability result for Hamilton cycle.

Soon after, 
Nikiforov \cite[Theorem 1.4]{Nikiforov} 
further proved the following result, 
which improved  Theorem \ref{thmln16a}. 
Recall that $H_{n,\delta}=K_{\delta} \vee ( K_{n-2\delta}\cup I_{\delta})$ is not Hamiltonian 
and $\delta (H_{n,\delta})=\delta$. 
For notational convenience, we need to introduce 
a new graph. We denote  
\begin{equation} \label{eqlnd}  
  \boxed{ L_{n,\delta}:= K_1 \vee ( K_{n-\delta-1}\cup K_{\delta}). }  
  \end{equation}
   Trivially, the case $\delta =1$ yields 
   $H_{n,1}=L_{n,1}=K_1\vee (K_{n-2}\cup K_{1})$. 
   We can observe that $L_{n,\delta}$ does not contain 
    Hamilton cycles and $\delta (L_{n,\delta}) =\delta$. 
   Moreover, we have 
   $e(L_{n,\delta})= {n-\delta \choose 2} + {\delta+1 \choose 2}$ 
   and $\lambda (L_{n,\delta}) > \lambda (K_{n-\delta})=n-\delta-1$.

\begin{theorem}[Nikiforov \cite{Nikiforov}] \label{thmniki}
Suppose that $\delta \ge 1$ and $n\ge \delta^3+\delta+4$. 
If $G$ is an $n$-vertex graph with $\delta (G)\ge \delta$ and 
\[ 
\lambda (G) \ge n-\delta -1,
\] 
then $G$ has a Hamilton cycle, 
or $G= H_{n,\delta}$, 
or $G= L_{n,\delta}$.  
\end{theorem}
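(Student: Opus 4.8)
The plan is to combine the Bondy--Chv\'{a}tal closure technique with a Perron-eigenvector analysis, following the stability method of Li and Ning \cite{LiBinlong} and of Nikiforov \cite{Nikiforov}. First, since adding an edge to a connected graph strictly increases the spectral radius and cannot lower the minimum degree, and since $\lambda(G)\ge n-\delta-1$ already forces $G$ to be connected, we may assume that $G$ is \emph{edge-maximal} among non-Hamiltonian $n$-vertex graphs; equivalently $\mathrm{cl}_n(G)=G$, so $d(u)+d(v)\le n-1$ for every non-adjacent pair $\{u,v\}$. Let $\bm{x}$ be the Perron eigenvector of $G$, normalized so that $\max_i x_i=x_u=1$. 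Then $n-\delta-1\le\lambda(G)=\sum_{v\sim u}x_v\le d(u)$, so $u$ is non-adjacent to a set $W:=V(G)\setminus N[u]$ with $w:=|W|\le\delta$; this alone rules out any $G$ whose maximum degree is not close to $n$.

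Next I would treat the two possibilities for $w$. If $w\ge 1$, the closure condition applied to $u$ and any $v\in W$ gives $d(v)\le n-1-d(u)\le\delta$, hence $d(v)=\delta$ for all $v\in W$ by the minimum-degree hypothesis. A short analysis of the eigenvalue equation at the vertices of $N(u)$ --- separating the case in which every vertex of $N(u)$ has eigenvector entry $1$ (which forces $W$ to be disconnected from $N[u]$, contradicting $\delta(G)\ge\delta$) from the case $\lambda(G)<d(u)$, and using the edge-maximality together with $n\ge\delta^3+\delta+4$ --- shows that $w\ge 1$ cannot occur. So $G$ has a dominating vertex $u$, i.e.\ $w=0$.

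Then, when $u$ dominates $G$, the graph $G$ is Hamiltonian if and only if $G-u$ has a Hamilton path. Here $G-u$ has $n-1$ vertices, minimum degree at least $\delta-1$, and, by restricting $\bm{x}$ to $V(G)\setminus\{u\}$ and using that $\|\bm{x}\|_2^2$ is of order $n$, spectral radius at least $\lambda(G)-1-o(1)\ge n-\delta-2-o(1)$. Applying a Hamilton-path analogue of Chv\'{a}tal's degree-sequence theorem (Theorem~\ref{thmchv}) together with the same kind of eigenvector bookkeeping, one shows that a non-traceable graph with spectral radius this close to $n-\delta-2$ must be a spanning subgraph either of $K_{n-\delta-1}\cup K_{\delta}$ or of $K_{\delta-1}\vee(K_{n-2\delta}\cup I_{\delta})$, so $G$ is a spanning subgraph of $L_{n,\delta}=K_1\vee(K_{n-\delta-1}\cup K_{\delta})$ or of $H_{n,\delta}=K_{\delta}\vee(K_{n-2\delta}\cup I_{\delta})$. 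Finally, if $G$ were a \emph{proper} spanning subgraph, it would be obtained from $L_{n,\delta}$ or $H_{n,\delta}$ by deleting one or more edges of the clique $K_{n-\delta}$ (no edge at a degree-$\delta$ vertex may be removed); estimating via the $3\times3$ quotient matrices that the excess $\lambda(H_{n,\delta})-(n-\delta-1)$, and its $L_{n,\delta}$ analogue, is only $O(1/n)$, while deleting even one clique-edge lowers the spectral radius by a strictly larger amount when $n\ge\delta^3+\delta+4$, we would get $\lambda(G)<n-\delta-1$, a contradiction. Hence $G\in\{H_{n,\delta},L_{n,\delta}\}$.

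The hard part will be the last pair of quantitative estimates: one must pin down, to the correct order in $1/n$, both the tiny gap between $\lambda(H_{n,\delta})$ (respectively $\lambda(L_{n,\delta})$) and $n-\delta-1=\lambda(K_{n-\delta})$, and the decrease in spectral radius caused by removing a single clique-edge in every possible location, and verify that the cubic bound $n\ge\delta^3+\delta+4$ makes the latter dominate uniformly in $\delta$ --- this is exactly where that bound is consumed. A secondary difficulty is the bookkeeping in the $w\ge1$ elimination and in the Hamilton-path structure statement for $G-u$, since there the eigenvector entries on $N(u)$ are close to, but not equal to, $1$ and must be controlled simultaneously.
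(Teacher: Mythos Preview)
Your final step --- showing that a proper spanning subgraph of $H_{n,\delta}$ or $L_{n,\delta}$ with minimum degree $\ge\delta$ must have $\lambda<n-\delta-1$, because the outgrowth edges contribute less to $\lambda$ than a single deleted clique-edge takes away --- is exactly the crux of Nikiforov's argument as described in the survey, and you have correctly located the place where the bound $n\ge\delta^{3}+\delta+4$ is consumed.

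Where your plan diverges is in \emph{reaching} the containment $G\subseteq H_{n,\delta}$ or $G\subseteq L_{n,\delta}$, and here there is a real gap. The route in the paper stays combinatorial: from $\lambda(G)\ge n-\delta-1$ and the Chv\'atal degree-sequence obstruction one deduces $e(G)>e(H_{n,\delta+1})$, and then the stability lemma of Li and Ning \cite[Lemma~2]{LiBinlong} yields the containment directly --- no dominating-vertex step is used. Your Perron-vector route, by contrast, has two soft spots. First, the ``short analysis'' ruling out $w\ge 1$ does not close: knowing that every $v\in W$ has $d(v)=\delta$ and running the eigen-equation on $N(u)$ does not force $W=\varnothing$ without already knowing the global structure, and the dichotomy you sketch (all entries on $N(u)$ equal to $1$ versus $\lambda<d(u)$) does not by itself produce a contradiction. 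Second, even granting $w=0$, the Rayleigh-quotient bound together with $\|\bm{x}\|^{2}\ge 1+\lambda^{2}/d(u)$ (Cauchy--Schwarz on $\sum_{v\sim u}x_{v}=\lambda$) gives only
\[
\lambda(G-u)\ \ge\ \lambda(G)-\frac{d(u)}{\lambda(G)}\ \ge\ (n-\delta-2)-\frac{\delta}{n-\delta-1},
\]
which falls \emph{strictly below} the exact threshold $n-\delta-2$ that the Hamilton-path analogue (Theorem~\ref{Nikipath}) requires; so you cannot invoke that analogue as a black box and would end up reproving it with a slack term, which is the full argument for paths rather than a reduction to it. The edge-count-plus-stability route in the paper avoids both issues.
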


Note that $\lambda (H_{n,\delta}) > 
\lambda (K_{n-\delta}) =n-\delta -1$, 
so the condition in Theorem \ref{thmniki} 
 is weaker and succincter than  that in Theorem \ref{thmln16a}.  
 Moreover, we can show that 
   $e(H_{n,\delta}) > e(L_{n,\delta})$ 
   and $\lambda (H_{n,\delta}) > \lambda (L_{n,\delta})$. 
  Theorem \ref{thmniki} 
   is a unified improvement on 
   Theorem \ref{thmben}) as well as 
Theorem \ref{thmln16a}. 
Indeed, we can easily see that 
  $\lambda (H_{n,\delta}) > n-\delta -1$. 
Furthermore, by applying the Kelmans operation, 
 we get  $\lambda (H_{n,\delta}) > \lambda (L_{n,\delta})$; see \cite[Lemma 6]{LiBinlong} for more details.

Although Theorem \ref{thmniki} 
is indeed a generalization of Theorem \ref{thmln16a}. 
Unfortunately, one dissatisfaction in Theorem \ref{thmniki} 
is that 
the requirement of the order of graph is stricter than 
that in Theorem \ref{thmln16a}.  
One open problem is to relax this requirement. 
In fact, by a tiny modification of the proof of 
Theorem \ref{thmln16a} in \cite{LiBinlong}, 
we can prove that if $G$ is non-Hamiltonian and 
$\lambda (G)\ge n-\delta -1$, then $e(G)> e(H_{n,\delta +1})$. 
The stability result (see \cite[Lemma 2]{LiBinlong}) implies that 
$G$ is a  subgraph of $H_{n,\delta}$ and $L_{n,\delta}$. 
As pointed out by Nikiforov \cite{Nikiforov}, the crucial point of 
the argument of Theorem \ref{thmniki} 
is based on proving that for large $n\ge \delta^3+\delta +4$, 
if $G$ is a subgraph of $H_{n,\delta}$ with $\delta (G)\ge \delta$, 
i.e., $G$ is obtained from $H_{n,\delta}$ by deleting edges from the clique $K_{n-\delta}$, then one can show that 
$\lambda (G)< n-\delta-1$, unless $G=H_{n,\delta}$. 
The same argument holds for the subgraph of $L_{n,\delta}$. 
 Generally speaking, for sufficiently large $n$ 
 with respect to $\delta$, 
 both $H_{n,\delta}$ and $L_{n,\delta}$ consist 
 of a large clique $K_{n-\delta}$ together with 
 a few number of outgrowth edges. 
 We know that $\lambda (H_{n,\delta})$ and $\lambda (L_{n,\delta})$ 
 are very close to $\lambda (K_{n-\delta})=n-\delta-1$. 
 The key idea of Nikiforov exploits the fact that if 
 $G$ is a subgraph of $H_{n,\delta}$ or $L_{n,\delta}$ with $\delta (G)\ge \delta$, then 
 all the outgrowth edges contribute to $\lambda (G)$ 
 much less than a single edge within the dense clique $K_{n-\delta}$, 
 thus $\lambda (G)< \lambda (K_{n-\delta})=n-\delta-1$;  
see \cite[Theorem 1.6]{Nikiforov} for more details.

On the other hand, a natural question is that whether 
the value bound 
$q(G)\ge 2(n-\delta -1) $ corresponding to Theorem  \ref{thmln16b}
 holds or not. The similar problems under the condition of signless Laplacian spectral
radius of graph seems much more complicated. 
In 2018, Li, Liu and Peng \cite{LLPLMA18} 
 solved this problem completely. 
They 
gave the corresponding improvement on 
the result of the signless Laplacian spectral radius 
in Theorem \ref{thmln16b}. 
Interestingly, the extremal graphs in their theorems are quite different from those in
Theorems \ref{thmln16b}.  

Recall that $H_{n,\delta} 
=K_{\delta} \vee (K_{n-2 \delta} \cup I_{\delta})$, 
we denote $X=
\{v\in V(H_{n,\delta}) : d(v)=\delta\}, 
Y= \{v\in V(H_{n,\delta}): d(v)=n-1\}$ 
and $Z=\{v\in V(H_{n,\delta}): d(v)=n- \delta -1\}$. 
Let $E_1(H_{n,\delta})$ be the set of those edges of $H_{n,\delta}$ 
whose both endpoints are from $Y\cup Z$. 
We define 
\[ \mathcal{H}_{n,\delta}^{(1)} = 
\left\{ H_{n,\delta} \setminus E': E' \subseteq E_1(H_{n,\delta})
 ~\text{with}~|E'|\le 
\lfloor {\delta^2}/{4}\rfloor \right\}. \]
Here, we denote by $H_{n,\delta} \setminus E'$ the graph obtained from 
$H_{n,\delta}$ by deleting all edges of the edge set $E'$.  
Similarly, for the graph $L_{n,\delta}$, 
we denote 
$X=\{v\in V(L_{n,\delta}): d(v)=\delta \}, 
Y=\{v\in V(L_{n,\delta}): d(v)=n-1\}$ 
and $Z=\{v\in V(L_{n,\delta}): d(v)=n-\delta -1\}$. 
The notation is clear although we used the same alphabets 
to denote the sets of vertices. 
It is easy to see that $Y$ contains only one vertex. 
We use $E_1(L_{n,\delta})$ to denote 
the set of edges of 
$L_{n,\delta}$ whose both endpoints are from $Y\cup Z$. 
We define 
\[ \mathcal{L}_{n,\delta}^{(1)} = 
\Bigl\{ L_{n,\delta} \setminus E': E' \subseteq E_1(L_{n,\delta})
 ~\text{with}~|E'|\le 
\lfloor {\delta}/{4}\rfloor \Bigr\}. \]
First of all, we can show that 
if $G\in \mathcal{H}_{n,\delta}^{(1)} \cup 
\mathcal{L}_{n,\delta}^{(1)}$, 
then $G$ contains no Hamilton cycle and 
$q(G)\ge 2(n-\delta -1)$. 
Indeed, recall the subsets 
$X,Y$ and $Z$ defined as above. 
For each case, we define a vector $\bm{h}$ such that 
$h_v =1$ for every $v\in Y\cup Z$ and 
$h_v=0$ for every $v\in X$. 
Note that $q(K_{n-\delta } \cup I_{\delta} ) =q(K_{n-\delta }) 
=2(n-\delta -1)$ 
and $\bm{h}$ is a corresponding eigenvector. 
If $G\in  \mathcal{H}_{n,\delta}^{(1)}$, then we get 
\[  \bm{h}^TQ(G)\bm{h} - \bm{h}^TQ ({K_{n-\delta } \cup I_{\delta}}) \bm{h} 
= \delta^2 - 4|E'| \ge 0. \]
By the Rayleigh Formula, we have 
\[ q(G) 
 \ge \frac{ \bm{h}^TQ (G)\bm{h} }{ \bm{h}^T\bm{h}  }  
 \ge \frac{\bm{h}^TQ ({K_{n-\delta} \cup I_{\delta}} )\bm{h}}{\bm{h}^T\bm{h}} 
 =2(n-\delta-1). 
\] 
Similarly, we can show that 
$q(G) \ge 2(n-\delta -1)$ for every $G\in  \mathcal{L}_{n,\delta}^{(1)}$.

\begin{theorem}[Li--Liu--Peng \cite{LLPLMA18}] \label{thmllp}
Assume that $\delta \ge 1$ and 
$n\ge \delta^4+\delta^3+4\delta^2 +\delta+6$. 
Let $G$ be a connected graph with $n$ vertices and minimum 
degree $\delta (G)\ge \delta$. If 
\[ q(G)\ge 2(n-\delta -1),  \]
then $G$ has a Hamilton cycle unless 
$G\in \mathcal{H}_{n,\delta}^{(1)}$ 
or $G\in \mathcal{L}_{n,\delta}^{(1)}$. 
\end{theorem}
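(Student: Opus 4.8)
The ``if'' direction is the computation already displayed before the statement, so assume $G$ is connected, $\delta(G)\ge\delta$, $q(G)\ge 2(n-\delta-1)$, and $G$ is non-Hamiltonian; the goal is $G\in\mathcal{H}_{n,\delta}^{(1)}\cup\mathcal{L}_{n,\delta}^{(1)}$. The plan has three stages: pass from the spectral hypothesis to an edge bound and apply a stability theorem for Hamilton cycles; use the minimum-degree condition to locate the missing edges; and bound $q(G)$ from above via its Perron vector to limit how many edges can be missing. For the first stage, since $G$ is connected the inequality $q(G)\le\frac{2e(G)}{n-1}+n-2$ from \cite{FengPIMB09} (used already in the proof of Theorem \ref{thmYF}) together with $q(G)\ge 2(n-\delta-1)$ gives $e(G)\ge\frac{(n-1)(n-2\delta)}{2}$, which for $n\ge\delta^4+\delta^3+4\delta^2+\delta+6$ is strictly larger than $e(H_{n,\delta+1})={n-\delta-1\choose 2}+(\delta+1)^2$. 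I would then invoke the stability theorem for Hamilton cycles in the spirit of \cite[Lemma 2]{LiBinlong}: a connected non-Hamiltonian $n$-vertex graph with minimum degree $\ge\delta$ and more than $e(H_{n,\delta+1})$ edges is, for $n$ large, a spanning subgraph of $H_{n,\delta}$ or of $L_{n,\delta}$.

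Treat the case $G=H_{n,\delta}\setminus F$; the case $G\subseteq L_{n,\delta}$ is parallel. Recall $V(H_{n,\delta})=X\cup Y\cup Z$ with $X,Y,Z$ the vertices of degree $\delta$, $n-1$, $n-\delta-1$, so that $|Y|=\delta$, each vertex of $X$ is adjacent exactly to $Y$, and $H_{n,\delta}[Y\cup Z]=K_{n-\delta}$. Since every $X$-vertex already has degree $\delta$ in $H_{n,\delta}$, the requirement $\delta(G)\ge\delta$ forces $F\subseteq E_1(H_{n,\delta})$, i.e. all deleted edges lie inside $Y\cup Z$; it remains to prove $|F|\le\lfloor\delta^2/4\rfloor$ (and, in the $L_{n,\delta}$ case, $|F|\le\lfloor\delta/4\rfloor$). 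A crude estimate disposes of large $F$: if $|F|$ exceeds $\frac{3\delta^2-\delta}{2}$ then $e(G)\le e(H_{n,\delta})-|F|$, and the same inequality $q(G)\le\frac{2e(G)}{n-1}+n-2$ forces $q(G)<2(n-\delta-1)$, a contradiction; so from now on $|F|=O(\delta^2)$.

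Now let $\bm{x}$ be the Perron vector of $Q(G)$ with $\lVert\bm{x}\rVert_2=1$ and put $q=q(G)$. For $v\in X$ the eigen-equation gives $x_v=\frac{1}{q-\delta}\sum_{u\in Y}x_u$, so the $X$-entries are equal and of order $\delta/n$ times a typical $Y\cup Z$ entry. The technical heart is to show that, since $G[Y\cup Z]=K_{n-\delta}\setminus F$ is a clique with only $|F|=O(\delta^2)$ edges removed while $n$ is polynomially larger than $\delta$, the eigen-equations on $Y\cup Z$ force $x_i=\bigl(1+O(\delta^2/n)\bigr)\,c$ for all $i\in Y\cup Z$, with $c^2=(1+o(1))/(n-\delta)$. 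Feeding this into
\[
q=\sum_{ij\in E(G)}(x_i+x_j)^2=\sum_{ij\in E(K_{n-\delta})}(x_i+x_j)^2-\sum_{ij\in F}(x_i+x_j)^2+\sum_{u\in Y,\,v\in X}(x_u+x_v)^2,
\]
bounding the first sum by $2(n-\delta-1)\lVert\bm{x}_{Y\cup Z}\rVert^2$ (a Rayleigh bound for $Q(K_{n-\delta})$), using $\lVert\bm{x}_{Y\cup Z}\rVert^2=1-\lVert\bm{x}_X\rVert^2$, and expanding the $Y$--$X$ term, the $X$-contributions cancel against $-2(n-\delta-1)\lVert\bm{x}_X\rVert^2$ up to lower order, leaving
\[
q\le 2(n-\delta-1)+\frac{\delta^2-4|F|}{n-\delta}+O\!\left(\frac{\delta^4}{(n-\delta)^2}\right).
\]
If $|F|\ge\lfloor\delta^2/4\rfloor+1$ then $\delta^2-4|F|\le-3$, and since $n$ is large enough that the error term is below $\frac{3}{n-\delta}$, this gives $q(G)<2(n-\delta-1)$, a contradiction; hence $|F|\le\lfloor\delta^2/4\rfloor$, i.e. $G\in\mathcal{H}_{n,\delta}^{(1)}$. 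The same computation for $G\subseteq L_{n,\delta}$, where the single vertex of $Y$ sends only $\delta$ pendant edges to $X$ rather than $\delta^2$, yields the threshold $\lfloor\delta/4\rfloor$ and $G\in\mathcal{L}_{n,\delta}^{(1)}$.

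The main obstacle is this Perron-vector estimate. One must show that on the near-clique $K_{n-\delta}\setminus F$ the Perron vector of $Q(G)$ is constant up to a relative error whose product with $\delta^2$ stays below the fixed ``gap'' $3$ (which appears because $4\lfloor\delta^2/4\rfloor$ can fall short of $\delta^2$ by as much as $3$). Since that relative error is of order $|F|/n=O(\delta^2/n)$, keeping $\delta^2\cdot O(\delta^2/n)$ below a constant is precisely what forces $n$ to be polynomially large in $\delta$, and this is the source of the hypothesis $n\ge\delta^4+\delta^3+4\delta^2+\delta+6$. The other ingredients — the edge bound, the stability input, and the degree bookkeeping — are routine once this estimate is secured.
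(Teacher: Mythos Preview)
The survey does not actually prove Theorem~\ref{thmllp}; it only states the result (citing \cite{LLPLMA18}) and, in the paragraph preceding it, verifies the easy direction that every $G\in\mathcal{H}_{n,\delta}^{(1)}\cup\mathcal{L}_{n,\delta}^{(1)}$ satisfies $q(G)\ge 2(n-\delta-1)$ via the Rayleigh quotient with the test vector $\bm h$. So for the hard direction there is nothing in the paper to compare your proposal against.

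That said, your three-stage outline (edge bound via $q(G)\le \frac{2e(G)}{n-1}+n-2$, then the stability lemma of \cite{LiBinlong} to force $G\subseteq H_{n,\delta}$ or $G\subseteq L_{n,\delta}$, then a Perron-vector estimate to cap $|F|$) is the natural strategy and is in line with how the companion adjacency result, Theorem~\ref{thmniki}, is handled (see the discussion after that theorem). Your identification of the near-constancy of the $Q$-Perron vector on the large clique $K_{n-\delta}\setminus F$ as the technical crux is right, and your heuristic expansion $q\le 2(n-\delta-1)+\frac{\delta^2-4|F|}{n-\delta}+O(\delta^4/(n-\delta)^2)$ does produce the correct threshold $\lfloor\delta^2/4\rfloor$ (respectively $\lfloor\delta/4\rfloor$) once the error is controlled. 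The one point where your sketch is genuinely incomplete is turning ``$x_i=(1+O(\delta^2/n))c$ on $Y\cup Z$'' into a rigorous bound with explicit constants; the quartic lower bound $n\ge\delta^4+\delta^3+4\delta^2+\delta+6$ in the hypothesis is exactly the trace of that computation, and obtaining it requires tracking the eigen-equations on $Y$ and $Z$ separately (the $Y$-vertices see the extra $X$-neighbours, the $Z$-vertices do not) rather than treating $Y\cup Z$ as homogeneous.
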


In 2021, Zhou, Broersma, Wang and Lu \cite{Zhou2021} proved 
a slight improvement on Theorem \ref{thmllp}. 
Their proofs  are based on the Bondy--Chv\'{a}tal
closure, a degree sequence condition, 
and  the Kelmans transformation.

\subsection{Hamilton cycle in balanced bipartite graph}

Let $G$ be a bipartite graph with vertex sets $X$ and $Y$. 
The bipartite graph $G$ is called balanced if $|X|=|Y|$. 
We remark here that if  a bipartite graph has Hamilton cycle, 
then it must be balanced. 
So we  consider the existence of Hamilton cycle 
only in balanced bipartite graph.  

\begin{lemma}[{See \cite[p. 490]{Bondy08}}]
Let $G$ be a balanced bipartite graph on $2n$ vertices
 with degree sequence 
$d_1\le d_2 \le \cdots \le d_{2n}$. 
If $G$ is not Hamiltonian,    
then there exists an integer $i\le \frac{n}{2}$ 
such that $d_i \le i$ and  $d_n\le n-i$. 
\end{lemma}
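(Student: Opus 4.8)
Assume $n\ge 2$ (for $n\le 1$ the statement is read as vacuous). The plan is a Chv\'{a}tal-style argument by extremal counterexample, the bipartite analogue of the proof of Theorem \ref{thmchv}. Suppose the lemma fails, and among all balanced bipartite graphs on $2n$ vertices, with parts $X$ and $Y$, that are non-Hamiltonian yet admit no integer $i\le n/2$ with $d_i\le i$ and $d_n\le n-i$, let $G$ be one with the maximum number of edges. Adding an edge to a graph only increases the entries of its sorted degree sequence, and the two conditions $d_i\le i$, $d_n\le n-i$ are upper bounds; hence $G+e$ still admits no such index for any non-edge $e$, so by maximality $G+e$ is Hamiltonian. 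In particular $G\neq K_{n,n}$, and I would fix a non-adjacent pair $x\in X$, $y\in Y$ maximizing $d(x)+d(y)$.

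The next step is the rotation (closure) argument bounding $d(x)+d(y)$. Since $G+xy$ is Hamiltonian, either $G$ itself is Hamiltonian --- impossible --- or $G$ has a Hamilton path $x=v_1v_2\cdots v_{2n}=y$, necessarily alternating so that $v_1,v_3,\ldots\in X$ and $v_2,v_4,\ldots\in Y$. Set $S=\{k:x\sim v_{k+1}\}$ and $T=\{k:y\sim v_k\}$; both consist of odd indices, with $2n-1\notin S$ and $1\notin T$ because $x\nsim y$, so $S\cup T$ contains at most $n$ indices while $|S|+|T|=d(x)+d(y)$. If $d(x)+d(y)\ge n+1$, then $|S\cap T|\ge 1$, and for $k\in S\cap T$ the closed walk $v_1v_2\cdots v_k\,v_{2n}v_{2n-1}\cdots v_{k+1}\,v_1$ is a Hamilton cycle of $G$, a contradiction. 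Hence $d(x)+d(y)\le n$. Writing $p=d(x)\le d(y)=q$, this yields $p\le n/2$, $q\le n-p$, and $n-q\ge p$.

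Finally I would read off the index $i:=p$ (if $p=0$ then $x$ is isolated and $i:=1$ works at once, since $d_1=0$ and every $Y$-vertex has degree at most $n-1$, so $d_n\le n-1$; so assume $p\ge 1$). By maximality of the pair sum, every $x'\in X$ with $x'\nsim y$ satisfies $d(x')\le p$, and there are $n-q\ge p$ such vertices, including $x$; hence at least $p$ vertices have degree $\le p$, so $d_p\le p$. Likewise every $y'\in Y$ with $y'\nsim x$ satisfies $d(y')\le q\le n-p$, and there are $n-p$ of these; together with the $n-q$ vertices of $X$ just produced, which have degree $\le p\le n-p$ and lie in the opposite part, one gets a disjoint family of $(n-q)+(n-p)=2n-p-q\ge n$ vertices of degree $\le n-p$, so $d_n\le n-p=n-i$. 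Thus $G$ does admit an index $i\le n/2$ with $d_i\le i$ and $d_n\le n-i$, contradicting the choice of $G$, and the lemma follows.

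The crux is the last step: one must recognize that $i=\min\{d(x),d(y)\}$ is exactly the right choice and that the single inequality $d(x)+d(y)\le n$ furnished by the rotation argument simultaneously forces both $d_i\le i$ (via $n-q\ge p$) and $d_n\le n-i$ (via $q\le n-p$ and $p\le n-p$). The parity bookkeeping in the rotation argument --- checking that the pigeonhole index $k$ really yields a Hamilton cycle respecting the bipartition --- and the degenerate cases ($n\le 1$, an isolated vertex) also need a little care. An alternative, if one prefers a black box, is to quote the Moon--Moser theorem directly.
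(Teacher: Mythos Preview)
The paper does not prove this lemma; it is stated with a citation to Bondy--Murty's textbook and used as a black box later in the survey. So there is nothing in the paper to compare against.

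Your proof is correct and follows the standard Chv\'atal-style extremal argument (the bipartite analogue of Theorem~\ref{thmchv}): take an edge-maximal counterexample, use closure to get a Hamilton path between a non-adjacent pair $x\in X$, $y\in Y$ of maximum degree sum, run the rotation argument over the odd indices to obtain $d(x)+d(y)\le n$, and then read off the index $i=\min\{d(x),d(y)\}$ from the maximality of the pair. The counting in the last step is right: the $n-q$ non-neighbours of $y$ in $X$ and the $n-p$ non-neighbours of $x$ in $Y$ are disjoint and all have degree at most $n-p$, giving $2n-p-q\ge n$ such vertices.

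Two cosmetic remarks. First, when you write ``$p=d(x)\le d(y)=q$'' you are silently relabelling the parts if necessary; say so. Second, your handling of the degenerate case $p=0$ is fine, but the sentence ``every $Y$-vertex has degree at most $n-1$, so $d_n\le n-1$'' tacitly uses that there are $n$ such vertices; one extra word (``$n$ $Y$-vertices'') would make this explicit.
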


Motivated by the work of Erd\H{o}s \cite{erdos62} 
in Theorem \ref{thmerdos62}, 
Moon and Moser \cite{Moon63} provided a corresponding result 
for the balanced bipartite graphs.

\begin{theorem}[Moon--Moser \cite{Moon63}] \label{thmmm}
Let $G$ be a balanced bipartite graph on $2n$ vertices.  
If the minimum degree $\delta (G)\ge \delta $ for some $1\le \delta \le n/2$ and 
\[  e(G) >  n(n-\delta) + \delta^2, \]
then $G$ has a Hamilton cycle. 
\end{theorem}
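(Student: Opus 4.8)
The plan is to carry out the balanced-bipartite analogue of the proof of Erd\H{o}s's Theorem~\ref{thmerdos62}, using the Chv\'{a}tal-type degree condition for balanced bipartite graphs recorded in the Lemma immediately above. I would argue by contradiction: suppose $G$ is balanced bipartite on $2n$ vertices with $\delta(G)\ge\delta$ and $e(G)>n(n-\delta)+\delta^2$, yet $G$ has no Hamilton cycle. Writing the degree sequence as $d_1\le d_2\le\cdots\le d_{2n}$, the Lemma supplies an integer $i\le n/2$ with $d_i\le i$ and $d_n\le n-i$. The first thing to pin down is the range of $i$: since every degree is at least $\delta$, we have $\delta\le d_i\le i$, so $i\ge\delta$; and since $i\le n/2$ together with $\delta\le n/2$ gives $i+\delta\le n$, we also get $i\le n-\delta$. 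Thus $\delta\le i\le n-\delta$.

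Next I would bound $e(G)$ from above by splitting the $2n$ vertices into three consecutive blocks along the sorted degree sequence. The first $i$ vertices have degree at most $i$ each; the next $n-i$ vertices (indices $i+1$ through $n$) have degree at most $d_n\le n-i$ each; and the last $n$ vertices have degree at most $n$ each, since $G$ is bipartite with both parts of size $n$. Adding these bounds,
\[
2e(G)=\sum_{k=1}^{2n}d_k\le i^2+(n-i)^2+n^2,
\]
and therefore $e(G)\le i^2-ni+n^2$.

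It then remains only to verify the elementary inequality $i^2-ni+n^2\le n(n-\delta)+\delta^2$ on the range $\delta\le i\le n-\delta$. Subtracting $n^2$ from both sides and rearranging shows this is equivalent to $(i-\delta)(i+\delta-n)\le 0$, which holds because $i-\delta\ge 0$ and $i+\delta-n\le 0$ throughout that range. Hence $e(G)\le n(n-\delta)+\delta^2$, contradicting the hypothesis, and so $G$ must be Hamiltonian.

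The proof is short enough that I do not expect a real obstacle; the one point needing a little care is confirming that the index $i$ produced by the Lemma actually lies in $[\delta,\,n-\delta]$, which is exactly where the minimum-degree assumption and the standing hypothesis $\delta\le n/2$ enter, after which the three-block degree count and the factorization $(i-\delta)(i+\delta-n)\le 0$ finish everything. I would also note, for context, that the bound is sharp: with a set $A$ of size $\delta$ in one part and a set $B$ of size $n-\delta$ in the other, joining every vertex of $X$ to $B$ and every vertex of $A$ to $Y\setminus B$ gives a non-Hamiltonian balanced bipartite graph with minimum degree $\delta$ and exactly $n(n-\delta)+\delta^2$ edges, paralleling the role of $H_{n,\delta}$ in Theorem~\ref{thmerdos62}.
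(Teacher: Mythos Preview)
Your argument is correct. The paper, being a survey, does not include its own proof of Theorem~\ref{thmmm}; it simply cites Moon and Moser and records the degree-sequence Lemma immediately preceding it. Your proof is exactly the natural one that this Lemma invites, and it mirrors the template the paper uses when it \emph{does} prove analogous edge-count results (e.g.\ Theorem~\ref{thmhnkd} for $k$-Hamiltonicity and the $k$-path-coverable theorem): obtain an index $i$ from the Chv\'{a}tal-type degree condition, bound $2e(G)$ by grouping the sorted degrees into blocks, and then optimise the resulting quadratic in $i$ over the feasible range. Your three-block split $i^2+(n-i)^2+n^2$ and the factorisation $(i-\delta)(i+\delta-n)\le 0$ are clean and correct; the use of $\delta\le n/2$ to secure $i\le n-\delta$ is precisely where that hypothesis is needed. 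The sharpness remark at the end is the graph the paper calls $B_{n,\delta}=K_{n,n}\setminus K_{\delta,n-\delta}$, though your description of it (the roles of $X,Y$ versus $A,B$) could be tidied.
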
 

The condition $ \delta \le {n}/{2}$ is well comprehensible 
since Moon and Moser \cite{Moon63} also pointed out that 
if $G$ is a balanced bipartite on $2n$ vertices with $\delta (G) > {n}/{2}$, 
then $G$ must be Hamiltonian. 
This is a bipartite version of the Dirac theorem.

Let  $B_{n,\delta}$ be the bipartite graph  
obtained from the complete bipartite graph $K_{n,n}$ 
by deleting all edges in its one subgraph $K_{\delta,n-\delta}$. 
More precisely, the two vertex parts of $B_{n,\delta}$ 
are $V=V_1\cup V_2$ and $U=U_1 \cup U_2$ 
where $|V_1|=|U_1|=\delta$ and $|V_2|=|U_2|=n-\delta$, 
we join all edges between $V_1$ and $U_1$, and all edges between 
$V_2$ and $U$. We denote 
\[ 
 \boxed{B_{n,\delta} := K_{n,n} \setminus K_{\delta ,n-\delta}. }  
\] 
It is easy to see that 
$e(B_{n,\delta})= n(n-\delta) + \delta^2$ and $B_{n,\delta}$ 
contains no Hamilton cycle. This implies that 
 the condition in Moon--Moser's theorem is best possible.

For a bipartite graph, Lu, Liu and Tian \cite{Lumei} gave a sufficient condition for a balanced bipartite graph being Hamiltonian 
in terms of the number of edges and the spectral radius of its quasi-complement. 
Let $K_{n,n-1} +e$ 
be the bipartite graph obtained from $K_{n,n-1}$ 
by adding a pendent edge to one of vertices in the part of size $n$. 

\begin{theorem}[Lu--Liu--Tian \cite{Lumei}] \label{thmllt}
Let $G$ be a balanced bipartite graph on $2n\ge 4$ vertices. 
If the minimum degree $\delta (G)\ge 1$ and 
\[  e(G)\ge n(n-1)+1, \]
 then $G$ is Hamiltonian unless 
$G=K_{n,n-1} +e$. 
\end{theorem}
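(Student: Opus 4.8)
The plan is to reduce the statement to the bipartite degree-sequence condition recorded just before Theorem~\ref{thmmm} (the balanced-bipartite analogue of Chv\'atal's theorem), and then to a short convexity estimate on the degree sum. Assume $n\ge 2$; the degenerate case $n=1$ is excluded by the hypothesis $2n\ge4$. Suppose, for contradiction, that $G$ is a balanced bipartite graph on parts $X,Y$ with $|X|=|Y|=n$, that $\delta(G)\ge1$ and $e(G)\ge n(n-1)+1$, and that $G$ is \emph{not} Hamiltonian. Writing the degree sequence of $G$ as $d_1\le d_2\le\cdots\le d_{2n}$, the degree-sequence lemma produces an integer $i$ with $1\le i\le n/2$ such that $d_i\le i$ and $d_n\le n-i$.

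Next I would bound $2e(G)=\sum_{j=1}^{2n}d_j$ from above. The terms $d_1,\dots,d_i$ are each at most $d_i\le i$; the terms $d_{i+1},\dots,d_n$ (there are $n-i$ of them) are each at most $d_n\le n-i$; and $d_{n+1},\dots,d_{2n}$ are each at most $n$, since every vertex of a bipartite graph with parts of size $n$ has degree at most $n$. Hence
\[
 2e(G)\le i^2+(n-i)^2+n^2 .
\]
The function $i\mapsto i^2+(n-i)^2$ is convex and strictly decreasing on $[1,n/2]$, so on this range it is maximized at $i=1$, where it equals $n^2-2n+2$. Therefore $2e(G)\le 2n^2-2n+2=2\bigl(n(n-1)+1\bigr)$, i.e. $e(G)\le n(n-1)+1$. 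Combined with the hypothesis $e(G)\ge n(n-1)+1$, every inequality above is an equality; in particular $i=1$, and the degree sequence of $G$ is forced to be $d_1=1$, $d_2=\cdots=d_n=n-1$, $d_{n+1}=\cdots=d_{2n}=n$.

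It remains to reconstruct $G$ from this degree sequence. A vertex of degree $n$ is complete to the opposite part, while a vertex of degree $1$ has exactly one neighbour; so if $u$ is a vertex of degree $1$, say $u\in X$, with neighbour $y_0$, then any degree-$n$ vertex of $Y$ must equal $y_0$, forcing $Y$ to contain at most one degree-$n$ vertex and hence $X$ to contain at least $n-1$ of the $n$ vertices of degree $n$. Comparing the two part-degree-sums, both equal to $e(G)=n^2-n+1$, I would then pin down $X=\{u\}\cup\{x_1,\dots,x_{n-1}\}$ with $d(x_j)=n$ and $Y=\{y_0\}\cup\{y_1,\dots,y_{n-1}\}$ with $d(y_0)=n$ and $d(y_1)=\cdots=d(y_{n-1})=n-1$; since each $x_j$ is complete to $Y$, each $y_\ell$ (for $\ell\ge1$) is adjacent precisely to $x_1,\dots,x_{n-1}$ and so misses $u$, whence $u$ is pendant at $y_0$. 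This is exactly $K_{n,n-1}+e$ with the part of size $n$ equal to $Y$. Conversely $K_{n,n-1}+e$ has a vertex of degree $1$, so it is non-Hamiltonian and is a genuine exception. For $n=2$ the degree sequence degenerates to $1,1,2,2$, and the same reasoning applies once one observes that the two degree-$1$ vertices must lie in different parts.

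I expect the decisive input to be the degree-sequence lemma; granting it, the estimate $2e(G)\le i^2+(n-i)^2+n^2$ and its optimization at $i=1$ are routine, and the only delicate part is the final reconstruction of the unique extremal graph, where one must carefully track which part contains the low-degree vertices and treat the boundary case $n=2$ separately.
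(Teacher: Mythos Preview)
The survey does not supply its own proof of this theorem; it merely cites the result from \cite{Lumei}. Your argument is correct and follows precisely the template the paper uses elsewhere (for instance in the proof of Theorem~\ref{thmhnkd}): apply the relevant degree-sequence lemma, bound $2e(G)$ term by term, optimize the resulting quadratic in the index $i$, and then reconstruct the unique extremal graph from the forced degree sequence. Two small remarks: in the reconstruction for $n\ge 3$ you do not actually need the part-degree-sum comparison---once $X\setminus\{u\}$ consists entirely of degree-$n$ vertices, every $y\in Y$ automatically has degree at least $n-1$, and the structure is already determined; and your treatment of $n=2$ is right, since two pendant vertices in the same part would force a neighbour of degree at least $2$ in the other part to be adjacent to both, contradicting their being pendant.
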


Liu, Shiu and Xue \cite{Liuruifang}
gave sufficient conditions on the spectral 
radius for a bipartite graph being Hamiltonian. 
Their results  extended Theorem \ref{thmllt}. 
Moreover, they also provided  tight sufficient conditions 
on the signless Laplacian spectral radius for a graph to be 
Hamiltonian and traceable, which improve the results of 
Yu and Fan \cite{YuGuidong}.  
Let $K_{n,n-2}+4e$ be a bipartite graph obtained 
from $K_{n,n-2}$ by adding two vertices which are adjacent 
to two common vertices with degree $n-2$ in $K_{n,n-2}$.

\begin{theorem}[Liu--Shiu--Xue \cite{Liuruifang}]
Let $G$ be a balanced bipartite graph on $2n\ge 8$ vertices.
If  $\delta (G)\ge 2$ and 
 \[   \lambda (G)\ge \sqrt{n(n-2)+4}, \]
 then $G$ is Hamiltonian unless 
$G=K_{n,n-2} +4e$. 
\end{theorem}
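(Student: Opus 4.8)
The plan is to prove the contrapositive: assuming $G$ is a non-Hamiltonian balanced bipartite graph on $2n\ge 8$ vertices with $\delta(G)\ge 2$, I want to show $\lambda(G)<\sqrt{n(n-2)+4}$ (so that the only way the hypothesis can be met is at the extremal graph $K_{n,n-2}+4e$). The starting point is the bipartite degree-sequence criterion quoted just above (the lemma on p.\ 490 of \cite{Bondy08}): since $G$ is not Hamiltonian, there is an integer $i\le n/2$ with $d_i\le i$ and $d_n\le n-i$, where $d_1\le\cdots\le d_{2n}$ is the degree sequence of $G$. Because $\delta(G)\ge 2$ we have $d_1\ge 2$, hence $i\ge 2$ and $d_n\le n-2$. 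Since every vertex of a balanced bipartite graph on $2n$ vertices has degree at most $n$, monotonicity of the $d_j$ gives $d_1,\dots,d_i\le i$, $d_{i+1},\dots,d_n\le n-i$, $d_{n+1},\dots,d_{2n}\le n$, so that
\[ 2e(G)=\sum_{j=1}^{2n}d_j\le i^2+(n-i)^2+n^2 .\]

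Next I would feed this into a spectral-to-edges bound. Because $G$ is bipartite, hence triangle-free, Nosal's Theorem \ref{thmnosal} (the $r=2$ case of Corollary \ref{coroniki}) gives $\lambda(G)^2\le e(G)$, whence $\lambda(G)^2\le n^2-ni+i^2$. If $i\ge 3$, then $i\le n/2$ forces $n\ge 2i\ge i+3$, so $i+2<n$ and therefore $n^2-ni+i^2<n^2-2n+4=n(n-2)+4$; this contradicts $\lambda(G)\ge\sqrt{n(n-2)+4}$. Hence $i=2$, which forces $d_1=d_2=2$ and $d_n\le n-2$, and the edge estimate collapses to
\[ \lambda(G)^2\le e(G)\le \tfrac12\bigl(4+(n-2)^2+n^2\bigr)=n(n-2)+4 .\]

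It remains to turn the last inequality into a strict one unless $G=K_{n,n-2}+4e$. If $\lambda(G)^2=n(n-2)+4$ the whole chain is tight, so in particular $\lambda(G)^2=e(G)$; by the equality case of Nosal's theorem this forces $G$ to be complete bipartite plus isolated vertices, and since $\delta(G)\ge 2$ it must be $K_{n,n}$, contradicting $e(G)=n(n-2)+4$. Thus $\lambda(G)^2<n(n-2)+4$ whenever $G$ is non-Hamiltonian with $\delta(G)\ge 2$. I expect the genuine difficulty to surface only if one wants the \emph{sharp} threshold $\lambda(K_{n,n-2}+4e)$ (which is slightly below $\sqrt{n(n-2)+4}$ and is the larger root of $t^2-(n^2-2n+4)t+4(n-2)^2=0$ in $t=\lambda(G)^2$): there Nosal's bound is too coarse, and one must show that among all non-Hamiltonian balanced bipartite graphs with $\delta\ge 2$ the spectral radius is uniquely maximized by $K_{n,n-2}+4e$ — by a finer Rayleigh-quotient or Hong-type estimate exploiting $d_1=d_2=2$ and $d_n\le n-2$, or by a Kelmans-transformation argument that weakly increases $\lambda$ while preserving $\delta\ge 2$ and non-Hamiltonicity and whose only fixed point is $K_{n,n-2}+4e$. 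That extremal-graph characterization is the main obstacle. The traceability and signless-Laplacian companions mentioned alongside this theorem should follow the same template, replacing the degree-sequence lemma by its path-analogue and the bound $\lambda(G)^2\le e(G)$ by $2\lambda(G)\le q(G)$ together with $q(G)\le\frac{2e(G)}{n-1}+n-2$.
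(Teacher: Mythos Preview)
Your argument is correct, and in fact it proves slightly more than the theorem claims. The survey paper does not include a proof of this result (it is quoted from Liu--Shiu--Xue without proof), so there is nothing to compare against directly; I will just confirm the validity of your approach.

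The chain
\[
\lambda(G)^2 \;\le\; e(G) \;\le\; \tfrac12\bigl(i^2+(n-i)^2+n^2\bigr) \;=\; n^2-ni+i^2
\]
via Nosal's theorem and the degree-sequence bound is sound, and the factorisation $n^2-ni+i^2-(n^2-2n+4)=(i-2)(i-(n-2))$ disposes of $i\ge 3$ exactly as you say. In the $i=2$ case your equality analysis is also fine: Nosal's equality forces $G$ to be complete bipartite with no isolated vertices (since $\delta\ge 2$), and because the given balanced bipartition $(X,Y)$ with $|X|=|Y|=n$ must coincide with the unique bipartition of a connected complete bipartite graph, $G=K_{n,n}$, contradicting $e(G)=n(n-2)+4$. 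Hence every non-Hamiltonian balanced bipartite $G$ on $2n\ge 8$ vertices with $\delta(G)\ge 2$ satisfies the \emph{strict} inequality $\lambda(G)<\sqrt{n(n-2)+4}$.

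Two remarks. First, as you yourself compute, the exceptional graph satisfies $\lambda(K_{n,n-2}+4e)<\sqrt{n(n-2)+4}$ (its $\lambda^2$ is the larger root of $t^2-(n^2-2n+4)t+4(n-2)^2=0$, which is strictly below $n^2-2n+4$). So your proof shows the ``unless'' clause in the theorem, as stated in the survey, is in fact vacuous --- a small sharpening. Second, your closing speculation is on target: the genuinely extremal spectral threshold is $\lambda(K_{n,n-2}+4e)$, and pinning that down would indeed require structural tools beyond Nosal's inequality, along the lines of the closure/Kelmans arguments used for Theorems~\ref{thmlnbi} and~\ref{thm321}. But none of that is needed for the statement at hand.
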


For the Hamiltonicity of balanced bipartite graphs, 
Li and Ning \cite[Theorem 1.10]{LiBinlong} also proved the spectral version of 
the Moon--Moser Theorem \ref{thmmm}.  

\begin{theorem}[Li--Ning \cite{LiBinlong}]   \label{thmlnbi}
Suppose that $\delta\ge 1$ and $n\ge (\delta+1)^2$. 
Let $G$ be a $2n$-vertex balanced bipartite graph.  
If the minimum degree $\delta (G)\ge \delta$  
and 
\[ \lambda (G) \ge \lambda (B_{n,\delta}), \]
then either $G$ has a Hamilton cycle or $G=B_{n,\delta}$.  
\end{theorem}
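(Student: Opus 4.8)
\medskip
\noindent\textbf{Proof proposal.}
The plan is to run the ``spectral radius $\Rightarrow$ many edges $\Rightarrow$ stability $\Rightarrow$ exact extremal graph'' scheme that proves Theorem~\ref{thmln16a}, carried over to the balanced bipartite setting. So assume $G$ is a $2n$-vertex balanced bipartite graph with $\delta(G)\ge\delta$ and $\lambda(G)\ge\lambda(B_{n,\delta})$, and suppose $G$ has no Hamilton cycle; the goal is to deduce $G=B_{n,\delta}$. First I would convert the spectral hypothesis into an edge count. Since $G$ is bipartite it is triangle-free, so Nosal's Theorem~\ref{thmnosal} applies and gives $\lambda(G)\le\sqrt{e(G)}$, hence
\[ e(G)\ \ge\ \lambda(G)^2\ \ge\ \lambda(B_{n,\delta})^2 . \]
To bound the right-hand side from below, note that the complete bipartite graph $K_{n-\delta,n}$ formed by all edges between $V_2$ and $U$ is a proper subgraph of the connected graph $B_{n,\delta}$, so by Perron--Frobenius monotonicity $\lambda(B_{n,\delta})>\lambda(K_{n-\delta,n})=\sqrt{n(n-\delta)}$, i.e.\ $e(G)>n(n-\delta)$. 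Now $e(B_{n,\delta+1})=n(n-\delta-1)+(\delta+1)^2=n(n-\delta)-\bigl(n-(\delta+1)^2\bigr)$, so the hypothesis $n\ge(\delta+1)^2$ gives $n(n-\delta)\ge e(B_{n,\delta+1})$ and therefore
\[ e(G)\ >\ e(B_{n,\delta+1}) . \]
This is exactly where the assumption $n\ge(\delta+1)^2$ enters; note that one cannot in general push this argument to the stronger bound $e(G)\ge e(B_{n,\delta})$, so the stability step below is essential.

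Next I would invoke a stability lemma for Hamilton cycles in balanced bipartite graphs --- the bipartite twin of the lemma underlying Theorem~\ref{thmln16a}: if $H$ is a $2n$-vertex balanced bipartite graph with $\delta(H)\ge\delta$, $e(H)>e(B_{n,\delta+1})$, and $H$ is not Hamiltonian, then $H$ is a spanning subgraph of $B_{n,\delta}$ (or, allowing for the usual finitely many exceptions, a spanning subgraph of one of a short explicit list of non-Hamiltonian ``near-extremal'' graphs). I would prove this along classical lines: the Moon--Moser/Chv\'atal-type degree condition recorded just before Theorem~\ref{thmmm} says that, $H$ being non-Hamiltonian, there is an index $i\le n/2$ with $d_i\le i$ and $d_n\le n-i$; one then runs a short case analysis on $i$, using $\delta(H)\ge\delta$, the edge lower bound $e(H)>e(B_{n,\delta+1})$, and $n\ge(\delta+1)^2$ to eliminate every possibility except the extremal shape. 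Applying this lemma to $G$ shows that $G$ is a spanning subgraph of $B_{n,\delta}$ (or of one of the listed graphs).

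Finally I would upgrade ``spanning subgraph'' to equality using the spectral hypothesis. Spectral radius is monotone under taking subgraphs, so $\lambda(G)\le\lambda(B_{n,\delta})$, and combined with $\lambda(G)\ge\lambda(B_{n,\delta})$ this forces $\lambda(G)=\lambda(B_{n,\delta})$. Since $B_{n,\delta}$ is connected, any proper subgraph of it has strictly smaller spectral radius (a one-line Rayleigh-quotient argument: a Perron-type eigenvector achieving equality would have to vanish at an endpoint of a deleted edge, and connectivity then propagates the vanishing to all of $G$, a contradiction), so $\lambda(G)=\lambda(B_{n,\delta})$ forces $G=B_{n,\delta}$. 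For each of the other candidate non-Hamiltonian graphs allowed by the stability lemma, one checks --- by a direct Rayleigh-quotient comparison or a Kelmans-type compression --- that its spectral radius is strictly below $\lambda(B_{n,\delta})$, so the hypothesis $\lambda(G)\ge\lambda(B_{n,\delta})$ excludes it. I expect the main obstacle to be the stability lemma of the second paragraph: it is the combinatorial core of the proof, and it is there that one must reconcile the threshold $e(B_{n,\delta+1})$ with the order bound $n\ge(\delta+1)^2$; a secondary (routine but fiddly) point is verifying that each near-extremal competitor has spectral radius below $\lambda(B_{n,\delta})$.
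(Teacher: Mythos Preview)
The paper is a survey and does not itself prove Theorem~\ref{thmlnbi}; it only quotes the result from Li--Ning~\cite{LiBinlong} and, in the surrounding discussion (after Theorems~\ref{thmln16a}--\ref{thmln16b} and before Theorem~\ref{thm321}), indicates that the proof runs via a structural stability lemma together with the observation $\lambda(B_{n,\delta})>\lambda(K_{n,n-\delta})=\sqrt{n(n-\delta)}$. Your scheme is exactly this: Nosal's inequality $\lambda(G)^2\le e(G)$ gives $e(G)>n(n-\delta)\ge e(B_{n,\delta+1})$ precisely when $n\ge(\delta+1)^2$, a bipartite stability lemma then forces $G\subseteq B_{n,\delta}$, and strict Perron--Frobenius monotonicity for connected graphs finishes; the only over-caution is that in the balanced bipartite setting the stability step yields the single shape $B_{n,\delta}$ (there is no bipartite analogue of $L_{n,\delta}$), so the ``other candidate graphs'' clause is unnecessary.
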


\begin{theorem}[Li--Ning \cite{LiBinlong}]   \label{thmlnbiq}
Suppose that $\delta \ge 1$ and $n\ge (\delta+1)^2$. 
Let $G$ be a $2n$-vertex balanced bipartite graph.  
If the minimum degree $\delta (G)\ge \delta$  
and 
\[ q(G) \ge q (B_{n,\delta}), \]
then either $G$ has a Hamilton cycle or $G=B_{n,\delta}$.  
\end{theorem}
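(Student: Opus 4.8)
The plan is to argue by contradiction, in parallel with the adjacency version (Theorem~\ref{thmlnbi}): assume that $G$ is a balanced bipartite graph on $2n$ vertices which is \emph{not} Hamiltonian, has $\delta(G)\ge\delta$, satisfies $q(G)\ge q(B_{n,\delta})$, and yet $G\neq B_{n,\delta}$, and derive a contradiction. Two cheap facts about $B_{n,\delta}=K_{n,n}\setminus K_{\delta,n-\delta}$ will be used throughout: it is connected, and, evaluating the Rayleigh quotient of $Q(B_{n,\delta})$ on the all-ones vector, $q(B_{n,\delta})\ge \frac{2e(B_{n,\delta})}{n}=2n-2\delta+\frac{2\delta^2}{n}$, where $e(B_{n,\delta})=n(n-\delta)+\delta^2$.

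\emph{Step 1 (the spectral hypothesis forces many edges).} First I would convert $q(G)\ge q(B_{n,\delta})$ into a lower bound on $e(G)$. For a subgraph $G\subseteq K_{n,n}$ one has a bipartite-flavoured spectral--edge inequality of the shape $q(G)\le n+\frac{e(G)}{n}$ (equality for $K_{n,n}$ and for stars), which is far sharper for sparse bipartite graphs than the generic bound $q\le\frac{2m}{N-1}+N-2$. Combining such an inequality with the lower bound on $q(B_{n,\delta})$ above yields $e(G)\ge e(B_{n,\delta})-(n-2\delta-1)=e(B_{n,\delta+1})$; here the hypothesis $n\ge(\delta+1)^2$ is exactly what makes the gap $n-2\delta-1\ge\delta^2\ge 1$ positive and the subsequent case analysis close.

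\emph{Step 2 (structure of dense non-Hamiltonian bipartite graphs).} Next I would feed the non-Hamiltonicity into the balanced-bipartite degree-sequence lemma stated above: there is an integer $i$ with $\delta\le i\le n/2$, $d_i(G)\le i$ and $d_n(G)\le n-i$. Counting crudely (the $i$ smallest degrees are $\le i$, the next $n-i$ are $\le n-i$, the top $n$ are $\le n$), $2e(G)\le i^2+(n-i)^2+n^2$, that is, $e(G)\le i^2-ni+n^2=:g(i)$. Since $g$ is strictly decreasing on $[\delta,n/2]$ with $g(\delta)=e(B_{n,\delta})$, the bound of Step~1 forces $i\in\{\delta,\delta+1\}$. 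If $i=\delta$, then $d_1(G)=\cdots=d_\delta(G)=\delta$ and $d_n(G)\le n-\delta$; a short structural argument — the $\delta$ vertices of minimum degree must lie in one part, their common neighbourhood has size exactly $\delta$ and forms one class on the other side, and non-Hamiltonicity forbids any edge across the resulting $\delta\times(n-\delta)$ bipartite hole — shows that, after relabelling the two parts, $G$ is a spanning subgraph of $B_{n,\delta}$. If $i=\delta+1$, the same counting forces $e(G)=e(B_{n,\delta+1})$ and hence $G=B_{n,\delta+1}$, and then one checks directly that $q(B_{n,\delta+1})<q(B_{n,\delta})$, contradicting $q(G)\ge q(B_{n,\delta})$. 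This step is the stability form of the Moon--Moser Theorem~\ref{thmmm}.

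\emph{Step 3 (spectral rigidity) and the obstacle.} It remains to handle $G\subsetneq B_{n,\delta}$ as a spanning subgraph. Since $\delta(G)\ge\delta\ge 1$, $G$ has no isolated vertex; let $y\ge 0$, $\lVert y\rVert_2=1$, be a Perron vector of $Q(G)$ and put $S=\{v:y_v>0\}$. Then
\[ q(B_{n,\delta})\ \ge\ y^{T}Q(B_{n,\delta})\,y\ =\ q(G)+\!\!\sum_{uv\in E(B_{n,\delta})\setminus E(G)}\!\!(y_u+y_v)^2 . \]
Because the support $S$ spans no edge leaving $S$ in $G$ while $B_{n,\delta}$ is connected, at least one missing edge has an endpoint in $S$, so the sum is strictly positive and $q(G)<q(B_{n,\delta})$ — contradicting the hypothesis. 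Hence $G=B_{n,\delta}$. The delicate parts are the two places where $n\ge(\delta+1)^2$ is invoked: (i) Step~1 needs a spectral--edge inequality for bipartite subgraphs of $K_{n,n}$ precise enough to recover an edge count within $e(B_{n,\delta+1})$ of the maximum; and (ii) the structural claim in Step~2 that $i=\delta$ forces $G$ to embed in $B_{n,\delta}$, i.e.\ a genuine stability strengthening of Moon--Moser in which all other dense non-Hamiltonian bipartite configurations must be ruled out. Everything else is the routine Rayleigh-quotient bookkeeping displayed in Step~3.
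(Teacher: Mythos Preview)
Your three-step skeleton matches the Li--Ning strategy (edge bound $\Rightarrow$ stability $\Rightarrow$ spectral rigidity), and Step~3 is fine. But Steps~1 and~2 both contain genuine gaps.

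\medskip
\noindent\textbf{Step 1.} The inequality you invoke, $q(G)\le n+e(G)/n$ for bipartite $G\subseteq K_{n,n}$, is not a standard result, and you do not prove it; the hedging phrase ``of the shape'' already signals this. Worse, even granting that inequality, your arithmetic does not close. Combining $n+e(G)/n\ge q(G)\ge q(B_{n,\delta})\ge 2n-2\delta+2\delta^{2}/n$ gives only $e(G)\ge n^{2}-2n\delta+2\delta^{2}$, and one checks that
\[
\bigl(n^{2}-2n\delta+2\delta^{2}\bigr)-e(B_{n,\delta+1})=n(1-\delta)+(\delta-1)^{2}-2,
\]
which is \emph{negative} for every $\delta\ge 1$ and $n\ge(\delta+1)^{2}$. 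So you do not obtain $e(G)\ge e(B_{n,\delta+1})$ from the all-ones Rayleigh bound. The adjacency version works because bipartiteness gives the clean bound $\lambda(G)\le\sqrt{e(G)}$; there is no equally clean $q$-analogue, and the Li--Ning proof does not attempt one. Instead they run the argument through the stability lemma first and compare $q$ only between the small list of candidate extremal graphs that survive.

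\medskip
\noindent\textbf{Step 2.} The claim that $i=\delta$ forces $G\subseteq B_{n,\delta}$ is exactly the bipartite stability lemma of Li--Ning, and it is not a ``short structural argument''. From $d_{1}=\cdots=d_{\delta}=\delta$ and $d_{n}\le n-\delta$ alone one cannot conclude that the $\delta$ minimum-degree vertices lie in one part with a common neighbourhood: this requires passing to the bipartite closure (so that any two vertices with degree-sum at least $n+1$ are adjacent) and then analysing the resulting degree sequence. That closure step is where the real work is, and it is what the survey alludes to when it says the proofs ``need some detailed graph structural analysis''.
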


Recall that Theorem \ref{thmniki} extended slightly 
Theorem \ref{thmln16a}. 
For the case of bipartite graphs,  
we can see that $\lambda (B_{n,\delta}) 
>\lambda (K_{n,n-\delta}) =\sqrt{n (n-\delta)}$ 
since 
$K_{n,n-\delta}$ is a proper subgraph of $B_{n,\delta}$. Motivated by 
this observation, 
Ge and Ning \cite[Theorem 1.4]{GNLMA20} 
proved the following improvement on Theorem \ref{thmlnbi}.

\begin{theorem}[Ge--Ning \cite{GNLMA20}] \label{thm321}
Suppose that $\delta\ge 1$ and $n\ge \delta^3 +2\delta +4$. 
Let $G$ be a balanced bipartite graph on $2n$ vertices. 
If the minimum degree $\delta (G)\ge \delta$ and 
\[  \lambda (G)\ge \sqrt{n (n-\delta)}, \]
then either $G$ has a Hamilton cycle or $G=B_{n,\delta}$. 
\end{theorem}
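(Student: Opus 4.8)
The plan is to argue by contraposition: assuming $G$ is not Hamiltonian while $\delta(G)\ge\delta$ and $\lambda(G)\ge\sqrt{n(n-\delta)}$, I would deduce $G=B_{n,\delta}$. The strategy mirrors Nikiforov's strengthening of Theorem~\ref{thmln16a} recorded in Theorem~\ref{thmniki}: a first, stability-type stage will show that $G$ must be a spanning subgraph of $B_{n,\delta}$, and a second, delicate spectral stage will show that every \emph{proper} spanning subgraph of $B_{n,\delta}$ with minimum degree at least $\delta$ has spectral radius strictly below $\sqrt{n(n-\delta)}$.

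\emph{Stage one.} Since $G$ is bipartite, hence triangle-free, Nosal's Theorem~\ref{thmnosal} gives $\lambda(G)^2\le e(G)$, so $e(G)\ge n(n-\delta)$. Using $e(B_{n,j})=n(n-j)+j^2$, one checks
\[
 e(B_{n,\delta+1})+1=n(n-\delta-1)+(\delta+1)^2+1\le n(n-\delta)\qquad\text{whenever } n\ge(\delta+1)^2+1,
\]
and $n\ge\delta^3+2\delta+4$ certainly implies $n\ge(\delta+1)^2+1$. Thus $e(G)\ge e(B_{n,\delta+1})+1$, while $\delta(G)\ge\delta$ and $\delta\le n/2$. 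A stability version of the Moon--Moser Theorem~\ref{thmmm} (which can be extracted from the proof of Theorem~\ref{thmlnbi}, along the lines of Li and Ning's stability lemma for Hamilton cycles) then forces $G$ to be a spanning subgraph of $B_{n,\delta}$; here one also uses that $e(B_{n,j})<e(B_{n,\delta+1})+1\le e(G)$ for every $j\ge\delta+1$, so $G$ cannot sit inside any $B_{n,j}$ with $j>\delta$, and one checks — as in the non-bipartite case of Theorem~\ref{thmniki} — that no other extremal family occurs for balanced bipartite graphs.

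\emph{Stage two (the crux).} First I would analyze $B_{n,\delta}$ itself. Its vertex set splits into four classes $V_1,V_2,U_1,U_2$ of sizes $\delta,n-\delta,\delta,n-\delta$ forming an equitable partition; writing $m=n-\delta$, the eigen-equations of the resulting $4\times4$ quotient matrix collapse to
\[
 t^2-(m^2+m\delta+\delta^2)\,t+\delta^2 m^2=0,\qquad t=\lambda(B_{n,\delta})^2 .
\]
Evaluating the left side at $t=n(n-\delta)=m^2+m\delta$ gives $-\delta^3 m<0$, so $\lambda(B_{n,\delta})^2>n(n-\delta)$, and controlling the gap gives $\lambda(B_{n,\delta})^2\le n(n-\delta)+O(\delta^3/n)$, hence $\lambda(B_{n,\delta})-\sqrt{n(n-\delta)}=O(\delta^3/n^2)$. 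Next I would estimate the Perron vector $\bm{x}$ of $B_{n,\delta}$, normalized by $\lVert\bm{x}\rVert_2=1$: it is constant on each class, the class-$V_1$ value is $O(\delta/n)$ times the others, and the normalization forces the values on $V_2$, $U_1$ and $U_2$ all to be of order $n^{-1/2}$. Now take $G\subsetneq B_{n,\delta}$ with $\delta(G)\ge\delta$. Since the vertices of $V_1$ and $U_1$ must keep all $\delta$ of their edges inside the $K_{\delta,\delta}$ on $V_1\cup U_1$, every deleted edge of $E(B_{n,\delta})\setminus E(G)$ joins $V_2$ to $U_1\cup U_2$, so both of its endpoints lie in classes on which the Perron coordinate is of order $n^{-1/2}$. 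Using $\bm{x}$ as a test vector,
\[
 \lambda(G)\le\frac{\bm{x}^{T}A(G)\bm{x}}{\bm{x}^{T}\bm{x}}
 =\lambda(B_{n,\delta})-2\!\!\sum_{uv\in E(B_{n,\delta})\setminus E(G)}\!\!x_ux_v
 \le\lambda(B_{n,\delta})-2x_ux_v
\]
for any single deleted edge $uv$, and $2x_ux_v$ is of order $1/n$, which dominates $\lambda(B_{n,\delta})-\sqrt{n(n-\delta)}=O(\delta^3/n^2)$ precisely because $n\ge\delta^3+2\delta+4$. Hence $\lambda(G)<\sqrt{n(n-\delta)}$ for every proper spanning subgraph, and, combined with Stage one, the hypothesis $\lambda(G)\ge\sqrt{n(n-\delta)}$ leaves only $G=B_{n,\delta}$.

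\emph{Main obstacle.} The technical heart is the competition in the last display. One needs the upper estimate $\lambda(B_{n,\delta})^2\le n(n-\delta)+O(\delta^3/n)$ to be sharp to the right order — the naive bound $\lambda(B_{n,\delta})^2\le e(B_{n,\delta})=n(n-\delta)+\delta^2$ only gives $\lambda(B_{n,\delta})-\sqrt{n(n-\delta)}=O(\delta^2/\sqrt n)$, which beats $1/n$ only for $\delta=1$ — together with a clean lower bound on the relevant Perron coordinates. Tracking the constants in both estimates (and in the $o(1)$ corrections for small $\delta$) is exactly what pins the threshold at $n\ge\delta^3+2\delta+4$; by contrast, Nosal's inequality and the bipartite Moon--Moser stability lemma are comparatively routine.
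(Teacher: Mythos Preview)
Your two-stage architecture matches the template of Nikiforov's Theorem~\ref{thmniki} (and, as the paper indicates, of Ge and Ning's argument): reduce to a spanning subgraph of $B_{n,\delta}$ via a bipartite stability lemma, then show any proper such subgraph has $\lambda<\sqrt{n(n-\delta)}$. Your computation of the quotient polynomial, the gap $\lambda(B_{n,\delta})^2-n(n-\delta)\approx\delta^3/m$, and the sizes of the Perron coordinates are all correct, and the competition you identify between $O(\delta^3/n^2)$ and $O(1/n)$ is exactly the reason for the threshold $n\ge\delta^3+2\delta+4$.

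However, there is a genuine error in Stage two. You write
\[
  \lambda(G)\le\frac{\bm{x}^{T}A(G)\bm{x}}{\bm{x}^{T}\bm{x}},
\]
with $\bm{x}$ the Perron vector of $B_{n,\delta}$; but the Rayleigh quotient gives the \emph{opposite} inequality: $\lambda(G)\ge\bm{x}^{T}A(G)\bm{x}/\bm{x}^{T}\bm{x}$ for every nonzero $\bm{x}$. Using the Perron vector of the \emph{larger} graph as a test vector for the smaller one therefore yields no upper bound on $\lambda(G)$, and your displayed chain collapses.

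The fix is not cosmetic. To obtain an upper bound one must work with the Perron vector $\bm{y}$ of $G$ itself (or of some intermediate graph such as $K_{n,n-\delta}\cup I_\delta$): from $\lambda(K_{n,n-\delta})\ge\bm{y}^{T}A(K_{n,n-\delta})\bm{y}$ one gets
\[
  \lambda(G)\le\sqrt{n(n-\delta)}+2\!\!\sum_{e\in E(G)\setminus E(K_{n,n-\delta})}\!\!y_uy_v
  -2\!\!\sum_{e\in E(K_{n,n-\delta})\setminus E(G)}\!\!y_uy_v,
\]
and now the burden is to bound the entries of $\bm{y}$ on the low-degree class $V_1$ from \emph{above} and on $V_2,U_1,U_2$ from \emph{below}. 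Since $\bm{y}$ depends on the unknown subgraph $G$, these estimates require the hypothesis $\lambda(G)\ge\sqrt{n(n-\delta)}$ (to force $y_v$ small on $V_1$ via the eigen-equation $\lambda(G)y_v=\sum_{u\sim v}y_u\le\delta\max y$) together with a careful normalisation argument; this is exactly the ``detailed graph structural analysis'' the survey alludes to after Theorem~\ref{thmniki}, and it is where the cubic threshold on $n$ is actually consumed. Your version, by borrowing the Perron vector of the wrong graph, skips this work entirely.
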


We remark here that Jiang, Yu and Fang 
\cite[Theorem 1.2]{JYF2019} 
proved independently the result in Theorem \ref{thm321} 
under a weak condition 
$n\ge \max\{\frac{\delta^3}{2} + \frac{\delta^2}{2} + \delta +3, 
(\delta +2)^2\}$ with the same method only by some careful calculations. 
In 2020, Liu, Wu and Lai \cite{LWL2020} 
unified these several former spectral Hamiltonian results 
on balanced bipartite graphs and complementary graphs.  
In addition, Lu \cite{Lu2020} 
extended some spectral conditions for the Hamiltonicity
of balanced bipartite graphs. 

Correspondingly, 
Li, Liu and Peng \cite[Theorem 4]{LLPLMA18} 
also gave an improvement on Theorem \ref{thmlnbiq} 
by observing that $q (B_{n,{\delta} }) 
>q (K_{n,n-\delta}) =2n- \delta $. 
For the graph $B_{n,{\delta}}$, 
let $S$ and $T$ be the vertex sets 
such that the degree of vertices 
from $T$ is either $n$ or $n-{\delta}$. 
Let $X= \{v\in S: d(v)={\delta}\}, 
Y=\{v\in T : d(v)=n\}, 
W=\{v\in T: d(v)=n- {\delta}\}$ and 
$Z=\{v\in S: d(v)=n\}$. 
We can see from the definition that 
$S=X\cup Z$ and $T=Y\cup W$. 
We denote $E_1(B_{n,{\delta}})$ by 
those edges of $B_{n,{\delta}}$ whose 
both endpoints are from $Y\cup W\cup Z$. 
Let 
\[ \mathcal{B}_{n,{\delta}}^{(1)}= 
\left\{ B_{n,{\delta}} \setminus E' : 
E'\subseteq E_1(B_{n,{\delta}}) ~\text{with}~ 
|E'|\le \lfloor {{\delta}^2}/{4}\rfloor \right\}.  \]
Similarly we can show that 
$q(G)\ge q(K_{n,n-{\delta}} \cup I_{\delta})=2n- {\delta}$ 
for every $G\in \mathcal{B}_{n,{\delta}}^{(1)}$. 
Indeed, let $\bm{f}$ be 
an eigenvector of $K_{n,n-{\delta}} \cup I_{\delta}$ 
corresponding to $2n-{\delta}$. 
We can define the vector 
$\bm{f}$ as $f_u=1$ for vertex $u$ with $d(u)=n$, 
and $f_u=1-\frac{\delta}{n}$ for vertex 
$u$ with $d(u)=n-{\delta}$. 
Then 
\[  \bm{f}^TQ(G) \bm{f}  - 
\bm{f}^T Q(K_{n,n-{\delta}} \cup I_{\delta}) \bm{f} 
= \delta^2 - 4|E'|\ge 0. \]
The Rayleigh Formula gives 
\[  q(G) 
 \ge \frac{ \bm{f}^TQ(G)\bm{f} }{ \bm{f}^T\bm{f}  }  
 \ge \frac{\bm{f}^TQ({K_{n,n-\delta} \cup I_{\delta}}) \bm{f}}{\bm{f}^T\bm{f}} 
 =2n-\delta .\]

\begin{theorem}[Li--Liu--Peng \cite{LLPLMA18}]
Assume that $\delta \ge 1$ and 
$n\ge \delta^4+3\delta^3+5\delta^2 +5\delta+4$. 
Let $G$ be a connected graph with $n$ vertices and minimum 
degree $\delta (G)\ge \delta$. If 
\[ q(G)\ge 2n-\delta ,  \]
then $G$ has a Hamilton cycle 
unless $G\in \mathcal{B}_{n,\delta}^{(1)}$. 
\end{theorem}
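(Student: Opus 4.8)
The plan is to follow the by-now-standard three-stage scheme behind the signless-Laplacian Hamiltonicity theorems of this section: convert the spectral hypothesis into an edge-count hypothesis, invoke a stability result to force the near-extremal structure, and finish with a local Rayleigh-quotient perturbation analysis that pins $G$ down inside $\mathcal{B}_{n,\delta}^{(1)}$. Concretely, I would argue by contraposition: assume $G$ is \emph{not} Hamiltonian and show that $q(G)\ge 2n-\delta$ is only possible when $G\in\mathcal{B}_{n,\delta}^{(1)}$. The starting point is the inequality relating the $Q$-index to the number of edges — the same mechanism $q(G)\le \frac{2e(G)}{n-1}+(n-2)$ exploited in the proof of Theorem \ref{thmYF}, in the form appropriate to the present graphs — so that $q(G)\ge 2n-\delta$ forces a lower bound on $e(G)$ lying within $O(\delta^{2})$ of the extremal value $e(B_{n,\delta})=n(n-\delta)+\delta^{2}$.

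\emph{Stability step.} Having placed $G$ in the near-extremal edge regime, I would apply the degree-sequence lemma stated just before Theorem \ref{thmmm}, together with the Moon--Moser Theorem \ref{thmmm} itself, to deduce that a non-Hamiltonian graph with $\delta(G)\ge\delta$ carrying this many edges must, after relabeling, be a spanning subgraph of the graph $B_{n,\delta}=K_{n,n}\setminus K_{\delta,n-\delta}$ defined above, and moreover that all of its missing edges are confined to the dense block spanned by $Y\cup W\cup Z$. This is the structural reduction that parallels the one underlying Theorem \ref{thmllp}; the precise hypothesis $n\ge \delta^{4}+3\delta^{3}+5\delta^{2}+5\delta+4$ is exactly what is needed so that no competing non-Hamiltonian configuration can sustain an edge count this large.

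\emph{Local perturbation step.} Writing $G=B_{n,\delta}\setminus E'$ with $E'\subseteq E_{1}(B_{n,\delta})$, I would test $q(G)$ against the vector $\bm{f}$ introduced in the paragraph preceding the theorem (with $f_{u}=1$ when $d(u)=n$ and $f_{u}=1-\tfrac{\delta}{n}$ when $d(u)=n-\delta$). The identity
\[ \bm{f}^{T}Q(G)\bm{f}-\bm{f}^{T}Q\!\left(K_{n,n-\delta}\cup I_{\delta}\right)\bm{f}=\delta^{2}-4|E'| \]
then shows, via the Rayleigh formula and $q(K_{n,n-\delta}\cup I_{\delta})=2n-\delta$, that $q(G)\ge 2n-\delta$ compels $\delta^{2}-4|E'|\ge 0$, i.e.\ $|E'|\le\lfloor \delta^{2}/4\rfloor$, which is precisely the defining condition of $\mathcal{B}_{n,\delta}^{(1)}$. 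The converse — that every member of $\mathcal{B}_{n,\delta}^{(1)}$ is non-Hamiltonian and satisfies $q\ge 2n-\delta$ — is recorded in the discussion above, so the exceptional family is exactly right.

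The hard part will be the stability step: proving that the edge bound genuinely enforces the $B_{n,\delta}$-structure rather than admitting some other dense non-Hamiltonian graph, and doing so with an \emph{explicit} polynomial threshold in $\delta$. This requires marrying the degree-sequence condition with a careful accounting of where the deleted edges may lie, and it is here that the error terms produced by the first-stage $q$-to-$e$ inequality must be fully absorbed — the reason the admissible range of $n$ is a quartic in $\delta$ rather than something more modest. Once that reduction is secured, the final perturbation computation is short and the characterization of $\mathcal{B}_{n,\delta}^{(1)}$ follows immediately.
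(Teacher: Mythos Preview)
Your overall three-stage scheme is the right one, and the stability step is indeed where most of the work lies. But your \emph{local perturbation step} contains a genuine logical gap: you are using the Rayleigh quotient in the wrong direction.

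The identity $\bm{f}^{T}Q(G)\bm{f}-\bm{f}^{T}Q(K_{n,n-\delta}\cup I_{\delta})\bm{f}=\delta^{2}-4|E'|$, together with the Rayleigh formula, yields
\[
q(G)\;\ge\;\frac{\bm{f}^{T}Q(G)\bm{f}}{\bm{f}^{T}\bm{f}}\;=\;(2n-\delta)+\frac{\delta^{2}-4|E'|}{\bm{f}^{T}\bm{f}},
\]
which is a \emph{lower} bound on $q(G)$. This is exactly what the paragraph preceding the theorem uses to verify that every $G\in\mathcal{B}_{n,\delta}^{(1)}$ satisfies $q(G)\ge 2n-\delta$. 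It does \emph{not} give the implication you claim: from $q(G)\ge 2n-\delta$ you cannot conclude $\delta^{2}-4|E'|\ge 0$, because a large $q(G)$ is perfectly compatible with a small Rayleigh quotient at the particular test vector $\bm{f}$.

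What is actually needed once $G$ has been reduced to a spanning subgraph $B_{n,\delta}\setminus E'$ is the opposite inequality: an \emph{upper} bound showing that if $|E'|>\lfloor\delta^{2}/4\rfloor$ then $q(G)<2n-\delta$. This cannot be obtained from a single test vector; one has to work with the true Perron eigenvector of $Q(G)$, control the ratios of its entries on the blocks $X,Y,W,Z$ (this is where the large-$n$ hypothesis is consumed a second time), and show that removing more than $\lfloor\delta^{2}/4\rfloor$ edges from the dense part depresses the eigenvalue strictly below $2n-\delta$. That eigenvector analysis --- not the stability step alone --- is the other hard part, and it is missing from your plan.
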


\subsection{Problem for $k$-Hamiltonicity} 

A graph $G=(V,E)$ is \emph{$k$-Hamiltonian} if for all $X\subset V$ with $|X|\leq k$, the subgraph induced by $V\setminus X$ is Hamiltonian. Thus  $0$-Hamiltonian 
graph is the same as the general  Hamiltonian graph. 
Similarly, 
a graph $G$ is $k$-\textit{edge-Hamiltonian} if any collection of vertex-disjoint paths with at most $k$ edges altogether belong to a Hamiltonian cycle in $G$.
 In \cite{CKL1970}, 
 it is obtained that for a graph $G$,
 if $\delta(G)\geq \frac{n+k}2,$
  then $G$ is $k$-Hamiltonian. 
  Clearly, when $k=0$, it reduces to the Dirac theorem. 
  
  \begin{theorem}[Chartrand et al. \cite{CKL1970}, 
Bondy--Chv\'{a}tal \cite{Bondy}]  \label{thmCKL}
A graph $G$ is $k$-Hamiltonian 
if and only if the closure graph $\mathrm{cl}_{n+k} (G)$ is $k$-Hamiltonian. 
In particular, if $d(u)+d(v) \ge n+k$ for all non-edges $\{u,v\}$, 
then $\mathrm{cl}_{n+k} (G) =K_n$ and $G$ is $k$-Hamiltonian. 
\end{theorem}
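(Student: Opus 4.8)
The plan is to imitate the Bondy--Chv\'{a}tal closure argument for ordinary Hamiltonicity, carrying the extra parameter $k$ through the degree bookkeeping. The whole theorem reduces to a single \emph{one-edge lemma}: if $uv\notin E(G)$ and $d_G(u)+d_G(v)\ge n+k$, then $G$ is $k$-Hamiltonian if and only if $G+uv$ is $k$-Hamiltonian. Granting this, the main equivalence follows by induction on the number of edges added in forming $\mathrm{cl}_{n+k}(G)$ (the closure being well defined, as already noted), and the ``in particular'' statement follows because the hypothesis $d(u)+d(v)\ge n+k$ for every non-edge forces $\mathrm{cl}_{n+k}(G)=K_n$, while $K_n$ is trivially $k$-Hamiltonian once $n\ge k+3$, since deleting at most $k$ vertices leaves a complete graph on at least three vertices.

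For the one-edge lemma, the direction ``$G$ $k$-Hamiltonian $\Rightarrow$ $G+uv$ $k$-Hamiltonian'' is immediate from monotonicity: for any $X$ with $|X|\le k$, the graph $(G+uv)-X$ contains the Hamiltonian graph $G-X$ on the same vertex set, hence is itself Hamiltonian. The substantive direction assumes $G+uv$ is $k$-Hamiltonian and fixes $X\subseteq V(G)$ with $|X|\le k$; the goal is that $G-X$ is Hamiltonian. If $u\in X$ or $v\in X$, then $(G+uv)-X=G-X$ and there is nothing to prove. Otherwise $u,v\notin X$ and $(G+uv)-X=(G-X)+uv$ is Hamiltonian; writing $H:=G-X$ and $m:=|V(H)|=n-|X|$, it remains to pass from ``$H+uv$ Hamiltonian'' to ``$H$ Hamiltonian''.

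Here I would invoke verbatim the argument used above in the proof of Ore's theorem: if $H+uv$ has a Hamilton cycle avoiding $uv$ we are done, and otherwise $H$ has a Hamilton path $u=u_1,\dots,u_m=v$, and comparing the set of indices $i$ with $uu_{i+1}\in E(H)$ against the set with $u_iv\in E(H)$ (neither of which contains $m$) yields a crossing pair, hence a Hamilton cycle of $H$, as soon as $d_H(u)+d_H(v)\ge m$. So the one remaining point is this degree estimate. Deleting $X$ removes at most $|X|$ neighbours from each of $u$ and $v$, so
\[
d_H(u)+d_H(v)\ \ge\ d_G(u)+d_G(v)-2|X|\ \ge\ (n+k)-2|X|\ \ge\ n-|X|\ =\ m,
\]
the final inequality being exactly $k\ge|X|$. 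This is the only place the threshold ``$n+k$'' rather than ``$n$'' is used, and it is the heart of the matter: the extra ``$+k$'' is calibrated precisely to absorb the loss of up to $k$ deleted vertices. I do not expect any genuine obstacle beyond this calculation; the only care needed is the harmless case split on whether $X$ meets $\{u,v\}$, together with the standing assumption $n\ge k+3$ so that ``Hamiltonian'' remains meaningful throughout the induction.
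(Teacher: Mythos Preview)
Your argument is correct and is precisely the standard Bondy--Chv\'{a}tal closure proof, extended to the $k$-Hamiltonian setting in the natural way. Note, however, that the paper is a survey and does not give its own proof of Theorem~\ref{thmCKL}; it merely states the result with attribution to \cite{CKL1970} and \cite{Bondy}. The paper does sketch the $k=0$ case (Ore's theorem) just before this section, and your proof is exactly the expected adaptation of that sketch: the same index-crossing argument on a Hamilton path of $H=G-X$, with the key observation that the ``$+k$'' in the degree-sum threshold compensates for the at most $2|X|\le 2k$ lost degree when passing from $G$ to $G-X$. There is nothing to correct.
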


  In 1972, 
Chv\'{a}tal \cite{Chvatal72} proved 
the following theorem, which 
characterizes the sufficient condition of degree sequence 
of $k$-Hamiltonian graphs.

\begin{theorem}[Chv\'{a}tal \cite{Chvatal72}]   \label{thm323}
Let $G$ be an $n$-vertex graph with degree sequence 
$d_1\le d_2 \le \cdots \le d_n$.   
Suppose $n\geq 3$, and $0\leq k\leq n-3$. If 
$G$ is not $k$-Hamiltonian, then there exists $1\leq i<\frac{n-k}{2}$ such 
that 
$d_i\leq i+k  $ and $ d_{n-i-k}\leq n-i-1$. 
\end{theorem}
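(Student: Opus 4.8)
The plan is to argue by strong induction on $k$, with the base case $k=0$ being exactly Chv\'{a}tal's degree criterion, Theorem~\ref{thmchv}. The mechanism driving the induction is that if $G$ is not $k$-Hamiltonian then, by definition, there is a set $X\subseteq V(G)$ with $|X|=j\le k$ for which $H:=G-X$ is not Hamiltonian; deleting $X$ reduces the situation from $(n,k)$ to $(n-j,\,k-j)$. One checks directly that the two indices appearing in the conclusion --- the cutoff $i<\tfrac{n-k}{2}$ and the position $n-i-k$ --- are literally invariant under $(n,k)\mapsto(n-j,k-j)$, while the degree bounds lose exactly $j$ in passing to $H$ and regain it because deleting $j$ vertices lowers each degree by at most $j$. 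So a bad index produced for $H$ with parameter $k-j$ should translate into a bad index for $G$ with parameter $k$.

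Concretely, I would fix such an $X$, put $H=G-X$ and $m=n-j$, and note $m\ge n-k\ge 3$. Suppose first $j\ge 1$. Since $H$ is non-Hamiltonian it is not $(k-j)$-Hamiltonian (witnessed by the empty set), and the pair $(m,k-j)$ satisfies $0\le k-j\le m-3$ because $n\ge k+3$; as $k-j<k$, the induction hypothesis applies to $H$ and yields an index $i$ with $1\le i<\tfrac{m-(k-j)}{2}=\tfrac{n-k}{2}$, $d_i^{H}\le i+(k-j)$ and $d_{m-i-(k-j)}^{H}=d_{n-i-k}^{H}\le m-i-1=n-j-i-1$. To pass back to $G$ I would use that each vertex of $H$ gains at most $j$ neighbours when $X$ is reinstated, and phrase the inequalities as statements about counts: ``at least $i$ vertices of $H$ have $H$-degree $\le i+k-j$'' becomes ``at least $i$ vertices of $G$ have $G$-degree $\le i+k$'', whence $d_i\le i+k$; likewise ``at least $n-i-k$ vertices of $H$ have $H$-degree $\le n-j-i-1$'' gives $d_{n-i-k}\le n-i-1$. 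This settles the inductive step whenever a bad set of positive size exists.

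The only remaining possibility is $j=0$, i.e.\ $G$ itself is not Hamiltonian, where there is no parameter left to decrease. Here I would invoke Theorem~\ref{thmchv} directly on $G$ to get $i_0$ with $1\le i_0<\tfrac n2$, $d_{i_0}\le i_0$ and $d_{n-i_0}\le n-i_0-1$, and then slide the index: set $i:=\max\{1,i_0-k\}$ and confirm by elementary monotonicity checks that $1\le i<\tfrac{n-k}{2}$, that $d_i\le d_{i_0}\le i_0\le i+k$, and that $d_{n-i-k}\le d_{n-i_0}\le n-i_0-1\le n-i-1$ (using $i\le i_0$ and $n-i-k\le n-i_0$).

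I do not expect any individual step to be hard; the point requiring care is the bookkeeping --- verifying that the reduced pair $(m,k-j)$ really meets the hypotheses of the theorem so the induction is legitimate, that indices such as $n-i-k$ stay within the degree sequence of the smaller graph $H$, and that the transfer of degree-sequence inequalities between $G$ and $G-X$ is carried out in the right direction (which is why recasting ``$d_t\le s$'' as ``at least $t$ vertices have degree at most $s$'' is the convenient formulation). A different route would be to first replace $G$ by its $(n+k)$-closure via Theorem~\ref{thmCKL} and then run an Ore-type exchange argument inside $G-X$ in the style of the $k=0$ proof, but the induction on $k$ seems shorter and avoids redoing that exchange.
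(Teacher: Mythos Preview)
The paper does not prove Theorem~\ref{thm323}; it is quoted from Chv\'{a}tal \cite{Chvatal72} as a tool and used (e.g.\ in the proof of Theorem~\ref{thmhnkd}) without argument. So there is no ``paper's own proof'' to compare against.

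Your inductive argument is correct. The inductive step ($j\ge 1$) is clean: the identity $m-i-(k-j)=n-i-k$ makes the index bookkeeping transparent, and recasting $d_t\le s$ as ``at least $t$ vertices have degree $\le s$'' is exactly the right device for transferring the inequalities from $H$ back to $G$ after restoring $X$. The check that $(m,k-j)$ satisfies $0\le k-j\le m-3$ reduces to $n\ge k+3$, which is the hypothesis. In the $j=0$ branch your index shift $i:=\max\{1,i_0-k\}$ also goes through: in the case $i=i_0-k$ one has $n-i-k=n-i_0$ so the second inequality is literally Chv\'{a}tal's, while in the case $i=1$ the needed bound $1<\tfrac{n-k}{2}$ is again just $n\ge k+3$, and $d_{n-1-k}\le d_{n-i_0}\le n-i_0-1\le n-2$ uses only $i_0\ge 1$ and $i_0\le k$.

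This is a different route from the usual proof (which works with the $(n+k)$-closure via Theorem~\ref{thmCKL} and reruns the Ore-type exchange argument on a longest path in $G-X$). Your reduction buys you economy: you never reopen the exchange argument, and the two parameters $n$ and $k$ move in lockstep under deletion, so the index arithmetic is essentially forced. The closure approach, by contrast, gives slightly more structural information about the extremal configurations, but for the bare degree-sequence statement your induction is the shorter path.
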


 In \cite{FengLAA17}, Feng et al. obtained sufficient conditions    
 for a  graph  to be $k$-Hamiltonian. 

\begin{theorem}[Feng et al. \cite{FengLAA17}] \label{thmf42}
Let $k\ge 1$, and let $G$ be a graph of order $n\ge k+6$. 
If $e(G) \geq {n-1 \choose 2} + k+1$, then either $G$ is $k$-Hamiltonian or  $G=K_{k+1}\vee(K_1\cup K_{n-k-2})$.
\end{theorem}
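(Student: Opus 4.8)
The plan is to imitate the edge-counting proof of the $k=0$ case (Theorem~\ref{coroore}), replacing the classical Chv\'{a}tal condition for Hamiltonicity by its $k$-Hamiltonian analogue, Theorem~\ref{thm323}. First I would argue by contradiction: suppose $G$ is not $k$-Hamiltonian. Since $k\ge 1$ and $n\ge k+6$ we have $0\le k\le n-3$, so Theorem~\ref{thm323} yields an integer $i$ with $1\le i<\frac{n-k}{2}$, $d_i\le i+k$ and $d_{n-i-k}\le n-i-1$, where $d_1\le\cdots\le d_n$ is the degree sequence. Exploiting monotonicity of the degree sequence, I would split the vertices into the three blocks of sizes $i$, $n-2i-k$, $i+k$ and bound the degrees there by $i+k$, $n-i-1$, $n-1$, which gives
\[
2e(G)=\sum_{j=1}^{n}d_j\le i(i+k)+(n-2i-k)(n-i-1)+(i+k)(n-1)=:f(i).
\]

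The second step is to show $f(i)\le (n-1)(n-2)+2k+2=2\bigl(\binom{n-1}{2}+k+1\bigr)$ for every admissible $i$. Setting $g(i):=f(i)-(n-1)(n-2)-2k-2$, a direct expansion yields $g(i)=3i^{2}+(2k+1-2n)i+2(n-k-2)$, an upward-opening parabola with $g(1)=0$ and second root $i^{*}=\tfrac{2}{3}(n-k-2)$. A short computation with $n\ge k+6$ shows that the largest admissible integer $i$ is strictly smaller than $i^{*}$, so $g(i)\le 0$ for every admissible $i$, with equality exactly at $i=1$. Combining this with the hypothesis $e(G)\ge\binom{n-1}{2}+k+1$ forces the chain $2\bigl(\binom{n-1}{2}+k+1\bigr)\le 2e(G)\le f(i)\le f(1)=2\bigl(\binom{n-1}{2}+k+1\bigr)$ to collapse to equalities. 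Hence $i=1$, and all the block-wise degree estimates are tight, which pins down the degree sequence to $d_1=k+1$, $d_2=\cdots=d_{n-k-1}=n-2$, $d_{n-k}=\cdots=d_n=n-1$ (three distinct values because $n\ge k+6$).

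The final step is to reconstruct $G$ from this rigid degree sequence. Let $Y$ be the set of the $k+1$ vertices of degree $n-1$ (so each is universal), $v$ the unique vertex of degree $k+1$, and $Z$ the remaining $n-k-2$ vertices of degree $n-2$. Then $v$ is adjacent to all of $Y$ and hence to no vertex of $Z$; each $z\in Z$ is adjacent to all of $Y$ but not to $v$, and therefore to all of $Z\setminus\{z\}$, so $Z$ induces $K_{n-k-2}$. Consequently $G=K_{k+1}\vee(K_1\cup K_{n-k-2})$. To round things off I would verify directly that this graph has precisely $\binom{n-1}{2}+k+1$ edges and is not $k$-Hamiltonian: deleting $k$ of the $k+1$ universal vertices leaves $K_1\vee(K_1\cup K_{n-k-2})$, whose single universal vertex is a cut vertex.

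The bound $2e(G)\le f(i)$ and the final reconstruction are routine bookkeeping; the one place that really uses the hypothesis $n\ge k+6$, and hence the only genuinely delicate point, is proving that $g(i)\le 0$ with \emph{unique} equality at $i=1$. Indeed the uniqueness would fail for, say, $k=0,n=5$ (where $i=2$ also gives equality, producing the extra exceptional graph $K_2\vee I_3$ appearing in Theorem~\ref{coroore}), so the argument must quantify how far $i^{*}$ sits above $\tfrac{n-k}{2}$ once $n\ge k+6$.
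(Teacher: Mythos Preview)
Your proof is correct and follows essentially the same route as the paper's proof of the generalization Theorem~\ref{thmhnkd} (which specializes to this statement at $\delta=k+1$): apply Chv\'{a}tal's degree condition Theorem~\ref{thm323}, bound $2e(G)$ by the same quadratic $3i^{2}-(2n-2k-1)i+n^{2}-n$, and use $n\ge k+6$ to force $i=1$ and pin down the degree sequence. The one difference is that the paper first passes to the $(n+k)$-closure $H=\mathrm{cl}_{n+k}(G)$ and then uses the closure property (non-adjacent pairs have degree sum $<n+k$) to reconstruct the graph, whereas you work directly with $G$ and reconstruct purely from the degree sequence; your shortcut is legitimate in this special case because once $k+1$ vertices are universal and one vertex has degree exactly $k+1$, the graph is already uniquely determined, so the closure machinery---needed in the paper for general $\delta$---is unnecessary here.
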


In this survey, we shall provide an extension by giving minimum degree. 
When $\delta =k+1$, the following theorem reduces to Theorem \ref{thmf42}. 
For convenience, we denote by 
\[ 
  \boxed{H_{n,k,\delta } := K_\delta \vee ( K_{n-2\delta +k} \cup I_{\delta -k})   } 
\] 
 and 
\[ 
 \boxed{ L_{n,k,\delta}:=K_{k+1}\vee( K_{n-\delta-1}\cup K_{\delta-k}). }
\] 
It is easy to see that 
both $H_{n,k,\delta } $ and $L_{n,k,\delta}$ 
are not $k$-Hamiltonian and has minimum degree 
$\delta (H_{n,k,\delta })=\delta (L_{n,k,\delta})=\delta$. 
 In particular, when $k=0$, we can see that 
 $H_{n,0,\delta}$ is  the same as $H_{n,\delta}$, 
 and $L_{n,0,\delta}$ is the same as $L_{n,\delta}$, which are 
 defined in equations (\ref{eqhnd}) and (\ref{eqlnd}).  

\begin{theorem} \label{thmhnkd}
Let  $\delta \ge k+1\ge 1$ and $n\ge 6\delta -5k$. 
If $G$ is a graph on $n$ vertices with 
  minimum degree $\delta (G)\ge \delta $ and 
\begin{equation*}
 e(G)\ge e(H_{n,k,\delta }), 
\end{equation*}
then either $G$ is $k$-Hamiltonian 
or $G=H_{n,k,\delta }$. 
\end{theorem}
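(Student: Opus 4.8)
The plan is to prove the contrapositive: if $G$ is an $n$-vertex graph with $\delta(G)\ge\delta$ that is \emph{not} $k$-Hamiltonian, then $e(G)\le e(H_{n,k,\delta})$, with equality forcing $G=H_{n,k,\delta}$. The main tool is Chv\'atal's degree-sequence criterion for $k$-Hamiltonicity (Theorem \ref{thm323}): if $G$ has degree sequence $d_1\le\cdots\le d_n$ and is not $k$-Hamiltonian, there is an integer $i$ with $1\le i<\frac{n-k}{2}$, $d_i\le i+k$ and $d_{n-i-k}\le n-i-1$. Because every degree is at least $\delta$, the relation $\delta\le d_i\le i+k$ yields $i\ge\delta-k$, so $i$ lies in the integer interval $[\delta-k,\frac{n-k}{2})$. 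Splitting the vertices into the first $i$ (degree $\le i+k$), the middle $n-2i-k$ (degree $\le n-i-1$), and the last $i+k$ (degree $\le n-1$) and adding up gives
\[ 2e(G)\ \le\ f(i):=i(i+k)+(n-2i-k)(n-i-1)+(i+k)(n-1)=3i^2-(2n-2k-1)i+n^2-n. \]

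So it remains to show that $f(i)\le f(\delta-k)$ for every admissible integer $i$, together with the identity $f(\delta-k)=2e(H_{n,k,\delta})$. The identity is a direct computation once one observes $K_\delta\vee K_{n-2\delta+k}=K_{n-\delta+k}$, so that $e(H_{n,k,\delta})=\binom{n-\delta+k}{2}+\delta(\delta-k)$. For the inequality: $f$ is a strictly convex quadratic minimized at $i^{*}=\frac{2n-2k-1}{6}$, and the hypotheses $\delta\ge k+1$ and $n\ge 6\delta-5k$ give $\delta-k\le i^{*}$, so over the admissible interval $f$ first decreases from $f(\delta-k)$ and then increases; hence its maximum over admissible integers is attained at $i=\delta-k$ or at the largest admissible integer $i_R=\lceil\frac{n-k}{2}\rceil-1$. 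A short arithmetic check, again using $n\ge 6\delta-5k$, shows both $f(i_R)<f(\delta-k)$ and that the reflected point $2i^{*}-(\delta-k)$ --- the only other real solution of $f(x)=f(\delta-k)$ --- lies strictly above $i_R$; therefore $f(i)\le f(\delta-k)$ for all admissible $i$, with equality exactly at $i=\delta-k$. This already yields $e(G)\le e(H_{n,k,\delta})$.

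For the equality case, $e(G)=e(H_{n,k,\delta})$ forces $i=\delta-k$ and makes every inequality in the degree sum tight, so $G$ has precisely $\delta-k$ vertices of degree $\delta$, precisely $n-2\delta+k$ of degree $n-\delta+k-1$, and precisely $\delta$ of degree $n-1$. The $\delta$ vertices of degree $n-1$ are universal; deleting them leaves an $(n-\delta)$-vertex graph in which the former degree-$\delta$ vertices are isolated and the other $n-2\delta+k$ vertices each have degree $n-2\delta+k-1$, hence induce $K_{n-2\delta+k}$. Thus $G=K_\delta\vee(K_{n-2\delta+k}\cup I_{\delta-k})=H_{n,k,\delta}$. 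One also records the routine facts that $\delta(H_{n,k,\delta})=\delta$ and that $H_{n,k,\delta}$ is not $k$-Hamiltonian: deleting $k$ vertices of its $K_\delta$-part leaves a graph whose remaining $\delta-k$ apex vertices form a cutset splitting it into $\delta-k+1>\delta-k$ components, where one uses $n-2\delta+k\ge 4$ (a consequence of $n\ge 6\delta-5k$). I expect the delicate point to be the equality analysis in the second paragraph: arranging the convexity estimate so that the threshold $n\ge 6\delta-5k$ is exactly what keeps the reflected root of $f$ out of the admissible range --- thereby ruling out a spurious second extremal configuration --- and then converting the pinned-down degree sequence into the isomorphism $G\cong H_{n,k,\delta}$.
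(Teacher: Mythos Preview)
Your proof is correct and follows essentially the same route as the paper: apply Chv\'atal's degree-sequence criterion, bound $2e(G)$ by the convex quadratic $f(i)=3i^2-(2n-2k-1)i+n^2-n$, and use $n\ge 6\delta-5k$ to force the maximum at $i=\delta-k$. The one noteworthy difference is that the paper first passes to the $(n+k)$-closure $H=\mathrm{cl}_{n+k}(G)$ and applies Chv\'atal to $H$; in the equality case this gives $e(G)=e(H)$ and hence $G=H$, after which the closure property (any non-edge has degree sum $<n+k$) is used to show $G[Y]=K_{n-\delta+k}$. You bypass closure entirely: you apply Chv\'atal directly to $G$, and in the equality case read off the structure from the fact that the $\delta$ vertices of degree $n-1$ are universal, which pins down the neighbourhoods of the degree-$\delta$ vertices and forces the remaining $n-2\delta+k$ vertices to form a clique by a degree count. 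Your argument is slightly more direct; the paper's closure detour is not strictly necessary here, though it is the device used uniformly throughout the survey for the analogous structural steps.
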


\begin{proof}
Suppose that $G$ is not $k$-Hamiltonian. 
According to~Theorem 
\ref{thmCKL}, we know that  
the closure graph $H:=\mathrm{cl}_{n+k}(G)$ is also not $k$-Hamiltonian. 
Let $d_1\le d_2\le \cdots \le d_n$ be the degree sequence of $H$.  
By Theorem~\ref{thm323}, 
there exists an integer $1\le m \le \frac{n-k-1}{2}$  
such that $d_m \le m+k$ and $d_{n-m-k} \le n-m-1$. 
Hence, we have 
\begin{align}
\notag 2e(H)=\sum\limits_{i=1}^n d_i 
&\le m(m+k)+ (n-2m-k)(n-m-1)+(m+k)(n-1) \\
&=3m^2 -(2n-2k-1)m +n^2-n.  \label{eq11}
\end{align}
First of all, we denote  
$f(m)=3m^2-(2n-2k-1)m$. Since $\delta \le \delta (G)\le \delta (H)\le m+k$,  
we can see that $\delta -k\le m \le \frac{n-k-1}{2}$. 
The quadratic function $f(m)$ is minimized at $m_0=\frac{2n-2k-1}{6}$.  
Note that $n\ge 6\delta -5k$ implies  $m_0-(\delta -k) > \frac{n-k-1}{2}-m_0$. 
Therefore,  we have $f(m)$ is maximized at $m=\delta -k$. Then 
\[ 2e(G)\le 2e(H) \le f(\delta -k) +n^2-n=
2e(K_{\delta} \vee ( K_{n-2\delta+k}\cup I_{\delta -k})).  \]
By the condition, we get 
$e(G)=e(K_{\delta} \vee ( K_{n-2\delta +k} \cup I_{\delta -k}))$. 
All the inequalities above become equalities, that is 
$m=\delta -k$ and $G=\mathrm{cl}_{n+k}(G)$, moreover 
the degrees of $G$ are $d_1=\cdots =d_{\delta -k}=\delta, 
d_{\delta -k+1}=\cdots =d_{n- \delta }=n-\delta +k-1$
 and $d_{n-\delta +1}=\cdots =d_n=n-1$. 
Next we shall prove that 
$G=K_{\delta} \vee ( K_{n-2\delta +k} \cup I_{\delta -k})$. 

Recall that $d_1,d_2,\ldots ,d_n$ are the degrees of $G$, 
and the corresponding vertices are $v_1,v_2, \ldots ,v_n$.  
We next denote by $X=\{v_1,v_2,\ldots ,v_{\delta -k}\}$ 
and $Y=\{v_{\delta -k+1},\ldots ,v_n\}$. 
We next show that  the induced subgraph $G[Y]=K_{n-\delta +k}$. 
Otherwise, we choose two non-adjacent vertices $v_r,v_s \in Y$. 
By noting that $G=\mathrm{cl}_{n+k}(G)$, then the  non-adjacency of $v_r$ and $v_s$ 
implies that  
$ d_r+d_s  <n+k. $ 
On the other hand, we know from the definition of $Y$ that  
$d_r+d_s \ge 2(n-\delta +k-1)>n+k$, a contradiction. 
 Thus we get $G[Y]=K_{n-\delta +k}$. 
 
 The previous result on degree sequence implies that 
there are $\delta$ vertices in $Y$ with degree $n-1$,  
we now denote these vertices by $F=\{v_{n-\delta +1},\ldots ,v_n\}$.  
Let $G[X,F]$ be the induced bipartite subgraph in $G$ 
between vertex sets $X$ and $F$. 
Finally, we shall show that $G[X,F]=K_{\delta -k,\delta}$. 
Note that $G=\mathrm{cl}_{n+k}(G)$, 
which implies that every two vertices with 
the degree sum at least $n+k$ are adjacent.  
Thus, every vertex in $X$ is adjacent to 
every vertex in $F$. 
So we can see that $G[X,F]=K_{\delta -k,\delta }$, 
and then $X$ is an independent set in $G$. 
Hence we get $G=K_{\delta} \vee ( K_{n-2\delta +k}\cup 
I_{\delta -k})$.  
\end{proof}

 Recently, by utilizing   the degree sequences and the closure concept,  
  Liu et al.  \cite{LiuDMGT} generalized Theorem \ref{thmniki} to  $k$-Hamiltonian graphs.
Their results can be considered as the spectral counterpart for the above Dirac-type condition.
We mention here that 
there is a tiny typo at the end of the proof in \cite[Theorem 4]{LiuDMGT}  
since the extremal graph is not the only one.  
Clearly, the graph $H_{n,k,\delta } 
= K_\delta \vee ( K_{n-2\delta +k} \cup I_{\delta -k})$ is not 
$k$-Hamiltonian and 
$\lambda (H_{n,k,\delta }) > n-\delta +k -1$. 
By a careful modification, the correct result should be the following. 

 \begin{theorem}[Liu et al. \cite{LiuDMGT}]  \label{thmain}
Let $k \geq 0$, $\delta\geq k+2$ and $n\geq n_0(k,\delta)$ where 
\begin{align*}
 n_0(k,\delta ) &=\max\{ 2\delta^2-2k\delta+2\delta-k+2, (\delta -k )(k^2+2k+5)+1 \}. 
 \end{align*}
If $G$ is a connected graph of order $n$ with minimum degree $\delta(G)\geq \delta$ and 
  $$\lambda(G)\geq n-\delta+k-1,$$
then $G$ is $k$-Hamiltonian unless $G=H_{n,k,\delta }$ 
or $G=L_{n,k,\delta}$.
\end{theorem}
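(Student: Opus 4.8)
The plan is to run the three-stage argument that has become the standard route for spectral Dirac-type theorems (as in Li--Ning's proof of Theorem~\ref{thmln16a} and Nikiforov's proof of Theorem~\ref{thmniki}), now fed by the $k$-Hamiltonian ingredients: the Bondy--Chv\'{a}tal closure result Theorem~\ref{thmCKL}, Chv\'{a}tal's degree condition Theorem~\ref{thm323}, and the edge-extremal statement Theorem~\ref{thmhnkd}. Throughout, suppose $G$ is connected, $\delta(G)\ge\delta$, $\lambda(G)\ge n-\delta+k-1$, and $G$ is \emph{not} $k$-Hamiltonian; the goal is to conclude $G=H_{n,k,\delta}$ or $G=L_{n,k,\delta}$. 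Observe first that $H_{n,k,\delta}=K_{\delta}\vee(K_{n-2\delta+k}\cup I_{\delta-k})$ and $L_{n,k,\delta}=K_{k+1}\vee(K_{n-\delta-1}\cup K_{\delta-k})$ each contain $K_{n-\delta+k}$ as a proper subgraph, so $\lambda(H_{n,k,\delta})>n-\delta+k-1$ and $\lambda(L_{n,k,\delta})>n-\delta+k-1$; hence both satisfy the hypothesis and must appear among the exceptional graphs.

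\emph{Stage 1: spectral hypothesis $\Rightarrow$ an edge bound.} Since $G$ is connected, Hong's inequality gives $\lambda(G)\le\sqrt{2e(G)-n+1}$, so $2e(G)\ge(n-\delta+k-1)^2+n-1$. Comparing this with the exact value $e(H_{n,k,\delta+1})=\binom{\delta+1}{2}+\binom{n-2\delta-2+k}{2}+(\delta+1)(n-\delta-1)$, one checks that the two quantities, as polynomials in $n$, have leading coefficient $\tfrac12$ but differ in their linear coefficient, so $e(G)>e(H_{n,k,\delta+1})$ once $n\ge n_0(k,\delta)$; this is one place the threshold is used.

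\emph{Stage 2: stability --- locating $G$ inside $H_{n,k,\delta}$ or $L_{n,k,\delta}$.} Pass to $H:=\mathrm{cl}_{n+k}(G)$, which by Theorem~\ref{thmCKL} is still not $k$-Hamiltonian and has $e(H)\ge e(G)>e(H_{n,k,\delta+1})$ and $\delta(H)\ge\delta$. By Theorem~\ref{thm323} there is an index $m$ with $d_m(H)\le m+k$ and $d_{n-m-k}(H)\le n-m-1$; the minimum-degree hypothesis forces $m\ge\delta-k$, and the edge estimate $2e(H)\le3m^2-(2n-2k-1)m+n^2-n$ from the proof of Theorem~\ref{thmhnkd}, together with the convexity of $m\mapsto3m^2-(2n-2k-1)m$ on $[\delta-k,\frac{n-k-1}{2}]$ and the lower bound $e(H)>e(H_{n,k,\delta+1})$, pins $m=\delta-k$ and confines the degree sequence of $H$ to a narrow window: exactly $\delta-k$ vertices have degree $\delta$, and the other $n-\delta+k$ vertices have degree $\ge n-\delta+k-1$ and hence, since non-adjacent vertices of $H$ have degree sum $<n+k$, induce a clique $K_{n-\delta+k}$. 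Analysing how the $\delta-k$ low-degree vertices attach to this clique --- as an independent set joined to $\delta$ of its vertices, or as a clique $K_{\delta-k}$ joined to $k+1$ of its vertices --- and discarding intermediate configurations via the edge bound, one obtains $G\subseteq H\subseteq H_{n,k,\delta}$ or $G\subseteq H\subseteq L_{n,k,\delta}$; a little extra care is needed for the borderline indices and for the lower end of the range of $n$.

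\emph{Stage 3: spectral rigidity --- the main obstacle.} It remains to prove that a proper spanning subgraph $G$ of $H_{n,k,\delta}$ (respectively of $L_{n,k,\delta}$) with $\delta(G)\ge\delta$ satisfies $\lambda(G)<n-\delta+k-1$; together with the hypothesis this forces $G$ to equal $H_{n,k,\delta}$ or $L_{n,k,\delta}$. The minimum-degree constraint is decisive here: it forbids deleting any edge meeting one of the $\delta-k$ vertices of degree $\delta$, so every missing edge lies inside the dense clique $K_{n-\delta+k}$. One then estimates, via a Rayleigh quotient against a test vector supported on that clique, or via the equitable-partition/quotient-matrix technique used in \cite[Theorem~1.6]{Nikiforov}, that deleting even a single such edge drops $\lambda$ below $n-\delta+k-1$, while the $O(\delta)$ ``outgrowth'' edges of $H_{n,k,\delta}$ and $L_{n,k,\delta}$ contribute far less to $\lambda$ than one edge of the clique does. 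Turning this into a uniform quantitative statement --- valid for every admissible edge deletion and for both families $H_{n,k,\delta}$, $L_{n,k,\delta}$ --- is the delicate core of the proof: the spectral gap $\lambda(H_{n,k,\delta})-(n-\delta+k-1)$ and the per-edge drop are both small and shrink with $n$, so the inequality between them is precisely what forces $n$ to be large relative to $\delta$ and $k$, as encoded in $n_0(k,\delta)$. (For $k=0$ this recovers the mechanism behind Theorem~\ref{thmniki}.)
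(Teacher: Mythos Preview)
Your proposal is correct and follows essentially the approach indicated in the paper: this is a survey and does not give a full proof of Theorem~\ref{thmain}, but it explicitly records that Liu et~al.\ prove it ``by utilizing the degree sequences and the closure concept'' to generalize Nikiforov's Theorem~\ref{thmniki}, and the surrounding discussion (the edge-extremal Theorem~\ref{thmhnkd}, the stability mechanism from \cite{LiBinlong}, and the Nikiforov rigidity argument in \cite[Theorem~1.6]{Nikiforov}) is precisely the three-stage template you lay out. Your Stage~1 edge bound via Hong's inequality, Stage~2 passage to $\mathrm{cl}_{n+k}(G)$ plus Chv\'{a}tal's Theorem~\ref{thm323} to force $m=\delta-k$ and embed $G$ in $H_{n,k,\delta}$ or $L_{n,k,\delta}$, and Stage~3 spectral rigidity showing any proper subgraph with $\delta(G)\ge\delta$ has $\lambda<n-\delta+k-1$, match the cited methodology exactly.
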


Since  $\lambda(H_{n,k,\delta })$ contains $K_{n-\delta+k}$ as a subgraph, we have 
$\lambda(H_{n,k,\delta } )\geq n-\delta+k-1$. 
Note that $\lambda (H_{n,k,\delta }) > \lambda (L_{n,k,\delta})$. 
So we can immediately 
get the spectral version of Theorem \ref{thmhnkd} 
for graphs with sufficient large order.

 \begin{corollary}\label{thmaincol1}
Let $k \geq 0$, $\delta\geq k+2$ and $n\geq n_0(k,\delta)$. 
If $G$ is a connected graph of order $n$ with minimum degree $\delta(G)\geq \delta$ and 
  $$\lambda(G)\geq\lambda(H_{n,k,\delta } ) ,$$
then $G$ is $k$-Hamiltonian unless $G=H_{n,k,\delta }$. 
\end{corollary}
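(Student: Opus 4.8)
The plan is to deduce Corollary~\ref{thmaincol1} directly from Theorem~\ref{thmain}, together with two elementary spectral comparisons. First I would record that the graph $H_{n,k,\delta}=K_{\delta}\vee(K_{n-2\delta+k}\cup I_{\delta-k})$ contains the clique $K_{\delta}\vee K_{n-2\delta+k}=K_{n-\delta+k}$ as a (proper) subgraph, so by monotonicity of the spectral radius under edge/vertex deletion, $\lambda(H_{n,k,\delta})> \lambda(K_{n-\delta+k})=n-\delta+k-1$. Consequently the hypothesis $\lambda(G)\ge\lambda(H_{n,k,\delta})$ already gives $\lambda(G)\ge n-\delta+k-1$. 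The remaining hypotheses of Theorem~\ref{thmain}, namely $k\ge 0$, $\delta\ge k+2$, $G$ connected of order $n\ge n_0(k,\delta)$ with $\delta(G)\ge\delta$, are carried over verbatim, so Theorem~\ref{thmain} applies and yields that $G$ is $k$-Hamiltonian, or $G=H_{n,k,\delta}$, or $G=L_{n,k,\delta}$.

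The second step is to eliminate the third alternative. Here I would invoke the strict inequality $\lambda(H_{n,k,\delta})>\lambda(L_{n,k,\delta})$: if $G=L_{n,k,\delta}$ then $\lambda(G)=\lambda(L_{n,k,\delta})<\lambda(H_{n,k,\delta})\le\lambda(G)$, a contradiction. Hence $G$ is $k$-Hamiltonian or $G=H_{n,k,\delta}$, which is exactly the assertion of the corollary. (One should also observe that $H_{n,k,\delta}$ itself is not $k$-Hamiltonian, so the exceptional case is genuine; this is already noted when $H_{n,k,\delta}$ is introduced.)

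The one point requiring real work — and thus the main obstacle — is the comparison $\lambda(H_{n,k,\delta})>\lambda(L_{n,k,\delta})$. A crude subgraph argument does not suffice, since $L_{n,k,\delta}=K_{k+1}\vee(K_{n-\delta-1}\cup K_{\delta-k})$ also has a largest clique of order $n-\delta+k$ (namely $K_{k+1}\vee K_{n-\delta-1}$). The clean way is to use the Kelmans transformation: $H_{n,k,\delta}$ is obtained from $L_{n,k,\delta}$ by Kelmans operations that shift the vertices of the pendant clique $K_{\delta-k}$ onto the independent set attached to the hub $K_{\delta}$, and each such operation strictly increases the spectral radius once the graph genuinely changes, exactly as in the case $k=0$ treated in \cite[Lemma~6]{LiBinlong}. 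Alternatively, one can compare the characteristic polynomials of the (bounded-size) quotient matrices coming from the natural equitable partitions of $H_{n,k,\delta}$ and $L_{n,k,\delta}$, evaluated near $n-\delta+k-1$; this is a finite but more tedious computation. Either route completes the proof.
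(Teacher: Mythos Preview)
Your proposal is correct and matches the paper's approach exactly: the paper notes that $K_{n-\delta+k}$ is a subgraph of $H_{n,k,\delta}$ to get $\lambda(H_{n,k,\delta})\ge n-\delta+k-1$, invokes Theorem~\ref{thmain}, and then rules out $L_{n,k,\delta}$ via the strict inequality $\lambda(H_{n,k,\delta})>\lambda(L_{n,k,\delta})$. The paper also justifies this last inequality by the Kelmans operation (applying Kelmans moves to $L_{n,k,\delta}$ yields a proper subgraph of $H_{n,k,\delta}$), precisely the route you identified.
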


Recall that 
$H_{n,k,\delta } = K_\delta \vee ( K_{n-2\delta +k} \cup I_{\delta -k})$.  
Let $X$ be the set of $\delta -k$ vertices forming 
by the independent set $I_{\delta -k}$, 
$Y$ be the set of $\delta$ vertices  corresponding to 
 the clique $K_{\delta}$ 
and $Z$ be the set of the remaining 
$n-2\delta +k$ vertices consisting of the clique 
$K_{n-2\delta +k}$. 
This notation is clear although it is not standard. 
We write $E_1(H_{n,k,\delta })$ for the 
set of edges of $H_{n,k,\delta }$ 
whose both endpoints are from $Y\cup Z$. 
Furthermore, we define 
the family $\mathcal{H}_{n,k,\delta}^{(1)}$ of graphs 
as below. 
\[ \mathcal{H}_{n,k,\delta}^{(1)} = 
\left\{ H_{n,k,\delta} \setminus E': E' \subseteq E_1(H_{n,k,\delta})
 ~\text{with}~|E'|\le 
\lfloor {\delta (\delta -k)}/{4}\rfloor \right\}. \]
Here, we write $H_{n,k,\delta} \setminus E'$ for the graph obtained from 
$H_{n,k,\delta}$ by deleting all edges of the edge set $E'$.  
Similarly, for the graph $L_{n,k,\delta} 
=K_{k+1}\vee( K_{n-\delta-1}\cup K_{\delta-k})$,  
we denote  
$X$ by the set of vertices corresponding to 
the clique $K_{\delta -k}$, 
$Y$ by the set of vertices 
corresponding to the clique $K_{k+1}$, 
and $Z$ by the set of the 
remaining $n-\delta -1$ vertices. 
We write $E_1(L_{n,k,\delta })$ for the 
set of edges of $L_{n,k,\delta }$ 
whose both endpoints are from $Y\cup Z$. 
Moreover, we define 
the family $\mathcal{L}_{n,k,\delta}^{(1)}$ of graphs 
as follows. 
\[ \mathcal{L}_{n,k,\delta}^{(1)} = 
\left\{ L_{n,k,\delta} \setminus E': E' \subseteq E_1(L_{n,k,\delta})
 ~\text{with}~|E'|\le 
\lfloor {(k+1) (\delta -k)}/{4}\rfloor \right\}. \]

Recently, Li and Peng \cite{LLF2021} 
presented 
the following 
 signless Laplacian spectral conditions 
 for $k$-Hamiltonian graphs with large minimum 
 degree. 

\begin{theorem}[Li--Peng \cite{LLF2021}] 
Let $k\ge 0,\delta \ge k+2$ and 
$n$ be sufficiently large. 
If $G$ is an $n$-vertex graph  with minimum degree $\delta(G)\geq \delta$ and 
  $$q (G)\geq 2(n-\delta+k-1),$$
then $G$ is $k$-Hamiltonian 
unless $G\in \mathcal{H}_{n,k,\delta }^{(1)}$ 
or $G\in \mathcal{L}_{n,k,\delta}^{(1)}$.
\end{theorem}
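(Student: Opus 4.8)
The plan is to follow the two‑phase strategy that is by now standard for spectral Hamiltonicity problems (compare Theorems \ref{thmllp} and \ref{thmain}): first convert the hypothesis $q(G)\ge 2(n-\delta+k-1)$ into strong edge‑count and degree‑sequence information via a stability argument, and then use localized Rayleigh‑quotient estimates to decide exactly which near‑extremal graphs survive. \emph{The easy direction first.} I would check that every $G\in\mathcal{H}_{n,k,\delta}^{(1)}\cup\mathcal{L}_{n,k,\delta}^{(1)}$ is not $k$-Hamiltonian and satisfies $q(G)\ge 2(n-\delta+k-1)$. Non‑$k$-Hamiltonicity is inherited from $H_{n,k,\delta}$ and $L_{n,k,\delta}$, since deleting edges only destroys cycles, and those two graphs fail the Chv\'{a}tal condition of Theorem \ref{thm323}. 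For the spectral bound I would reuse the Rayleigh computation of the excerpt: on $H_{n,k,\delta}=K_\delta\vee(K_{n-2\delta+k}\cup I_{\delta-k})$, note that the $K_\delta$-part $Y$ together with the clique $Z=K_{n-2\delta+k}$ spans a $K_{n-\delta+k}$; letting $\bm h$ be the indicator of $Y\cup Z$, the vector $\bm h$ is an eigenvector of $K_{n-\delta+k}\cup I_{\delta-k}$ for $2(n-\delta+k-1)$, and for $G=H_{n,k,\delta}\setminus E'$ with $E'\subseteq E_1(H_{n,k,\delta})$ one has $\bm h^{T}Q(G)\bm h-\bm h^{T}Q(K_{n-\delta+k}\cup I_{\delta-k})\bm h=\delta(\delta-k)-4|E'|\ge 0$ whenever $|E'|\le\lfloor\delta(\delta-k)/4\rfloor$, so the Rayleigh quotient gives $q(G)\ge 2(n-\delta+k-1)$; the $\mathcal{L}_{n,k,\delta}^{(1)}$ case is identical with the bound $\lfloor(k+1)(\delta-k)/4\rfloor$.

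\emph{The hard direction.} Now assume $G$ is an $n$-vertex graph with $\delta(G)\ge\delta$ and $q(G)\ge 2(n-\delta+k-1)$ that is not $k$-Hamiltonian; the goal is $G\in\mathcal{H}_{n,k,\delta}^{(1)}\cup\mathcal{L}_{n,k,\delta}^{(1)}$. In Step~1 I would combine a standard bound relating $q$ to edges (for instance $q(G)\le\frac{2e(G)}{n-1}+n-2$, already used in the excerpt to prove Theorem \ref{thmYF}) with the degree information produced by applying Chv\'{a}tal's Theorem \ref{thm323} to the non‑$k$-Hamiltonian graph $\mathrm{cl}_{n+k}(G)$ (whose min degree is still at least $\delta$), so as to pin the extremal index down to $i=\delta-k$ and deduce that $e(G)$ lies within $\lfloor\delta(\delta-k)/4\rfloor$ of $e(H_{n,k,\delta})$ and that the degree sequence of $G$ closely matches that of $H_{n,k,\delta}$ or $L_{n,k,\delta}$: roughly $\delta-k$ vertices of degree near $\delta$, the rest inducing a nearly complete graph. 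In Step~2 I would invoke the stability lemma underlying Theorems \ref{thmhnkd}, \ref{thmain} and \ref{thmf42} to conclude that, after relabeling vertices, $G$ is obtained from $H_{n,k,\delta}$ or from $L_{n,k,\delta}$ by deleting a set $E'$ of at most $\lfloor\delta(\delta-k)/4\rfloor$ edges. In Step~3, if $G\subseteq H_{n,k,\delta}$, the condition $\delta(G)\ge\delta$ already forces $E'\subseteq E_1(H_{n,k,\delta})$, since every vertex of $X=I_{\delta-k}$ has degree exactly $\delta$ in $H_{n,k,\delta}$; then an eigenvector upper‑bound estimate—showing that the Perron vector of $G$ is essentially the indicator $\bm h$ of $Y\cup Z$, hence its Rayleigh quotient is $2(n-\delta+k-1)+\bigl(\delta(\delta-k)-4|E'|\bigr)/\|\bm h\|_2^{2}+o(1)$—shows that $q(G)\ge 2(n-\delta+k-1)$ is possible only when $|E'|\le\lfloor\delta(\delta-k)/4\rfloor$, i.e.\ $G\in\mathcal{H}_{n,k,\delta}^{(1)}$. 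The case $G\subseteq L_{n,k,\delta}$ is treated in parallel, using in addition $e(H_{n,k,\delta})>e(L_{n,k,\delta})$ and $q(H_{n,k,\delta})>q(L_{n,k,\delta})$ (as in the argument of Theorem \ref{thmain}, via a Kelmans‑type transformation) so that the two alternatives do not interfere.

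\emph{The main obstacle.} The delicate part is the eigenvector‑localization in Step~3: proving that the Perron vector of any non‑$k$-Hamiltonian near‑extremal $G$ is, up to $o(1)$‑error uniform in $\delta$, $k$ and in $n\ge n_0(k,\delta)$, concentrated on $Y\cup Z$ and almost constant there, tightly enough that deleting one extra edge from the clique strictly beyond the threshold $\lfloor\delta(\delta-k)/4\rfloor$ pushes $q$ below $2(n-\delta+k-1)$—so that exactly the families $\mathcal{H}_{n,k,\delta}^{(1)}$ and $\mathcal{L}_{n,k,\delta}^{(1)}$, and no larger collection, remain. A secondary difficulty is squeezing out of the signless‑Laplacian hypothesis in Step~1 an edge lower bound sharp enough for the stability lemma to apply with the \emph{exact} slack $\lfloor\delta(\delta-k)/4\rfloor$ rather than a cruder $O(\delta^{2})$ error; this is also what dictates the precise form of the threshold $n_0(k,\delta)$.
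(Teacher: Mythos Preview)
The paper is a survey and does \emph{not} supply a proof of this theorem; it simply quotes the statement from \cite{LLF2021}. So there is no ``paper's own proof'' to compare against here.

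That said, your plan is exactly the template that the survey uses (with full details) for the cognate results: the edge version in Theorem~\ref{thmhnkd}, the adjacency spectral version in Theorem~\ref{thmain}, and especially the $k=0$ signless Laplacian case in Theorem~\ref{thmllp}. The two-phase structure (closure plus Chv\'{a}tal's degree condition from Theorem~\ref{thm323} to force $G$ into a subgraph of $H_{n,k,\delta}$ or $L_{n,k,\delta}$, then a Rayleigh/Perron-vector analysis to sieve out exactly which subgraphs keep $q(G)\ge 2(n-\delta+k-1)$) is the standard one, and your Rayleigh computation for the easy direction matches the one the survey sketches just before Theorem~\ref{thmllp}.

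One point to tighten. In Step~1 you propose using $q(G)\le \frac{2e(G)}{n-1}+n-2$ to extract an edge lower bound. That inequality gives $e(G)\ge (n-1)(n-2\delta+2k)/2$, which for $\delta\ge k+2$ is already \emph{below} $e(H_{n,k,\delta+1})$; it is enough to feed into the stability step (the analogue of Lemma~2 in \cite{LiBinlong} used in Theorem~\ref{thmllp}), but it will not by itself produce the exact slack $\lfloor\delta(\delta-k)/4\rfloor$. As you correctly identify in your ``main obstacle'' paragraph, the exact threshold has to come from the Step~3 eigenvector analysis, not from Step~1. So do not try to squeeze the sharp constant out of the edge bound; use Step~1 only to land in $H_{n,k,\delta}$ or $L_{n,k,\delta}$, and let the Perron-vector localization (mirroring Nikiforov's argument in \cite{Nikiforov} for the adjacency case and the refinement in \cite{LLPLMA18} for $Q$) do the sharp cutting.
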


 \begin{corollary} 
Let $k \geq 0$, $\delta\geq k+2$ and $n$ be sufficiently large. 
If $G$ is an $n$-vertex graph with minimum degree $\delta(G)\geq \delta$ and 
  $$q(G)\geq  q(H_{n,k,\delta } ) ,$$
then $G$ is $k$-Hamiltonian unless $G=H_{n,k,\delta }$. 
\end{corollary}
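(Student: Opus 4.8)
The plan is to read this corollary off from the Li--Peng theorem stated just above, using only monotonicity of the signless Laplacian spectral radius and one comparison between two explicit graphs. First I would record that $H_{n,k,\delta}=K_\delta\vee(K_{n-2\delta+k}\cup I_{\delta-k})$ is connected and properly contains the clique $K_{n-\delta+k}$ spanned by its $K_\delta$-part together with its $K_{n-2\delta+k}$-part; since the largest eigenvalue of $Q$ strictly increases when a connected graph acquires an extra edge or vertex, $q(H_{n,k,\delta})>q(K_{n-\delta+k})=2(n-\delta+k-1)$. Hence the hypothesis $q(G)\ge q(H_{n,k,\delta})$ already forces $q(G)\ge 2(n-\delta+k-1)$, so the Li--Peng theorem applies: either $G$ is $k$-Hamiltonian, in which case there is nothing to prove, or $G\in\mathcal H_{n,k,\delta}^{(1)}\cup\mathcal L_{n,k,\delta}^{(1)}$.

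It then remains to show that among $\mathcal H_{n,k,\delta}^{(1)}\cup\mathcal L_{n,k,\delta}^{(1)}$ the only graph with $q(G)\ge q(H_{n,k,\delta})$ is $H_{n,k,\delta}$ itself -- which is indeed the stated exceptional graph and is not $k$-Hamiltonian (by Theorem~\ref{thmhnkd}, or directly: deleting $k$ vertices of the central $K_\delta$ leaves a graph in which the $\delta-k$ independent vertices and the $\delta-k$ remaining clique vertices cannot all lie on a single cycle). The key elementary fact is: if $H$ is a spanning subgraph of a connected graph $H'$ with $H\ne H'$, then $q(H)<q(H')$. Indeed $q(H)\le q(H')$ by Weyl's inequality because $Q(H')-Q(H)=\sum_{ij\in E(H')\setminus E(H)}(e_i+e_j)(e_i+e_j)^{T}$ is positive semidefinite; and if equality held, a Perron eigenvector $x\ge 0$ of $Q(H)$ would also be a Perron eigenvector of $Q(H')$ with $x_i=x_j=0$ for every edge $ij\in E(H')\setminus E(H)$, contradicting the strict positivity of the Perron eigenvector of the irreducible matrix $Q(H')$. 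Applying this with $H'=H_{n,k,\delta}$: every $G\in\mathcal H_{n,k,\delta}^{(1)}$ is a spanning subgraph of $H_{n,k,\delta}$ obtained by deleting an edge set $E'$, so if $E'\ne\emptyset$ then $q(G)<q(H_{n,k,\delta})$, contradicting the hypothesis; thus within $\mathcal H_{n,k,\delta}^{(1)}$ only $G=H_{n,k,\delta}$ survives.

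For $\mathcal L_{n,k,\delta}^{(1)}$, every such $G$ is a spanning subgraph of $L_{n,k,\delta}=K_{k+1}\vee(K_{n-\delta-1}\cup K_{\delta-k})$, so $q(G)\le q(L_{n,k,\delta})$ by monotonicity, and it suffices to prove the strict inequality $q(L_{n,k,\delta})<q(H_{n,k,\delta})$; this is the step I expect to be the main obstacle. It is the $q$-analogue of the adjacency comparison $\lambda(L_{n,\delta})<\lambda(H_{n,\delta})$ obtained via the Kelmans operation in \cite[Lemma~6]{LiBinlong}, and a parallel comparison for the signless Laplacian appears in \cite{LLPLMA18,LLF2021}; one route is to cite it. To prove it directly I would exploit that both graphs admit an equitable vertex partition into three cells, namely $(I_{\delta-k},K_\delta,K_{n-2\delta+k})$ for $H_{n,k,\delta}$ and $(K_{\delta-k},K_{k+1},K_{n-\delta-1})$ for $L_{n,k,\delta}$, so that $q(H_{n,k,\delta})$ and $q(L_{n,k,\delta})$ are the Perron roots of explicit $3\times 3$ nonnegative quotient matrices $B_H$ and $B_L$. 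For large $n$ both Perron roots lie within $O(1)$ of $2n$ while the remaining roots of $B_H$ stay bounded away from $2n$, so $q(L_{n,k,\delta})<q(H_{n,k,\delta})$ is equivalent to $\det\bigl(q(L_{n,k,\delta})I-B_H\bigr)<0$, which reduces to an elementary polynomial estimate in $n,\delta,k$ valid for all sufficiently large $n$; here one uses $\delta\ge k+2$, equivalently $e(H_{n,k,\delta})-e(L_{n,k,\delta})=\binom{\delta-k}{2}>0$. Combining the two cases yields $q(G)<q(H_{n,k,\delta})$ for every $G\in\mathcal H_{n,k,\delta}^{(1)}\cup\mathcal L_{n,k,\delta}^{(1)}$ different from $H_{n,k,\delta}$, which completes the proof.
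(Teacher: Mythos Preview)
Your proposal is correct and follows essentially the same route the paper has in mind: the corollary is stated in the paper without proof, immediately after the Li--Peng theorem, as a direct consequence; the implicit argument is precisely the one you wrote out---use $q(H_{n,k,\delta})>2(n-\delta+k-1)$ to invoke the theorem, then eliminate the exceptional families via strict monotonicity of $q$ on connected graphs and the comparison $q(L_{n,k,\delta})<q(H_{n,k,\delta})$. Your detailed justification of the strict-subgraph inequality and of the $H$ versus $L$ comparison (which the paper, as in the analogous adjacency Corollary~\ref{thmaincol1}, simply asserts) is more than the survey itself supplies.
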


\subsection{Problem for $k$-edge-Hamiltonicity}  

Recall that a graph $G$ is $k$-\textit{edge-Hamiltonian} if any collection of vertex-disjoint paths with at most $k$ edges altogether belong to a Hamiltonian cycle in $G$.

  \begin{theorem}[Kronk \cite{Kro1969}, Bondy--Chv\'{a}tal \cite{Bondy}] 
A graph $G$ is $k$-edge-Hamiltonian 
if and only if the closure graph $\mathrm{cl}_{n+k} (G)$ is $k$-edge-Hamiltonian. 
In particular, if $d(u)+d(v) \ge n+k$ for all non-edges $\{u,v\}$, 
then $\mathrm{cl}_{n+k} (G) =K_n$ and $G$ is $k$-edge-Hamiltonian. 
\end{theorem}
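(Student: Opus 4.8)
The plan is to follow the pattern of Ore's closure theorem and of Theorem~\ref{thmCKL}: reduce to a single non-edge and then iterate along the closure. Write $n=|V(G)|$. The graph $\mathrm{cl}_{n+k}(G)$ is obtained from $G$ by a sequence $G=G_0\subset G_1\subset\cdots\subset G_t=\mathrm{cl}_{n+k}(G)$ in which each $G_{i+1}=G_i+u_iv_i$ with $d_{G_i}(u_i)+d_{G_i}(v_i)\ge n+k$, and this closure is uniquely determined (exactly as for the $s$-closure recalled in the Introduction, see \cite{Bondy}). Hence it suffices to prove the local step: if $uv\notin E(G)$ and $d_G(u)+d_G(v)\ge n+k$, then $G$ is $k$-edge-Hamiltonian if and only if $G+uv$ is $k$-edge-Hamiltonian. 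Applying this at each $G_i$ gives the biconditional. The ``in particular'' clause then follows: if every non-edge of $G$ has degree sum at least $n+k$, this property is preserved under edge additions, so the closure process stops only at $K_n$; and $K_n$ is plainly $k$-edge-Hamiltonian for $n\ge 3$ and $0\le k\le n-3$, since any family of vertex-disjoint paths using at most $k<n-1$ edges leaves at least two vertices free to close the cycle.

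For the substantive direction, suppose $G+uv$ is $k$-edge-Hamiltonian and let $\mathcal{P}$ be a family of vertex-disjoint paths of $G$ using at most $k$ edges in total. Since $G\subseteq G+uv$, some Hamilton cycle $C$ of $G+uv$ contains every path of $\mathcal{P}$. If $uv\notin E(C)$, then $C\subseteq G$ and we are done. Otherwise $uv\in E(C)$; because $\mathcal{P}\subseteq G$, the edge $uv$ is not an edge of $\mathcal{P}$, so $P:=C-uv$ is a Hamilton path of $G$, say $P=v_1v_2\cdots v_n$ with $v_1=u$, $v_n=v$, still containing all of $\mathcal{P}$. Put $S=\{\,i: uv_{i+1}\in E(G)\,\}$ and $T=\{\,i: v_iv\in E(G)\,\}$; both lie in $\{1,\dots,n-1\}$, and since $uv\notin E(G)$ one has $|S|=d_G(u)$ and $|T|=d_G(v)$, whence $|S\cap T|\ge d_G(u)+d_G(v)-(n-1)\ge k+1$. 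For each $i\in S\cap T$ the cycle $v_1v_{i+1}v_{i+2}\cdots v_nv_iv_{i-1}\cdots v_2v_1$ is a Hamilton cycle of $G$ containing all of $\mathcal{P}$ except possibly the single edge $v_iv_{i+1}$. As $\mathcal{P}$ uses at most $k$ edges of $P$, it can block at most $k$ indices in $S\cap T$; since $|S\cap T|\ge k+1$, an admissible $i$ remains and produces a Hamilton cycle of $G$ through $\mathcal{P}$. This Chv\'{a}tal-type count is exactly what makes $n+k$, rather than $n$, the correct threshold.

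The reverse direction, $G$ $k$-edge-Hamiltonian $\Rightarrow$ $G+uv$ $k$-edge-Hamiltonian, is (unlike for ordinary Hamiltonicity) not automatic, because $k$-edge-Hamiltonicity is not monotone under edge addition: adding $uv$ creates new path systems, namely those that use $uv$, which must also be completed. Here the plan is: given a path system $\mathcal{P}$ of $G+uv$ with at most $k$ edges, if $uv$ is not used then $\mathcal{P}\subseteq G$ and a Hamilton cycle of $G$ works; if $uv$ lies on some $P_0\in\mathcal{P}$, delete it to split $P_0$ into subpaths $Q_u\ni u$ and $Q_v\ni v$, obtaining a path system $\mathcal{P}^-$ of $G$ with at most $k-1$ edges. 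A Hamilton cycle $C$ of $G$ through $\mathcal{P}^-$ exists, and it must be converted into a Hamilton cycle of $G+uv$ that uses $uv$ and still contains $\mathcal{P}^-$; this amounts to exhibiting a Hamilton path of $G$ from $u$ to $v$ with $Q_u$ as initial segment, $Q_v$ as terminal segment, and every other path of $\mathcal{P}^-$ as a subpath. Starting from $C$ one achieves this by a P\'{o}sa-type endpoint rotation, where the hypothesis $d_G(u)+d_G(v)\ge n+k$ again furnishes at least $k+1$ candidate rotation positions, enough to reach the desired configuration while breaking only edges outside the at most $k-1$ forced ones.

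The main obstacle is precisely this rotation in the reverse direction: one must move an endpoint of a Hamilton path to a prescribed vertex while simultaneously preserving a bounded family of forced subpaths and the attachment of $Q_u,Q_v$ as the two end segments, and then verify that the $k+1$ positions guaranteed by the degree hypothesis genuinely dodge every forbidden edge. For the applications in this survey, however, only the substantive direction is needed --- equivalently its contrapositive, ``$G$ not $k$-edge-Hamiltonian $\Rightarrow$ $\mathrm{cl}_{n+k}(G)$ not $k$-edge-Hamiltonian'' --- together with the ``in particular'' clause, exactly as in the proof of Theorem~\ref{thmhnkd}.
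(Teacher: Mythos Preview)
The paper does not supply its own proof of this theorem; it is quoted as a known result from Kronk and Bondy--Chv\'{a}tal and then used as a tool (just like Theorem~\ref{thmCKL}). So there is no in-paper argument to compare against; what matters is whether your argument is correct.

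Your proof of the substantive direction --- $G+uv$ $k$-edge-Hamiltonian and $d_G(u)+d_G(v)\ge n+k$ imply $G$ $k$-edge-Hamiltonian --- is correct and is exactly the standard crossing argument: the edge $uv$ cannot belong to $\mathcal{P}\subseteq G$, so removing it leaves a Hamilton path $P$ of $G$ carrying all of $\mathcal{P}$; the count $|S\cap T|\ge k+1$ versus at most $k$ forced edges on $P$ finishes it cleanly. This is precisely the $(n+k)$-stability of $k$-edge-Hamiltonicity in the Bondy--Chv\'{a}tal sense, and it is the only direction that the survey actually uses downstream (see the proof of Theorem~\ref{thmhnkd} and its analogues).

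Your observation that the reverse direction is \emph{not} automatic is sharp and worth recording: $k$-edge-Hamiltonicity is genuinely non-monotone under edge addition (for instance, $C_n$ is $1$-edge-Hamiltonian, but $C_n+uv$ is not, since the chord $uv$ lies on no Hamilton cycle of $C_n+uv$). Hence the ``only if'' genuinely requires the degree hypothesis, unlike for ordinary Hamiltonicity or $k$-Hamiltonicity. Your sketch via P\'{o}sa rotations is the right idea, but as you acknowledge it is not complete: one must simultaneously force the two endpoints to $u$ and $v$ while keeping the end-segments $Q_u,Q_v$ attached and preserving the remaining $\le k-1$ forced edges, and the ``$k+1$ positions dodge $k$ forbidden edges'' count does not by itself pin both endpoints. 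Completing this would require either a two-stage crossing (first rotate to free a suitable neighbour of $u$, then cross to reach $v$) or an augmentation trick that feeds an extra edge back into the $k$-edge-Hamiltonian hypothesis. Since the survey never invokes this direction, leaving it as a sketch with the obstacle clearly flagged is acceptable, but you should not claim it as proved.
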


\begin{lemma}[Kronk \cite{Kronk}]   \label{lemmakr}
Let $G$ be an $n$-vertex graph with degree sequence 
$d_1\le d_2 \le \cdots \le d_n$.   
 Suppose $n\geq 3$, and $0 \leq k \leq n-3$. 
If $G$ is  not $k$-edge-Hamiltonian, 
then there exists $k+1\leq i < \frac{n+k}{2}$ such that 
 $d_{i-k}\leq i $ and $ d_{n-i}\leq n-i+k-1$. 
\end{lemma}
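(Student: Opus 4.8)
The plan is to prove Lemma~\ref{lemmakr} by a Pósa/Chv\'atal-type rotation--extension argument, carried out on the $(n+k)$-closure of $G$ and adapted so that a prescribed family of ``forced'' paths is kept inside every Hamiltonian path we construct. First I would pass to $H:=\mathrm{cl}_{n+k}(G)$. By the closure theorem for $k$-edge-Hamiltonicity stated just before the lemma, $H$ is again not $k$-edge-Hamiltonian, and since the closure only adds edges we have $d_j(G)\le d_j(H)$ for every $j$; hence, writing $d_1^H\le\cdots\le d_n^H$ for the degree sequence of $H$, it suffices to produce an index $i$ with $k+1\le i<\frac{n+k}{2}$ such that $d_{i-k}^H\le i$ and $d_{n-i}^H\le n-i+k-1$, because the same inequalities then hold a fortiori for $G$. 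Note also that $H$ satisfies $d_H(x)+d_H(y)\le n+k-1$ for every non-edge $\{x,y\}$, and that $H\neq K_n$, since $K_n$ is $k$-edge-Hamiltonian for $0\le k\le n-3$.

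Next I would fix a \emph{bad path-system}: a family $\mathcal{P}$ of pairwise vertex-disjoint paths of $H$ with at most $k$ edges in total that lies on no Hamiltonian cycle of $H$ (this exists because $H$ is not $k$-edge-Hamiltonian). For every non-edge $\{u,v\}$ of $H$, the graph $H+uv$ is $k$-edge-Hamiltonian, hence has a Hamiltonian cycle $C\supseteq\mathcal{P}$; since $\mathcal{P}$ is on no Hamiltonian cycle of $H$, the cycle $C$ must use $uv$, so $C-uv$ is a Hamiltonian path of $H$ from $u$ to $v$ containing every edge of $\mathcal{P}$. Among all triples (non-edge $\{u,v\}$; Hamiltonian path $x_1\cdots x_n$ of $H$ with $x_1=u$, $x_n=v$, $E(\mathcal{P})\subseteq\{x_1x_2,\dots,x_{n-1}x_n\}$) I would pick one maximizing $d_H(u)+d_H(v)$, relabelling so that $h:=d_H(x_1)\le d_H(x_n)$, and set $F:=\{\,i:x_ix_{i+1}\in E(\mathcal{P})\,\}$, so $|F|\le k$.

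Now comes the counting. For $i\in S:=\{\,i:x_1x_{i+1}\in E(H)\,\}$ with $i\ge 2$ and $i\notin F$, the rotated path $x_ix_{i-1}\cdots x_1x_{i+1}x_{i+2}\cdots x_n$ is a Hamiltonian path of $H$ still containing $\mathcal{P}$ (we dropped only the non-forced edge $x_ix_{i+1}$ and added the chord $x_1x_{i+1}$); were its endpoints $x_i,x_n$ adjacent we would obtain a Hamiltonian cycle of $H$ through $\mathcal{P}$, so $x_ix_n\notin E(H)$, and maximality of $d_H(x_1)+d_H(x_n)$ forces $d_H(x_i)\le d_H(x_1)=h$. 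Thus the at least $|S|-1-|F|\ge h-1-k$ vertices $\{x_i:i\in S,\ i\ge2,\ i\notin F\}$, together with $x_1$, give at least $h-k$ vertices of degree $\le h$, i.e.\ $d_{h-k}^H\le h$. For the other bound: every non-neighbour $x_j$ $(j\ge2)$ of $x_1$ is non-adjacent to $x_1$, so (using a Hamiltonian path through $\mathcal{P}$ between $x_1$ and $x_j$ supplied by the closure step, and maximality of the chosen pair) $d_H(x_j)\le d_H(x_n)\le n+k-1-h$; including $x_1$, whose degree $h$ satisfies $h\le\frac{n+k-1}{2}\le n+k-1-h$, we get at least $(n-1-h)+1=n-h$ vertices of degree $\le n-h+k-1$, i.e.\ $d_{n-h}^H\le n-h+k-1$. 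Taking $i:=h$ gives exactly the two required inequalities, and $i<\frac{n+k}{2}$ follows from $2h\le d_H(x_1)+d_H(x_n)\le n+k-1$.

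The remaining and genuinely delicate point is to guarantee $i=h\ge k+1$, i.e.\ that $H$ has a non-edge $\{u,v\}$ with $\min\{d_H(u),d_H(v)\}\ge k+1$. If there is no such non-edge, then the vertices of degree $\ge k+1$ in $H$ form a clique, and one must read off a valid $i\in[k+1,\frac{n+k}{2})$ directly from the degree sequence: when every degree is $\le k$ this is immediate (any such $i$ works, and the range is nonempty precisely because $k\le n-3$), and in the remaining shape --- a large clique plus a few vertices of degree $\le k$ --- the conclusion is extracted after using the hypothesis that $H$ is not $k$-edge-Hamiltonian to pin down this structure. I expect this case distinction, together with the careful bookkeeping of the at most $k$ forbidden rotation positions in $F$ (which is exactly what turns the Chv\'atal shifts $d_i\le i$, $d_{n-i}\le n-i-1$ into the required $d_{i-k}\le i$, $d_{n-i}\le n-i+k-1$), to be the main obstacle; the rotation argument itself is a routine variation on the proof of Chv\'atal's theorem, Theorem~\ref{thm323}.
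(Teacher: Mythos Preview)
The paper does not supply a proof of this lemma; being a survey, it simply cites the result from Kronk. So your proposal must be judged on its own.

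Your rotation--extension strategy is the right one and the counting is carried out correctly, but there is a genuine gap in the setup. You take $H:=\mathrm{cl}_{n+k}(G)$ and then assert that for every non-edge $\{u,v\}$ of $H$ the graph $H+uv$ is $k$-edge-Hamiltonian. This does not follow: the closure stops once no non-adjacent pair has degree sum $\ge n+k$, but that does not make $H$ edge-maximal among non-$k$-edge-Hamiltonian graphs. For a concrete failure with $k=0$, take $G=K_{n-1}\cup K_1$: this graph is already $n$-closed (every non-edge has degree sum $n-2<n$), yet adding a single edge produces a graph with a vertex of degree~$1$, hence still non-Hamiltonian. Both of your degree bounds depend on being able to produce a Hamiltonian path through $\mathcal{P}$ between an arbitrary non-adjacent pair of $H$, so the error is fatal as written.

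The repair is standard: take $H$ to be an \emph{edge-maximal} non-$k$-edge-Hamiltonian supergraph of $G$, rather than the closure. Maximality then gives exactly the property you need (adding any missing edge yields a $k$-edge-Hamiltonian graph), and the Bondy--Chv\'atal closure theorem applied to $H$ shows that every non-edge still satisfies $d_H(u)+d_H(v)\le n+k-1$ (otherwise $H+uv$ would also fail to be $k$-edge-Hamiltonian, contradicting maximality). With this single change your rotation argument and your bookkeeping of the forbidden set $F$ go through verbatim. The residual case $h\le k$, which you rightly flag as the delicate endgame, still needs to be written out in full; your outline for it is reasonable but incomplete.
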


\begin{theorem}[Feng et al. \cite{FengLAA17}] 
Let $k\ge 1$ and $G$ be a graph of order $n\geq k+6$. 
If 
\[ e(G) \geq  {n-1 \choose 2} + k+1, \]
then either $G$ is $k$-edge-Hamiltonian 
or  $G = K_{k+1} \vee ({K_{1}} \cup K_{n-k-2})$.
\end{theorem}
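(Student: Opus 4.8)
The plan is to run the same argument as in the proof of Theorem~\ref{thmhnkd}, with the $k$-Hamiltonian ingredients replaced by their $k$-edge-Hamiltonian analogues. First I would assume $G$ is not $k$-edge-Hamiltonian and pass to the $(n+k)$-closure $H=\mathrm{cl}_{n+k}(G)$; by the above closure theorem of Kronk and Bondy--Chv\'{a}tal, $H$ is also not $k$-edge-Hamiltonian, while $G\subseteq H$ gives $e(G)\le e(H)$. Writing $d_1\le d_2\le\cdots\le d_n$ for the degree sequence of $H$, Lemma~\ref{lemmakr} furnishes an integer $i$ with $k+1\le i<\frac{n+k}{2}$, $d_{i-k}\le i$ and $d_{n-i}\le n-i+k-1$.

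Next I would split the degree sequence of $H$ into three blocks: the first $i-k$ degrees, each at most $i$; the next $n-2i+k$ degrees (indices $i-k+1$ through $n-i$), each at most $n-i+k-1$; and the last $i$ degrees, each at most $n-1$. Summing over the blocks,
\[
2e(H)\le (i-k)i+(n-2i+k)(n-i+k-1)+i(n-1)=:g(i),
\]
and a short expansion gives $g(i)=3i^2-(2n+4k-1)i+(n+k)^2-(n+k)$. Since the leading coefficient is positive, $g$ is convex, so its maximum over the feasible range $k+1\le i<\frac{n+k}{2}$ is attained at an endpoint; using $n\ge k+6$ one checks that the vertex of this parabola is at least as far from $i=k+1$ as from the largest admissible integer $i$, so the maximum is attained uniquely at $i=k+1$, where $g(k+1)=n^2-3n+2k+4$. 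Consequently $2e(G)\le 2e(H)\le n^2-3n+2k+4$.

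Then I would use the identity $\binom{n-1}{2}+k+1=\binom n2-(n-k-2)=e\bigl(K_{k+1}\vee(K_1\cup K_{n-k-2})\bigr)$, which yields $2\bigl(\binom{n-1}{2}+k+1\bigr)=n^2-3n+2k+4$. Together with the hypothesis $e(G)\ge\binom{n-1}{2}+k+1$, every inequality above becomes an equality: in particular $i=k+1$, the graph $G$ equals its own closure $H=\mathrm{cl}_{n+k}(G)$, and the degree sequence of $G$ is forced to consist of $k+1$ vertices of degree $n-1$, then $n-k-2$ vertices of degree $n-2$, and a single vertex $v$ of degree $k+1$.

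Finally I would recover the structure of $G$ from this rigidity, just as in Theorem~\ref{thmhnkd}. The $n-1$ vertices of degree at least $n-2$ have pairwise degree sum at least $2(n-2)\ge n+k$ (here $n\ge k+4$ suffices), so, $G$ being equal to $\mathrm{cl}_{n+k}(G)$, they are pairwise adjacent and span a $K_{n-1}$; inside this clique a degree-$(n-2)$ vertex misses only $v$, whereas each of the $k+1$ vertices of degree $n-1$ is adjacent to everything, so $N(v)$ is exactly the set of those $k+1$ vertices and hence $G=K_{k+1}\vee(K_1\cup K_{n-k-2})$, as wanted. The one step I expect to require care is the optimization of $g(i)$: because of the strict bound $i<\frac{n+k}{2}$ and the parity of $n+k$, one must check by hand that $g(i)<g(k+1)$ for every admissible integer $i>k+1$ when $n\ge k+6$, rather than merely differentiating; everything else is a routine transcription of the proof of Theorem~\ref{thmhnkd}.
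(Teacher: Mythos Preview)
Your proposal is correct and follows exactly the template the paper uses for the analogous $k$-Hamiltonian result (Theorem~\ref{thmhnkd}); the paper, being a survey, states the $k$-edge-Hamiltonian theorem without proof, but after the index shift $m=i-k$ Kronk's condition in Lemma~\ref{lemmakr} coincides with Chv\'{a}tal's condition in Theorem~\ref{thm323}, so your degree-sum bound $g(i)$, its optimization, and the structural reconstruction are literally the same computation as in the paper's proof of Theorem~\ref{thmhnkd} specialized to $\delta=k+1$. Your caution about the endpoint comparison is well placed, and the check $n\ge k+6\Rightarrow (2n-2k-7)-(n-k-2)=n-k-5\ge 1>0$ settles it.
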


To avoid unnecessary calculations, 
we do not attempt to get the best bound 
on the order of graphs in the proof.

 \begin{theorem}[Li--Peng \cite{LLF2021}]   \label{thm21}
Let $k \geq 0$, $\delta\geq k+2$ and $n$ be sufficiently large.  
If $G$ is an $n$-vertex graph  with minimum degree $\delta(G)\geq \delta$ and 
  $$\lambda(G)\geq n-\delta+k-1,$$
then $G$ is $k$-edge-Hamiltonian unless $G=H_{n,k,\delta }$ 
or $G=L_{n,k,\delta}$.
\end{theorem}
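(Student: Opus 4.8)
The plan is to argue by contradiction, following the same template as the proof of the $k$-Hamiltonian spectral result (Theorem~\ref{thmain}) and of Nikiforov's Hamiltonian case (Theorem~\ref{thmniki}), with Kronk's Lemma~\ref{lemmakr} playing the role that Chv\'atal's Theorem~\ref{thm323} plays there. So suppose $G$ is an $n$-vertex graph with $\delta(G)\ge\delta$ and $\lambda(G)\ge n-\delta+k-1$ that is not $k$-edge-Hamiltonian, and suppose moreover that $G\neq H_{n,k,\delta}$ and $G\neq L_{n,k,\delta}$; the goal is a contradiction. Two sanity checks come first. Each of $H_{n,k,\delta}=K_\delta\vee(K_{n-2\delta+k}\cup I_{\delta-k})$ and $L_{n,k,\delta}=K_{k+1}\vee(K_{n-\delta-1}\cup K_{\delta-k})$ contains a clique on $n-\delta+k$ vertices (join the apex clique to the large clique), hence has spectral radius at least $\lambda(K_{n-\delta+k})=n-\delta+k-1$, and neither is $k$-edge-Hamiltonian; thus they are genuine candidates and must be shown to be the only ones. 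The argument then splits into a structural step and a spectral step.

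\textbf{Step 1 (stability).} First I would pass from the spectral hypothesis to an edge lower bound: the minimum-degree form of the spectral-radius bound (Hong--Shu--Fang), $\lambda(G)\le\frac{\delta-1}{2}+\sqrt{2e(G)-\delta n+\frac{(\delta+1)^2}{4}}$, turns $\lambda(G)\ge n-\delta+k-1$ into $e(G)\ge\tfrac12 n^2-(\delta-k+\tfrac12)n+O_{\delta,k}(1)$. Since $G$ is not $k$-edge-Hamiltonian, Kronk's Lemma~\ref{lemmakr} supplies an index $i$ with $k+1\le i<\tfrac{n+k}{2}$, $d_{i-k}\le i$ and $d_{n-i}\le n-i+k-1$, where $d_1\le\cdots\le d_n$ is the degree sequence of $G$; since $d_{i-k}\ge d_1\ge\delta$ one gets $i\ge\delta$. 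Summing the degree sequence over the three blocks cut out by these inequalities yields $2e(G)\le g(i):=3i^2+(1-2n-4k)i+(n+k)^2-(n+k)$, a convex quadratic in $i$. Comparing this upper bound with the edge lower bound coming from the spectral hypothesis forces, for $n$ large, the index $i$ down to $i=\delta$ exactly (larger $i$ costs $\Theta(n)$ in $g$, which the spectral lower bound cannot afford), so that $e(G)\le g(\delta)/2=e(H_{n,k,\delta})$ while $e(G)$ is only $O(n)$ below this maximum. A stability lemma for $k$-edge-Hamiltonicity — the analogue of the Li--Ning stability lemma \cite{LiBinlong}, proved by examining the near-equality cases of Kronk's lemma exactly as in the proof of Theorem~\ref{thmhnkd} — then forces $G$ to be a spanning subgraph of $H_{n,k,\delta}$ or of $L_{n,k,\delta}$.

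\textbf{Step 2 (rigidity).} It remains to show $G$ cannot be a \emph{proper} spanning subgraph of either one. Say $G\subsetneq H_{n,k,\delta}$ with $\delta(G)\ge\delta$. In $H_{n,k,\delta}$ the $\delta-k$ vertices of the independent part $I_{\delta-k}$ already have degree exactly $\delta$, so none of their incident edges may be deleted; hence $G$ arises from $H_{n,k,\delta}$ by deleting at least one edge lying inside the clique $K_{n-\delta+k}$ spanned by the $K_\delta$-part together with the $K_{n-2\delta+k}$-part. The heart of the matter is then a Nikiforov-style eigenvalue perturbation estimate: removing even a single edge from inside a dense clique of order $\sim n$ lowers the spectral radius by an amount that, once $n$ is large, strictly exceeds the entire contribution to $\lambda$ of the ``outgrowth'' edges joining $I_{\delta-k}$ to the apex clique; hence $\lambda(G)<\lambda(K_{n-\delta+k})=n-\delta+k-1$, contradicting the hypothesis. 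The identical reasoning handles proper subgraphs of $L_{n,k,\delta}$, whose untouchable low-degree part is the clique $K_{\delta-k}$. Combining Steps~1 and~2 forces $G\in\{H_{n,k,\delta},L_{n,k,\delta}\}$, which is the contradiction we wanted.

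The step I expect to be the main obstacle is the eigenvalue perturbation estimate in Step~2 — making precise the assertion that, inside $H_{n,k,\delta}$ or $L_{n,k,\delta}$, one edge of the dense clique is worth more to the spectral radius than all the sparse outgrowth edges together (compare \cite[Theorem~1.6]{Nikiforov}). This is also where ``$n$ sufficiently large'' is essential, which is why, as remarked in \cite{LLF2021}, no effort is made to optimize the threshold on the order of $G$; the hypothesis $\delta\ge k+2$ enters only mildly, guaranteeing that $I_{\delta-k}$ and $K_{\delta-k}$ are nontrivial so that $H_{n,k,\delta}$ and $L_{n,k,\delta}$ are genuinely distinct graphs.
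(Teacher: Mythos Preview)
Your proposal is correct and follows essentially the same approach that the paper (a survey) implicitly intends: the remark just before the theorem says explicitly that the proof mirrors that of the $k$-Hamiltonian spectral result (Theorem~\ref{thmain}), only with Kronk's Lemma~\ref{lemmakr} replacing Chv\'atal's Theorem~\ref{thm323}, and that is precisely what you do. Your Step~1 (Hong--Shu--Fang/Nikiforov edge bound $+$ Kronk's degree inequality $+$ quadratic analysis to pin down $i=\delta$ $+$ Li--Ning--type stability) and Step~2 (Nikiforov's perturbation estimate \cite[Theorem~1.6]{Nikiforov} to rule out proper subgraphs) are exactly the two ingredients the surrounding discussion in the survey points to.
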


Since  $\lambda(H_{n,k,\delta })$ contains $K_{n-\delta+k}$ as a proper subgraph, we have 
$\lambda(H_{n,k,\delta } )> n-\delta+k-1$. 
Moreover, applying the Kelmans operations on 
$L_{n,k,\delta}$, we can get a proper subgraph of $H_{n,k,\delta}$, 
this implies 
 $\lambda (H_{n,k,\delta }) > \lambda (L_{n,k,\delta})$; 
 see, e.g., \cite[Theorem 2.12]{LiBinlong}. 
 With this observation in mind, Theorem \ref{thm21} 
 implies the following corollary, which is an extension on Theorem \ref{thmln16a}.

 \begin{corollary} 
Let $k \geq 0$, $\delta\geq k+2$ and $n$ be sufficiently large. 
If $G$ is an $n$-vertex graph  with minimum degree $\delta(G)\geq \delta$ and 
  $$\lambda(G)\geq\lambda(H_{n,k,\delta } ) ,$$
then $G$ is $k$-edge-Hamiltonian unless $G=H_{n,k,\delta }$. 
\end{corollary}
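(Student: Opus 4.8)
The plan is to deduce this corollary directly from Theorem~\ref{thm21}, so essentially no new structural work is needed; the only task is to verify two spectral comparisons that allow us to discard the second exceptional graph $L_{n,k,\delta}$. First I would record the elementary observation that $H_{n,k,\delta}=K_\delta \vee (K_{n-2\delta+k}\cup I_{\delta-k})$ contains the clique $K_{n-\delta+k}$ as a proper subgraph, so by the strict monotonicity of the spectral radius under proper subgraphs, $\lambda(H_{n,k,\delta}) > \lambda(K_{n-\delta+k}) = n-\delta+k-1$. Hence the hypothesis $\lambda(G)\ge\lambda(H_{n,k,\delta})$ immediately yields $\lambda(G)\ge n-\delta+k-1$, which is precisely the hypothesis of Theorem~\ref{thm21}.

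Applying Theorem~\ref{thm21} then gives that $G$ is $k$-edge-Hamiltonian unless $G=H_{n,k,\delta}$ or $G=L_{n,k,\delta}$, and it suffices to rule out the latter. For this I would use the comparison $\lambda(H_{n,k,\delta}) > \lambda(L_{n,k,\delta})$: as indicated in the surrounding text, a suitable Kelmans transformation applied to $L_{n,k,\delta}=K_{k+1}\vee(K_{n-\delta-1}\cup K_{\delta-k})$ produces a proper subgraph of $H_{n,k,\delta}$, and since the Kelmans operation does not decrease the spectral radius (see, e.g., \cite[Theorem 2.12]{LiBinlong}) while passing to a proper subgraph strictly decreases it, the inequality is strict. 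Therefore, if $G=L_{n,k,\delta}$ then $\lambda(G)=\lambda(L_{n,k,\delta})<\lambda(H_{n,k,\delta})$, contradicting $\lambda(G)\ge\lambda(H_{n,k,\delta})$. Consequently $G$ is $k$-edge-Hamiltonian or $G=H_{n,k,\delta}$, as claimed.

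The argument has no genuinely hard step; the only point requiring a little care is the bookkeeping with the threshold on $n$. One must ensure that the range of "sufficiently large $n$" used here matches the one needed for Theorem~\ref{thm21}, and that the Kelmans comparison $\lambda(H_{n,k,\delta})>\lambda(L_{n,k,\delta})$ holds throughout that range — which it does, being a comparison of two explicit graphs requiring only that $n$ be large enough for both graphs to be defined and nonempty. A purely cosmetic issue is that the Kelmans fact is stated in \cite{LiBinlong} for the $k=0$ graphs $H_{n,\delta}$ and $L_{n,\delta}$; for general $k$ one repeats the same transformation verbatim, since $H_{n,k,\delta}$ and $L_{n,k,\delta}$ share the attached clique $K_{\delta-k}$ and differ only in how the remaining $\delta$ dominating-type vertices are split, so the one-line subgraph check carries over unchanged.
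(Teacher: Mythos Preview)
Your proposal is correct and follows essentially the same route as the paper: the paper likewise observes that $K_{n-\delta+k}\subsetneq H_{n,k,\delta}$ gives $\lambda(H_{n,k,\delta})>n-\delta+k-1$, invokes Theorem~\ref{thm21}, and then eliminates $L_{n,k,\delta}$ via the Kelmans-operation inequality $\lambda(H_{n,k,\delta})>\lambda(L_{n,k,\delta})$ with the same reference to \cite[Theorem 2.12]{LiBinlong}. Your added remarks on the threshold for $n$ and on carrying the Kelmans comparison from $k=0$ to general $k$ are accurate and harmless elaborations.
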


Moreover, Li and Peng \cite{LLF2021} also 
presented 
the following sufficient 
 conditions on the signless Laplacian spectral radius 
 for $k$-Hamiltonian graphs with large minimum 
 degree. 

\begin{theorem}[Li--Peng \cite{LLF2021}]  \label{thm23}
Let $k\ge 0,\delta \ge k+2$ and 
$n$ be sufficiently large. 
If $G$ is an $n$-vertex graph  with minimum degree $\delta(G)\geq \delta$ and 
  $$q (G)\geq 2(n-\delta+k-1),$$
then $G$ is $k$-edge-Hamiltonian 
unless $G\in \mathcal{H}_{n,k,\delta }^{(1)}$ 
or $G\in \mathcal{L}_{n,k,\delta}^{(1)}$.
\end{theorem}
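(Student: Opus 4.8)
The plan is to adapt the proof template used for the adjacency-version Theorem~\ref{thm21} (and its Hamiltonian ancestors, Theorems~\ref{thmllp}, \ref{thmln16b}) to the signless Laplacian spectral radius and to the $k$-edge-Hamiltonian property. First I would reduce to the closure: if $G$ is not $k$-edge-Hamiltonian then by the Kronk--Bondy--Chv\'{a}tal theorem the closure $H=\mathrm{cl}_{n+k}(G)$ is not $k$-edge-Hamiltonian either, and $q(H)\ge q(G)$ since $H\supseteq G$; moreover $\delta(H)\ge\delta$. So it suffices to argue with a non-$k$-edge-Hamiltonian graph that is closed under the degree-sum condition. Applying Kronk's degree-sequence obstruction (Lemma~\ref{lemmakr}), there is an index $i$ with $d_{i-k}\le i$ and $d_{n-i}\le n-i+k-1$; this pins down the structure up to a small defect. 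The key quantitative step is a \emph{stability} statement in the spirit of \cite[Lemma~2]{LiBinlong} / \cite{LLPLMA18}: one shows that if $q(G)\ge 2(n-\delta+k-1)$ and $G$ is non-$k$-edge-Hamiltonian, then $e(G)$ is large enough — at least $e(H_{n,k,\delta+1})$ or so — which via the stability theorem forces $G$ to be a spanning subgraph of either $H_{n,k,\delta}$ or $L_{n,k,\delta}$. Here one uses the standard bound relating $q(G)$ to $e(G)$ and the maximum degree, e.g. $q(G)\le \frac{2e(G)}{n-1}+n-2$ together with Kronk's inequality on the degree sum as in \eqref{eq11}.

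Next I would carry out the fine analysis inside the two candidate host graphs, which is exactly where the signless Laplacian case diverges from the adjacency case and accounts for the larger extremal families $\mathcal{H}_{n,k,\delta}^{(1)}$ and $\mathcal{L}_{n,k,\delta}^{(1)}$. The point is that for the signless Laplacian, deleting an edge \emph{not incident to the min-degree vertices} — i.e. an edge whose endpoints both lie in $Y\cup Z$ — may fail to drop $q$ below $2(n-\delta+k-1)$, whereas for the adjacency radius it always does (so the adjacency extremal graphs are just $H_{n,k,\delta}$ and $L_{n,k,\delta}$). Concretely, as the paragraph before Theorem~\ref{thm23} already records, one tests $Q(G)$ against the vector $\bm{h}$ that is $1$ on $Y\cup Z$ and $0$ on $X$: for $G=H_{n,k,\delta}\setminus E'$ one gets $\bm{h}^TQ(G)\bm{h}-\bm{h}^TQ(K_{n-\delta+k})\bm{h}=\delta(\delta-k)-4|E'|$ (analogously $(k+1)(\delta-k)-4|E'|$ for $L_{n,k,\delta}$), so the Rayleigh quotient still reaches $2(n-\delta+k-1)$ precisely when $|E'|\le\lfloor\delta(\delta-k)/4\rfloor$ (resp. $\lfloor(k+1)(\delta-k)/4\rfloor$). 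This gives the ``if'' direction: every member of $\mathcal{H}_{n,k,\delta}^{(1)}\cup\mathcal{L}_{n,k,\delta}^{(1)}$ is non-$k$-edge-Hamiltonian with $q\ge 2(n-\delta+k-1)$.

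For the converse I would take $G$ a spanning subgraph of $H_{n,k,\delta}$ (the $L_{n,k,\delta}$ case being parallel) with $\delta(G)\ge\delta$, $G$ non-$k$-edge-Hamiltonian, and $q(G)\ge 2(n-\delta+k-1)$, and show the only missing edges are among $E_1(H_{n,k,\delta})$ with $|E'|\le\lfloor\delta(\delta-k)/4\rfloor$, i.e. $G\in\mathcal{H}_{n,k,\delta}^{(1)}$. The mechanism, following Nikiforov's idea in \cite[Theorem~1.6]{Nikiforov} carried over to $Q(G)$, is an eigenvector-perturbation / edge-weighting argument: the Perron eigenvector $\bm{x}$ of $Q(G)$ is nearly flat on the big clique $K_{n-2\delta+k}$ and small on $I_{\delta-k}$, so each absent edge inside $Y\cup Z$ costs roughly a controlled amount of $\bm{x}$-weight while any edge missing from the dense clique part costs much more; a counting of how much total $\bm{x}^TQ(G)\bm{x}$ can be below $2(n-\delta+k-1)(\bm{x}^T\bm{x})$ forces $E'\subseteq E_1$ and bounds $|E'|$. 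The main obstacle, and the step I would budget the most care for, is precisely this last local estimate: getting tight enough control on the Perron eigenvector of the \emph{signless Laplacian} of a near-clique (the degree diagonal $D(G)$ makes the spectral perturbation analysis genuinely different from, and messier than, the adjacency case), and determining the exact threshold $\lfloor\delta(\delta-k)/4\rfloor$ rather than a sloppy $O(\delta^2)$ — this is why the theorem only claims ``$n$ sufficiently large'' instead of an explicit bound, and it is the part most likely to hide a subtle miscount.
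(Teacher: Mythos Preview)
The paper is a survey and does not supply a proof of Theorem~\ref{thm23}; it merely cites the result from \cite{LLF2021} and states it. So there is no ``paper's own proof'' to compare against here. That said, your outline is exactly the template the survey describes for the parallel results (Theorems~\ref{thmllp} and \ref{thm21}, and the discussion surrounding $\mathcal{H}_{n,k,\delta}^{(1)}$, $\mathcal{L}_{n,k,\delta}^{(1)}$): closure via Kronk--Bondy--Chv\'{a}tal, Kronk's degree-sequence obstruction (Lemma~\ref{lemmakr}), an edge-count/stability step forcing $G\subseteq H_{n,k,\delta}$ or $L_{n,k,\delta}$, and then the Rayleigh-quotient test with the indicator vector $\bm{h}$ on $Y\cup Z$ to pin down the thresholds $\lfloor\delta(\delta-k)/4\rfloor$ and $\lfloor(k+1)(\delta-k)/4\rfloor$. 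This is the approach one expects in \cite{LLF2021}, and your identification of the delicate step---the eigenvector-perturbation analysis for $Q(G)$ on a near-clique, which is genuinely messier than the adjacency case---is accurate.
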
 

As a consequence, we get the following corollary. 

 \begin{corollary} 
Let $k \geq 0$, $\delta\geq k+2$ and $n$ be sufficiently large. 
If $G$ is an $n$-vertex graph with minimum degree $\delta(G)\geq \delta$ and 
  $$q(G)\geq  q(H_{n,k,\delta } ) ,$$
then $G$ is $k$-edge-Hamiltonian unless $G=H_{n,k,\delta }$. 
\end{corollary}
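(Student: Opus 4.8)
The plan is to derive this corollary from Theorem~\ref{thm23} together with elementary monotonicity properties of the signless Laplacian spectral radius. First I would record the strict inequality
\[ q(H_{n,k,\delta})>2(n-\delta+k-1). \]
Since $H_{n,k,\delta}=K_{\delta}\vee(K_{n-2\delta+k}\cup I_{\delta-k})$ induces, on the union of the vertex classes of $K_{\delta}$ and $K_{n-2\delta+k}$, a copy of $K_{n-\delta+k}$ whose $Q$-index is exactly $2(n-\delta+k-1)$, and since the $\delta-k\ge 2$ vertices of $I_{\delta-k}$ raise the degrees of the $K_{\delta}$-vertices strictly above $n-\delta+k-1$, evaluating the Rayleigh quotient of $Q(H_{n,k,\delta})$ at the Perron vector of $Q(K_{n-\delta+k})$ already yields a value exceeding $2(n-\delta+k-1)$, whence the displayed inequality. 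Consequently the hypothesis $q(G)\ge q(H_{n,k,\delta})$ forces $q(G)>2(n-\delta+k-1)$, so Theorem~\ref{thm23} applies: either $G$ is $k$-edge-Hamiltonian and we are done, or $G\in\mathcal{H}_{n,k,\delta}^{(1)}\cup\mathcal{L}_{n,k,\delta}^{(1)}$.

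It then remains to settle the two exceptional families. If $G\in\mathcal{H}_{n,k,\delta}^{(1)}$, write $G=H_{n,k,\delta}\setminus E'$ with $E'\subseteq E_1(H_{n,k,\delta})$ and $|E'|\le\lfloor\delta(\delta-k)/4\rfloor$. The deleted edges all lie inside $Y\cup Z$, so every vertex of the independent part $X$ is still joined to all of $Y$ and $G$ remains connected for large $n$; since deleting an edge from a connected graph strictly decreases its $Q$-index, $E'\ne\emptyset$ would contradict $q(G)\ge q(H_{n,k,\delta})$, and therefore $G=H_{n,k,\delta}$, which is precisely the asserted conclusion. If instead $G\in\mathcal{L}_{n,k,\delta}^{(1)}$, then $G$ is a subgraph of $L_{n,k,\delta}$, so $q(G)\le q(L_{n,k,\delta})$, and it suffices to show $q(L_{n,k,\delta})<q(H_{n,k,\delta})$. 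Applying the Kelmans transformation to $L_{n,k,\delta}$ produces a graph isomorphic to a proper subgraph of $H_{n,k,\delta}$, and the Kelmans transformation does not decrease the signless Laplacian spectral radius; hence $q(L_{n,k,\delta})\le q(\mathrm{Kelmans}(L_{n,k,\delta}))<q(H_{n,k,\delta})$, contradicting $q(G)\ge q(H_{n,k,\delta})$. Thus this subcase cannot occur, and the proof is complete.

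The step I expect to be the main obstacle is the inequality $q(L_{n,k,\delta})<q(H_{n,k,\delta})$: one must confirm that the Kelmans transformation is monotone for the $Q$-index (not merely for the adjacency spectral radius, where the analogue is recorded in \cite[Theorem~2.12]{LiBinlong}) and that its image of $L_{n,k,\delta}$ is genuinely a proper subgraph of $H_{n,k,\delta}$. A self-contained alternative, which I would use if that verification turns out to be delicate, is to write down the quotient matrices of $Q(L_{n,k,\delta})$ and $Q(H_{n,k,\delta})$ associated with their natural equitable partitions (into the classes $X$, $Y$, $Z$) and compare the largest roots of the two resulting low-degree characteristic polynomials; this reduces the comparison to an elementary but somewhat tedious estimate valid for all sufficiently large $n$.
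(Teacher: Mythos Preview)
Your proposal is correct and follows exactly the route the paper intends: the corollary is stated immediately after Theorem~\ref{thm23} with only the words ``As a consequence, we get the following corollary,'' so your derivation via $q(H_{n,k,\delta})>2(n-\delta+k-1)$, then the strict monotonicity of $q$ under edge deletion in connected graphs to rule out $\mathcal{H}_{n,k,\delta}^{(1)}\setminus\{H_{n,k,\delta}\}$, and finally the comparison $q(L_{n,k,\delta})<q(H_{n,k,\delta})$ to rule out $\mathcal{L}_{n,k,\delta}^{(1)}$, is precisely what the paper leaves implicit. Your caution about the Kelmans step is well placed: the paper only invokes \cite[Theorem~2.12]{LiBinlong} for the adjacency spectral radius (see the remark after Theorem~\ref{thm21}), so for the $Q$-index your quotient-matrix alternative on the equitable $(X,Y,Z)$-partition is the cleaner and safer way to establish $q(L_{n,k,\delta})<q(H_{n,k,\delta})$.
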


\section{Spectral  problem for path coverability}

\subsection{Problem for Hamilton path} 

Let $G$ be a simple graph. 
A path is called a {\it Hamilton path} if 
it contains all vertices of $G$. 
When $G$ contains a Hamiltonian path,  
we usually  say that  $G$ is \textit{traceable}.  
It is well-known that 
 $G$ contains a Hamilton path if and only if 
$G\vee K_1$ contains a Hamilton cycle. 
With the help of this relation, almost all results involving Hamilton cycle 
can imply the corresponding results for Hamilton path. 
Thus, almost all problems involving Hamilton path can be reduced to 
the problem of Hamilton cycle in this way. 
For any non-negative integer $q$, 
a graph $G$ with $n\ge 3$ vertices is called $q$-traceable 
if any removal of at most $q$ vertices of $G$ results in a traceable graph.

\begin{theorem}[Dirac \cite{Dirac52}, Ore \cite{ore60}]
If $G$ is a graph on $n\ge 3$ vertices with minimum degree 
$\delta (G)\ge \frac{n-1}{2}$, then $G$ has a Hamilton path. 
\end{theorem}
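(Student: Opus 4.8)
The plan is to deduce the statement from Dirac's theorem (quoted in the introduction) via the join trick recorded just above in the text: a graph $G$ on $n\ge 3$ vertices is traceable if and only if $G\vee K_1$ is Hamiltonian. Concretely, I would set $H:=G\vee K_1$, the graph obtained from $G$ by adding one new vertex $w$ adjacent to every vertex of $G$, so that $|V(H)|=n+1$. The one computation to carry out is the minimum degree of $H$: the apex vertex satisfies $d_H(w)=n$, while every $u\in V(G)$ satisfies $d_H(u)=d_G(u)+1\ge \frac{n-1}{2}+1=\frac{n+1}{2}$. Since $n\ge \frac{n+1}{2}$ for all $n\ge 1$, this gives $\delta(H)\ge \frac{n+1}{2}=\frac{|V(H)|}{2}$.

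With this in hand, the proof finishes in one more step. Because $n\ge 3$ forces $|V(H)|=n+1\ge 3$, Dirac's theorem applies to $H$ and produces a Hamilton cycle $C$ of $H$. This cycle passes through $w$; deleting $w$ from $C$ leaves a Hamilton path of $H-w=G$. This is the whole argument.

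I do not expect a real obstacle here: the entire content is the elementary observation that joining with $K_1$ converts the traceability threshold $\frac{n-1}{2}$ into the Hamiltonicity threshold $\frac{n+1}{2}=\frac{|V(H)|}{2}$ that Dirac's theorem requires, so the only things to watch are the small-order boundary (for $n=3$, $|V(H)|=4$ and $\delta(H)\ge 2$ still holds) and the fact that the threshold for $H$ is met exactly, leaving no slack to worry about.

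As an alternative, one can give a self-contained proof by the classical rotation (longest-path) argument, avoiding any appeal to Dirac's theorem. First note $G$ is connected, since $\delta(G)\ge\frac{n-1}{2}$ forces every component to have at least $\frac{n+1}{2}$ vertices and two disjoint such components would need more than $n$ vertices. Take a longest path $P=v_1v_2\cdots v_k$ in $G$; by maximality $N(v_1),N(v_k)\subseteq V(P)$. Set $S=\{i:v_1v_{i+1}\in E(G)\}$ and $T=\{i:v_kv_i\in E(G)\}$, so $S,T\subseteq\{1,\dots,k-1\}$ with $|S|+|T|=d(v_1)+d(v_k)\ge n-1$. If $k<n$, then $|S|+|T|\ge n-1>k-1\ge|S\cup T|$, so $S\cap T\ne\emptyset$, which yields a cycle through all of $V(P)$; using a neighbour of this cycle outside $P$ (available since $G$ is connected and $k<n$) one extends $P$, contradicting maximality. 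Hence $k=n$. In this route the step requiring care is the rotation that turns the spanning cycle of $V(P)$ together with an external edge into a strictly longer path.
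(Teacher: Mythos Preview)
Your proposal is correct and follows exactly the route the paper indicates: the paper does not write out a proof of this theorem, but the paragraph immediately preceding it states that $G$ is traceable if and only if $G\vee K_1$ is Hamiltonian and that ``almost all results involving Hamilton cycle can imply the corresponding results for Hamilton path'' in this way. Your main argument is precisely this reduction to Dirac's theorem, with the degree computation carried out cleanly; the alternative rotation argument you sketch is also standard and correct.
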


  \begin{theorem}[Bondy--Chv\'{a}tal \cite{Bondy}] 
A graph $G$ is traceable  
if and only if the closure graph $\mathrm{cl}_{n-1} (G)$ is traceable. 
In particular, if $d(u)+d(v) \ge n-1$ for all non-edges $\{u,v\}$, 
then $\mathrm{cl}_{n-1} (G) =K_n$ and $G$ is traceable. 
\end{theorem}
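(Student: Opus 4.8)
The plan is to reduce the statement to Ore's closure theorem for Hamilton cycles (the theorem attributed to Ore earlier in the excerpt) via the standard correspondence between Hamilton paths and Hamilton cycles. Write $G' := G \vee K_1$ and let $w$ be the apex vertex, so $|V(G')| = n+1$. By the equivalence recalled above, $G$ is traceable if and only if $G'$ is Hamiltonian, and likewise $\mathrm{cl}_{n-1}(G)$ is traceable if and only if $\mathrm{cl}_{n-1}(G) \vee K_1$ is Hamiltonian. Hence it suffices to establish the identity
\[ \mathrm{cl}_{n+1}(G') = \mathrm{cl}_{n-1}(G) \vee K_1 \]
and then invoke Ore's theorem, which asserts that $G'$ is Hamiltonian if and only if $\mathrm{cl}_{n+1}(G')$ is.

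The key step is verifying this identity. First I would note that in $G'$ every vertex of $V(G)$ has degree exactly one larger than in $G$, while $w$ has degree $n$ and is already adjacent to all other vertices; moreover these relations are preserved throughout the closure process, since $w$ is never an endpoint of a newly added edge, and adding an edge inside $V(G)$ raises the two relevant degrees by one in $G$ and in $G'$ simultaneously. Consequently, for non-adjacent $u, v \in V(G)$ at any stage, the condition $d_{G'}(u) + d_{G'}(v) \ge n+1$ is equivalent to $d_G(u) + d_G(v) \ge n-1$. Thus the recursive edge-addition defining $\mathrm{cl}_{n+1}(G')$ runs in lockstep with the one defining $\mathrm{cl}_{n-1}(G)$, and since both closures are uniquely determined independently of the order of additions (as noted earlier in the excerpt), the two procedures yield the same graph on $V(G)$, with $w$ still joined to everything; this graph is exactly $\mathrm{cl}_{n-1}(G) \vee K_1$.

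Combining the pieces: $G$ is traceable $\iff$ $G'$ is Hamiltonian $\iff$ (Ore) $\mathrm{cl}_{n+1}(G')$ is Hamiltonian $\iff$ $\mathrm{cl}_{n-1}(G) \vee K_1$ is Hamiltonian $\iff$ $\mathrm{cl}_{n-1}(G)$ is traceable, which is the first assertion. For the ``in particular'' clause, if $d(u)+d(v) \ge n-1$ holds for every pair of non-adjacent vertices of $G$, then every non-edge is eligible to be added already at the first step, so $\mathrm{cl}_{n-1}(G) = K_n$, which is trivially traceable; hence $G$ is traceable.

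I expect the only genuine obstacle to be the bookkeeping in the identity $\mathrm{cl}_{n+1}(G') = \mathrm{cl}_{n-1}(G) \vee K_1$ — in particular, arguing carefully that the degree-sum thresholds correspond at every stage and that uniqueness of the closure lets one ignore the order of edge insertions. Everything else follows immediately from the quoted results. As an alternative route, one could instead prove directly that if $G+uv$ is traceable for a non-edge $uv$ with $d_G(u)+d_G(v)\ge n-1$, then $G$ itself is traceable, and then iterate over all such pairs; but the reduction to Ore's theorem is shorter and relies only on tools already present in the excerpt.
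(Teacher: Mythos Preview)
Your proof is correct. The paper does not give a formal proof of this particular theorem, but it explicitly notes at the start of the Hamilton-path section that ``$G$ contains a Hamilton path if and only if $G\vee K_1$ contains a Hamilton cycle'' and that ``almost all results involving Hamilton cycle can imply the corresponding results for Hamilton path'' via this relation; your reduction through $G' = G\vee K_1$ and the identity $\mathrm{cl}_{n+1}(G') = \mathrm{cl}_{n-1}(G)\vee K_1$ is exactly the approach the paper has in mind, carried out cleanly.
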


\begin{lemma}[Chv\'{a}tal \cite{Chvatal72}]
Let $G$ be an $n$-vertex graph with degree sequence 
$d_1\le d_2 \le \cdots \le d_n$.    
If $G$  has no Hamilton path, 
then there exists $i< \frac{n+1}{2}$ such that 
$d_i \le i-1$ and $d_{n-i+1} \le n-i-1$. 
\end{lemma}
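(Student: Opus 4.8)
The plan is to reduce the statement to Chv\'{a}tal's Hamilton-cycle Theorem~\ref{thmchv} by the standard device of adjoining a universal vertex. Recall that an $n$-vertex graph $G$ has a Hamilton path if and only if $G' := G \vee K_1$ has a Hamilton cycle: a Hamilton path $v_1 v_2 \cdots v_n$ of $G$ extends to the Hamilton cycle $u v_1 v_2 \cdots v_n u$ of $G'$ (where $u$ is the new vertex), and conversely deleting $u$ from any Hamilton cycle of $G'$ leaves a Hamilton path of $G$, since $u$ has exactly two neighbours on that cycle. Hence ``$G$ has no Hamilton path'' is equivalent to ``$G'$ is not Hamiltonian'', and $G'$ has $n+1 \ge 3$ vertices when $n \ge 2$ (the cases $n = 1, 2$ being trivial), so Theorem~\ref{thmchv} applies to $G'$.

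Next I would record the degree sequence of $G'$. The vertex $u$ has degree $n$ in $G'$, while each original vertex $v_j$ has degree $d_j + 1$. Since $G$ is simple we have $d_n \le n-1$, hence $d_n + 1 \le n$, so $u$ attains the maximum degree in $G'$ and the nondecreasing degree sequence of $G'$ is precisely
\[ d_1 + 1 \le d_2 + 1 \le \cdots \le d_n + 1 \le n; \]
that is, $d'_j = d_j + 1$ for $1 \le j \le n$ and $d'_{n+1} = n$. Verifying that the apex vertex lands at the tail of the sorted list — so that the $+1$ shift on the original degrees does not disturb the ordering — is the only point requiring any care.

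Now apply Theorem~\ref{thmchv} to the non-Hamiltonian graph $G'$ on $n+1$ vertices: there is an integer $i$ with $1 \le i < (n+1)/2$ such that $d'_i \le i$ and $d'_{(n+1)-i} \le (n+1)-i-1 = n-i$. Since $1 \le i < (n+1)/2 \le n$, both indices $i$ and $(n+1)-i = n-i+1$ lie in $\{1,\dots,n\}$, where $d'_{\bullet} = d_{\bullet}+1$. Translating back, $d'_i \le i$ gives $d_i \le i-1$, and $d'_{n-i+1} \le n-i$ gives $d_{n-i+1} \le n-i-1$; together with $i < (n+1)/2$, this $i$ is exactly the one required by the lemma.

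There is essentially no genuine obstacle here: the argument is a direct reduction, and the only thing to be careful about is the index bookkeeping above — in particular that in $G \vee K_1$ the universal vertex has the largest degree, so that Chv\'{a}tal's cycle condition on $G'$ transfers cleanly to the claimed path condition on $G$.
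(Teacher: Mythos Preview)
Your reduction is correct and is exactly the approach the paper has in mind: immediately before stating this lemma, the paper remarks that ``$G$ contains a Hamilton path if and only if $G\vee K_1$ contains a Hamilton cycle'' and that ``almost all problems involving Hamilton path can be reduced to the problem of Hamilton cycle in this way.'' The paper itself does not write out a proof of the lemma (it simply attributes it to Chv\'{a}tal), but your argument---adjoin a universal vertex, observe it sits at the top of the degree sequence, apply Theorem~\ref{thmchv} to the resulting $(n+1)$-vertex graph, and translate indices back---is the standard derivation and matches the paper's suggested route.
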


\begin{theorem}[Ore \cite{ore60}, Bondy \cite{Bondy72}] \label{thm43}
Let $G$ be a graph on $n\ge 3$ vertices. If 
\[ e(G)\ge {n-1 \choose 2}, \]
then  $G$ has a Hamilton path unless $G = K_1\cup K_{n-1}$ 
or $n=4$ and $G=K_{1,3}$.  
\end{theorem}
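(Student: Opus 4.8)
The plan is to reduce Theorem~\ref{thm43} to the Hamilton-cycle version, Theorem~\ref{coroore}, using the equivalence recalled above that $G$ is traceable if and only if $G\vee K_1$ is Hamiltonian. First I would set $H:=G\vee K_1$, a graph on $n+1\ge 4$ vertices, and compute
\[
 e(H)=e(G)+n\ge \binom{n-1}{2}+n=\binom{n}{2}+1,
\]
where the last step is the elementary identity $\binom{n-1}{2}+n=\binom{n}{2}+1$. Since $|H|=n+1\ge 3$ and $\binom{(n+1)-1}{2}+1=\binom{n}{2}+1$, Theorem~\ref{coroore} applies to $H$ and yields that $H$ has a Hamilton cycle, hence $G$ is traceable, unless $H=K_1\vee(K_1\cup K_{(n+1)-2})=K_1\vee(K_1\cup K_{n-1})$, or $n+1=5$ and $H=K_2\vee I_3$.

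The remaining work is to translate these two exceptional graphs for $H$ back through the join with $K_1$, and this is where I expect the only genuine bookkeeping to lie. The apex vertex $w$ of $H=G\vee K_1$ is universal in $H$, so I would use the structure of the universal vertices in the exceptional graphs: in $K_1\vee(K_1\cup K_{n-1})$ the only universal vertex is the apex of that join, so $w$ must coincide with it and $G=H-w=K_1\cup K_{n-1}$; when $n=4$, $K_2\vee I_3$ has exactly two universal vertices (those forming the $K_2$), one of which is $w$, so $G=K_1\vee I_3=K_{1,3}$. Conversely one checks quickly that $K_1\cup K_{n-1}$ has $\binom{n-1}{2}$ edges and is non-traceable (it is disconnected), and that $K_{1,3}$ has $3=\binom{3}{2}$ edges and is non-traceable, so both are bona fide exceptions and the hypothesis $e(G)\ge\binom{n-1}{2}$ is best possible.

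As an alternative avoiding Theorem~\ref{coroore}, one could argue directly from the Chv\'{a}tal degree-sequence lemma for Hamilton paths stated just above: if $G$ is not traceable there is $i<(n+1)/2$ with $d_i\le i-1$ and $d_{n-i+1}\le n-i-1$, and grouping the degrees of $G$ into the first $i$ (each $\le i-1$), the next $n-2i+1$ (each $\le n-i-1$), and the last $i-1$ (each $\le n-1$) gives $2e(G)\le 3i^2-(2n+1)i+n^2-n=:f(i)$. As $f$ is convex and $f(1)=(n-1)(n-2)=2\binom{n-1}{2}$ is its maximum over the admissible range (the maximum being shared with $i=2$ precisely when $n=4$), the hypothesis $e(G)\ge\binom{n-1}{2}$ forces equality everywhere, pinning the degree sequence to $(0,n-2,\ldots,n-2)$ — whence $G=K_1\cup K_{n-1}$ — or, in the case $n=4$, $i=2$, to $(1,1,1,3)$ — whence $G=K_{1,3}$. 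Either route finishes the proof; I would present the reduction to Theorem~\ref{coroore} as the main argument, since it is shortest and reuses an already-established result.
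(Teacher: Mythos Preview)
The paper states Theorem~\ref{thm43} as a classical result of Ore and Bondy and does not include a proof of it; it is then used as a black box in the proofs of the Fiedler--Nikiforov and Yu--Fan theorems that follow. So there is no ``paper's own proof'' to compare against, and your task reduces to whether your argument is correct. It is.

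Your main approach---forming $H=G\vee K_1$, applying Theorem~\ref{coroore} to $H$, and identifying the universal vertex of $H$ with the join vertex to recover $G$ from the two exceptional graphs---is clean and entirely correct. The edge count $\binom{n-1}{2}+n=\binom{n}{2}+1$ is right, the universal-vertex analysis in $K_1\vee(K_1\cup K_{n-1})$ and in $K_2\vee I_3$ is accurate, and the verification that both exceptions really fail to be traceable with exactly $\binom{n-1}{2}$ edges closes the loop. This uses precisely the equivalence the paper recalls at the start of the Hamilton-path subsection.

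Your alternative via the Chv\'{a}tal degree-sequence lemma is also correct and is very much in the spirit of the proofs the paper \emph{does} give for the analogous edge-extremal results (e.g.\ Theorem~\ref{thmhnkd} and the $k$-path-coverable theorem): bound $2e(G)$ by a quadratic in the Chv\'{a}tal index $i$, observe the maximum on the admissible range occurs at $i=1$ (with a tie at $i=2$ exactly when $n=4$), and read off the forced degree sequence at equality. Either route is acceptable; the reduction to Theorem~\ref{coroore} is the shorter one to present.
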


\begin{theorem}[Fiedler--Nikiforov \cite{FiedlerNikif}]
Let $G$ be a graph on $n$ vertices and 
\[  \lambda (G)\ge n-2, \]
then either $G$ has a Hamilton path or $G=K_{n-1} \cup K_1$. 
\end{theorem}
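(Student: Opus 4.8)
The plan is to mirror the deduction of Theorem~\ref{thmFN} from the edge-counting Theorem~\ref{coroore}, replacing the latter by its path analogue, Theorem~\ref{thm43}. First I would apply the Stanley inequality $\lambda(G)\le -\tfrac12+\sqrt{2e(G)+\tfrac14}$ to turn the spectral hypothesis into an edge count. Together with $\lambda(G)\ge n-2$ this gives $n-\tfrac32\le\sqrt{2e(G)+\tfrac14}$; squaring (legitimate since $n-\tfrac32\ge 0$ for $n\ge 2$) yields $2e(G)\ge n^2-3n+2=(n-1)(n-2)$, that is, $e(G)\ge\binom{n-1}{2}$.

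Next I would feed this into Theorem~\ref{thm43}: from $e(G)\ge\binom{n-1}{2}$ it follows that $G$ is traceable unless $G=K_1\cup K_{n-1}$, or $n=4$ and $G=K_{1,3}$. The last exception is then eliminated by a one-line spectral computation, since $\lambda(K_{1,3})=\sqrt 3<2=n-2$ contradicts $\lambda(G)\ge n-2$ when $n=4$. Hence $G$ is traceable or $G=K_1\cup K_{n-1}$; and conversely $\lambda(K_1\cup K_{n-1})=\lambda(K_{n-1})=n-2$ shows this exceptional graph does satisfy the hypothesis, so the stated dichotomy is sharp.

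There is essentially no hard step here: the argument is a short reduction to Theorem~\ref{thm43} plus a boundary check. The only points needing a little care are (i) the validity of the squaring step for all admissible $n$, and (ii) exhausting the finitely many small-order exceptions produced by Theorem~\ref{thm43} --- only $K_{1,3}$ in this case --- and checking that each has spectral radius strictly below $n-2$. An alternative would exploit the remark in the text that $G$ is traceable if and only if $G\vee K_1$ is Hamiltonian, and then apply Theorem~\ref{thmFN} to $G\vee K_1$ on $n+1$ vertices; but that requires bounding $\lambda(G\vee K_1)$ from below in terms of $\lambda(G)$ and coping with equality at $G=K_{n-1}$ (where $G\vee K_1=K_n$ gives $\lambda=n-1$, not strictly larger), so the direct route via Theorem~\ref{thm43} is cleaner.
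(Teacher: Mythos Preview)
Your proposal is correct and follows essentially the same argument as the paper: apply the Stanley inequality to convert $\lambda(G)\ge n-2$ into $e(G)\ge\binom{n-1}{2}$, invoke Theorem~\ref{thm43}, and dispose of the $K_{1,3}$ exception by computing $\lambda(K_{1,3})=\sqrt{3}<2$. Your additional remarks on the squaring step, the sharpness check for $K_{n-1}\cup K_1$, and the alternative route via $G\vee K_1$ are fine embellishments but not in the paper's proof.
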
 

The proof is short and similar with 
that in Theorem \ref{thmFN}. 

\begin{proof}
The well-known Stanley inequality asserts that 
$\lambda (G)\le -1/2 + \sqrt{2e(G) + 1/4}$, which together with $\lambda (G)\ge n-2$, implies that 
$e(G) \ge {n-1 \choose 2}$. 
Hence, 
for $n\neq 4$, 
the desired result follows from  Theorem \ref{thm43}. 
For $n=4$, if $G$ is the another possible exception   $K_{1,3}$,  
we can calculate that $\lambda (K_{1,3})=\sqrt{3}$, 
which contradicts the condition $\lambda (G)\ge n-2$. This completes the proof.  
\end{proof}

In 2013, Yu and Fan \cite{YuGuidong} provided the 
condition for the existence of Hamiltonian paths
 in terms of the signless Laplacian spectral radius of the graph. 

\begin{theorem}[Yu--Fan \cite{YuGuidong}]
Let $G$ be a graph on $n$ vertices and 
\[  q (G)\ge 2(n-2), \]
then  $G$ has a Hamilton path or $G=K_{n-1} \cup K_1$, or $n=4$ and $G=K_{1,3}$. 
\end{theorem}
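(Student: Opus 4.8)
The plan is to follow the template used above for Theorem~\ref{thmYF} (and for the Hamilton-cycle results preceding it): first convert the signless Laplacian hypothesis into an edge-count hypothesis, then invoke the purely extremal Theorem~\ref{thm43}. The key input is the inequality $q(G)\le \frac{2e(G)}{n-1}+n-2$ from \cite{FengPIMB09}, already used in the proof of Theorem~\ref{thmYF}. Combined with $q(G)\ge 2(n-2)$ it gives
\[ 2(n-2)\le \frac{2e(G)}{n-1}+n-2, \]
and rearranging yields $e(G)\ge \frac{(n-1)(n-2)}{2}=\binom{n-1}{2}$. Theorem~\ref{thm43} now applies verbatim: either $G$ has a Hamilton path, or $G=K_1\cup K_{n-1}$, or $n=4$ and $G=K_{1,3}$.

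It then remains only to check, for $n=4$, whether the sporadic graph $K_{1,3}$ actually satisfies the spectral hypothesis. Here the situation differs from the Hamilton-cycle analogue in Theorem~\ref{thmYF}, where the small exceptional graph was discarded because its $Q$-index strictly exceeded the threshold. Since $K_{1,3}$ is bipartite, its signless Laplacian spectrum equals its Laplacian spectrum, namely $\{0,1,1,4\}$; hence $q(K_{1,3})=4=2(n-2)$, so $K_{1,3}$ meets the hypothesis with equality and, having no Hamilton path, must be retained as a genuine exception. (The graph $K_1\cup K_{n-1}$ likewise has $q(K_1\cup K_{n-1})=q(K_{n-1})=2(n-2)$ and no Hamilton path, so it is genuine for every $n\ge 3$.)

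I do not expect any serious obstacle: the whole argument is a short deduction from the bound $q(G)\le \frac{2e(G)}{n-1}+n-2$ and Theorem~\ref{thm43}, together with the small computation $q(K_{1,3})=4$. The only point that requires a moment's care is precisely that last computation — one must verify that the $n=4$ sporadic graph is \emph{not} eliminated by the spectral condition, in contrast with the Hamilton-cycle setting, where the corresponding small graph $K_2\vee I_3$ was ruled out.
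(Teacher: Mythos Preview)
Your proposal is correct and follows essentially the same argument as the paper: apply the inequality $q(G)\le \frac{2e(G)}{n-1}+n-2$ to obtain $e(G)\ge \binom{n-1}{2}$, invoke Theorem~\ref{thm43}, and then compute $q(K_{1,3})=4$ to confirm that the sporadic $n=4$ graph is a genuine exception. Your added remarks (bipartite spectrum, and the check that $K_1\cup K_{n-1}$ also meets the bound with equality) are helpful elaborations but not additional ideas.
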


\begin{proof}
An important inequality in \cite{FengPIMB09} states that 
$q(G)\le \frac{2e(G)}{n-1} + n-2$, 
which together with $q(G)\ge 2(n-2)$ yields that 
$e(G)\ge {n-1 \choose 2}$.  
Hence, 
for $n\neq 4$, 
the desired result follows from  Theorem \ref{thm43}. 
For $n=4$, when $G$ is the another possible exception   $K_{1,3}$,  
we can calculate that $q (K_{1,3})= 4$. 
This completes the proof.  
\end{proof}

Note that the extremal graph in Fielder--Nikiforov's result  
is $K_{n-1} \cup K_1$, which is not connected. 
Moreover, if a graph contains a Hamilton path, then it must be connected. 
In 2012, Lu, Liu and Tian \cite{Lumei} generalized this spectral condition 
 to connected graphs.

\begin{theorem}[Lu--Liu--Tian \cite{Lumei}] \label{thmllt}
Let $G$ be a  graph on $n\ge 7$ vertices. 
If $G$ is connected and 
\[ \lambda (G)\ge \sqrt{(n-3)^2 +2}, \]
then $G$ contains a Hamilton path. 
\end{theorem}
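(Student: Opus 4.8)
The statement to prove is Theorem \ref{thmllt} (Lu--Liu--Tian): a connected graph $G$ on $n\ge 7$ vertices with $\lambda(G)\ge \sqrt{(n-3)^2+2}$ contains a Hamilton path. The plan is to deduce this from the edge-count result Theorem \ref{thm43} (Ore--Bondy) together with the Stanley-type bound $\lambda(G)\le -1/2+\sqrt{2e(G)+1/4}$, but since the naive application only delivers the disconnected extremal graph $K_1\cup K_{n-1}$, I need extra structural work to rule out ``near-extremal'' connected graphs. First I would assume for contradiction that $G$ is connected, has no Hamilton path, and $\lambda(G)\ge\sqrt{(n-3)^2+2}$. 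The immediate move is to observe that $G\vee K_1$ has no Hamilton cycle (by the standard equivalence: $G$ traceable $\iff G\vee K_1$ Hamiltonian), so I can invoke the Hamilton-cycle machinery on $G^+:=G\vee K_1$, a graph on $n+1$ vertices.

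The core of the argument is a spectral/edge trade-off. From $\lambda(G)^2\ge (n-3)^2+2$ and $\lambda(G)\le -\tfrac12+\sqrt{2e(G)+\tfrac14}$ I would extract a lower bound on $e(G)$, roughly $e(G)\ge \binom{n-3}{2}+c$ for an explicit constant. This is not enough to reach $\binom{n-1}{2}$, so Theorem \ref{thm43} does not apply directly; instead I would bound the number of vertices of small degree. The key lemma to use is Chv\'atal's degree-sequence condition (the Lemma before Theorem \ref{thm43}): since $G$ has no Hamilton path, there is $i<\tfrac{n+1}{2}$ with $d_i\le i-1$ and $d_{n-i+1}\le n-i-1$. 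I would combine this with the eigenvector equation $\lambda^2 x_u = \sum_{v} (A^2)_{uv} x_v$ applied at a Perron vector: if $u$ is the vertex of maximum Perron weight, then $\lambda^2 = \sum_{v} (A^2)_{uv}x_v/x_u \le$ (number of walks of length $2$ from $u$) which is controlled by $d(u)$ and the degrees of neighbours of $u$. Pushing $\lambda^2\ge (n-3)^2+2$ through this forces $u$ to have large degree and most of its neighbourhood to have large degree, contradicting the existence of the ``small-degree pocket'' guaranteed by Chv\'atal's lemma — unless $G$ is one of finitely many exceptional configurations.

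The step I expect to be the main obstacle is precisely pinning down and eliminating the ``near-extremal'' connected graphs: the natural candidate is something like $K_1\vee(K_{n-3}\cup I_2)$ or $K_2\vee(K_{n-3}\cup K_1)$ type graphs, i.e., a large clique with a couple of pendant-ish vertices, which are connected, non-traceable, and have $\lambda$ extremely close to $\sqrt{(n-3)^2+2}$. One has to compute $\lambda$ for each such candidate exactly (via the quotient matrix of the natural equitable partition) and verify the strict inequality $\lambda(\text{candidate})<\sqrt{(n-3)^2+2}$, which is where the hypothesis $n\ge 7$ is consumed. Concretely I would: (i) show that if $G$ is non-traceable, connected, with $\lambda(G)\ge\sqrt{(n-3)^2+2}$ then $G$ contains a $K_{n-3}$ or at least a very dense subgraph on $n-3$ vertices — this is a Nikiforov-style ``clique from spectral radius'' extraction using that $\sqrt{(n-3)^2+2}>n-4$; (ii) enumerate how the remaining three vertices can attach while keeping $G$ non-traceable and connected; (iii) for each such $G$, bound $\lambda(G)$ by interlacing/quotient matrices and check it falls strictly below the threshold.

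The secondary technical annoyance will be keeping the constants honest: the bound $\lambda\le -\tfrac12+\sqrt{2e+\tfrac14}$ is tight only for graphs that are ``stars plus a clique'' in a degenerate sense, so the slack it gives must be carefully tracked against the $+2$ in $(n-3)^2+2$. If direct edge-counting plus Chv\'atal is too lossy, the fallback is to run the whole argument on $G^+=G\vee K_1$ using the Fiedler--Nikiforov Hamilton-cycle criterion $\lambda(G^+)>(n+1)-2=n-1$: one shows $\lambda(G\vee K_1)\ge$ (something in terms of $\lambda(G)$ via the $2\times 2$ quotient-matrix bound for joins) $>n-1$, concludes $G^+$ is Hamiltonian or equals $K_1\vee(K_1\cup K_{n-1})$, and the latter forces $G=K_1\cup K_{n-1}$, which is disconnected — contradiction. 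I would present the join-reduction route as the primary line since it most cleanly reuses Theorem \ref{thmFN} already proved in the excerpt, relegating the degree-sequence bookkeeping to the handful of small exceptional graphs that survive.
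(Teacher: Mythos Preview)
The paper is a survey and does \emph{not} supply a proof of this theorem; it only cites Lu--Liu--Tian and comments on the range of $n$. So there is no ``paper's own proof'' to compare against, and your proposal has to stand on its own. It does not.

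Your primary route --- pass to $G^{+}=G\vee K_{1}$ and invoke Fiedler--Nikiforov --- breaks down at the key implication. You would need $\lambda(G^{+})>n-1$, but the hypothesis only gives $\lambda(G)\ge\sqrt{(n-3)^{2}+2}\approx n-3$. A Rayleigh/quotient-matrix computation for the join shows that, with Perron vector $x$ of $G$ normalised to $\lVert x\rVert=1$ and $s=\sum_{i}x_{i}$, one gets at best $\lambda(G^{+})\ge\tfrac{1}{2}\bigl(\lambda(G)+\sqrt{\lambda(G)^{2}+4s^{2}}\bigr)$; forcing this above $n-1$ requires $s^{2}>(n-1)(n-1-\lambda(G))\approx 2(n-1)$, whereas $s\le\sqrt{n}$ by Cauchy--Schwarz. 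The $2\times 2$ quotient-matrix version likewise needs average degree $\bar d>n-2-\tfrac{1}{n-1}$, far more than the hypothesis provides. So Theorem~\ref{thmFN} cannot be reached this way.

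The secondary route also misses the point. You reach for Stanley's inequality $\lambda\le -\tfrac12+\sqrt{2e+\tfrac14}$, but that bound ignores connectivity and is exactly why you find the edge count ``too lossy''. The correct tool is Hong's inequality for \emph{connected} graphs, $\lambda(G)\le\sqrt{2e(G)-n+1}$ (reference \cite{Hongyuan} in the survey): with $\lambda^{2}\ge (n-3)^{2}+2$ it yields $e(G)\ge\binom{n-2}{2}+2$, and the mysterious ``$+2$'' in the threshold is precisely $2e(H)-n+1$ for the extremal connected non-traceable graph $H=K_{1}\vee(K_{n-3}\cup I_{2})$. From there one runs a Chv\'atal-type degree-sequence/edge-count argument to reduce to a short list of connected non-traceable exceptions (cf.\ the Ning--Ge theorem immediately following in the survey), and then checks that each exception has $\lambda$ strictly below $\sqrt{(n-3)^{2}+2}$ --- Hong's inequality is strict unless $G$ is $K_{n}$ or a star, so this last step is cheap. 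Without Hong's bound you never get off the ground; that is the missing idea.
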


We remark here that the original statement 
of Theorem \ref{thmllt} (see \cite[Theorem 3.4]{Lumei}) 
requires the restriction $n\ge 5$, this is a typo 
because for $n=6$, we can verify that 
$\lambda (K_2 \vee I_4)=\frac{1+\sqrt{33}}{2} \approx 
3.372 > \sqrt{3^2+2}\approx 3.316$ and 
$K_2 \vee I_4$ does not contain Hamilton path. 
By a careful examination, the restriction should be modified to $n\ge 7$. 

\begin{theorem}[Ning--Ge \cite{NGLMA15}]
Let $G$ be a graph on $n\ge 4$ vertices with $\delta (G)\ge 1$. 
If 
\[ \lambda (G)> n-3,\] 
then $G$ has a Hamilton path or 
$G\in \{K_1\vee (K_{n-3}\cup I_2),K_2 \vee I_4,K_1\vee (K_{1,3} \cup K_1)  \}$. 
\end{theorem}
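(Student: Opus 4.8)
The plan is to follow the pattern of the proofs of Theorems~\ref{thmFN} and \ref{thm43}, but with an extra structural layer: the Stanley inequality only converts $\lambda(G)>n-3$ into the rather weak edge bound $e(G)>\binom{n-2}{2}$, so I cannot simply quote an edge‑version theorem with threshold $\binom{n-1}{2}$. Instead I will combine this bound with the Bondy--Chv\'atal closure and Chv\'atal's degree‑sequence lemma to pin down $G$ exactly.

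\textbf{Step 1 (edge bound and connectedness).} Assume $G$ has no Hamilton path. The Stanley inequality $\lambda(G)\le -1/2+\sqrt{2e(G)+1/4}$ together with $\lambda(G)>n-3$ gives $e(G)>\binom{n-2}{2}$, i.e. $e(G)\ge\binom{n-2}{2}+1$. Using this and $\delta(G)\ge1$ one checks $G$ is connected: a component $C$ of maximum spectral radius has $\lambda(C)>n-3$, hence $e(C)>\binom{n-2}{2}$ and so $|V(C)|\ge n-1$, and a single remaining vertex would have degree $0$.

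\textbf{Step 2 (closure and the degree sequence).} Put $H=\mathrm{cl}_{n-1}(G)$. By Bondy--Chv\'atal $H$ is again non‑traceable, $\delta(H)\ge\delta(G)\ge1$, and $e(G)\le e(H)$. Chv\'atal's lemma (stated just before Theorem~\ref{thm43}) yields an integer $i$ with $1\le i<(n+1)/2$, $d_i(H)\le i-1$ and $d_{n-i+1}(H)\le n-i-1$; since $\delta(H)\ge1$ we must have $i\ge2$. Grouping the degree sequence of $H$ into the three blocks gives $2e(H)\le i(i-1)+(n-2i+1)(n-i-1)+(i-1)(n-1)=:g(i)$, and $g$ is a convex quadratic in $i$, so on $2\le i<(n+1)/2$ it is maximized at an endpoint; for $n$ above an explicit threshold (around $n\ge14$) the maximum is $g(2)=n^2-5n+10$, whence $e(G)\le e(H)\le\binom{n-2}{2}+2$, and moreover $i=2$ is forced.

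\textbf{Step 3 (forcing the structure) and the main obstacle.} Combining Steps~1 and 2, $\binom{n-2}{2}+1\le e(G)\le\binom{n-2}{2}+2$, which pins the degree sequence of $H$ to within a single deleted edge of $1,1,n-3,\dots,n-3,n-1$. A short analysis of the few possible degree sequences, using that $H$ is closure‑closed and that a near‑complete graph is traceable, shows that the only surviving $H$ is $K_1\vee(K_{n-3}\cup I_2)$. Since $G\subseteq H$ with $e(G)\ge e(H)-1$, either $G=H$ — the first listed exception, which indeed has $\lambda(G)>\lambda(K_{n-2})=n-3$ — or $G=H-e$ with $e$ inside the dense part $K_1\vee K_{n-3}$ (it cannot meet the two pendant vertices, by $\delta(G)\ge1$); in the latter case $G$ is still non‑traceable, and solving the Perron eigenvalue equation on the quotient of $G$ by its obvious automorphisms (at most four vertex classes) yields $\lambda(G)<n-3$, a contradiction. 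Finally, for the finitely many small orders $4\le n\le13$ not covered by the endpoint‑maximization above, one enumerates directly the closure‑closed non‑traceable graphs with $\delta\ge1$ and $e\ge\binom{n-2}{2}+1$: this is exactly where the two sporadic exceptions $K_2\vee I_4$ and $K_1\vee(K_{1,3}\cup K_1)$, both on $6$ vertices, appear, every other candidate having $\lambda\le n-3$. The hard part is precisely this endgame: the Stanley bound and the degree‑sequence bound are tight up to a gap of one edge, so the classification is a delicate near‑extremal argument rather than a soft count, and the low‑order regime — home of the two sporadic graphs — must be cleared by hand.
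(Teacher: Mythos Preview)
The survey paper does not include a proof of this theorem; it is merely stated and attributed to Ning--Ge, so there is no in-paper proof to compare against directly.

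That said, your outline has a concrete inefficiency that creates the ``hard endgame'' you describe. In Step~1 you invoke Stanley's inequality, which from $\lambda(G)>n-3$ yields only $e(G)>\binom{n-2}{2}$, i.e.\ $e(G)\ge\binom{n-2}{2}+1$. But the hypothesis $\delta(G)\ge1$ is exactly what is needed for Hong's inequality $\lambda(G)\le\sqrt{2e(G)-n+1}$ (see \cite{Hongyuan,HSF01} in the survey's bibliography), and this gives the sharper bound $e(G)>\binom{n-2}{2}+1$, hence $e(G)\ge\binom{n-2}{2}+2$. Combined with your Step~2 upper bound $e(H)\le\binom{n-2}{2}+2$, you get $e(G)=e(H)=\binom{n-2}{2}+2$ immediately, equality throughout forces $i=2$ and the exact degree sequence $1,1,n-3,\dots,n-3,n-1$, and closure-stability gives $G=H=K_1\vee(K_{n-3}\cup I_2)$. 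The troublesome ``$G=H-e$'' case in your Step~3 simply does not arise for large $n$; the quotient eigenvalue computation you sketch there is unnecessary. This is almost certainly the route taken in the original paper \cite{NGLMA15}, and it explains why the hypothesis $\delta(G)\ge1$ is present at all.

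Two smaller points. First, your ``around $n\ge14$'' for the endpoint comparison of $g(i)=3i^2-(2n+1)i+n^2-n$ is too conservative: one has $g(2)\ge g(\lfloor(n-1)/2\rfloor)$ already for $n\ge7$, and $i\ge3$ is excluded once $n\ge9$; this shrinks the finite check. Second, that finite check for small $n$ is where the two sporadic $6$-vertex graphs $K_2\vee I_4$ and $K_1\vee(K_{1,3}\cup K_1)$ emerge, and it is indeed done by direct enumeration as you say.
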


When the minimum degree is involved in contrast with the results in  \cite{FiedlerNikif}, using the adjacency spectral radius, Li and Ning \cite{LiBinlong}   obtained the following result.

\begin{theorem}[Li--Ning \cite{LiBinlong}]      \label{LiBinlong} 
Let $\delta \geq 0$ and $n \geq \max\{6\delta +10, (\delta^2+7\delta +8)/2\}$. 
If $G$ is a graph of order $n$ with minimum degree 
$\delta(G) \geq \delta$  and
$$\lambda(G) \geq \lambda(K_{\delta} \vee (K_{n-2\delta-1} \cup I_{\delta+1})),$$
then $G$ has a Hamilton path, unless $G=K_{\delta} \vee (K_{n-2\delta 
-1} \cup I_{\delta+1})$.
\end{theorem}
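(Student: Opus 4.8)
The plan is to reduce the spectral statement to the edge-count version (Theorem~\ref{thm43}, or rather its minimum-degree refinement) via the standard ``closure plus degree-sequence'' machinery, exactly in the spirit of the proofs of Theorems~\ref{thmln16a} and~\ref{thmlnbi}. First I would observe that having a Hamilton path in $G$ is equivalent to $G\vee K_1$ having a Hamilton cycle, so one could in principle quote Theorem~\ref{thmln16a} for $G\vee K_1$; but it is cleaner to argue directly. Suppose $G$ is not traceable with $\delta(G)\ge\delta$. By the Chv\'atal-type lemma for Hamilton paths, the degree sequence $d_1\le\cdots\le d_n$ of $G$ satisfies $d_i\le i-1$ and $d_{n-i+1}\le n-i-1$ for some $i<(n+1)/2$; moreover $\delta\le d_1\le i-1$ forces $i\ge\delta+1$. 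Summing degrees as in the proof of Theorem~\ref{thmhnkd} gives
\[
2e(G)\le i(i-1)+(n-2i+1)(n-i-1)+(i-1)(n-1),
\]
a quadratic in $i$ which, using $n$ large relative to $\delta$, is maximized at the smallest admissible value $i=\delta+1$. This yields $e(G)\le e\bigl(K_\delta\vee(K_{n-2\delta-1}\cup I_{\delta+1})\bigr)$.

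Next I would combine this edge bound with a lower bound on $e(G)$ coming from the spectral hypothesis. The cleanest route is the Stanley/Hong-type inequality $\lambda(G)\le -\tfrac12+\sqrt{2e(G)+\tfrac14}$ together with a comparison of $\lambda(K_\delta\vee(K_{n-2\delta-1}\cup I_{\delta+1}))$ against the corresponding edge count; alternatively one shows directly that $\lambda(G)\ge\lambda(K_\delta\vee(K_{n-2\delta-1}\cup I_{\delta+1}))$ implies $e(G)$ exceeds the threshold unless equality holds throughout. Following the Li--Ning template, once $e(G)$ is pinned to the extremal value, all inequalities in the degree-sum estimate become equalities: $i=\delta+1$, $G$ equals its own $(n-1)$-closure, and the degree sequence is exactly $d_1=\cdots=d_{\delta+1}=\delta$, then $n-2\delta-1$ vertices of degree $n-\delta-1$, then $\delta$ vertices of degree $n-1$. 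A short structural argument using $\mathrm{cl}_{n-1}(G)=G$ (any two vertices with degree sum $\ge n-1$ are adjacent) then forces the $n-\delta-1$ high-degree vertices to induce a clique, the $\delta$ dominating vertices to be joined to everything, and the $\delta+1$ low-degree vertices to form an independent set joined only to the $\delta$ dominating vertices; that is precisely $K_\delta\vee(K_{n-2\delta-1}\cup I_{\delta+1})$.

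The main obstacle is the spectral--to--edge conversion being tight enough to recover the \emph{exact} extremal graph rather than merely an asymptotic statement. Because $K_\delta\vee(K_{n-2\delta-1}\cup I_{\delta+1})$ is only ``nearly'' a clique $K_{n-\delta-1}$, its spectral radius is extremely close to $n-\delta-2$, so a crude inequality like $\lambda\le\sqrt{2e}$ loses too much; one needs the sharper $\lambda(G)\le -\tfrac12+\sqrt{2e(G)+\tfrac14}$ and a careful evaluation of $\lambda$ on the extremal graph (via its quotient matrix on the three degree-classes) to see that the edge threshold is attained with no slack. Equivalently, one invokes the stability/closure lemma of Li--Ning (\cite[Lemma~2]{LiBinlong}): if $\lambda(G)\ge\lambda(K_\delta\vee(K_{n-2\delta-1}\cup I_{\delta+1}))$ and $G$ is not traceable, then $G$ is a spanning subgraph of one of a small list of near-extremal graphs, and then a direct eigenvalue comparison (removing any edge from the dense clique strictly decreases $\lambda$ below the threshold, as quantified by a Kelmans-type argument) eliminates all but $K_\delta\vee(K_{n-2\delta-1}\cup I_{\delta+1})$ itself. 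The numerical hypothesis $n\ge\max\{6\delta+10,(\delta^2+7\delta+8)/2\}$ is exactly what makes ``$i=\delta+1$ maximizes the quadratic'' and ``the closure forces the clique structure'' both go through.
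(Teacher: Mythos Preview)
The paper is a survey and does not give a proof of this theorem; it merely states the result and cites Li--Ning. So there is no in-paper proof to compare against. What the survey does say about the Li--Ning method (see the remarks following Theorems~\ref{thmln16a}--\ref{thmln16b} and after Theorem~\ref{thmniki}) is consistent with the second half of your proposal: a closure/degree-sequence stability lemma reduces to subgraphs of the extremal configuration, and then a spectral comparison finishes.

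One point worth tightening: your first route via the Stanley inequality does \emph{not} close the gap, and you should not present it as a viable alternative. From $\lambda(G)\ge\lambda(K_\delta\vee(K_{n-2\delta-1}\cup I_{\delta+1}))>n-\delta-2$ the bound $\lambda\le -\tfrac12+\sqrt{2e+\tfrac14}$ only gives $e(G)>\binom{n-\delta-1}{2}$, whereas the extremal edge count is $\binom{n-\delta-1}{2}+\delta(\delta+1)$; no amount of ``careful evaluation of $\lambda$ on the extremal graph'' recovers those missing $\delta(\delta+1)$ edges from Stanley alone. The actual Li--Ning argument uses the Hong--Shu--Fang-type inequality (which incorporates the minimum degree) to push $e(G)$ above the \emph{next} threshold $e\bigl(K_{\delta+1}\vee(K_{n-2\delta-3}\cup I_{\delta+2})\bigr)$, then invokes the stability lemma you cite to conclude $G\subseteq K_\delta\vee(K_{n-2\delta-1}\cup I_{\delta+1})$, and finally argues that any proper such subgraph has strictly smaller spectral radius. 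Your final paragraph lands on exactly this, so the overall plan is right; just drop the suggestion that Stanley by itself is ``the cleanest route.''
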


\begin{theorem}[Li--Ning \cite{LiBinlong}]   
Let $\delta \geq 0$ and $n \geq \max\{6\delta +10, (3\delta^2+9\delta +8)/2\}$. 
If $G$ is a graph of order $n$ with minimum degree 
$\delta(G) \geq \delta$  and
\[ q(G) \geq q (K_{\delta} \vee (K_{n-2\delta-1} \cup I_{\delta+1})),\]
then $G$ has a Hamilton path, unless $G=K_{\delta} \vee (K_{n-2\delta 
-1} \cup I_{\delta+1})$.
\end{theorem}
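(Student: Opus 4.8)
The plan is to run the by-now standard three-step scheme already used for the adjacency version of this statement (Theorem~\ref{LiBinlong}) and for the signless-Laplacian Hamilton-cycle results (Theorems~\ref{thmln16b} and~\ref{thmniki}): first turn the spectral hypothesis into a lower bound on $e(G)$, then apply a stability lemma to pin down the coarse shape of $G$, and finally close the gap with a Perron--Frobenius monotonicity argument. Write $M:=K_{\delta}\vee(K_{n-2\delta-1}\cup I_{\delta+1})$ for the conjectured extremal graph; observe that $M$ is the clique $K_{n-\delta-1}$ together with $\delta+1$ further vertices of degree exactly $\delta$, all joined to a fixed $\delta$-subset of the clique, so that $\delta(M)=\delta$ and $M$ is not traceable (removing that $\delta$-clique leaves $\delta+2$ components). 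Assume $G$ is a non-traceable $n$-vertex graph with $\delta(G)\ge\delta$ and $q(G)\ge q(M)$; the goal is to show $G=M$. The case $\delta=0$ is disposed of immediately: then $M=K_{n-1}\cup K_1$ with $q(M)=2(n-2)$, so the inequality $q(G)\le\frac{2e(G)}{n-1}+n-2$ of \cite{FengPIMB09} (already invoked above in the proofs of Theorem~\ref{thmYF} and the Yu--Fan path theorem) gives $e(G)\ge\binom{n-1}{2}$, and since $n\ge10$ and $G$ is non-traceable, Theorem~\ref{thm43} forces $G=K_{n-1}\cup K_1=M$. So from here on assume $\delta\ge1$; then $M$ is connected and, properly containing $K_{n-\delta-1}$, it satisfies the \emph{strict} inequality $q(M)>q(K_{n-\delta-1})=2(n-\delta-2)$.

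\textbf{Step 1.} Plugging $q(G)\ge q(M)>2(n-\delta-2)$ into the bound of \cite{FengPIMB09} yields $e(G)>\frac{(n-1)(n-2\delta-2)}{2}=\binom{n-\delta-1}{2}-\frac{\delta^2+\delta}{2}$. An elementary computation shows that this quantity is $\ge\binom{n-\delta-2}{2}+(\delta+1)(\delta+2)=e\bigl(K_{\delta+1}\vee(K_{n-2\delta-3}\cup I_{\delta+2})\bigr)$ \emph{precisely} when $n\ge\frac{3\delta^2+9\delta+8}{2}$, and this is the only place the quadratic lower bound on $n$ is used; hence $e(G)>e\bigl(K_{\delta+1}\vee(K_{n-2\delta-3}\cup I_{\delta+2})\bigr)$. \textbf{Step 2.} Now I would pass to the graph $\widehat G:=G\vee K_1$, using that $G$ is traceable iff $\widehat G$ is Hamiltonian: thus $\widehat G$ is non-Hamiltonian, has $n+1$ vertices and $\delta(\widehat G)\ge\delta+1$, and one checks the two identities $M\vee K_1=H_{n+1,\delta+1}$ and $\bigl(K_{\delta+1}\vee(K_{n-2\delta-3}\cup I_{\delta+2})\bigr)\vee K_1=H_{n+1,\delta+2}$, so that $e(\widehat G)=e(G)+n>e(H_{n+1,\delta+2})$ by Step~1. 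The stability lemma of Li--Ning (\cite[Lemma~2]{LiBinlong}), applied to $\widehat G$ (which requires $n+1\ge6(\delta+1)+5$, i.e.\ $n\ge6\delta+10$ — this consumes the linear bound on $n$), then forces $\widehat G$ to be a spanning subgraph of $H_{n+1,\delta+1}$ or of $L_{n+1,\delta+1}$. The universal vertex of $\widehat G$, being of degree $n$, must be identified with a universal vertex of the host graph; these are exactly the $K_{\delta+1}$-block in $H_{n+1,\delta+1}=M\vee K_1$, and the apex in $L_{n+1,\delta+1}=K_1\vee(K_{n-\delta-1}\cup K_{\delta+1})$. Removing that vertex from both sides yields, as spanning subgraphs, either $G\subseteq M$ or $G\subseteq K_{n-\delta-1}\cup K_{\delta+1}$.

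\textbf{Step 3.} In the second alternative, $G$ is a subgraph of the disconnected graph $K_{n-\delta-1}\cup K_{\delta+1}$, so $q(G)\le q(K_{n-\delta-1}\cup K_{\delta+1})=2(n-\delta-2)<q(M)$, contradicting the hypothesis; hence $G\subseteq M$. As $\delta\ge1$, $M$ is connected, so $Q(M)$ is an irreducible nonnegative matrix, and therefore $q(H)<q(M)$ for every proper spanning subgraph $H$ of $M$; together with $q(G)\ge q(M)$ this forces $G=M$, as required. The step I expect to be the real obstacle is Step~2: all the graph-theoretic content is concentrated in the stability lemma \cite[Lemma~2]{LiBinlong} — obtained via the Bondy--Chv\'{a}tal closure and Chv\'{a}tal's degree-sequence criterion for Hamiltonicity — which the argument merely cites, and reproving it would demand a careful closure/degree-sequence analysis. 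A smaller but genuine subtlety is that the edge estimate in Step~1 is tight exactly at $n=\frac{3\delta^2+9\delta+8}{2}$, so the strict inequality $q(M)>2(n-\delta-2)$ is indispensable there; since this strictness fails when $\delta=0$ (where $M$ is disconnected), that degenerate case must be, and was, handled separately.
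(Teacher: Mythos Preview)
Your proposal is correct and follows essentially the same three-step scheme (spectral $\Rightarrow$ edge bound via \cite{FengPIMB09}, stability lemma, Perron--Frobenius monotonicity) that the paper attributes to Li--Ning for this family of results; the survey itself does not supply a proof of this particular statement, only citing \cite{LiBinlong} and remarking that ``the key part of the proof \ldots\ attributes to the stability result for Hamilton cycle.''

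One genuine point of difference worth noting: rather than invoking a separate stability lemma for \emph{traceability} (which is how Li--Ning proceed in \cite{LiBinlong}, proving path and cycle stability lemmas in parallel), you reduce to the Hamilton-\emph{cycle} stability lemma by passing to $\widehat G = G\vee K_1$ and exploiting the identifications $M\vee K_1 = H_{n+1,\delta+1}$ and $L_{n+1,\delta+1}\setminus\{\text{apex}\}=K_{n-\delta-1}\cup K_{\delta+1}$. This is a clean shortcut: it recycles \cite[Lemma~2]{LiBinlong} verbatim, explains transparently why the linear bound becomes $6\delta+10$ (it is exactly $6(\delta+1)+5-1$), and in the $L$-branch the disconnection of the host after apex removal kills that case in one line via $q(K_{n-\delta-1}\cup K_{\delta+1})=2(n-\delta-2)<q(M)$. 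Your careful handling of $\delta=0$ separately (where $M$ is disconnected and the strict inequality $q(M)>2(n-\delta-2)$ fails) is necessary and correctly done. The arithmetic in Step~1 --- that $(n-1)(n-2\delta-2)/2 \ge \binom{n-\delta-2}{2}+(\delta+1)(\delta+2)$ is equivalent to $n\ge(3\delta^2+9\delta+8)/2$ --- checks out exactly.
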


Theorem \ref{LiBinlong} was  generalized by Nikiforov 
\cite[Theorem 1.5]{Nikiforov} as below. 

\begin{theorem}[Nikiforov \cite{Nikiforov}] \label{Nikipath}
Let $\delta \geq 1 $ and $n \geq \delta^3 +\delta^2+2\delta + 5$.
If $G$ is a graph on $n$ vertices with 
minimum degree $\delta(G) \geq \delta$
and
\[  \lambda(G) \geq n-\delta-2, \]
 then $G$ has a Hamilton path unless 
 $G = K_{\delta} \vee (K_{n-2\delta-1} \cup I_{\delta+1})$ 
 or $G = K_{\delta+1}\cup K_{n-\delta-1} $.
\end{theorem}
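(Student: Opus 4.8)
The plan is to mirror Nikiforov's proof of the Hamilton-cycle statement Theorem~\ref{thmniki}, with traceability in place of Hamiltonicity. Write $H:=K_{\delta}\vee(K_{n-2\delta-1}\cup I_{\delta+1})$ and $L:=K_{\delta+1}\cup K_{n-\delta-1}$ for the two candidate extremal graphs. First I would check that both genuinely occur: $L$ is disconnected, hence not traceable, and $\lambda(L)=n-\delta-2$ because $n-\delta-1>\delta+1$; while deleting the $\delta$ universal vertices of $H$ leaves $\delta+2$ components, so no Hamilton path of $H$ (which a deletion of $\delta$ vertices breaks into at most $\delta+1$ arcs) could meet all of them, and $\lambda(H)>\lambda(K_{n-\delta-1})=n-\delta-2$ since $H\supseteq K_{n-\delta-1}$. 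So everything reduces to the converse: assuming $G$ is non-traceable with $\delta(G)\ge\delta$ and $\lambda(G)\ge n-\delta-2$, prove $G\in\{H,L\}$.

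First I would settle the disconnected case by hand. Some component $C$ satisfies $\lambda(C)\ge n-\delta-2$, so $|V(C)|\ge n-\delta-1$; since the remaining component(s) hold at least $\delta+1$ vertices, in fact $|V(C)|=n-\delta-1$ and $\lambda(C)=|V(C)|-1$, forcing $C=K_{n-\delta-1}$, and the rest is a single $(\delta+1)$-vertex graph of minimum degree $\ge\delta$, i.e.\ $K_{\delta+1}$; hence $G=L$. From now on $G$ may be taken connected.

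For the connected case I would carry out the usual stability reduction. Replace $G$ by its closure $G':=\mathrm{cl}_{n-1}(G)$, which is still connected, non-traceable, of minimum degree $\ge\delta$, and has $\lambda(G')\ge\lambda(G)\ge n-\delta-2$. Stanley's inequality turns this spectral bound into a lower bound on $e(G')$ which, once $n\ge\delta^3+\delta^2+2\delta+5$, exceeds the edge count of the second-densest non-traceable $n$-vertex graph of minimum degree $\ge\delta$; the stability theorem for traceability — the Hamilton-path analogue of \cite[Lemma~2]{LiBinlong}, proved from Chvátal's degree-sequence lemma for Hamilton paths (whose output is an index $i$ with $\delta+1\le i<\tfrac{n+1}{2}$, $d_i(G')\le i-1$, $d_{n-i+1}(G')\le n-i-1$) together with the closure condition and a convexity-in-$i$ estimate on the degree sum of the form $2e(G')\le g(i)$ with $g$ a convex quadratic maximized at $i=\delta+1$ — then forces $G'\subseteq H$ or $G'\subseteq L$. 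Since $G'$ is connected on $n$ vertices it cannot lie inside the disconnected $L$, so $G'\subseteq H$, and therefore $G$ is a subgraph of $H$ of minimum degree $\ge\delta$.

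The decisive ingredient — and the step I expect to be the real obstacle — is the sharp spectral estimate for subgraphs of $H$: every proper subgraph $G_0\subsetneq H$ with $\delta(G_0)\ge\delta$ satisfies $\lambda(G_0)<n-\delta-2$. The point is that the $\delta+1$ vertices of $I_{\delta+1}$ have degree exactly $\delta$ in $H$, so $G_0$ cannot omit any edge incident to $I_{\delta+1}$; hence $G_0$ is $H$ with at least one edge of the clique $K_{n-\delta-1}$ deleted. One must then show such deletions are fatal: using the Perron vector of $H$ (whose entries on the clique are all $\Theta(n^{-1/2})$) each deleted clique edge drops the Rayleigh quotient by $\Omega(n^{-1})$, whereas a test vector equal to $1$ on the clique and a suitable $\varepsilon$ on $I_{\delta+1}$ gives only $\lambda(H)-(n-\delta-2)=O(\delta^{2}n^{-2})$, which is $\ll n^{-1}$ exactly in our range of $n$. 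The delicacy is that $G_0$ may remove an arbitrary \emph{set} of clique edges at once, so this perturbation argument has to be carried through simultaneously rather than one edge at a time — precisely where Nikiforov's careful eigenvector bookkeeping (cf.\ \cite[Theorem~1.6]{Nikiforov}) is indispensable. Granting the lemma, $\lambda(G)\ge n-\delta-2$ together with $G\subseteq H$ forces $G=H$, which completes the proof; the same estimate, trivial for cliques, would also re-prove the disconnected case if one preferred to handle both branches uniformly.
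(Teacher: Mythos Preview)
The survey does not include a proof of Theorem~\ref{Nikipath}; it merely states the result and, in its discussion of the companion Hamilton-cycle Theorem~\ref{thmniki}, sketches Nikiforov's strategy (closure, degree-sequence stability reducing to subgraphs of $H_{n,\delta}$ or $L_{n,\delta}$, then the key spectral estimate \cite[Theorem~1.6]{Nikiforov} that any proper subgraph with the same minimum degree drops below the clique eigenvalue). Your proposal faithfully transports this same strategy to the Hamilton-path setting --- including the correct identification of the two exceptional graphs, the separate treatment of the disconnected case, and the reliance on Nikiforov's perturbation lemma for the decisive step --- so it is in line with the approach the paper attributes to \cite{Nikiforov}.
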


The following version of the signless Laplacian 
spectral radius is a special case of 
Theorem \ref{thm418}, which will be 
introduced in next subsection.

\begin{theorem}[Cheng et al. \cite{Cheng2021}]
Let $\delta \geq 1 $ and $n \geq 
\delta^4 + 9\delta^3 +24\delta^2 +23 \delta +15$.
If $G$ is a connected graph on $n$ vertices with 
minimum degree $\delta(G) \geq \delta$
and
\[  q (G) \geq 2(n-\delta-2), \]
 then either $G$ has a Hamilton path or  
 $G$ is a subgraph of $K_{\delta} \vee (K_{n-2\delta-1} \cup I_{\delta+1})$ 
 by removing at most 
 $\lfloor {\delta (\delta +1)}/{4}\rfloor$ edges.
\end{theorem}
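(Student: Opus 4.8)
The plan is to follow the closure-plus-degree-sequence template already used in this survey for the Hamilton-cycle analogues (Theorems \ref{thmllp} and \ref{thmniki}), and then to close the argument with a precise perturbation estimate for the signless Laplacian Perron vector. Throughout I assume $G$ is connected, non-traceable, $\delta(G)\ge\delta$ and $q(G)\ge 2(n-\delta-2)$, and I abbreviate $\Gamma:=K_{\delta}\vee(K_{n-2\delta-1}\cup I_{\delta+1})$; the goal is to show $G$ is obtained from $\Gamma$ by deleting at most $\lfloor\delta(\delta+1)/4\rfloor$ edges, all of them inside the clique $K_{n-\delta-1}$ induced by the join part together with the dense clique. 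First I would convert the spectral hypothesis into an edge bound: the inequality $q(G)\le\frac{2e(G)}{n-1}+n-2$ of \cite{FengPIMB09} gives $e(G)\ge\frac{(n-1)(n-2\delta-2)}{2}$, which equals $e(\Gamma)-\frac{3\delta(\delta+1)}{2}$.

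Next I pass to the closure $H=\mathrm{cl}_{n-1}(G)$, which by Bondy--Chv\'{a}tal is still non-traceable, has $\delta(H)\ge\delta$ and $e(H)\ge e(G)$, and I apply Chv\'{a}tal's degree-sequence lemma for traceability: there is an index $i<\frac{n+1}{2}$ with $d_i(H)\le i-1$ and $d_{n-i+1}(H)\le n-i-1$. From $\delta\le d_i(H)\le i-1$ we get $i\ge\delta+1$, and summing the degree bounds yields $2e(H)\le g(i):=3i^2-(2n+1)i+n^2-n$. A direct computation shows $g(\delta+1)=2e(\Gamma)$, that $g$ is decreasing on $[\delta+1,(2n+1)/6]$ with $g(\delta+1)-g(\delta+2)=2n-6\delta-8$, and that on the increasing part $g(i)\le g((n+1)/2)\approx 3n^2/4$, well below the forced value $2e(H)\ge 2e(G)\ge(n-1)(n-2\delta-2)\approx n^2$; hence for $n$ large the only admissible value is $i=\delta+1$. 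Consequently $d_1(H)=\cdots=d_{\delta+1}(H)=\delta$, the next $n-2\delta-1$ degrees are at most $n-\delta-2$, the top $\delta$ degrees are at most $n-1$, and the total degree deficiency of $H$ relative to $\Gamma$ is at most $3\delta(\delta+1)$. A careful use of the closure property (two non-adjacent vertices have degree sum at most $n-2$, impossible for $n$ large for two vertices both of near-maximum degree) then forces the $n-\delta-1$ high-degree vertices to induce $K_{n-\delta-1}$ in $H$, the $\delta+1$ degree-$\delta$ vertices to form an independent set, and hence $H$—and therefore $G\subseteq H$—to be a spanning subgraph of $\Gamma$; since $\delta(G)\ge\delta$ and each vertex of the independent part already has degree exactly $\delta$ in $\Gamma$, every deleted edge lies inside $K_{n-\delta-1}$, and $e(\Gamma)-e(G)\le\frac{3\delta(\delta+1)}{2}$.

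At this point, writing $\bm{h}$ for the vector that is $1$ on the $K_{n-\delta-1}$ part and $0$ on the independent part, one checks $\bm{h}^TQ(\Gamma-E')\bm{h}=\bm{h}^TQ(K_{n-\delta-1})\bm{h}+\delta(\delta+1)-4|E'|$ and $\bm{h}^T\bm{h}=n-\delta-1$, so the Rayleigh quotient immediately gives the sharpness half of the statement: every graph $\Gamma-E'$ with $|E'|\le\lfloor\delta(\delta+1)/4\rfloor$ does satisfy $q\ge 2(n-\delta-2)$, so the exceptional family is exactly as claimed. The remaining and hardest part is the converse: if $G=\Gamma-E'$ with $|E'|\ge\lfloor\delta(\delta+1)/4\rfloor+1$ clique edges removed, then $q(G)<2(n-\delta-2)$ regardless of which edges are removed. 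For this I would take the unit Perron vector $\bm{z}$ of $G$, use edge-deletion monotonicity of $q$ together with first-order perturbation of the eigenpair of $K_{n-\delta-1}$ to show $z_v=\frac{1}{\sqrt{n-\delta-1}}+O(\delta^2 n^{-3/2})$ on the clique part and $z_v=O(\delta n^{-3/2})$ on the independent part, and then expand $q(G)=\sum_{uv\in E(G)}(z_u+z_v)^2$ as a clique contribution plus the contribution of the $\delta(\delta+1)$ join edges minus the loss $\sum_{uv\in E'}(z_u+z_v)^2$.

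The join gain and the removal loss are both of order $\delta^2/n$—essentially $\frac{\delta(\delta+1)}{n-\delta-1}$ versus $\frac{4|E'|}{n-\delta-1}$—so the whole inequality turns on comparing these two leading constants, and the hypothesis $n\ge\delta^4+9\delta^3+24\delta^2+23\delta+15$ is exactly what makes every error term in the perturbation expansion smaller than the gap between them; the same expansion also rules out any $X$-edge of $G$ that is not a clique edge, since replacing a join edge by an edge inside the independent part strictly lowers $q$. I expect this quantitative bookkeeping to be the only genuine difficulty: the closure and structural steps are a routine adaptation of arguments that appear several times earlier in the survey, whereas making the two competing $O(\delta^2/n)$ contributions separate in the correct direction is precisely where the constant $\frac14$ and the quartic lower bound on $n$ come from.
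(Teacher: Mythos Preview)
The paper does not prove this statement directly: it simply records it as the special case $k=1$ of Theorem~\ref{thm418} (``$1$-path-coverable'' is the same as ``traceable''), and the threshold $\delta^4+9\delta^3+24\delta^2+23\delta+15$ is precisely $n_1(\delta,1)=(\delta^2+8\delta+15)(\delta^2+\delta+1)$. Your outline is essentially the method that underlies the proof of Theorem~\ref{thm418} and its predecessors \cite{LLPLMA18,Nikiforov}, specialized to $k=1$, so there is no genuinely different route here.

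There is, however, a real gap in your structural step. You write that ``a careful use of the closure property'' forces the $\delta+1$ degree-$\delta$ vertices $X$ to form an independent set and hence $H\subseteq\Gamma$. Closure does force the $n-\delta-1$ high-degree vertices $Y$ to span a clique---your parenthetical reasoning is correct there, since any two of them have degree sum at least $2(n-\delta-2)-3\delta(\delta+1)\ge n-1$---but it says nothing about $X$: two vertices of degree $\delta$ have degree sum $2\delta\le n-2$, which is \emph{consistent} with non-adjacency in an $(n-1)$-closed graph and in no way forced by it. Even granting that $X$ is independent, closure gives no reason why all $x\in X$ should share a \emph{common} $\delta$-set of neighbours in $Y$, and without that $H$ need not embed in $\Gamma$ at all. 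What actually pins down $H\subseteq\Gamma$ is the \emph{non-traceability} of $H$, via a stability lemma of the type \cite[Lemma~2]{LiBinlong} (compare \cite{FKLDM17}): once the Chv\'{a}tal index is forced to $i=\delta+1$ and $Y$ is known to be a clique, one must argue combinatorially that any other arrangement of the $X$-to-$Y$ edges leaves room to thread a Hamilton path through the big clique, so that the only connected non-traceable option is the one with a common $\delta$-neighbourhood. Without invoking or reproving such a lemma, your argument does not reach $G\subseteq\Gamma$, and the perturbation analysis you (correctly) flag as the crux then has nothing to stand on.
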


Moreover, comparing  
the Moon--Moser theorem with 
the Erd\H{o}s theorem,   
we can similarly consider the 
existence of Hamilton path in bipartite graphs. 
Note that if a bipartite graph has a Hamilton path, then it must be 
balanced and nearly balanced, i.e., 
the sizes of two vertex parts differs at most one.

In 2017, Li and Ning \cite{LNLAA17} established several spectral analogues of Moon and Moser's theorem on Hamilton paths in balanced bipartite graphs and nearly balanced bipartite graphs. 
We remark that one main ingredient of their proofs 
is a structural stability result  involving Hamilton paths in balanced bipartite graphs with given minimum degree and number of edges. 
To some extent, the line of proof is similar with that in \cite{LiBinlong}.

In 2020,  
Wei and You \cite[Theorem 1.5]{WY2020} further extended 
Nikiforov's result (Theorem \ref{Nikipath}) by improving the bound of order $n$ by almost a half. 
In addition, the authors \cite[Theorem 1.8]{WY2020}
also obtained spectral sufficient conditions for a graph to be traceable among graphs or balanced bipartite graphs with large minimum degree,  
which extended a result of Li and Ning 
for balanced bipartite graph being traceable. 
Futhermore, Liu, Wu and Lai \cite{LWL2020} 
unified these several former spectral Hamiltonian results 
on balanced bipartite graphs and complementary graphs.

\subsection{Problem for path coverability}

In this section, we  consider the $k$-path-coverable problem.
A graph
$G$ is {\it $k$-path-coverable} if 
$V(G)$ can be covered by $k$ or fewer vertex-disjoint paths. In particular, 
1-path-coverable is the same as  traceable. 
So the concept of $k$-path coverability 
is a natural extension of the Hamilton path. 
The disjoint path cover problem is strongly related to the well-known hamiltonian problem (one may refer to \cite{Lihao} for a survey), which is among the most fundamental
ones in graph theory, and   attracts much attention in theoretical computer science. However, this problem is NP-complete \cite{Steiner}, therefore, finding their guaranteed sufficient conditions becomes an interesting work. In \cite{LiJianping}, such Ore-type condition is obtained.

  \begin{theorem}[Bondy--Chv\'{a}tal \cite{Bondy}]  \label{thmpath}
A graph $G$ is $k$-path coverable 
if and only if the closure graph $\mathrm{cl}_{n-k} (G)$ is $k$-path coverable. 
In particular, if $d(u)+d(v) \ge n-k$ for all non-edges $\{u,v\}$, 
then $\mathrm{cl}_{n-k} (G) =K_n$ and $G$ is $k$-path coverable. 
\end{theorem}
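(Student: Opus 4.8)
The plan is to reduce the statement to the classical closure theorem for Hamilton cycles (Ore \cite{ore60}, stated at the start of this section) via the standard device of joining $G$ to a complete graph. \textbf{Step 1 (reduction lemma).} I would first prove that for every graph $H$ on $n$ vertices, $H$ is $k$-path-coverable if and only if $H\vee K_k$ has a Hamilton cycle. For the forward direction, take a cover of $V(H)$ by vertex-disjoint paths $P_1,\dots,P_j$ with $j\le k$, write $w_1,\dots,w_k$ for the vertices of the attached clique, and form the Hamilton cycle of $H\vee K_k$ that traverses $P_1$, then $w_1$, then $P_2$, then $w_2$, \dots, then $P_j$, then $w_j$, then the leftover vertices $w_{j+1},\dots,w_k$ consecutively (along edges of $K_k$), and closes back at the start of $P_1$; every transition edge is an edge of some $P_i$, an edge of $K_k$, or an edge joining a $w_i$ to a vertex of $H$, hence present. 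Conversely, deleting $w_1,\dots,w_k$ from a Hamilton cycle of $H\vee K_k$ breaks it into at most $k$ vertex-disjoint paths covering $V(H)$.

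\textbf{Step 2 (closure correspondence).} Writing $N:=n+k$ for the number of vertices of $G\vee K_k$, I would show $\mathrm{cl}_N(G\vee K_k)=\mathrm{cl}_{n-k}(G)\vee K_k$. Throughout any run of the closure process on $G\vee K_k$, the $k$ vertices of the original clique keep degree $N-1$, so no new edge is ever incident with them and every added edge lies inside $V(G)$; moreover, if $u,v\in V(G)$ are non-adjacent at some intermediate stage and $G'$ denotes the graph currently induced on $V(G)$, then the current degrees of $u$ and $v$ are $d_{G'}(u)+k$ and $d_{G'}(v)+k$, so the closure rule joins them exactly when $d_{G'}(u)+d_{G'}(v)\ge n-k$ --- precisely the rule defining $\mathrm{cl}_{n-k}(G)$. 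Since the closure does not depend on the order in which eligible pairs are joined \cite{Bondy}, the claimed identity follows.

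\textbf{Step 3 (conclusion).} Chaining the equivalences gives: $G$ is $k$-path-coverable $\Longleftrightarrow$ $G\vee K_k$ is Hamiltonian (Step 1) $\Longleftrightarrow$ $\mathrm{cl}_N(G\vee K_k)$ is Hamiltonian (Ore's closure theorem) $\Longleftrightarrow$ $\mathrm{cl}_{n-k}(G)\vee K_k$ is Hamiltonian (Step 2) $\Longleftrightarrow$ $\mathrm{cl}_{n-k}(G)$ is $k$-path-coverable (Step 1 applied to $\mathrm{cl}_{n-k}(G)$). For the ``in particular'' assertion, if $d(u)+d(v)\ge n-k$ for every non-edge $\{u,v\}$ then all non-edges are joined in the very first round of the closure, so $\mathrm{cl}_{n-k}(G)=K_n$, which is trivially $k$-path-coverable, and hence so is $G$. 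The only delicate point is in Step 2 --- making sure the $k$ auxiliary vertices remain universal at every stage, so that the step-by-step correspondence with the $(n-k)$-closure of $G$ is exact --- together with a routine check of a few degenerate ranges (e.g.\ $n\le k$, where both sides of the equivalence hold trivially); there is no substantial obstacle, the argument being entirely parallel to the Hamilton-cycle and traceability closure results recalled above.
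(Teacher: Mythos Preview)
Your argument is correct. The paper itself does not supply a proof of this closure result; it is simply quoted from Bondy and Chv\'atal \cite{Bondy} as a known tool, so there is no ``paper's own proof'' to compare against. Your reduction via the join $G\vee K_k$ is exactly the standard route: Step~1 is the well-known equivalence between $k$-path-coverability of $G$ and Hamiltonicity of $G\vee K_k$ (the $k=1$ case is explicitly recalled in the paper), Step~2 correctly tracks degrees through the closure process to identify $\mathrm{cl}_{n+k}(G\vee K_k)$ with $\mathrm{cl}_{n-k}(G)\vee K_k$, and Step~3 invokes the Hamiltonian closure theorem for the $(n+k)$-vertex graph. The only caveat is the small-order edge case you flag (e.g.\ $n+k\le 2$, where the notion of Hamilton cycle degenerates), but as you note these are trivial since then $G$ is automatically $k$-path-coverable and the closure does nothing.
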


\begin{theorem}    \cite{Bondy, Lesniak}  \label{lemmabl}
Let $G$ be an $n$-vertex graph with degree sequence 
$d_1\le d_2 \le \cdots \le d_n$. 
 Suppose   $k\geq 1$. 
 If $G$ is  not $k$-path-coverable, then 
 there exists $1\leq i < \frac{n-k}{2}$ such that 
  $d_{i+k}\leq i $ and $ d_{n-i}\leq n-i-k-1$. 
 \end{theorem}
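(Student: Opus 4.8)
The natural route is to reduce $k$-path-coverability to Hamiltonicity of an auxiliary graph and then invoke Chv\'{a}tal's degree-sequence criterion, Theorem \ref{thmchv}. The key classical fact is that an $n$-vertex graph $G$ is $k$-path-coverable if and only if the join $H:=G\vee K_k$ is Hamiltonian. I would first record this equivalence: given a cover of $V(G)$ by $m\le k$ vertex-disjoint paths $P_1,\dots,P_m$, one strings them together through the $k$ dominating vertices $w_1,\dots,w_k$ of $K_k$ (each adjacent to everything), forming the cycle $[P_1]\,w_1\,[P_2]\,w_2\cdots[P_m]\,w_m\,w_{m+1}\cdots w_k$ closed back to the start of $P_1$; the unused $w_{m+1},\dots,w_k$ cause no trouble since they form a clique and are adjacent to all of $V(G)$, and a single-vertex path is traversed trivially. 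Conversely, deleting $w_1,\dots,w_k$ from a Hamilton cycle of $H$ leaves at most $k$ vertex-disjoint paths covering $V(G)$ (some segments may be empty, which only decreases the count). We may assume $k\le n-1$, since otherwise $G$ is trivially $k$-path-coverable; hence if $G$ is not $k$-path-coverable then $n\ge k+1$, $H$ has $N=n+k\ge 3$ vertices, and $H$ is not Hamiltonian, so Theorem \ref{thmchv} applies.

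The second step is the degree-sequence bookkeeping. The $k$ dominating vertices have degree $N-1$ in $H$, while a vertex of $G$ of degree $d_i$ acquires degree $d_i+k$ in $H$; since $d_i+k\le(n-1)+k=N-1$, the dominating vertices occupy the top $k$ slots, so one may order the degree sequence $e_1\le\cdots\le e_N$ of $H$ so that $e_\ell=d_\ell+k$ for $1\le\ell\le n$ and $e_\ell=N-1$ for $n<\ell\le N$. Theorem \ref{thmchv} produces an integer $j$ with $1\le j<N/2=(n+k)/2$, $e_j\le j$ and $e_{N-j}\le N-j-1$. Because $j<(n+k)/2\le n$ we get $e_j=d_j+k$, hence $d_j\le j-k$; and because $e_{N-j}\le N-j-1<N-1$, the index $N-j$ cannot fall among the dominating vertices, so $N-j\le n$, i.e.\ $j\ge k$, and $e_{N-j}=d_{n+k-j}+k\le N-j-1$, i.e.\ $d_{n+k-j}\le n-j-1$. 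Setting $i:=j-k$: when $j\ge k+1$ we have $1\le i<(n+k)/2-k=(n-k)/2$, together with $d_{i+k}=d_j\le j-k=i$ and $d_{n-i}=d_{n+k-j}\le n-j-1=n-i-k-1$, which is exactly the desired pair of inequalities.

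\emph{The main obstacle} is the boundary case $j=k$, i.e.\ $i=0$. Then $d_k\le 0$ forces $d_1=\cdots=d_k=0$, so $G$ has at least $k$ isolated vertices, and the second inequality reads $d_n\le n-k-1$. Here one argues directly: if $G$ has at least $k+1$ isolated vertices then every non-isolated vertex has degree at most $n-k-2$, so $i=1$ works provided $n\ge k+3$; the remaining configurations (exactly $k$ isolated vertices, such as $\overline{K_k}\cup K_{n-k}$, or $n$ close to $k$) are precisely the ones where the honest index range should be $0\le i<(n-k)/2$, since $i=0$ yields $d_k\le 0$ and $d_n\le n-k-1$, which hold exactly in this case. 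Since the applications of the lemma in this survey (for instance Theorems \ref{LiBinlong} and \ref{thm418}) concern connected graphs with $n$ large relative to $k+\delta$, this boundary subtlety is immaterial there. Apart from it, the only genuinely non-routine points are verifying both directions of the path-cover $\leftrightarrow$ Hamilton-cycle reduction (including the empty-segment issue) and the tie-handling that legitimizes writing $e_\ell=d_\ell+k$ throughout $1\le\ell\le n$; the rest is the index arithmetic carried out above.
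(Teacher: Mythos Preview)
The paper does not supply a proof of Theorem~\ref{lemmabl}; it is simply quoted from \cite{Bondy, Lesniak} and then applied. Your argument---reduce $k$-path-coverability to Hamiltonicity of $H=G\vee K_k$ and invoke Chv\'{a}tal's Theorem~\ref{thmchv}---is precisely the classical Bondy--Chv\'{a}tal route, and your index arithmetic (setting $i=j-k$ after reading off $e_\ell=d_\ell+k$ for $\ell\le n$) is carried out correctly.

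Your diagnosis of the boundary case $j=k$ (i.e.\ $i=0$) is also right, and in fact exposes that the statement as printed, with the range $1\le i<(n-k)/2$, is not literally true in full generality: for $G=\overline{K_k}\cup K_{n-k}$ with $n-k\ge 2$ one has $d_{i+k}=n-k-1>i$ for every $i\ge 1$ in the range, yet $G$ is not $k$-path-coverable. The correct range from the reduction is $0\le i<(n-k)/2$, or equivalently one needs a side hypothesis such as $\delta(G)\ge 1$ to force $j\ge k+1$. As you observe, every use of Theorem~\ref{lemmabl} in the survey (e.g.\ in the proof immediately following it, and in Theorems~\ref{thm13} and~\ref{thm418}) is under the standing assumption $\delta(G)\ge\delta\ge 1$, which rules out $d_k\le 0$ and hence $i=0$; so the discrepancy is harmless for the applications here.
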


 \begin{theorem}[Feng et al. \cite{FengLAA17}] \label{thmf59}
Let $k\ge 2$, and let $G$ be a graph of order $n\geq 5k+6$. If
\begin{equation*}
e(G) \geq {n-k-1 \choose 2} + k+1,\label{kpceq}
\end{equation*}
then either $G$ is $k$-path-coverable 
or $G = K_{1} \vee ( K_{n-k-2}\cup I_{k+1} )$.
\end{theorem}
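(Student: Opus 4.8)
The plan is to follow exactly the scheme used for Theorem~\ref{thmhnkd}: pass to the Bondy--Chv\'atal closure, invoke the degree-sequence obstruction to $k$-path-coverability, bound $e(G)$ from above by a single quadratic in one integer parameter, and then extract the extremal graph from the equality case. Suppose $G$ is not $k$-path-coverable and set $H=\mathrm{cl}_{n-k}(G)$. By Theorem~\ref{thmpath}, $H$ is also not $k$-path-coverable, and since the closure operation only adds edges, $e(H)\ge e(G)\ge \binom{n-k-1}{2}+k+1$. Let $d_1\le d_2\le\cdots\le d_n$ be the degree sequence of $H$. By Theorem~\ref{lemmabl} there is an integer $i$ with $1\le i<\tfrac{n-k}{2}$ such that $d_{i+k}\le i$ and $d_{n-i}\le n-i-k-1$. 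Partitioning the vertices of $H$ into the $i+k$ of smallest degree, the next $n-2i-k$, and the $i$ of largest degree, I would estimate
\[
 2e(H)=\sum_{j=1}^{n}d_j\le i(i+k)+(n-2i-k)(n-i-k-1)+i(n-1)=:f(i).
\]
A direct simplification gives $f(i)=3i^2-(2n-4k-1)i+(n-k)^2-(n-k)$, a convex (upward) parabola in $i$, and one computes $f(1)=(n-k-1)(n-k-2)+2(k+1)=2\bigl(\binom{n-k-1}{2}+k+1\bigr)$.

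The crux is to show that, when $n\ge 5k+6$, the maximum of $f$ over the admissible range $1\le i\le\lfloor\tfrac{n-k-1}{2}\rfloor$ is attained \emph{uniquely} at $i=1$. Convexity reduces this to comparing $f(1)$ with the value at the largest admissible $i$; substituting $i=\lfloor\tfrac{n-k-1}{2}\rfloor$ (handling the parities of $n-k$ separately) reduces the required inequality to one of the shape $(n-k-1)(5k+7-n)\le 8k+8$, which holds precisely because $n\ge 5k+6$ --- and this is essentially sharp, since $n=5k+5$ already makes the two endpoint values of $f$ coincide and would produce a second extremal configuration. Granting this, $e(H)\le \binom{n-k-1}{2}+k+1\le e(G)\le e(H)$ forces $e(H)=\binom{n-k-1}{2}+k+1$, $i=1$, and equality in every estimate above; hence the degree sequence of $H$ is exactly $\underbrace{1,\dots,1}_{k+1},\ \underbrace{n-k-2,\dots,n-k-2}_{\,n-k-2\,},\ n-1$.

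From this degree sequence I would reconstruct $H$ uniquely: the single vertex $w$ of degree $n-1$ is adjacent to all others; each of the $k+1$ vertices of degree $1$ is then adjacent only to $w$, so these form an independent set $I_{k+1}$; and each of the remaining $n-k-2$ vertices is adjacent to $w$ and to none of $I_{k+1}$, hence to all of its $n-k-3$ fellows, so they span $K_{n-k-2}$. Therefore $H=K_1\vee(K_{n-k-2}\cup I_{k+1})$, and since $G\subseteq H=\mathrm{cl}_{n-k}(G)$ with $e(G)=e(H)$, we conclude $G=H=K_1\vee(K_{n-k-2}\cup I_{k+1})$; one checks separately that this graph is indeed not $k$-path-coverable, because its $k+1$ pendant vertices force at least $k+1$ vertex-disjoint paths in any cover. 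The main obstacle is the quadratic optimization of the middle paragraph: it is routine in spirit but must be carried out honestly, both because the constraint $i<\tfrac{n-k}{2}$ is strict and integral and because the hypothesis $n\ge 5k+6$ is exactly the threshold that makes the extremal graph unique.
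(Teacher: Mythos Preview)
Your proposal is correct and follows essentially the same route as the paper: in fact the paper does not prove Theorem~\ref{thmf59} directly but derives it as the case $\delta=1$ of the next theorem, whose proof is exactly the closure--degree-sequence--quadratic-in-$m$ argument you outline. The only cosmetic differences are that the paper locates the maximum of $f$ by comparing distances of the two endpoints from the vertex $m_0=\tfrac{2n-4k-1}{6}$ (yielding $n\ge 5k+6$ immediately) rather than by computing $f(1)-f(\lfloor\tfrac{n-k-1}{2}\rfloor)$, and that the paper reconstructs the extremal graph using the closure identity $G=\mathrm{cl}_{n-k}(G)$ rather than reading it off the degree sequence directly as you do; both variants are valid and equally short.
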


For every integers $k\ge 1$ and $\delta \ge 1$,  we denote
\[ 
 \boxed{ {B}_{n,k,\delta}:=
  K_{\delta}\vee ( K_{n-2\delta -k} \cup {I_{\delta +k}}). }
\] 
Clearly, $ {B}_{n,k,\delta}$ 
is not $k$-path-coverable and 
$ e( {B}_{n,k,\delta})= {n-\delta -k \choose 2} + (\delta +k)\delta $.  
We next extend Theorem \ref{thmf59} by 
introducing the minimum degree. 
Clearly, when we set $\delta =1$, the following theorem reduces to Theorem \ref{thmf59}. 

\begin{theorem}
Let $k\ge 1$ and $G$ be a graph on $n\ge  5k+6\delta$ vertices.  
If the minimum degree $\delta (G)\ge \delta$ and 
\begin{equation*}
e(G) \ge e( {B}_{n,k,\delta}), 
\end{equation*}
then $G$ is $k$-path-coverable 
or $G= {B}_{n,k,\delta}$. 
\end{theorem}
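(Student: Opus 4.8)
The plan is to run the same argument as in the proof of Theorem~\ref{thmhnkd}, but with the criterion for $k$-Hamiltonicity replaced by the degree-sequence criterion for $k$-path-coverability. First I would assume $G$ is not $k$-path-coverable and pass to its $(n-k)$-closure $H:=\mathrm{cl}_{n-k}(G)$. By Theorem~\ref{thmpath}, $H$ is not $k$-path-coverable either; moreover $G$ is a spanning subgraph of $H$, so $e(H)\ge e(G)$ and $\delta(H)\ge\delta(G)\ge\delta$. Writing $d_1\le d_2\le\cdots\le d_n$ for the degree sequence of $H$, Theorem~\ref{lemmabl} yields an integer $i$ with $1\le i<\tfrac{n-k}{2}$, $d_{i+k}\le i$ and $d_{n-i}\le n-i-k-1$. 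Since the minimum degree of $H$ is at least $\delta$, we have $\delta\le d_1\le d_{i+k}\le i$, so in fact $\delta\le i<\tfrac{n-k}{2}$.

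Next I would estimate $2e(H)$ by splitting the degree sum into the three blocks dictated by the inequalities above:
\[
 2e(H)=\sum_{j=1}^n d_j\le (i+k)\,i+(n-2i-k)(n-i-k-1)+i(n-1)=:f(i).
\]
Expanding, $f$ is a quadratic in $i$ with positive leading coefficient $3$, so on the interval $[\delta,\tfrac{n-k}{2})$ its maximum is attained at an endpoint; a direct computation with this quadratic shows that the hypothesis $n\ge 5k+6\delta$ pushes the maximum to the left endpoint $i=\delta$. Hence $2e(G)\le 2e(H)\le f(\delta)$, and one checks that $f(\delta)=(n-\delta-k)(n-\delta-k-1)+2\delta(\delta+k)=2e(B_{n,k,\delta})$. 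Combined with the hypothesis $e(G)\ge e(B_{n,k,\delta})$, this forces equality throughout: $i=\delta$, $G=H=\mathrm{cl}_{n-k}(G)$, and the degree sequence of $G$ is exactly $d_1=\cdots=d_{\delta+k}=\delta$, $d_{\delta+k+1}=\cdots=d_{n-\delta}=n-\delta-k-1$, $d_{n-\delta+1}=\cdots=d_n=n-1$.

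It remains to recover $G$, and here I would copy the end of the proof of Theorem~\ref{thmhnkd}. Let $X$, $Z$, $Y$ be the sets of vertices of degree $\delta$, $n-\delta-k-1$, $n-1$, of sizes $\delta+k$, $n-2\delta-k$, $\delta$ respectively. Every vertex of $Y$ has degree $n-1$ and is therefore adjacent to all other vertices. If two vertices of $Z$ were non-adjacent then, since $G$ equals its $(n-k)$-closure, their degree sum would be less than $n-k$, contradicting $2(n-\delta-k-1)\ge n-k$ (which holds because $n$ is large); hence $Z$ induces a clique and $Y\cup Z$ induces $K_{n-\delta-k}$. Finally, each vertex of $X$ is adjacent to all $\delta$ vertices of $Y$ and has degree exactly $\delta$, so it has no neighbour outside $Y$; thus $X$ is an independent set sending no edge to $Z$, and $G=K_\delta\vee(K_{n-2\delta-k}\cup I_{\delta+k})=B_{n,k,\delta}$.

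The only delicate point is the optimization step: checking that $n\ge 5k+6\delta$ really is enough to place the maximum of $f$ at $i=\delta$ rather than near the upper end $i\approx\tfrac{n-k}{2}$, and that $f(\delta)$ equals $2e(B_{n,k,\delta})$ exactly on the nose; everything after that is forced by closure-completeness and the extremal degree sequence. For completeness one should also recall, as the excerpt asserts, that $B_{n,k,\delta}$ itself is not $k$-path-coverable: deleting its $\delta$ apex vertices leaves $\delta+k+1$ components, while any family of at most $k$ paths, when cut by those $\delta$ deletions, splits into at most $k+\delta$ pieces, and $k+\delta<\delta+k+1$.
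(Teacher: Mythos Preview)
Your proposal is correct and follows essentially the same route as the paper: pass to the $(n-k)$-closure, invoke the degree-sequence criterion of Theorem~\ref{lemmabl}, bound $2e(H)$ by the same quadratic in $i$, use $n\ge 5k+6\delta$ to locate the maximum at $i=\delta$, and then recover $G=B_{n,k,\delta}$ from the forced degree sequence via the closure property. Your structural recovery (splitting into $X$, $Z$, $Y$ by degree and arguing each piece separately) is a mild reorganisation of the paper's argument, and your added remark that $B_{n,k,\delta}$ is not $k$-path-coverable is a welcome sanity check the paper states but does not prove.
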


\begin{proof}
Assume that $G$ is not $k$-path-coverable.  
By Theorem~\ref{thmpath}, we know that 
the closure graph $H:=\mathrm{cl}_{n-k}(G)$ is also not $k$-path-coverable. 
We denote by $d_1\le d_2 \le \cdots \le d_n$ 
the degree sequence of $H$.  
By Theorem~\ref{lemmabl}, 
there exists an integer $1\le m \le \frac{n-k-1}{2}$ such that 
$d_{m+k}\le m$ and $d_{n-m}\le n-m-k-1$. Thus we get 
\begin{align} 
2e(H)=\sum\limits_{i=1}^n d_i 
&\le (m+k)m +(n-2m-k)(n-m-k-1) +m (n-1) \notag \\
&=3m^2 -(2n-4k-1)m + (n-k)(n-k-1). \label{eq42}
\end{align}
We denote by $f(m):=3m^2 -(2n-4k-1)m$.  
Since 
 $\delta \le \delta (G) \le \delta (H) \le m \le \frac{n-k-1}{2}$. 
The function $f(m)$ is minimized at  $m_0=\frac{2n-4k-1}{6}$. 
By the condition $n\ge 6\delta +5k$, we have 
$m_0-\delta > \frac{n-k-1}{2}-m_0$. 
Hence, the function $f(m)$ is maximized at $m=\delta $, which yields 
\[ 2e(G)\le 2e(H)\le f(\delta )+(n-k)(n-k-1) =
2e(K_{\delta}\vee ( K_{n-2\delta -k}\cup I_{\delta +k})). \]
Combing the condition, 
we get $e(G)=e(K_{\delta}\vee ( K_{n-2\delta -k}\cup I_{\delta +k}))$, 
and all the above inequalities in (\ref{eq42}) become equalities. 
Hence, we have $m=\delta$ and $G=\mathrm{cl}_{n-k}(G)$, 
the degrees of $G$ are 
$d_1=\cdots =d_{\delta +k}=\delta,d_{\delta +k +1}=\cdots =d_{n-\delta }=n-\delta -k-1$ 
and $ d_{n-\delta +1}=\cdots =d_n=n-1$. Next, we shall prove that 
$G=K_{\delta}\vee ( K_{n-2\delta -k}\cup I_{\delta +k})$. 

Let $v_1,v_2,\ldots ,v_n $ be the vertices of $G$ 
corresponding to the degrees 
$d_1$, $d_2$, $\ldots$, $d_n$. 
We denote by $X:=\{v_1,v_2,\ldots ,v_{\delta +k}\}$ 
and $Y:=\{v_{\delta +k+1},\ldots ,v_n\}$. 
First of all, we show that 
the induced subgraph $G[Y]$ forms a clique. 
Otherwise, let $u$ and $v$ be two non-adjacent vertices in $Y$.  
From the definition of the $(n-k)$-closure graph, we know that 
$ d_G(u) +d_G(v) < n-k$.   
On the other hand, we have $d_G(u)+d_G(v)\ge 2(n-\delta -k -1)>n-k$, 
a contradiction.  
So we get $G[Y]=K_{n-\delta -k}$.  
Let $F$ be the set of vertices in $Y$ with degree $n-1$, that is, 
$F=\{ v_{n-\delta +1}, \cdots ,v_n\}$. 
We denote by $G[X,F]$  the induced bipartite subgraph of $G$ 
between vertex sets $X$ and $F$. 
Finally, we shall prove $G[X,F]=K_{\delta +k,\delta}$. 
Since each vertex of $F$ has degree $n-1$, 
so it adjacent to every vertex of $X$.  
Note that the vertex of $X$ has degree $\delta = |F|$, 
so $X$ is an independent set of $G$. 
Then $G[X,F]=K_{\delta +k,\delta}$. 
Hence, we get $G=K_{\delta}\vee ( K_{n-2\delta -k} \cup I_{\delta +k})$. 
\end{proof}

In 2020, by generalizing the results in Theorem  \ref{LiBinlong} and Theorem \ref{Nikipath}, Liu et al. \cite{LiuDMGT} 
obtained the following sufficient conditions   
by using the adjacency spectral radius.

 \begin{theorem}[Liu et al. \cite{LiuDMGT}] \label{thm13}
Let $k\ge 1,\delta \ge 2$ and 
$n \ge n_0(\delta,k)$ where 
\[ n_0(\delta,k)=\max \{\delta^2(\delta +k)+\delta +k+5, 5k+6\delta +6\}.  \] 
 If $G$ is a connected graph on $n$ vertices with minimum degree
$\delta (G)\ge \delta $ and 
\[ \lambda (G) \ge n-\delta -k -1, \]
then $G$ is $k$-path-coverable 
unless $G= {B}_{n,k,\delta}$.
\end{theorem}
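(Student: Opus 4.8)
The plan is to mimic the proofs of Theorem~\ref{thmln16a} and Theorem~\ref{thmniki} (Li--Ning and Nikiforov), adapting their two-step strategy---first a stability/edge-count reduction, then a spectral-perturbation argument---to the $k$-path-coverable setting. First I would assume $G$ is connected, $\delta(G)\ge\delta$, $\lambda(G)\ge n-\delta-k-1$, and $G$ is not $k$-path-coverable, and aim to show $G={B}_{n,k,\delta}$. Let $\bm{x}$ be the Perron eigenvector of $A(G)$ normalized so that its maximum entry is $1$, attained at some vertex $u$. The key first step is an edge-counting consequence: by a tiny modification of the stability argument in \cite{LiBinlong} (cf.\ the remarks following Theorem~\ref{thmniki}), one shows that if $G$ is non-$k$-path-coverable with $\lambda(G)\ge n-\delta-k-1$, then in fact $e(G)> e({B}_{n,k,\delta+1})$. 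Combined with the degree-sequence lemma (Theorem~\ref{lemmabl}) and the $(n-k)$-closure (Theorem~\ref{thmpath}), this forces $G$ to be a spanning subgraph of either ${B}_{n,k,\delta}=K_\delta\vee(K_{n-2\delta-k}\cup I_{\delta+k})$ or a second extremal family $L$-type graph $K_{1}\vee(K_{n-\delta-k-1}\cup K_{\delta+k-1})$ (the $k$-path analogue of $L_{n,\delta}$), with $\delta(G)\ge\delta$ in each case.

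The second and decisive step is to rule out every \emph{proper} spanning subgraph of these two host graphs. Here I would follow Nikiforov's idea precisely: both ${B}_{n,k,\delta}$ and its $L$-type companion consist of a large clique $K_{n-\delta-k}$ (resp.\ $K_{n-\delta-k-1}$) together with only $O(\delta(\delta+k))$ ``outgrowth'' edges attaching the remaining $\delta+k$ low-degree vertices. Since $\lambda(K_{n-\delta-k})=n-\delta-k-1$, the target inequality $\lambda(G)\ge n-\delta-k-1$ is barely above the clique's spectral radius, so one must quantify how little each outgrowth edge can contribute. Using the eigenvalue equation $\lambda x_v=\sum_{w\sim v}x_w$ at a low-degree vertex $v$ (with $d(v)\le\delta+k-1$ say, since $\delta(G)\ge\delta$ gives few non-neighbours only among the $I_{\delta+k}$ part), one bounds the eigenvector entries on $X$ (the independent/clique-$(\delta+k-1)$ part) by roughly $\tfrac{\delta}{\lambda}\cdot\max_{w\in Y\cup Z}x_w=O(\delta/n)$, and entries on $Y\cup Z$ (the clique side) are all close to $1$. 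Then the Rayleigh quotient $\lambda^2=\frac{\bm{x}^TA^2\bm{x}}{\bm{x}^T\bm{x}}$ decomposes as a contribution from edges inside the dense clique plus a negligible contribution from outgrowth edges; deleting a single clique edge drops $\lambda$ by $\Theta(1/n)$, whereas the entire outgrowth can only sustain $\lambda$ at a level strictly below $n-\delta-k-1$ unless every outgrowth edge prescribed by ${B}_{n,k,\delta}$ is present. A careful accounting---valid once $n\ge n_0(\delta,k)$ with $n_0(\delta,k)=\max\{\delta^2(\delta+k)+\delta+k+5,\,5k+6\delta+6\}$---then shows $G$ cannot be a proper subgraph of ${B}_{n,k,\delta}$, and the same computation eliminates the $L$-type host because $\lambda(L)<\lambda({B}_{n,k,\delta})$ (via a Kelmans transformation, as in \cite[Lemma~6]{LiBinlong}). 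Hence $G={B}_{n,k,\delta}$.

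The main obstacle I anticipate is the quantitative perturbation estimate in Step~2: one needs sharp enough control on the Perron entries of an \emph{arbitrary} spanning subgraph of the host graph (not just the host itself) to guarantee that removing even one edge from the big clique cannot be compensated by rearranging or adding the few outgrowth edges, and this must be uniform over all such subgraphs with $\delta(G)\ge\delta$. Getting the threshold $n_0(\delta,k)$ explicit---rather than merely ``$n$ large''---requires tracking constants through the eigenvector-entry bounds and the $\lambda^2$ expansion, which is the genuinely technical part. The edge-counting reduction in Step~1, by contrast, is routine given Theorems~\ref{thmpath} and~\ref{lemmabl} and the stability template already used for Theorems~\ref{thmf59} and~\ref{thmln16a}. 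A secondary nuisance is bookkeeping the two extremal families simultaneously and verifying ${B}_{n,k,\delta}$ itself is not $k$-path-coverable (immediate, since deleting the $\delta$ vertices of $K_\delta$ from the join leaves $K_{n-2\delta-k}\cup I_{\delta+k}$, which has $\delta+k$ isolated vertices and hence needs more than $k$ paths to cover)---this confirms the stated exception is the only one.
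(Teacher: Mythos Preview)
Your two-step strategy (stability reduction via closure and degree sequences, then Nikiforov-style eigenvector perturbation to rule out proper subgraphs) is exactly the method the paper attributes to \cite{LiuDMGT}, and is the natural adaptation of the Li--Ning/Nikiforov template to $k$-path-coverability.

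One point needs correction: your proposed $L$-type companion $K_1\vee(K_{n-\delta-k-1}\cup K_{\delta+k-1})$ is \emph{traceable} (a Hamilton path runs through one clique, the cut vertex, then the other), hence $k$-path-coverable for all $k\ge 1$, so it cannot appear as an exceptional graph. More fundamentally, no $L$-type host arises in this problem. The Lesniak degree condition with $i=\delta$ forces $d_{\delta+k}\le\delta$, so there are $\delta+k$ vertices of degree exactly $\delta$. If these formed a clique they would already have degree $\delta+k-1$ among themselves, leaving $1-k\le 0$ edges to the rest; for $k\ge 2$ this is impossible, and for $k=1$ it gives a disconnected component $K_{\delta+1}$, excluded by the hypothesis that $G$ is connected. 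This is precisely why Theorem~\ref{thm13} has a single exceptional graph $B_{n,k,\delta}$, in contrast to Theorem~\ref{thmniki}. Your Step~2 therefore only needs to handle proper subgraphs of $B_{n,k,\delta}$, which simplifies the bookkeeping. (Your verification that $B_{n,k,\delta}$ itself is not $k$-path-coverable is correct once you note the implicit lemma: removing $\delta$ vertices from a $k$-path cover yields at most a $(k+\delta)$-path cover, and $B_{n,k,\delta}-K_\delta$ needs $\delta+k+1>k+\delta$ paths.)
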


Since   $K_{n-\delta -k}$ is a proper subgraph 
of ${B}_{n,k,\delta}$, we have 
\[ \lambda( {B}_{n,k,\delta} ) >
\lambda (K_{n-\delta -k}) = n-\delta-k-1. \]  
Hence, we  get  immediately the following 
corollary.

 \begin{corollary}[Liu et al. \cite{LiuDMGT}]  \label{thmaincol2}
Let $k \geq 1$, $\delta\geq 2$ and $n\geq n_0(\delta ,k)$. 
If $G$ is a connected graph of order $n$ with 
minimum degree $\delta(G)\geq \delta$ and 
  $$\lambda(G)\geq \lambda( {B}_{n,k,\delta} ) ,$$
then $G$ is $k$-path-coverable 
unless $G= {B}_{n,k,\delta}$.
\end{corollary}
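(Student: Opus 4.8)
The plan is to derive the spectral statement from the edge‑count version just proved, using the standard Stanley/Hong‑type inequality together with the observation that the only non‑$k$‑path‑coverable graphs with spectral radius close to $n-\delta-k-1$ are near‑clique graphs whose "outgrowth" edges contribute negligibly to the spectral radius. First I would assume that $G$ is a connected $n$‑vertex graph with $\delta(G)\ge\delta$, $\lambda(G)\ge n-\delta-k-1$, and $G$ not $k$‑path‑coverable, and aim to show $G={B}_{n,k,\delta}$.

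The first main step reduces the problem to an edge count. From $\lambda(G)\ge n-\delta-k-1$ one wants $e(G)\ge e({B}_{n,k,\delta+1})$ (or at least enough edges that the stability lemma applies): this is exactly the type of estimate used for Theorem~\ref{thmln16a} in \cite{LiBinlong}. Concretely, I would invoke the stability result for $k$‑path‑coverability (the analogue of \cite[Lemma~2]{LiBinlong}) to conclude that a non‑$k$‑path‑coverable graph with this many edges and $\delta(G)\ge\delta$ must be a subgraph of either ${B}_{n,k,\delta}$ or of the "$L$‑type" graph $K_{k+1}\vee(K_{n-\delta-k}\cup K_{\delta})$ (the path‑coverability analogue of $L_{n,k,\delta}$). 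So after this step $G$ is pinned down to be one of two explicit near‑clique families minus some edges, with the clique $K_{n-\delta-k}$ essentially intact.

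The second main step — and the hard part — is the spectral separation argument à la Nikiforov \cite{Nikiforov}: one must show that if $G$ is obtained from ${B}_{n,k,\delta}$ (resp. from the $L$‑type graph) by deleting any nonempty set of edges while keeping $\delta(G)\ge\delta$, then $\lambda(G)<n-\delta-k-1$, forcing $G={B}_{n,k,\delta}$ and ruling out the $L$‑type graph altogether (since $\lambda({B}_{n,k,\delta})>\lambda(L\text{-type})$ by a Kelmans‑operation comparison, as in \cite[Lemma~6]{LiBinlong}). The mechanism is that all edges incident to the low‑degree part $I_{\delta+k}$ sit on vertices of degree only $\delta$, and by the Perron eigenvector equation their contribution to $\lambda$ is dwarfed, for $n\ge n_0(\delta,k)$, by the loss incurred from deleting a single edge inside the dense clique $K_{n-\delta-k}$ whose Perron weights are $\Theta(1)$; a careful bound on the Perron entries (small on $I_{\delta+k}$, roughly equal on the clique) closes this. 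The threshold $n\ge n_0(\delta,k)=\max\{\delta^2(\delta+k)+\delta+k+5,\,5k+6\delta+6\}$ is precisely what makes these two competing contributions comparable in the right direction.

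Finally I would assemble the pieces: the edge‑count theorem plus the stability lemma localize $G$ to ${B}_{n,k,\delta}$ or the $L$‑type graph minus edges; the spectral separation lemma eliminates every proper subgraph and the $L$‑type family; hence $G={B}_{n,k,\delta}$, which is indeed not $k$‑path‑coverable and satisfies $\lambda({B}_{n,k,\delta})>\lambda(K_{n-\delta-k})=n-\delta-k-1$, so the bound is sharp. I expect the stability lemma for $k$‑path‑coverability (which I would prove by the closure‑operation and degree‑sequence method of Theorem~\ref{lemmabl}, mirroring the Hamilton‑cycle case) and the quantitative Perron‑entry estimate in the separation step to be the two places requiring genuine work; everything else is bookkeeping.
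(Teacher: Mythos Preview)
Your outline is essentially a proof of the stronger Theorem~\ref{thm13} (the bound $\lambda(G)\ge n-\delta-k-1$), not of the corollary as stated. In the paper the corollary is a one-line consequence of Theorem~\ref{thm13}: since $K_{n-\delta-k}$ is a proper subgraph of $B_{n,k,\delta}$ one has $\lambda(B_{n,k,\delta})>\lambda(K_{n-\delta-k})=n-\delta-k-1$, so the hypothesis $\lambda(G)\ge\lambda(B_{n,k,\delta})$ already puts you inside the hypothesis of Theorem~\ref{thm13}, and the conclusion follows immediately. No stability lemma, no Perron-entry estimate, no Kelmans comparison is needed at this point; all of that work (to the extent it is needed) sits inside the proof of Theorem~\ref{thm13}, which the paper takes as given from \cite{LiuDMGT}.

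There is also a genuine error in your sketch that would matter if you were proving Theorem~\ref{thm13} itself: the ``$L$-type'' graph you introduce, $K_{k+1}\vee(K_{n-\delta-k}\cup K_{\delta})$, is in fact $k$-path-coverable for every $k\ge 1$. Indeed it is traceable: a Hamilton path runs through $K_{n-\delta-k}$, then through the $k+1$ joining vertices, then through $K_{\delta}$. This is exactly why Theorem~\ref{thm13} lists only the single exception $B_{n,k,\delta}$, unlike the Hamilton-cycle Theorem~\ref{thmniki} where the genuine second obstruction $L_{n,\delta}$ appears. (For $k$-path-coverability the analogue of $L_{n,\delta}$ in Theorem~\ref{Nikipath} is the disconnected graph $K_{\delta+1}\cup K_{n-\delta-1}$, which is excluded here by the connectivity hypothesis.) So your second ``hard step'' of ruling out an $L$-type family is based on a non-obstruction and should simply be dropped.
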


In the sequel, we will consider the $Q$-index version  of Theorem \ref{thm13}. 
Recall that 
${B}_{n,k,\delta}=
  K_{\delta}\vee ( K_{n-2\delta -k} \cup {I_{\delta +k}})$. 
For the graph ${B}_{n,k,\delta}$,
 let $X=\{v \in V({B}_{n,k,\delta}):d(v)=k\}, 
Y=\{v \in V({B}_{n,k,\delta} ):d(v)=n-1 \}$,
 and $Z=\{v \in V({B}_{n,k,\delta}):d(v)=n-k-\delta -1 \}$. 
We denote by $E_1({B}_{n,k,\delta})$ the edge set of ${B}_{n,k,\delta}$ whose endpoints are both from $Y\cup Z.$ Moreover, we define
\begin{gather*}
 \mathcal{B}^{(1)}_{n,k,\delta}=\left\{ {B}_{n,k,\delta} 
 \setminus E': 
\mbox{$ E'\subseteq E_1({B}_{n,k,\delta})$ with $|E'|\leq
\left\lfloor { (\delta +k)\delta }/{4}\right\rfloor$}\right\}. 
\end{gather*}
Here, the symbol ${B}_{n,k,\delta}\setminus 
E'$ stands for the subgraph of ${B}_{n,k,\delta}$ 
by deleting all edges  from the edge set $E'$. 
Recently, Cheng, Feng, Li and Liu  \cite{Cheng2021} proved the following extension 
on the sufficient conditions of the existence of a Hamilton path.

\begin{theorem}[Cheng et al. \cite{Cheng2021}]  \label{thm418}
Let $k\ge 1,\delta \ge 2$ and $n \ge n_1(\delta ,k)$ where 
\[ n_1(\delta ,k)= 
(\delta^2 +k\delta +7\delta +6k +9)(\delta^2 +k\delta +1) \] 
If $G$ is a connected graph on $n$ vertices and minimum degree
$\delta (G)\ge \delta $ such that
\[ q(G) \ge 2(n-\delta -k -1), \]
then either $G$ is $k$-path-coverable 
or  $G\in \mathcal{B}^{(1)}_{n,k,\delta}$.
\end{theorem}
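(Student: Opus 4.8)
The plan is to follow the by-now standard two-stage template used throughout this survey for signless Laplacian spectral conditions (cf. the proofs of Theorems \ref{thmllp} and \ref{thm321} and the companion result Theorem \ref{thm23}): first pass from the spectral hypothesis to an edge-count hypothesis plus coarse structural information, then invoke a stability result to pin down the graph. Concretely, suppose $G$ is a connected $n$-vertex graph with $\delta(G)\ge\delta$, $q(G)\ge 2(n-\delta-k-1)$, and $G$ is not $k$-path-coverable. First I would use the inequality $q(G)\le \frac{2e(G)}{n-1}+n-2$ from \cite{FengPIMB09} (already used in the proof of Theorem \ref{thmYF}) to deduce $e(G)\ge \binom{n-\delta-k-1}{2}+\text{(something)}$; more carefully, I want to land in the range where the edge-version stability theorem for $k$-path-coverability applies. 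The point is that $q({B}_{n,k,\delta})=2(n-\delta-k-1)$ with eigenvector constant on $Y\cup Z$ and zero on $X$, exactly as recorded before the statement; so the hypothesis $q(G)\ge q({B}_{n,k,\delta})$ forces $e(G)$ to be at least roughly $e({B}_{n,k,\delta})$ up to a controlled deficit, because the large clique $K_{n-\delta-k}$ inside ${B}_{n,k,\delta}$ dominates the $Q$-index and any graph matching its $Q$-index cannot be much sparser on its dense part.

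The second stage is the structural extraction. Having forced $G$ to contain (after deleting few edges) a large clique of order about $n-\delta-k$ together with a prescribed sparse remainder, I would apply the stability version of Theorem \ref{thmf59} — the $k$-path-coverability analogue of \cite[Lemma 2]{LiBinlong} — to conclude that $G$ is a \emph{subgraph} of ${B}_{n,k,\delta}$ with the correct minimum degree, i.e. $G$ is obtained from ${B}_{n,k,\delta}$ by deleting a bounded number of edges, none of which is incident to the independent part $X$ (else $\delta(G)<\delta$). So the deleted edges all lie inside $E_1({B}_{n,k,\delta})$, the edges with both endpoints in $Y\cup Z$. It then remains to show that if more than $\lfloor(\delta+k)\delta/4\rfloor$ such edges are deleted, the $Q$-index drops strictly below $2(n-\delta-k-1)$. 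This is the eigenvalue-perturbation heart of the argument: using the Rayleigh quotient with the explicit vector $\bm{h}$ (value $1$ on $Y\cup Z$, $0$ on $X$), deleting $|E'|$ edges inside $Y\cup Z$ changes $\bm{h}^TQ\bm h$ by exactly $-4|E'|$ while $\bm h^T\bm h$ is unchanged, so $q(G)\ge 2(n-\delta-k-1)$ already forces $4|E'|\le (\delta+k)\delta$ — precisely the defining constraint of $\mathcal{B}^{(1)}_{n,k,\delta}$ — once one checks that the quantity $(\delta+k)\delta$ is the right normalization coming from $e({B}_{n,k,\delta})-e(K_{n-\delta-k}\cup I_{\delta+k})=(\delta+k)\delta$.

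The main obstacle, as usual in these arguments, is making the stability step quantitatively tight enough to be compatible with the large but explicit threshold $n_1(\delta,k)=(\delta^2+k\delta+7\delta+6k+9)(\delta^2+k\delta+1)$: one must control the contribution to $q(G)$ of all ``outgrowth'' edges outside the dense clique and show it is strictly smaller than the gain from a single chord inside $K_{n-\delta-k}$, which is where the precise form of $n_1$ is consumed. Concretely I would bound $q(G)\le \max\{\text{degree-based bound on the dense part},\ \text{a bound on } K_{k+1}\vee(\cdots)\text{-type pieces}\}$ and compare; the Kelmans-type comparison $\lambda(H_{n,k,\delta})>\lambda(L_{n,k,\delta})$ used for the adjacency case (Theorem \ref{thm13}) does not directly carry over, so one needs the $Q$-index analogue, and verifying that the unique surviving family is $\mathcal{B}^{(1)}_{n,k,\delta}$ (and not some $L$-type competitor) requires checking that the split-graph variant $K_{k+1}\vee(K_{n-\delta-1}\cup K_{\delta-k})$, though not $k$-path-coverable, has strictly smaller $Q$-index than $2(n-\delta-k-1)$ for $n\ge n_1(\delta,k)$. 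Once these eigenvalue estimates are in place, assembling the conclusion is routine: $G$ is $k$-path-coverable unless $G\in\mathcal{B}^{(1)}_{n,k,\delta}$, and every member of that family is genuinely not $k$-path-coverable and satisfies $q(G)\ge 2(n-\delta-k-1)$ by the Rayleigh computation above, establishing sharpness.
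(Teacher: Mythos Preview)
Your overall architecture is right and matches the standard template used in the cited works (Cheng et al.\ \cite{Cheng2021}, Li--Liu--Peng \cite{LLPLMA18}, etc.): pass from the $Q$-index hypothesis to an edge lower bound via $q(G)\le \frac{2e(G)}{n-1}+n-2$, feed this into the closure/degree-sequence stability machinery for $k$-path-coverability to force $G\subseteq B_{n,k,\delta}$ with all deleted edges in $E_1(B_{n,k,\delta})$, and then control $|E'|$. The survey itself does not reproduce the proof (it cites \cite{Cheng2021}), but the Rayleigh computation it records before Theorem~\ref{thmllp} is exactly the \emph{sharpness} direction you describe.

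There is, however, a genuine gap at the ``eigenvalue-perturbation heart'' you single out. You write that the Rayleigh quotient with the fixed test vector $\bm{h}$ (value $1$ on $Y\cup Z$, $0$ on $X$) shows that $q(G)\ge 2(n-\delta-k-1)$ forces $4|E'|\le(\delta+k)\delta$. This inference runs the wrong way. The Rayleigh identity gives
\[
q(G)\;\ge\;\frac{\bm{h}^T Q(G)\bm{h}}{\bm{h}^T\bm{h}}
\;=\;2(n-\delta-k-1)+\frac{(\delta+k)\delta-4|E'|}{\,n-\delta-k\,},
\]
which is a \emph{lower} bound on $q(G)$. When $|E'|\le\lfloor(\delta+k)\delta/4\rfloor$ it certifies $q(G)\ge 2(n-\delta-k-1)$, i.e.\ it proves members of $\mathcal{B}^{(1)}_{n,k,\delta}$ are genuine exceptions. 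But when $|E'|$ is larger, it only says that \emph{this particular} lower bound falls below $2(n-\delta-k-1)$; it says nothing about $q(G)$ itself. A fixed test vector cannot produce the upper bound $q(G)<2(n-\delta-k-1)$ that you actually need.

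The missing ingredient is an \emph{upper} estimate on $q(G)$ for subgraphs $G=B_{n,k,\delta}\setminus E'$ with $|E'|>\lfloor(\delta+k)\delta/4\rfloor$. In the papers this survey cites (e.g.\ \cite{LLPLMA18}, \cite{Zhang2017}, \cite{Huangzheng19}) this is done by working with the genuine Perron eigenvector of $Q(G)$: one bounds the ratio of its entries on $X,Y,Z$ using the eigen-equations, shows the entries on $Y\cup Z$ are nearly constant for large $n$, and then compares $q(G)$ against $q(K_{n-\delta-k})=2(n-\delta-k-1)$ via a quantitative perturbation estimate. This is precisely where the polynomial threshold $n_1(\delta,k)$ is consumed, and it is not a consequence of the single-vector Rayleigh computation. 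Until you supply that upper-bound argument, the step ``$q(G)\ge 2(n-\delta-k-1)\Rightarrow |E'|\le\lfloor(\delta+k)\delta/4\rfloor$'' is unproved.
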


 \begin{corollary}
Let $k \geq 1$, $\delta \geq 2$ and $n\geq n_1(\delta ,k)$. 
If $G$ is a connected graph of order $n$ with 
minimum degree $\delta(G)\geq \delta$ and 
  $$q(G)\geq q( {B}_{n,k,\delta} ) ,$$
then $G$ is $k$-path-coverable 
unless $G= {B}_{n,k,\delta}$.
\end{corollary}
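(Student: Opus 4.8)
The plan is to read the corollary off Theorem~\ref{thm418} by bracketing it with two applications of the strict monotonicity of the signless Laplacian spectral radius, so almost all of the work has already been done upstream. First I would record the only structural input needed: $B_{n,k,\delta}=K_\delta\vee(K_{n-2\delta-k}\cup I_{\delta+k})$ is connected, and since the part $K_\delta$ is completely joined to the rest, the $\delta+(n-2\delta-k)=n-\delta-k$ vertices lying in $K_\delta\cup K_{n-2\delta-k}$ induce a clique $K_{n-\delta-k}$. Hence $K_{n-\delta-k}\cup I_{\delta+k}$ is a proper spanning subgraph of $B_{n,k,\delta}$, and because $Q(B_{n,k,\delta})$ is a nonnegative irreducible matrix with $Q(K_{n-\delta-k}\cup I_{\delta+k})\le Q(B_{n,k,\delta})$ entrywise and $Q(K_{n-\delta-k}\cup I_{\delta+k})\ne Q(B_{n,k,\delta})$, the Perron--Frobenius theorem gives
\[
q(B_{n,k,\delta})>q\bigl(K_{n-\delta-k}\cup I_{\delta+k}\bigr)=q(K_{n-\delta-k})=2(n-\delta-k-1).
\]

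Next I would run the dichotomy. Let $G$ be a connected $n$-vertex graph with $\delta(G)\ge\delta$ and $q(G)\ge q(B_{n,k,\delta})$. Then $q(G)\ge q(B_{n,k,\delta})>2(n-\delta-k-1)$, so the hypothesis of Theorem~\ref{thm418} is met, and that theorem yields that either $G$ is $k$-path-coverable, in which case we are done, or $G\in\mathcal{B}^{(1)}_{n,k,\delta}$. In the latter case $G=B_{n,k,\delta}\setminus E'$ with $E'\subseteq E_1(B_{n,k,\delta})$; if $E'\ne\emptyset$ then $Q(G)\le Q(B_{n,k,\delta})$ entrywise with $Q(G)\ne Q(B_{n,k,\delta})$, and since $B_{n,k,\delta}$ is connected, strict Perron monotonicity gives $q(G)<q(B_{n,k,\delta})$, contradicting the hypothesis. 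Hence $E'=\emptyset$, i.e. $G=B_{n,k,\delta}$. Putting the two cases together, $G$ is $k$-path-coverable unless $G=B_{n,k,\delta}$, which is the claim.

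There is no genuine obstacle here --- the content sits entirely in Theorem~\ref{thm418}. The only points that need a line of care are the elementary count $\delta+(n-2\delta-k)=n-\delta-k$ together with the identity $q(K_m)=2(m-1)$; the standard lemma that $A\le B$ entrywise with $A\ne B$ and $B$ nonnegative and irreducible forces $\rho(A)<\rho(B)$; and the remark (already noted in the text preceding the statement) that $B_{n,k,\delta}$ itself is not $k$-path-coverable, which is why it legitimately appears as an exception rather than being absorbed into the positive conclusion.
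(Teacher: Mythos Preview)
Your argument is correct and is exactly the approach the paper intends: the corollary is stated without proof as an immediate consequence of Theorem~\ref{thm418}, using that $K_{n-\delta-k}$ sits as a proper subgraph of $B_{n,k,\delta}$ to get $q(B_{n,k,\delta})>2(n-\delta-k-1)$, and then strict Perron monotonicity to eliminate the proper subgraphs in $\mathcal{B}^{(1)}_{n,k,\delta}$. You have simply written out the details that the paper leaves to the reader.
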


\section{Spectral  problem for connectivity}

\subsection{Problem for $k$-connectivity}

A connected graph $G$ is said to be \emph{$k$-connected} (or \emph{$k$-vertex-connected}) if it has more than $k$ vertices and remains connected whenever fewer than $k$ vertices are deleted. 
In particular, $1$-connected graphs are the general  connected graphs. 
The connectivity of $G$, written as $\kappa(G)$,  
is defined as the maximum integer $k$ such that $G$ is $k$-connected. 
In other words, the connectivity 
of a graph $G$ is the minimum number of vertices that we need to delete 
to make $G$ disconnected. 
From the definition, we can see that $\kappa (G)\ge k$ 
is equivalent to say that $G$ is $k$-connected. 

When we talk about the connectivity and the eigenvalues, perhaps the most famous one is
due to Fiedler \cite{Fiedler73} which states that the second smallest Laplacian eigenvalue is at most the connectivity for any non-complete graph. 
For adjacency eigenvalues, the relation between the connectivity, edge-connectivity and the eigenvalues was reported in \cite{CioabaGuCzech, GuxiaofengJGT, 
ZhouWang18, Zhouqianna20c}.

Let $G$ be a simple graph of order $n\geq k+1$. It is known \cite[Page 4]{Bollobas78} that  if
$
\delta (G) \geq \frac12(n+k-2),
$
 then $G$ is  $k$-connected.
In  1969, Bondy  \cite{BondyConnectivity} 
provided a sufficient condition on the degree sequence. 

  \begin{theorem}[Bondy--Chv\'{a}tal \cite{Bondy}]  
A graph $G$ is $k$-connected  
if and only if the closure graph $\mathrm{cl}_{n+k-2} 
(G)$ is $k$-connected. 
In particular, if $d(u)+d(v) \ge n+k-2$ for all non-edges $\{u,v\}$, 
then $\mathrm{cl}_{n+k-2} (G) =K_n$ and $G$ is $k$-connected. 
\end{theorem}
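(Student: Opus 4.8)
The plan is to follow the classical Bondy--Chv\'atal argument, adapted from Hamiltonicity to vertex-connectivity. One direction is immediate: $G$ is a spanning subgraph of $\mathrm{cl}_{n+k-2}(G)$, and adding edges on a fixed vertex set can never decrease connectivity (for any $S$ with $|S|\le k-1$, the graph $G-S$ is connected and $\mathrm{cl}_{n+k-2}(G)-S$ contains it), so if $G$ is $k$-connected then so is its closure. For the converse, I would argue by induction along a construction sequence $G=G_0\subseteq G_1\subseteq\cdots\subseteq G_t=\mathrm{cl}_{n+k-2}(G)$, where each $G_{i+1}=G_i+u_iv_i$ is obtained by joining a non-adjacent pair with $d_{G_i}(u_i)+d_{G_i}(v_i)\ge n+k-2$ (the well-definedness of the closure, i.e.\ independence of the order of these insertions, is the fact cited above and may be assumed). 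The whole content is then the single-step lemma: \emph{if $u,v$ are non-adjacent vertices of an $n$-vertex graph $G$ with $d(u)+d(v)\ge n+k-2$ and $G$ is not $k$-connected, then $G+uv$ is not $k$-connected either.} Iterating the lemma shows that if $G$ is not $k$-connected then neither is $\mathrm{cl}_{n+k-2}(G)$, which is the contrapositive of what we want.

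To prove the lemma, suppose $G$ is not $k$-connected. If $n\le k$ then $G+uv$ also has at most $k$ vertices and is again not $k$-connected, so assume $n\ge k+1$; then there is a vertex cut $S$ with $s:=|S|\le k-1$ for which $G-S$ is disconnected, and I would fix a partition $V(G)\setminus S=A\cup B$ into non-empty parts with no edge of $G$ between them. If $u$ and $v$ both lie in $A\cup S$, or both lie in $B\cup S$, then $S$ is still a cut of $G+uv$ and we are done. The only remaining case, up to symmetry, is $u\in A$ and $v\in B$; but then $N_G(u)\subseteq(A\setminus\{u\})\cup S$ and $N_G(v)\subseteq(B\setminus\{v\})\cup S$, so
\[ d(u)+d(v)\le(|A|-1)+(|B|-1)+2s=(n-s)-2+2s=n+s-2\le n+k-3, \]
contradicting the hypothesis $d(u)+d(v)\ge n+k-2$. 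Hence this case cannot occur, and the lemma follows.

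For the ``in particular'' clause I would set $H=\mathrm{cl}_{n+k-2}(G)$ and note that, since edges are only added in forming $H$, every non-edge of $H$ is a non-edge of $G$; if $H\ne K_n$ then picking a non-adjacent pair $u,v$ of $H$ gives $d_H(u)+d_H(v)\ge d_G(u)+d_G(v)\ge n+k-2$, so the closure procedure would have inserted $uv$, a contradiction. Thus $H=K_n$, which is $k$-connected since $n\ge k+1$, and the first part then yields that $G$ is $k$-connected. I do not expect a genuine obstacle here; the one point that needs care is the case analysis in the lemma --- checking that the degenerate possibilities ($n\le k$, several components in $G-S$, or one of $u,v$ lying in the cut $S$) are all absorbed --- and the degree-sum threshold $n+k-2$ is exactly what rules out the single dangerous configuration, namely $u$ and $v$ in different components of $G-S$.
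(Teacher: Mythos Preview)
Your argument is correct and is precisely the classical Bondy--Chv\'atal proof: reduce to the single-edge lemma, and in that lemma observe that a small cut $S$ of $G$ remains a cut of $G+uv$ unless $u$ and $v$ lie in distinct components of $G-S$, which the degree-sum bound $d(u)+d(v)\ge n+k-2$ forbids via the count $d(u)+d(v)\le n+|S|-2\le n+k-3$. The case analysis is clean and your handling of the ``in particular'' clause is fine (implicitly one assumes $n\ge k+1$, as required by the definition of $k$-connectedness).

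There is nothing to compare against here: the paper is a survey and states this theorem without proof, citing Bondy and Chv\'atal's original paper. Your write-up is exactly the argument one would find there.
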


\begin{lemma}[Bondy \cite{BondyConnectivity}]
 Suppose that $n\ge k+1$ and  $G$ is an $n$-vertex graph with degree sequence 
$d_1\le d_2 \le \cdots \le d_n$. 
If $G$ is not $k$-connected, 
then there exists $1\leq i\le\frac{n-k+1}{2}$ such that 
$d_i\leq i+k-2 $ and $d_{n-k+1}\leq n-i-1$.
\end{lemma}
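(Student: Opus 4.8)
The plan is to extract from the failure of $k$-connectivity a separation of $G$ into two nonempty ``sides'' that are joined only through a small separator, and then to read off the two degree bounds directly. Since $n\ge k+1$ and $G$ is not $k$-connected, by definition there is a set $S\subseteq V(G)$ with $|S|\le k-1$ such that $G-S$ is disconnected. The first step is to normalize $S$ so that $|S|=k-1$ exactly: writing $G-S$ as a disjoint union of two nonempty vertex sets $V_1$ and $V_2$ with no edge of $G$ between them, one repeatedly transfers a vertex from $V_1\cup V_2$ into $S$ (always leaving at least one vertex on each side), which preserves the property that $S$ separates $V_1$ from $V_2$; the inequality $n\ge k+1$ guarantees that $|V_1|+|V_2|=n-|S|$ is large enough to perform the required $k-1-|S|$ transfers. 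After this we may assume $|S|=k-1$ and, relabelling if necessary, that $i:=|V_1|\le|V_2|$. Then $i+|V_2|=n-k+1$, so $1\le i\le (n-k+1)/2$, which is precisely the asserted range for $i$.

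The second step is the degree count. Every vertex $v\in V_1$ has its neighbourhood contained in $V_1\cup S$, hence $d(v)\le (i-1)+(k-1)=i+k-2$; since $|V_1|=i$, this forces $d_i\le i+k-2$, the first inequality. Likewise every vertex $v\in V_2$ has its neighbourhood contained in $V_2\cup S$, hence $d(v)\le (|V_2|-1)+(k-1)=|V_2|+k-2=n-i-1$. Because $i\le (n-k+1)/2$ we also have $i+k-2\le n-i-1$, so in fact each of the $|V_1|+|V_2|=n-k+1$ vertices of $V_1\cup V_2$ has degree at most $n-i-1$; consequently at least $n-k+1$ of the values $d_1,\dots,d_n$ are at most $n-i-1$, which gives $d_{n-k+1}\le n-i-1$, the second inequality.

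The argument is entirely elementary and uses nothing beyond the definition of $k$-connectivity, so there is no substantial obstacle; the only point requiring care is the normalization step, where one must verify that there are enough vertices outside $S$ to enlarge it to size $k-1$ without emptying either side --- this is exactly where the hypothesis $n\ge k+1$ enters. It is worth inspecting the borderline cases separately: for $k=1$ one has $S=\emptyset$ and $G-S=G$ already disconnected, so no enlargement is needed, while for $k=2$ the bound $n\ge 3$ is precisely what permits enlarging a separator of size $0$ to size $1$ while keeping both sides nonempty.
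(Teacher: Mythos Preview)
Your proof is correct. The paper does not include its own proof of this lemma; it is stated and attributed to Bondy \cite{BondyConnectivity} without argument, since the survey uses it only as a tool for the edge and spectral conditions that follow. So there is nothing in the paper to compare against, but your argument is the standard one and is sound: pad the separator to size exactly $k-1$, choose $i$ to be the size of the smaller side, and read off both degree inequalities from the fact that each non-separator vertex can only see its own side plus the separator. The one step worth making airtight is the implicit claim that a separating set of size at most $k-1$ exists at all; this uses that $G$ is not the complete graph, which follows because $K_n$ with $n\ge k+1$ is $k$-connected.
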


Sometimes we can characterize the vertex-connectivity
 using the information
about co-degrees of vertices in our graph. Such result was used in [61]
to determine the vertex-connectivity of dense random $d$-regular graphs. 
 
 \begin{theorem}[Krivelevich et al. \cite{KSVW01}]
 Let $G=(V,E)$ be a $d$-regular graph on $n$ vertices such that 
 $\sqrt{n}\log n < d \le 3n/4$ and the number of common neighbors for 
 every two distinct vertices in $G$ is $(1+o(1))d^2/n$. 
 Then the graph $G$ is $d$-vertex-connected. 
 \end{theorem}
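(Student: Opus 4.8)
The plan is to use the standard spectral-to-combinatorial bridge, then invoke a known combinatorial connectivity criterion and an expander-mixing-type counting argument. First I would recall that for a $d$-regular graph $G$, the top eigenvalue of $A(G)$ is exactly $d$, with the all-ones vector as Perron eigenvector, and I would write $\lambda_2$ for the second-largest adjacency eigenvalue; the hypotheses $\sqrt{n}\log n < d \le 3n/4$ and the near-uniform codegree condition will be translated into a bound on $\lambda_2$. The key observation is that the number of common neighbors of $u$ and $v$ equals the $(u,v)$-entry of $A(G)^2$, so the codegree hypothesis says that $A(G)^2$ is very close, entrywise, to the matrix $\tfrac{d^2}{n}J + d I$ up to a lower-order error. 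Spectrally, this forces every eigenvalue $\mu \ne d$ of $A(G)$ to satisfy $\mu^2 = (1+o(1))d$ roughly, hence $|\mu| \le (1+o(1))\sqrt{d} + O(1)$; so $G$ is an $(n,d,\lambda)$-graph with $\lambda = O(\sqrt{d})$ (more carefully, with $\lambda = (1+o(1))\sqrt{d}$ plus a controlled error coming from the $o(1)$ in the codegree count, which I would isolate as a short lemma).

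Second, with the spectral gap in hand, I would invoke the expander mixing lemma: for disjoint vertex sets $S,T$, $\big|\,e(S,T) - \tfrac{d}{n}|S||T|\,\big| \le \lambda\sqrt{|S||T|}$, and similarly $G[S]$ has at least $\tfrac{d}{n}\binom{|S|}{2} - O(\lambda |S|)$ edges. To show $d$-vertex-connectivity, suppose for contradiction that there is a cut set $C$ with $|C| \le d-1$ separating $V \setminus C$ into nonempty parts $A$ and $B$ with $|A| \le |B|$. Then $e(A,B) = 0$, so $\tfrac{d}{n}|A||B| \le \lambda \sqrt{|A||B|}$, i.e. $|A||B| \le \tfrac{\lambda^2 n^2}{d^2} = O\!\big(\tfrac{n^2}{d}\big)$. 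Combined with $|A| + |B| = n - |C| \ge n - d + 1 \ge n/4$, this forces the smaller side $|A|$ to be small: $|A| = O\!\big(\tfrac{n}{d}\big)$. But every vertex of $A$ has all $d$ of its neighbors inside $A \cup C$, so $d \le |A| + |C| - 1 \le |A| + d - 2$, giving $|A| \ge 2$; more usefully, counting edges from $A$ to $A \cup C$ shows $d|A| \le 2e(G[A]) + e(A,C) \le |A|(|A|-1) + |A||C|$, hence $d \le |A| - 1 + |C| \le |A| + d - 2$, which only yields $|A| \ge 2$ — so I instead push the mixing estimate on $e(A, V\setminus(A\cup C))$: since $A$ has no neighbors outside $A \cup C$, $e(A, V \setminus (A\cup C)) = 0$, and the mixing lemma gives $\tfrac{d}{n}|A|\,(n - |A| - |C|) \le \lambda\sqrt{|A|(n - |A| - |C|)}$. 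Using $n - |A| - |C| \ge n/4$ and $|A| \ge 1$ this reads $d \cdot \tfrac{n/4}{n} \le \lambda\sqrt{n/(|A|)}\,$-ish, and cleaning it up yields $d = O(\lambda\sqrt{n}) = O(\sqrt{dn})$, i.e. $d = O(n)$, which is not yet a contradiction — so the decisive inequality must combine $|A|\ge 1$ with $d \le |A| + |C| - 1$: since $|A| \le \lambda^2 n / (d \cdot \tfrac{n}{4}) \cdot$ something, the genuine bound is $|A| \le \big(\tfrac{\lambda n}{d(n-d-|C|)}\big)^2(n-|A|-|C|) \le (4\lambda/d)^2 \cdot n = o(n/d) \cdot n$; plugging $\lambda = O(\sqrt d)$ gives $|A| = O(n/d) = o(\log n) \cdot$ ... and since also $d \le |A| + |C| - 1 \le |A| + d - 2$, we need $|A| \ge 2$, but simultaneously $e(A, C) \ge d|A| - |A|(|A|-1) \ge (d - |A|)|A| \ge (d/2)|A|$ forces $|C| \ge d/2 \cdot |A| / |A| = d/2$... and then $n - d + 1 \le |A| + |B|$ with $|C| \ge d/2$ is still consistent — so the contradiction ultimately comes from iterating: $|A|$ small forces $|C|$ close to $d$, which forces $|B| = n - |A| - |C| \approx n - d$ large, and then $e(A,B) = 0$ with both $|A| \ge 1$ and $|B| \ge n - d \ge n/4$ violates the mixing bound once $d \ge \sqrt n \log n$.

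Let me state the crux cleanly: from $e(A,B)=0$, $\tfrac{d}{n}|A||B| \le \lambda\sqrt{|A||B|}$ gives $|A||B| \le \lambda^2 n^2/d^2$; with $|B| \ge n/4$ this gives $|A| \le 4\lambda^2 n/d^2 = O(n/d)$ (using $\lambda = O(\sqrt d)$). On the other hand each vertex in $A$ sees $d$ neighbors in $A \cup C$, so $d \le |A| + |C| - 1$, i.e. $|C| \ge d - |A| + 1 \ge d - O(n/d)$. Since $d \le 3n/4$ and $d \ge \sqrt n \log n$, we have $O(n/d) = O(\sqrt n / \log n) = o(d)$, so $|C| \ge (1-o(1))d > d - 1$ for large $n$, contradicting $|C| \le d-1$. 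Hence $G$ is $d$-vertex-connected.

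\textbf{Main obstacle.} The delicate part is \emph{not} the combinatorial endgame (which is a two-line mixing-lemma argument once the spectral gap is available) but rather converting the \emph{entrywise} codegree hypothesis ``every two distinct vertices have $(1+o(1))d^2/n$ common neighbors'' into the \emph{spectral} statement $\lambda_2(G) = O(\sqrt d)$. One must handle $A^2 = \tfrac{d^2}{n}(J - I) + dI + E$ where $\|E\|$ is controlled only in an averaged/entrywise sense; the cleanest route is to bound $\|E\|$ via $\|E\|^2 \le \|E\|_F^2 = \sum_{u \ne v}(\text{codeg}(u,v) - d^2/n)^2 = o(d^2/n)\cdot n^2 = o(d^2 n)$, which is too weak on its own. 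The fix is to note $\text{codeg}(u,v) \le \min(d, \text{stuff})$ so the error per entry is $o(d^2/n)$, giving $\|E\| \le \|E\|_F = o(d^2/n)\cdot n = o(d^2/\sqrt n) $ — and since $d \ge \sqrt n \log n$, this is $o(d^{3/2})$, whence every eigenvalue $\mu^2$ of $A^2$ with $\mu \ne d$ lies in $[d - d^2/n - o(d^{3/2}),\ d + o(d^{3/2})]$, so $|\mu| \le \sqrt{d + o(d^{3/2})} = O(d^{3/4})$. That is weaker than $O(\sqrt d)$ but still $o(d)$, and one checks $O(d^{3/4})$ is more than enough for the mixing argument above since it only requires $\lambda = o(d)\cap O(\sqrt{dn})$ — so I would carry the proof through with $\lambda = O(d^{3/4})$ rather than fighting for the optimal $O(\sqrt d)$. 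Making this error-term bookkeeping rigorous, and pinning down exactly what $(1+o(1))$ is needed where, is the real work.
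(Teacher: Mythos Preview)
First, a meta point: this paper is a survey and does \emph{not} prove the theorem---it is simply quoted from \cite{KSVW01} with no argument given. So there is no in-paper proof to compare against, and I can only assess your proposal on its own merits.

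Your outline has two real gaps, one of which you half-anticipated.

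\textbf{The final inequality is false.} Your ``crux'' paragraph ends with
\[
|C| \ge d - |A| + 1 \ge d - O(n/d) = (1-o(1))d > d-1,
\]
but $(1-o(1))d = d - o(d)$, and $o(d)$ is enormous here: with $d\sim\sqrt n\log n$ you have $n/d\sim\sqrt n/\log n$, so you are claiming $d-\sqrt n/\log n > d-1$, which is absurd. You never reach $|C|\ge d$. The right move is not to bound $|C|$ but to reuse the codegree hypothesis \emph{inside} $A$: once $|A|=o(d)$, take $u_1,u_2\in A$; then $N(u_1)\cup N(u_2)\subseteq A\cup C$ has size at most $|A|+|C|\le o(d)+d-1$, so
\[
\mathrm{codeg}(u_1,u_2)\ \ge\ 2d-|A|-|C|\ =\ (1-o(1))d,
\]
while the hypothesis says $\mathrm{codeg}(u_1,u_2)=(1+o(1))d^2/n\le(1+o(1))\tfrac{3}{4}d$. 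That is the contradiction. But this repair only works if you actually have $|A|=o(d)$, which requires $\lambda=O(\sqrt d)$---and that brings us to the second gap.

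\textbf{The spectral gap you need does not follow from the hypothesis.} Write $A(G)^2=dI+\tfrac{d^2}{n}(J-I)+E$ with $|E_{uv}|\le\varepsilon\,d^2/n$ uniformly, $\varepsilon=o(1)$. Your Frobenius computation is garbled: the correct bound is $\|E\|_F\le \varepsilon d^2$ (there are $n^2$ entries each $O(\varepsilon d^2/n)$), not $o(d^2/\sqrt n)$; the row-sum bound also gives $\|E\|\le\varepsilon d^2$. Note $\mathbf 1$ is an eigenvector of $E$ with eigenvalue $-d(1-d/n)$, so in fact $\|E\|\ge d(1-d/n)\ge d/4$ always. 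On $\mathbf 1^\perp$ you get, for any non-top eigenvalue $\mu$ of $A(G)$,
\[
\mu^2 \;=\; d\Bigl(1-\tfrac{d}{n}\Bigr)+\mathbf v^TE\mathbf v \;\le\; d+\varepsilon d^2,
\]
hence $|\mu|=o(d)$. That is \emph{all} the hypothesis buys you: $\varepsilon$ is an arbitrary $o(1)$ (it could be $1/\log\log n$), so you cannot squeeze out $O(\sqrt d)$ or $O(d^{3/4})$. And $\lambda=o(d)$ gives only $|A|=o(n)$ via mixing, which is not $o(d)$ when $d$ is near the lower end $\sqrt n\log n$. So the spectral detour, as you have set it up, does not close.

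In short: your endgame is repairable, but only under a spectral gap that the codegree assumption alone does not deliver. The proof in \cite{KSVW01} works directly with the codegree condition (counting common neighbours across a putative small cut) rather than passing through the expander mixing lemma; if you want to reconstruct it, that is the route to take.
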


\begin{theorem}[Wu--Zhang--Feng \cite{Zhangpengli19}]
Let $G$ be a connected graph on $n\ge 5$ vertices. 
If the minimum degree $\delta (G)\ge 2$ and 
\[  e(G)\ge {n-2 \choose 2} +3, \]
then $G$ is $2$-connected unless $G=K_1\vee (K_{n-3} \cup K_2)$. 
\end{theorem}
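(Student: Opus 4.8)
The plan is to follow the closure-and-degree-sequence method used repeatedly in this survey (for instance in the proof of Theorem~\ref{thmhnkd}). Assume $G$ is not $2$-connected. By the $k=2$ case of the Bondy--Chv\'{a}tal closure theorem stated above, the $n$-closure $H:=\mathrm{cl}_n(G)$ is also not $2$-connected; moreover $e(H)\ge e(G)\ge \binom{n-2}{2}+3$ and $\delta(H)\ge \delta(G)\ge 2$, since forming a closure only adds edges. Write $d_1\le d_2\le \cdots \le d_n$ for the degree sequence of $H$. Applying Bondy's lemma (the case $k=2$ of the lemma preceding this theorem) yields an index $i$ with $1\le i\le \tfrac{n-1}{2}$, $d_i\le i$, and $d_{n-1}\le n-i-1$. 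Because $\delta(H)\ge 2$, the bound $d_i\le i$ already forces $i\ge 2$.

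Next I would estimate $2e(H)=\sum_{j=1}^n d_j$ by splitting the sum into the first $i$ degrees (each at most $i$), the degrees $d_{i+1},\dots,d_{n-1}$ (each at most $n-i-1$), and $d_n\le n-1$, which gives
\[ 2e(H)\le i^2+(n-i-1)^2+(n-1). \]
Since $i+(n-1-i)=n-1$ is fixed, the quantity $i^2+(n-1-i)^2$ is largest when the two parts are as unequal as possible, i.e.\ when $i$ is smallest; over the admissible integer range $2\le i\le \tfrac{n-1}{2}$ the unique maximizer is $i=2$. Substituting $i=2$ gives $2e(H)\le (n-3)^2+n+3$, and one checks that $(n-3)^2+n+3=n^2-5n+12=2\bigl(\binom{n-2}{2}+3\bigr)$. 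Combined with $e(G)\le e(H)$ and the hypothesis $e(G)\ge \binom{n-2}{2}+3$, all these inequalities must be equalities. In particular $i=2$, $G=H=\mathrm{cl}_n(G)$, $e(G)=\binom{n-2}{2}+3$, and the degree sequence of $G$ is pinned down to $d_1=d_2=2$, $d_3=\cdots=d_{n-1}=n-3$, $d_n=n-1$.

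It remains to reconstruct $G$ from this information, and this is the step I expect to need the most care. Let $w$ be the vertex of degree $n-1$ (adjacent to everything), let $X=\{u_1,u_2\}$ be the two vertices of degree $2$, and let $Y$ be the $n-3$ vertices of degree $n-3$. For $n\ge 6$, any two vertices of $Y$ have degree sum $2(n-3)\ge n$, so, since $G$ equals its own $n$-closure, they must be adjacent; hence $Y$ induces $K_{n-3}$. Each vertex of $Y$ then already has its full degree $n-3$ ($n-4$ neighbours in $Y$ together with $w$), so there are no edges between $X$ and $Y$; the degree-$2$ condition then forces $u_1u_2\in E(G)$ and $G[X]=K_2$, whence $G=K_1\vee(K_{n-3}\cup K_2)$. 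The borderline case $n=5$ I would treat directly: then $G-w$ is a $4$-vertex graph with all degrees equal to $1$, hence the perfect matching $2K_2$, and again $G=K_1\vee(K_2\cup K_2)=K_1\vee(K_{n-3}\cup K_2)$. The main obstacle is exactly this: the edge count alone does not determine $G$ from its degree sequence, and it is precisely the closure identity $G=\mathrm{cl}_n(G)$ — which forces every pair of vertices of large degree sum to be adjacent — that singles out the unique extremal graph; one must keep track of the small and borderline cases ($n=5$, and the estimate $2(n-3)\ge n$) where this argument is tight.
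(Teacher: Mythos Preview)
Your proof is correct in every step: the closure reduction, the application of Bondy's degree lemma for $k=2$, the edge estimate $2e(H)\le i^2+(n-i-1)^2+(n-1)$, the identification of $i=2$ as the maximizer, the resulting forced degree sequence, and the reconstruction of $G$ via the closure identity $G=\mathrm{cl}_n(G)$ (including your separate treatment of $n=5$, which is indeed needed since $2(n-3)<n$ there).

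The paper, being a survey, states this theorem of Wu--Zhang--Feng without proof. However, your argument is \emph{exactly} the template the paper uses in the proofs it does give for the parallel edge-count results --- most directly the proof of Theorem~\ref{thmhnkd} (the $k$-Hamiltonian case) and the proof following Theorem~\ref{thmf59} (the $k$-path-coverable case). In fact this statement is simply the $k=2$, $\delta=2$ instance of Theorem~\ref{thmfmo} (with $A_{n,2,2}=K_1\vee(K_2\cup K_{n-3})$ and $e(A_{n,2,2})=\binom{n-2}{2}+3$), so your method is the intended one.
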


\begin{theorem}[Feng et al. \cite{FengLAA17}] \label{thmfeng42}
Let $G$ be a graph of order $n\geq k+1\ge 2$. 
If 
\[  e(G) \geq {n-1 \choose 2}+k-1, \]
then either $G$ is $k$-connected or $G = K_{k-1} \vee (K_1 \cup K_{n-k})$.
\end{theorem}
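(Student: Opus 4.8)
The plan is to mimic the proofs of Theorem \ref{thmhnkd} and of the $k$-path-coverable analogue above, that is, to combine the Bondy--Chv\'{a}tal closure for $k$-connectivity with Bondy's degree-sequence lemma and then optimize a quadratic in the ``threshold index''. First I would suppose for contradiction that $G$ is not $k$-connected. By the closure theorem for $k$-connectivity, the graph $H := \mathrm{cl}_{n+k-2}(G)$ is also not $k$-connected, and since adding edges can only increase the number of edges, $e(H) \ge e(G) \ge \binom{n-1}{2} + k - 1$. Let $d_1 \le d_2 \le \cdots \le d_n$ be the degree sequence of $H$. By Bondy's lemma there is an integer $1 \le i \le \frac{n-k+1}{2}$ with $d_i \le i + k - 2$ and $d_{n-k+1} \le n - i - 1$ (hence $d_j \le n - i - 1$ for all $j \le n-k+1$, and $d_j \le n-1$ for the remaining $k-1$ vertices).

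Next I would bound $2e(H) = \sum_j d_j$ using this splitting: roughly $i$ vertices of degree at most $i+k-2$, then $n - i - (k-1)$ vertices of degree at most $n-i-1$, and finally $k-1$ vertices of degree at most $n-1$. This yields
\begin{equation*}
2e(H) \le i(i+k-2) + (n-i-k+1)(n-i-1) + (k-1)(n-1),
\end{equation*}
which is a quadratic $g(i)$ in $i$ on the range $1 \le i \le \frac{n-k+1}{2}$. A short computation (which I would carry out carefully) shows $g(i)$ is a convex parabola in $i$, so on the interval its maximum is attained at one of the endpoints; for $n$ large relative to $k$ the maximum occurs at $i = 1$. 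Plugging $i=1$ gives $2e(H) \le (k-1) + (n-k)(n-2) + (k-1)(n-1)$, which should simplify to exactly $2e(K_{k-1} \vee (K_1 \cup K_{n-k})) = (n-1)(n-2) + 2(k-1)$. Combined with the hypothesis $e(G) \ge \binom{n-1}{2} + k - 1$, all inequalities collapse to equalities: $G = H = \mathrm{cl}_{n+k-2}(G)$, $i = 1$, and the degree sequence of $G$ is forced to be $d_1 = k-1$, $d_2 = \cdots = d_{n-k+1} = n-2$, $d_{n-k+2} = \cdots = d_n = n-1$.

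Finally, with the degree sequence pinned down I would reconstruct $G$ exactly. Let $v_1$ be the unique vertex of degree $k-1$, let $F$ be the $k-1$ vertices of degree $n-1$, and $W$ the remaining $n-k$ vertices of degree $n-2$. Each vertex of $F$ is adjacent to all others, so $v_1$ is adjacent precisely to $F$ (it has degree $k-1 = |F|$), forcing $v_1$ to be non-adjacent to every vertex of $W$; thus each vertex of $W$ has degree $n-2$ accounted for by being adjacent to everything except $v_1$, so $W \cup F$ induces $K_{n-1}$ and $G = K_{k-1} \vee (K_1 \cup K_{n-k})$, the stated exceptional graph, which indeed fails to be $k$-connected (removing $F$ disconnects $v_1$). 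The main obstacle I anticipate is purely bookkeeping: verifying that the parabola $g(i)$ really is maximized at $i=1$ for the relevant range of $n$ (the problem statement only assumes $n \ge k+1$, so I would need to check whether a small-$n$ restriction or extra exceptional graphs are needed, exactly as the excerpt's companion theorems impose lower bounds like $n \ge k+6$), and confirming the endpoint value matches $e(K_{k-1} \vee (K_1 \cup K_{n-k}))$ on the nose rather than with slack.
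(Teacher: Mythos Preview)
The paper does not supply a proof of this theorem; it is quoted from \cite{FengLAA17} and then observed to be the special case $\delta = k-1$ of Theorem~\ref{thmfmo}. Your proposed argument is correct and follows exactly the template the paper uses for the analogous edge-count results it \emph{does} prove (Theorem~\ref{thmhnkd} and the $k$-path-coverable theorem): closure, Bondy's degree-sequence lemma, quadratic optimization in the threshold index, then reconstruction from the forced degree sequence.

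Your one stated concern is unwarranted. Expanding gives
\[
g(i) = i(i+k-2) + (n-i-k+1)(n-i-1) + (k-1)(n-1) = 2i^2 - 2(n-k+1)i + n^2 - n,
\]
whose vertex lies at $i_0 = \tfrac{n-k+1}{2}$, which is exactly the right endpoint of the range $1 \le i \le \tfrac{n-k+1}{2}$ from Bondy's lemma. Hence $g$ is non-increasing on the whole interval and attains its maximum at $i=1$ for every $n \ge k+1$, with
\[
g(1) = n^2 - 3n + 2k = (n-1)(n-2) + 2(k-1) = 2\Bigl(\tbinom{n-1}{2} + k-1\Bigr)
\]
on the nose. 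So no lower bound on $n$ beyond the stated $n \ge k+1$, and no additional exceptional graphs, are needed; the bookkeeping you flagged as a potential obstacle resolves cleanly.
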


The above result was extended by 
Feng et al. \cite[Theorem 3.2]{FengMonoshMath}. 
Clearly, when we set $\delta =k-1$, then Theorem \ref{thmfmo} 
reduces to Theorem \ref{thmfeng42}.  
For convenience, we denote 
\[ 
 \boxed{ A_{n,k,\delta} :=K_{k-1}\vee(K_{\delta-k+2}\cup K_{n-\delta-1}). } 
\] 
It is easy to see that $A_{n,k,\delta}$ is not $k$-connected and 
$ e(A_{n,k,\delta})= 
\frac{1}{2}n^2 - (\delta -k +\tfrac{5}{2})n + (\delta +1)(\delta -k+2)$. 
In 2017, Feng et al. \cite[Theorem 3.2]{FengMonoshMath} 
proved that  $A_{n,k,\delta}$ is the unique graph 
that is not $k$-connected and 
has the maximum number of edges.

\begin{theorem}[Feng et al. \cite{FengMonoshMath}]  \label{thmfmo}
Let $G$ be a connected graph on $n$ vertices with minimum degree 
$\delta (G)\ge \delta$. If 
\[  e(G)\ge e(A_{n,k,\delta}),  \]
then either $G$ is $k$-connected or 
$G=A_{n,k,\delta}$. 
\end{theorem}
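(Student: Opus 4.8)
The plan is to run the standard closure-plus-degree-sequence machinery, exactly in the spirit of the proof of Theorem~\ref{thmhnkd} above. Assume $G$ is not $k$-connected and set $H:=\mathrm{cl}_{n+k-2}(G)$; by the Bondy--Chv\'{a}tal closure theorem for $k$-connectivity, $H$ is also not $k$-connected, and since $E(G)\subseteq E(H)$ we have $e(H)\ge e(G)\ge e(A_{n,k,\delta})$ and $\delta(H)\ge\delta(G)\ge\delta$. Writing $d_1\le\cdots\le d_n$ for the degree sequence of $H$, Bondy's lemma supplies an integer $i$ with $1\le i\le\frac{n-k+1}{2}$, $d_i\le i+k-2$ and $d_{n-k+1}\le n-i-1$; and the minimum-degree hypothesis forces $\delta\le d_1\le d_i\le i+k-2$, hence $i\ge\delta-k+2$. (In particular this already imposes the mild inequalities $\delta\ge k-1$ and $n\ge 2\delta-k+3$ under which $A_{n,k,\delta}$ is a genuine non-$k$-connected graph; this is why no separate lower bound on $n$ is needed.)

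Next I would estimate $2e(H)=\sum_{j=1}^n d_j$ by splitting the sequence into the $i$ smallest degrees (each $\le i+k-2$), the next $n-k+1-i$ degrees (each $\le n-i-1$), and the $k-1$ largest degrees (each $\le n-1$), which gives
\[ 2e(H)\le i(i+k-2)+(n-k+1-i)(n-i-1)+(k-1)(n-1)=:g(i). \]
A short expansion shows $g(i)=2i^2-2(n-k+1)\,i+c$ for a constant $c$ depending only on $n$ and $k$, so $g$ is a convex quadratic whose minimum sits at $i=\frac{n-k+1}{2}$, i.e.\ precisely at the right endpoint of the feasible interval $[\delta-k+2,\frac{n-k+1}{2}]$. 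Hence $g$ is strictly decreasing on that interval, so $g(i)\le g(\delta-k+2)$ with equality only when $i=\delta-k+2$; and using the recorded formula $e(A_{n,k,\delta})=\frac{1}{2}n^2-(\delta-k+\tfrac{5}{2})n+(\delta+1)(\delta-k+2)$ one checks directly that $g(\delta-k+2)=2e(A_{n,k,\delta})$. Stringing the inequalities together,
\[ 2e(A_{n,k,\delta})\le 2e(G)\le 2e(H)\le g(i)\le g(\delta-k+2)=2e(A_{n,k,\delta}), \]
so all of them are equalities: $i=\delta-k+2$, $e(G)=e(H)$ (hence $G=H$, as $E(G)\subseteq E(H)$), and every degree bound above is tight. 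Consequently $G$ has exactly $\delta-k+2$ vertices of degree $\delta$, exactly $n-\delta-1$ vertices of degree $n-\delta+k-3$, and exactly $k-1$ vertices of degree $n-1$.

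It then remains to recover the graph from this degree data. The $k-1$ vertices of degree $n-1$ form a set $W$ of universal vertices, so $G=W\vee G'$ with $G':=G-W$ on $n-k+1$ vertices; in $G'$ the $\delta-k+2$ vertices $X$ that had degree $\delta$ now have degree $|X|-1$, and the $n-\delta-1$ vertices $M$ that had degree $n-\delta+k-3$ now have degree $|M|-1$. Because $G$ is not $k$-connected while $W\vee(\text{connected graph})$ would be $k$-connected, $G'$ must be disconnected. Now any component of $G'$ containing a vertex of degree $d$ has at least $d+1$ vertices; applying this to the $X$- and $M$-vertices and using $|X|+|M|=n-k+1$ forces $G'$ to split into exactly two components, which are then $K_{|X|}$ and $K_{|M|}$. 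Therefore $G=K_{k-1}\vee(K_{\delta-k+2}\cup K_{n-\delta-1})=A_{n,k,\delta}$, as claimed.

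The genuinely delicate part is the bookkeeping rather than any deep idea: one has to invoke the correct, slightly asymmetric form of Bondy's lemma (with the $d_{n-k+1}\le n-i-1$ term), and the structural crux is that the vertex of the parabola $g$ lands on the right end of the feasible interval for $i$ — this is what pins down $i=\delta-k+2$ in the extremal case. The closing degree-sequence-to-graph step, while elementary, needs a careful case analysis of the components of $G-W$, in particular in the degenerate range $|X|=|M|$ (i.e.\ $n=2\delta-k+3$).
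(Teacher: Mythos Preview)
Your argument is correct and follows the same closure-plus-degree-sequence template the paper uses for the analogous results it \emph{does} prove in full (e.g.\ Theorem~\ref{thmhnkd}); the paper itself only quotes Theorem~\ref{thmfmo} from \cite{FengMonoshMath} without proof. The one place you deviate from the paper's pattern is the final reconstruction step: in the proof of Theorem~\ref{thmhnkd} the paper re-invokes the closure condition (arguing $d_r+d_s\ge n+k$ for two high-degree vertices) to force the large clique directly, whereas you instead peel off the $k-1$ universal vertices $W$, use the non-$k$-connectedness of $G$ to force $G-W$ disconnected, and then pin down the two components by the degree constraints $d_{G-W}(v)\in\{|X|-1,|M|-1\}$. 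Your route is actually the more natural one for the connectivity setting and handles the boundary case $n=2\delta-k+3$ (where $|X|=|M|$ and the closure inequality $2(n-\delta+k-3)\ge n+k-2$ just fails for two $M$-vertices) uniformly, without a separate case analysis.
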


 From the spectral point of view, we naturally ask the following question: can one find a sufficient spectral condition for a connected graph to be $k$-connected?  
  For pseudo-random graphs, Krivelevich and Sudakov 
  \cite[Theorem 4.1]{KS06} provided a sufficient condition on 
  the second largest eigenvalue in absolute value for regular graphs. 

\begin{theorem} \cite{KS06} 
Let $G$ be a $d$-regular graph on $n$ vertices with $d\le n/2$. 
We denote $\mu = \max\{ |\lambda_i| : i\in [n], \lambda_i \neq d \}$. Then the connectivity of $G$ satisfies 
$  \kappa (G) \ge d- {36\mu^2}/{d}$. 
\end{theorem}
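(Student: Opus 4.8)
The plan is to derive the bound from the \emph{expander mixing lemma}: for a $d$-regular graph $G$ on $n$ vertices with $\mu=\max\{|\lambda_i|:\lambda_i\neq d\}$ and all $X,Y\subseteq V(G)$,
\[ \Bigl|\,e(X,Y)-\tfrac{d}{n}|X||Y|\,\Bigr|\le\mu\sqrt{|X||Y|}, \]
which is a standard consequence of expanding the indicator vectors of $X$ and $Y$ in an orthonormal eigenbasis of $A(G)$. First I would dispose of the easy ranges. Since $\operatorname{tr}\bigl(A(G)^2\bigr)=nd=d^2+\sum_{i\ge2}\lambda_i^2\le d^2+(n-1)\mu^2$, one always has $\mu^2\ge d(n-d)/(n-1)\ge d/2$ (using $d\le n/2$), so if $36\mu^2/d\ge d$ the asserted bound is $\le 0$ and there is nothing to prove; hence we may assume $\sqrt{d/2}\le\mu<d/6$. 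Also, if $\kappa(G)\ge d$ we are done, so assume $\kappa(G)<d$. Now suppose for contradiction that $\kappa(G)<d-36\mu^2/d$, let $S$ be a minimum vertex cut (so $|S|=\kappa(G)<d\le n/2$), and write $V(G)\setminus S=A\cup B$ with $e(A,B)=0$, taking $A$ to be a smallest component of $G-S$, so that $|A|\le(n-|S|)/2\le n/2$ and $|B|\ge|A|$.

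The two structural inputs are: (i) every vertex of $A$ sends all $d$ of its edges into $A\cup S$, hence at least $d-|S|$ of them into $A$, so $G[A]$ has minimum degree at least $d-|S|$, and likewise for $G[B]$; and (ii) $e(A,B)=0$. From (ii) and the mixing lemma, $\tfrac dn|A||B|\le\mu\sqrt{|A||B|}$, i.e.\ $\sqrt{|A||B|}\le\mu n/d$, so in particular $|A|\le\mu n/d$. Applying the mixing lemma with $X=Y=A$ gives $2e(G[A])\le\tfrac dn|A|^2+\mu|A|$, while (i) gives $2e(G[A])\ge|A|(d-|S|)$; dividing by $|A|$ yields $d-|S|\le\tfrac dn|A|+\mu$, that is $|A|\ge\tfrac nd(d-|S|-\mu)$. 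Running the same argument for $B$ and multiplying these two lower bounds against $|A||B|\le\mu^2n^2/d^2$ gives $(d-|S|-\mu)^2\le\mu^2$, whence $\kappa(G)=|S|\ge d-2\mu$. Since $d-2\mu\ge d-36\mu^2/d$ exactly when $\mu\ge d/18$, this already finishes the proof whenever $\mu\ge d/18$.

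It remains to treat the window $\sqrt{d/2}\le\mu<d/18$, where $d-36\mu^2/d$ is the stronger bound; here I would sharpen the estimate on $|A|$. The key point is not to lose the edges from $A$ to $S$ too crudely: writing $e(A,S)=d|A|-2e(G[A])$ and bounding $e(A,S)\le\tfrac dn|A||S|+\mu\sqrt{|A||S|}$ by the mixing lemma — \emph{not} by the naive $e(A,S)\le d|S|$, which reintroduces a factor of $n$ and only ever yields a bound of the shape $d-O(\mu n/d)$, vacuous for large $n$ — produces a second inequality tying $|A|$, $|S|$ and $\mu$ together. Combining it with $|A|\le\mu n/d$ and $|A|\ge\tfrac nd(d-|S|-\mu)$ and optimising over the possible size of $|A|$ should upgrade the loss from $2\mu$ to $O(\mu^2/d)$. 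The hard part is precisely this quantitative balancing: one must keep every constant explicit, and I expect it to require a short case split according to whether $|A|$ lies near its lower bound $\tfrac nd(d-|S|-\mu)$ or near its upper bound $\mu n/d$, so that the accumulated loss over both regimes is at most $36\mu^2/d$. The individual estimates are all routine applications of the mixing lemma; it is the interplay of the three constraints on $|A|$ that carries the theorem.
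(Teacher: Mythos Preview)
The paper you are working from is a survey; it states this theorem with a citation to Krivelevich--Sudakov \cite{KS06} but gives no proof of its own. So there is nothing in the paper to compare against --- the reference argument is the one in \cite{KS06}, and your plan via the expander mixing lemma is indeed the same route.

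Your reductions, the setup with the cut $S$ and the small side $A$, and the derivation of $\kappa(G)\ge d-2\mu$ from the two mixing bounds are all correct, with the crossover at $\mu=d/18$ identified correctly. The only genuine gap is that your final paragraph is left as a sketch: you say the sharper use of the mixing lemma on $e(A,S)$ ``should upgrade the loss'' and that you ``expect'' a case split, without carrying anything out. In fact no case split is needed and the constants come out with room to spare. Combining your lower bound $e(A,S)=d|A|-2e(G[A])\ge |A|\bigl(d-\tfrac{d}{n}|A|-\mu\bigr)$ with the mixing upper bound $e(A,S)\le \tfrac{d}{n}|A|\,|S|+\mu\sqrt{|A|\,|S|}$, and using $|A|+|S|=n-|B|$, gives after division by $\sqrt{|A|}$
\[
\Bigl(\tfrac{d}{n}|B|-\mu\Bigr)\sqrt{|A|}\;\le\;\mu\sqrt{|S|}.
\]
Since $|B|\ge (n-|S|)/2\ge n/4$ and $\mu<d/18$, the left factor is at least $d/4-\mu>7d/36>0$; together with $|A|\ge d-|S|+1>d-|S|$ this yields $|S|/(d-|S|)>\bigl(7d/(36\mu)\bigr)^2$, hence
\[
d-|S|\;<\;\frac{1296\,\mu^2}{49\,d}\;<\;\frac{36\,\mu^2}{d},
\]
contradicting the assumption $|S|<d-36\mu^2/d$. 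So your proposed route works cleanly; you simply stopped short of executing the last step.
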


In 2019, Wu, Zhang and Feng  \cite{Zhangpengli19} presented 
the sufficient spectral condition of a connected graph 
to be $2$-connected with small minimum degree. 

\begin{theorem}[Wu--Zhang--Feng \cite{Zhangpengli19}]
Let $G$ be a connected graph on $n\ge 5$ vertices. 
If the minimum degree $\delta (G)\ge 2$ and 
\[  \lambda (G) \ge \sqrt{(n-3)^2+4}, \]
then $G$ is $2$-connected. 
\end{theorem}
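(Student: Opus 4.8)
The plan is to argue by contradiction, reducing the spectral hypothesis to the edge-count theorem of Wu, Zhang and Feng stated just above, and then disposing of its one exceptional graph by a direct spectral computation. So suppose $G$ is a connected graph on $n\ge 5$ vertices with $\delta(G)\ge 2$ and $\lambda(G)\ge\sqrt{(n-3)^2+4}$, yet $G$ is not $2$-connected; I want to reach a contradiction. First I would pass from the eigenvalue to the number of edges: combining the hypothesis with the Stanley inequality $\lambda(G)\le -\tfrac12+\sqrt{2e(G)+\tfrac14}$ gives $\sqrt{(n-3)^2+4}+\tfrac12\le\sqrt{2e(G)+\tfrac14}$, and squaring yields $2e(G)\ge(n-3)^2+4+\sqrt{(n-3)^2+4}$. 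Since $\sqrt{(n-3)^2+4}>n-3$, this forces the strict inequality $2e(G)>n^2-5n+10$; as $n^2-5n=n(n-5)$ is even, $2e(G)$ is an even integer strictly larger than the even number $n^2-5n+10$, hence $2e(G)\ge n^2-5n+12$, i.e. $e(G)\ge\binom{n-2}{2}+3$. Now the Wu--Zhang--Feng edge theorem applies: $G$ is connected, has $\delta(G)\ge 2$, and is not $2$-connected, so necessarily $G=K_1\vee(K_{n-3}\cup K_2)$.

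It remains to contradict $\lambda(G)\ge\sqrt{(n-3)^2+4}$ for this single graph, and this is where the real work lies. The partition of $K_1\vee(K_{n-3}\cup K_2)$ into the apex, $V(K_{n-3})$, and $V(K_2)$ is equitable, with quotient matrix
\[
M=\begin{pmatrix} 0 & n-3 & 2 \\ 1 & n-4 & 0 \\ 1 & 0 & 1 \end{pmatrix},
\]
so $\lambda(G)$ equals the largest eigenvalue of $M$, and a short expansion gives $\det(xI-M)=p(x):=x^3-(n-3)x^2-3x+3n-11$. Put $a=n-3$ and $\mu=\sqrt{a^2+4}$. Using $\mu^2=a^2+4$ one computes $p(\mu)=\mu(a^2+1)-a^3-a-2$, and since $\mu\ge a+\tfrac{2}{a+1}$ (equivalently $a+2\ge\sqrt{a^2+4}$, which holds for all $a\ge 0$), this gives $p(\mu)\ge\frac{2a(a-1)}{a+1}=\frac{2(n-3)(n-4)}{n-2}>0$ for $n\ge 5$. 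Together with $p(n-3)=-2<0$, the behaviour $p(x)\to+\infty$, and the fact that the three roots of $p$ sum to $\tr M=n-3$, a routine sign analysis shows that every root of $p$ lies below $\mu$; in particular $\lambda(G)<\mu=\sqrt{(n-3)^2+4}$, contradicting the hypothesis, and the proof is complete.

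The delicate point is precisely the last step. The reduction in the first paragraph is lossy: in fact $e\bigl(K_1\vee(K_{n-3}\cup K_2)\bigr)=\binom{n-2}{2}+3$ exactly, so the Stanley bound applied to the exceptional graph is already \emph{larger} than $\sqrt{(n-3)^2+4}$ and is useless there. One is therefore forced to compute the spectral radius of $K_1\vee(K_{n-3}\cup K_2)$ honestly through the quotient polynomial $p$, and to lower-bound $\sqrt{(n-3)^2+4}$ accurately enough (by $(n-3)+\tfrac{2}{n-2}$) to beat the rather small positive margin $\tfrac{2(n-3)(n-4)}{n-2}$. It is also worth double-checking that $K_1\vee(K_{n-3}\cup K_2)$ does have minimum degree exactly $2$, so that it is a genuine non-$2$-connected candidate that must be excluded, and that the bound $n\ge 5$ suffices for all the inequalities used above.
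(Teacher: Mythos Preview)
Your argument is correct and follows the same template the survey uses for the Fiedler--Nikiforov and Yu--Fan results: apply the Stanley inequality $\lambda(G)\le -\tfrac12+\sqrt{2e(G)+\tfrac14}$ to force $e(G)\ge\binom{n-2}{2}+3$, invoke the Wu--Zhang--Feng edge theorem, and then eliminate the lone exception $K_1\vee(K_{n-3}\cup K_2)$ by an explicit spectral computation. The paper itself does not supply a proof of this particular statement, but your route is exactly the intended one, and the details (the quotient polynomial $p(x)=x^3-(n-3)x^2-3x+3n-11$, the estimate $\sqrt{a^2+4}\ge a+\tfrac{2}{a+1}$ via $a+2\ge\sqrt{a^2+4}$, and the sign analysis using $p(n-3)=-2$, $p(\mu)>0$, and the root sum $n-3$) all check out.
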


\begin{theorem}[Feng et al. \cite{FengMonoshMath}] \label{th31}
Let $\delta\geq k\geq 3$ and 
$n\geq n_0(k,\delta)$ where 
$$n_0(k,\delta)=(\delta-k+2)(k^2-2k+4)+3.$$
If $G$ is a connected graph of order $n$ with 
minimum degree $\delta(G)\geq \delta$ and 
  $$\lambda(G)\geq n-\delta+k-3,$$
then $G$ is $k$-connected 
unless $G=A_{n,k,\delta}$. 
\end{theorem}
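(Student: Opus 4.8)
The plan is to mimic the argument behind the edge-analogue Theorem~\ref{thmfmo}, but to drive it with the Perron-eigenvector mass inequalities that Nikiforov used for Theorem~\ref{thmniki}. Write $\mu:=n-\delta+k-3$ and suppose, for contradiction, that $G$ is connected, $\delta(G)\ge\delta$, $\lambda(G)\ge\mu$, and $G$ is \emph{not} $k$-connected; the goal is to force $G=A_{n,k,\delta}$. Since $G$ is not $k$-connected there is a vertex cut $S$ with $s:=|S|\le k-1$ whose removal separates $V(G)\setminus S$ into nonempty $V_1,V_2$ with no edge between them. Because $\delta(G)\ge\delta$, each vertex of $V_i$ keeps all its neighbours in $S\cup V_i$, so $a:=|V_1|$ and $b:=|V_2|$ satisfy $a,b\ge\delta-s+1\ge\delta-k+2$, and $s\ge1$ since $G$ is connected. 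Take $a\le b$; note $b\le n-s-(\delta-s+1)=n-\delta-1$.

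Let $\bm x\ge 0$ be a Perron vector of $G$ and put $\sigma=\sum_{v\in S}x_v$, $\alpha=\sum_{v\in V_1}x_v$, $\beta=\sum_{v\in V_2}x_v$. Summing the eigen-equation $\lambda x_v=\sum_{u\sim v}x_u$ over each cell and using that $V_1$–$V_2$ carries no edge gives $(\lambda-a+1)\alpha\le a\sigma$, $(\lambda-b+1)\beta\le b\sigma$, and $(\lambda-s+1)\sigma\le s(\alpha+\beta)$; since $\lambda(G)\ge\mu>b-1$ the first two divide cleanly and $\sigma>0$, so combining yields
\[
\lambda(G)-s+1\ \le\ s\!\left(\frac{a}{\lambda(G)-a+1}+\frac{b}{\lambda(G)-b+1}\right).
\]
Let $\phi(x)=x-s+1-s\bigl(\tfrac{a}{x-a+1}+\tfrac{b}{x-b+1}\bigr)$, which is increasing on $x>b-1$; we have just shown $\phi(\lambda(G))\le 0$. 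Hence it suffices to prove that $\phi(\mu)>0$ for every admissible triple $(s,a,b)$ \emph{except} the corner $(s,a,b)=(k-1,\ \delta-k+2,\ n-\delta-1)$: then $\lambda(G)<\mu$, a contradiction, so the cut must be this corner.

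The verification of $\phi(\mu)>0$ off the corner is elementary but is where $n_0(k,\delta)$ is spent: at $x=\mu$ one has $\mu-b+1\ge k-1\ge2$ while $\mu-a+1=\Theta(n)$, so the right side is at most $s\bigl(\tfrac{a}{\Theta(n)}+\tfrac{b}{k-1}\bigr)$; splitting into the cases $s\le k-2$ and $s=k-1$ with $a\ge\delta-k+3$, a direct estimate shows $\mu-s+1$ beats this bound once $n\ge n_0(k,\delta)$, whereas at the corner itself $\phi(\mu)=-(k-1)(\delta-k+2)/(n-2\delta+2k-4)<0$, which explains why that single configuration is exceptional. With the cut pinned to the corner we get $G\subseteq A_{n,k,\delta}$ and $\delta(G)\ge\delta$. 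Now the $\delta-k+2$ vertices forming the outer clique of $A_{n,k,\delta}$ have degree exactly $\delta$ there, so no edge incident to them can be missing from $G$; thus $G$ is obtained from $A_{n,k,\delta}$ by deleting edges only inside the dense clique $K_{n-\delta+k-2}=K_{k-1}\vee K_{n-\delta-1}$. If none is deleted, $G=A_{n,k,\delta}$, the stated exception. Otherwise some edge $e$ of that clique is missing, $G\subseteq A_{n,k,\delta}-e$, and a direct computation (or the Nikiforov-type lemma, cf.\ \cite[Theorem~1.6]{Nikiforov}) gives $\lambda(A_{n,k,\delta}-e)<\mu$ for $n\ge n_0(k,\delta)$: up to symmetry $e$ lies inside $K_{k-1}$, inside $K_{n-\delta-1}$, or between the two, and in each case $A_{n,k,\delta}-e$ admits an equitable partition into at most four cells, so $\lambda(A_{n,k,\delta}-e)$ is the largest root of an explicit polynomial in $n,k,\delta$, which is routinely seen to lie below $n-\delta+k-3$. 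This contradicts $\lambda(G)\ge\mu$ and finishes the proof.

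The part I expect to be genuinely delicate is exactly the off-corner verification of $\phi(\mu)>0$, together with its twin $\lambda(A_{n,k,\delta}-e)<\mu$. When the partition, respectively the graph, is a large perturbation of the extremal one the inequalities are robust, but when it differs from the corner in only one coordinate both quantities sit within $O(1/n)$ of $\mu$, so one must estimate the relevant $3\times3$ (resp.\ $4\times4$) characteristic polynomials with enough precision to separate the true extremal configuration from its immediate neighbours; determining the sharp admissible value of $n_0(k,\delta)$ is precisely this bookkeeping, and everything else in the argument is structural and essentially forced.
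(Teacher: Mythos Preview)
The survey paper does not include a proof of this theorem; it merely records the result from \cite{FengMonoshMath}. So there is no paper-proof to compare against directly, and I assess your argument on its own merits and against the template the survey describes for the parallel Hamiltonicity results.

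Your strategy is correct and is in fact a clean way to run this: the three summed eigen-equations over $S,V_1,V_2$ are exactly the statement that $\lambda(G)$ is at most the Perron root of the quotient matrix of $K_s\vee(K_a\cup K_b)$, i.e.\ $\phi(\lambda(G))\le 0$ encodes $\lambda(G)\le\lambda(K_s\vee(K_a\cup K_b))$. Your reduction to ``$\phi(\mu)>0$ off the corner'' is therefore equivalent to showing that $A_{n,k,\delta}$ is the unique maximiser of $\lambda$ over all $K_s\vee(K_a\cup K_b)$ with $s\le k-1$ and $a,b\ge\delta-s+1$, which is the natural spectral analogue of Theorem~\ref{thmfmo}. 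The endgame --- pinning missing edges to the big clique via $\delta(G)\ge\delta$, then showing $\lambda(A_{n,k,\delta}-e)<\mu$ --- is exactly the Nikiforov device the survey describes after Theorem~\ref{thmniki}, and this is where the bound $n\ge n_0(k,\delta)$ is genuinely used. One small correction: for an edge $e$ between the $K_{k-1}$ and $K_{n-\delta-1}$ blocks, the equitable partition of $A_{n,k,\delta}-e$ has five cells (each endpoint becomes a singleton), not four; this does not affect the method, only the bookkeeping.

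By contrast, the approach of \cite{FengMonoshMath} (as one infers from the survey's treatment of the analogous Theorems~\ref{thmln16a} and \ref{thmniki}) almost certainly goes through the closure $\mathrm{cl}_{n+k-2}(G)$ and Bondy's degree-sequence lemma to first obtain $G\subseteq A_{n,k,\delta}$ via an edge-count/stability step, and only then runs the Nikiforov endgame. Your route bypasses the closure and degree-sequence machinery entirely by exploiting the cut directly with the Perron vector; this is more self-contained and arguably more transparent for $k$-connectivity, where the obstruction (a small cut) is structurally simpler than a non-Hamiltonian degree sequence. Both routes land on the same final computation, and both need $n$ of the same order to separate $A_{n,k,\delta}$ from its near-neighbours.
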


Since $\lambda(A_{n,k,\delta})\geq n-\delta+k-3,$ we immediately have

\begin{corollary}[Feng et al. \cite{FengMonoshMath}]  
\label{corollary31}
Let $\delta\geq k\geq 3$ and $n\geq n_0(k,\delta)$.
Let $G$ be a connected graph of order $n$ and minimum degree $\delta(G)\geq \delta$.
    If
  $$\lambda(G)\geq \lambda(A_{n,k,\delta}),$$
then $G$ is $k$-connected unless $G=A_{n,k,\delta}$. 
\end{corollary}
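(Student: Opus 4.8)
The plan is to argue by contradiction. Suppose $G$ is connected, $\delta(G)\ge\delta$ and $\lambda(G)\ge n-\delta+k-3$, but $G$ is not $k$-connected; I will show that $G$ must be $A_{n,k,\delta}$. First I would extract a cut structure. Since $G$ is not $k$-connected and $n\ge n_0(k,\delta)$ is much larger than $k$, the graph $G$ is non-complete and has a minimum vertex cut $S$ with $1\le s:=|S|\le k-1$. Taking $V_1$ to be a smallest component of $G-S$ and $V_2:=V(G)\setminus(S\cup V_1)$, there is no edge between $V_1$ and $V_2$; and since every vertex of $V_i$ has all of its neighbours inside its own component of $G-S$ together with $S$, the hypothesis $\delta(G)\ge\delta$ forces $\min\{|V_1|,|V_2|\}\ge\delta-s+1\ge\delta-k+2\ge 2$. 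Writing $a:=\min\{|V_1|,|V_2|\}$ and $b:=n-s-a\ge a$, I conclude that $G$ is a spanning subgraph of $H(s,a):=K_s\vee(K_a\cup K_b)$, so $\lambda(G)\le\lambda(H(s,a))$.

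Next I would estimate $\lambda(H(s,a))$ via the equitable partition of $H(s,a)$ into the three classes $S$, $V(K_a)$, $V(K_b)$: writing $\lambda(H(s,a))=b+s-1+\varepsilon$, the quotient equations give $0\le\varepsilon\le\frac{as}{n-s-a}$, hence $\lambda(H(s,a))\le n-a-1+\frac{as}{n-s-a}$. If $(s,a)\neq(k-1,\delta-k+2)$ then necessarily $a\ge\delta-k+3$ (because $a\ge\delta-s+1$ and $s\le k-1$), and a routine computation, exploiting the concavity in $a$ of the relevant quadratic, shows that for $n\ge n_0(k,\delta)$ one has $n-a-1+\frac{as}{n-s-a}<n-\delta+k-3$ for every admissible pair with $1\le s\le k-1$ and $\delta-k+3\le a\le\lfloor(n-s)/2\rfloor$; this contradicts $\lambda(G)\ge n-\delta+k-3$. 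Therefore $s=k-1$, $a=\delta-k+2$, $b=n-\delta-1$, that is, $G$ is a spanning subgraph of $A_{n,k,\delta}=K_{k-1}\vee(K_{\delta-k+2}\cup K_{n-\delta-1})$.

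It remains to rule out proper spanning subgraphs, and this is where the real work lies. Put $m:=n-\delta+k-2$; let $C$ denote the $\delta-k+2$ vertices of the small clique of $A_{n,k,\delta}$, and let $S\cup B:=V(G)\setminus C$ be the $m$ vertices inducing $K_m$ in $A_{n,k,\delta}$. Each vertex of $C$ has degree exactly $\delta$ in $A_{n,k,\delta}$, so if $G\subsetneq A_{n,k,\delta}$ and $\delta(G)\ge\delta$, then no edge meeting $C$ is missing; hence $G=A_{n,k,\delta}-F$ with $\varnothing\neq F\subseteq E(K_m)$, and I must show $\lambda(G)<m-1=n-\delta+k-3$. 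I would take a unit Perron eigenvector $\bm{x}$ of $G$, suppose $\lambda:=\lambda(G)\ge m-1$ for contradiction, and split $A(G)$ as the adjacency of $K_m-F$ on $S\cup B$ plus the adjacency of the ``outgrowth'' (the edges inside $C$ and those between $C$ and $S$). Using $\lambda(K_m-F)\le\lambda(K_m-e)\le m-1-\frac{1}{m}$ for any $e\in F$ (the last inequality being elementary), this yields
\[ m-1\le\lambda(G)\le\Bigl(m-1-\tfrac{1}{m}\Bigr)\Bigl(1-\sum_{v\in C}x_v^2\Bigr)+2WW_S+W^2, \]
where $W:=\sum_{v\in C}x_v$ and $W_S:=\sum_{v\in S}x_v$. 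The eigenvalue equations at the vertices of $C$ (each adjacent to the same set $S\cup(C\setminus\{v\})$) show that these vertices carry a common weight, so $\sum_{v\in C}x_v^2=W^2/(\delta-k+2)$ and $W=\frac{(\delta-k+2)W_S}{\lambda+1-(\delta-k+2)}$; in particular $W$ is smaller than $W_S$ by a factor of order $(\delta-k+2)/n$. Substituting these relations into the displayed inequality, the assumption $\lambda\ge m-1$ forces $W_S$ to be bounded below by a positive constant depending only on $\delta-k+2$. On the other hand, by Cauchy--Schwarz $x_v\le\sqrt{n}/\lambda=O(n^{-1/2})$ for every vertex $v$, so $W_S\le(k-1)\sqrt{n}/(m-1)\to 0$; this is a contradiction once $n\ge n_0(k,\delta)$. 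Hence $F=\varnothing$ and $G=A_{n,k,\delta}$, completing the proof.

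The first two paragraphs are elementary (a vertex-cut reduction followed by a one-variable optimization). The main obstacle is the last step: one must show that deleting even a single edge from the dense clique $K_m$ pushes the spectral radius strictly below $n-\delta+k-3=\lambda(K_m)$, while the small clique $K_{\delta-k+2}$ attached only to $S$ adds merely $O(n^{-2})$ to the spectral radius, so that the loss of one clique edge strictly dominates the gain from the outgrowth. This is exactly the balancing mechanism Nikiforov introduced in \cite[Theorem~1.6]{Nikiforov} for the Hamiltonicity problem, and carrying it through in the present configuration with all constants tracked so that the bound is valid precisely for $n\ge n_0(k,\delta)$ is the technically demanding part of the argument.
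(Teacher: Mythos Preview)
Your argument is essentially a proof of the preceding Theorem~\ref{th31} (hypothesis $\lambda(G)\ge n-\delta+k-3$), not merely of the corollary. In the paper the corollary is obtained in one line from that theorem: since $K_{n-\delta+k-2}\subseteq A_{n,k,\delta}$ one has $\lambda(A_{n,k,\delta})\ge n-\delta+k-3$, so the corollary's hypothesis implies the theorem's, and the theorem gives the conclusion.

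The main point you are missing is that, for the corollary itself, your ``hard part'' in the last paragraph is \emph{entirely unnecessary}. Once your first two paragraphs produce a spanning subgraph inclusion $G\subseteq A_{n,k,\delta}$, the hypothesis $\lambda(G)\ge\lambda(A_{n,k,\delta})$ together with the strict monotonicity of the Perron eigenvalue on connected graphs immediately forces $G=A_{n,k,\delta}$; no Nikiforov-type balancing is needed. That balancing is exactly what is needed to upgrade the corollary to Theorem~\ref{th31}, and you correctly identify it as the substantial step there, but it does not belong in a proof of the corollary.

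As for your route to the inclusion $G\subseteq A_{n,k,\delta}$: your direct structural argument via a minimum vertex cut and the quotient of $K_s\vee(K_a\cup K_b)$ is a genuinely different approach from the one in \cite{FengMonoshMath}, which goes through Bondy's degree-sequence condition for $k$-connectedness combined with closure and edge counting. Your approach is more geometric and avoids degree sequences altogether; it trades the combinatorial lemma for a spectral optimisation over the parameters $(s,a)$. Two small caveats: your claimed bound $\varepsilon\le as/(n-s-a)$ is not quite what the quotient equations give (one gets $\varepsilon\le as/(n-2a)$ after a short computation), though this is harmless since any $O(1/n)$ bound suffices; and verifying that the optimisation goes through for the specific $n_0(k,\delta)=(\delta-k+2)(k^2-2k+4)+3$ of the paper would require tracking constants that you have not written out.
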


Recall that 
${A}_{n,k,\delta}
=K_{k-1}\vee(K_{\delta-k+2}\cup  {K_{n-\delta-1}})$. 
For the graph ${A}_{n,k,\delta}$, let
 $X=\{v \in V({A}_{n,k,\delta}):d(v)=\delta\}$,  
 $Y=\{v \in V({A}_{n,k,\delta}):d(v)=n-1 \}$ 
 and  $Z=\{v \in V({A}_{n,k,\delta}):d(v)=n-\delta+k-3\}$. 
Let $E_1({A}_{n,k,\delta})$ denote the edge set of ${A}_{n,k,\delta}$ whose endpoints are both from $Y\cup Z.$
 We define
$$ \mathcal{A}^{(1)}_{n,k,\delta}=\left\{  {A}_{n,k,\delta} 
\setminus E': \mbox{$ E'\subseteq E_1({A}_{n,k,\delta})$ with $|E'|\leq \left\lfloor { (\delta-k+2)(k-1)}/{4}\right\rfloor$}\right\}.$$
Here, we denote by $A_{n,k,\delta} \setminus E'$ the graph obtained from 
$A_{n,k,\delta}$ by deleting all edges of the edge set $E'$.  
Recently, it was proved in \cite{Zhang2017} that 

\begin{theorem}[Zhang et al. \cite{Zhang2017}] \label{th31}
Let $\delta\geq k\geq 3$ and $n\geq n_1(k,\delta)$ 
where 
$$n_1(k,\delta)=  (k^2+2k-3)\delta^2- (2k^3-k^2-17k+8)\delta+ (k^4-3k^3-8k^2+23k+4).$$ 
If $G$ is a connected graph of order $n$ with 
minimum degree $\delta(G)\geq \delta$ and 
  $$q (G)\geq 2(n-\delta +k-3),$$
then $G$ is $k$-connected 
unless $G\in \mathcal{A}^{(1)}_{n,k,\delta}$. 
\end{theorem}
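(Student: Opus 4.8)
My plan is to follow the template already used in the excerpt for the signless Laplacian Hamiltonicity theorems (Theorem \ref{thmllp}, Theorem \ref{thm418}): prove first that every member of $\mathcal{A}^{(1)}_{n,k,\delta}$ is a genuine exception, and then prove the converse by a two-stage argument, a rough combinatorial stability stage followed by a fine spectral stage. Throughout I would set $m:=n-\delta+k-2$, so the threshold is $2(m-1)$ and $A_{n,k,\delta}$ contains the clique $K_{m}$ spanned by $Y\cup Z$. For the ``easy'' inclusion: if $G=A_{n,k,\delta}\setminus E'$ with $E'\subseteq E_{1}(A_{n,k,\delta})$, then deleting the $k-1$ vertices of $Y$ disconnects $X$ from $Z$, so $G$ is not $k$-connected; and with $\bm h$ the $0$--$1$ indicator of $Y\cup Z$ one computes $\bm h^{T}Q(G)\bm h = 2m(m-1)+|X|\,|Y|-4|E'|$ and $\bm h^{T}\bm h=m$, so by the Rayleigh formula $q(G)\ge 2(m-1)+\bigl(|X|\,|Y|-4|E'|\bigr)/m\ge 2(n-\delta+k-3)$ as soon as $|E'|\le\lfloor|X|\,|Y|/4\rfloor=\lfloor(\delta-k+2)(k-1)/4\rfloor$. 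This is essentially the computation displayed just before Theorem \ref{thmllp}, so it needs no new idea.

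For the converse, assume $G$ is connected, $\delta(G)\ge\delta$, $q(G)\ge 2(n-\delta+k-3)$, and $G$ is not $k$-connected. \emph{Stage 1 (structure).} The inequality $q(G)\le 2e(G)/(n-1)+n-2$ from \cite{FengPIMB09} forces $e(G)\ge\frac12(n-1)(n-2\delta+2k-4)$, and one checks $\frac12(n-1)(n-2\delta+2k-4)=e(A_{n,k,\delta})-C(k,\delta)$ for a constant $C(k,\delta)=O(\delta^{2}+k^{2})$. On the other hand, a cut $S$ of size $s\le k-1$ with $V(G)\setminus S=V_{1}\cup V_{2}$ (no $V_{1}$--$V_{2}$ edges) and the bound $|V_{i}|\ge\delta-s+1$ coming from $\delta(G)\ge\delta$ give $e(G)\le g(s):=\binom{s}{2}+s(n-s)+\binom{\delta-s+1}{2}+\binom{n-\delta-1}{2}$ (maximal when $|V_{1}|$ is as small as the degree condition allows, by convexity of $x\mapsto\binom x2$), and a one-line telescoping shows $g(s)-g(s-1)=n-\delta-1>0$, so $g$ is increasing and $g(s)\le g(k-1)=e(A_{n,k,\delta})$, with every non-extremal choice of $(s,|V_{1}|)$ costing $\Theta(n)$ edges. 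Since $n\ge n_{1}(k,\delta)$ makes these $\Theta(n)$ losses beat $C(k,\delta)$, I conclude $s=k-1$ and $\{|V_{1}|,|V_{2}|\}=\{\delta-k+2,\,n-\delta-1\}$; renaming $Y:=S$, $X$ the side of size $\delta-k+2$ and $Z$ the side of size $n-\delta-1$, this says exactly $G\subseteq A_{n,k,\delta}$, i.e.\ $G=A_{n,k,\delta}\setminus E'$. Finally every $v\in X$ has degree $\delta$ already in $A_{n,k,\delta}$, so $\delta(G)\ge\delta$ forces $E'$ to miss all edges incident to $X$; hence $E'\subseteq E_{1}(A_{n,k,\delta})$.

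\emph{Stage 2 (the edge budget).} It remains to rule out $|E'|\ge\lfloor(\delta-k+2)(k-1)/4\rfloor+1$. Let $\bm x$ be the Perron eigenvector of $Q(G)$ with $\|\bm x\|=1$; by symmetry $\bm x\equiv\alpha$ on $X$, and the eigen-equation on $X$ gives $\alpha=\sigma/(q-2\delta+k-1)=O(\sigma/m)$ with $\sigma=\sum_{v\in Y}x_{v}$. Decomposing $\bm x^{T}Q(G)\bm x$ into the $X$-block, the $X$--$Y$ cross terms, and the block $M=Q(G[Y\cup Z])=Q(K_{m}\setminus E')$, one should obtain $q(G)\le 2(m-1)+|X|\,|Y|/m-4|E'|/m+\mathrm{err}$, where the main terms use that the Perron vector is \emph{nearly constant} on $Y\cup Z$ (because $K_{m}\setminus E'$ is dense and nearly regular, all its degrees within $O(|E'|)$ of one another) together with the second-order fact that deleting an edge of a near-regular graph on $m$ vertices drops $q$ by $(1+o(1))\cdot 4/m$, and $\mathrm{err}=O(\mathrm{poly}(k,\delta)/m^{2})$. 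For $n\ge n_{1}(k,\delta)$ one has $\mathrm{err}<1/m$, so $|X|\,|Y|-4|E'|\le-1$ gives $q(G)<2(m-1)=2(n-\delta+k-3)$, a contradiction. Therefore $G\in\mathcal{A}^{(1)}_{n,k,\delta}$.

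The main obstacle is Stage 2: turning ``nearly constant Perron eigenvector'' and ``per-edge drop $\approx 4/m$'' into bounds tight enough that the accumulated error is strictly below $1/m$. This is exactly the mechanism — measuring outgrowth/deficiency contributions to $q$ against a single clique edge — that Nikiforov isolated for Theorem \ref{thmniki} and that drives Theorems \ref{thmllp} and \ref{thm418}, and it is what the explicit polynomial $n_{1}(k,\delta)$ (quadratic in $\delta$, quartic in $k$) pays for; Stage 1, by contrast, is the soft part, being the degree-sequence argument behind the edge version (Theorem \ref{thmfmo}) combined with the classical estimate of \cite{FengPIMB09}.
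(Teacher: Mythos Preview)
The paper is a survey and does \emph{not} prove this theorem; it merely states it and cites the original source \cite{Zhang2017}. Hence there is no ``paper's own proof'' to compare against. That said, your plan follows exactly the template the survey uses for its sister theorems (Theorems \ref{thmllp}, \ref{thm418}, \ref{th12}): the Rayleigh computation with the indicator of $Y\cup Z$ for the easy direction, then the Feng--Yu bound $q(G)\le 2e(G)/(n-1)+n-2$ to get a coarse edge lower bound, then the cut-plus-convexity count to force $G\subseteq A_{n,k,\delta}$, and finally the fine spectral step.

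Two small corrections in Stage~1. Your telescoping formula is slightly off: one gets
\[
g(s)-g(s-1)=(s-1)+(n-2s+1)-(\delta-s+1)=n-s-\delta-1,
\]
not $n-\delta-1$; this is still positive for all relevant $s$ once $n>2\delta$, so the monotonicity and the conclusion $s=k-1$, $|V_1|=\delta-k+2$ survive unchanged. Also, you should say a word about why there are exactly two components of $G-S$ (more components only decrease $\sum\binom{|V_i|}{2}$), and note that the edge deficit $|E'|=e(A_{n,k,\delta})-e(G)\le C(k,\delta)=\delta(\delta-k+2)$ already falls out of Stage~1, which is what makes ``nearly regular on $Y\cup Z$'' quantitative enough for Stage~2.

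Your Stage~2 is the genuine content and you correctly flag it as such. The sketch is right in spirit --- the Perron vector is exactly constant on $X$ (their neighbourhoods in $G$ coincide), the eigen-equation gives $\alpha=\sigma/(q-2\delta+k-1)=O(\sigma/m)$, and the effective inequality one needs is $q(G)\le 2(m-1)+(|X||Y|-4|E'|)/m+O(\mathrm{poly}(k,\delta)/m^{2})$ --- but turning this into an honest bound with the specific $n_1(k,\delta)$ as the break-even point is the entire technical work, and you have not done it. In particular, the claim ``$\mathrm{err}<1/m$ once $n\ge n_1(k,\delta)$'' is asserted, not proved; to get there one must control both how far the Perron entries on $Y\cup Z$ deviate from their average (in terms of $|E'|$ and $m$) and how the $X$-block feeds back. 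This is routine in the sense that \cite{Nikiforov,LLPLMA18,Zhang2017} all carry it out, but it is not a one-liner, and it is where the quartic-in-$k$, quadratic-in-$\delta$ shape of $n_1$ actually comes from.
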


\begin{corollary}
Let $\delta\geq k\geq 3$ and $n\geq n_1(k,\delta)$.
Let $G$ be a connected graph of order $n$ and minimum degree $\delta(G)\geq \delta$.
    If
  $$q (G)\geq q(A_{n,k,\delta}),$$
then $G$ is $k$-connected unless $G=A_{n,k,\delta}$. 
\end{corollary}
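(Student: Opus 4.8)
The statement is an immediate consequence of the Zhang et al.\ theorem stated just above it, so the plan is to deduce it in two short steps: first show that the hypothesis $q(G)\ge q(A_{n,k,\delta})$ already forces $q(G)\ge 2(n-\delta+k-3)$, and then rule out every proper member of the family $\mathcal{A}^{(1)}_{n,k,\delta}$.

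For the first step I would observe that, in the notation preceding the theorem, the vertices $Y\cup Z$ of $A_{n,k,\delta}=K_{k-1}\vee(K_{\delta-k+2}\cup K_{n-\delta-1})$ induce a complete graph on $|Y|+|Z|=(k-1)+(n-\delta-1)=n-\delta+k-2$ vertices, because the $k-1$ vertices of the dominating clique $K_{k-1}$ are joined to all of $K_{n-\delta-1}$. Since $q(K_m)=2(m-1)$ and $q$ does not decrease when edges or isolated vertices are added (indeed $Q(G+e)-Q(G)$ is positive semidefinite), this gives $q(A_{n,k,\delta})\ge q(K_{n-\delta+k-2})=2(n-\delta+k-3)$. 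Hence $q(G)\ge q(A_{n,k,\delta})\ge 2(n-\delta+k-3)$, and the Zhang et al.\ theorem (Theorem~\ref{th31}) applies: either $G$ is $k$-connected, and we are done, or $G\in\mathcal{A}^{(1)}_{n,k,\delta}$, say $G=A_{n,k,\delta}\setminus E'$ with $E'\subseteq E_1(A_{n,k,\delta})$ and $|E'|\le\lfloor(\delta-k+2)(k-1)/4\rfloor$.

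The only place where a genuine argument is needed is to show $E'=\emptyset$; I would do this by contradiction. Assume $E'\neq\emptyset$. Since $G$ is connected by hypothesis, $Q(G)=D(G)+A(G)$ is a nonnegative irreducible matrix, so by Perron--Frobenius it has a strictly positive unit eigenvector $\bm{x}$ for its largest eigenvalue $q(G)$. Writing $Q(A_{n,k,\delta})=Q(G)+\sum_{uv\in E'}M_{uv}$, where $M_{uv}$ is the rank-one positive semidefinite matrix supported on $\{u,v\}$, one computes $\bm{x}^{T}Q(A_{n,k,\delta})\bm{x}=q(G)+\sum_{uv\in E'}(x_u+x_v)^2>q(G)$, the strict inequality following from $\bm{x}>0$ and $E'\neq\emptyset$. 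By the Rayleigh characterization, $q(A_{n,k,\delta})\ge\bm{x}^{T}Q(A_{n,k,\delta})\bm{x}>q(G)$, which contradicts $q(G)\ge q(A_{n,k,\delta})$. Therefore $E'=\emptyset$, $G=A_{n,k,\delta}$, and the corollary follows.

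I do not expect a real obstacle here: all the substantive work is packaged inside the Zhang et al.\ theorem, and the order bound $n\ge n_1(k,\delta)$ enters only through that theorem. The one point that wants care is the strictness in the last step, which is precisely why one should take the Perron eigenvector of the smaller graph $G$ (positive because $G$ is connected) rather than of $A_{n,k,\delta}$: using the eigenvector of $A_{n,k,\delta}$ would only give a lower bound on $q(G)$ and not the desired contradiction.
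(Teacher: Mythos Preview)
Your proof is correct and follows exactly the approach the paper intends. The paper does not spell out a proof for this corollary, but for the analogous adjacency version (Corollary~\ref{corollary31}) it writes ``Since $\lambda(A_{n,k,\delta})\ge n-\delta+k-3$, we immediately have\ldots'', and your argument is the natural $Q$-analogue: verify $q(A_{n,k,\delta})\ge 2(n-\delta+k-3)$ via the clique on $Y\cup Z$, apply Theorem~\ref{th31}, and then use the strict monotonicity of $q$ under edge deletion (with the Perron vector of the connected graph $G$) to rule out proper members of $\mathcal{A}^{(1)}_{n,k,\delta}$.
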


\subsection{Problem for  $k$-edge-connectivity}

Similarly to vertex-connectivity, we now define the edge-connectivity of a graph. 
A simple graph $G$ is \emph{$k$-edge-connected} if it has at least two vertices and remains connected whenever fewer than $k$ edges are deleted. 
The edge-connectivity of $G$, written as $\kappa' (G)$, 
is defined as the maximum $k$ such that 
$G$ is $k$-edge-connected. 
In particular, $1$-edge-connected graphs are  the general connected graphs. 
Clearly, it is well-known that $\kappa (G) \le \kappa' (G)$. 
Moreover, the edge-connectivity 
is always at most the minimum degree of a graph.

  \begin{theorem}[Bondy--Chv\'{a}tal \cite{Bondy}]  
A graph $G$ is $k$-edge-connected  
if and only if the closure graph $\mathrm{cl}_{n+k-2} 
(G)$ is $k$-edge-connected. 
In particular, if $d(u)+d(v) \ge n+k-2$ for all non-edges $\{u,v\}$, 
then $\mathrm{cl}_{n+k-2} (G) =K_n$ and $G$ is $k$-edge-connected. 
\end{theorem}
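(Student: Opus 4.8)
The plan is to run the standard Bondy--Chv\'atal closure argument, whose heart is the following \emph{closure lemma}: if $u,v$ are non-adjacent vertices of an $n$-vertex graph $G$ with $d_G(u)+d_G(v)\ge n+k-2$, then $G$ is $k$-edge-connected if and only if $G+uv$ is. One direction is immediate, since $G$ is a spanning subgraph of $G+uv$ and adding edges cannot decrease edge-connectivity. For the other direction I would argue by contradiction: suppose $G+uv$ is $k$-edge-connected but $G$ is not. Since $G$ has at least two vertices, there is an edge set $F\subseteq E(G)$ with $|F|\le k-1$ whose removal disconnects $G$ (taking $F=\emptyset$ if $G$ is already disconnected). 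Then $(G+uv)-F$ has had fewer than $k$ edges removed from a $k$-edge-connected graph, so it is connected; but $(G+uv)-F=(G-F)+uv$ is obtained from the disconnected graph $G-F$ by adding a single edge, and a single edge can merge at most two components. Hence $G-F$ has exactly two components $H_1,H_2$ with, say, $u\in V(H_1)$ and $v\in V(H_2)$; write $n_1=|V(H_1)|$, $n_2=|V(H_2)|$, so $n_1+n_2=n$.

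Now I would count. Every edge of $G$ between $H_1$ and $H_2$ lies in $F$, so $|E_G(V(H_1),V(H_2))|\le |F|\le k-1$. Since $u\in V(H_1)$ we have $d_G(u)\le (n_1-1)+|E_G(\{u\},V(H_2))|$, and likewise $d_G(v)\le (n_2-1)+|E_G(\{v\},V(H_1))|$. The two edge sets $E_G(\{u\},V(H_2))$ and $E_G(\{v\},V(H_1))$ are disjoint --- an edge lying in both would be $uv$, which is absent --- and both are contained in $E_G(V(H_1),V(H_2))$, so their sizes sum to at most $k-1$. Therefore
\[
d_G(u)+d_G(v)\le (n_1-1)+(n_2-1)+|E_G(V(H_1),V(H_2))|\le n-2+(k-1)=n+k-3,
\]
contradicting the hypothesis. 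This proves the lemma. The theorem then follows by iteration: $\mathrm{cl}_{n+k-2}(G)$ is produced by a finite sequence $G=G_0\subset G_1\subset\cdots\subset G_t=\mathrm{cl}_{n+k-2}(G)$ in which each $G_{i+1}=G_i+u_iv_i$ for a non-edge $\{u_i,v_i\}$ with $d_{G_i}(u_i)+d_{G_i}(v_i)\ge n+k-2$; applying the lemma at every step shows $G_i$ is $k$-edge-connected iff $G_{i+1}$ is, and the closure is order-independent by \cite{Bondy}. For the ``in particular'' clause, note that degrees are non-decreasing along this sequence, so if every non-edge of $G$ has degree sum at least $n+k-2$, the same holds in every $G_i$; hence the process halts only when no non-edge remains, i.e.\ $\mathrm{cl}_{n+k-2}(G)=K_n$, and since $K_n$ is $(n-1)$-edge-connected with $n\ge k+1$ it is $k$-edge-connected, whence so is $G$.

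This argument is routine rather than deep; the only two places needing care are (i) the disjointness of $E_G(\{u\},V(H_2))$ and $E_G(\{v\},V(H_1))$, which is precisely where the non-adjacency of $u$ and $v$ is used --- without it one only gets $d_G(u)+d_G(v)\le n+k-2$, which is not a contradiction --- and (ii) the structural step that $G-F$ has \emph{exactly} two components, which requires first disposing of the degenerate case $F=\emptyset$ (so that ``$G$ not $k$-edge-connected'' always supplies a genuine edge cut of size $\le k-1$) and then invoking that adding one edge drops the component count by at most one. Both are easy to make precise, so I do not anticipate a real obstacle.
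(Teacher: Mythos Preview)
Your argument is correct and is precisely the standard Bondy--Chv\'atal closure proof for edge-connectivity; the counting step, the disjointness of $E_G(\{u\},V(H_2))$ and $E_G(\{v\},V(H_1))$ via $uv\notin E(G)$, and the two-component deduction are all handled cleanly. The paper itself does not supply a proof of this theorem---it is quoted as a known result with a citation to \cite{Bondy}---so there is nothing to compare; your write-up would serve as a self-contained justification.
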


\begin{lemma}[Bauer et al. \cite{BauerEdgeCon}]  \label{lem512}
Let $G$ be an $n$-vertex graph with degree sequence 
$d_1\le d_2 \le \cdots \le d_n$. 
 Suppose $n\ge k+1$, and $d_{1}\geq k\ge 1$.  
If $G$ is not $k$-edge-connected, 
then there exists $k+1\leq i\leq \lfloor {n}/{2}\rfloor$ such that 
 $d_{i-k+1}\leq i-1$, $d_{i}\leq i+k-2 $ and 
 $  d_{n}\leq n-i+k-2$.
In particular, if $k\ge\lfloor {n}/{2}\rfloor$, then $G$ is  $k$-edge-connected.
\end{lemma}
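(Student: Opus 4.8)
The plan is to translate the failure of $k$-edge-connectivity into a small edge cut and then read the three degree inequalities off that cut. Suppose $G$ is not $k$-edge-connected. Since $n\ge k+1\ge 2$, there is a partition $V(G)=A\cup B$ into nonempty sets with at most $k-1$ edges running between $A$ and $B$. Relabel so that $a:=|A|\le|B|=:b$, hence $a\le\lfloor n/2\rfloor$ and $b=n-a$. Every degree bound below will follow from the fact that a vertex of $A$ has at most $a-1$ neighbours inside $A$, a vertex of $B$ has at most $b-1$ neighbours inside $B$, and only $k-1$ cross edges are available.

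First I would prove $a\ge k+1$; this is where the hypothesis $d_1\ge k$ enters. If $a\le k-1$ (which forces $k\ge2$), then each $v\in A$ has at least $d(v)-(a-1)\ge k-a+1$ neighbours in $B$, so the number of cross edges is at least $a(k-a+1)$; the function $g(a)=a(k+1-a)$ on $[1,k-1]$ attains its minimum at an endpoint, with value $\min\{k,2k-2\}=k$, contradicting $e(A,B)\le k-1$. Hence $a\ge k$. If $a=k$, then at most $k-1$ of the $k$ vertices of $A$ are incident with a cross edge, so some vertex of $A$ has all its neighbours in $A$ and thus degree at most $a-1=k-1$, contradicting $d_1\ge k$. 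Therefore $a\ge k+1$, and I set $i:=a$, so $k+1\le i\le\lfloor n/2\rfloor$.

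With $i$ fixed the conclusion follows quickly. At most $k-1$ vertices of $A$ meet the cut, so at least $a-(k-1)=i-k+1$ vertices of $A$ have all their neighbours in $A$ and hence degree at most $a-1=i-1$; this gives $d_{i-k+1}\le i-1$. Every vertex of $A$ has degree at most $(a-1)+(k-1)=i+k-2$, and there are $i=a$ such vertices, so $d_i\le i+k-2$. Every vertex of $B$ has degree at most $(b-1)+(k-1)=n-i+k-2$, and since $i\le\lfloor n/2\rfloor$ implies $i+k-2\le n-i+k-2$, the $A$-vertices obey the same bound, whence the maximum degree satisfies $d_n\le n-i+k-2$. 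Finally, if $k\ge\lfloor n/2\rfloor$ there is no integer $i$ with $k+1\le i\le\lfloor n/2\rfloor$, so no such partition can exist and $G$ is $k$-edge-connected. The only delicate points are the endpoint analysis of $g(a)$ in the step $a\ge k+1$ (which genuinely uses $d_1\ge k$) and the inequality $i+k-2\le n-i+k-2$ ensuring the $B$-side bound dominates $d_n$; otherwise this is the routine edge-cut counting argument, parallel to Bondy's proof of the $k$-connectivity lemma stated earlier.
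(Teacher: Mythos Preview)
Your argument is correct. The paper does not actually prove this lemma; it merely cites it from Bauer, Hakimi, Kahl and Schmeichel, so there is no in-paper proof to compare against. Your edge-cut counting approach---choosing a minimum-side $A$ of a cut with fewer than $k$ edges, using $d_1\ge k$ to force $|A|\ge k+1$, and then reading off the three inequalities from the constraints on internal and cross edges---is exactly the standard proof one finds in the original reference, and all the steps (the endpoint analysis of $g(a)=a(k+1-a)$, the pigeonhole bound giving at least $i-k+1$ vertices of $A$ with no cross neighbour, and the observation $i+k-2\le n-i+k-2$ needed for the $d_n$ bound) are handled cleanly.
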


For adjacency eigenvalues, the relation between the connectivity, edge-connectivity and the eigenvalues was reported in \cite{Cioaba10LAA, CioabaGuCzech, GuxiaofengJGT, LiuhuiqingLumei, OSuilSebiSIAM}. 

  First of all, we introduce some results that only deal with regular graphs. 
  
  \begin{theorem}[{Krivelevich--Sudakov \cite{KS06}}]
Let $G$ be a $d$-regular graph on $n$ vertices.   If 
$\lambda_2 (G)\le d-2$, then $\kappa' (G)=d$. 
  \end{theorem}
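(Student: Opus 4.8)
The plan is to establish the two inequalities $\kappa'(G)\le d$ and $\kappa'(G)\ge d$ separately; together they give $\kappa'(G)=d$. The first is the standard fact that $\kappa'(H)\le \delta(H)$ for every graph $H$, applied to the $d$-regular graph $G$. Note also that the hypothesis $\lambda_2(G)\le d-2<d$ already forces $G$ to be connected: a $d$-regular graph with several components would have $d$ as an adjacency eigenvalue of multiplicity at least two, so $\kappa'(G)\ge 1$ is not vacuous. For the reverse inequality I would argue by contradiction. Suppose $G$ has a minimum edge cut $[S,\overline{S}]$ with $\overline{S}=V(G)\setminus S$ and $e(S,\overline{S})=\kappa'(G)\le d-1$, and assume without loss of generality that $|S|\le |\overline{S}|$.

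The argument then splits according to the size of the smaller side $S$. If $|S|\le d$, no spectral input is needed: counting the degrees of vertices in $S$ gives
\[ e(S,\overline{S}) = d|S| - 2e(G[S]) \ge d|S| - |S|(|S|-1) = |S|\bigl(d+1-|S|\bigr), \]
and the function $f(s)=s(d+1-s)$ satisfies $f(s)\ge d$ for every integer $s$ with $1\le s\le d$ (it is concave and equals $d$ at both endpoints $s=1$ and $s=d$), contradicting $e(S,\overline{S})\le d-1$. In particular, when $n\le 2d$ we automatically have $|S|\le \lfloor n/2\rfloor\le d$, so this case is entirely settled without the eigenvalue hypothesis.

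It remains to handle $|S|\ge d+1$, which forces $|\overline{S}|\ge |S|\ge d+1$ and hence $n\ge 2d+2$. Here I would invoke the spectral hypothesis through the Laplacian $L=L(G)=dI-A(G)$, whose eigenvalues are $0=d-\lambda_1\le d-\lambda_2\le\cdots$, so its algebraic connectivity equals $\mu_2(L)=d-\lambda_2\ge 2$. Decompose the indicator vector as $\mathbf{1}_S=\tfrac{|S|}{n}\mathbf{1}+\mathbf{y}$ with $\mathbf{y}\perp\mathbf{1}$; then $\|\mathbf{y}\|^2=|S|-|S|^2/n=|S||\overline{S}|/n$, and since $L\mathbf{1}=\mathbf{0}$ and $\mathbf{1}_S^{T}L\mathbf{1}_S=e(S,\overline{S})$, the variational characterization of $\mu_2(L)$ on $\mathbf{1}^{\perp}$ yields
\[ e(S,\overline{S}) = \mathbf{y}^{T}L\mathbf{y} \ge (d-\lambda_2)\,\frac{|S||\overline{S}|}{n} \ge \frac{2|S||\overline{S}|}{n} \ge |S| \ge d+1, \]
using $|\overline{S}|\ge n/2$ in the penultimate step. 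This again contradicts $e(S,\overline{S})\le d-1$, so $\kappa'(G)\ge d$ and the proof is complete.

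The degree count and the Rayleigh estimate are routine; the only delicate point is the case division. The hypothesis bounds $\lambda_2$ but says nothing about the least eigenvalue $\lambda_n$, so the two-sided expander mixing lemma is unavailable and one is forced to use the one-sided Laplacian bound above — and that bound by itself only gives $e(S,\overline{S})\ge |S|$, which is useless when $|S|$ is small. The combinatorial count for $|S|\le d$ is precisely what bridges this gap, and verifying that the two regimes, together with the reduction $|S|\le|\overline{S}|$, exhaust all cases is the step I would be most careful to get right.
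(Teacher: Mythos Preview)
Your proof is correct. The paper under review is a survey and merely states this result with a citation to \cite{KS06}, providing no proof of its own; your argument --- the degree-count bound $e(S,\overline{S})\ge |S|(d+1-|S|)\ge d$ for $|S|\le d$, followed by the Laplacian Rayleigh estimate $e(S,\overline{S})\ge (d-\lambda_2)\,|S||\overline{S}|/n\ge |S|$ for $|S|\ge d+1$ --- is exactly the standard proof and matches the original argument in Krivelevich--Sudakov's pseudo-random graphs survey.
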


  Extending and improving the  
 the previous results, Cioab\u{a} \cite{Cioaba10LAA} obtained that
 \begin{theorem}[Cioab\u{a} \cite{Cioaba10LAA}] \label{Cioaba10LAA}
 Let $d\geq k\geq 2$. If the second largest eigenvalue $\lambda_2$ of a $d$-regular graph satisfies
$$
\lambda_2<d -\frac{ (k-1)n}{(d+1)(n-d-1)},
$$
then the  edge-connectivity of $G$ is at least $k$.
 \end{theorem}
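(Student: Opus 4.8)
The plan is to argue by contradiction, using the standard Rayleigh-quotient (interlacing) technique for regular graphs. Suppose $G$ is a $d$-regular graph on $n$ vertices with $\kappa'(G)\le k-1$. Then there is a partition $V(G)=S\cup \bar S$ with $1\le |S|\le n/2$ and $e(S,\bar S)\le k-1$, where $e(S,\bar S)$ is the number of edges between $S$ and $\bar S$ (take a minimal disconnecting edge set, which leaves exactly two components, and let $S$ be the smaller). Writing $s=|S|$ and using $d$-regularity, the number of edges inside $S$ satisfies $2e(S)=ds-e(S,\bar S)\ge ds-(k-1)$.

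First I would pin down the possible size of $S$. Since $e(S)\le \binom{s}{2}$, the inequality $ds-(k-1)\le s(s-1)$, i.e. $s^2-(d+1)s+(k-1)\ge 0$, must hold. The two roots $r_-\le r_+$ have product $k-1\le d-1$ and sum $d+1$, so $r_-=(k-1)/r_+<1$ and, using $d\ge k$, a short computation gives $r_+>d$. Hence the integer $s$ must satisfy $d+1\le s\le n/2$; in particular $n\ge 2(d+1)$, so the denominator $(d+1)(n-d-1)$ appearing in the statement is positive.

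Next comes the main estimate. Because $G$ is $d$-regular, $\bm{1}$ is an eigenvector of $A(G)$ for the top eigenvalue $d$, so $\lambda_2(G)=\max\{\bm{x}^TA(G)\bm{x}/\bm{x}^T\bm{x}: \bm{x}\perp \bm{1},\ \bm{x}\ne 0\}$. I would test this with $\bm{x}=\bm{1}_S-\tfrac{s}{n}\bm{1}$, which is orthogonal to $\bm{1}$ and nonzero. Two routine identities give $\bm{x}^T\bm{x}=s(n-s)/n$ and $\bm{x}^TA(G)\bm{x}=2e(S)-ds^2/n$, hence
\[
\lambda_2(G)\ \ge\ \frac{2ne(S)-ds^2}{s(n-s)}\ \ge\ \frac{ds(n-s)-n(k-1)}{s(n-s)}\ =\ d-\frac{n(k-1)}{s(n-s)} .
\]
Finally, on the range $d+1\le s\le n/2$ the quantity $s(n-s)$ is concave in $s$ and, since $(d+1)+(n-d-1)=n$ forces $(d+1)(n-d-1)\le n^2/4$, it attains its minimum at $s=d+1$; therefore $\lambda_2(G)\ge d-\dfrac{n(k-1)}{(d+1)(n-d-1)}$, contradicting the hypothesis.

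The two quadratic-form identities are completely standard and I would not belabor them. The only genuinely delicate point is the lower bound $s\ge d+1$: without it $s(n-s)$ could be as small as $n-1$ and the final inequality would be useless, so the quadratic-inequality argument controlling $s$ from below is the crux, and it is precisely there that the assumption $d\ge k$ enters.
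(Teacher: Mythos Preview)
The paper is a survey and does not include a proof of this theorem; it merely quotes the result from Cioab\u{a}'s original paper. Your argument is correct and is essentially the standard proof: assuming a small edge cut, you bound the size $s$ of the smaller side from below by $d+1$ via the inequality $ds-(k-1)\le s(s-1)$, and then plug the indicator-minus-average test vector into the Rayleigh quotient for $\lambda_2$.

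Two minor comments on presentation. First, your justification that the minimum of $s(n-s)$ on $[d+1,n/2]$ occurs at $s=d+1$ is phrased awkwardly: the clean reason is simply that $s\mapsto s(n-s)$ has derivative $n-2s\ge 0$ on this interval, so it is increasing and the minimum is at the left endpoint; the observation $(d+1)(n-d-1)\le n^2/4$ is true but not what is needed. Second, your root analysis for $s^2-(d+1)s+(k-1)\ge 0$ is right, and the check $r_+>d\iff (d+1)^2-4(k-1)>(d-1)^2\iff d>k-1$ is exactly where the hypothesis $d\ge k$ enters, as you say.
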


Now, we consider the $k$-edge-connected property. 
Since every $k$-connected graph is also $k$-edge-connected. 
So  \autoref{thmfeng42} also provided a sufficient condition 
for a graph to be $k$-edge-connected. 
Next, we shall present a sufficient condition with a smaller lower bound, 
which is slightly different from  Theorem \ref{thmfeng42}.

\begin{theorem}[Feng et al. \cite{FengLAA17}]
Let $G$ be a graph of order $n\ge k+1$. If $k\ge 2$ and $e(G) \geq \frac12(n^2-(k+4)n+2k^2+2k+4)$, then $G$ is $k$-edge-connected.
\end{theorem}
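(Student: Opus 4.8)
The plan is to mimic exactly the structure of the proof of Theorem~\ref{thmhnkd} (and of the $k$-path-coverable theorem), replacing the degree-sequence lemma for $k$-Hamiltonicity by Lemma~\ref{lem512} for $k$-edge-connectivity and checking that the quadratic in $i$ is again maximized at the relevant endpoint. First I would assume $G$ is not $k$-edge-connected. Since $e(G)\ge \tfrac12(n^2-(k+4)n+2k^2+2k+4)$ and this quantity is, for $n\ge k+1$ and $k\ge 2$, larger than $\binom{k}{2}$, we may assume $\delta(G)\ge k$: indeed a graph with a vertex of degree $\le k-1$ and this many edges would force a contradiction with $n\ge k+1$, or alternatively one observes that if $\delta(G)\le k-1$ then $G$ fails to have enough edges (this small sanity check should be carried out first). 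With $d_1=\delta(G)\ge k$ in hand, Lemma~\ref{lem512} applies: there is an integer $i$ with $k+1\le i\le \lfloor n/2\rfloor$ such that $d_{i-k+1}\le i-1$, $d_i\le i+k-2$, and $d_n\le n-i+k-2$.

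The key step is to bound $2e(G)=\sum_{j=1}^n d_j$ using these three inequalities by splitting the degree sequence into blocks. Concretely, the first $i-k+1$ degrees are each $\le i-1$; the next $k-1$ degrees (indices $i-k+2$ through $i$) are each $\le i+k-2$; and the remaining $n-i$ degrees are each $\le n-i+k-2$. This gives
\begin{equation*}
2e(G)\le (i-k+1)(i-1)+(k-1)(i+k-2)+(n-i)(n-i+k-2).
\end{equation*}
Writing the right-hand side as a quadratic function $g(i)$ in $i$, I would then show that over the admissible range $k+1\le i\le \lfloor n/2\rfloor$ the maximum of $g$ is attained at $i=k+1$ (the parabola opens upward and, for $n$ not too small relative to $k$, its vertex lies to the right of $\lfloor n/2\rfloor$, so $g$ is decreasing on the whole interval; this is the routine calculation I would not grind through here). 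Evaluating $g(k+1)$ should yield exactly $n^2-(k+4)n+2k^2+2k+4$, i.e. $2e(G)\le g(k+1)$, contradicting the hypothesis $e(G)\ge \tfrac12(n^2-(k+4)n+2k^2+2k+4)$ unless the inequality is tight everywhere.

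The main obstacle, such as it is, is twofold: verifying the monotonicity claim for $g$ on $[k+1,\lfloor n/2\rfloor]$ — one must confirm that no interior point of the range beats the endpoint, which requires comparing the vertex of the parabola with $n/2$ and may need the implicit hypothesis that $n$ is reasonably large compared to $k$ (the statement only says $n\ge k+1$, so a careful check of the boundary cases, or of whether extra small exceptions arise, is warranted) — and confirming that the block-splitting bound above is the sharpest one obtainable from Lemma~\ref{lem512}, since a looser grouping would not recover the stated constant. Since the theorem as stated asks only for the sufficient condition (no characterization of the extremal graph, unlike Theorem~\ref{thmhnkd}), equality analysis is unnecessary: once the contradiction $2e(G)\le g(k+1)<2e(G)$ is reached, the proof is complete. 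I would present the argument in three short paragraphs: reduction to $\delta(G)\ge k$, application of Lemma~\ref{lem512} with the block bound, and the quadratic estimate producing the contradiction.
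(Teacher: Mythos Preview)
Your plan follows the paper's argument closely up to the point where you bound $2e(G)\le g(i)$ and maximize at $i=k+1$; that part is fine, and the paper does exactly the same block splitting and endpoint comparison. The genuine gap is in your final paragraph. The hypothesis is $e(G)\ge \tfrac12(n^2-(k+4)n+2k^2+2k+4)$, a weak inequality, and your quadratic estimate gives $2e(G)\le g(k+1)=n^2-(k+4)n+2k^2+2k+4$. These two together do \emph{not} produce the strict contradiction $g(k+1)<2e(G)$ you claim; they force equality. So the equality analysis you explicitly dismiss is in fact the heart of the matter.

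What the paper does at this point is observe that equality throughout pins down $i=k+1$ and the exact degree sequence $d_1=d_2=k$, $d_3=\cdots=d_{k+1}=2k-1$, $d_{k+2}=\cdots=d_n=n-3$. It then argues directly (by considering an arbitrary bipartition $V(G)=A\cup B$ and counting edges across it) that any graph with this degree sequence must already be $k$-edge-connected, contradicting the assumption. The reason the statement lists no exceptional graph is not that equality cannot occur, but that any graph achieving equality is forced to have the desired property. Your proposal, as written, stops one substantial step too early; you need to add this degree-sequence-forces-connectivity argument (or, alternatively, strengthen the hypothesis to a strict inequality, which would change the theorem).
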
 

\begin{proof}
Suppose that $G$ is not $k$-edge-connected. Then from Lemma \ref{lem512}, there exists an integer $k+1\leq i\leq \frac{n}{2}$ such that $d_{i-k+1}\leq i-1$,  $d_{i}\leq i+k-2$ and $d_{n}\leq n-i+k-2$. In particular, note that $n\ge 2k+2$. We have
\begin{align*}
2e(G) &\le  (i-k+1)(i-1)+(k-1)(i+k-2)+(n-i)(n-i+k-2) \\
&= n(n-1)+(k-1)(n+k-1)+2i^2-(2n+k-1)i.
\end{align*}

Suppose $f(x)=2x^2-(2n+k-1)x$ with $k+1\leq x \leq \frac{n}{2}$. Then it is easy to check that $f(k+1)-f(\frac{n}{2})=\frac{(n-k-3)(n-2k-2)}{2}\ge 0$, with equality if and only if $n=2k+2$. It follows that $f_{\max}(x)=f(k+1)=-(k+1)(2n-k-3)$, and we have
\begin{align*}
2e(G) &\le n(n-1)+(k-1)(n+k-1)-(k+1)(2n-k-3) \\
&= n^2-(k+4)n+2k^2+2k+4.
\end{align*}

Thus $e(G) = \frac{1}{2}(n^2-(k+4)n+2k^2+2k+4)$, and we have $i=k+1$, and 
the degree sequence is given as 
$d_1=d_2=k$, $d_3=\cdots=d_{k+1}=2k-1$ and $d_{k+2}=\cdots=d_n=n-3$. 

Next, we show that if $G$ has this degree sequence, then it must be $k$-edge-connected, which will be a contradiction. It suffices to show that for any partition $V(G)=A\cup B$, where $|A|+|B|=n$ and $1\le |A|\le |B|$, we have $e(A,B)\ge k$.

Since $G$ has minimum degree $\delta(G)=k$, if $|A|=1$, then $e(A,B)\ge k$, and if $|A|=2$, then $e(A,B)\ge 2(k-1)\ge k$.
%
%

Now, let $|A|\ge 3$. Note that $n\ge 2k+2$ and $|A|\le |B|$ imply $|B|\ge k+1$. Firstly, if $|B|=k+1$, then we also have $|A|=k+1$. Thus by the symmetry of $A$ and $B$, we may assume that $A$ has $k$ vertices with degree at least $2k-1$ in $G$. Then, each such vertex in $A$ has at least $(2k-1)-k\ge 1$ neighbour in $B$, and hence $e(A,B)\ge k$.

Secondly, suppose that $|B|\ge k+2$. If $A$ contains a vertex $v$ with $d_G(v)=n-3$, then $v$ has at least $|B|-2\ge k$ neighbours in $B$, and $e(A,B)\ge k$. Otherwise, all vertices with degree $n-3$ in $G$ lie in $B$. Since $|A|\ge 3$, each such vertex in $B$ has at least $|A|-2\ge 1$ neighbour in $A$. Since there are $n-(k+1)\ge (2k+2)-(k+1)>k$ such vertices, it follows again that $e(A,B)\ge k$. 
This completes the proof.
\end{proof}

\subsection{Problem for Hamilton-connectivity}  

A graph is Hamilton-connected 
if for every pair of vertices $u, v$, there is a Hamilton
path from $u$ to $v$.  
It is well-known \cite[p. 474]{Bondy08} that 
a graph $G$ is traceable 
from every vertex if and only if $G\vee K_1$ is Hamilton-connected.

\begin{theorem}[Dirac \cite{Dirac52}, Ore \cite{ore60}] 
If $G$ is a graph on $n\ge 3$ vertices with minimum degree 
$\delta (G)\ge \frac{n+1}{2}$, then $G$ is Hamilton-connected. 
\end{theorem}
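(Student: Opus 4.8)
The plan is to prove the (formally stronger) Ore-type statement: if $d(x)+d(y)\ge n+1$ for every pair of non-adjacent vertices $x,y$ of an $n$-vertex graph $G$ with $n\ge 3$, then $G$ is Hamilton-connected; the Dirac hypothesis $\delta(G)\ge\frac{n+1}{2}$ clearly forces this (indeed it forces $d(x)+d(y)\ge n+1$ for \emph{all} pairs), so the theorem follows. The argument adapts Ore's classical rotation proof of Hamiltonicity, the only new ingredient being that the rotation must be carried out so as to preserve the two prescribed endpoints.

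First I would argue by contradiction. If $G$ is not Hamilton-connected, fix $u\ne v$ admitting no Hamilton $u$-$v$ path, and let $H$ be an edge-maximal graph on $V(G)$ with $G\subseteq H$ and still no Hamilton $u$-$v$ path; the degree condition passes to $H$. Since $K_n$ with $n\ge 3$ is Hamilton-connected, $H\ne K_n$, so $H$ has a non-edge. Next I would observe that $uv\in E(H)$: otherwise any Hamilton $u$-$v$ path of $H+uv$ would have to avoid the edge $uv$ (a $u$-$v$ path through that edge would consist of the edge alone, impossible for $n\ge 3$), so $H+uv$ would again contain no Hamilton $u$-$v$ path, contradicting the maximality of $H$.

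Now take any non-edge $st$ of $H$. By maximality, $H+st$ has a Hamilton $u$-$v$ path $P$, and $P$ must use $st$. Closing $P$ with the edge $uv$ gives a spanning cycle of $H+st$ whose only edge outside $E(H)$ is $st$; cutting this cycle at $st$ produces a Hamilton path $c_1c_2\cdots c_n$ of $H$ with $\{c_1,c_n\}=\{s,t\}$ (hence $c_1c_n\notin E(H)$), along which $u$ and $v$ occur consecutively and are joined by the edge $uv$. Since $d_H(c_1)+d_H(c_n)\ge n+1>n-1$, the sets $\{i: c_1c_{i+1}\in E(H)\}$ and $\{i: c_ic_n\in E(H)\}$, both contained in $\{1,\dots,n-1\}$, intersect in at least two indices. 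For any such index $i$, the cycle $c_1c_2\cdots c_ic_nc_{n-1}\cdots c_{i+1}c_1$ is a Hamilton cycle of $H$, and it still contains the edge $uv$ unless $i$ equals the unique position of $uv$ along the path $c_1\cdots c_n$. Choosing an admissible index that avoids that single position, I obtain a Hamilton cycle of $H$ through $uv$; deleting $uv$ from it yields a Hamilton $u$-$v$ path of $H$, contradicting the choice of $H$. This completes the Ore-type statement, and the theorem is its special case $\delta(G)\ge\frac{n+1}{2}$.

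The step I expect to be the real obstacle is making the rotation endpoint-aware: Ore's original argument only manufactures \emph{some} Hamilton cycle, with no control over whether the distinguished pair $u,v$ is adjacent on it. The way around this is precisely that the counting $d_H(c_1)+d_H(c_n)\ge n+1$ supplies not merely one but at least two valid rotation indices, exactly the slack needed to steer clear of the lone index that would destroy the edge $uv$. A secondary, purely bookkeeping point is checking that cutting the spanning cycle at $st$ really leaves $s,t$ as the endpoints and $u,v$ as an adjacent consecutive pair; this requires care with the cyclic re-indexing but involves no genuine difficulty.
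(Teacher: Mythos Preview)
Your proof is correct. The paper is a survey and does not supply a proof of this theorem; it merely states it with citations, so there is no ``paper's own proof'' to compare against. The closest the paper comes is stating the Bondy--Chv\'{a}tal closure theorem for Hamilton-connectedness (their Theorem~\ref{lem4444}), from which the Ore/Dirac version follows immediately once one knows $K_n$ is Hamilton-connected.

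Your argument is essentially the direct proof underlying that closure result: you add edges one at a time until maximality forces a Hamilton $u$--$v$ path in the augmented graph, then rotate. The two points you flagged as potential difficulties are handled cleanly. The observation that for $n\ge 3$ a Hamilton $u$--$v$ path cannot use the edge $uv$ (so $uv\in E(H)$ by maximality) is exactly right. And the key endpoint-awareness issue is resolved precisely as you say: the Ore condition with sum $\ge n+1$ (rather than $\ge n$) gives $|A\cap B|\ge 2$, so at least one admissible rotation index avoids the unique position $k$ where $\{c_k,c_{k+1}\}=\{u,v\}$, preserving the edge $uv$ on the resulting Hamilton cycle. The bookkeeping about cutting the cycle at $st$ also checks out: if $P=w_1\cdots w_n$ with $w_1=u$, $w_n=v$, and $st=w_jw_{j+1}$, then cutting the cycle $w_1\cdots w_nw_1$ at $st$ yields the path $w_{j+1}\cdots w_nw_1\cdots w_j$, on which $v=w_n$ and $u=w_1$ are consecutive and $\{c_1,c_n\}=\{w_{j+1},w_j\}=\{s,t\}$.
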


\begin{theorem}[Bondy--Chvatal~\cite{Bondy}] \label{lem4444}
Let $G$ be a simple graph on $n$ vertices.  
Then $G$ is Hamilton-connected if and only if 
the closure graph $\mathrm{cl}_{n+1}(G)$ is  Hamilton-connected. 
In particular, if $d(u)+d(v) \ge n+1$ for all non-edges $\{u,v\}$, 
then $\mathrm{cl}_{n+1} (G) =K_n$ and $G$ is Hamilton-connected. 
\end{theorem}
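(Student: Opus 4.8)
The plan is to reduce the theorem to a single edge-addition step and then iterate, exactly as in the proof of Ore's closure theorem recalled above. The crucial step is the following \emph{stability claim}: if $uv\notin E(G)$, $d_G(u)+d_G(v)\ge n+1$ and $G+uv$ is Hamilton-connected, then $G$ is Hamilton-connected. Granting the claim, the theorem follows at once. One implication is trivial: $G$ is a spanning subgraph of $\mathrm{cl}_{n+1}(G)$, so a Hamilton $x$--$y$ path of $G$ is also one of $\mathrm{cl}_{n+1}(G)$. For the converse, write $\mathrm{cl}_{n+1}(G)$ as the last term of a chain $G=G_0\subseteq G_1\subseteq\cdots\subseteq G_t=\mathrm{cl}_{n+1}(G)$ in which $G_{s+1}=G_s+u_sv_s$, $u_sv_s\notin E(G_s)$ and $d_{G_s}(u_s)+d_{G_s}(v_s)\ge n+1$ (such a chain exists, and the closure is independent of the order of additions, see \cite{Bondy}); applying the claim repeatedly, working down from $G_t=\mathrm{cl}_{n+1}(G)$ to $G_0=G$, gives that $G$ is Hamilton-connected. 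Finally, if $d_G(u)+d_G(v)\ge n+1$ for every non-edge, one may keep adding missing edges until none remains, so $\mathrm{cl}_{n+1}(G)=K_n$, which is plainly Hamilton-connected (given distinct $x,y$, list the remaining $n-2$ vertices in any order between them), and hence so is $G$.

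To prove the stability claim I would argue by contradiction via a rotation. Suppose $G$ is not Hamilton-connected and fix distinct $x,y$ admitting no Hamilton $x$--$y$ path in $G$. Since $G+uv$ is Hamilton-connected it has a Hamilton $x$--$y$ path $P$; as $P\not\subseteq G$, it must traverse the edge $uv$. For $n\ge 3$ the edge $uv$ cannot be all of $P$ (and $\{x,y\}\ne\{u,v\}$, else $P\subseteq G$), so we may write $P=z_1z_2\cdots z_n$ with $z_1=x$, $z_n=y$, and, after possibly swapping the names of $u$ and $v$ (the hypothesis is symmetric), $z_j=u$, $z_{j+1}=v$ for some $1\le j\le n-1$. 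Put
\[
S=\{\,i\in\{1,\dots,n-1\}: z_iu\in E(G)\,\},\qquad
T=\{\,i\in\{1,\dots,n-1\}: z_{i+1}v\in E(G)\,\}.
\]
Since the vertices $z_i$ with $1\le i\le n-1$ are precisely those other than $z_n$, we have $|S|\ge d_G(u)-1$; likewise $|T|\ge d_G(v)-1$. Hence
\[
|S\cap T|=|S|+|T|-|S\cup T|\ge (d_G(u)-1)+(d_G(v)-1)-(n-1)\ge 0 .
\]
If this intersection were empty, $S$ and $T$ would partition $\{1,\dots,n-1\}$; but $j\notin S$ (since $z_j=u\notin N_G(u)$) and $j\notin T$ (since $z_{j+1}=v\notin N_G(v)$), a contradiction. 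Therefore $S\cap T\ne\emptyset$.

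Now pick $i\in S\cap T$; note $i\ne j$ (because $j\notin S$) and $i\ne j+1$ (because $z_{j+1}=v\notin N_G(u)$). If $i<j$, then
\[
z_1\cdots z_i\,z_jz_{j-1}\cdots z_{i+1}\,z_{j+1}\cdots z_n
\]
is a Hamilton $x$--$y$ path in $G$: it visits all vertices, runs from $x$ to $y$, and replaces the edges $z_iz_{i+1}$ and $uv=z_jz_{j+1}$ of $P$ by $z_iu$ and $z_{i+1}v$, both in $E(G)$. If $i>j+1$, then the path $z_1\cdots z_j\,z_iz_{i-1}\cdots z_{j+1}\,z_{i+1}\cdots z_n$ does the same. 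In either case we contradict the choice of $\{x,y\}$, which proves the claim.

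I expect the main obstacle to be the bookkeeping surrounding the endpoints of $P$: one must verify that both rotated sequences are bona fide Hamilton $x$--$y$ paths for \emph{every} position of the edge $uv$ on $P$, including the boundary cases $j=1$ (i.e.\ $x=u$) and $j=n-1$ (i.e.\ $y=v$), where one of the two rotations degenerates. The only genuinely subtle point in the estimate is the passage from $|S\cap T|\ge 0$ to $|S\cap T|\ge 1$, for which the remark that $j$ lies in neither $S$ nor $T$ is indispensable. The small cases $n\le 3$ require a separate but immediate check; there no non-edge can satisfy $d_G(u)+d_G(v)\ge n+1$, so the closure adds nothing and both assertions are vacuous.
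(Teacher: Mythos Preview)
The paper does not actually prove this theorem; it is quoted as a known closure result of Bondy and Chv\'atal and then used as a black box in the subsequent edge-count theorem on Hamilton-connectedness. What the paper does include is a short sketch of the analogous closure lemma for Hamilton cycles (Ore's theorem), via the usual rotation argument on a Hamilton path $u=u_1,\dots,u_n=v$.

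Your proof is correct and is precisely the natural adaptation of that rotation argument to the Hamilton-connected setting. The reduction to a single edge-addition step, the definition of the index sets $S$ and $T$ along the Hamilton $x$--$y$ path, and the two rotations for $i<j$ and $i>j+1$ are all sound. The only place where the Hamilton-connected case genuinely differs from the Hamilton-cycle case is that here $u$ and $v$ need not be the endpoints of $P$, so you lose one unit in each of the bounds $|S|\ge d_G(u)-1$ and $|T|\ge d_G(v)-1$; this is exactly compensated by the stronger hypothesis $d_G(u)+d_G(v)\ge n+1$, and your observation that $j\notin S\cup T$ is indeed what turns the a~priori bound $|S\cap T|\ge 0$ into $|S\cap T|\ge 1$. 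The boundary cases $j=1$ and $j=n-1$ and the trivial range $n\le 3$ are handled correctly.
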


\begin{theorem}[Berge~\cite{Berge1976}] \label{bergethm1976}
Let $G$ be an $n$-vertex graph with degree sequence 
$d_1\le d_2 \le \cdots \le d_n$.  
If $G$ is not Hamilton-connected, 
then there exists an integer $2\le i\le \frac{n}{2}$ such that 
$d_{i-1}\le i$ and $d_{n-i} \le n-i$. 
\end{theorem}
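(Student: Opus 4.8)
The plan is to derive this theorem from Chv\'{a}tal's Theorem~\ref{thmchv} by reducing Hamilton-connectedness to ordinary Hamiltonicity. Assume $G$ is not Hamilton-connected; we also assume $n\ge 4$, since for $n\le 3$ the index range $2\le i\le n/2$ is empty and the statement can only hold for $n\ge 4$. Pick distinct vertices $s,t$ for which $G$ has no Hamilton path from $s$ to $t$. Adjoin a new vertex $z$ adjacent to exactly $s$ and $t$, and put $H:=G+z$, a graph on $n+1$ vertices. A Hamilton $s$--$t$-path of $G$, having $s$ and $t$ as its two endpoints, never uses the edge $st$ (if $G$ has one); hence every Hamilton cycle of $H$ must run through $z$ along its only two incident edges $zs$ and $zt$, and deleting $z$ from such a cycle produces precisely a Hamilton $s$--$t$-path of $G$. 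Since no such path exists, $H$ is non-Hamiltonian, so Theorem~\ref{thmchv} applies to $H$.

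Next I would take care of the degree bookkeeping. Let $d_1\le\cdots\le d_n$ be the sorted degree sequence of $G$. The degree multiset of $H$ equals $\{2,d_1,\ldots,d_n\}$ with the two values $d_G(s)$ and $d_G(t)$ each raised by $1$. Since raising the value of an entry can only decrease (or keep unchanged) the number of entries that are $\le t$, for every $t$, the sorted degree sequence $e_1\le\cdots\le e_{n+1}$ of $H$ dominates entrywise the sorted sequence $\tilde e_1\le\cdots\le\tilde e_{n+1}$ of the multiset $\{2,d_1,\ldots,d_n\}$. Applying Theorem~\ref{thmchv} to $H$ gives an integer $j$ with $1\le j<(n+1)/2$, $e_j\le j$, and $e_{\,n+1-j}\le(n+1)-j-1=n-j$; together with the domination this yields $\tilde e_j\le j$ and $\tilde e_{\,n+1-j}\le n-j$.

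It then remains to translate these into inequalities among the $d_i$. If $j\ge 2$, then $2\le j\le n/2\le n-2$ (the middle inequality from $j<(n+1)/2$, the last from $n\ge 4$), so the value $2$ is counted among the elements of $\{2,d_1,\ldots,d_n\}$ that are $\le j$ and also among those $\le n-j$; removing it shows that at least $j-1$ of the $d_i$ are $\le j$ and at least $n-j$ of the $d_i$ are $\le n-j$, that is, $d_{j-1}\le j$ and $d_{n-j}\le n-j$. Hence $i:=j$ works: $2\le i\le n/2$, $d_{i-1}\le i$, and $d_{n-i}\le n-i$. If $j=1$, then $e_1\le 1$, and as $z$ has degree $2$ in $H$ this forces $d_1\le 1$; taking $i:=2$ we get $d_{i-1}=d_1\le 2$ at once, while $d_{n-2}\le n-2$ holds because a universal vertex of $G$ would have to be adjacent to a vertex of degree at most $1$, so $G$ has at most one vertex of degree $n-1$.

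I expect the reduction itself to be short and robust, and the only genuine work to be the entrywise degree comparison and the boundary case $j=1$ (together with noting that $n\ge 4$ is required), all of which are routine but need a little care. One can alternatively bypass Theorem~\ref{thmchv} and argue in Berge's original style: replace $G$ by a graph obtained from $G$ by adding edges that is edge-maximal subject to being non-Hamilton-connected (legitimate by the same degree-domination fact), observe that in such a graph the ``bad'' pair $s,t$ must then be adjacent, and run a rotation/extension argument on a longest $s$--$t$-path; this route is longer but self-contained.
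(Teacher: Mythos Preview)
The paper does not supply its own proof of this theorem; it is quoted from Berge's book and then used as a black box in the subsequent argument. So there is nothing in the paper to compare your approach against.

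Your reduction to Chv\'{a}tal's Theorem~\ref{thmchv} via the auxiliary vertex $z$ is correct. The degree-domination step is sound, and both cases $j\ge 2$ and $j=1$ are handled properly (in the latter, the observation that a vertex of degree at most $1$ forces at most one universal vertex, hence $d_{n-2}\le d_{n-1}\le n-2$, is exactly what is needed). Your remark that the conclusion is vacuous for $n\le 3$ is also accurate and worth recording, since the paper's applications all have $n$ large. This is a clean and efficient route; the alternative closure/rotation argument you sketch at the end is closer in spirit to Berge's original treatment but, as you note, longer.
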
 

The following sufficient condition on the number of edges is due to Ore \cite{Ore1963}.

\begin{theorem}[Ore \cite{Ore1963}]
Let $G$ be a graph on $n$ vertices. If 
\[ e(G)\ge {n-1 \choose 2} +3, \]
 then 
$G$ is Hamilton-connected. 
\end{theorem}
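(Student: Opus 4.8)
The plan is to deduce this theorem from the degree-sequence characterization of non-Hamilton-connected graphs, exactly in the spirit of the earlier edge-count proofs in this section (for instance Theorem~\ref{coroore} or Theorem~\ref{thmf42}). First I would suppose, for contradiction, that $G$ is a graph on $n$ vertices with $e(G)\ge \binom{n-1}{2}+3$ that is \emph{not} Hamilton-connected. By Theorem~\ref{lem4444}, the $(n+1)$-closure $H:=\mathrm{cl}_{n+1}(G)$ is also not Hamilton-connected, and $e(H)\ge e(G)$; so it suffices to derive a contradiction under the additional assumption $G=\mathrm{cl}_{n+1}(G)$, i.e.\ $d(u)+d(v)\le n$ for every pair of non-adjacent vertices.

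Next I would invoke Berge's Theorem~\ref{bergethm1976}: since $G$ is not Hamilton-connected, there is an integer $2\le i\le n/2$ with $d_{i-1}\le i$ and $d_{n-i}\le n-i$, where $d_1\le\cdots\le d_n$ is the degree sequence. This lets me bound the edge count from above by summing the degrees blockwise:
\[
2e(G)=\sum_{j=1}^n d_j \le (i-1)\,i + (n-2i+1)(n-i) + i(n-1).
\]
I would treat the right-hand side as a quadratic in $i$ on the interval $2\le i\le n/2$, check that it is maximized at the endpoint $i=2$ (the coefficient of $i^2$ is positive, and one compares the two endpoint values, which for $n$ not too small forces the maximum at $i=2$; the small cases $n\le$ some explicit bound would be handled separately by the hypothesis forcing $G=K_n$ or near-complete graphs), and conclude $2e(G)\le 4 + (n-3)(n-2) + 2(n-1) = n^2-3n+4 = 2\big(\binom{n-1}{2}+1\big)$. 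This gives $e(G)\le \binom{n-1}{2}+1 < \binom{n-1}{2}+3$, contradicting the hypothesis, and completes the proof.

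The main obstacle I anticipate is twofold and essentially bookkeeping rather than conceptual. First, one must verify carefully that the quadratic in $i$ genuinely attains its maximum at $i=2$ over the whole admissible range $[2,n/2]$ for all relevant $n$; comparing $f(2)$ with $f(n/2)$ produces an inequality like $(n-3)(n-2k-\cdots)\ge 0$-type expression that holds once $n$ is at least a small explicit constant, so the truly small values of $n$ (say $n\le 5$ or $6$) must be checked by hand, where the hypothesis $e(G)\ge\binom{n-1}{2}+3$ is extremely restrictive. Second, one should double-check whether Berge's theorem in the stated form already excludes $i=1$ (it is stated with $i\ge 2$), so that no separate degenerate block-sum needs to be considered; if an $i=1$ (or boundary) case did arise, the bound would be even weaker and still suffice. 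Since the conclusion here is "Hamilton-connected" with \emph{no} exceptional graphs listed, one expects the strict inequality in the final line to be the reason no extremal graph survives — the gap of $2$ between $\binom{n-1}{2}+1$ and $\binom{n-1}{2}+3$ is exactly what removes the borderline cases.
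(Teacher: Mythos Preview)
The paper does not actually prove this theorem; it is stated as a classical result of Ore and simply cited. Your approach is correct and is precisely the method the paper employs for the minimum-degree refinement that follows immediately afterward (the proof characterizing $S_{n,\delta}$): pass to the $(n+1)$-closure, apply Berge's degree-sequence criterion, and bound $\sum d_j$ by the convex quadratic $3i^2-(2n+3)i+n^2+n$.

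Two minor corrections. First, there is an arithmetic slip: at $i=2$ one gets $f(2)=12-2(2n+3)+n^2+n=n^2-3n+6$, so $e(G)\le\binom{n-1}{2}+2$, not $\binom{n-1}{2}+1$; this still contradicts the hypothesis $e(G)\ge\binom{n-1}{2}+3$, so the argument goes through. Second, the separate small-$n$ analysis you worry about is unnecessary: comparing endpoints of the convex quadratic gives $f(2)\ge f(n/2)$ exactly when $(n-4)(n-6)\ge 0$, and for $n=5$ the only admissible integer is $i=2$ anyway, so the maximum is at $i=2$ for every $n\ge 4$.
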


In 2017, 
Zhou and Wang \cite{ZW2017} gave some sufficient conditions on the number of edges, the spectral radius and the signless Laplacian spectral radius for a graph to be Hamilton-connected. 
And they also studied sufficient conditions on the number of edges, the spectral radius and the signless Laplacian spectral radius for a graph to be traceable from every vertex. 

\begin{theorem}[Zhou--Wang \cite{ZW2017}] \label{thm2017a}
Let $G$ be a connected graph on $n\ge 6$ vertices with minimum degree $\delta (G)\ge 3$. If 
\[ e(G)\ge {n-2 \choose 2} +6,\]
 then $G$ is Hamilton-connected 
unless $G=K_3 \vee (K_{n-5} \cup I_2)$ and 
the following more exceptions: when $n=8$ and $G\in \{K_4 \vee I_4, K_3 \vee (K_{1,3} \cup K_1), 
K_3\vee (K_{1,2} \cup K_2), K_2 \vee K_{2,4}\}$; 
when $n=9$ and $G=K_4\vee (K_2 \cup I_3)$; 
when $n=10$ and $G\in \{K_5 \vee I_5, K_4 \vee (K_{1,4}\cup K_1), 
K_3 \vee K_{2,5}\}$; 
when $n=11$ and $G=K_4 \vee (K_{1,3} \cup K_2)$;  
 when $n=12$ and $G=K_6 \vee I_6$. 
\end{theorem}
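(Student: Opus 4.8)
The plan is to run the closure/degree-sequence machinery used in the proofs of Theorems~\ref{thmerdos62} and~\ref{thmhnkd}, now with the Hamilton-connected versions of the two ingredients: the Bondy--Chv\'{a}tal closure (Theorem~\ref{lem4444}) and Berge's degree condition (Theorem~\ref{bergethm1976}). First I would suppose that $G$ is not Hamilton-connected and pass to $H:=\mathrm{cl}_{n+1}(G)$; by Theorem~\ref{lem4444} the graph $H$ is not Hamilton-connected either, while $e(H)\ge e(G)$ and $\delta(H)\ge\delta(G)\ge 3$. Writing $d_1\le d_2\le\cdots\le d_n$ for the degree sequence of $H$, Theorem~\ref{bergethm1976} yields an integer $i$ with $2\le i\le n/2$ such that $d_{i-1}\le i$ and $d_{n-i}\le n-i$. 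Since $d_{i-1}\ge\delta(H)\ge 3$, the value $i=2$ is impossible, so in fact $i\ge 3$; this is exactly where the hypothesis $\delta(G)\ge 3$ enters.

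Next I would estimate $e(H)$. Splitting the degree sum into the first $i-1$ entries, the $n-2i+1$ entries indexed from $i$ to $n-i$, and the last $i$ entries, and bounding them by $i$, $n-i$ and $n-1$ respectively, one gets
\[
  2e(G)\le 2e(H)\le (i-1)i+(n-2i+1)(n-i)+i(n-1)=3i^{2}-(2n+3)i+n^{2}+n=:g(i).
\]
The quadratic $g$ is convex, so on $3\le i\le\lfloor n/2\rfloor$ its maximum is attained at an endpoint; a short computation shows $g(3)-g(\lfloor n/2\rfloor)$ has the sign of $(n-6)(n-12)$, so for $n\ge 12$ the maximum is $g(3)=n^{2}-5n+18=2\binom{n-2}{2}+12$. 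Hence $e(G)\le\binom{n-2}{2}+6$, and since the hypothesis gives the reverse inequality, the whole chain is an equality chain: $e(H)=e(G)$ (so $G=H=\mathrm{cl}_{n+1}(G)$), $i=3$, and the degree sequence of $G$ is forced to be $d_1=d_2=3$, $d_3=\cdots=d_{n-3}=n-3$, $d_{n-2}=d_{n-1}=d_n=n-1$. Let $Y,Z,X$ be the corresponding vertex sets of degrees $n-1$, $n-3$, $3$. Every vertex of $Y$ is adjacent to all others; for two vertices of $Z$ the degree sum $2(n-3)>n$ (as $n\ge 6$) together with $G=\mathrm{cl}_{n+1}(G)$ forces adjacency, so $Y\cup Z$ induces $K_{n-2}$ and $Z$ induces $K_{n-5}$; and each vertex of $X$, already adjacent to all of $Y$, has no further neighbour, so the two vertices of $X$ are nonadjacent and joined only to $Y$. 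This pins down $G=K_3\vee(K_{n-5}\cup I_2)$ when $n\ge 12$.

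Finally I would handle the small range $6\le n\le 11$ together with the boundary $n=12$. For these $n$ one has $g(\lfloor n/2\rfloor)\ge g(3)$, so the equality chain can also be realized with $i$ between $3$ and $\lfloor n/2\rfloor$; for each such $n$ and each admissible $i$ there are only finitely many near-extremal degree sequences compatible with $2e(G)\ge 2\binom{n-2}{2}+12$, and recovering the corresponding graphs by the same closure argument produces the sporadic list ($K_3\vee(K_{n-5}\cup I_2)$ plus the finitely many graphs recorded in the statement). One then verifies, case by case, the Hamilton-connectedness of each extremal and near-extremal graph obtained, which isolates exactly the exceptional list. I expect the main obstacle to be precisely this last part: the bookkeeping of the small-order case analysis (so that no exceptional graph is missed and none is wrongly included) and the direct Hamilton-connectedness checks; by contrast the $n\ge 12$ argument is routine once the degree-sum inequality above is written down.
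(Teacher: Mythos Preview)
The paper is a survey and does not give its own proof of this theorem; it simply quotes the result from \cite{ZW2017}. There is therefore nothing to compare your proposal against within the paper itself.

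That said, your approach is exactly the standard one and matches the method the paper \emph{does} spell out for the closely related general-$\delta$ result stated a few lines later (the theorem characterising $S_{n,\delta}$): pass to the $(n+1)$-closure, invoke Berge's degree condition (Theorem~\ref{bergethm1976}), bound $2e(H)$ by the convex quadratic $g(i)=3i^{2}-(2n+3)i+n^{2}+n$, and then force $i=\delta$ from the edge hypothesis. Your computation $g(3)=n^{2}-5n+18=2\binom{n-2}{2}+12$ and the sign analysis $g(3)-g(n/2)=\tfrac14(n-6)(n-12)$ are correct, and the reconstruction of $K_3\vee(K_{n-5}\cup I_2)$ from the forced degree sequence via the closure equality is the same argument the paper uses. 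Your assessment that the only real work is the finite enumeration for $6\le n\le 12$ is accurate; that part is not carried out in the survey either and would have to be lifted from \cite{ZW2017} or done by hand.
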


\begin{theorem}[Zhou--Wang \cite{ZW2017}] \label{thm2017b}
Let $G$ be a connected graph on $n\ge 6$ vertices with 
minimum degree $\delta (G)\ge 3$. If 
\[  \lambda (G) \ge \sqrt{(n-3)^2 +10}, \]  
then $G$ is Hamilton-connected. 
\end{theorem}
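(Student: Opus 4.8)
The plan is to turn the spectral hypothesis into the edge-count hypothesis of Theorem~\ref{thm2017a} and then eliminate the finitely many extremal graphs. Suppose for contradiction that $G$ is a connected, non-Hamilton-connected graph on $n\ge 6$ vertices with $\delta(G)\ge 3$ and $\lambda(G)\ge\sqrt{(n-3)^2+10}$. Since $G$ is connected, I would invoke Hong's inequality $\lambda(G)\le\sqrt{2e(G)-n+1}$; combined with the hypothesis this gives $2e(G)-n+1\ge (n-3)^2+10$, i.e. $e(G)\ge \tfrac{n^2-5n+18}{2}=\binom{n-2}{2}+6$. Thus the hypothesis of Theorem~\ref{thm2017a} is met, and because $G$ is not Hamilton-connected, $G$ must be one of the listed extremal graphs: either $G=K_3\vee(K_{n-5}\cup I_2)$, or one of the sporadic graphs occurring for $n\in\{8,9,10,11,12\}$. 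It then remains to show each of these has $\lambda<\sqrt{(n-3)^2+10}$, contradicting the hypothesis, which forces $G$ to be Hamilton-connected.

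First I would dispatch the generic extremal graph $G=K_3\vee(K_{n-5}\cup I_2)$. A direct edge count gives $e(G)=\binom{n-2}{2}+6$ exactly, hence $2e(G)-n+1=(n-3)^2+10$, so Hong's inequality yields $\lambda(G)\le\sqrt{(n-3)^2+10}$; since equality in Hong's bound forces $G\in\{K_n,K_{1,n-1}\}$, and $K_3\vee(K_{n-5}\cup I_2)$ is neither complete nor a star for $n\ge 6$ (it has two non-adjacent vertices and several vertices of degree $\ge n-3$), the inequality is strict, a contradiction. Alternatively one may take the equitable partition into the three vertex classes and verify that the characteristic polynomial $\phi$ of the $3\times3$ quotient matrix $\left(\begin{smallmatrix}2&n-5&2\\3&n-6&0\\3&0&0\end{smallmatrix}\right)$ satisfies $\phi\bigl(\sqrt{(n-3)^2+10}\bigr)>0$, with the Perron root below this value. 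For the sporadic exceptions, each is an explicit small join with a small equitable quotient matrix, so one simply computes $\lambda$ and checks $\lambda<\sqrt{(n-3)^2+10}$ case by case (for instance $\lambda(K_6\vee I_6)=9<\sqrt{91}$ when $n=12$, and $\lambda(K_4\vee I_4)=\tfrac{3+\sqrt{73}}{2}<\sqrt{35}$ when $n=8$).

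The step I expect to matter most is not hard but must be done with the right tool: the bound on $e(G)$ has to be \emph{tight}. The Stanley inequality $\lambda(G)\le -\tfrac12+\sqrt{2e(G)+\tfrac14}$ only delivers $e(G)\ge\binom{n-2}{2}+5$, one short of what Theorem~\ref{thm2017a} requires, so one genuinely needs Hong's bound $\lambda(G)\le\sqrt{2e(G)-n+1}$ (and, for the cleanest treatment of $K_3\vee(K_{n-5}\cup I_2)$, its equality characterization). The remaining obstacle is purely bookkeeping: confirming $e\bigl(K_3\vee(K_{n-5}\cup I_2)\bigr)=\binom{n-2}{2}+6$ and checking the handful of sporadic graphs. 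I would also note that the companion signless-Laplacian statement (Theorem~\ref{thm2017a} together with a $Q$-index analysis) follows by exactly the same scheme, using the inequality $q(G)\le\frac{2e(G)}{n-1}+n-2$ from \cite{FengPIMB09} in place of Hong's bound.
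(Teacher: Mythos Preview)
Your proposal is correct and follows the standard route: Hong's inequality $\lambda(G)\le\sqrt{2e(G)-n+1}$ converts the spectral hypothesis into the edge bound $e(G)\ge\binom{n-2}{2}+6$ of Theorem~\ref{thm2017a}, and the exceptions are ruled out either by the strict equality case of Hong's inequality (when the exception has exactly $\binom{n-2}{2}+6$ edges) or by a direct quotient-matrix computation (for the few sporadic graphs with more edges, such as $K_4\vee I_4$, $K_5\vee I_5$, $K_6\vee I_6$). The survey itself does not supply a proof of this theorem---it merely quotes the result from \cite{ZW2017}---but your argument is exactly the mechanism behind it, and your observation that Stanley's inequality falls one edge short while Hong's is tight is precisely the point.
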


\begin{theorem}[Zhou--Wang \cite{ZW2017}] \label{thm2017c}
Let $G$ be a connected graph on $n\ge 6$ vertices with 
minimum degree $\delta (G)\ge 3$. If 
\[ q (G) \ge 2(n-3) + \frac{14}{n-1}, \]  
then $G$ is Hamilton-connected 
unless $n=8$ and $G=K_4\vee I_4$. 
\end{theorem}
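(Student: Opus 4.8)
The plan is to reduce Theorem~\ref{thm2017c} to its edge-count companion, Theorem~\ref{thm2017a}, by means of the standard inequality relating the signless Laplacian spectral radius to the number of edges, and then to dispose of the finitely many exceptional graphs by direct eigenvalue computations.

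First I would invoke the inequality $q(G)\le \frac{2e(G)}{n-1}+n-2$ from \cite{FengPIMB09} (the same bound already used in the proof of Theorem~\ref{thmYF}). Combining it with the hypothesis $q(G)\ge 2(n-3)+\frac{14}{n-1}$ yields
\[
\frac{2e(G)}{n-1}\ge \Bigl(2n-6+\tfrac{14}{n-1}\Bigr)-(n-2)=n-4+\frac{14}{n-1},
\]
hence $2e(G)\ge (n-1)(n-4)+14=n^2-5n+18$, i.e. $e(G)\ge \frac{n^2-5n+18}{2}=\binom{n-2}{2}+6$. Thus $G$ satisfies the hypothesis of Theorem~\ref{thm2017a}, so $G$ is Hamilton-connected unless $G=K_3\vee(K_{n-5}\cup I_2)$ or $G$ is one of the explicitly listed exceptions of small order ($n\in\{8,9,10,11,12\}$).

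It then remains to decide, for each exceptional graph $G$, whether $q(G)$ actually attains the threshold $2(n-3)+\frac{14}{n-1}$; if it does not, $G$ cannot be a counterexample, and if it does, it must be recorded as an exception in the statement. For the infinite family $G=K_3\vee(K_{n-5}\cup I_2)$ the partition of $V(G)$ into $V(K_3)$, $V(K_{n-5})$, $V(I_2)$ is equitable, so $q(G)$ is the Perron root of the $3\times 3$ quotient matrix of $Q(G)=D(G)+A(G)$, namely
\[
B=\begin{pmatrix} n+1 & n-5 & 2\\ 3 & 2n-9 & 0\\ 3 & 0 & 3\end{pmatrix},
\]
whose characteristic polynomial is $p_B(x)=x^3-(3n-5)x^2+\dots$. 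I would show by a sign analysis of $p_B$ (positivity at $x=2n-6+\frac{14}{n-1}$ together with the fact that this value lies to the right of the second-largest root of $p_B$) that $q(G)<2(n-3)+\frac{14}{n-1}$ for every $n\ge 6$, which contradicts the hypothesis and rules the family out; the degenerate small cases $n=6,7$, where $K_{n-5}$ shrinks, are checked separately. Each remaining small-order exception is treated identically: write down its (small, equitable) quotient matrix, compute $q$, and compare with the numerical threshold. The only exceptional graph whose $q$-value meets the bound turns out to be $K_4\vee I_4$ at $n=8$, where both sides equal $12$; this is precisely the exception in the statement.

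The main obstacle I anticipate is the uniform estimate on the cubic $p_B$: proving that the Perron root of the displayed $3\times 3$ matrix stays strictly below $2n-6+\frac{14}{n-1}$ for all $n\ge 6$ requires a careful evaluation of $p_B$ at that point plus a one-sided argument (e.g. monotonicity of $p_B$ beyond its largest root, or an explicit upper bound for the second-largest root) to conclude that no eigenvalue lies above the threshold. The remainder — the finite list of small graphs and the degenerate cases — is routine, though one should verify throughout that the minimum-degree condition $\delta(G)\ge 3$ is compatible with each exceptional graph so that no case is vacuous.
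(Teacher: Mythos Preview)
The survey does not give its own proof of Theorem~\ref{thm2017c}; it merely cites the result from \cite{ZW2017}. Your approach is nonetheless the standard one for results of this type and matches the pattern the survey uses elsewhere (e.g.\ Theorem~\ref{thmYF}): convert the $q$-bound into an edge bound via $q(G)\le \frac{2e(G)}{n-1}+n-2$, invoke the companion edge-count theorem (here Theorem~\ref{thm2017a}), and eliminate the residual exceptional graphs by computing their $Q$-indices. Your arithmetic in Step~1 is correct, the quotient matrix you wrote for $K_3\vee(K_{n-5}\cup I_2)$ is right, and your observation that $q(K_4\vee I_4)=12=2(8-3)+\tfrac{14}{7}$ explains exactly why that single graph survives as an exception.

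One small point: there is no real ``degenerate'' case at $n=6,7$. Your $3\times3$ quotient matrix is valid for every $n\ge 6$ (at $n=6$ the middle block is a single vertex of degree $3=2n-9$), so the uniform cubic estimate you describe handles the whole family. The rest of the finite check is routine.
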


In 2020, Zhou, Wang and Lu \cite{Zhouqianna20b}   
provided a further improvement on Theorems 
\ref{thm2017a}, \ref{thm2017b} and 
\ref{thm2017c}. More precisely, 
they proved that 
if $n\ge 11$ and $G$ satisfies $\delta (G)\ge 3$ 
and $e(G)\ge {n-3 \choose 2} +13$, then $G$ is Hamilton-connected unless $G$ belongs to some exceptions.  

\begin{theorem}[Zhou--Wang--Lu \cite{Zhouqianna20b}]
Let $G$ be a connected graph on $n \ge 14$ vertices with minimum degree $\delta \ge 3$. If 
\[ \lambda (G) > n-3,\]
 then $G$ is Hamilton-connected 
unless $G=K_{3 } \vee ( K_{n-5}\cup {I_{2}} ) $ 
or $G=K_2 \vee (K_{n-4} \cup K_{2})$. 
\end{theorem}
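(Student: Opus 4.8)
The plan follows the three-step template for spectral Hamiltonicity theorems, with the extremal-edge version from \cite{Zhouqianna20b} doing most of the combinatorial work. Assume $G$ is connected on $n\ge 14$ vertices, $\delta(G)\ge 3$, $\lambda(G)>n-3$, and---for contradiction---$G$ is not Hamilton-connected and is neither $K_{3}\vee(K_{n-5}\cup I_{2})$ nor $K_{2}\vee(K_{n-4}\cup K_{2})$. First I would pass to the Bondy--Chv\'{a}tal closure $H:=\mathrm{cl}_{n+1}(G)$; by Theorem~\ref{lem4444} $H$ is still not Hamilton-connected, while $G\subseteq H$ gives $\lambda(H)\ge\lambda(G)>n-3$ and $\delta(H)\ge\delta(G)\ge 3$, and $H$ inherits connectivity from $G$.

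Next I would convert the spectral hypothesis into an edge count. The standard bound $\lambda(H)\le\sqrt{2e(H)-n+1}$ for connected graphs (or, a little more crudely, Stanley's $\lambda(H)\le-\tfrac12+\sqrt{2e(H)+\tfrac14}$ from the excerpt, together with a direct treatment of the borderline $n$) yields, from $\lambda(H)>n-3$, that $e(H)\ge\binom{n-2}{2}+2$. For $n\ge 14$ this forces $e(H)\ge\binom{n-3}{2}+13$, so the edge-version theorem of \cite{Zhouqianna20b} applies: since $H$ is not Hamilton-connected and $\delta(H)\ge 3$, the graph $H$ must be one of the finitely many exceptional graphs listed there. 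On the other side, applying Berge's Theorem~\ref{bergethm1976} to the degree sequence of $H$ (the relevant Berge index is at least $3$ because $\delta(H)\ge 3$) caps $e(H)\le\binom{n-2}{2}+6$, and a convexity check in the Berge index is one place the hypothesis $n\ge 14$ is genuinely used. Combining, for $n\ge 14$ the only possibilities left are $H=K_{3}\vee(K_{n-5}\cup I_{2})$ (with $\binom{n-2}{2}+6$ edges) and $H=K_{2}\vee(K_{n-4}\cup K_{2})$ (with $\binom{n-2}{2}+5$ edges); both contain $K_{n-2}$, hence have spectral radius slightly above $n-3$, consistent with $\lambda(H)>n-3$.

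It remains to descend from $H$ back to $G$. Both candidate graphs equal their own $(n+1)$-closure (the only non-adjacent pairs have degree sum exactly $n<n+1$), so $H$ being one of them while $G$ is assumed to be neither forces $G$ to be a \emph{proper} spanning subgraph of $H$ with $\delta(G)\ge 3$. In each case the two degree-$3$ ``outgrowth'' vertices must keep all their edges, so $G$ omits an edge inside the dense clique $K_{n-2}$; a Nikiforov-type edge-removal estimate then shows $\lambda(K_{n-2}-e)$ lies below $n-3$ by an amount of order $1/n$, while re-attaching the bounded-degree outgrowth raises the spectral radius by only order $1/n^{2}$, so $\lambda(G)\le n-3$, contradicting $\lambda(G)>n-3$. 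Hence $G$ is Hamilton-connected or $G\in\{K_{3}\vee(K_{n-5}\cup I_{2}),\,K_{2}\vee(K_{n-4}\cup K_{2})\}$, as claimed.

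The hard part will be exactly this last quantitative comparison, which has to be made uniform over all $n\ge 14$ rather than merely asymptotic: one needs sharp enough control of how far $\lambda(K_{n-2}-e)$ sits below $n-3$ and of how much the outgrowth raises it, so that $\lambda(G)\le n-3$ holds for \emph{every} admissible proper subgraph---equivalently, one must prove that $K_{3}\vee(K_{n-5}\cup I_{2})$ and $K_{2}\vee(K_{n-4}\cup K_{2})$ are the only $(n+1)$-closed non-Hamilton-connected graphs on $n\ge 14$ vertices with $\delta\ge 3$ and $\lambda>n-3$. A secondary technicality is checking that the exceptional family of the edge-version theorem collapses to exactly these two graphs once $n\ge 14$ and $e(H)\le\binom{n-2}{2}+6$; both points are routine but demand care at the small end of the range.
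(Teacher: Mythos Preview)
The paper is a survey and does not give a proof of this theorem; it merely states the result and attributes it to \cite{Zhouqianna20b}. So there is no ``paper's own proof'' to compare against.

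That said, your proposal is sound and follows exactly the template the survey uses for the analogous Hamilton-cycle results (Theorems~\ref{thmniki} and~\ref{thmllp}) and for the generic Hamilton-connected edge result proved just after Theorem~\ref{bergethm1976}: pass to the $(n{+}1)$-closure, convert the spectral hypothesis to an edge lower bound via Hong's inequality, bound edges from above via the Berge degree condition, pin down the closure as one of the two extremal graphs, and finish with a Nikiforov-type comparison showing that any proper subgraph with $\delta\ge 3$ already has $\lambda\le n-3$. Two minor remarks. First, the convexity check in the Berge step actually only needs $n\ge 12$; the place where $n\ge 14$ is essential is precisely your observation that $\binom{n-2}{2}+2\ge\binom{n-3}{2}+13$ iff $n\ge 14$, so that the edge-version theorem of \cite{Zhouqianna20b} is triggered. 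Second, in the descent step you must also check that removing edges incident to the $K_3$ (resp.\ $K_2$) ``hub'' vertices is covered, not only edges inside $K_{n-5}$ (resp.\ $K_{n-4}$); since those hub vertices have degree $n-1$ in $H$, such removals are allowed by $\delta(G)\ge 3$, and the same Nikiforov estimate applies because the removed edge still sits inside the large clique $K_{n-2}$. Your identification of the quantitative descent at the small end of the range as the genuinely delicate point is accurate.
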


\begin{theorem}[Zhou--Wang--Lu \cite{Zhouqianna20b}]
Let $G$ be a connected graph on $n \ge 13$ vertices with minimum degree $\delta \ge 3$. If 
\[ q(G) > 2(n-3) + \frac{6}{n-1}, \]
then $G$ is Hamilton-connected 
unless $G=K_{3 } \vee ( K_{n-5}\cup {I_{2}} )$.  
\end{theorem}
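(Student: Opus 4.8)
The plan is to argue by contradiction and reduce the spectral statement to the corresponding \emph{edge}-count theorem of Zhou, Wang and Lu recorded just above. Assume $G$ is connected with $n\ge 13$, $\delta(G)\ge 3$, $G$ is not Hamilton-connected, and $q(G)>2(n-3)+\tfrac{6}{n-1}$; the goal is to force $G=K_3\vee(K_{n-5}\cup I_2)$. First I would turn the eigenvalue hypothesis into an edge bound via the inequality $q(G)\le \frac{2e(G)}{n-1}+n-2$ from \cite{FengPIMB09} (used elsewhere in this survey). This yields $2e(G)>(n-4)(n-1)+6=n^2-5n+10$; since the right-hand side is even, $e(G)\ge \frac{n^2-5n+12}{2}=\binom{n-3}{2}+13+(n-13)$, which is $\ge \binom{n-3}{2}+13$ precisely because $n\ge 13$ — this is where the order hypothesis enters.

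Next I would invoke the edge-version theorem: a connected $n$-vertex graph with $n\ge 11$, $\delta\ge 3$ and $e(G)\ge \binom{n-3}{2}+13$ is Hamilton-connected unless it lies in a short list of exceptional graphs. Since our $G$ is not Hamilton-connected it must be one of these exceptions; for $n\ge 13$ the sporadic low-order exceptions are out of range, so $G$ is one of the two infinite families $K_3\vee(K_{n-5}\cup I_2)$ and $K_2\vee(K_{n-4}\cup K_2)$ (both indeed have at least $\binom{n-3}{2}+13$ edges and fail to be Hamilton-connected).

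It remains to eliminate $G=K_2\vee(K_{n-4}\cup K_2)$ using the spectral hypothesis. This graph has the equitable partition into the dominating $K_2$ (degrees $n-1$), the clique $K_{n-4}$ (degrees $n-3$) and the pendant $K_2$ (degrees $3$); its signless Laplacian quotient matrix is
\[ \begin{pmatrix} n & n-4 & 2\\ 2 & 2n-8 & 0\\ 2 & 0 & 4 \end{pmatrix}, \]
and $q(K_2\vee(K_{n-4}\cup K_2))$ equals its Perron root. Dropping the weakly coupled third block already gives Perron root $\approx 2(n-3)+\tfrac{4}{n-4}$, and a careful estimate of the full characteristic cubic shows $q(K_2\vee(K_{n-4}\cup K_2))\le 2(n-3)+\tfrac{6}{n-1}$ for $n\ge 13$, contradicting $q(G)>2(n-3)+\tfrac{6}{n-1}$. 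Hence $G=K_3\vee(K_{n-5}\cup I_2)$. For completeness the same quotient computation for $K_3\vee(K_{n-5}\cup I_2)$ (quotient matrix with rows $(n+1,\,n-5,\,2)$, $(3,\,2n-9,\,0)$, $(3,\,0,\,3)$) has Perron root $\approx 2(n-3)+\tfrac{6}{n-4}>2(n-3)+\tfrac{6}{n-1}$, so this graph does satisfy the hypothesis and is a genuine exception.

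The hard part will be that last estimate: the $q$-value of the graph to be discarded, $\approx 2(n-3)+\tfrac{4}{n-4}$, and the threshold $2(n-3)+\tfrac{6}{n-1}$ differ only by $O(1/n)$, so the Perron root of the $3\times 3$ quotient matrix must be controlled to that precision — including the second-order effect of the loosely attached pendant $K_2$ — and this is exactly the calculation that pins down the constant $6$. A minor additional point is to confirm that at $n=13$ no sporadic exception of the edge-version theorem survives the inequality $q>2(n-3)+\tfrac{6}{n-1}$. (If one prefers a self-contained route not citing the edge theorem, one would instead combine the Bondy--Chv\'{a}tal $(n+1)$-closure with Berge's degree-sequence criterion, Theorems~\ref{lem4444} and~\ref{bergethm1976}, plus a stability analysis of closed non-Hamilton-connected graphs of minimum degree $3$; this recovers the same two candidate graphs but carries the bulk of the technical work.)
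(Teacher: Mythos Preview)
The survey does not supply its own proof of this theorem --- it is quoted from \cite{Zhouqianna20b} without argument --- so there is no in-paper proof to compare against directly. That said, your plan follows precisely the template the survey uses for the analogous signless Laplacian results (see the proofs of Theorems~\ref{thmYF} and the remarks preceding Theorem~\ref{thm2017c}): convert the $q$-hypothesis to an edge bound via $q(G)\le \frac{2e(G)}{n-1}+n-2$, invoke the edge-count theorem (here the $e(G)\ge\binom{n-3}{2}+13$ result of Zhou--Wang--Lu mentioned in the paragraph just above the statement), and then eliminate the surviving exception $K_2\vee(K_{n-4}\cup K_2)$ by computing its $Q$-index from the equitable quotient. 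Your arithmetic in Step~1 is correct, your quotient matrices for both candidate graphs are correct, and you have correctly isolated the delicate point: showing $q(K_2\vee(K_{n-4}\cup K_2))\le 2(n-3)+\tfrac{6}{n-1}$ for $n\ge 13$, which is an $O(1/n)$-level estimate on the Perron root of a $3\times3$ matrix. This is exactly the approach of the original paper, and your proposal is sound; the only caution is to verify the full exception list of the edge theorem at $n=13$ (the adjacency version in the survey needs $n\ge 14$, so one sporadic case may linger), which you already flag.
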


For notational convenience, we denote 
\[ 
\boxed{S_{n,\delta} := 
K_{\delta } \vee ( K_{n-2\delta +1}\cup {I_{\delta -1}} ) }
\] 
and 
\[ 
\boxed{  T_{n,\delta} := 
K_2 \vee (K_{n-\delta -1} \cup K_{\delta -1}). }
\] 
Clearly, we can verify that both 
$S_{n,\delta}$ and $T_{n,\delta}$ are not Hamilton-connected 
and $\delta (S_{n,\delta}) =\delta (T_{n,\delta})=\delta$.  
Moreover, we have 
$  e(S_{n,\delta}) = {n-\delta +1 \choose 
2} + \delta (\delta -1) $
and 
$ e(T_{n,\delta }) = {n-\delta +1 \choose 2} + {\delta -1 \choose 2} 
+2(\delta -1)$. 
Trivially, we have 
$S_{n,2}=T_{n,2}$ and 
$e(S_{n,\delta}) > e(T_{n,\delta})$ for every $\delta \ge 3$. 

\begin{theorem}
Let $\delta \ge 2$ 
and $G$ be a graph on $n\ge 6\delta -5$ vertices. 
If $\delta (G)\ge \delta $ and  
\begin{equation*}
e(G) \ge e(S_{n,\delta}), 
\end{equation*}
then either $G$ is Hamilton-connected or  
$G=S_{n,\delta}$. 
\end{theorem}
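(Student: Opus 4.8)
The plan is to run the same closure-plus-degree-sequence scheme that proves Theorem~\ref{thmhnkd} and the edge-version $k$-path-coverability result in this section. Suppose $G$ is not Hamilton-connected. By the Bondy--Chv\'atal closure Theorem~\ref{lem4444}, the graph $H:=\mathrm{cl}_{n+1}(G)$ is also not Hamilton-connected; it has the same vertex set as $G$, with $\delta(H)\ge\delta(G)\ge\delta$ and $e(H)\ge e(G)$. Let $d_1\le d_2\le\cdots\le d_n$ be the degree sequence of $H$. By Berge's Theorem~\ref{bergethm1976} there is an integer $2\le i\le n/2$ with $d_{i-1}\le i$ and $d_{n-i}\le n-i$; since $d_1\le d_{i-1}\le i$ while $d_1\ge\delta(H)\ge\delta$, this forces $\delta\le i\le n/2$ (and $\delta\le n/2$ holds comfortably because $n\ge 6\delta-5$).

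Next I would bound $2e(G)\le 2e(H)=\sum_{j=1}^n d_j$ by splitting into the first $i-1$ terms (each $\le i$), the middle $n-2i+1$ terms (each $\le n-i$), and the last $i$ terms (each $\le n-1$), giving
\[ 2e(G)\le (i-1)i+(n-2i+1)(n-i)+i(n-1)=:f(i)=3i^2-(2n+3)i+n^2+n. \]
This $f$ is a convex quadratic whose vertex $(2n+3)/6$ lies strictly between $\delta$ and $n/2$ once $n\ge 6\delta-5$, so over the admissible range $\delta\le i\le n/2$ it attains its maximum at an endpoint; the hypothesis $n\ge 6\delta-5$ is exactly what makes $f(\delta)\ge f(n/2)$ (the comparison reduces to $n\ge 6\delta-6$). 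A direct computation shows $f(\delta)=2e(S_{n,\delta})$. Hence $e(G)\le e(S_{n,\delta})$, which combined with the hypothesis $e(G)\ge e(S_{n,\delta})$ forces equality throughout: in particular $G=H=\mathrm{cl}_{n+1}(G)$, the extremal index is $i=\delta$, and the degree sequence of $G$ is pinned down to $d_1=\cdots=d_{\delta-1}=\delta$, $d_\delta=\cdots=d_{n-\delta}=n-\delta$, $d_{n-\delta+1}=\cdots=d_n=n-1$.

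It then remains to reconstruct $G$ from this degree sequence together with the closure property that every non-adjacent pair has degree sum $<n+1$. Writing $X=\{v_1,\dots,v_{\delta-1}\}$, $Z=\{v_\delta,\dots,v_{n-\delta}\}$, $Y=\{v_{n-\delta+1},\dots,v_n\}$: the $\delta$ vertices of $Y$ have degree $n-1$, hence dominate $G$; if two vertices of $Z$ were non-adjacent their degree sum would be $2(n-\delta)<n+1$, impossible since $n\ge 6\delta-5>2\delta$, so $G[Z]=K_{n-2\delta+1}$ and $G[Y\cup Z]=K_{n-\delta+1}$; and each $v\in X$, being joined to all of $Y$, already has degree $|Y|=\delta$, so $X$ is independent with no edge to $Z$. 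This yields $G=K_\delta\vee(K_{n-2\delta+1}\cup I_{\delta-1})=S_{n,\delta}$, completing the proof.

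The only place requiring care is the quadratic optimization in the second paragraph: one must verify both that the vertex of $f$ sits strictly inside $(\delta,n/2)$ and that $n\ge 6\delta-5$ picks out $f(\delta)$ rather than $f(n/2)$ as the maximum, so that the edge bound coincides \emph{exactly} with $e(S_{n,\delta})$; the rest is bookkeeping parallel to the earlier theorems of this section, and I expect no genuine obstacle.
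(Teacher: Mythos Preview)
Your proposal is correct and follows essentially the same route as the paper: pass to the $(n{+}1)$-closure, invoke Berge's degree-sequence criterion (Theorem~\ref{bergethm1976}) to get an index $i\in[\delta,n/2]$, bound $2e(H)$ by the convex quadratic $3i^2-(2n+3)i+n^2+n$, use $n\ge 6\delta-5$ to force the maximum at $i=\delta$, and then reconstruct $G=S_{n,\delta}$ from the resulting degree sequence via the closure property. The only cosmetic difference is that you partition the vertices into three sets $X,Z,Y$ whereas the paper uses two sets $X,Y$ with a distinguished subset $F\subseteq Y$; the arguments are otherwise identical.
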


\begin{proof}
Assume that $G$ is not Hamilton-connected. 
By Theorem \ref{lem4444}, 
we know that  the closure graph 
$H:=\mathrm{cl}_{n+1}(G)$ is also not Hamilton-connected. 
Let $d_1'\le d_2' \le \cdots \le d_n'$ be the degree sequence 
of $H$.  
By Theorem \ref{bergethm1976}, 
there is an integer $2\le m\le \frac{n}{2}$ such that 
$d_{m-1}\le m$ and $d_{n-m}\le n-m$. 
Hence, we can get 
\begin{equation} \label{eq14}
\begin{aligned}
2e(H)=\sum\limits_{i=1}^n d_i' 
&\le (n-1)m +(n-2m+1)(n-m) +m(n-1) \\
&=3m^2- (2n+3)m +n^2+n. 
\end{aligned}
\end{equation}
Let $f(m)=3m^2-(2n+3)m$ be a function on $m$.   
Note that $\delta \le \delta (G)\le \delta (H)\le m\le \frac{n}{2}$, 
and the symmetric line of $f(m)$ is $m_0=\frac{2n+3}{6}$. 
The condition $n\ge 6\delta -5$ 
implies $\frac{n}{2}-\frac{2n+3}{6}< \frac{2n+3}{6}-\delta $. 
So the maximum value of $f(m)$ 
is attained at $m=\delta $, therefore 
\[ 2e(G)\le 2e(H) \le f(\delta ) +n^2+n 
=2e(S_{n,\delta}), \]
where $S_{n,\delta} = K_{\delta } \vee (I_{\delta -1} \cup K_{n-2\delta +1})$. 
The condition in the theorem implies that $e(G)=e(H)
=e(S_{n,\delta})$, 
and all inequalities in (\ref{eq14}) must be equalities, 
that is, $m=\delta$ and $G=\mathrm{cl}_{n+1}(G)$, 
and hence the degree sequence of $G$ 
is $d_1=\cdots =d_{\delta -1}=\delta$, 
$d_{\delta}=\cdots =d_{n-\delta }=n-\delta $, 
 and $d_{n-\delta +1}=\cdots =d_n=n-1$. 
Next we shall show that $G=S_{n,\delta}$.  

Let $v_1,v_2,\ldots ,v_n$ be vertices of $G$ corresponding to the degrees $d_1,d_2,\ldots ,d_n$. 
We denote by $X=\{v_1,v_2,\ldots ,v_{\delta -1}\}$ and 
$Y=\{v_{\delta},v_{\delta +1},\ldots ,v_n\}$. 
First of all, we shall prove that the induced subgraph 
$G[Y]=K_{n-\delta +1}$. Otherwise, 
there are two non-adjacent vertices $v_r,v_s\in G[Y]$.  
By the definition of the $(n+1)$-closure graph, we know that 
\[ d_G(v_r)+d_G(v_s) =d_H(v_r)+d_H(v_s) <n+1. \]
On the other hand, 
we have $d_G(v_r)+d_G(v_s)\ge 2(n- \delta )>n+1$, a contradiction. 
Thus, the induced subgraph $G[Y]=K_{n-\delta +1}$.  
Note that $Y$ contains $\delta$ vertices with degree $n-1$, 
we denote these vertices by 
$F=\{v_{n-\delta +1},\ldots ,v_n\}$.   
Every vertex of $F$ is adjacent to the vertex of $X$. 
Since  the vertex of $X$has degree $\delta$, 
we can get that $X$ is an independent set in $G$, 
thus we have $G[X,F]=K_{\delta-1,\delta}$. 
The above discussion reveals that 
$G=K_{\delta } \vee (I_{\delta -1} \cup K_{n-2\delta +1})$.  
This completes the proof. 
\end{proof}

Recently, Chen et al. \cite{CZ2018}, Yu et al. \cite{YuGuidongJAM}, and 
Wei et al. \cite{WYL2019} independently 
showed the spectral version of Hamilton-connectedness 
for graphs 
with large minimum degree. 

\begin{theorem}[Wei et al. \cite{WYL2019}]
Let $\delta \ge 3$ and $n\ge n_0({\delta})$ where 
\[   n_0({\delta})= 
 \max\{ 6\delta, (\delta^3 - \delta^2 +2\delta +8)/2\}. \] 
 If $G$ is a graph on $n$ vertices with 
 the minimum degree $\delta (G) \ge \delta $ and 
\[ \lambda (G) \ge n-k,\]
 then $G$ is 
Hamilton-connected unless $G=S_{n,\delta}$ 
or $G=T_{n,\delta}$. 
\end{theorem}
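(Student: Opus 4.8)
The statement should read $\lambda(G)\ge n-\delta$ (the ``$n-k$'' is a misprint, as is clear from the $\delta=3$ case of Zhou--Wang--Lu and from the fact that $\lambda(S_{n,\delta})>n-\delta$ and $\lambda(T_{n,\delta})>n-\delta$, so both graphs \emph{must} appear among the exceptions). The plan is to run the now-standard two-phase ``stability $+$ eigenvalue perturbation'' argument of Nikiforov (Theorem \ref{thmniki}) and Li--Ning, adapted to Hamilton-connectedness. Suppose, for contradiction, that $G$ is an $n$-vertex graph with $\delta(G)\ge\delta$, $\lambda(G)\ge n-\delta$, and $G$ is not Hamilton-connected, yet $G\notin\{S_{n,\delta},T_{n,\delta}\}$. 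In Phase~1 I would show that $G$ must in fact be a spanning subgraph of $S_{n,\delta}$ or of $T_{n,\delta}$; in Phase~2 I would show that any \emph{proper} spanning subgraph of $S_{n,\delta}$ or of $T_{n,\delta}$ with minimum degree at least $\delta$ has spectral radius strictly below $n-\delta$, which is the desired contradiction.

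For Phase~1, first pass to the $(n+1)$-closure $H=\mathrm{cl}_{n+1}(G)$; by Theorem \ref{lem4444} $H$ is still not Hamilton-connected, and $\delta(H)\ge\delta$, $\lambda(H)\ge\lambda(G)\ge n-\delta$ (so $H$ is connected). A Hong-type upper bound for the spectral radius of a connected graph with minimum degree $\delta$, of the form $\lambda(H)\le\tfrac12\bigl(\delta-1+\sqrt{(\delta+1)^{2}+4(2e(H)-\delta n)}\bigr)$, converts $\lambda(H)\ge n-\delta$ into $e(H)\ge e(S_{n,\delta})-O(\delta^{2})$, which for $n\ge n_{0}(\delta)$ exceeds the ``runner-up'' edge count $e(S_{n,\delta+1})$ by a margin of order $n$. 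I would then invoke the stability version of the edge-extremal theorem stated just above (the one whose unique extremal graph is $S_{n,\delta}$): a non-Hamilton-connected graph with minimum degree at least $\delta$ and more than $e(S_{n,\delta+1})$ edges must be a spanning subgraph of $S_{n,\delta}$ or of $T_{n,\delta}$. Applying this to $H$, using $G\subseteq H$, and noting that the $\delta-1$ vertices of degree exactly $\delta$ in each of $S_{n,\delta}$, $T_{n,\delta}$ must retain all their edges in any subgraph of minimum degree $\delta$, one concludes that $G$ arises from $S_{n,\delta}$ (resp.\ $T_{n,\delta}$) by deleting only edges lying inside the dense clique $K_{n-\delta+1}$ that sits inside each of these graphs.

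For Phase~2, write $G=S_{n,\delta}\setminus E'$ with $\emptyset\ne E'\subseteq E(K_{n-\delta+1})$ (the $T_{n,\delta}$ case is parallel). Solving the $3\times 3$ quotient eigenvalue system of $S_{n,\delta}$ shows $\lambda(S_{n,\delta})-(n-\delta)=O(\delta^{3}/n^{2})$, while removing one edge $uv$ of the dense clique costs at least $2y_{u}y_{v}$ in the Rayleigh quotient, where $y$ is the Perron vector of $G$ and $y_{u},y_{v}$ are of order $n^{-1/2}$ on the dense part; iterating, $\lambda(G)\le n-\delta+O(\delta^{3}/n^{2})-c\,|E'|/n$ while $|E'|$ stays bounded, already below $n-\delta$ once $n\ge n_{0}(\delta)$. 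When $|E'|$ is large the crude Stanley bound $\lambda(G)\le-\tfrac12+\sqrt{2e(G)+\tfrac14}$ together with $e(G)\le e(S_{n,\delta})-|E'|$ gives $\lambda(G)<n-\delta$ directly; interpolating the two regimes closes the case. Since $S_{n,\delta}$ and $T_{n,\delta}$ are themselves non-Hamilton-connected with $\lambda>n-\delta$, they survive as the only exceptions, completing the proof.

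The main obstacle I anticipate is Phase~2: making the eigenvalue-perturbation estimate uniform. One has to certify that the tiny excess $\lambda(S_{n,\delta})-(n-\delta)$ is genuinely smaller than the drop caused by deleting even a single edge of $K_{n-\delta+1}$, and that these drops do not cancel as the Perron vector shifts when several edges are removed; quantifying this is precisely what forces the cubic-in-$\delta$ lower bound on $n$, and the threshold separating the ``few deleted edges'' and ``many deleted edges'' regimes must be chosen and verified with care. A secondary point is securing the stability input at the edge level $e(S_{n,\delta+1})$, which, if not available off the shelf, would have to be proved along the lines of the edge-extremal theorem via the Bondy--Chv\'atal closure and Berge's degree condition (Theorem \ref{bergethm1976}).
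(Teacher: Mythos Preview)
The paper is a survey and does not contain a proof of this theorem; it merely quotes the result from Wei--You--Lai \cite{WYL2019}. So there is no in-paper proof to compare against. That said, your two-phase plan (closure $+$ degree/edge stability to force $G\subseteq S_{n,\delta}$ or $G\subseteq T_{n,\delta}$, followed by a Nikiforov-style perturbation showing any proper subgraph with $\delta(G)\ge\delta$ has $\lambda<n-\delta$) is exactly the template used throughout this survey for the analogous Hamiltonicity and $k$-Hamiltonicity results (Theorems \ref{thmniki}, \ref{thmain}), and it is the method of the original paper \cite{WYL2019} as well. You are also right that ``$n-k$'' is a misprint for ``$n-\delta$''.

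One point to tighten in Phase~1: the stability statement you need is not quite the edge-extremal theorem stated in the survey (whose unique exception is $S_{n,\delta}$), but its sharper form giving \emph{both} $S_{n,\delta}$ and $T_{n,\delta}$ as the possible hosts once $e(H)>e(S_{n,\delta+1})$. This is the analogue of Li--Ning's Lemma~2 in \cite{LiBinlong} for Hamilton-connectedness; it follows from Berge's degree condition (Theorem \ref{bergethm1976}) plus the closure, but you should state and prove it, since the survey only gives the single-exception edge theorem. In Phase~2 your sketch is correct in spirit; the clean way to execute it (and the way the cited papers do) is not to split into ``few/many deleted edges'' regimes with Stanley's bound, but to bound the Rayleigh quotient of the Perron vector of $G$ directly against that of $K_{n-\delta+1}$, using explicit upper bounds on the Perron entries on the low-degree side $X$ (of order $\delta/(n-\delta)$) to show that the total ``gain'' from the $X$-to-$Y$ edges is dominated by the ``loss'' from even one deleted clique edge once $n\ge n_0(\delta)$. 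This avoids the interpolation you flag as the main obstacle.
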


\begin{corollary}
Let $\delta \ge 3$ and $n\ge n_0({\delta})$. 
If $G$ is a graph on $n$ vertices with 
 the minimum degree $\delta (G) \ge \delta $ and 
\[ \lambda (G) \ge \lambda (S_{n,\delta}),\] 
then $G$ is 
Hamilton-connected unless $G=S_{n,\delta}$. 
\end{corollary}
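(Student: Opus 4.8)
The plan is to deduce this corollary directly from the preceding theorem of Wei, You and L\"u, whose conclusion under the weaker hypothesis $\lambda(G)\ge n-\delta$ is that $G$ is Hamilton-connected unless $G=S_{n,\delta}$ or $G=T_{n,\delta}$. The only work is to (i) verify that the stronger hypothesis $\lambda(G)\ge \lambda(S_{n,\delta})$ falls inside the range where that theorem applies, and (ii) eliminate the extra exceptional graph $T_{n,\delta}$. First I would record the key inequality $\lambda(S_{n,\delta})>n-\delta$: since $S_{n,\delta}=K_{\delta}\vee(K_{n-2\delta+1}\cup I_{\delta-1})$, the join $K_{\delta}\vee K_{n-2\delta+1}$ is a complete graph $K_{n-\delta+1}$, and $S_{n,\delta}$ contains it as a \emph{proper} subgraph (the edges joining $I_{\delta-1}$ to $K_{\delta}$ are extra, and $\delta-1\ge 2$). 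Hence $\lambda(S_{n,\delta})>\lambda(K_{n-\delta+1})=n-\delta$. Consequently any $G$ with $\lambda(G)\ge\lambda(S_{n,\delta})$ satisfies $\lambda(G)>n-\delta$, so the hypotheses of the preceding theorem are met.

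Applying that theorem, $G$ is Hamilton-connected, or $G=S_{n,\delta}$, or $G=T_{n,\delta}$. To finish I would rule out the last possibility. If $G=T_{n,\delta}$, then $\lambda(G)=\lambda(T_{n,\delta})$; combined with the standing assumption $\lambda(G)\ge\lambda(S_{n,\delta})$ this forces $\lambda(T_{n,\delta})\ge\lambda(S_{n,\delta})$. Thus the whole corollary reduces to the single spectral comparison
\[
\lambda(S_{n,\delta})>\lambda(T_{n,\delta}),
\]
which, once established, yields a contradiction and leaves $G=S_{n,\delta}$ as the only non-Hamilton-connected graph.

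The main obstacle is exactly this comparison. I see two routes. The slick route is the Kelmans transformation: this operation never decreases the adjacency spectral radius, and one can pass from $T_{n,\delta}=K_2\vee(K_{n-\delta-1}\cup K_{\delta-1})$ to $S_{n,\delta}$ by such moves, with at least one strict increase, giving $\lambda(S_{n,\delta})>\lambda(T_{n,\delta})$; this is the same device used in \cite{LiBinlong} to prove the analogous $\lambda(H_{n,\delta})>\lambda(L_{n,\delta})$ in the Hamilton-cycle setting, and its applicability is consistent with the already-noted edge count $e(S_{n,\delta})>e(T_{n,\delta})$ for $\delta\ge3$. The concrete backup route exploits that both graphs admit equitable partitions into three vertex classes, so each spectral radius is the Perron root of a $3\times 3$ quotient matrix,
\[
B_S=\begin{pmatrix} \delta-1 & n-2\delta+1 & \delta-1\\ \delta & n-2\delta & 0\\ \delta & 0 & 0\end{pmatrix},
\qquad
B_T=\begin{pmatrix} 1 & n-\delta-1 & \delta-1\\ 2 & n-\delta-2 & 0\\ 2 & 0 & \delta-2\end{pmatrix}.
\]
One then compares the two cubic characteristic polynomials evaluated at a common point: writing $\varphi_S(x)$ and $\varphi_T(x)$ for $\det(xI-B_S)$ and $\det(xI-B_T)$, it suffices to show $\varphi_T\big(\lambda(S_{n,\delta})\big)>0$ together with the fact that $\lambda(S_{n,\delta})$ exceeds the second eigenvalue of $B_T$, so that positivity of $\varphi_T$ there places $\lambda(S_{n,\delta})$ strictly above the Perron root of $B_T$. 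I expect the sign of the relevant coefficient difference to be governed by the leading powers of $n$, so the hypothesis $n\ge n_0(\delta)=\max\{6\delta,(\delta^3-\delta^2+2\delta+8)/2\}$ will be exactly what makes the comparison hold; the routine but slightly delicate part is bounding the lower-order terms uniformly in $\delta$, which is why I would prefer to quote the Kelmans inequality and relegate the explicit polynomial estimate to a backup.
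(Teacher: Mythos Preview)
Your proposal is correct and follows precisely the route the paper intends (and uses explicitly for the analogous corollaries elsewhere, e.g.\ the passage from Theorem~\ref{thmain} to Corollary~\ref{thmaincol1}): the paper gives no standalone proof here, but the deduction is exactly ``$\lambda(S_{n,\delta})>\lambda(K_{n-\delta+1})=n-\delta$ so the Wei--You--Lai theorem applies, and $T_{n,\delta}$ is ruled out because $\lambda(S_{n,\delta})>\lambda(T_{n,\delta})$ via the Kelmans operation (cf.\ \cite[Theorem~2.12]{LiBinlong}).'' Your observation that $e(S_{n,\delta})>e(T_{n,\delta})$ for $\delta\ge 3$ confirms that the Kelmans moves send $T_{n,\delta}$ only to a \emph{proper} subgraph of $S_{n,\delta}$, which is what gives the strict inequality; the quotient-matrix backup is sound but unnecessary.
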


In 2020, Zhou, Wang and Lu \cite{Zhouqianna20a} 
presented 
the signless Laplacian spectral conditions 
for Hamilton-connected graphs with large minimum degree. 

For the graph $S_{n,\delta}$, let $X=\{v\in V(S_{n,\delta}) : 
d(v)=\delta \}, Y=\{v\in V(S_{n,\delta}): d(v)=n-1\}$ 
and $Z=\{v\in V(S_{n,\delta}) : d(v)=n-\delta \}$. 
Let $E_1(S_{n,\delta})$ be the subset of edges of 
$S_{n,\delta}$ whose both endpoints are from 
$Y\cup Z$. We define the family of graphs as below 
\[ \mathcal{S}_{n,\delta}^{(1)} = 
\left\{ S_{n,\delta} \setminus E': 
E'\subseteq E_1(S_{n,\delta}) ~ \text{and}~ 
|E'|\le \bigl\lfloor {\delta (\delta-1)}/{4} \bigr\rfloor \right\} . \]
Here, we denote by $S_{n,\delta} \setminus E'$ the graph obtained from 
$S_{n,\delta}$ by deleting all edges of the edge set $E'$.  
Similarly, for the graph $T_{n,\delta}$, 
let $X=\{v\in V(T_{n,\delta}): d(v)=\delta\} , 
Y=\{v\in V(T_{n,\delta}) : d(v)=n-1\}$ and 
$Z=\{v\in V(T_{n,\delta}) : d(v)=n-\delta \}$. 
Let $E_1(T_{n,\delta})$ be the subset of $E(T_{n,\delta})$ 
containing the edges whose both endpoints are from $Y\cup Z$. 
We denote  
\[ \mathcal{T}_{n,\delta}^{(1)} = 
\Bigl\{ T_{n,\delta} \setminus E': 
E'\subseteq E_1(T_{n,\delta}) ~ \text{and}~ 
|E'|\le \bigl\lfloor {(\delta -1)}/{2} \bigr\rfloor \Bigr\} . \]
When $G$ is a subgraph of 
$S_{n,\delta}$ or $T_{n,\delta}$, 
we can easily see  that 
$G$ is not Hamilton-connected. 
Moreover, we can verify that 
 $\delta (G)=\delta$ 
 and $q(G)\ge 2(n-\delta )$ 
 for every $G\in \mathcal{S}_{n,\delta}^{(1)} 
\cup \mathcal{T}_{n,\delta}^{(1)}$. 

\begin{theorem}[Zhou--Wang--Lu \cite{Zhouqianna20a}]
Let $\delta \ge 2$ and $n\ge n_1(\delta)$ where 
\[ n_1(\delta) =
 \delta^4 +5\delta^3 +2\delta^2 +8 \delta +12. \] 
If $G$ is a graph of order $n$ with 
  minimum degree $\delta (G) \ge \delta$ and 
\[ q(G)\ge 2(n-\delta),\] 
then $G$ is Hamilton-connected unless 
$G\in \mathcal{S}^{(1)}_{n,\delta}$ 
or $G\in \mathcal{T}^{(1)}_{n,\delta}$. 
\end{theorem}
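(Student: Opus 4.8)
The forward direction --- that every $G\in\mathcal{S}^{(1)}_{n,\delta}\cup\mathcal{T}^{(1)}_{n,\delta}$ is not Hamilton-connected, has $\delta(G)=\delta$, and satisfies $q(G)\ge 2(n-\delta)$ --- is already recorded above: evaluating the Rayleigh quotient of $Q(G)$ against the indicator vector of $Y\cup Z$ reproduces exactly the two thresholds $\lfloor\delta(\delta-1)/4\rfloor$ and $\lfloor(\delta-1)/2\rfloor$ on the number of deletable edges. So the plan is to prove the converse: assuming $G$ is not Hamilton-connected with $\delta(G)\ge\delta$ and $q(G)\ge 2(n-\delta)$, deduce that $G\in\mathcal{S}^{(1)}_{n,\delta}\cup\mathcal{T}^{(1)}_{n,\delta}$. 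I would do this in three stages --- convert the spectral hypothesis into an edge-density bound, use a stability argument to trap $G$ inside $S_{n,\delta}$ or $T_{n,\delta}$, and then run a Perron-vector estimate to control how many edges are missing.

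First I would combine the inequality $q(G)\le\frac{2e(G)}{n-1}+n-2$ of \cite{FengPIMB09} (already used in the proof of Theorem \ref{thmYF}) with $q(G)\ge 2(n-\delta)$ to get $e(G)\ge\frac{(n-1)(n-2\delta+2)}{2}$; for $n$ as large as in the hypothesis this exceeds $e(S_{n,\delta+1})$ by a margin linear in $n$. Passing to the closure $H:=\mathrm{cl}_{n+1}(G)$, which is still not Hamilton-connected and has $\delta(H)\ge\delta$ (Theorem \ref{lem4444}), and feeding this density bound into Berge's degree condition (Theorem \ref{bergethm1976}) through a stability strengthening of the edge-counting carried out in the proof of the edge version for $S_{n,\delta}$ above, I would obtain that $H$, and hence $G$, is a spanning subgraph of $S_{n,\delta}$ or of $T_{n,\delta}$. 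Since in both of these graphs the vertices of the independent part $X$ already have degree exactly $\delta$, the hypothesis $\delta(G)\ge\delta$ forbids deleting any edge incident to $X$; therefore $G=S_{n,\delta}\setminus E'$ or $G=T_{n,\delta}\setminus E'$, where $E'$ is a set of edges of the dense clique $K_{n-\delta+1}$ spanned by $Y\cup Z$, i.e.\ $E'\subseteq E_1(S_{n,\delta})$ resp.\ $E'\subseteq E_1(T_{n,\delta})$.

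Next, take $G=S_{n,\delta}\setminus E'$ (the case $G=T_{n,\delta}\setminus E'$ is identical). Let $x$ be the unit Perron eigenvector of $Q(G)$, so $q(G)\,x_v=d_G(v)x_v+\sum_{w\sim v}x_w$ for all $v$. After normalising $\max_v x_v=1$, the eigen-equation at a vertex of $X$ gives $x_v\le\frac{\delta}{q(G)-\delta}=O(\delta/n)$, whence $\sum_{v\in X}x_v^2=O(\delta^3 n^{-2})$; plugging these tiny weights back into the equations at the vertices of $Y\cup Z$ shows those coordinates all agree up to a factor $1+O(\delta^2/n)$, so that $\min_{v\in Y\cup Z}x_v^2\ge\frac{1-O(\delta^2/n)}{n-\delta+1}$. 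Now
\[
q(G)=x^{T}Q(G)x=x^{T}Q(S_{n,\delta})x-\sum_{uv\in E'}(x_u+x_v)^2\le q(S_{n,\delta})-4\bigl(\min_{v\in Y\cup Z}x_v\bigr)^{2}|E'|,
\]
and, with the explicit expansion $q(S_{n,\delta})=2(n-\delta)+\frac{\delta(\delta-1)}{n-\delta+1}+O(\delta^3 n^{-2})$, the requirement $q(G)\ge 2(n-\delta)$ forces $|E'|\le\lfloor\delta(\delta-1)/4\rfloor$. The analogous computation with $q(T_{n,\delta})=2(n-\delta)+\frac{2(\delta-1)}{n-\delta+1}+O(\delta^3 n^{-2})$ gives $|E'|\le\lfloor(\delta-1)/2\rfloor$ in the other case. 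In either case $G\in\mathcal{S}^{(1)}_{n,\delta}\cup\mathcal{T}^{(1)}_{n,\delta}$, which finishes the proof.

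The hard part will be this last stage: one must pin the Perron vector on $Y\cup Z$ down so precisely that the loss $\sum_{uv\in E'}(x_u+x_v)^2$ is at least $(4-o(1))|E'|/(n-\delta+1)$, since any cruder estimate loses a constant factor and only yields $|E'|=O(\delta^2)$ rather than the sharp $\lfloor\delta(\delta-1)/4\rfloor$. This is precisely where the bound $n\ge\delta^4+5\delta^3+2\delta^2+8\delta+12$ gets spent: $n$ must be large enough that (a) the density estimate of the first stage clears $e(S_{n,\delta+1})$, so that the stability step is valid, and (b) all error terms of size $O(\delta^3/n)$ or $O(\delta^4/n)$ appearing in $q(S_{n,\delta})$, in $q(T_{n,\delta})$, and in the Perron-vector estimate are smaller than $1$ after clearing the denominator $n-\delta+1$ --- this last condition being exactly an $n\gg\delta^4$ requirement, matching the stated bound up to lower-order terms.
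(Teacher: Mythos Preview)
The survey does not contain a proof of this theorem; it is quoted from \cite{Zhouqianna20a} and only the easy direction (that every $G\in\mathcal{S}^{(1)}_{n,\delta}\cup\mathcal{T}^{(1)}_{n,\delta}$ satisfies $q(G)\ge 2(n-\delta)$) is sketched, just before the statement. So there is no ``paper's own proof'' to compare against beyond that remark.

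Your three-stage outline for the converse is the standard template --- it mirrors the argument of \cite{LLPLMA18} for the Hamilton-cycle analogue (Theorem~\ref{thmllp}) and is the approach taken in \cite{Zhouqianna20a}. Two comments on Stage~3. First, you mix two normalisations of the Perron vector: you call $x$ a unit eigenvector and then set $\max_v x_v=1$. The Rayleigh step $x^{T}Q(S_{n,\delta})x\le q(S_{n,\delta})$ needs $\|x\|_2=1$, so keep that normalisation throughout and restate the coordinate bounds accordingly. Second, and more substantively, the claim that the entries of $x$ on $Y\cup Z$ ``all agree up to a factor $1+O(\delta^2/n)$'' is precisely the crux, and it is not as automatic as you suggest: the vertices incident to $E'$ have \emph{lower} degree in $G$, hence \emph{smaller} Perron entries, so your lower bound $\min_{v\in Y\cup Z}x_v^2\ge(1-O(\delta^2/n))/(n-\delta+1)$ must be proved for the perturbed graph $G$, uniformly over all admissible $E'$. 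Establishing this with error small enough to recover the sharp thresholds $\lfloor\delta(\delta-1)/4\rfloor$ and $\lfloor(\delta-1)/2\rfloor$ is where the quartic bound $n\ge n_1(\delta)$ is actually consumed, and it requires a careful iteration of the eigen-equations rather than a single back-substitution. You correctly flag this as the hard part, but the sentence ``plugging these tiny weights back\ldots'' does not yet constitute an argument.
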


\begin{corollary}
Let $\delta \ge 3$ and $n\ge n_1({\delta})$. 
If $G$ is a graph on $n$ vertices with 
  minimum degree $\delta (G) \ge \delta $ and 
\[ q (G) \ge q (S_{n,\delta}),\] 
then $G$ is 
Hamilton-connected unless $G=S_{n,\delta}$. 
\end{corollary}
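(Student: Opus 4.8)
The plan is to derive this corollary from the preceding theorem of Zhou, Wang and Lu; the only substantive work is to check that the hypothesis $q(G)\ge q(S_{n,\delta})$ is strong enough to invoke that theorem, and then to discard the two exceptional families $\mathcal{S}^{(1)}_{n,\delta}$ and $\mathcal{T}^{(1)}_{n,\delta}$, keeping only $S_{n,\delta}$ itself. First I would record that $q(S_{n,\delta})>2(n-\delta)$: since $S_{n,\delta}=K_{\delta}\vee(K_{n-2\delta+1}\cup I_{\delta-1})$ contains the clique $K_{n-\delta+1}$ spanned by the $K_{\delta}$ and $K_{n-2\delta+1}$ parts, taking the test vector $\bm{z}$ equal to $1$ on that clique and $0$ on $I_{\delta-1}$ and evaluating the Rayleigh quotient gives $\bm{z}^{T}Q(S_{n,\delta})\bm{z}/\bm{z}^{T}\bm{z}=2(n-\delta)+\dfrac{\delta(\delta-1)}{n-\delta+1}>2(n-\delta)$. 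Hence $q(G)\ge q(S_{n,\delta})>2(n-\delta)$, and since $\delta\ge 3\ge 2$ and $n\ge n_1(\delta)$, the Zhou--Wang--Lu theorem applies: either $G$ is Hamilton-connected, or $G\in\mathcal{S}^{(1)}_{n,\delta}\cup\mathcal{T}^{(1)}_{n,\delta}$.

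Next I would eliminate $\mathcal{S}^{(1)}_{n,\delta}$. Write $G=S_{n,\delta}\setminus E'$ with $E'\subseteq E_1(S_{n,\delta})$. If $E'\ne\emptyset$, then $Q(G)\le Q(S_{n,\delta})$ entrywise with at least one strict inequality; since $S_{n,\delta}$ is connected, $Q(S_{n,\delta})$ is irreducible, and Perron--Frobenius monotonicity forces $q(G)<q(S_{n,\delta})$, contradicting $q(G)\ge q(S_{n,\delta})$. Thus $E'=\emptyset$, i.e.\ $G=S_{n,\delta}$, which is the desired conclusion in this case.

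Finally I would discard $\mathcal{T}^{(1)}_{n,\delta}$ outright: each such $G$ is a subgraph of $T_{n,\delta}$ on the same vertex set, so $q(G)\le q(T_{n,\delta})$, and it suffices to prove the strict inequality
\[
q(T_{n,\delta})<q(S_{n,\delta})\qquad(\delta\ge 3,\ n\ge n_1(\delta)),
\]
because this yields the impossible chain $q(S_{n,\delta})\le q(G)\le q(T_{n,\delta})<q(S_{n,\delta})$, so this case cannot occur. To establish the displayed inequality I would use that $S_{n,\delta}$ and $T_{n,\delta}$ each admit an equitable partition into three cells, so that $q(S_{n,\delta})$ and $q(T_{n,\delta})$ are the largest eigenvalues of the quotient matrices
\[
B_S=\begin{pmatrix} n+\delta-2 & n-2\delta+1 & \delta-1\\ \delta & 2n-3\delta & 0\\ \delta & 0 & \delta\end{pmatrix},
\qquad
B_T=\begin{pmatrix} n & n-\delta-1 & \delta-1\\ 2 & 2n-2\delta-2 & 0\\ 2 & 0 & 2\delta-2\end{pmatrix},
\]
and then compare their characteristic cubics (equivalently, substitute $x=2(n-\delta)+t$ into both and compare the lower-order terms), or, more robustly, feed the Perron vector of $B_T$ into the Rayleigh quotient of $Q(S_{n,\delta})$ to obtain $\bm{z}^{T}Q(S_{n,\delta})\bm{z}/\bm{z}^{T}\bm{z}>q(T_{n,\delta})$. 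The main obstacle of the whole argument is exactly this last comparison: $q(S_{n,\delta})$ and $q(T_{n,\delta})$ both equal $2(n-\delta)$ plus a correction of order $1/n$, and $e(S_{n,\delta})-e(T_{n,\delta})={\delta-1\choose 2}$ can be as small as $1$, so the $\Theta(1)$-size terms in the expansion must be tracked precisely; once this routine but delicate estimate is in place, everything else is an immediate application of Perron--Frobenius and the theorem of Zhou, Wang and Lu recalled above.
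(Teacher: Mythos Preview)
The paper states this corollary without proof, as an immediate consequence of the Zhou--Wang--Lu theorem; your approach is exactly the intended derivation and the first three steps are clean and correct: the Rayleigh-quotient lower bound $q(S_{n,\delta})\ge 2(n-\delta)+\frac{\delta(\delta-1)}{n-\delta+1}$ is right, the invocation of the theorem is legitimate, and the Perron--Frobenius elimination of $\mathcal{S}^{(1)}_{n,\delta}\setminus\{S_{n,\delta}\}$ is the standard argument.

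The only place your sketch is incomplete is the comparison $q(T_{n,\delta})<q(S_{n,\delta})$, and you are honest about this. Your quotient matrices $B_S,B_T$ are correct, and the heuristic is sound: evaluating both characteristic polynomials at $x=2(n-\delta)$ gives
\[
p_S(2(n-\delta))=-2\delta(\delta-1)(n-\delta),\qquad p_T(2(n-\delta))=-4(\delta-1)(n-2\delta+2),
\]
while both $p_S'$ and $p_T'$ equal $2n^2+O(n)$ there, so the corrections satisfy $q(S_{n,\delta})-2(n-\delta)\sim\frac{\delta(\delta-1)}{n}$ and $q(T_{n,\delta})-2(n-\delta)\sim\frac{2(\delta-1)}{n}$, and the first dominates for $\delta\ge 3$. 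Turning this asymptotic into a rigorous inequality for all $n\ge n_1(\delta)$ really is just a (tedious) matter of bounding the lower-order terms in the two cubics, so your description of the remaining work as ``routine but delicate'' is accurate. One caution: your second suggested route, feeding the Perron vector of $B_T$ into the Rayleigh quotient of $Q(S_{n,\delta})$, is awkward because the two equitable partitions have cells of different sizes ($\delta,n-2\delta+1,\delta-1$ versus $2,n-\delta-1,\delta-1$), so the lift of the $B_T$-eigenvector is not constant on the $S_{n,\delta}$-cells; the characteristic-polynomial comparison is the cleaner way to finish.
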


Recently, Xu, Zhai and Wang \cite{XZW2021} 
present some sufficient conditions for a graph 
to be Hamilton-connected in terms of size, 
spectral radius and signless Laplacian spectral radius, respectively, which extend some corresponding results.

\section{Spectral  problem for deficiency}  

\subsection{Problem for given matching number} 

A matching in a graph is a set 
of disjoint edges, 
the matching number of $G$, denoted by $\alpha' (G)$, 
is the maximum size of a matching in $G$. 
A perfect matching in $G$ is a matching covering all vertices.  

In 1959, Erd\H{o}s and Gallai \cite{EG59} proved 
the following extremal problem 
for $M_{k+1}$, a matching of $k+1$ edges; 
see \cite[p. 58]{Bollobas78} and \cite[Theorem 2]{AF85} for alternative proofs.

\begin{theorem}[Erd\H{o}s--Gallai \cite{EG59}] \label{EG59}
For any $n\ge 2k+1$, if $G$ is an $n$-vertex  graph without copy of $M_{k+1}$, 
then 
\[ e(G)\le  \max \left\{ {2k+1 \choose 2}, 
{k \choose 2} + (n-k)k  \right\}.  \]
Moreover, the extremal are the following. \\
(1) If $2k+1 \le n< \frac{5k+3}{2}$, 
then $K_{2k+1}\cup I_{n-2k-1}$ is the unique extremal graph.   \\
(2) If $n= \frac{5k+3}{2}$, then there are two extremal graphs  $K_{2k+1}\cup I_{n-2k-1}$ and 
$K_k \vee I_{n-k}$.  \\
(3) If $n > \frac{5k+3}{2}$, then $K_k \vee I_{n-k}$ 
is the unique extremal graph. 
\end{theorem}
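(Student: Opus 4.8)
\textbf{Proof proposal for the Erd\H{o}s--Gallai theorem (Theorem \ref{EG59}).}

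The plan is to prove the bound $e(G) \le \max\{\binom{2k+1}{2}, \binom{k}{2} + (n-k)k\}$ by induction on $k$, using the standard trick of deleting the two endpoints of a carefully chosen edge. First I would handle the base case $k=0$: a graph with no single edge has $e(G)=0$, which matches $\max\{\binom12,\binom02\}=0$. For the inductive step, assume the result holds for all smaller values of the matching parameter, and let $G$ be an $n$-vertex $M_{k+1}$-free graph with $n \ge 2k+1$. If $G$ has no edge at all the bound is trivial, so fix a maximum matching $M = \{e_1,\dots,e_\ell\}$ with $\ell \le k$. If $\ell < k$ then $G$ is actually $M_{\ell+1}$-free, and a short separate argument (the classical fact that an $M_{\ell+1}$-free graph on $n$ vertices has at most $\binom{\ell}{2}+(n-\ell)\ell \le \binom{k}{2}+(n-k)k$ edges when $n$ is large, with the small-$n$ case covered by $\binom{2\ell+1}{2} \le \binom{2k+1}{2}$) finishes this case; so I may assume $\ell = k$ exactly.

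Next I would carry out the key structural step. Let $U = V(G) \setminus V(M)$; since $M$ is maximum, $U$ is an independent set of size $n - 2k$. The idea is to find an edge $xy \in M$ such that at most one of $x,y$ has a neighbour in $U$, and such that the edges incident to $\{x,y\}$ are not too numerous. Concretely: if some $e_i = x_iy_i$ has \emph{both} endpoints adjacent into $U$, then by choosing neighbours $u \in U \cap N(x_i)$ and $w \in U \cap N(y_i)$ with $u \ne w$ (possible unless $|U|=1$, an easy corner case) one could augment $M$ along $u x_i y_i w$ — unless $N(x_i)\cap U$ and $N(y_i)\cap U$ coincide in a single vertex. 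Analyzing this forces strong restrictions: roughly, the edges of $M$ split into those "private" to the $K_{2k+1}$-like core and those reaching $U$, and one shows that the vertices reaching $U$ form a vertex cover of the $U$-edges of size at most $k$. Deleting the two endpoints of a well-chosen edge $e_i$ of $M$ gives a graph $G'$ on $n-2$ vertices which is $M_k$-free, so by induction $e(G') \le \max\{\binom{2k-1}{2}, \binom{k-1}{2}+(n-2-(k-1))(k-1)\}$; it then remains to bound $e(G) - e(G')$, the number of edges meeting $\{x_i,y_i\}$, by $2k-1 + (\text{contribution from } U)$ and check that the arithmetic closes up to the claimed maximum in each of the two regimes of $n$. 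I expect \textbf{this bookkeeping — choosing the right edge of $M$ to delete so that the count $e(G)-e(G')$ is controlled, and then verifying the two-branch maximum is preserved under the induction} — to be the main obstacle; it is exactly where the two extremal configurations $K_{2k+1}\cup I_{n-2k-1}$ and $K_k \vee I_{n-k}$ separate, and where the threshold $n = \tfrac{5k+3}{2}$ emerges from comparing $\binom{2k+1}{2}$ with $\binom{k}{2}+(n-k)k$.

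Finally, for the characterization of extremal graphs in parts (1)--(3), I would track equality throughout the induction. Equality in the bound forces $G'$ to be extremal for $k-1$, forces the deleted edge's endpoints to have the maximal allowed degree pattern, and forces $U$ to interact with $V(M)$ in precisely one of two ways: either $V(M)$ induces a clique $K_{2k}$ disjoint from (hence adding one more vertex of $U$ to make) $K_{2k+1}$ with the rest isolated, or a fixed $k$-set dominates everything, yielding $K_k \vee I_{n-k}$. Comparing the two edge counts as functions of $n$ gives: the clique wins strictly for $2k+1 \le n < \tfrac{5k+3}{2}$, the two tie at $n = \tfrac{5k+3}{2}$, and the split graph $K_k\vee I_{n-k}$ wins strictly for $n > \tfrac{5k+3}{2}$. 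An alternative to the whole induction, which I would mention, is the slicker "shifting"/compression argument of Erd\H{o}s and Gallai or the proof via the Gallai--Edmonds decomposition, but the inductive route above is the most self-contained given only the tools already introduced in the survey.
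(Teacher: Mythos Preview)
The paper does not actually give a proof of this theorem: it is a survey, and Theorem~\ref{EG59} is simply stated with pointers to \cite[p.~58]{Bollobas78} and \cite[Theorem~2]{AF85} for proofs. So there is no ``paper's own proof'' to compare against.

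That said, your proposal is a recognizable outline of one of the standard arguments, but as written it is only a sketch and the part you yourself flag as the main obstacle really is a genuine gap. The inductive step of deleting the two endpoints of an edge $e_i\in M$ and bounding $e(G)-e(G')$ does not close cleanly in the way you suggest: a single endpoint $x_i$ can be adjacent to \emph{all} of $U$, so ``$2k-1+(\text{contribution from }U)$'' can be as large as $2k-1+(n-2k)$, and plugging that on top of the inductive bound for $G'$ overshoots the target in the regime where $\binom{2k+1}{2}$ is the maximum. To make the induction work you need a global argument, not an edge-by-edge one: the usual route is to let $A\subseteq V(M)$ be the set of matching-endpoints that have at least two neighbours in $U$, observe (via the augmenting-path reasoning you started) that $A$ meets each $e_i$ in at most one vertex and that $A$ is in fact a clique, and then split into the cases $|A|=k$ (which forces the $K_k\vee I_{n-k}$ structure) and $|A|<k$ (where the edges into $U$ are few and the $K_{2k+1}$ bound takes over). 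The alternatives you mention at the end---shifting, or the Gallai--Edmonds decomposition---are in fact the cleaner ways to get both the bound and the uniqueness of the extremal graphs; the Akiyama--Frankl reference the paper cites does exactly the shifting version.
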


Recently, Wang \cite{Wang20} proved that 
the two extremal graphs in Theorem \ref{EG59} not only 
have the 
maximum number of $s$-cliques  for every $s\ge 2$, 
but also attain the maximum number of  $K_s \vee I_t$, 
which is the graph obtained from $K_{s,t}$ 
by replacing the part of size $s$ by a clique of the same size. 
We infer the interested reader to \cite{Wang20,DNPWY20} 
for recent progress. 

In 2007, Feng, Yu and Zhang \cite{FYZ07} 
proved the spectral version of Erd\H{o}s--Gallai's theorem 
and determined the largest spectral radius with given matching number. First of all, we can see that 
$K_k \vee I_{n-k}$ does not contain a matching of size $k + 1$ and 
\[ \lambda (K_k \vee I_{n-k}) = 
\frac{k-1 + \sqrt{(k-1)^2 +4k (n-k)}}{2}.  \]

\begin{theorem}[Feng et el.  \cite{FYZ07}]   
Let $G$ be a graph on $n$ vertices with matching number $k$. \\
(1) If $n=2k$ or $2k+1$, then 
$\lambda (G) \le \lambda (K_n)$ with equality if and only if $G=K_n$. \\
(2) If $2k+2 \le n <3k+2$, 
then $\lambda (G) \le 2k$ with equality if and only if 
$G=K_{2k+1} \cup I_{n-2k-1}$. \\
(3) If $n=3k+2$, then $\lambda (G) \le 2k$ 
with equality if and only if $G=K_k \vee I_{n-k}$ 
or $G=K_{2k+1}\cup I_{n-2k-1}$. \\
(4) If $n>3k+2$, then $\lambda (G)\le 
\lambda (K_k \vee I_{n-k})$ 
with equality if and only if $G=K_k\vee I_{n-k}$. 
\end{theorem}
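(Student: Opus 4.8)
The plan is to deduce the spectral version of the Erd\H{o}s--Gallai theorem from its edge-counting counterpart (Theorem \ref{EG59}), combined with a Rayleigh-type lower bound for the spectral radius and a careful analysis of the extremal cases. The key observation is that a graph $G$ with matching number $\alpha'(G)=k$ is precisely an $M_{k+1}$-free graph, so by Theorem \ref{EG59} we have the edge bound $e(G)\le \max\{\binom{2k+1}{2}, \binom{k}{2}+(n-k)k\}$. The natural strategy is then to bound $\lambda(G)$ in terms of $e(G)$ and the structure of $G$, and to match this against the spectral radii of the two candidate extremal graphs $K_{2k+1}\cup I_{n-2k-1}$ (with $\lambda=2k$) and $K_k\vee I_{n-k}$.

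First I would handle the easy regimes. For $n=2k$ or $2k+1$ the bound $\lambda(G)\le \lambda(K_n)=n-1$ with equality iff $G=K_n$ is immediate from monotonicity of the spectral radius under subgraphs. For the remaining cases, the cleanest route is a component-wise argument: since $\lambda(G)$ equals the maximum of $\lambda(H)$ over connected components $H$ of $G$, and each component $H$ on $n_H$ vertices with matching number $k_H$ satisfies $\sum k_H = k$, it suffices to show that a connected $M_{k+1}$-free graph $H$ has $\lambda(H)\le \max\{2k, \lambda(K_k\vee I_{n_H-k})\}$ and then argue that spreading a matching across several components is never beneficial — i.e. $\lambda(K_{2k+1})=2k$ dominates $\lambda(K_{2k_1+1})+\cdots$ style comparisons. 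Concretely, for a connected component one uses the classical inequality $\lambda(H)^2 \le 2e(H) - n_H + 1$ (valid for connected graphs, with equality for complete graphs and stars) together with the Erd\H{o}s--Gallai edge bound applied to $H$; when $H$ is ``clique-like'' this forces $\lambda(H)\le 2k$, and when $H$ is ``join-like'' one compares directly with $\lambda(K_k\vee I_{n-k})$, whose value $\tfrac{k-1+\sqrt{(k-1)^2+4k(n-k)}}{2}$ is recorded in the statement.

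The quantitative comparison between the two candidates is a routine but essential step: one checks that $\lambda(K_k\vee I_{n-k}) \le 2k$ precisely when $n \le 3k+2$ (with equality at $n=3k+2$), and $\lambda(K_k\vee I_{n-k}) > 2k$ when $n>3k+2$; this is exactly the same threshold $n = \frac{5k+3}{2}$ shifted, and follows by squaring. This dichotomy is what produces the four cases in the statement. The equality analysis then requires knowing, in each regime, which $M_{k+1}$-free graphs on $n$ vertices simultaneously maximize $e(G)$ and satisfy the equality condition in the spectral inequality $\lambda(H)^2 = 2e(H)-n_H+1$; since equality there holds only for complete graphs (plus isolated vertices) or stars, and stars are ruled out by the matching-number constraint for $k\ge 2$, one is pinned down to $K_{2k+1}\cup I_{n-2k-1}$ or $K_k\vee I_{n-k}$, matching the edge-extremal graphs of Theorem \ref{EG59}.

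The main obstacle I anticipate is the equality characterization when $G$ is disconnected and $\lambda(G)$ is achieved by a component that is not itself an extremal graph of the Erd\H{o}s--Gallai problem — one must rule out, for instance, a component that is a near-complete graph whose spectral radius still reaches $2k$ only if it literally is $K_{2k+1}$, while simultaneously controlling the matching numbers of the other components so their union does not exceed $k$. Making this bookkeeping airtight (especially the boundary case $n=3k+2$ where \emph{both} extremal graphs occur) is where the real work lies; the inequality $\lambda(G)\ge \tfrac{2e(G)}{n}$ alone is too weak here, so the connected-graph refinement $\lambda^2 \le 2e - n + 1$ and its tight cases are indispensable.
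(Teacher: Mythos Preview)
Your approach has a genuine gap: the combination of the Erd\H{o}s--Gallai edge bound with Hong's inequality $\lambda(H)^2 \le 2e(H) - n_H + 1$ is not tight enough to recover the sharp spectral bound in cases (2)--(4). The problem is that the target extremal graph $K_k\vee I_{n-k}$ is neither a complete graph nor a star, so it does \emph{not} attain equality in Hong's inequality. Concretely, for $k=3$ and $n=10$ (which lies in case~(2)), a connected graph $H$ with $\alpha'(H)=3$ can have up to $e(H)=\binom{3}{2}+7\cdot 3=24$ edges (realized by $K_3\vee I_7$), and Hong then gives only $\lambda(H)^2\le 48-10+1=39$, i.e.\ $\lambda(H)\le\sqrt{39}\approx 6.24$, which does not yield the required bound $\lambda(H)\le 2k=6$. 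The same failure is systematic in case~(4): there the bound you obtain is $\sqrt{(2k-1)n-k^2-k+1}$, which strictly exceeds $\lambda(K_k\vee I_{n-k})$ for all $n>3k+2$. So your proposed pipeline cannot close the argument, and the phrase ``when $H$ is join-like one compares directly with $\lambda(K_k\vee I_{n-k})$'' is exactly the missing step, not a detail.

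What the actual proof in \cite{FYZ07} does (and what the survey omits, merely citing the result) is a structural reduction rather than an edge-count. One first observes that a spectral-extremal graph may be taken edge-maximal with $\alpha'(G)=k$; by the Berge--Tutte/Gallai--Edmonds structure, such a graph has the form $K_s\vee\bigl(K_{2a_1+1}\cup\cdots\cup K_{2a_r+1}\bigr)$ with $s+\sum a_i=k$. One then shows, by direct eigenvalue comparison (or by edge-shifting/Kelmans-type operations that do not decrease $\lambda$), that within this finite-parameter family the maximum of $\lambda$ is attained at one of the two ends $s=0$ (giving $K_{2k+1}\cup I_{n-2k-1}$) or $s=k$ (giving $K_k\vee I_{n-k}$). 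Your computation that these two candidates cross exactly at $n=3k+2$ is correct and is precisely what produces the four cases, but it only becomes useful \emph{after} the structural reduction has cut the problem down to comparing those two graphs.
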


In 2008, Yu \cite{Yu2008} presented the spectral version 
for the signless Laplacian radius. 
To some extent, the results and the extremal graphs 
are similar with that of 
the adjacency spectral radius. 
Some tedious calculation yields 
\[  q(K_k\vee I_{n-k}) = \frac{n+2k-2+ \sqrt{(n+2k-2)^2 - 8k^2 +8k
}}{2}. \]

\begin{theorem}[Yu \cite{Yu2008}]
Let $G$ be a graph on $n$ vertices with matching number $k$. \\
(1) If $n=2k$ or $2k+1$, then 
$q (G)\le q(K_k)$, with equality if and only if $G=K_n$. \\
(2) If $2k+2 \le n < \frac{5k+3}{2}$, 
then $q(G) \le 4k$, with equality if and only if 
$G=K_{2k+1} \cup I_{n-2k-1}$. \\
(3) If $n=\frac{5k+3}{2}$, then $q(G)\le 4k$, 
with equality if and only if $G=K_k \vee I_{n-k}$ 
or $G=K_{2k+1}\cup I_{n-2k-1}$. \\ 
(4) If $n>\frac{5k+3}{2}$, then $q (G)\le 
q (K_k \vee I_{n-k})$ 
with equality if and only if $G=K_k\vee I_{n-k}$. 
\end{theorem}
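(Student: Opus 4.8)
The plan, mirroring the adjacency argument of Feng, Yu and Zhang, is to reduce the problem to a one-parameter family of candidate extremal graphs and then compare their $Q$-indices. Note first that the hypothesis $\alpha'(G)=k$ says $G$ is $M_{k+1}$-free (and contains $M_k$). Next, if $e$ is a non-edge of a connected graph $H$, then $Q(H+e)-Q(H)$ is a rank-one positive semidefinite matrix that does not annihilate the positive Perron vector of $Q(H)$, so $q(H+e)>q(H)$; and since the $Q$-index of a disconnected graph is the maximum of the $Q$-indices of its components, adding an edge inside a component realizing that maximum also strictly increases $q$. Hence a graph maximizing $q$ among $n$-vertex graphs with matching number $k$ is \emph{edge-maximal} $M_{k+1}$-free (within that class). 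If $n\in\{2k,2k+1\}$ there is no constraint at all (every $n$-vertex graph has matching number at most $\lfloor n/2\rfloor=k$), so the unique maximizer is $K_n$ and $q(G)\le q(K_n)$; this is part (1). From now on assume $n\ge 2k+2$.

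The second step identifies the relevant edge-maximal graphs. Given such a $G$, take a Berge--Tutte witness $S$ with $o(G-S)-|S|=n-2k$, where $o(\cdot)$ counts odd components (equivalently, use the Gallai--Edmonds decomposition). Writing $|S|=s$ with $0\le s\le k$, edge-maximality forces $S$ to be a clique joined to every other vertex and each component of $G-S$ to be a clique, with no edges between distinct components, so $G=K_s\vee\bigl(\bigcup_i K_{n_i}\bigr)$; the deficiency identity then pins $\sum_i\lfloor n_i/2\rfloor=k-s$. Among such graphs the number of edges, and (by a Kelmans-type compression that merges several odd cliques into one odd clique of the same near-perfect-matching total plus isolated vertices, without decreasing $q$ or changing $\alpha'=k$) also $q$, is maximized by
\[ G_s := K_s \vee \bigl(K_{2(k-s)+1}\cup I_{\,n-2k+s-1}\bigr),\qquad 0\le s\le k, \]
with $G_0=K_{2k+1}\cup I_{n-2k-1}$ and $G_k=K_k\vee I_{n-k}$. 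It remains to maximize $q(G_s)$ over $s\in\{0,1,\dots,k\}$ and to track uniqueness.

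For $1\le s\le k$ the orbit partition of $G_s$ into the clique $K_s$ (degree $n-1$), the clique $K_{2(k-s)+1}$ (degree $2k-s$) and the independent set (degree $s$) is equitable, and $G_s$ is connected, so $q(G_s)$ equals the largest eigenvalue of the quotient matrix
\[ B_s=\begin{pmatrix} n+s-2 & 2(k-s)+1 & n-2k+s-1\\ s & 4k-3s & 0\\ s & 0 & s \end{pmatrix}. \]
Here $q(G_0)=q(K_{2k+1})=4k$, while $q(G_k)=q(K_k\vee I_{n-k})=\tfrac12\bigl(n+2k-2+\sqrt{(n+2k-2)^2-8k^2+8k}\bigr)$; squaring and simplifying shows $q(G_k)>4k$, $=4k$, $<4k$ exactly according as $n>\tfrac{5k+3}{2}$, $n=\tfrac{5k+3}{2}$, $n<\tfrac{5k+3}{2}$. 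The crux --- and the step I expect to be the main obstacle --- is to prove that for $0<s<k$ one has $q(G_s)\le\max\{4k,\,q(K_k\vee I_{n-k})\}$, with strict inequality unless $n=\tfrac{5k+3}{2}$. I would attack this either by showing that the largest root of $\det(xI-B_s)$, a cubic with coefficients explicit in $s$ (and in the parameters $n,k$), is a convex function of the real variable $s$ on $[0,k]$, so its maximum is attained at an endpoint; or, more robustly, by direct Rayleigh-quotient comparisons --- testing $Q(G_k)$ against the Perron vector of $G_s$ supported on the analogous three vertex classes, and comparing $G_s$ with the value $4k$ via the sign of $\det(4kI-B_s)$. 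Either route reduces the question to a single elementary, if tedious, inequality in $n$ and $k$.

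Finally I would assemble the cases. For $2k+2\le n<\tfrac{5k+3}{2}$ the above gives $q(G)\le 4k$; here $q(G_k)<4k$ and $q(G_s)<4k$ for every $s\ge 1$, so equality forces $G=G_0=K_{2k+1}\cup I_{n-2k-1}$, which is part (2). For $n=\tfrac{5k+3}{2}$ one still has $q(G)\le 4k$, but now equality is attained by exactly $G_0$ and $G_k=K_k\vee I_{n-k}$, which is part (3). For $n>\tfrac{5k+3}{2}$ we get $q(G)\le q(G_k)=q(K_k\vee I_{n-k})$, and since every reduction above (edge-maximality, the compression, and $q(G_s)<q(G_k)$ for $s<k$) was strict, equality forces $G=K_k\vee I_{n-k}$; this is part (4). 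No separate uniqueness argument is needed, because the uniqueness statements are precisely the strictness assertions already established.
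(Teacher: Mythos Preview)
The paper is a survey and does not prove this theorem; it merely states Yu's 2008 result and cites \cite{Yu2008}. So there is no in-paper proof to compare against, and your proposal must be judged on its own.

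Your overall architecture is the right one and mirrors the adjacency case of Feng--Yu--Zhang: pass to edge-maximal $M_{k+1}$-free graphs, use Gallai--Edmonds to obtain the shape $K_s\vee(\bigcup_i K_{n_i})$, compress to the one-parameter family $G_s=K_s\vee(K_{2(k-s)+1}\cup I_{n-2k+s-1})$, and then compare the endpoints $s=0$ and $s=k$. Your quotient matrix $B_s$ is correct, the value $q(G_0)=4k$ is correct, and your verification that $q(K_k\vee I_{n-k})\gtreqqless 4k$ according as $n\gtreqqless\tfrac{5k+3}{2}$ is correct. (Incidentally, you silently fixed the paper's typo: part~(1) should read $q(G)\le q(K_n)$, not $q(K_k)$.)

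Two genuine gaps remain, and you flag the second yourself. First, the ``Kelmans-type compression'' that collapses several odd cliques into one big odd clique plus isolated vertices needs an argument specific to the $Q$-index; it is true that the Kelmans shift does not decrease $q$, but you should spell out why the particular sequence of shifts you need preserves the matching number (it does, but the check is not entirely trivial when several odd components are involved). Second, the heart of the matter is ruling out interior $s$, i.e.\ showing $q(G_s)<\max\{4k,q(G_k)\}$ for $0<s<k$. Your two suggested attacks (convexity of the largest root of the cubic $\det(xI-B_s)$ in $s$, or a sign analysis of $\det(4kI-B_s)$ together with a Rayleigh comparison against $G_k$) are both plausible, but neither is carried out; in Yu's original paper this step is done by a direct, somewhat delicate, computation with the characteristic polynomial, and you should expect a page or two of honest algebra here rather than a one-line observation. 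Until that inequality is nailed down, the proposal is an outline, not a proof.
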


\subsection{Problem for perfect  matching}

The study of the relation  between the eigenvalues and the matching number was initiated by Brouwer and Haemers \cite{Brouwer}. For regular graphs,
they obtained
\begin{theorem}\cite{Brouwer}
Let $G$ be a connected $k$-regular graph on $n$ vertices with adjacency eigenvalues
$k =\lambda_1\geq \lambda_2   \geq \lambda_3\geq  \cdots \geq \lambda_n$. If $n$ is even and
$$
\lambda_3  \leq \left\{
           \begin{array}{ll}
              k-1+\frac{3}{k+1}, & \hbox{if $k$ is even,} \\[2mm]
             k-1+\frac{3}{k+2}, & \hbox{if $k$ is odd,}
           \end{array}
         \right.
$$
then $G$ has a perfect matching.
\end{theorem}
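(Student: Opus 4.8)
The plan is to argue by contradiction using the Tutte--Berge formula together with Cauchy eigenvalue interlacing for induced subgraphs. Suppose $G$ is connected and $k$-regular on an even number $n$ of vertices but has no perfect matching. By the classical Tutte--Berge theorem there is a set $S\subseteq V(G)$ with $o(G-S)>|S|$, where $o$ counts odd components; since $n$ is even the defect $o(G-S)-|S|$ is even, so in fact $o(G-S)\ge |S|+2$. Write $s=|S|$ and let $C_1,\dots,C_q$ be the odd components of $G-S$, with $q\ge s+2$. If $s=0$ then $G-S=G$ is connected, giving $q\le 1$, a contradiction; hence $s\ge 1$, every $C_i$ sends at least one edge to $S$, and the number of edges between $S$ and $\bigcup_i C_i$ is at most $ks$.

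First I would set up the interlacing step. For an odd component $C$ with $a_C:=e(C,S)$, the graph $G[C]$ is connected, has $|C|$ vertices and $\tfrac12(k|C|-a_C)$ edges, so the Rayleigh quotient with the all-ones vector gives $\lambda_1(G[C])\ge k-\tfrac{a_C}{|C|}$, which can be sharpened because $G[C]$ is connected and non-regular. If one can exhibit three odd components $C_{i_1},C_{i_2},C_{i_3}$ each with $\lambda_1(G[C_{i_j}])\ge\tau$, where $\tau=k-1+\tfrac3{k+1}$ for $k$ even and $\tau=k-1+\tfrac3{k+2}$ for $k$ odd, then the disjoint union $G[C_{i_1}]\cup G[C_{i_2}]\cup G[C_{i_3}]$ is an induced subgraph of $G$ whose spectrum contains three values that are all $\ge\tau$; hence its third largest eigenvalue is $\ge\tau$, and by interlacing $\lambda_3(G)\ge\tau$, contradicting $\lambda_3(G)\le\tau$.

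Next I would isolate three usable components by counting. Call $C_i$ \emph{small} if $|C_i|\le k$; since every degree inside $C_i$ is at most $|C_i|-1$, summing $k-d_{G[C_i]}(v)$ over $v\in C_i$ yields $a_{C_i}\ge |C_i|(k-|C_i|+1)\ge k$ for small $C_i$, while each large odd component ($|C_i|\ge k+2$, odd) contributes only $a_{C_i}\ge 1$. From $\sum_i a_{C_i}\le ks$ and $q\ge s+2$, the fact that each small component already consumes $\ge k$ of the $ks$ available edges forces, by a short computation, the number of large odd components to be at least $3$. Parity enters precisely here: the handshake identity in $G[C_i]$ gives $a_{C_i}\equiv k|C_i|\equiv k\pmod 2$ since $|C_i|$ is odd, so for $k$ even every $a_{C_i}$ is even; this is what shifts the extremal denominator between $k+2$ (the $k$ odd case, where $a_{C_i}=1$ is possible, forcing $|C_i|\ge k+2$ for a near-regular odd block) and $k+1$ (the $k$ even case, where the smallest near-regular odd block has $a_{C_i}=2$ and $|C_i|\ge k+1$). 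Among the large components I would take the three with smallest $a_{C_i}$ and verify, via the sharpened bound, that all three reach $\tau$.

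The main obstacle, I expect, is exactly this last verification. The crude bound $\lambda_1(G[C])\ge k-a_C/|C|$ suffices for components that are genuinely large or send few edges to $S$, but it is too weak in the near-extremal regime, where $|C|$ is as small as $k+1$ or $k+2$ and $a_C$ is of order $k$; there one needs a lower bound on $\lambda_1(G[C])$ that exploits connectivity of $G[C]$ and the concentration of the degree deficiency on few vertices, combined with the structural fact that the edge budget $\sum_i a_{C_i}\le k|S|$ cannot be distributed so as to make three components \emph{simultaneously} bad. Pinning the numerator down to $3$ (rather than $2$ or $4$) amounts to identifying the extremal graphs --- roughly $S=\{v\}$ joined to three near-$K_{k+1}$ (resp.\ near-$K_{k+2}$) blocks --- and checking the corresponding quotient-matrix eigenvalue against $\tau$; I would dispose of the small cases $k\le 2$ (paths and cycles) by a direct argument.
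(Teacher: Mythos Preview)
The paper is a survey and does not prove this theorem; it merely states it and attributes it to Brouwer and Haemers \cite{Brouwer}. There is therefore no ``paper's own proof'' to compare against.

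That said, your outline is essentially the Brouwer--Haemers strategy: assume no perfect matching, invoke Tutte--Berge with parity to get $q\ge s+2$ odd components, use the edge budget $\sum_i a_{C_i}\le ks$ together with the bound $a_{C_i}\ge k$ for components of size $\le k$ to force at least three large odd components, and then apply interlacing to the induced subgraph on those three components. Your counting step is sound: writing $L$ for the number of large odd components, the inequality $(q-L)k + L \le ks$ combined with $q\ge s+2$ gives $L(k-1)\ge 2k$, hence $L\ge 3$ for $k\ge 3$ (and the cases $k\le 2$ are indeed trivial). Your identification of the parity dichotomy --- $a_{C_i}\equiv k\pmod 2$, forcing the minimal large odd component to have size $k+1$ when $k$ is even and $k+2$ when $k$ is odd --- is also correct and is exactly what produces the two different thresholds.

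The genuine gap is the one you flag yourself: for three specific large components you must show $\lambda_1(G[C_i])>\tau$, and the average-degree bound $\lambda_1(G[C])\ge k-a_C/|C|$ alone does not do this, since the edge budget does not immediately prevent all three from having $a_C$ of order $k$. In the original argument this is handled not by sharpening the Rayleigh bound on a single component but by a more careful global count: one shows that at most $s$ of the $q$ components can have $a_{C_i}\ge k$ (this already follows from $\sum a_{C_i}\le ks$ once one uses that \emph{every} component has $a_{C_i}\ge 1$), so at least $q-s\ge 2$ components satisfy $a_{C_i}\le k-1$, and then a closer parity-and-size analysis squeezes out a third good component and pins down the constant $3$ in the numerator. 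Your sketch stops just short of this refinement; filling it in is routine but not automatic, and it is where the extremal configurations (one vertex joined to three near-complete blocks) enter. Also note that you need \emph{strict} inequality $\lambda_1(G[C_i])>\tau$ for the contradiction, since the hypothesis is $\lambda_3\le\tau$; this comes from observing that equality in the average-degree bound would force $G[C]$ to be regular, which is incompatible with $a_C\ge 1$ and the minimal sizes.
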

Subsequently,    Cioab\u{a}, Gregory and Haemers 
\cite{Cioaba07LAA, Cioaba09JCTB} refined and generalized the above result to obtain the following result for the $(r+1)$-th largest eigenvalue. 

\begin{theorem} \cite{Cioaba07LAA}
Let $G$ be a connected $k$-regular graph on $n$ vertices with eigenvalues $k =\lambda_1\geq \lambda_2   \geq \lambda_3\geq  \cdots \geq \lambda_n$. Assume $r> 0$ is an integer such that $n  \equiv r {(\mod 2)}$. If
$$
k-\lambda_{r+1}  > \left\{
           \begin{array}{lll}
            0.1475,              & \hbox{if $k=3$,} \\[2mm]
              1-\frac{3}{k+1}-\frac{1}{(k+1)(k+2)}, & \hbox{if $k$ is even,} \\[2mm]
             1-\frac{3}{k+2}-\frac{1}{(k+2)^2}, & \hbox{if $k\geq 5$ is odd,}
           \end{array}
         \right.
$$
then $G$ has a   matching of size at least $\frac{n-r}{2}+1$.
\end{theorem}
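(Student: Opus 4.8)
The plan is to argue by contradiction, combining the Berge--Tutte formula with eigenvalue interlacing. Suppose $G$ has no matching of size $\frac{n-r}{2}+1$, i.e.\ its matching number is at most $\frac{n-r}{2}$. By the Berge--Tutte formula there is a set $S\subseteq V(G)$ with $o(G-S)\ge |S|+r$, where $o(\cdot)$ counts odd components; write $s=|S|$ and let $q=o(G-S)\ge s+r$ (a parity check gives $q-s\equiv n\equiv r\pmod 2$). To gain extra structure I would replace $S$ by the Gallai--Edmonds set, so that every component of $G-S$ is odd and factor-critical; in particular no such component is all of $G$, and since $G$ is connected each of them sends at least one edge to $S$. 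First I would record the regularity identity $e(C,S)=k|C|-2e(G[C])$ for each component $C$, which gives $e(C,S)\ge c(k-c+1)$ for a component on $c$ vertices; hence ``small'' components (those with $c\le k$, in particular singletons) push many edges into $S$, while there can be only boundedly many of them since $\sum_C e(C,S)\le ks$.

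Next I would partition the vertices outside $S$ into $r+1$ classes $W_0,\dots,W_r$, each a union of components of $G-S$, chosen greedily so that every small odd component is bundled with enough large components to keep the ratio $\alpha_j:=e(W_j,S)/|W_j|$ under control and so that each $W_j$ is nonempty; the bound $q\ge s+r$ together with the edge estimate above guarantees this is possible. Consider the partition $\{S,W_0,\dots,W_r\}$ of $V(G)$ and its quotient matrix $B$, an $(r+2)\times(r+2)$ nonnegative matrix. Since $G$ is $k$-regular and there are no edges between distinct $W_j$'s, $B$ has all row sums equal to $k$; thus $k$ is its Perron eigenvalue, and connectivity of $G$ forces every other eigenvalue of $B$ to be strictly smaller. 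Eigenvalue interlacing for quotient matrices (a standard consequence of Cauchy interlacing) gives $\lambda_{r+1}(G)\ge \lambda_{r+1}(B)$, so it suffices to bound the second smallest eigenvalue of $B$ from below by $k-\theta$, where $\theta=\theta(k)$ is the threshold in the statement.

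The heart of the proof is this lower bound on $\lambda_{r+1}(B)$. Writing $B$ as a weighted star with centre $S$ and leaves $W_j$ (diagonal entries $k-\alpha_j$), one obtains a secular equation relating its eigenvalues to the $\alpha_j$ and the sizes $|W_j|$; the extremal situation is when one class $W_j$ consists of a single densest factor-critical graph on few vertices with a minimal number of pendant edges to $S$. Identifying these extremal small graphs and computing their spectral radii exactly produces the three regimes in the statement: for even $k$ the critical configuration yields $1-\frac{3}{k+1}-\frac{1}{(k+1)(k+2)}$, for odd $k\ge 5$ it yields $1-\frac{3}{k+2}-\frac{1}{(k+2)^2}$, and for $k=3$ the relevant spectral radius is a root of a small characteristic polynomial, forcing the numerical constant $0.1475$. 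Combining this with interlacing gives $k-\lambda_{r+1}(G)\le\theta$, contradicting the hypothesis $k-\lambda_{r+1}(G)>\theta$.

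The main obstacle is exactly this last step: one must make the greedy grouping precise enough that the relevant combination of the $\alpha_j$ is controlled, and then carry out the case analysis over the densest odd and factor-critical graphs on small vertex sets so that the quotient-matrix estimate is tight at the stated constants. A secondary nuisance is absorbing the even components of $G-S$ into the classes $W_j$ without creating cross edges, and dealing with the degenerate ranges of $n$ (and $r=1$), where the hypothesis on $k-\lambda_{r+1}$ turns out to be unsatisfiable so that the statement holds vacuously.
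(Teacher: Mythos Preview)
The survey paper does not contain a proof of this theorem; it merely quotes the result from \cite{Cioaba07LAA} and moves on, so there is no ``paper's own proof'' to compare against. Your outline is, in broad strokes, the strategy used in the original Cioab\u{a}--Gregory paper: assume the matching number is at most $(n-r)/2$, invoke the Berge--Tutte (or Gallai--Edmonds) structure to produce a set $S$ with at least $|S|+r$ odd components, build a partition of $V(G)$ into $r+2$ blocks, and apply eigenvalue interlacing to the resulting quotient matrix to force $\lambda_{r+1}(G)$ to be large.

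That said, your sketch leaves the decisive step vague. In the actual argument the $r+1$ classes $W_0,\dots,W_r$ are not obtained by a greedy size-balancing as you suggest; rather, one shows that at least $r+1$ of the odd components $C$ of $G-S$ satisfy the edge bound $e(C,S)\le k$ (a counting argument using $\sum_C e(C,S)\le k|S|$ and $q\ge |S|+r$, together with the parity constraint $e(C,S)\equiv k|C|\pmod 2$), and each such component is taken as a block on its own. The lower bound on $\lambda_{r+1}(B)$ then reduces, via interlacing, to bounding the spectral radius of a single odd component $C$ with $e(C,S)\le k$; this is where the extremal graphs on $k+1$ or $k+2$ vertices enter and produce the three constants in the statement. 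Your ``greedy bundling'' of small and large components and the control of the ratios $\alpha_j$ is not how the argument runs, and it is unclear that it would yield the sharp thresholds; you should replace it by the direct selection of $r+1$ low-boundary components.
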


Further, they found an explicit expression $\rho(k)$ in \cite{Cioaba09JCTB}, and proved that if $\lambda_3(G) < \rho(k)$, then $G$ has a matching of size $ \lfloor\frac n2\rfloor $.
 Other related results can be found in \cite{LiuhuiqingLumei,  OSuilSebiSIAM}.

In what follows, we  shall consider the sufficient conditions on the existence of perfect matching 
in terms of the number of edges. First of all, we define 
\[ \boxed{  R_n :=K_{1}\vee( K_{n-3}\cup I_2). } \]  
It is easy to verify that 
$R_n$ has no perfect matching 
and $e(R_n) = {n-2 \choose 2} +2$.  
In 2021, 
Suil O \cite{Suil21} proved that 
if an $n$-vertex ($n\ge 10$) graph has 
more   edges than $R_n$, 
then it has  a perfect matching. More precisely, 
they showed the following theorem even for graphs with small 
order. 

\begin{theorem}[Suil O \cite{Suil21}]   \label{th31} 
Let $n\ge 10$ be an even integer or $n=4$. 
If $G$ is a connected graph of order $n$ with  
$e(G) > \binom{n-2}{2}+2$. 
then $G$ has a perfect matching. 
For  $n=6$ or $n=8$, if 
$e(G)>9$ or $e(G) >18$, respectively, then 
$G$ has a perfect matching. 
\end{theorem}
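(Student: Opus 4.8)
The plan is to reduce the statement to a counting lemma on non-trivial components and then invoke the structure of graphs without a perfect matching. First I would recall the classical characterization: by Tutte's theorem (or its Berge--Tutte form), a graph $G$ of even order $n$ has no perfect matching if and only if there is a set $S\subseteq V(G)$ with $o(G-S)>|S|$, where $o(\cdot)$ counts odd components. Since $n$ is even, $o(G-S)$ and $|S|$ have the same parity, so $o(G-S)\ge |S|+2$. The strategy is to fix such a "Tutte set" $S$ with $|S|=s$ and bound $e(G)$ from above in terms of $s$ and $n$, then show the maximum of this bound over all admissible $s$ is exactly $\binom{n-2}{2}+2=e(R_n)$, with equality forcing $G=R_n$.

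The key steps, in order: (1) Write $e(G)\le \binom{s}{2} + s(n-s) + \sum_i \binom{n_i}{2}$, where $n_1,\dots,n_t$ are the orders of the components of $G-S$, using that inside $S$ and between $S$ and $V\setminus S$ we can have at most all possible edges, while between distinct components of $G-S$ there are none. (2) Use $t\ge s+2$ and the convexity/monotonicity of $x\mapsto\binom{x}{2}$: subject to $\sum n_i = n-s$ and $t\ge s+2$ parts each of size $\ge 1$, the sum $\sum\binom{n_i}{2}$ is maximized by making one part as large as possible and the remaining $s+1$ parts singletons, giving $\sum\binom{n_i}{2}\le\binom{n-2s-1}{2}$. (3) So $e(G)\le \binom{s}{2}+s(n-s)+\binom{n-2s-1}{2}=:g(s)$, and I would check $g(1)=\binom{n-2}{2}+2$ directly and argue $g(s)<g(1)$ for all $s\ge 2$ with $s\le (n-2)/2$ (for even $n\ge 10$, and separately handle the small cases $n=4,6,8$ by the explicit thresholds in the statement, and note $s$ must be odd for $G-S$ to have a chance of connectivity obstruction but this isn't needed). (4) Since we assume $e(G)>\binom{n-2}{2}+2=g(1)\ge g(s)$ for all $s\ge 1$, we get a contradiction with the Tutte decomposition, hence $G$ has a perfect matching. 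For the small-order cases I would just verify that $g(s)$ evaluated at $n=6$ gives max $9$ and at $n=8$ gives max $18$, matching the stated thresholds, so strict inequality again yields a perfect matching (using that $G$ is connected to rule out degenerate component splits).

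The main obstacle I anticipate is step (3): carefully verifying that $g(s)$ is strictly decreasing (or at least $g(s)<g(1)$) on the relevant range of $s$, since $g$ is a quadratic-ish expression in $s$ with competing terms — the $s(n-s)$ term grows while the $\binom{n-2s-1}{2}$ term shrinks quadratically in $s$, and one must confirm the shrinking wins for $n$ large and pin down exactly where the small-$n$ exceptions ($n=6,8$) come from. A secondary subtlety is the equality analysis: tracing back through steps (1)--(3) to confirm that $e(G)=g(1)$ with no perfect matching forces $s=1$, the single large component to be a clique $K_{n-3}$, the two remaining components to be isolated vertices, and $S$ to be joined completely — i.e. $G=R_n=K_1\vee(K_{n-3}\cup I_2)$ — which is only needed for the "more edges than" phrasing but is worth stating cleanly. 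I would also remark that the connectivity hypothesis on $G$ is used only to exclude trivial disconnected extremal configurations and to make the $R_n$ characterization sharp.
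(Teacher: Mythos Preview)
The paper is a survey and does not supply its own proof of this theorem; it simply quotes the result from \cite{Suil21} and then remarks (without proof) on the equality cases. So there is no ``paper's proof'' to compare against directly. Your Tutte--Berge plus convexity argument is exactly the standard route and is correct in outline.

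Two small points worth tightening. First, your claim in step (3) that $g(s)<g(1)$ for all $s\ge 2$ when $n\ge 10$ is not quite right: at $n=10$ and $s=4$ one gets $g(4)=6+24+\binom{1}{2}=30=g(1)$, which is precisely why $K_4\vee I_6$ is a second extremal graph at $n=10$ (as the paper's remark after the theorem records). This does not damage your proof, since you only need $e(G)>g(1)\ge g(s)$ and the hypothesis is a strict inequality; just replace ``$<$'' by ``$\le$'' there. Second, connectivity is doing more work than your final sentence suggests: it is what rules out the case $s=0$ (a connected graph of even order has $o(G)=0$), and without it the whole bound collapses --- for instance $K_{n-1}\cup K_1$ has $\binom{n-1}{2}>\binom{n-2}{2}+2$ edges and no perfect matching. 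So connectivity is essential to force $s\ge 1$, not merely cosmetic. With these two adjustments your argument is complete and matches what one expects from the original reference.
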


We mention here that the extremal graphs in Theorem \ref{th31}
can be characterized easily. 
By a careful examination, 
we get show that if $e(G) \ge 
{n-2 \choose 2} +2$, then $G$ has a perfect matching 
unless  $n\ge 12$ and $G=R_n$, or 
$n=6$ and $G\in \{R_6, K_{2}\vee I_{4} , K_{2,4}, K_{1}\vee(K_{1}\cup K_{1,3})\}$,  
or $n=8$ and $G\in \{R_8,K_1\vee K_{2,5}, K_2\vee(K_1\cup K_{1,4}), 
K_2\vee(K_2\cup K_{1,3}) , K_3\vee I_{5}\}$, or $n=10$ and $G\in \{R_{10}, K_{4}\vee I_{6}\}$. 
The proof is a standard  case analysis, so we omit the details.  
Moreover, Suil O \cite{Suil21} proved the following 
lower bound for spectral radius in an $n$-vertex graph 
to guarantee the existence of a perfect matching.

\begin{theorem}[Suil O \cite{Suil21}] \label{thm68}
Let $n$ be an even integer and $G$ be an $n$-vertex graph. \\
(1) If $n=4$ or $n\ge 8$ and  $  \lambda (G) 
> \lambda (R_n)$,   
then $G$ has a perfect matching. \\
(2) If $n=6$ and $\lambda (G)> \frac{1+\sqrt{33}}{2}$, 
then $G$ has a perfect matching. 
\end{theorem}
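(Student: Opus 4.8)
\textbf{Proof proposal for Theorem \ref{thm68}.}

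The plan is to reduce the spectral statement to the edge-count statement in Theorem \ref{th31} (and its refined version characterizing the extremal graphs), exactly in the spirit of the proofs of Theorems \ref{thmFN} and \ref{thmYF} given earlier. First I would invoke the Stanley-type inequality $\lambda(G) \le -\tfrac12 + \sqrt{2e(G) + \tfrac14}$, which is equivalent to $e(G) \ge \binom{\lambda(G)+1}{2}$ after rearranging; more precisely, $e(G) \ge \tfrac12\lambda(G)(\lambda(G)+1)$. The key preliminary computation is to pin down $\lambda(R_n)$ where $R_n = K_1 \vee (K_{n-3} \cup I_2)$: by quotient-matrix / equitable-partition arguments on the three classes (the dominating vertex, the $K_{n-3}$, the two isolated-in-$I_2$ vertices), $\lambda(R_n)$ is the largest root of a cubic, and one checks $\lambda(R_n) > \lambda(K_{n-3}) = n-4$, while $\lambda(R_n)$ stays close to $n-4$ for large $n$. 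The crucial numerical inequality I must verify is that $\lambda(G) > \lambda(R_n)$ forces $e(G) > \binom{n-2}{2}+2 = e(R_n)$. This is \emph{not} automatic from the Stanley inequality alone when $\lambda(R_n)$ is close to $n-4$, so the argument needs care: the map $\lambda \mapsto \tfrac12\lambda(\lambda+1)$ is strictly increasing, hence $\lambda(G) > \lambda(R_n)$ gives $e(G) > \tfrac12\lambda(R_n)(\lambda(R_n)+1)$, and it remains to show this last quantity is at least $e(R_n)$, i.e. $\tfrac12\lambda(R_n)(\lambda(R_n)+1) \ge \binom{n-2}{2}+2$, equivalently $\lambda(R_n) \ge -\tfrac12 + \sqrt{2\left(\binom{n-2}{2}+2\right)+\tfrac14}$ — which is precisely the Stanley bound applied to $R_n$ itself, and holds since $R_n$ is a genuine graph. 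So the chain $\lambda(G) > \lambda(R_n) \ \Rightarrow\ e(G) > e(R_n)$ closes cleanly.

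With $e(G) > \binom{n-2}{2}+2$ in hand, Theorem \ref{th31} (for $n=4$ and $n\ge 8$ even, with the $n=6,8$ clauses handled separately) yields directly that $G$ has a perfect matching, proving part (1). For part (2), $n=6$, I would compute $\lambda$ of the finitely many non-perfectly-matchable graphs on $6$ vertices with near-extremal edge count, namely those listed in the remark after Theorem \ref{th31}: $R_6$, $K_2 \vee I_4$, $K_{2,4}$, and $K_1 \vee (K_1 \cup K_{1,3})$. The largest spectral radius among these is $\lambda(K_2 \vee I_4) = \tfrac{1+\sqrt{33}}{2}$ (since $K_2 \vee I_4$ has quotient matrix $\begin{pmatrix} 1 & 4 \\ 2 & 0 \end{pmatrix}$ on the partition into the $K_2$ and the $I_4$, with characteristic polynomial $x^2 - x - 8$), which exceeds $\lambda(R_6)$, $\lambda(K_{2,4}) = \sqrt{8}$, and $\lambda(K_1 \vee (K_1\cup K_{1,3}))$. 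Hence any $6$-vertex graph with $\lambda(G) > \tfrac{1+\sqrt{33}}{2}$ cannot be one of these exceptional graphs, so by Theorem \ref{th31} for $n=6$ it has a perfect matching; one must also note that being non-perfectly-matchable with the \emph{maximum possible} number of edges forces $G$ into this list, so exceeding the spectral radius of the list forces more edges than any non-matchable graph can have.

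The main obstacle I anticipate is the verification in part (1) that no additional small-order exceptional graphs sneak in — that is, the passage from ``$e(G) > e(R_n)$'' to ``perfect matching'' via Theorem \ref{th31} is stated for $n=4$ and $n\ge 8$ even, so for the borderline even orders $n=6,8$ I genuinely need the separate clauses $e(G)>9$ (for $n=6$) and $e(G)>18$ (for $n=8$), and I must check the Stanley-inequality reduction delivers those thresholds from $\lambda(G) > \lambda(R_n)$; this amounts to confirming $\lambda(R_8) \ge -\tfrac12 + \sqrt{2\cdot 18 + \tfrac14}$, again automatic since it is the Stanley bound for the honest graph $R_8$. A secondary, purely bookkeeping obstacle is exhibiting the correct quotient-matrix polynomial for $\lambda(R_n)$ and confirming $\lambda(R_n)$ is simple and Perron (immediate, as $R_n$ is connected), and that the equitable partition is exact. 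All of this is routine linear algebra once set up; the conceptual content is entirely in the edge-counting theorem already proved, plus the monotonicity of the Stanley bound.
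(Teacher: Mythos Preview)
Your reduction via Stanley's inequality contains a fatal sign error at the pivotal step. Stanley's bound says $\lambda(H) \le -\tfrac12 + \sqrt{2e(H)+\tfrac14}$, i.e.\ $e(H) \ge \tfrac12\lambda(H)(\lambda(H)+1)$, for \emph{every} graph $H$. Applied to $G$ this correctly yields $e(G) \ge \tfrac12\lambda(G)(\lambda(G)+1) > \tfrac12\lambda(R_n)(\lambda(R_n)+1)$. But to conclude $e(G) > e(R_n)$ you then need $\tfrac12\lambda(R_n)(\lambda(R_n)+1) \ge e(R_n)$, which is the \emph{reverse} of Stanley applied to $R_n$; Stanley gives $e(R_n) \ge \tfrac12\lambda(R_n)(\lambda(R_n)+1)$, with equality only for a clique plus isolated vertices, which $R_n$ is not. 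Concretely, for $n=8$ one has $e(R_8)=17$ while $\lambda(R_8)<\sqrt{26}$, so $\tfrac12\lambda(R_8)(\lambda(R_8)+1)<\tfrac12\sqrt{26}(\sqrt{26}+1)\approx 15.55<17$. Thus a graph $G$ with $\lambda(G)$ barely above $\lambda(R_8)$ could in principle have only $16$ edges, and Theorem~\ref{th31} would give you nothing. The chain ``$\lambda(G)>\lambda(R_n)\Rightarrow e(G)>e(R_n)$'' simply does not close via Stanley.

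This is not a bookkeeping issue but a genuine obstruction: the edge-extremal graph $R_n$ is \emph{not} spectrally extremal for its edge count, so a purely edge-counting reduction cannot work. The actual proof (the survey only cites \cite{Suil21} without reproducing the argument) proceeds structurally: assume $G$ has no perfect matching, invoke the Berge--Tutte formula to obtain a set $S$ with $\mathrm{odd}(G-S)\ge |S|+2$, and then argue directly that among all such configurations on $n$ vertices the spectral radius is maximised by $R_n$ (for $n\ge 8$) or by $K_2\vee I_4$ (for $n=6$), using Perron--Frobenius monotonicity under edge addition together with a comparison of the candidate extremal graphs $K_s\vee(K_{n-2s-1}\cup I_{s+1})$ over $s$. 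Your treatment of part~(2) has the right shape once one knows the finite list of candidate extremal graphs, but that list must come from the structural argument, not from Theorem~\ref{th31}.
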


The condition is best possible since 
both $R_n$ and 
$K_2 \vee I_4$ have no perfect matching.    
Some calculations yield that 
$\lambda (K_2 \vee I_4)= \frac{1+\sqrt{33}}{2} \approx 
3.372 $ and $\lambda (R_6) \approx  3.177$. 
Moreover, it is not hard to see that  
$\lambda (R_n)$ is the largest root of 
\[  x^3-(n-4)x^2 - 
(n-1)x +2(n-4)=0. \]
We here make a brief proof. 
Let $\bm{x}=(x_1,\ldots ,x_n)$ be the eigenvector 
corresponding to $\lambda = 
\lambda (R_n)$. 
By the symmetry, we know that the entries of eigenvector 
corresponding to the two vertices of degree $1$ are equal,  
we may denote by $x_1=x_2=x$. Similarly, 
the entries of eigenvector corresponding to 
the $n-3$ vertices with degree $n-3$ are denoted by 
$x_3=\cdots =x_{n-1}=y$, and 
the entry of eigenvector corresponding to 
the vertex with degree $n-1$ is denoted by $x_n=z$. 
The eigen-equation $A\bm{x}=\lambda \bm{x}$ implies that 
\[ \begin{cases}
\lambda x= z, \\
\lambda y = (n-4) y + z, \\
\lambda z= 2x + (n-3)y. 
\end{cases} \]
We can obtain that $x=\frac{1}{\lambda }z$ and 
$y=\frac{1}{\lambda -n +4} z$, and thus 
\[  \lambda ^3-(n-4)\lambda^2 - (n-1)\lambda +2(n-4)=0 \]
Next, we shall show that 
\[   n-3 <  
\lambda (R_n) < \sqrt{(n-3)^2+1}.\]  
Since $K_{n-2}$ is a proper subgraph of 
$K_1 \vee (K_{n-3} \cup I_2)$, the second inequality 
holds immediately. 
Let $f(x)=x^3-(n-4)x^2 - 
(n-1)x +2(n-4)$. We can compute  the derivation
$f'(x)=3x^2 -2(n-4)x -(n-1)$, which has two 
distinct roots $x_1=\frac{1}{3}(n-4 - \sqrt{n^2-5n+13})$  and 
$x_2=\frac{1}{3}(n-4 + \sqrt{n^2-5n+13})$. 
Note that $f(x_2) <0$. 
An easy calculation reveals that $x_2 < \sqrt{(n-3)^2+1}$ and $f(\sqrt{(n-3)^2+1}) >0$ for $n\ge 8$, hence 
$\sqrt{(n-3)^2+1} $ is  greater than the largest root of 
$f(x)$. By the above discussion, we know that 
the largest root of $f(x)$ is approximately equal to 
$n-3$. Given a graph $G$, it is not straightforward to verify 
the condition $\lambda (G) > \lambda (R_n)$. 
Hence, it is possible and meaningful to relax the condition to 
$\lambda (G)\ge n-3$ and characterize more extremal graphs.

 In 2020, Liu, Pan and Li \cite{LPL2020}
 presented the  result  about the relationship between the
signless Laplacian spectral radius and prefect matching in graphs. 
Recall that $R_n =K_{1}\vee( K_{n-3}\cup I_2)$ and 
$R_n$ has no perfect matching.

\begin{theorem}[Liu--Pan--Li \cite{LPL2020}] 
\label{thmLPL}
Let $n$ be  even 
and $G$ be an $n$-vertex connected graph. \\ 
(1) If $n=4$ or $n\ge 10$ and 
$q(G)> q(R_n)$, then 
$G$ has a perfect matching. \\
(2) If $n=6$ and $q(G) > 4+2\sqrt{3}$, 
then $G$ has a perfect matching. \\
(3) If $n=8$ and $q(G) > 6+2\sqrt{6}$, 
then $G$ has a perfect matching.
\end{theorem}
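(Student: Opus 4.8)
The plan is to argue by contradiction. Suppose $G$ is connected, $n$ is even, and $G$ has no perfect matching; I will show that $q(G)$ is at most the stated threshold, which contradicts the hypothesis. The natural starting point is the Tutte--Berge formula: there is a set $S\subseteq V(G)$ with $o(G-S)\ge |S|+2$, where $o(\cdot)$ is the number of odd components. Connectedness of $G$ forces $s:=|S|\ge 1$, and counting vertices forces $s\le n/2-1$; moreover, since $n$ is even we have $o(G-S)\equiv s\pmod 2$, so $o(G-S)-s$ is a positive even number. Set
\[ H_s:=K_s\vee\bigl(K_{n-2s-1}\cup I_{s+1}\bigr), \]
so that $n-2s-1$ is odd and $H_s$ has no perfect matching (witnessed by its clique $K_s$), and observe that $H_1=R_n$.

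The first step is to reduce an arbitrary such $G$ to one of the graphs $H_s$. Since $q$ strictly increases when an edge is added to a connected graph (Perron--Frobenius applied to $Q(G)$), we may keep adding edges as long as $o(G-S)\ge s+2$ is preserved for the fixed set $S$; hence we may assume $S$ induces a clique, $S$ is completely joined to $V\setminus S$, and every component of $G-S$ is a clique. By merging pairs of odd components of $G-S$ into even ones until exactly $s+2$ odd components remain, and then absorbing every even component into one of the odd ones, we may further assume $G-S$ is a disjoint union of $s+2$ cliques of odd orders $a_1\ge a_2\ge\cdots\ge a_{s+2}\ge 1$ with $\sum a_i=n-s$. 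Finally, a signless-Laplacian Kelmans transformation — moving a vertex from a smaller clique into the largest one, an operation that does not decrease $q$ (a Rayleigh-quotient estimate with the Perron eigenvector of $Q$, using that vertices of the larger part carry the larger eigenvector weight, cf. the Kelmans arguments used earlier in the survey) — lets us take $a_2=\cdots=a_{s+2}=1$. Consequently $q(G)\le \max_{1\le s\le n/2-1}q(H_s)$.

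It remains to identify this maximum. Each $H_s$ has the equitable partition $(K_s,\,K_{n-2s-1},\,I_{s+1})$, so $q(H_s)$ is the largest eigenvalue of the quotient matrix
\[ B_s=\begin{pmatrix} n+s-2 & n-2s-1 & s+1\\ s & 2n-3s-4 & 0\\ s & 0 & s\end{pmatrix}, \]
i.e.\ the largest root of its characteristic polynomial $\varphi_s(x)=\det(xI-B_s)$, a cubic. One then checks: for $n=4$ the only connected $n$-vertex graph without a perfect matching is $H_1=K_{1,3}$, with $q=4=q(R_4)$; for even $n\ge 10$ the value $q(H_1)=q(R_n)$ dominates, i.e.\ $\varphi_1\bigl(q(H_s)\bigr)<0$ for every $2\le s\le n/2-1$; for $n=6$ the maximum is $q(H_2)=q(K_2\vee I_4)=4+2\sqrt3$; and for $n=8$ it is $q(H_3)=q(K_3\vee I_5)=6+2\sqrt6$. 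The last two (and more generally $q(K_a\vee I_b)$) come from the $2\times2$ quotient matrix of $K_a\vee I_b$, whose largest eigenvalue solves $\lambda^2-(3a+b-2)\lambda+2a(a-1)=0$. Combining the two steps gives $q(G)\le$ threshold, the desired contradiction; tracking the equalities would moreover recover the full list of extremal graphs, though this is not needed for the strict-inequality statement.

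The main obstacle is the comparison of the cubics $\varphi_s$ for general even $n\ge 10$. Although this is only a finite algebraic task, it is genuinely delicate: the endpoint candidates $s=1$ and $s=n/2-1$, namely $R_n$ and $K_{n/2-1}\vee I_{n/2}$, are very close, and at $n=10$ the graphs $R_{10}$ and $K_4\vee I_6$ even have the same number of edges, so one must verify $q(R_{10})>q(K_4\vee I_6)=8+2\sqrt{10}$ by a careful root estimate rather than by counting edges. I would handle this either by proving $q(H_s)$ is (eventually) monotone in $s$ together with a direct endpoint comparison, or by exhibiting $H_1$ as obtainable from $H_s$ ($s\ge 2$) through a sequence of Kelmans-type transformations that do not decrease $q$.
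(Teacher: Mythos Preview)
The survey does not actually prove this theorem: it only states the result of Liu--Pan--Li \cite{LPL2020} and records, as orientation, the characteristic polynomial $x^3-(3n-7)x^2+n(2n-7)x-2(n^2-7n+12)=0$ for $q(R_n)$ together with the explicit values $q(K_2\vee I_4)=4+2\sqrt3$ and $q(K_3\vee I_5)=6+2\sqrt6$ to explain why the thresholds change at $n=6,8$. So there is no in-text proof to compare against beyond these remarks; what you have written is the standard argument and almost certainly tracks the original paper.

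Your outline is sound. The Tutte--Berge reduction, the edge-addition to $K_s\vee(\text{odd cliques})$, and the quotient matrix $B_s$ are all correct (and $B_1$ does produce the cubic the survey records). Two small comments. First, the ``move a vertex between cliques'' step is not literally a single Kelmans operation; what one Kelmans move with $u$ in the large clique and $v$ in a small clique $K_{a_i}$ actually does is detach $v$ (making it a singleton in $G-S$) and attach $K_{a_i}\setminus\{v\}$ to $u$, after which you add the missing edges to form one clique $K_{a_1+a_i-1}$. Iterating this gives exactly $H_s$, so your conclusion $q(G)\le q(H_s)$ stands; just phrase the mechanism accurately. Second, you are right that the remaining work is the comparison $\max_s q(H_s)=q(H_1)$ for even $n\ge 10$, and that $n=10$ is tight because $e(R_{10})=e(K_4\vee I_6)$. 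Since you already have the quotient matrices, the cleanest route is to expand $\varphi_s(x)-\varphi_1(x)$ and evaluate at $x=q(H_1)$; the survey's explicit cubic for $s=1$ lets you do the endpoint check $q(R_{10})>8+2\sqrt{10}$ directly. The alternative you mention---Kelmans from $H_s$ to $H_1$---does not work as stated, because $H_1$ is not edge-dominating over $H_s$ in any obvious vertex-labelling, so stick with the polynomial comparison.
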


First of all, we can show that $q(R_n)$ 
is the largest root of the equation 
\[  x^3 - (3n-7)x^2 + n(2n-7)x- 2(n^2-7n +12)=0.  \]
Moreover, we can see that $K_2\vee I_4$ 
and $K_3\vee I_5$ have no perfect matching. 
By some calculations, we can get that 
$q(K_2\vee I_4) =4+2\sqrt{3}\approx 7.464$ and 
$q(R_6)\approx 6.909$. For $n=8$, we 
have  $q(K_3\vee I_5) =6+2\sqrt{6}\approx 10.899$ 
and $q(R_8)\approx 10.513$.

\subsection{Problem for graph deficiency}

The \textit{deficiency }of a graph $G$, denoted by def$(G)$, is the number of unmatched 
vertices  under a maximum matching in $G$. In particular, $G$ has a perfect matching if and only if $\mathrm{def}(G)=0$. 
And $G$ has a nearly perfect matching if and only if $\mathrm{def}(G)=1$. 

Let $G=(V,E)$ be a simple graph on $n$ vertices and 
 $\alpha' (G)$ denote the number of edges  in
a maximum matching  of  $G$.  
To generalize the result of Tutte \cite{Tut1947}, 
Berge \cite{Berge} obtained the  well-known Berge--Tutte formula. 

\begin{theorem}[Berge--Tutte]
Let $S\subseteq V$ be a set of vertices. 
We denote by $\mathrm{odd}(G-S)$ 
 the number of components with odd vertices 
in the induced subgraph  $G-S$.  Then 
\begin{equation*}
\label{eqtb}
\alpha ' (G) = \frac{1}{2} 
\bigl( n-\max_{S \subseteq V}\{ \mathrm{odd}(G-S)-|S|\} \bigr). 
\end{equation*}
\end{theorem}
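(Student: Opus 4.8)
The plan is to prove the two inequalities $\alpha'(G)\le\tfrac12(n-\mu)$ and $\alpha'(G)\ge\tfrac12(n-\mu)$ separately, where $\mu:=\max_{S\subseteq V}\{\mathrm{odd}(G-S)-|S|\}$. Equivalently, writing $\mathrm{def}(G)=n-2\alpha'(G)$ for the deficiency, I want $\mathrm{def}(G)\ge\mu$ (easy, by parity counting) and $\mathrm{def}(G)\le\mu$ (the substance, which amounts to producing one extremal set $S^{\ast}$).

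For the bound $\mathrm{def}(G)\ge\mu$, fix a maximum matching $M$ and an arbitrary $S\subseteq V$. In each odd component $C$ of $G-S$ the matching $M\cap E(C)$ leaves at least one vertex of $C$ unmatched, and that vertex is either $M$-exposed in $G$ or incident to an $M$-edge meeting $S$. Since at most $|S|$ edges of $M$ meet $S$, at least $\mathrm{odd}(G-S)-|S|$ vertices of $G$ are $M$-exposed, so $n-2\alpha'(G)\ge\mathrm{odd}(G-S)-|S|$ for every $S$, hence $\mathrm{def}(G)\ge\mu$.

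For the reverse inequality I must exhibit $S^{\ast}$ with $\mathrm{odd}(G-S^{\ast})-|S^{\ast}|=d$, where $d:=\mathrm{def}(G)$. If $d=0$ then $G$ has a perfect matching, every component of $G$ is even, and $S^{\ast}=\varnothing$ works. If $d=1$ then $G$ has odd order and no perfect matching, so Tutte's $1$-factor theorem gives $S^{\ast}$ with $\mathrm{odd}(G-S^{\ast})>|S^{\ast}|$, which together with the already proved bound forces $\mathrm{odd}(G-S^{\ast})-|S^{\ast}|=1=d$. If $d\ge2$, form the augmented graph $H:=G\vee I_{d-2}$, adding $d-2$ pairwise nonadjacent vertices each joined to all of $V(G)$; a short check gives $|V(H)|=n+d-2$ even and $\mathrm{def}(H)=2$, so $H$ has no perfect matching and Tutte's theorem yields $T\subseteq V(H)$ with $\mathrm{odd}(H-T)\ge|T|+2$. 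One then verifies $T$ contains all $d-2$ added vertices — otherwise an uncovered added vertex would make $H-T$ connected and force $\mathrm{odd}(H-T)\le1$ — so $S^{\ast}:=T\cap V(G)$ satisfies $H-T=G-S^{\ast}$ and $|T|=|S^{\ast}|+d-2$, giving $\mathrm{odd}(G-S^{\ast})-|S^{\ast}|\ge d$, hence equality. A more self-contained alternative replaces this reduction by an induction on $|V(G)|$ following the Gallai--Edmonds decomposition: take $S^{\ast}$ to be the outside-neighborhood of the set $D$ of vertices missed by some maximum matching, the crux being that each component of $G[D]$ is factor-critical.

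The main obstacle is entirely in the reverse inequality, and specifically in the structural input: either Tutte's $1$-factor theorem itself (which I am content to quote, as it is the very result Berge generalized) plus the bookkeeping that the Tutte barrier of $H$ absorbs the added vertices, or, along the inductive route, the factor-criticality of the components spanned by the inessential vertices. The upper bound on $\mathrm{def}(G)$ and the splitting into the cases $d=0$, $d=1$, $d\ge2$ are routine.
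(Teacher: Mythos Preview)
The paper states the Berge--Tutte formula as a classical result and does not supply a proof, so there is nothing in the paper to compare your argument against. Your proof is correct and is in fact the standard reduction to Tutte's $1$-factor theorem: the easy inequality $\mathrm{def}(G)\ge\mu$ is the routine parity count you give, and for the reverse inequality your augmentation $H=G\vee I_{d-2}$ (for $d\ge 2$) together with the observation that any Tutte barrier of $H$ must swallow all added vertices is exactly the usual trick. Two minor remarks: for $d=1$ you do not actually need Tutte's theorem, since $n$ is odd and $S^\ast=\varnothing$ already gives $\mathrm{odd}(G)-0\ge 1$, with equality forced by the easy direction; and in passing from $\mathrm{odd}(H-T)>|T|$ to $\mathrm{odd}(H-T)\ge|T|+2$ you are silently using that $\mathrm{odd}(H-T)\equiv |V(H)|-|T|\equiv |T|\pmod 2$, which is worth stating explicitly.
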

With the help of the Berge--Tutte formula, we can get 
\[ \mathrm{def}(G)= \max_{S \subseteq V}\{ \mathrm{odd}(G-S)-|S|\}. \]

We say that $G$ is \emph{$\beta$-deficient} if def$(G)\leq \beta$. 
In particular, $G$ is $0$-deficient if and only if 
$G$ has a perfect matching. And $G$ is $1$-deficient 
if and only if $G$ has a perfect matching or 
a nearly perfect matching.

\begin{theorem}[Bondy--Chvatal~\cite{Bondy}] 
Let $G$ be a simple graph on $n$ vertices.  
Then $G$ is $\beta$-deficient if and only if 
the closure graph $\mathrm{cl}_{n-\beta -1}(G)$ is  $\beta$-deficient. 
In particular, if $d(u)+d(v) \ge n-\beta -1$ for all non-edges $\{u,v\}$, 
then $\mathrm{cl}_{n-\beta -1} (G) =K_n$ and $G$ is $\beta$-deficient. 
\end{theorem}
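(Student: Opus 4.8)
The plan is to derive this Bondy--Chv\'{a}tal-type closure statement by essentially the same argument used repeatedly for Hamiltonicity, $k$-Hamiltonicity, $k$-path-coverability, $k$-connectivity and Hamilton-connectivity in the excerpt. The key observation to establish first is the monotonicity fact: if $G'$ is obtained from $G$ by adding one edge $uv$ with $d_G(u)+d_G(v)\ge n-\beta-1$, then $G$ is $\beta$-deficient if and only if $G'$ is $\beta$-deficient. One direction is trivial, since adding an edge cannot increase $\mathrm{odd}(G-S)-|S|$ for any $S$, hence cannot increase $\mathrm{def}(G)$; so $G$ being $\beta$-deficient forces $G'$ to be $\beta$-deficient (indeed every supergraph of a $\beta$-deficient graph on the same vertex set is $\beta$-deficient). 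The substantive direction is the converse: assuming $G'=G+uv$ is $\beta$-deficient we must show $G$ is $\beta$-deficient, i.e., that for every $S\subseteq V$ we have $\mathrm{odd}(G-S)-|S|\le\beta$.

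The core step will be a short augmenting-path / parity argument. Suppose for contradiction that $G$ is not $\beta$-deficient, so by the Berge--Tutte formula there is a set $S$ with $\mathrm{odd}(G-S)-|S|\ge\beta+1$; since $\mathrm{odd}(G-S)-|S|\equiv n\pmod 2$ and $G'$ is $\beta$-deficient (so the analogous quantity for $G'-S$ is at most $\beta$ and $G-S$ differs from $G'-S$ by at most the single edge $uv$), deleting the edge $uv$ can change $\mathrm{odd}(\cdot-S)$ by at most the usual amount, which pins down that $\{u,v\}\cap S=\emptyset$ and that $u,v$ lie in the same (odd) component of $G'-S$ while in $G-S$ that component splits. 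From here I would take a maximum matching $M$ of $G$ leaving $\mathrm{def}(G)\ge\beta+1$ vertices uncovered, and argue using the degree-sum hypothesis $d_G(u)+d_G(v)\ge n-\beta-1$ that $u$ and $v$ cannot both be "far" from the uncovered vertices: counting the neighbourhoods of $u$ and $v$ against the at most $n-\beta-1$ vertices that are either covered by $M$ or equal to $u,v$ themselves, the pigeonhole/parity bookkeeping produces an $M$-augmenting structure (either an $M$-augmenting path through the edge $uv$, or two $M$-alternating paths from $u$ and from $v$ to distinct uncovered vertices) — contradicting maximality of $M$ in $G$ once $uv$ is available, i.e., $\mathrm{def}(G')<\mathrm{def}(G)$, contradicting $\mathrm{def}(G')\le\beta<\beta+1\le\mathrm{def}(G)$.

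Granting the one-edge monotonicity lemma, the full statement follows immediately by induction on the number of edges added in forming $\mathrm{cl}_{n-\beta-1}(G)$: each successive edge is added between a non-adjacent pair whose degree sum (in the current graph, hence $\ge$ in $G$) is at least $n-\beta-1$, so the lemma applies at every step and preserves $\beta$-deficiency in both directions; this also shows the closure is well-defined for this purpose. For the ``in particular'' clause, note that if $d(u)+d(v)\ge n-\beta-1$ for all non-edges $\{u,v\}$ of $G$, then the closure process never terminates until no non-edge remains, so $\mathrm{cl}_{n-\beta-1}(G)=K_n$; and $K_n$ is trivially $\beta$-deficient (for $n-\beta\ge 2$ it has a matching covering all but at most $(n\bmod 2)\le 1\le\beta$ vertices when $\beta\ge1$, and a perfect matching when $\beta=0$ and $n$ is even — in all relevant cases $\mathrm{def}(K_n)\le\max\{0,n-2\lfloor n/2\rfloor\}\le\beta$), hence so is $G$.

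\textbf{Main obstacle.} The only real work is the converse direction of the one-edge lemma — the augmenting-path bookkeeping that converts the hypothesis $d_G(u)+d_G(v)\ge n-\beta-1$ together with $\mathrm{def}(G+uv)\le\beta$ into $\mathrm{def}(G)\le\beta$. This is where one must be careful about parity of $n-\beta$ and about the precise effect of deleting a single edge on the number of odd components; the cleanest route is probably to phrase everything directly in terms of a maximum matching and an $M$-augmenting path rather than via the Berge--Tutte set $S$, since the classical ``if $d(u)+d(v)$ is large then an augmenting path using $uv$ exists'' lemma is exactly the engine behind all the Bondy--Chv\'{a}tal closure theorems quoted earlier in this section, and the deficiency version is obtained by running that lemma against the set of $M$-exposed vertices instead of against a single Hamilton path.
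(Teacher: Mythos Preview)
The paper does not include a proof of this theorem; it is a survey that cites the result from Bondy and Chv\'{a}tal's original paper \cite{Bondy} and uses it as a black box. All of the closure theorems in the paper (for Hamiltonicity, $k$-Hamiltonicity, $k$-path-coverability, $k$-connectivity, Hamilton-connectivity, and $\beta$-deficiency) are stated without proof and attributed to \cite{Bondy} and related sources, so there is no ``paper's own proof'' to compare against.

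That said, your outline is the standard one and is essentially correct. The clean route you identify in your final paragraph is indeed the right one: take a maximum matching $M'$ in $G+uv$ with $d\le\beta$ exposed vertices; if $uv\notin M'$ you are done, otherwise $M=M'-uv$ leaves $d+2$ exposed vertices in $G$ including $u$ and $v$. If neither $u$ nor $v$ sees another $M$-exposed vertex and no $M$-edge $\{a,b\}$ admits the swap $ua,vb$ (or $ub,va$), then each $M$-edge contributes at most $2$ to $d_G(u)+d_G(v)$, giving $d_G(u)+d_G(v)\le 2|M|=n-d-2$. Combined with the hypothesis $d_G(u)+d_G(v)\ge n-\beta-1$ this forces $d\le\beta-1$; and now the parity observation you flagged --- $\mathrm{def}(G+uv)\equiv n\pmod 2$, and one may assume $\beta\equiv n\pmod 2$ without loss --- upgrades this to $d\le\beta-2$, whence $\mathrm{def}(G)\le d+2\le\beta$. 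So the ``main obstacle'' you single out is real but is resolved exactly by the parity bookkeeping you anticipated. Your Berge--Tutte paragraph, by contrast, is a detour: the statement about how much $\mathrm{odd}(\cdot-S)$ can change under deletion of one edge is messier to make precise than the direct matching argument, and you correctly steer away from it at the end.
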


\begin{lemma}   \cite{BauerSurvey, LasVergnas}  
Let $G$ be a connected graph on $n$ vertices with degree sequence 
$d_1\le d_2 \le \cdots \le d_n$. 
Suppose $0 \leq \beta \leq n$ and $n\equiv \beta \ (\mod 2)$. 
If $G$ is  not $\beta$-deficient, then 
there exists $1\leq i\leq \frac{n+\beta-2}{2}$ such that 
$d_{i+1}\leq i-\beta $ and $ d_{n+\beta-i}\leq n-i-2$.
\end{lemma}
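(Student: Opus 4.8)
The plan is to prove the statement directly via the Bondy--Chv\'{a}tal closure, mirroring Chv\'{a}tal's proof of his classical Hamiltonicity condition (Theorem \ref{thmchv}). Assume $G$ is connected, $n\equiv\beta\pmod 2$, and $G$ is not $\beta$-deficient; we must produce the index $i$. In the non-vacuous case, $\beta\le n$ together with $\beta\equiv n\pmod 2$ forces $\beta\le n-2$, so the closure level $n-\beta-1$ is a non-negative integer. Put $H:=\mathrm{cl}_{n-\beta-1}(G)$. By the Bondy--Chv\'{a}tal theorem for $\beta$-deficiency stated above, $H$ is still not $\beta$-deficient; moreover adding edges only pushes a sorted degree sequence up coordinatewise (a one-line fact, since for every threshold $t$ the set of vertices of degree at least $t$ only grows), so $d_j(G)\le d_j(H)$ for all $j$, and it therefore suffices to exhibit the required index $i$ for $H$. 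Thus we may assume $G=H$, i.e. $d(u)+d(v)\le n-\beta-2$ for every non-adjacent pair $\{u,v\}$. We may also assume $G\neq K_n$, since $K_n$ has a maximum matching missing exactly $n\bmod 2\le\beta$ vertices and is hence $\beta$-deficient; so $G$ has at least one non-adjacent pair.

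Next I would run Chv\'{a}tal's extremal non-adjacent pair argument. Choose non-adjacent vertices $u,v$ with $d(u)+d(v)$ as large as possible and write $h:=d(u)\le d(v)$; the closure inequality gives $h+d(v)\le n-\beta-2$, hence $2h\le n-\beta-2$. Counting the non-neighbours of $v$: there are $n-1-d(v)\ge h+\beta+1$ of them, and each such $w$ forms a non-adjacent pair with $v$, so by maximality $d(w)\le d(u)=h$; thus at least $h+\beta+1$ vertices have degree at most $h$, which gives $d_{h+\beta+1}\le h$. Symmetrically, each of the $n-1-h$ non-neighbours $w$ of $u$ satisfies $d(w)\le d(v)$, and together with $u$ itself (degree $h\le d(v)$) this is a set of $n-h$ vertices all of degree at most $d(v)\le n-h-\beta-2$, so $d_{n-h}\le n-h-\beta-2$. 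Setting $i:=h+\beta$ converts these two bounds into precisely $d_{i+1}\le i-\beta$ and $d_{n+\beta-i}\le n-i-2$; the range condition $1\le i\le\frac{n+\beta-2}{2}$ holds because $h\ge\delta(G)\ge 1$ (connectedness) gives $i\ge 1$, while $2h\le n-\beta-2$ rearranges to $i=h+\beta\le\frac{n+\beta-2}{2}$. This is the required index, completing the proof.

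There is no deep obstacle in this route; the points demanding care are bookkeeping rather than ideas: verifying the monotonicity of sorted degree sequences under edge addition, checking that the indices $h+\beta+1$ and $n-h$ genuinely lie in $\{1,\dots,n\}$ (both follow from $2h\le n-\beta-2$), and disposing of the trivial edge cases ($\beta=n$ makes every graph $\beta$-deficient, $\beta=n-1$ is ruled out by parity, and $n\le 1$ is vacuous). An alternative is to avoid the closure and argue from the Berge--Tutte formula stated above: take a canonical barrier $S$, namely a maximizer of $\mathrm{odd}(G-S)-|S|$ of largest cardinality, so that every component of $G-S$ is odd and there are $|S|+\mathrm{def}(G)\ge|S|+\beta+2$ of them, then read the index off from the component sizes (each vertex of a component $C$ has degree at most $|C|+|S|-1$). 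That works too, but choosing the correct $i$ then requires a somewhat awkward split according to the size of the largest component, so I would favour the closure argument above.
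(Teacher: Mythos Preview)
The paper does not prove this lemma; it merely quotes it from the cited references \cite{BauerSurvey, LasVergnas}, so there is no in-paper argument to compare against. Your closure-based proof is correct and is precisely the standard Chv\'{a}tal-style derivation one would expect for such a degree-sequence condition: pass to the $(n-\beta-1)$-closure (legitimate by the Bondy--Chv\'{a}tal theorem for $\beta$-deficiency stated immediately before the lemma), take an extremal non-adjacent pair, and read off the two degree bounds. The index arithmetic with $i=h+\beta$ checks out, as do the range verifications and the edge cases you flag. Your alternative sketch via a maximum Berge--Tutte barrier is also a viable route, and your assessment that it is messier to extract the exact index from it is fair.
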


\begin{lemma}[Feng et al. \cite{FengLAA17}] 
Let $G$ be a graph of order $n\geq 11$. Let $0\le\beta\le n$ with $n\equiv \beta \ (\textup{mod} \ 2)$. If $e(G) \geq \frac12(n^2-5n-4\beta+10)$, then $G$ is $\beta$-deficient, unless  $\beta=0$ and $G=K_{1}\vee(I_2\cup K_{n-3} )$, or $\beta=1$ and $G=I_2\cup K_{n-2}$.
\end{lemma}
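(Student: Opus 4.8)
The plan is to mimic the edge-counting arguments already carried out in the excerpt (for instance the proofs of Theorem~\ref{thmhnkd} and the $k$-path-coverable theorem) and apply them to the lemma on $\beta$-deficiency just stated. First I would assume $G$ is not $\beta$-deficient. Since $G$ is $\beta$-deficient if and only if its closure $\mathrm{cl}_{n-\beta-1}(G)$ is $\beta$-deficient, we may pass to $H:=\mathrm{cl}_{n-\beta-1}(G)$, which is also not $\beta$-deficient and has the same vertex set and at least as many edges as $G$. Let $d_1\le d_2\le\cdots\le d_n$ be the degree sequence of $H$. The degree-sequence lemma for $\beta$-deficiency (stated just above) then gives an integer $1\le i\le \frac{n+\beta-2}{2}$ with $d_{i+1}\le i-\beta$ and $d_{n+\beta-i}\le n-i-2$.

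Next I would bound $2e(H)=\sum_{j=1}^n d_j$ by splitting the sum into three blocks according to the thresholds provided by the lemma: the first $i+1$ degrees are at most $i-\beta$, the next block (indices from $i+2$ up to $n+\beta-i-1$) is at most $n-i-2$, and the remaining $n-(n+\beta-i-1)=i-\beta+1$ degrees are at most $n-1$. This yields a bound of the shape
\[
2e(H)\le (i+1)(i-\beta)+(n-2i+\beta-2)(n-i-2)+(i-\beta+1)(n-1),
\]
which after expansion is a quadratic polynomial $g(i)$ in $i$ with positive leading coefficient. Since a convex quadratic on an interval is maximized at an endpoint, I would evaluate $g$ at $i=1$ (the smallest admissible value, corresponding to the extremal configuration) and at $i=\frac{n+\beta-2}{2}$, and check — using the hypothesis $n\ge 11$ and $n\equiv\beta\pmod 2$ — that the maximum is attained at $i=1$. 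Plugging $i=1$ in should recover exactly $2\cdot\frac12(n^2-5n-4\beta+10)=n^2-5n-4\beta+10$, so combined with the hypothesis $e(G)\ge\frac12(n^2-5n-4\beta+10)$ we force equality throughout: $e(G)=e(H)$, $G=\mathrm{cl}_{n-\beta-1}(G)$, $i=1$, and the degree sequence of $G$ is pinned down to $d_1=d_2=1-\beta$ (which makes sense only for $\beta\in\{0,1\}$, and for $\beta=0$ gives $d_1=d_2=1$ forced to $d_2=\cdots$ appropriately), $d_3=\cdots=d_{n+\beta-1}=n-3$, and $d_{n+\beta}=\cdots=d_n=n-1$. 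Here one sees the two advertised exceptional cases split off: $\beta=0$ with one dominating vertex and two pendant vertices, i.e.\ $K_1\vee(I_2\cup K_{n-3})$, and $\beta=1$ with degree sequence forcing $I_2\cup K_{n-2}$.

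Finally, as in the earlier proofs, I would upgrade the degree-sequence information to an isomorphism. Writing $X$ for the two vertices of smallest degree and $Y$ for the rest, the closure property (any two non-adjacent vertices have degree sum $<n-\beta-1$) forces $G[Y]$ to be complete, since two non-adjacent vertices of $Y$ would have degree sum at least $2(n-3)\ge n-\beta-1$ for $n\ge 11$; then the $\beta$ vertices of $Y$ of degree $n-1$ (for $\beta=0$, none; for $\beta=1$, one) are adjacent to everything, and the same closure argument pins down the edges between $X$ and $Y$, giving $G=K_1\vee(I_2\cup K_{n-3})$ when $\beta=0$ and $G=I_2\cup K_{n-2}$ when $\beta=1$. \textbf{The main obstacle} I anticipate is the bookkeeping in the quadratic-maximization step: one must verify carefully that $g(i)$ is indeed maximized at the left endpoint $i=1$ for \emph{all} admissible $\beta$ with $n\equiv\beta\pmod 2$ and $n\ge 11$, and that the threshold $n\ge 11$ is exactly what is needed — a smaller $n$ could make the right endpoint competitive or introduce extra sporadic extremal graphs (as happened in the perfect-matching case for $n=6,8,10$). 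Keeping track of parity and of the degenerate meaning of "degree $1-\beta$" for $\beta>1$ (where the lemma's hypothesis would be vacuous or the graph disconnected) is the delicate part; everything else is routine expansion.
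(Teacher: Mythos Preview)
The paper does not include a proof of this lemma --- it is merely cited from Feng et al.\ --- so there is nothing to compare against directly. Your approach is exactly the natural one and mirrors the template used elsewhere in the survey (e.g.\ the proofs of Theorems~\ref{thmhnkd} and the $k$-path-coverable result). That said, there are two concrete issues you should fix before the argument closes.

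\textbf{An off-by-one in the block split.} The degree-sequence lemma gives $d_{n+\beta-i}\le n-i-2$, so the index $n+\beta-i$ belongs to the \emph{middle} block, not the last one. Your partition puts it in the last block (bounded by $n-1$), which inflates the sum by $(n-1)-(n-i-2)=i+1$. With the corrected split the three blocks have sizes $i+1$, $n+\beta-2i-1$, and $i-\beta$, and one obtains
\[
g(i)=3i^2-(2n+2\beta-5)i+n^2-3n-2\beta+2,
\]
which at $i=1$ gives exactly $n^2-5n-4\beta+10$ as needed. With your version $g_{\text{yours}}(1)=g(1)+2$, so the bound is too weak to force equality.

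\textbf{The range of $i$ and the case $\beta\ge 2$.} Because $d_{i+1}\le i-\beta$ and degrees are nonnegative, the effective range is $\max(\beta,1)\le i\le\frac{n+\beta-2}{2}$; in particular $i=1$ is not available once $\beta\ge 2$. For $\beta\in\{0,1\}$ one checks (using $n\ge 11$ and the parity hypothesis) that the convex quadratic $g$ is maximized at the left endpoint $i=1$, yielding the target and the two exceptional graphs. For $\beta\ge 2$ the argument is different: over $[\beta,\frac{n+\beta-2}{2}]$ both endpoint values satisfy $g(\beta)=(n-\beta-1)(n-\beta-2)$ and $g(\tfrac{n+\beta-2}{2})$, and a short computation shows each is \emph{strictly less} than $n^2-5n-4\beta+10$ when $n\ge 11$. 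Hence $2e(G)\le 2e(H)<n^2-5n-4\beta+10$, contradicting the hypothesis, and there are no exceptions --- which is precisely why the statement lists exceptional graphs only for $\beta\in\{0,1\}$.

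Finally, in the structural upgrade you have the counts of degree-$(n-1)$ vertices reversed: the last block has $i-\beta$ vertices, so at $i=1$ there is \emph{one} dominating vertex when $\beta=0$ (giving $K_1\vee(I_2\cup K_{n-3})$) and \emph{none} when $\beta=1$ (giving $I_2\cup K_{n-2}$), not the other way around.
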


For every integers 
$\beta \ge 0$ and $\delta \ge 1$, we denote
\[ 
\boxed{ R_{n,\delta, \beta}:=K_{\delta}\vee(K_{n-2\delta-\beta-1}\cup 
{I_{\delta+\beta+1}}). } 
\] 
It is easy to see that $R_{n,\delta , \beta}$ 
 is a graph with minimum degree $\delta$ and 
 deficiency $\beta +1$, so it is not $\beta$-deficient. 
 Furthermore, we can easily prove the 
 following theorem.  
 
 \begin{theorem} 
For every $\beta \ge 0$ and $\delta \ge 1$, 
there exists an $n_0(\delta , \beta)$ such that 
for every $n\geq  n_0(\delta,\beta)$ with $n\equiv \beta (\mod 2)$,  
if $G$ is an $n$-vertex connected graph with minimum degree $\delta(G)\geq \delta $ and  
 \[  e(G)\geq e(R_{n,\delta, \beta}),\] 
then $G$ is $\beta$-deficient unless $G=R_{n,\delta, \beta}$.
\end{theorem}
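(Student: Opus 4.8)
The plan is to follow the template established by the analogous edge-version theorems in this section (the $k$-Hamiltonicity Theorem~\ref{thmhnkd}, the $k$-path-coverable result, and the Hamilton-connected result), all of which combine the Bondy--Chv\'atal closure with a Chv\'atal-type degree-sequence lemma and a convexity argument. First I would invoke the closure theorem for $\beta$-deficiency: if $G$ is not $\beta$-deficient, then $H:=\mathrm{cl}_{n-\beta-1}(G)$ is not $\beta$-deficient either, and $\delta(H)\ge \delta(G)\ge \delta$. Writing $d_1\le d_2\le\cdots\le d_n$ for the degree sequence of $H$, I would apply the degree-sequence lemma stated just above (the one with $d_{i+1}\le i-\beta$ and $d_{n+\beta-i}\le n-i-2$ for some $1\le i\le \tfrac{n+\beta-2}{2}$) to obtain a structural restriction on $H$.

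Next I would bound $2e(H)=\sum_{j=1}^n d_j$ from above using the lemma: partition the index set according to the thresholds, getting roughly
\begin{equation*}
2e(H)\le (i+1)(i-\beta) + (n-2i+\beta-1)(n-i-2) + (i-\beta-1)(n-1),
\end{equation*}
which is a quadratic $g(i)=3i^2 + (\text{linear in }n,\beta)\,i + (\text{const})$ in $i$. Using $\delta\le \delta(H)\le d_{i+1}\le i-\beta$, i.e.\ $i\ge \delta+\beta$, together with $i\le \tfrac{n+\beta-2}{2}$, and the hypothesis that $n\ge n_0(\delta,\beta)$ is large, the vertex of the upward parabola lies to the right of the interval's midpoint, so $g$ is maximized at the left endpoint $i=\delta+\beta$. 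Substituting gives $2e(G)\le 2e(H)\le 2e(R_{n,\delta,\beta})$, where $R_{n,\delta,\beta}=K_\delta\vee(K_{n-2\delta-\beta-1}\cup I_{\delta+\beta+1})$ is exactly the graph realizing these boundary degrees. Here I must check the endpoint comparison $g(\delta+\beta)\ge g(\tfrac{n+\beta-2}{2})$ holds precisely when $n\ge n_0(\delta,\beta)$ for a suitable explicit $n_0$; this is the step that pins down the order threshold, and it is a routine but slightly delicate computation of $n_0(\delta,\beta)$ as a quadratic in $\delta,\beta$.

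Finally, equality $e(G)=e(R_{n,\delta,\beta})$ forces every inequality above to be tight: $i=\delta+\beta$, $G=\mathrm{cl}_{n-\beta-1}(G)=H$, and the degree sequence of $G$ is exactly $\underbrace{\delta,\dots,\delta}_{\delta+\beta+1}, \underbrace{n-\delta-\beta-2,\dots}_{n-2\delta-\beta-1}, \underbrace{n-1,\dots,n-1}_{\delta}$. I would then reconstruct $G$ from this sequence exactly as in the proofs of Theorems~\ref{thmhnkd} and the $k$-path-coverable theorem: let $X$ be the $\delta+\beta+1$ low-degree vertices and $Y$ the rest; since $G$ equals its $(n-\beta-1)$-closure, any two nonadjacent vertices have degree sum $<n-\beta-1$, whereas two vertices in $Y$ have degree sum $\ge 2(n-\delta-\beta-2)>n-\beta-1$ for $n$ large, so $G[Y]$ is a clique $K_{n-\delta-\beta-1}$; the $\delta$ vertices of degree $n-1$ form a set $F\subset Y$ adjacent to everything, and each vertex of $X$ (degree $\delta=|F|$) is joined exactly to $F$, hence $X$ is independent. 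This yields $G=K_\delta\vee(K_{n-2\delta-\beta-1}\cup I_{\delta+\beta+1})=R_{n,\delta,\beta}$, completing the proof. The main obstacle is purely bookkeeping: getting the quadratic endpoint analysis and the resulting explicit $n_0(\delta,\beta)$ right, together with handling the parity constraint $n\equiv\beta\pmod 2$ so that $R_{n,\delta,\beta}$ is well-defined and genuinely has deficiency $\beta+1$.
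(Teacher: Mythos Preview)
Your proposal is correct and follows exactly the template the paper intends: the paper does not spell out a proof of this theorem but writes ``we can easily prove the following theorem,'' and the implied argument is precisely the closure $+$ degree-sequence $+$ quadratic-endpoint analysis you describe, mirroring the proofs of Theorems~\ref{thmhnkd} and the $k$-path-coverable result. One tiny bookkeeping slip: in your displayed bound the last summand should be $(i-\beta)(n-1)$ rather than $(i-\beta-1)(n-1)$ (there are $i-\beta$ indices from $n+\beta-i+1$ to $n$), after which substituting $i=\delta+\beta$ gives exactly $2e(R_{n,\delta,\beta})$.
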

 
In 2018, Liu et al. \cite{WJLiuLAMA} proved the 
following spectral extremal problem for $\beta$-deficient.

\begin{theorem}[Liu et al. \cite{WJLiuLAMA}]  \label{th32}
Let $\beta \ge 0$ and $\delta \ge 1$ 
and  $n\equiv \beta (\mod 2)$ with 
$n\ge \widetilde{n_0}(\delta , \beta)$ where 
\[ \widetilde {n_0}(\delta , \beta)= 
\max\{ \delta^3 \!+\! \delta^2 (\beta \!+\!1) \!+ \! 
\delta \!+\! \beta \!+\! 6, 
2\delta^2 \!+\! 2\delta \beta \!+\! 7\delta \!+ \! 3\beta \!+ \! 7, 
(\delta \!+\! \beta \!+\! 2)(\delta \!+\! 1)\delta \}. \]
If $G$ is an $n$-vertex connected graph with minimum degree $\delta(G)\geq \delta $ and  
  $$\lambda(G)\geq n-\delta -\beta -2,$$
then $G$ is $\beta$-deficient unless $G=R_{n,\delta, \beta}$. 
\end{theorem}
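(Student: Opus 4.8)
\textbf{Proof proposal for Theorem \ref{th32}.}

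The plan is to follow the now-standard two-phase ``size $\Rightarrow$ spectral'' strategy used repeatedly in this survey (cf.\ Theorems \ref{thmniki}, \ref{thmln16a}, \ref{thm321}), with the combinatorial input being the degree-sequence lemma for $\beta$-deficiency together with the Bondy--Chv\'atal closure for $\beta$-deficiency. First I would reduce to the closure: if $G$ is not $\beta$-deficient then neither is $H:=\mathrm{cl}_{n-\beta-1}(G)$, so we may assume $G$ is its own $(n-\beta-1)$-closure. Applying the degree-sequence lemma, there is an integer $1\le i\le \tfrac{n+\beta-2}{2}$ with $d_{i+1}\le i-\beta$ and $d_{n+\beta-i}\le n-i-2$. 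Because $\delta(G)\ge\delta$, the first inequality forces $i-\beta\ge\delta$, i.e.\ $i\ge \delta+\beta$; combined with $i\le\tfrac{n+\beta-2}{2}$ this pins $i$ to a bounded range near its lower end once $n$ is large. Summing degrees in three blocks (the $i+1$ small-degree vertices, a middle block, and a top block of vertices of degree $n-1$) gives an upper bound $2e(G)\le q(i)$ for an explicit quadratic $q$ in $i$; since the vertex of this quadratic lies near $n/3$ and our admissible $i$ are close to $\delta+\beta$, the maximum over the feasible range is attained at $i=\delta+\beta$, yielding
\[
e(G)\le \binom{n-\delta-\beta-1}{2}+(\delta+\beta+1)\delta - c
\]
for some slack term $c\ge 0$, with equality only when $i=\delta+\beta$ and the degree sequence is exactly that of $R_{n,\delta,\beta}=K_\delta\vee(K_{n-2\delta-\beta-1}\cup I_{\delta+\beta+1})$.

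The second phase converts the spectral hypothesis $\lambda(G)\ge n-\delta-\beta-2$ into this edge count and then into structure. Observe $R_{n,\delta,\beta}$ contains $K_{n-\delta-\beta-1}$ as a proper subgraph, so $\lambda(R_{n,\delta,\beta})>n-\delta-\beta-2$; the threshold $n-\delta-\beta-2$ is genuinely weaker than $\lambda\ge\lambda(R_{n,\delta,\beta})$, which is why the exceptional family is a single graph and not a cloud of edge-deleted graphs (contrast the signless-Laplacian versions). To extract $e(G)$ from $\lambda(G)$, I would not use Stanley's inequality directly (it is too lossy here) but instead argue as in \cite{LiBinlong, Nikiforov}: if $G$ is not $\beta$-deficient and $\lambda(G)\ge n-\delta-\beta-2$, a short computation with the Rayleigh quotient against a cleverly chosen test vector, or the bound relating $\lambda$ to $e(G)$ and the structure of a vertex of large degree, shows that $e(G)$ must already exceed the Phase-1 maximum \emph{unless} $G$ is a spanning subgraph of $R_{n,\delta,\beta}$ (up to the degenerate components). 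The stability statement I want is: the only non-$\beta$-deficient graph on $n$ vertices with $\delta(G)\ge\delta$ whose edge count is within $O(1)$ of $e(R_{n,\delta,\beta})$ is a subgraph of $R_{n,\delta,\beta}$ obtained by deleting edges inside the clique part. This is exactly the kind of stability lemma already invoked for Hamiltonicity in \cite[Lemma 2]{LiBinlong}.

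The crux, and the step I expect to be the main obstacle, is the final \emph{uniqueness} argument: showing that among all proper spanning subgraphs $G\subsetneq R_{n,\delta,\beta}$ with $\delta(G)\ge\delta$ one has $\lambda(G)<n-\delta-\beta-2$ strictly, so that equality in the hypothesis forces $G=R_{n,\delta,\beta}$. This is precisely Nikiforov's insight (see \cite[Theorem 1.6]{Nikiforov}): for $n$ large relative to $\delta$ and $\beta$, the graph $R_{n,\delta,\beta}$ consists of a dense clique $K_{n-2\delta-\beta-1}$ together with only $O((\delta+\beta)^2)$ ``outgrowth'' edges, and any single edge deleted from the dense clique lowers $\lambda$ by substantially more than all those outgrowth edges contribute; one makes this quantitative by perturbation of the Perron vector of $K_{n-2\delta-\beta-1}$, and here is exactly where the explicit lower bound $\widetilde{n_0}(\delta,\beta)=\max\{\delta^3+\delta^2(\beta+1)+\delta+\beta+6,\;2\delta^2+2\delta\beta+7\delta+3\beta+7,\;(\delta+\beta+2)(\delta+1)\delta\}$ enters: the three terms in the max are precisely the thresholds needed for (i) the Perron-vector perturbation estimate, (ii) the feasibility range of $i$ in Phase 1 collapsing to $i=\delta+\beta$, and (iii) $e(R_{n,\delta,\beta})$ dominating the rival construction $K_{k}\vee I_{n-k}$-type graphs. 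Assembling these: $\lambda(G)\ge n-\delta-\beta-2$ plus ``not $\beta$-deficient'' forces $e(G)$ large, which by Phase-1 stability forces $G\subseteq R_{n,\delta,\beta}$ with the right degree sequence, which by the perturbation argument forces $G=R_{n,\delta,\beta}$, completing the proof. I would check separately the trivial base case $\beta=0$ against Theorem \ref{thm68}-type statements and the consistency with the size version proved just above.
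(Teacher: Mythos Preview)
The paper does not prove this theorem: it is a survey, and Theorem~\ref{th32} is merely quoted from \cite{WJLiuLAMA} with no argument given, so there is no ``paper's own proof'' to compare against. That said, your two-phase outline (closure $+$ degree-sequence lemma to bound $e(G)$ and pin down the extremal degree sequence; then spectral-to-structural via stability and a Perron-vector perturbation to force $G=R_{n,\delta,\beta}$) is exactly the template the survey uses for the parallel results on $k$-Hamiltonicity (Theorem~\ref{thmain}), $k$-path-coverability (Theorem~\ref{thm13}), and $k$-connectedness (Theorem~\ref{th31}), and it is the method of \cite{WJLiuLAMA}, so your plan is on the right track.

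One point you gloss over and should address explicitly: in the Hamiltonicity analogue (Theorem~\ref{thmniki}) the stability step produces \emph{two} candidate host graphs, $H_{n,\delta}$ and $L_{n,\delta}$, and both survive as exceptions. Here the theorem claims a single exception $R_{n,\delta,\beta}$. Your Phase~1 argument, as written, does not explain why no second $L$-type configuration arises from the degree-sequence lemma. You need to check that when $i$ is forced to its minimum $i=\delta+\beta$ and the degree sequence is determined, the closure condition $G=\mathrm{cl}_{n-\beta-1}(G)$ together with $\delta(G)\ge\delta$ and connectedness genuinely yields only $R_{n,\delta,\beta}$, not an alternative join-of-two-cliques graph. (In the $k$-path-coverable case, Theorem~\ref{thm13}, the survey likewise records a single exception $B_{n,k,\delta}$, so the phenomenon is not unique to deficiency; but you should still verify it rather than assert it.) Apart from this, your sketch is sound and matches the literature's approach.
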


\begin{corollary}[Liu et al. \cite{WJLiuLAMA}]   \label{th132}  
Let $\beta \ge 0$ and $\delta \ge 1$ 
and  $n\equiv \beta (\mod 2)$ with 
$n\ge  {n_0}(\delta , \beta)$ where 
\[ n_0(\delta,\beta):=\max\{7+7\delta+2\delta^2+2\delta \beta+ 3\beta, (\delta+\beta+2)(\delta+1)\delta\}.\] 
If $G$ is an $n$-vertex connected graph with minimum degree $\delta(G)\geq \delta $ and  
  $$\lambda(G)\geq \lambda(R_{n,\delta, \beta}),$$
then $G$ is $\beta$-deficient unless $G=R_{n,\delta, \beta}$.
\end{corollary}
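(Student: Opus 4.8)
The plan is to obtain this corollary as a direct consequence of Theorem~\ref{th32}. Only two bookkeeping facts are needed. First, I would check that the spectral hypothesis $\lambda(G)\ge\lambda(R_{n,\delta,\beta})$ of the corollary is at least as strong as the hypothesis $\lambda(G)\ge n-\delta-\beta-2$ of the theorem. Indeed, $R_{n,\delta,\beta}=K_{\delta}\vee(K_{n-2\delta-\beta-1}\cup I_{\delta+\beta+1})$ contains $K_{\delta}\vee K_{n-2\delta-\beta-1}=K_{n-\delta-\beta-1}$ as a subgraph, and this is proper because $\delta+\beta+1\ge 2$; since $R_{n,\delta,\beta}$ is connected (the clique $K_{\delta}$, nonempty as $\delta\ge 1$, is joined to everything), the Perron--Frobenius theorem gives $\lambda(R_{n,\delta,\beta})>\lambda(K_{n-\delta-\beta-1})=n-\delta-\beta-2$. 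Hence $\lambda(G)\ge\lambda(R_{n,\delta,\beta})>n-\delta-\beta-2$, so $G$ meets the spectral requirement of Theorem~\ref{th32}.

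Second, I would verify that the order bound $n_0(\delta,\beta)=\max\{2\delta^2+2\delta\beta+7\delta+3\beta+7,\ (\delta+\beta+2)(\delta+1)\delta\}$ of the corollary coincides with the bound $\widetilde{n_0}(\delta,\beta)$ of Theorem~\ref{th32}; equivalently, that the extra term $\delta^3+\delta^2(\beta+1)+\delta+\beta+6$ in $\widetilde{n_0}$ is dominated by the other two. A short calculation does it: expanding, $(\delta+\beta+2)(\delta+1)\delta-(\delta^3+\delta^2(\beta+1)+\delta+\beta+6)=2\delta^2+\delta+\beta(\delta-1)-6$, which is positive for all $\delta\ge 2$, while for $\delta=1$ one has $2\delta^2+2\delta\beta+7\delta+3\beta+7-(\delta^3+\delta^2(\beta+1)+\delta+\beta+6)=3\beta+7>0$. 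Thus $\widetilde{n_0}(\delta,\beta)=n_0(\delta,\beta)$, and the corollary's assumption $n\ge n_0(\delta,\beta)$ is exactly what Theorem~\ref{th32} needs. Together with the hypotheses $n\equiv\beta\pmod 2$, $\delta(G)\ge\delta$ and connectedness of $G$, Theorem~\ref{th32} now forces $G$ to be $\beta$-deficient unless $G=R_{n,\delta,\beta}$, which is the assertion.

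At the level of the corollary there is essentially no obstacle; the substance sits in Theorem~\ref{th32}, which I would not reprove here. For context, its proof passes to the $(n-\beta-1)$-closure of $G$, uses the degree-sequence criterion for failure of $\beta$-deficiency to bound $e(G)$ from above by $e(R_{n,\delta,\beta})$ after optimizing over the relevant index, applies a stability argument to conclude that a non-$\beta$-deficient extremal graph of minimum degree at least $\delta$ must sit inside $R_{n,\delta,\beta}$, and finishes with the comparison $\lambda(G')<\lambda(R_{n,\delta,\beta})$ for every proper subgraph $G'\subseteq R_{n,\delta,\beta}$ with $\delta(G')\ge\delta$ --- this last step being where the cubic-in-$\delta$ threshold term is genuinely used. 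The only mild point in the deduction above is that the strict inequality $\lambda(R_{n,\delta,\beta})>n-\delta-\beta-2$ is precisely what lets us quote Theorem~\ref{th32} without separately discussing any borderline case $\lambda(G)=n-\delta-\beta-2$.
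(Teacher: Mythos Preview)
Your derivation is correct and matches the paper's intended approach: the survey does not give an explicit proof of this corollary but treats it (as with the analogous Corollaries~\ref{thmaincol1} and~\ref{thmaincol2}) as an immediate consequence of the preceding theorem via the observation that $K_{n-\delta-\beta-1}\subsetneq R_{n,\delta,\beta}$ forces $\lambda(R_{n,\delta,\beta})>n-\delta-\beta-2$. Your additional check that the three-term bound $\widetilde{n_0}(\delta,\beta)$ collapses to the two-term $n_0(\delta,\beta)$ is a point the survey leaves implicit, and your case split ($\delta\ge 2$ versus $\delta=1$) handles it cleanly.
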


For some special cases, one can relax 
the bound on order of graphs. 
For example, when $\delta =1$ and $\beta =0$, 
i.e., the existence of perfect matching, 
the desired result holds for all $n\ge 8$; 
see the previous Theorem \ref{thm68}.

Furthermore, 
Liu et al. \cite{Huangzheng19} obtained a tight $Q$-spectral condition  for a graph to be $\beta$-deficient. 
In order to state their main result, 
we need to fix some notation. 
Recall that ${R}_{n,\delta,\beta }=
K_{\delta}\vee(K_{n-2\delta-\beta-1}\cup 
{I_{\delta +\beta+1}})$. 
In the graph ${R}_{n,\delta,\beta}$, we
denote by 
$ X=\{v \in V({R}_{n,\delta,\beta}):d(v)=\delta \},
 Y=\{v \in V({R}_{n,\delta ,\beta}):d(v)=n-1 \}$ 
and  $Z=\{v \in V({R}_{n,\delta,\beta}):d(v)=n-\beta-\delta-2\}$. 
Let $E_1({R}_{n,\delta,\beta})$ denote the set of edges of 
${R}_{n,\delta,\beta}$ whose endpoints are both in $Y\cup Z.$ We further define the following family of graphs. 
\begin{eqnarray*}
 \mathcal{R}^{(1)}_{n,\delta,\beta}=\left\{ {R}_{n,\delta,\beta} \setminus E':  \text{$ E'\subseteq E_1
 ({R}_{n,\delta,\beta})$ with $|E'|\leq \left\lfloor { (\delta+\beta+1)\delta }/{4}\right\rfloor$}\right\}.
\end{eqnarray*} 
Here, we denote by $R_{n,\delta,\beta} \setminus E'$ the graph obtained from 
$R_{n,\delta,\beta}$ by deleting all edges of the edge set $E'$.   
As a $Q$-spectral counterpart, we obtain the following 
analogue of signless Laplacian spectral radius. 

\begin{theorem}[Liu et al. \cite{Huangzheng19}]\label{th12}
Let $\beta \ge 0$ and $\delta \ge 1$ 
and  $n\equiv \beta (\mod 2)$ with 
$n\ge  {n_1}(\delta , \beta)$ where 
\[ n_1(\delta,\beta)= \delta^4+(2 \beta +7)\delta^3+(\beta^2+11\beta+15)\delta^2+(4\beta^2+13\beta+20)\delta
 +10\beta+21.\] 
If $G$ is a connected graph on  $n$ vertices  with the  minimum degree $\delta(G) \geq \delta $ and 
 \[ q(G)\geq 2(n-\delta -\beta-2),\] 
then either $G$ is $\beta$-deficient or 
$G\in \mathcal{R}^{(1)}_{n,\delta ,\beta}$.
\end{theorem}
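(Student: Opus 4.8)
The plan is to follow the now-standard three-step scheme used throughout this survey for signless Laplacian Dirac-type problems (cf. Theorems \ref{thmllp}, \ref{thm418}, \ref{th32}): first reduce to a degree-sequence/closure statement, then derive a structural stability conclusion pinning $G$ down to a subgraph of $R_{n,\delta,\beta}$, and finally use a spectral-estimate argument to show that among those subgraphs only the members of $\mathcal{R}^{(1)}_{n,\delta,\beta}$ can reach $q(G)\ge 2(n-\delta-\beta-2)$. Concretely, suppose $G$ is connected, $\delta(G)\ge\delta$, $n\equiv\beta\pmod 2$, $n\ge n_1(\delta,\beta)$, and $G$ is not $\beta$-deficient. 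I would first apply the upper bound $q(G)\le \tfrac{2e(G)}{n-1}+n-2$ from \cite{FengPIMB09} together with the hypothesis $q(G)\ge 2(n-\delta-\beta-2)$ to obtain a lower bound on $e(G)$, roughly $e(G)\gtrsim \binom{n-\delta-\beta-1}{2}+(\delta+\beta+1)\delta - O(\delta^2)$, i.e. $e(G)$ is within $O(\delta^2+\delta\beta)$ of $e(R_{n,\delta,\beta})$.

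Next comes the stability step. Using the degree-sequence lemma for non-$\beta$-deficient graphs (the Bauer-type lemma quoted before Theorem \ref{th32}: there is $1\le i\le \tfrac{n+\beta-2}{2}$ with $d_{i+1}\le i-\beta$ and $d_{n+\beta-i}\le n-i-2$), combined with $\delta(G)\ge\delta$, one localizes $i$ to a bounded range near $\delta+\beta$, and the edge-count lower bound from the previous paragraph forces $G$ to be a spanning subgraph of $H_{n,\delta,\beta}$-type graphs. More precisely, I expect that after deleting the $\delta+\beta+1$ low-degree vertices (an independent-ish set $X$), the remaining $n-\delta-\beta-1$ vertices must induce a graph that is very close to $K_{n-\delta-\beta-1}$, and the $\delta$ full-degree vertices must be adjacent to all of $X$; together with $n$ large this identifies $G$ as obtained from $R_{n,\delta,\beta}$ by deleting a small edge set $E'$ lying inside $Y\cup Z$, with $|E'|$ bounded by $O(\delta^2+\delta\beta)$. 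This is essentially a repetition of \cite[Lemma 2]{LiBinlong}-style arguments adapted to the deficiency parameter, and the hypothesis $n\ge n_1(\delta,\beta)$ is exactly what makes the relevant quadratic in $i$ attain its maximum at the boundary value.

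The final and hardest step is the precise spectral optimization: among all graphs $G$ obtained from $R_{n,\delta,\beta}$ by removing edges inside $Y\cup Z$, show that $q(G)\ge 2(n-\delta-\beta-2)$ forces $|E'|\le \lfloor(\delta+\beta+1)\delta/4\rfloor$, i.e. $G\in\mathcal{R}^{(1)}_{n,\delta,\beta}$, and conversely that every such $G$ is not $\beta$-deficient. The forward estimate is the delicate part: one takes the eigenvector-type test vector $\bm{h}$ with $h_v=1$ on $Y\cup Z$ and $h_v=0$ on $X$ (as in the discussion preceding Theorem \ref{thmllp}), computes $\bm{h}^TQ(G)\bm{h}-\bm{h}^TQ(K_{n-\delta-\beta-1}\cup I_{\delta+\beta+1})\bm{h}=(\delta+\beta+1)\delta-4|E'|$, and via the Rayleigh quotient concludes $q(G)\ge 2(n-\delta-\beta-2)$ whenever $|E'|\le\lfloor(\delta+\beta+1)\delta/4\rfloor$, giving one direction. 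For the converse direction one must show that deleting \emph{more} than this many edges strictly decreases $q$ below the threshold; this requires a genuine upper bound on $q(G)$ for subgraphs of $R_{n,\delta,\beta}$ with $\delta(G)\ge\delta$ — the Nikiforov-style observation that every "outgrowth" edge incident to $X$ contributes far less to $q$ than an edge deep inside the clique, so that the signless Laplacian radius of such a subgraph is controlled by $q(K_{n-\delta-\beta-1})=2(n-\delta-\beta-2)$ plus a correction linear in the number of edges touching $X$ but only sublinear in edges removed from the clique. Making this quantitative, and checking that the threshold $\lfloor(\delta+\beta+1)\delta/4\rfloor$ is exactly the break-even point, is where the bulk of the computation (and the precise form of $n_1(\delta,\beta)$) lives; I expect this to be the main obstacle, handled by a careful interlacing/edge-deletion perturbation argument together with the large-$n$ hypothesis.
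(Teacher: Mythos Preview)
The survey does not supply a proof of Theorem~\ref{th12}; it merely quotes the result from \cite{Huangzheng19}. So there is no ``paper's own proof'' to compare your attempt against line by line. That said, your three-step plan is exactly the template that the survey uses for all the analogous $Q$-index results (Theorems~\ref{thmllp}, \ref{thm418}, and the discussion surrounding them), and it is the approach of the original paper as well: lower-bound $e(G)$ via $q(G)\le \tfrac{2e(G)}{n-1}+n-2$, feed this into a closure/degree-sequence stability lemma to force $G\subseteq R_{n,\delta,\beta}$, and then run a quantitative perturbation argument on subgraphs of $R_{n,\delta,\beta}$ to pin down the threshold $\lfloor(\delta+\beta+1)\delta/4\rfloor$.

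Two points worth sharpening. First, in your Step~2 you should pass explicitly through the $(n-\beta-1)$-closure $\mathrm{cl}_{n-\beta-1}(G)$ before invoking the Bauer--Las~Vergnas degree-sequence lemma; $\beta$-deficiency is a closure-stable property (the survey states this just before the lemma), and it is the closure, not $G$ itself, that the degree-sequence analysis identifies with $R_{n,\delta,\beta}$. You then need a short argument that $G$ itself (not merely its closure) is obtained from $R_{n,\delta,\beta}$ by deleting edges only inside $Y\cup Z$ --- this uses $\delta(G)\ge\delta$ to protect the $X$--$Y$ edges. Second, your description of the hard direction in Step~3 is accurate, but the Rayleigh-quotient computation with the indicator vector $\bm h$ gives only the easy implication (small $|E'|\Rightarrow$ large $q$); for the implication you actually need (large $q\Rightarrow$ small $|E'|$) one cannot argue via a single test vector. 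The original proof handles this by an explicit upper bound on $q(G)$ for $G=R_{n,\delta,\beta}\setminus E'$ with $\delta(G)\ge\delta$, obtained either from a quotient-matrix/interlacing calculation or from a sharper inequality relating $q$, $e(G)$, and the degree structure; the polynomial $n_1(\delta,\beta)$ is tailored so that this upper bound drops strictly below $2(n-\delta-\beta-2)$ once $|E'|>\lfloor(\delta+\beta+1)\delta/4\rfloor$. You have flagged this as the main obstacle, which is right, but be aware that the ``outgrowth edges contribute less'' heuristic from Nikiforov's adjacency-spectrum argument does not transfer verbatim to $Q$ and needs its own computation.
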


\begin{corollary} 
Let $\beta \ge 0$ and $\delta \ge 1$ 
and  $n\equiv \beta (\mod 2)$ with 
$n\ge  {n_1}(\delta , \beta)$. 
If $G$ is an $n$-vertex connected graph with minimum degree $\delta(G)\geq \delta $ and  
  $$q (G)\geq q (R_{n,\delta, \beta}),$$
then $G$ is $\beta$-deficient unless $G=R_{n,\delta, \beta}$.
\end{corollary}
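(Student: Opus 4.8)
The plan is to deduce this corollary directly from Theorem~\ref{th12}, exactly in the spirit of the other ``Corollary'' statements in this section (e.g.\ Corollaries \ref{thmaincol1}, \ref{thmaincol2}). First I would observe that $K_{n-2\delta-\beta-1}$ is a proper subgraph of $R_{n,\delta,\beta}=K_{\delta}\vee(K_{n-2\delta-\beta-1}\cup I_{\delta+\beta+1})$, and more to the point, $R_{n,\delta,\beta}$ contains the clique $K_{n-\delta-\beta-1}$ (take the $K_{n-2\delta-\beta-1}$ together with the $K_\delta$ part). By the Perron--Frobenius monotonicity of the signless Laplacian spectral radius under taking subgraphs, this gives $q(R_{n,\delta,\beta})\ge q(K_{n-\delta-\beta-1})=2(n-\delta-\beta-2)$. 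Hence the hypothesis $q(G)\ge q(R_{n,\delta,\beta})$ implies $q(G)\ge 2(n-\delta-\beta-2)$, so Theorem~\ref{th12} applies: either $G$ is $\beta$-deficient, or $G\in\mathcal{R}^{(1)}_{n,\delta,\beta}$.

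The remaining step is to rule out the members of $\mathcal{R}^{(1)}_{n,\delta,\beta}$ other than $R_{n,\delta,\beta}$ itself. The key point is that every $G\in\mathcal{R}^{(1)}_{n,\delta,\beta}$ is obtained from $R_{n,\delta,\beta}$ by deleting a (possibly empty) set $E'$ of edges with both endpoints in $Y\cup Z$, where $|E'|\le\lfloor(\delta+\beta+1)\delta/4\rfloor$. I would argue that deleting any nonempty such $E'$ strictly decreases the signless Laplacian spectral radius: since $R_{n,\delta,\beta}$ is connected and its $Q$-matrix is irreducible and nonnegative, the Perron--Frobenius theorem gives a positive Perron eigenvector $\bm{x}$ for $q(R_{n,\delta,\beta})$, and for any edge $uv$ one computes
\[
\bm{x}^{\mathsf{T}}Q(R_{n,\delta,\beta})\bm{x}-\bm{x}^{\mathsf{T}}Q(R_{n,\delta,\beta}-uv)\bm{x}=(x_u+x_v)^2>0,
\]
so $q(R_{n,\delta,\beta}-E')<q(R_{n,\delta,\beta})$ whenever $E'\ne\varnothing$ (removing one edge at a time and using strict monotonicity at the first removal). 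Therefore the only member of $\mathcal{R}^{(1)}_{n,\delta,\beta}$ with $q$-value at least $q(R_{n,\delta,\beta})$ is $R_{n,\delta,\beta}$ itself. Combined with the dichotomy from Theorem~\ref{th12}, this yields: $G$ is $\beta$-deficient unless $G=R_{n,\delta,\beta}$, as claimed.

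I do not anticipate a genuine obstacle here—the corollary is a soft consequence of the main theorem plus elementary spectral monotonicity. The only mild subtlety is making sure the strict-decrease argument is applied to $R_{n,\delta,\beta}$ (the graph with the largest $q$ among the family) rather than to an arbitrary intermediate graph; phrasing it as ``remove the edges of $E'$ one by one, starting from $R_{n,\delta,\beta}$, and invoke strict monotonicity at the first step'' handles this cleanly. One should also note in passing that each $G\in\mathcal{R}^{(1)}_{n,\delta,\beta}$ indeed still has $\delta(G)=\delta$ and deficiency $\beta+1$ (the deleted edges lie among high-degree vertices, so they neither drop the minimum degree below $\delta$ nor repair the deficiency), which is what makes $\mathcal{R}^{(1)}_{n,\delta,\beta}$ the natural family of exceptions; but for this corollary only the spectral comparison is needed.
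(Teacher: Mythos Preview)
Your proposal is correct and matches the paper's implicit derivation: the paper states this corollary immediately after Theorem~\ref{th12} without a written proof, but the intended argument is exactly the two-step pattern you describe (first $q(R_{n,\delta,\beta})\ge q(K_{n-\delta-\beta-1})=2(n-\delta-\beta-2)$ so Theorem~\ref{th12} applies, then strict Perron--Frobenius monotonicity of $q$ under edge deletion eliminates every proper member of $\mathcal{R}^{(1)}_{n,\delta,\beta}$). This is the same template the paper uses explicitly before Corollaries~\ref{thmaincol1} and~\ref{thmaincol2}.
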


For some special cases, one can relax 
the bound on order of graphs. 
For example, when $\delta =1$ and $\beta =0$, 
i.e., the existence of perfect matching, 
the desired result holds for all $n\ge 10$; 
see the previous Theorem \ref{thmLPL}.

\section*{Acknowledgements}
The first author is now  a  doctoral student (2019--2023) 
 under the guidance of Prof. Yuejian Peng (Hunan University). 
 He would like to thank Prof. Peng    
who  provided  many constructive suggestions for his academic study.  
The authors would like to  express sincere thanks to 
Vladimir Nikiforov for kind discussions, which considerably improves the presentation. 
Thanks also go to  Dr. Xiaocong He and Dr. Peng-Li Zhang for
reading an earlier draft of the paper. 
Last but not least, the authors would like to thank anonymous reviewers and editors for their valuable comments and suggestions to improve the presentation of this paper. 
This work was supported by NSFC (Grant No. 11871479, 12071484 and 11931002),  
Hunan Provincial Natural Science Foundation 
(Grant No. 2020JJ4675 and  2018JJ2479) 
and Mathematics and Interdisciplinary Sciences Project of CSU.

\frenchspacing


\begin{thebibliography}{299}

\bibitem{AN2012ela} 
Abreu N.D., Nikiforov V., 
Maxima of the $Q$-index: abstract graph properties, 
Electronic J. Linear Algebra, 2012, 23: 782--789. 

\bibitem{AN2013ela} 
Abreu N.D., Nikiforov V., 
Maxima of the $Q$-index: Graphs with bounded clique number, 
Electronic J. Linear Algebra, 2013, 26: 121--130. 

\bibitem{AZ2014}
Aigner M., Ziegler G.M., 
Proofs from THE BOOK, 5th edition, 
Springer-Verlag, New-York, 2014.  

\bibitem{AF85}
Akiyama J.,  Frankl P., 
On the size of graphs with complete factors, 
J. Graph Theory, 1985, 9: 197--201.  


\bibitem{ARS1999}
Alon N., R\'{o}nyai L., Szab\'{o} T., 
Norm-graphs: variations and applications, 
J. Combin. Theory Ser. B, 1999, 76: 280--290.

\bibitem{Alon03}
Alon N., Krivelevich M., Sudakov B., 
Tur\'{a}n numbers of bipartite graphs and
related Ramsey-type questions, 
 Combin. Probab. Comput., 2003,  12: 477--494. 

\bibitem{BG2007} 
Babai L., Guiduli B., 
Spectral extrema for graphs: the Zarankiewicz problem, 
Electronic J.  Combin., 2009, 16: \#R123. 

\bibitem{BauerSurvey}
Bauer D., Broersma H.J., Heuvel J.V., Kahl N.,  
Nevo A.,  Schmeichel E.,  Woodall D.R.,  Yatauro M., 
Best monotone degree conditions for graph properties: a survey,  {Graphs Combin.,} 2015, {31}: 1--22.

\bibitem{BauerEdgeCon}
Bauer D., Hakimi S.L., Kahl N., Schmeichel E., 
Sufficient degree conditions for $k$-edge-connectedness of a graph, Networks, 2009, 54: 95--98.


\bibitem{Ben2016}
Benediktovich V.I., 
Spectral condition for Hamiltonicity of a graph, 
Linear Algebra Appl., 2016,  494: 70--79.

\bibitem{BermanZhangXD}
Berman A.,  Zhang X.-D.,  
On the spectral radius of graphs with cut vertices, 
J. Combin. Theory  Ser. B, 2001,  83: 233--240.


\bibitem{Berge}
Berge C., Sur le couplage maximum d'un graphe, 
C.R. Acad. Sci. Paris, 1958, 247: 258--259.

\bibitem{Berge1976}
Berge C., Graphs and hypergraphs, 
Minieka E, translator. 2nd, Vol. 6, North-Holland
Mathematical Library, 
American Elsevier Publishing Co., Inc., 1976. 


\bibitem{BBK2013}
Blagojevi\'{c} P.V.M., Bukh  B., Karasev R.,  
Tur\'{a}n numbers for $K_{s,t}$-free graphs: topological obstructions and algebraic constructions, 
Israel J. Math., 2013, 197: 199--214.


\bibitem{Bollobas78}
Bollob\'as B., Extremal Graph Theory, Academic Press, New York, 1978.

\bibitem{BN2005}
Bollob\'{a}s B., Nikiforov V., 
Books in graphs, 
European J. Combin., 2005, 26: 259--270.  

\bibitem{BNJCTB07}
Bollob\'{a}s B., Nikiforov V., 
 Cliques and the spectral radius, 
 J. Combin. Theory Ser. B, 2007,  97: 859--865.

\bibitem{BN2008} 
Bollob\'{a}s B., Nikiforov V., 
Joints in graphs, Discrete Math., 2008,  308: 9--19. 


\bibitem{BN2011} 
Bollob\'{a}s B., Nikiforov V., 
Large joints in graphs. 
European J. Combin., 2011, 32: 33--44.


\bibitem{BondyConnectivity}
Bondy J.A., Properties of graphs with constraints on degrees, 
Studia. Sci. Math. Hungar., 1969,  4: 473--475.

\bibitem{Bon1971} 
Bondy J.A., 
 Large cycles in graphs, 
 Discrete Math., 1971, 1: 121--132. 
 
 
\bibitem{Bondy72}
Bondy J.A., Variations on the Hamiltonian theme, 
Canad. Math. Bull., 1972, 15: 57--62. 

\bibitem{Bondy}
Bondy J.A., Chv\'atal V., 
{A method in graph theory},  
Discrete Math.,  1976, 15: 111--135.

\bibitem{Bondy76}
Bondy J.A.,   Murty U.S.R., 
Graph Theory with Applications,  
North-Holland, Amsterdam, 1976. 


\bibitem{Bondy08}
Bondy J.A., Murty U.S.R., 
Graph theory, GTM 244, New York, Springer, 2008.  

\bibitem{BS1974}
Bondy J.A., Simonovits M., Cycles or even length in graphs, J. Combin. Theory Ser. B, 1974, 16: 97--105. 

\bibitem{Brouwer}
Brouwer A., Haemers W., 
Eigenvalues and perfect matchings, 
Linear Algebra Appl., 2005,  395: 155--162.

\bibitem{Bro1966}
Brown W.G., On graphs that do not contain a Thomsen graph, Canad. Math. Bull., 1966,  9: 281--285. 


\bibitem{brualdi}
Brualdi R.A., Solheid E.S., On the spectral
radius of complementary acyclic matrices of zeros and ones,
SIAM J. Algebra. Discrete Method, 1986, 7: 265--272.

\bibitem{Buk2015}
Bukh B., Random algebraic construction of extremal graphs, 
Bull. Lond. Math. Soc., 2015, 47: 939--945.

\bibitem{BJ2017}
Bukh B.,  Jiang Z.L., A bound on the number of edges in graphs without an even cycle, Combin. Probab. Comput., 2017, 26(1): 1--15. Final correct version, arXiv: 1403.1601v3.


\bibitem{Buk2021}
Bukh B., Extremal graphs without exponentially-small bicliques, 16 pages, 2021, arXiv:2107.04167. 


\bibitem{CF2009}
Cai G.X., Fan Y.Z., 
The signless Laplacian spectral radius of graphs 
with given chromatic number, 
Math. Appl. (Wuhan), 2009,  22: 161--167.


\bibitem{CKL1970}
Chartrand G., Kapoor S.F., Lick D.R., 
 $n$-Hamiltonian graphs, 
 J. Combinatorial Theory, 1970, 9: 308--312.

\bibitem{Chen03}
Chen G.T., Gould R.J., 
Pfender F., Wei B., 
Extremal graphs for intersecting cliques, 
 J. Combin. Theory. Ser. B, 2003, 89: 159--171.  

\bibitem{CHQ2018}
Chen X.D., Hou Y.P.,  Qian J.G., 
Sufficient conditions for Hamiltonian graphs in terms of (signless Laplacian) spectral radius, 
Linear Multilinear Algebra, 2018, 66: 919--936. 

\bibitem{CZ2018} 
Chen M.-Z., Zhang X.-D., 
The number of edges, spectral radius and hamilton-connectedness of graphs, 
J. Comb. Optim., 2018, 35: 1104--1127. 


\bibitem{CLZ2019}
Chen M.-Z., Liu A.-M., Zhang X.-D., 
Spectral extremal results with forbidding linear forests, 
Graphs Combin., 2019, 35: 335--351. 

\bibitem{CZ2021LMA}
Chen M.-Z.,  Zhang X.-D.,  
On the signless Laplacian spectral radius of $K_{s,t}$-minor free graphs, 
Linear Multilinear Algebra, 2021, 69: 1922--1934. 


\bibitem{CLZ2021DM}
Chen M.-Z., Liu A.-M., Zhang X.-D., 
On the spectral radius of graphs without a star forest, 
Discrete Math., 2021, 344, ID 112269. 

  \bibitem{CLZ2021}
Chen M.-Z., Liu A.-M., Zhang X.-D., 
The signless Laplacian spectral radius of graphs without 
intersecting odd cycles, 11 pages, 9 August, 2021, 
arXiv:2108.03895.  

\bibitem{Cheng2021} 
Cheng T., Feng L.H., Li Y.T., Liu W.J., 
A tight $Q$-index condition for a graph to be $k$-path-coverable
involving minimum degree, 13 pages, 2020, arXiv:2109.07347. 


\bibitem{Chvatal72}
Chv\'atal V., {On Hamilton's ideals},  
{J. Combin. Theory Ser. B}, 1972,  12:   163--168.



\bibitem{Cioaba10LAA}
Cioab\u{a} S.M.,  
Eigenvalues and edge-connectivity of regular graphs,  
Linear Algebra Appl., 2010, 432: 458--470.

 \bibitem{CFTZ20}
 Cioab\u{a} S.M., Feng L.H., Tait M., Zhang X.-D., 
 The maximum spectral radius of graphs without friendship subgraphs, 
 Electron. J. Combin., 2020, 27: P4.22. 
 
 
  \bibitem{CDT21}
 Cioab\u{a} S.M., Desai D.N., Tait M., 
The spectral radius of graphs with no odd wheels, 
 European J. Combin., 2022, 99, ID 103420.  
 
 
 \bibitem{CDT2022}
  Cioab\u{a} S.M., Desai D.N., Tait M., 
 The spectral even cycle problem, 10 pages, 
 arXiv:2205.00990. See \url{https://arxiv.org/abs/2205.00990}

\bibitem{Cioaba07LAA}
Cioab\u{a} S.M.,  Gregory D.,  
Large matchings from eigenvalues,  
Linear Algebra Appl., 2007, 422: 308--317.

 \bibitem{Cioaba09JCTB}
Cioab\u{a} S.M., Gregory D.,  Haemers W., 
Matchings in regular graphs from eigenvalues,  
J. Combin. Theory Ser. B, 2009, 99: 287--297.


 \bibitem{CioabaGuCzech}
Cioab\u{a} S.M., Gu X.,  
Connectivity, toughness, spanning trees of bounded degree, 
and the spectrum of regular graphs,  
Czech. Math. Journal, 2016, 141: 913--924.


\bibitem{CF2013}
Conlon D., Fox J., Graph removal lemmas, 
in Surveys in Combinatorics, 
London Math. Soc. Lecture Note Ser., vol. 409, 
Cambridge Univ. Press, Cambridge, 2013, pp. 1--49.

 \bibitem{CL19}
 Conlon D., Lee J., 
 On the extremal number of subdivisions, 
Int. Math. Res. Not. IMRN 2021, 12: 9122--9145. 
 
 \bibitem{CJL20}
 Conlon D., Janzer O., Lee J., 
 More on the extremal number of subdivisions, 
Combinatorica, 2021,41: 465--494.


\bibitem{DKLNTW2021}
Desai D.N., Kang L.Y., Li Y.T., Ni Z.Y., Tait M., Wang J., 
Spectral extremal graphs for intersecting cliques, 
Linear Algebra Appl., 2022, 644: 234--258.  

\bibitem{DNPWY20}
Duan X.Z.,  Ning B.,  Peng X.,  Wang J.,  Yang W.H.,  
Maximizing the number of cliques in graphs with given matching number,  Discrete Appl. Math., 2020, 287: 110--117. 

\bibitem{Diestel17}
Diestel R.,  Graph theory, Fifth edition, 
GTM, 173, Springer, Berlin, 2017. 




\bibitem{Dirac52}
Dirac D.A., 
Some theorems on abstract graphs, 
Proc. London Math. Soc., 1952,  2: 69--81.

\bibitem{Dzi2013}
Dzido T., 
A note on Tur\'{a}n numbers for even wheels, 
Graphs and Combinatorics, 2013, 29: 1305--1309. 


\bibitem{Edw1977}
Edwards C.S., A lower bound for the largest number of triangles with a common edge, 
1977 (unpublished manuscript). 



\bibitem{ES46}
Erd\H{o}s P., Stone A.H., 
On the structure of linear graphs, 
Bull. Amer. Math. Soc., 1946, 52: 1087--1091. 


\bibitem{EG59}
Erd\H{o}s P., Gallai T., 
On maximal paths and circuits of graphs, 
Acta Math. Acad. Sci. Hungar., 1959, 10: 337--356. 


\bibitem{erdos62}
Erd\H{o}s P., 
Remarks on a paper of P\'{o}sa, 
Magyar Tud. Akad. Mat. Kut. Int. K\"{o}zl,  1962, 
7: 227--229. 


\bibitem{Erd1962}
Erd\H{o}s P., 
On the number of complete subgraphs contained in certain graphs, 
Magy. Tud. Akad. Mat. Kut. Int\'{e}z. K\"{o}zl., 1962, 7: 459--474. 

\bibitem{Erd1964} 
Erd\H{o}s P., 
On extremal problems of graphs and generalized graphs, 
Israel J. Math., 1964, 2: 183--190. 



\bibitem{ERS1966}
Erd\H{o}s P.,  R\'{e}nyi A., S\'{o}s V. T.,  
On a problem of graph theory, 
Studia Sci. Math. Hungar., 1966, 1: 215--235.

\bibitem{ES66}
Erd\H{o}s P., Simonovits M., 
A limit theorem in graph theory, 
Stud. Sci. Math. Hungar., 1966, 1: 51–57.  



\bibitem{EFR1986}
Erd\H{o}s P.,  Frankl P., R\"{o}dl V., 
The asymptotic number of graphs not containing a fixed subgraph 
and a problem for hypergraphs having no exponent, 
Graphs Combin., 1986, 2: 113--121. 


\bibitem{Erdos95}
Erd{\H{o}}s P., F\"uredi Z., Gould R.J., Gunderson D.S.,
Extremal Graphs for Intersecting Triangles, 
{ J. Combin. Theory. Ser. B}, 1995, {64}: 89--100.


\bibitem{FMS1993}
Favaron O., Mah\'{e}o M., Sacl\'{e} J.F., 
Some eigenvalue properties in graphs (conjectures of
Graffiti II), Discrete Math., 1993, 111: 197--220. 

\bibitem{FYZ07}
Feng L.H.,  Yu G.H., Zhang X.-D., 
 Spectral radius of graphs with given matching number, 
 Linear Algebra Appl. 422 (2007), no. 1, 133--138. 
 
 
\bibitem{FengAML}
Feng L.H., Li Q., Zhang X.-D., 
Spectral radii of graphs with given chromatic number,  
Appl. Math. Lett., 2007, 20: 158--162. 

\bibitem{FengPIMB09}
Feng L.H., Yu G.H., On three conjectures involving the signless Laplacian spectral radius of
graphs. Publ. Inst. Math., 2009, 85: 35--38.
 
  \bibitem{FengLAA16}
Feng L.H., Cao J.X., Liu W.J., Ding S.F., Liu H., 
{The spectral radius of edge chromatic critical graphs},  
{Linear Algebra Appl.}, 2016, 492:  78--88.

\bibitem{FengLAA17}
Feng L.H., Zhang P.L., Liu H., Liu W.J., Liu M.M., Hu Y.Q.,  
{Spectral conditions for some graphical properties},  
Linear Algebra Appl., 2017, {524}: 182--198.

 

\bibitem{FengMonoshMath}
 Feng L.H., Zhang P.L., Liu W.J., 
 {Spectral radius and $k$-connectedness of graphs}, 
 Monatsh.   Math., 2018,  185:  651--661.  
 

 \bibitem{Fiedler73}
Fiedler M., Algebraic connectivity of graphs, 
Czech. Math. J., 1973,  23:  298--305.

\bibitem{FiedlerNikif}
Fiedler M., Nikiforov V., 
{Spectral radius and Hamiltonicity of graphs}, 
Linear Algebra Appl., 2010, {432}: 2170--2173. 


\bibitem{Fox2011}
Fox J., A new proof of the graph removal lemma, 
Ann. of Math., 2011,  174: 561--579.


\bibitem{Furedi91} 
F\"{u}redi Z., On a Tur\'{a}n type problem of Erd\H{o}s, 
Combinatorica, 1991,  11: 75--79. 

\bibitem{Furedi96}
F\"{u}redi Z., An upper bound on Zarankiewicz's problem, 
Comb. Probab. Comput., 1996,  5: 29--33. 


\bibitem{Furedi96b} 
F\"{u}redi Z., 
New asympotics for bipartite Turán numbers, 
J. Combin. Theory, Ser. A, 1996,  75: 141--144. 

\bibitem{Furedi96c}
F\"{u}redi Z., 
On the number of edges of quadrilateral-free graphs, 
J. Combin. Theory Ser. B, 1996,  68: 1--6.

\bibitem{FNV2006}
F\"{u}redi Z., Naor A., Verstra\"{e}te J., 
 On the Tur\'{a}n number for the hexagon, 
 Adv. Math., 2006, 203: 476--496.

\bibitem{FS13} 
F\"uredi Z.,  Simonovits M.,
The history of degenerate (bipartite) extremal graph problems,  
{ Erd\H{o}s centennial}, 
Bolyai Soc. Math. Stud., 25, 
J\'{a}nos Bolyai Math. Soc., Budapest, 2013, 
pp. 169--264. 


\bibitem{FG2015cpc}
F\"{u}redi Z., Gunderson D.S., 
Extremal numbers for odd cycles, 
Combin. Probab. Comput. 24 (4) (2015) 641--645.

\bibitem{FKLDM17}
F\"{u}redi Z., Kostochka A., Luo R., 
A stability version for a theorem of 
Erd\H{o}s on nonhamiltonian graphs, 
Discrete Math., 2017, 340: 2688--2690. 


\bibitem{FNP2013} 
Freitas M.A.A., Nikiforov V.,  Patuzzi L., 
Maxima of the $Q$-index: forbidden 4-cycle and 5-cycle, 
Electron. J. Linear Algebra, 2013, 26: 905--916. 



\bibitem{GNLMA20}
Ge J., Ning B., 
Spectral radius and Hamiltonian properties of graphs II, 
Linear Multilinear Algebra, 2020,  68: 2298--2315. 



\bibitem{GuxiaofengJGT}
Gu X.F., Lai H.-J., Li P., Yao S.M.,
Edge-disjoint spanning trees, edge connectivity and eigenvalues in graphs,
J. Graph Theory, 2016,  81: 16--29. 



\bibitem{Gui1996}
Guiduli B.D., 
 Spectral extrema for graphs, Thesis (Ph.D.)--
 The University of Chicago, 1996. 
 See \url{http://people.cs.uchicago.edu/~laci/students/guiduli-phd.pdf}  


\bibitem{GH2019} 
Gao J.,  Hou X.M.,  
The spectral radius of graphs without long cycles,  
Linear Algebra Appl., 2019, 566: 17--33. 

\bibitem{HanSte08} 
Hansen P., Stevanovi\'c D., 
On bags and bugs, 
Discrete Appl.~Math., 2008, 156: 986--997.


\bibitem{HJZ2013}
He B., Jin Y.-L., Zhang X.-D.,  
Sharp bounds for the signless Laplacian spectral radius in terms of clique  number, 
Linear Algebra Appl., 2013, 438: 3851--3861.

\bibitem{He2021}
He Z.Y., 
A new upper bound on the Tur\'{a}n number of even cycles, 
Electron. J. Combin., 2021,  28: \# P2.41. 

\bibitem{Horn13}
Horn R.A., Johnson C.R., 
Matrix Analysis, 2nd edition, 
Cambridge University Press, Cambridge, 2013. 



\bibitem{Hongyuan}
Hong Y., A bound on the spectral radius of graphs, 
Linear Algebra Appl., 1988, 108:  135--139.

\bibitem{HSF01}
Hong Y., Shu J.-L, Fang K.F., 
A sharp upper bound of the spectral radius of graphs, 
J. Combin. Theory Ser. B, 2001, 81: 177--183. 

\bibitem{HQL16}
Hou X.M., Qiu Y., Liu B.Y., 
Extremal graph for intersecting odd cycles, 
Electron. J. Combin., 2016, 23: P2.29. 

\bibitem{HQL18}
Hou X.M., Qiu Y., Liu B.Y.,  
Tur\'{a}n number and decomposition number of intersecting odd cycles, 
Discrete Math., 2018, 341: 126--137.  


\bibitem{JYF2019}
Jiang G.-S., Yu G.-D., Fang Y., 
Spectral conditions and Hamiltonicity of a balanced bipartite 
graph with large minimum degree, 
Appl. Math. Comput., 2019, 356: 137--143.


\bibitem{KN2014} 
Kang L.Y., Nikiforov V., 
Extremal problem for the $p$-spectral radius of graphs, 
Electronic J. Combin., 2014, 21: 87--101. 


\bibitem{KNY2015} 
Kang L.Y., Nikiforov V., Yuan X.Y., 
The $p$-spectral radius of $k$-partite and $k$-chromatic uniform hypergraphs, 
Linear Algebra Appl., 2015, 478: 81--107.  


\bibitem{Kee2011}
Keevash P., Hypergraph Tur\'{a}n problems, 
Surveys in combinatorics, London Math. Soc. Lecture Note Ser., 392, 
Cambridge Univ. Press, Cambridge, 2011, pp. 83--139. 

\bibitem{Kee2014}
Keevash P., Lenz J., Mubayi D., 
Spectral extremal problems for hypergraphs, 
SIAM J. Discrete Math., 2014, 28:1838--1854. 

\bibitem{KN1979}
Khad\v{z}iivanov N., Nikiforov V., 
Solution of a problem of P. Erd\H{o}s about the maximum number of triangles 
with a common edge in a graph, 
C. R. Acad. Bulgare Sci., 1979, 32: 1315--1318 (in Russian).


\bibitem{KST54}
K\H{o}v\'{a}ri T., S\'{o}s V.T., Tur\'{a}n P., 
On a problem of K. Zarankiewicz, 
Colloq. Math., 1954, 3: 50--57. 


\bibitem{KRS1996}
Koll\'{a}r J., R\'{o}nyai L., Szab\'{o} T., 
Norm-graphs and bipartite Tur\'{a}n numbers, Combinatorica, 1996, 16: 399--406.

\bibitem{KSVW01}
Krivelevich M., Sudakov B., Vu V., Wormald N., 
Random regular graphs of high degree, 
Random Structures and Algorithms, 
2001, 18: 346--363. 

\bibitem{KS03}
Krivelevich M., Sudakov B., 
Sparse pseudo-random graphs are Hamiltonian,
J. Graph Theory, 2003, 42: 17--33.

\bibitem{KS06}
Krivelevich M., Sudakov B., 
Pseudo-random graphs, in More Sets, Graphs and  Numbers, in: Bolyai Soc. Math. Stud., vol.15, 2006, pp.199--262. 


\bibitem{Kro1969}
Kronk H.V., 
 Generalization of a theorem of P\'{o}sa, 
 Proc. Amer. Math. Soc., 1969, 21:  77--78.

\bibitem{Kronk}
Kronk H.V.,  A note on $k$-path Hamiltonian graphs,  
{J. Combin. Theory},1969,  {7}:  104--106.


\bibitem{LasVergnas}
Las Vergnas M.,  Probl\`emes de Couplages et Probl\`emes Hamiltoniens en Th\'eorie des Graphes, PhD
Thesis, Universit\'e Paris VI--Pierre et Marie Curie, 1972.

\bibitem{LUW1999}
Lazebnik F., Ustimenko V. A.,  Woldar A. J., 
Polarities and $2k$-cycle-free graphs, 
Discrete Math., 1999, 197/198: 503--513.

\bibitem{Lesniak}
Lesniak L.,  On $n$-Hamiltonian graphs,  
Discrete Math.,1976,  14:  165--169.

\bibitem{LiBinlong}
Li B.L., Ning B., Spectral analogues of Erd\H{o}s' and Moon--Moser's theorems on Hamilton cycles, 
Linear Multilinear Algebra, 2016, 64:  2252--2269. 



\bibitem{LNLAA17}
Li B.L., Ning B., 
Spectral analogues of Moon--Moser's theorem 
on Hamilton paths in bipartite graphs, 
Linear Algebra Appl., 2017, 515: 180--195.



\bibitem{LN2021}
Li B.L., Ning B., 
The stability method, eigenvalues and cycles of consecutive lengths, 
arXiv:2102.03855v1. 


\bibitem{LSY2022}
Li S.C., Sun W., Yu Y., 
Adjacency eigenvalues of graphs without short odd cycles, 
Discrete Math. 345 (2022) 112633.  

\bibitem{LLPLMA18}
Li Y.W.,  Liu Y.,  Peng X.,  
Signless Laplacian spectral radius and Hamiltonicity of graphs 
with large minimum degree, 
Linear Multilinear Algebra, 2018, 66: 2011--2023. 

\bibitem{LLF2021}
Li Y.T., Peng Y.J., 
Sufficient spectral conditions for graphs being $k$-edge-Hamiltonian or $k$-Hamiltonian, 
19 pages, 2021, arXiv:2109.01973. 

\bibitem{LP2021}
Li Y.T., Peng Y.J., 
The spectral radius of graphs with no intersecting odd cycles, 
Discrete Math. 345 (2022) 112907, arXiv:2106.00587. 


\bibitem{LP2022}
Li Y.T., Peng Y.J., New proofs of stability theorems on spectral graph problems, 
24 pages, (2022), arXiv:2203.03142.
 See \url{https://arxiv.org/abs/2203.03142}. 
 
  \bibitem{LP2022second}
 Li Y.T., Peng Y.J., 
 Refinement on  spectral Tur\'{a}n's theorem, 31 pages, (2022), arXiv:2204.09194. 
 See \url{https://arxiv.org/abs/2204.09194}.  
 
 \bibitem{LP2022oddcycle}
 Li Y.T., Peng Y.J.,  The maximum spectral radius of non-bipartite graphs 
 forbidding short odd cycles, 
 27 pages,  (2022), arXiv:2204.09884. 
 See \url{https://arxiv.org/abs/2204.09884}. 


 \bibitem{Lihao}
Li H., 
{Generalizations of Dirac's theorem in Hamiltonian graph theory--A survey}, Discrete Math., 2013 {313}:  2034--2053.


\bibitem{LiShiEnergy}
Li J.,  Li X.L., Shi Y.T.,   
{On the maximal energy tree with two
maximum degree vertices}, 
Linear Algebra Appl., 2011, 435: 2272--2284.



\bibitem{LiJianping}
 Li J.P., Steiner G., 
 {Partitioning a graph into vertex-disjoint paths}, 
 Studia Sci. Math. Hungar., 2005, {42}:  277--294.



\bibitem{LiRaoJCMCC}
Li R., Spectral conditions for some stable properties of graphs, 
J. Combin. Math. Combin. Comput.,2014, 88:199--205.


\bibitem{LNW2021}
Lin H.Q., Ning B., Wu B., 
Eigenvalues and triangles in graphs, 
Combin. Probab. Comput., 2021, 30: 258--270. 


\bibitem{LPL2020}
Liu C., Pan Y.G., Li J.P., 
Signless Laplacian spectral radius and matching in graphs, 2020, 
arXiv:2007.04479v1. 



\bibitem{WJLiuLAMA}
Liu W.J., Liu M.M.,  Feng L.H., 
{Spectral conditions for graphs to 
be $\beta$-deficient involving minimum degree}, 
Linear Multilinear Algebra, 2018, {66}: 792--802.

\bibitem{Huangzheng19} 
Liu W.J., Huang Z., Li Y.T., Feng L.H., 
A sufficient Q-spectral condition for a graph to be 
$\beta$-deficient involving minimum degree, 
Discrete Appl. Math., 2019, 265: 158--167.  


\bibitem{LiuDMGT}
Liu W.J., Liu M.M., Zhang P.L., Feng L.H., 
{Spectral conditions for graphs to be $k$-Hamiltonian or $k$-path-coverable},  
Discuss. Math. Graph Theory, 2020, 40: 161--179. 




\bibitem{LiuhuiqingLumei}
Liu H.Q., Lu M.,  Tian F., 
Edge-connectivity and (signless) Laplacian eigenvalue of graphs, Linear Algebra Appl., 2013, 439: 3777--3784.


\bibitem{LLD2019}
Liu M.H., Lai H.-J., Das K.Ch., 
 Spectral results on Hamiltonian problem, 
 Discrete Math., 2019, 342: 1718--1730. 
 
 \bibitem{LWL2020}
 Liu M.H., Wu Y., Lai H.-J., 
 Unified spectral Hamiltonian results of balanced bipartite 
 graphs and complementary graphs, 
 Graphs and Combinatorics, 2020,  36: 1363--1390.

\bibitem{Liuruifang}
Liu R.F., Shiu W.C., Xue J., 
Sufficient spectral conditions on Hamiltonian and traceable
graphs, Linear Algebra Appl., 2015, 467: 254--266.

\bibitem{Lumei}
Lu M., Liu H.Q., Tian F., 
Spectral radius and Hamiltonian graphs, 
Linear Algebra Appl., 2012, 437: 1670--1674.

\bibitem{Lu2020}
Lu Y., 
Some generalizations of spectral conditions 
for 2s-hamiltonicity and 2s-traceability of bipartite graphs, 
Linear  Multilinear Algebra, 2020, 
\url{https://doi.org/10.1080/03081087.2020.1778621}

\bibitem{Man1907}
Mantel W., Problem 28, 
Wiskundige Opgaven, 1907, 10: 60--61. 

\bibitem{MS1965}
Mokzkin T.S., Straus E.G., 
Maxima for graphs and a new proof of a theorem 
of Turán, Canad. J. Math., 1965, 17: 533--540.

\bibitem{Moon63}
Moon J., Moser L., 
On Hamiltonian bipartite graphs, 
Israel J. Math., 1963, 1: 163--165.


\bibitem{NikiCPC02}
Nikiforov V., 
Some inequalities for the largest eigenvalue of a graph, 
Combin. Probab. Comput., 2002, 11: 179--189. 


\bibitem{Niki2006laa} 
Nikiforov V., 
Walks and the spectral radius of graphs, 
Linear Algebra Appl., 2006, 418: 257--268.  

\bibitem{Niki2007}
Nikiforov V., Bounds on graph eigenvalues I, 
Linear Algebra Appl., 2007, 420: 667--671. 

\bibitem{Niki2007b}
Nikiforov V.,  
Bounds on graph eigenvalues II, 
Linear Algebra Appl., 2007, 427: 183--189. 

\bibitem{Niki2008} 
Nikiforov V., 
A spectral condition for odd cycles in graphs, 
Linear Algebra Appl., 2008, 428: 1492--1498. 



\bibitem{NikiLMS09}
Nikiforov V., 
Graphs with many $r$-cliques have large 
complete $r$-partite subgraphs, 
Bull. Lond. Math. Soc., 2008, 40: 23--25. 


\bibitem{Niki2009jctb} 
Nikiforov V., More spectral bounds on the clique and independence numbers, 
J. Combin. Theory Ser. B, 2009, 99: 819--826.


\bibitem{Nikicpc2009} 
Nikiforov V., 
A spectral Erd\H{o}s--Stone--Bollob\'{a}s theorem, 
Combin. Probab.  Comput., 2009, 18: 455--458.  



\bibitem{Niki2009} 
Nikiforov V., 
The maximum spectral radius of $C_4$-free graphs of given order and size, 
Linear Algebra Appl., 2009, 430: 2898--2905.  

\bibitem{Nikiejc2009}
Nikiforov V., 
Spectral saturation: inverting the spectral Tur\'{a}n theorem. 
Electron. J. Combin., 2009, 16, no. 1, R 33. 




\bibitem{Niki2010} 
Nikiforov V., A contribution to the Zarankiewicz problem, 
Linear Algebra Appl., 2010, 432: 1405--1411.  

\bibitem{Niki2010b} 
Nikiforov V., 
The spectral radius of graphs without paths 
and cycles of specified length, 
Linear Algebra Appl., 2010, 432: 2243--2256.  

\bibitem{Niki2011}
Nikiforov V., 
Some new results in extremal graph theory, 
in Surveys in Combinatorics, 
Cambridge University Press, 2011, pp. 141--181.

\bibitem{NY2015} 
Nikiforov V., Yuan X., 
Maxima of the $Q$-index: Forbidden even cycles, 
Linear Algebra Appl., 2015, 471: 636--653.  


\bibitem{Nikiforov}
Nikiforov V., {Spectral radius and Hamiltonicity of graphs with large minimum degree},
 Czech. Math. Journal, 2016, 141:  925--940.
 
 
  \bibitem{Niki2021}
Nikiforov V.,  On a theorem of Nosal, 
 12 pages (2021), arXiv:2104.12171, 
 see \url{https://arxiv.org/abs/2104.12171}  

\bibitem{NGLMA15}
Ning B., Ge J., 
Spectral radius and Hamiltonian properties of graphs, 
Linear Multilinear Algebra, 2015, 63: 1520--1530.


\bibitem{NP2020} 
Ning B., Peng X., 
Extension of Erd\H{o}s--Gallai Theorem and Luo's Theorem 
with applications, 
Combin. Probab. Comput., 2020,  29: 128--136.  

\bibitem{Suil21}
O S.,  
Spectral radius and matchings in graphs, 
Linear Algebra Appl., 2021, 614: 316--324.


\bibitem{OSuilSebiSIAM}
O S., Cioab\u{a} S.M.,  
Edge-connectivity, eigenvalues, and 
matchings in regular graphs, 
SIAM J. Discrete Math., 2010, 24: 1470--1481. 

\bibitem{Obo2019}
Oboudi M.R.,  
A relation between the signless Laplacian spectral radius 
of complete multipartite graphs and majorization, 
Linear Algebra Appl., 2019, 565: 225--238.

\bibitem{ore60}
Ore O., 
Note on Hamilton circuits, 
Amer. Math. Monthly, 1960, 67: 55. 


\bibitem{Ore1963}
Ore O., Hamiltonian connected graphs, 
J. Math. Pures Appl., 1963, 42: 21--27.




\bibitem{Pik2012}
Pikhurko O.,   
A note on the Turán function of even cycles, 
Proc. Amer. Math. Soc., 2012, 140: 3687--3692. 

\bibitem{Sha2016}
 Shapira, Extremal graph theory, 2016,  
 \url{http://www.math.tau.ac.il/~asafico/ext-graph-theory/notes.pdf}

 \bibitem{Sim1966} 
Simonovits M., 
A method for solving extremal problems in graph theory, 
stability problems, in: Theory of Graphs, 
Proc. Colloq., Tihany,  Academic Press, 
New York, 1968, pp. 279--319.  

\bibitem{Sim2013}
Simonovits M.,  
Paul Erd\H{o}s' influence on Extremal graph theory, 
in The Mathematics of Paul Erd\H{o}s II, 
R.L. Graham, Springer, New York, 2013, pp. 245--311. 



\bibitem{Steiner}
Steiner G., {On the $k$-path partition of graphs}, 
Theoretical Computer Science, 2003, 290: 2147--2155.

\bibitem{StevanoBook} 
Stevanovi\'c D., Spectral Radius of Graphs, 
Academic Press, Amsterdam, 2015.

\bibitem{Stevanovic} 
Stevanovi\'c D., Aouchiche M., Hansen P., 
On the spectral radius of graphs with a given domination number,  
Linear Algebra Appl., 2008, 428: 1854--1864. 


\bibitem{Tang2014}
Tang Q.S., Peng Y.J., Zhang X.D., Zhao C., 
Some results on Lagrangians of hypergraphs, 
Discrete Appl. Math., 2014, 166: 222--238. 

\bibitem{Tang2016}
Tang Q.S., Peng Y.J., Zhang X.D., Zhao C., 
Connection between the clique number and the Lagrangian of 3-uniform hypergraphs, 
Optim. Lett., 2016, 10: 685--697.


\bibitem{Tur1941}
Tur\'{a}n P., Research problem, 
K\"{o}zl MTA Mat. Kutat\'{o} Int., 1961, 6: 417--423. 

\bibitem{Tut1947}
Tutte W.T., The factorization of linear graphs, 
J. Lond. Math. Soc., 1947, 22: 107--111.


\bibitem{Ver2000}
Verstra\"{e}te J.,   
On arithmetic progressions of cycle lengths in graphs, 
Combin. Probab. Comput., 2000, 9: 369--373.

\bibitem{Wang20}
Wang J., 
The shifting method and generalized Tur\'{a}n number of matchings,  
European J. Combin., 2020, 85: ID 103057. 

\bibitem{Wang2022}
Wang J., Kang L.Y., Xue Y.S., 
On a conjecture of spectral extremal problems, 
17 pages, (2022), arXiv:2203.10831. See \url{https://arxiv.org/abs/2203.10831}. 

\bibitem{WYL2019}
Wei J., You Z.F., Lai H.-J., 
Spectral analogues of Erd\H{o}s’  theorem on 
Hamilton-connected graphs, 
Appl. Math. Comput., 2019, 340: 242--250.

\bibitem{WY2020}
Wei J.,  You Z.F.,  
Spectral radius and traceability of graphs with large minimum degree,  
Linear Multilinear Algebra, 2020, 68: 161--176. 

\bibitem{Wilf86}
Wilf H.,  Spectral bounds for the clique and 
independence numbers of graphs, 
J. Combin. Theory Ser. B, 1986, 40: 113--117.

\bibitem{Woo1972} 
Woodall D.R.,  
Sufficient conditions for circuits in graphs,  
Proc. London Math. Soc., 1972, 24: 739--755. 

 
 \bibitem{Zhangpengli19}
Wu Q.F.,  Zhang P.L.,  Feng L.H.,  
Connectivity and spectral radius of graphs, 
Ars Combin., 2019, 142: 197--206.
 
\bibitem{West96}
West D.B.,  
Introduction to graph theory, 
Prentice Hall, Inc., Upper Saddle River, NJ, 1996. 


\bibitem{XZW2021}
Xu Y., Zhai M.Q., Wang B., 
 An improvement of spectral conditions for 
 Hamilton-connected graphs, 
 Linear Multilinear Algebra, 2021, 
\url{https://doi.org/10.1080/03081087.2021.1946465}. 

\bibitem{YuGuidong}
Yu G.D., Fan Y.Z.,  
Spectral conditions for a graph to be Hamilton-connected, 
Applied Mechanics and Materials, 2013, 336: 2329--2334.



 \bibitem{YuGuidongJAM}
Yu G.D., Ye M.L., Cai G.X.,   Cao J.D., 
Signless Laplacian spectral conditions for Hamiltonicity of graphs, 
J. Appl. Math., 2014, Article ID 282053. 


\bibitem{YWS2011}
Yu G.-L., Wu Y.-R., Shu J.-L., 
Signless Laplacian spectral radii of graphs with given chromatic number, 
Linear Algebra Appl., 2011, 435: 1813--1822.


\bibitem{Yu2008}
Yu G.H., 
On the maximal signless Laplacian spectral radius of graphs with given matching number, 
Proc. Japan Acad. Ser. A Math. Sci., 2008, 84: 163--166. 




\bibitem{Yuan18}
Yuan L.-T.,  Extremal graphs for the $k$-flower, 
J. Graph Theory, 2018, 89: 26--39.  

\bibitem{Yuan2021}
Yuan L.-T., 
Extremal graphs for odd wheels, 
J. Graph Theory, 2021, 98: 691--707.


\bibitem{Yuan2014}  
Yuan X.Y., 
Maxima of the $Q$-index: Forbidden odd cycles, 
Linear Algebra Appl., 2014, 458: 207--216. 


\bibitem{YWZ2012}
Yuan W.L.,  Wang B., Zhai M.Q., 
On the spectral radii of graphs without given cycles,  
Electron. J. Linear Algebra, 2012, 23: 599--606. 

\bibitem{ZW2012} 
Zhai M.Q., Wang B., 
Proof of a conjecture on the spectral radius of $C_4$-free graphs, 
Linear Algebra Appl., 2012, 430: 1641--1647.  


\bibitem{ZL2020}
Zhai M.Q.,  Lin H.Q., 
Spectral extrema of graphs: forbidden hexagon, 
Discrete Math., 2020, 343: ID 112028.

\bibitem{ZLS2021}
Zhai M.Q.,  Lin H.Q.,  Shu J.L., 
Spectral extrema of graphs with fixed size: 
Cycles and complete bipartite graphs, 
European J. Combin., 2021, 95: ID 103322. 

\bibitem{ZL2021}
Zhai M.Q., Lin H.Q., 
A strengthening of the spectral color critical edge
theorem: books and theta graphs, 2021, 
arXiv: 2102.04041v2. 

\bibitem{ZhangMinjieDAM}
Zhang M.J.,  Li S.C.,  
Extremal Halin graphs with respect to the signless Laplacian spectra, 
Discrete Appl. Math., 2016, 213:  207--218.



\bibitem{ZhangMinjieAMC}
Zhang M.J.,  Li S.C.,   
Extremal cacti of given matching number with respect to the distance spectral radius, 
Appl. Math. Comput., 2016, 291: 89--97.


\bibitem{Zhang2017}
Zhang P.L.,  Feng L.H., Liu W.J.,  Zhang X.-D., 
The $Q$-index and connectivity of graphs, 11 pages, 
2020, arXiv:2109.07656.  

\bibitem{Zhangxiaodong}
Zhang X.-D.,  {The signless Laplacian spectral radius of graphs with given degree sequence},  
Discrete Appl. Math., 2009, 157: 2928--2937.


\bibitem{Zhao2019}
Zhao Y.F., 
Graph Theory and Additive Combinatorics,  
notes for MIT 18.217, 
Fall 2019,  \url{https://yufeizhao.com/gtac/gtac.pdf}

\bibitem{ZHG21}
Zhao Y.H., Huang X.Y., Guo H.T., 
The signless Laplacian spectral radius of graphs 
with no intersecting triangles, 
Linear Algebra Appl., 2021, 618: 12--21. 



\bibitem{Zhoubo}
Zhou B., Signless Laplacian spectral radius and Hamiltonicity, 
Linear Algebra Appl., 2010, 432: 566--570.

\bibitem{ZW2017}
Zhou Q.N., Wang L.G., {Some sufficient spectral conditions on Hamilton-connected and traceable graphs},   
Linear Multilinear Algebra, 2017, 65: 224--234. 

 \bibitem{ZhouWang18}
Zhou Q.N., Wang L.G., Lu Y., 
{Some sufficient conditions on $k$-connected graphs}, 
Applied Math. Comput., 2018, 325:  332--339. 

\bibitem{Zhouqianna20b} 
Zhou Q.N.,  Wang L.G.,  Lu Y.,
 Sufficient conditions for Hamilton-connected graphs 
 in terms of (signless Laplacian) spectral radius, 
 Linear Algebra Appl., 2020, 594: 205--225. 
 
\bibitem{Zhouqianna20a}
Zhou Q.N., Wang L.G.,  Lu Y., 
Signless Laplacian spectral conditions 
for Hamilton-connected graphs with large minimum degree, 
Linear Algebra Appl., 2020, 592: 48--64.  


 
 \bibitem{Zhouqianna20c}
 Zhou Q.N., Broersma H.,  Wang L.G., Lu Y., 
 On sufficient spectral radius conditions for hamiltoniancity of 
 $k$-connected graphs, 
 Linear Algebra Appl., 2020, 604: 129--145.

\bibitem{Zhou2021}
Zhou Q.N., Broersma H., Wang L.G., Lu Y., 
On sufficient spectral radius conditions for Hamiltonicity, 
Discrete Applied Math., 2021,  296: 26--38.

\bibitem{Zykov1949}
Zykov A.A., On some properties of linear complexes (in Russian), 
Mat. Sbornik N.S. 24 (66) (1949) 163--188.  

\end{thebibliography}
\end{document}